\newtheorem{teoIntro}{Theorem}
\newtheorem{coroIntro}[teoIntro]{Corollary}
\newtheorem{teo}{Theorem}[section]
\newtheorem{lem}[teo]{Lemma}
\newtheorem{cor}[teo]{Corollary}
\newtheorem{exa}[teo]{Example}
\newtheorem{prop}[teo]{Proposition}
\newtheorem{defi}[teo]{Definition}
\newtheorem{remark}[teo]{Remark}
\newcommand{\flecheIso}[4]{                     
            \begin{array}{ccll} #1 & \overset{\sim}{\longrightarrow} & #2 \\   %
                         #3 &\longmapsto & #4          %
            \end{array}}
\newcommand{\fonc}[5]{                     
            \begin{array}{rcll}#1 :& #2 & \longrightarrow & #3 \\   %
                         &#4 &\longmapsto & #5          %
            \end{array}}
\newcommand{\mc}{\mathbb{C}}
\newcommand{\mz}{\mathbb{Z}}
\newcommand{\mn}{\mathbb{N}}
\newcommand{\Ll}{{\mathcal L}}
\newcommand{\Oo}{{\mathcal O}}
\newcommand{\lra}{\longrightarrow}
\newcommand{\ra}{\rightarrow}
\newcommand{\Endproof}{\hfill$\Box$}
\newcommand{\smallblacktriangle}{%
  \vcenter{\hbox{\,\scalebox{0.75}{$\blacktriangleright$}\,}}%
}
\title[quantum moduli and $\mathfrak{g}$-skein algebras]{Noetherian and Affine properties of \\ quantum moduli and $\mathfrak{g}$-skein algebras}
\author[St\'ephane Baseilhac, Matthieu Faitg, Philippe Roche]{St\'ephane Baseilhac\textsuperscript{1}, Matthieu Faitg\textsuperscript{2}, Philippe Roche\textsuperscript{3}}
\begin{document}


\maketitle

\begin{center}
\textsuperscript{1,3}IMAG, Univ Montpellier, CNRS, Montpellier, France.

\smallskip

\textsuperscript{2}Institut de Mathématiques de Toulouse, 118 route de Narbonne, F-31062 Toulouse, France.

\medskip

\textsuperscript{1}stephane.baseilhac@umontpellier.fr, \textsuperscript{2}matthieu.faitg@univ-tlse3.fr, \textsuperscript{3}philippe.roche@umontpellier.fr
\end{center}

\begin{abstract} We prove that the quantum moduli algebra associated to a possibly punctured compact oriented surface and a complex semisimple Lie algebra $\mathfrak{g}$ is a Noetherian and finitely generated ring. If the surface has punctures, we prove also that it has no non-trivial zero divisors (i.e., it is a domain). Moreover, we show that the quantum moduli algebra is isomorphic to the skein algebra of the surface, defined by means of the Reshetikhin-Turaev functor for the quantum group $U_q(\mathfrak{g})$, and which coincides with the Kauffman bracket skein algebra when $\mathfrak{g}=\mathfrak{sl}_2$. We obtain these results by a similar study of quantum graph algebras, which we show to be isomorphic to stated skein algebras.
\end{abstract} 
\medskip

{\it Keywords: quantum groups, skein algebras, invariant theory}

{\it AMS subject classification 2020:  17B37, 20G42, 57R56}

\tableofcontents

\section{Introduction}\label{INTRO}
\subsection{Context} In this section we present qualitative features of our results, and relations with connected results in the previous literature. In Section \ref{summary} we give precise statements and discuss our methods in details. 

The quantum moduli algebras have been introduced in the mid $'90$s by Alekseev-Grosse-Schomerus \cite{AGS1,AGS2,AS} and Buffenoir-Roche \cite{BuR1,BuR2}. They are associative, non commutative algebras defined over the ground field $\mc(q^{1/D})$, quantizing the algebras of functions on the moduli spaces of flat $\mathfrak{g}$-connections on surfaces, where $q$ is a formal variable, $\mathfrak{g}$ is a complex finite dimensional simple Lie algebra, and $D$ is the smallest positive integer such that $DP\subset Q$, where $P$, $Q$ are the weight and root lattices of $\mathfrak{g}$ (see \S \ref{prelimLieAlgebras} for the generalization to semisimple Lie algebras).

The main purpose of this paper is to prove that, for every $\mathfrak{g}$ and every finite type surface, the quantum moduli algebra is a Noetherian and finitely generated ring, with no non-trivial zero divisors when the surface has punctures (i.e. it is a domain). Moreover, it is isomorphic to a $\mathfrak{g}$-skein algebra. The resulting properties of skein algebras were previously known only in the case $\mathfrak{g}=\mathfrak{sl}_2$ (\cite{Bu}, \cite{PS}), and \cite{FrohmanSikora} proved finite generation for $\mathfrak{g}=\mathfrak{sl}_3$. For stated skein algebras, again when $\mathfrak{g}=\mathfrak{sl}_2$, this was proved in \cite{LY}. All these papers use presentations by generators and relations, together with geometric techniques based on curves or graphs on surfaces, and hardly seem to to be generalizable to Lie algebras $\mathfrak{g}$ of higher rank. On the contrary,  our approach based on quantum moduli algebras allows one to prove the results for all cases of $\mathfrak{g}$, and does it in a uniform, intrinsic way. It provides also a direct, fruitful connection with quantum group theory.

Denote by $\Sigma_{g,n}$ the oriented surface with genus $g$ and $n$ punctures ($n\in \mn$), and by $\Sigma_{g,n}^\circ$ the surface with one boundary component, obtained from $\Sigma_{g,n}$ by removing an open $2$-disk (these surfaces are pictured in \S \ref{sectionStatedSkein}). To $\mathfrak{g}$ and $\Sigma_{g,n}^\circ$ one can associate an algebra $\mathcal{L}_{g,n}(\mathfrak{g})$ (see Definition \ref{def:Lgn}), called {\it graph algebra}, which is endowed with an action of the Drinfeld-Jimbo (simply-connected) quantum group $U_q = U_q(\mathfrak{g})$ making $\mathcal{L}_{g,n}(\mathfrak{g})$ a module-algebra. The subalgebra of invariant elements $\mathcal{L}_{g,n}^{U_q}(\mathfrak{g})$ is the {\it quantum moduli algebra} of $\mathfrak{g}$ and $\Sigma_{g,n}^\circ$. The case $g=0$ was studied in \cite{BR1,BR2}. 

There are at least two areas of good motivations to study the quantum moduli algebras. On one hand, the definition of $\mathcal{L}_{g,n}(\mathfrak{g})$ relies on Hopf algebra and quantum group theory. It is built by a twisting procedure from $2g+n$ copies of the quantum coordinate algebra $\mathcal{O}_q(G)$ associated to a complex semisimple algebraic group $G$ with Lie algebra $\mathfrak{g}$, and it is therefore a natural problem to study the algebraic structure and representation theory of $\mathcal{L}_{g,n}(\mathfrak{g})$ and $\mathcal{L}_{g,n}^{U_q}(\mathfrak{g})$. 

Our first main result makes progresses in this direction. We show that $\mathcal{L}_{g,n}(\mathfrak{g})$ and $\mathcal{L}_{g,n}^{U_q}(\mathfrak{g})$ are Noetherian, finitely generated rings, and are domains. Such properties are well-known for $\mathcal{O}_q(G)$ and the subalgebra of invariant elements under the coadjoint action of $U_q(\mathfrak{g})$ (see e.g. \cite[Prop. 3.117]{VY}). In our context we use also tools, like filtrations, which are standard for quantum groups, and a version of the Hilbert--Nagata theorem in invariant theory. The similar result when $g=0$ was obtained in \cite{BR2}. The present genus $g>0$ case is substantially more complicated.

On the other hand, connections with quantum topology are well-established since the $'90$s. For instance, \cite{ASduke} showed that the Witten-Reshetikin-Turaev representations of the mapping class groups of surfaces can be recovered from certain representations of $\mathcal{L}_{g,n}(\mathfrak{g})$. Also, \cite{BFK, BFK2} proved (with a slightly different formalism) that $\mathcal{L}_{g,n}^{U_q}(\mathfrak{sl}_2)$ is isomorphic to the Kauffman bracket skein algebra of $\Sigma_{g,n}^\circ$, and recently these results have been extended in \cite{Faitg3,FaitgMCG} and \cite {FaitgHol, korinman} respectively. It is a natural problem to extend the isomorphism of $\mathcal{L}_{g,n}^{U_q}(\mathfrak{g})$ with a skein algebra beyond $\mathfrak{g} = \mathfrak{sl}_2$. By using such an isomorphism one can naturally expect that $\mathcal{L}_{g,n}(\mathfrak{g})$, which has an algebraic flavour by definition, provides good tools to study the skein algebras.

Our second main result achieves this goal: we construct an (explicit) isomorphism of $\mathcal{L}_{g,n}(\mathfrak{g})$ with an algebra of ``stated ribbon graphs'' in the thickened surface $\Sigma_{g,n}^\circ \times [0,1]$. It restricts to an isomorphism between $\mathcal{L}_{g,n}^{U_q}(\mathfrak{g})$ and the skein algebra of the surface $\Sigma_{g,n}^{\circ}$ (whose skein relations are given by the Reshetikin--Turaev functor for $U_q(\mathfrak{g})$). 

\indent Combining our two main results we see that the skein algebra of $\Sigma_{g,n}^\circ$ associated to an arbitrary semisimple Lie algebra $\mathfrak{g}$ is a Noetherian and finitely generated domain. 

Finally, the quantum moduli algebra of $\Sigma_{g,n}$, including the case $n=0$, can be defined by using the notion of quantum reduction developed by several authors and applied to $\mathcal{L}_{g,n}(\mathfrak{g})$. The topological counterpart of this notion is the operation of gluing a $2$-disk along the boundary component of $\Sigma_{g,n}^\circ$. We describe quantum reduction in detail, especially for $\mathcal{L}_{g,n}(\mathfrak{g})$, and show that the resulting algebra is a Noetherian and finitely generated ring, indeed isomorphic to the skein algebra of $\Sigma_{g,n}$. For closed surfaces $(n=0)$, whether or not it is a domain is still an open question at this stage for general $\mathfrak{g}$; it is the case for $\mathfrak{g} =\mathfrak{sl}_2$ (see \cite{PS}).

\smallskip

We can formulate most of our constructions for general quasitriangular Hopf algebras $H$, thus obtaining an $H$-module algebra $\mathcal{L}_{g,n}(H)$ and a subalgebra of $H$-invariant elements $\mathcal{L}_{g,n}^H(H)$. We do so in the text, and then make the required adaptation to handle the case of $U_q(\mathfrak{g})$, which is quasitriangular only in a categorical completion. 

\smallskip

Another approach to $\mathcal{L}_{g,n}(H)$ is \textit{via} factorization homology. The seminal paper is \cite{BBJ}, where it is in particular shown that $\textstyle \int_{\Sigma_{g,n}^{\circ}}\!\!H\text{-}\mathrm{mod} \cong A_{g,n}\text{-}\mathrm{mod}_H$ for a $H$-module-algebra $A_{g,n}$ which they proved isomorphic to $\mathcal{L}_{g,n}(H)$. Factorization homology can be realized by skein categories \cite{cooke}, where $A_{g,n}$ takes the name of ``internal skein algebra''. For $H = U^{\mathrm{ad}}_q(\mathfrak{sl}_2)$, it is known that internal skein algebras are isomorphic to stated skein algebras \cite{haioun}, and this is thought to be true for all $U_q^{\mathrm{ad}}(\mathfrak{g})$. Also, for $H = U_q^{\mathrm{ad}}(\mathfrak{g})$, \cite[Cor.\,4]{cooke} asserts that $A_{g,n}^{H\text{-}\mathrm{inv}}$ is isomorphic to a skein algebra whose definition is equivalent to the one used here. A survey on the relations between the different approaches is in \cite[\S 4.6]{JordanSurvey}.

\smallskip

We note that the algebras $\mathcal{L}_{g,n}(\mathfrak{g})$ and $\mathcal{L}_{g,n}^{U_q}(\mathfrak{g})$ have integral forms, which are subalgebras $\mathcal{L}_{g,n}^A(\mathfrak{g})$ and $(\mathcal{L}_{g,n}^A)^{U_A^{\mathrm{res}}}(\mathfrak{g})$ defined over the ground ring $A=\mc[q^{1/D},q^{-1/D}]$, and such that $\mathcal{L}_{g,n}^A(\mathfrak{g}) \otimes_{A} \mc(q^{1/D}) = \mathcal{L}_{g,n}(\mathfrak{g})$ and $(\mathcal{L}_{g,n}^A)^{U_A^{\mathrm{res}}}(\mathfrak{g}) \otimes_{A} \mc(q^{1/D}) = \mathcal{L}_{g,n}^{U_q}(\mathfrak{g})$, where $U_A^{\mathrm{res}}$ is Lusztig's restricted quantum group associated to the adjoint quantum group $U_q^\mathrm{ad}(\mathfrak{g})$. It is because of integrality properties of the $R$-matrix of $U_A^{\mathrm{res}}$ that the twists involved in the definition of $\mathcal{L}_{g,n}(\mathfrak{g})$ yield a well-defined algebra structure on $\mathcal{L}_{g,n}^A(\mathfrak{g})$. 

Trivially our first main result implies that $\mathcal{L}_{g,n}^A(\mathfrak{g})$ and $(\mathcal{L}_{g,n}^A)^{U_A^{\mathrm{res}}}(\mathfrak{g})$ are domains. By using the Kashiwara-Lusztig theory of canonical basis, we have shown in \cite{BR2} that $\mathcal{L}_{0,n}^A(\mathfrak{g})$ is a Noetherian and finitely generated algebra. We expect that these results still hold true in genus $g > 0$. 
\smallskip

In \cite{BFR} we study the algebraic properties and representations of these algebras when the parameter $q$ is specialized to a root of unity. As a further direction of research, we note that the results of this paper should allow one to study stratifications of the prime and primitive spectra of the skein algebras, similarly as those of quantized coordinate rings described e.g. in \cite{BG}, Part II.

\subsection{Summary of results}\label{summary}
\indent We first give an overview of the definition of $\mathcal{L}_{g,n}(H)$. Let $H$ be a quasitriangular Hopf algebra with $R$-matrix $R \in H^{\otimes 2}$, and let $H^{\circ}$ be its restricted dual (see \S \ref{sectionPreliminaires}). For $g,n \in \mathbb{N}$, the algebra $\mathcal{L}_{g,n}(H)$ is the vector space $(H^{\circ})^{\otimes (2g+n)}$ with a product ``twisted by $R$''. There is a coadjoint action $\mathrm{coad}^r$ on $\mathcal{L}_{g,n}(H)$, which gives it the structure of a right $H$-module-algebra (\S \ref{sectionDefLgnH}). In particular we have the subalgebra $\mathcal{L}_{g,n}^H(H)$ of $H$-invariant elements for this action.

\smallskip

\indent The definitions and some results will be given for general $H$ but we are mainly interested in the case where $H$ is a quantum group. So let $\mathfrak{g}$ be a complex simple Lie algebra and $G$ be the simply-connected algebraic Lie group with Lie algebra $\mathfrak{g}$; for the extension to semisimple $\mathfrak{g}$, see \S \ref{prelimLieAlgebras}. Fix a formal variable $q$ and denote by $U_q = U_q(\mathfrak{g})$ the simply connected Drinfeld-Jimbo quantum group defined over $\mathbb{C}(q)$, by $U_q^{\mathrm{ad}}\subset U_q$ the adjoint quantum group, and by $\mathcal{O}_q = \mathcal{O}_q(G)$ the associated quantized coordinate algebra (see \S \ref{sectionPrelimUq}, \S \ref{sectionOq}). In this situation we denote the resulting algebra by $\mathcal{L}_{g,n}(\mathfrak{g})$ or simply $\mathcal{L}_{g,n}$, which is $\mathcal{O}_q(G)^{\otimes (2g+n)}$ as a $\mathbb{C}(q^{1/D})$-vector space (see \S \ref{prelimLieAlgebras} for the definition of the integer $D$), and by $\mathcal{L}_{g,n}^{U_q}(\mathfrak{g})$ or simply $\mathcal{L}_{g,n}^{U_q}$ the subalgebra of $U_q$-invariant elements. 

\smallskip

\indent The definition of $\mathcal{L}_{g,n}(H)$ relies on the special cases $\mathcal{L}_{0,1}(H)$ and $\mathcal{L}_{1,0}(H)$:
\begin{equation}\label{defLgnIntroBraidedTensor}
\mathcal{L}_{g,n}(H) = \mathcal{L}_{1,0}(H)^{\widetilde{\otimes} g} \: \widetilde{\otimes} \: \mathcal{L}_{0,1}(H)^{\widetilde{\otimes} n}
\end{equation}
where $\widetilde{\otimes}$ is the braided tensor product in the braided tensor category of right $H$-modules, as defined in \cite[Lem 9.2.12]{majidFoundations} and recalled before Proposition  \ref{propBraidedTensProduct}. Hence the algebras $\mathcal{L}_{0,1}(H)$ and $\mathcal{L}_{1,0}(H)$ play a special role and have to be examined first. The papers \cite{BR1, BR2} were focused on $g=0$ and in particular $\mathcal{L}_{0,1}(H)$. Here we are interested in $\mathcal{L}_{g,n}(H)$ for arbitrary $g$. So we start with $\mathcal{L}_{1,0}(H)$ in \S \ref{sectionHandleAlgebra}. This algebra is very different from $\mathcal{L}_{0,1}(H)$; for instance $\mathcal{L}_{0,1}(H)$ is strongly related to $H$ while $\mathcal{L}_{1,0}(H)$ is strongly related to the Heisenberg double of $H^\circ$ (\S \ref{sectionHeisenberg}, \S \ref{SectionL10ForUq}).

\smallskip

\indent Here is our first main result for the algebra $\mathcal{L}_{g,n} := \mathcal{L}_{g,n}(\mathfrak{g})$:
\begin{teoIntro}[Theorems \ref{ThmLgnNoetherian}, \ref{thmLgnUqFinGen}, \ref{TheoremePhignInjectif}] \label{thmNoethIntro}
1. The algebra $\mathcal{L}_{g,n}$ is a Noetherian domain.
\\2. The algebra $\mathcal{L}_{g,n}^{U_q}$ is Noetherian and finitely generated.
\end{teoIntro}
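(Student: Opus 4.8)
The plan is to prove the two statements by reducing to the building blocks $\mathcal{L}_{0,1}$ and $\mathcal{L}_{1,0}$ via the braided tensor product decomposition \eqref{defLgnIntroBraidedTensor}, and then transferring ring-theoretic properties through filtrations. For part 1, I would first treat the two elementary pieces: $\mathcal{L}_{0,1}$ is a braided form of $\mathcal{O}_q(G)$, while $\mathcal{L}_{1,0}$ is governed by the Heisenberg double of $\mathcal{O}_q^\circ$ (as indicated in the introduction). In each case one expects a filtration whose associated graded is an iterated Ore extension (quantum affine space) over $\mathbb{C}(q^{1/D})$; since such skew-polynomial rings are Noetherian domains, and since $\mathcal{O}_q(G)$ is itself a known Noetherian domain, the standard lifting lemma---if $\mathrm{gr}(A)$ is a Noetherian domain then so is $A$---yields the base cases. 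For general $(g,n)$, the key is to equip the braided tensor product with a filtration in which the braiding ($R$-matrix) corrections are of strictly lower order, so that $\mathrm{gr}(\mathcal{L}_{g,n})$ becomes the ordinary tensor product of the associated gradeds of the factors, again an iterated Ore extension; this uses in an essential way the integrality of the $R$-matrix of $U_A^{\mathrm{res}}$ noted above. A second application of the lifting lemma then gives that $\mathcal{L}_{g,n}$ is Noetherian and a domain. Alternatively, the absence of zero divisors can be obtained by exhibiting an injective algebra homomorphism $\Phi_{g,n}$ from $\mathcal{L}_{g,n}$ into a manifest domain, which is the content of Theorem \ref{TheoremePhignInjectif}.

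For part 2, the central tool is a Reynolds operator $\rho\colon\mathcal{L}_{g,n}\to\mathcal{L}_{g,n}^{U_q}$, that is, an $\mathcal{L}_{g,n}^{U_q}$-bimodule projection onto the invariants, whose existence rests on the complete reducibility of the coadjoint $U_q$-action $\mathrm{coad}^r$ for generic $q$. The required input is the local finiteness of this action---so that every element lies in a finite-dimensional submodule on which averaging makes sense in the categorical completion where $U_q$ is quasitriangular. Granting $\rho$, Noetherianity of $\mathcal{L}_{g,n}^{U_q}$ follows formally: for any ideal $I\subseteq\mathcal{L}_{g,n}^{U_q}$ one has $I\mathcal{L}_{g,n}\cap\mathcal{L}_{g,n}^{U_q}=I$ (apply $\rho$ and use its bimodule-linearity), so contraction $J\mapsto J\cap\mathcal{L}_{g,n}^{U_q}$ carries the ascending chain condition from $\mathcal{L}_{g,n}$ down to $\mathcal{L}_{g,n}^{U_q}$. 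Finite generation is then a Hilbert--Nagata argument: $\mathcal{L}_{g,n}$ is finitely generated, being a braided tensor product of finitely generated $U_q$-module algebras with $\mathcal{O}_q(G)$ finitely generated, so by Noetherianity the ideal generated by the positive-degree invariants is finitely generated; projecting a finite generating set by $\rho$ and running the usual degree induction (with respect to a grading compatible with $\mathrm{coad}^r$ and with $\rho$) shows these finitely many invariants generate $\mathcal{L}_{g,n}^{U_q}$ as an algebra.

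I expect the main obstacle to lie in the filtration of part 1: identifying a filtration on the braided tensor product for which the braiding terms are genuinely of lower order and the associated graded is an iterated Ore extension, and controlling this simultaneously for all $2g+n$ factors in positive genus. The handle contributions $\mathcal{L}_{1,0}$ are the delicate ones, being tied to the Heisenberg double rather than to $\mathcal{O}_q(G)$ directly, which is presumably why the genus $g>0$ case is substantially more complicated than $g=0$. The remaining difficulty in part 2 is establishing the Reynolds operator together with a compatible grading, after which Noetherianity and finite generation of the invariants follow from the standard invariant-theoretic machinery.
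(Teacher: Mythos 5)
Your filtration strategy for Noetherianity of $\mathcal{L}_{g,n}$ is essentially the paper's (Theorem \ref{ThmLgnNoetherian}: a first filtration making the braided tensor product quasi-commutative, then tensoring the filtrations of $\mathcal{L}_{0,1}$ and $\mathcal{L}_{1,0}$, then the criterion of Lemma \ref{critereNoetherien}). But your primary route to ``no non-trivial zero divisors'' has a genuine gap. The associated graded algebras arising here are \emph{not} iterated Ore extensions or quantum affine spaces: the exchange relations \eqref{relationsNoetherianiteL01} and \eqref{echangeL10Generateurs} carry lower-order correction terms of the form $\sum_{s,t} \alpha^{ab}_{st}\, u_s u_t$, which is enough for the Noetherianity criterion of Lemma \ref{critereNoetherien} but leaves the domain property of the graded algebra completely unestablished (it would require proving, for instance, that the truncated product $\overline{\star}$ on $\mathcal{O}_q$ has no zero divisors). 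Moreover the transfer lemma for zero divisors (Lemma \ref{lemmadomainGr}) requires a \emph{totally ordered}, locally bounded below filtration, whereas the filtrations needed to handle the braided tensor product are indexed by partially ordered monoids. This is precisely why the paper does not prove the domain property by a filtration on $\mathcal{L}_{g,n}$: it constructs the Alekseev morphism $\Phi_{g,n} : \mathcal{L}_{g,n} \to \mathcal{HH}_q^{\otimes g} \otimes U_q^{\otimes n}$, proves that the target has no zero divisors (Proposition \ref{propHHqUqIntegre}, via the De Concini--Kac filtration, whose graded \emph{is} honestly quasi-polynomial over $\mathcal{O}_q^{\otimes g}$), and proves injectivity of $\Phi_{g,n}$ by an induction on weight decompositions resting on Theorem \ref{ThmPhi10}. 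Your ``alternative'' is exactly this route, but invoking Theorem \ref{TheoremePhignInjectif} as a black box begs the question: that theorem is part of the statement to be proved, and its proof (two-sided Heisenberg double, injectivity and image of $\Phi_{1,0}$, etc.) is where the real work lies. A minor point: what is used about the $R$-matrix is its triangular form \eqref{expressionCanoniqueR}, whose correction terms shift weights by nonzero elements of $Q_+$; the integrality of the $R$-matrix of $U_A^{\mathrm{res}}$ concerns the integral form $\mathcal{L}_{g,n}^A$ and plays no role here.

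For part 2, your Reynolds-operator argument for Noetherianity of $\mathcal{L}_{g,n}^{U_q}$ is sound and is in fact a legitimate shortcut: $\mathrm{coad}^r$ is locally finite with semisimple category, so $\mathcal{L}_{g,n}$ is completely reducible, the projection $\mathfrak{R}$ onto the trivial isotypic component is an $\mathcal{L}_{g,n}^{U_q}$-bimodule map by the Schur-lemma argument, and then $I = \mathcal{L}_{g,n} I \cap \mathcal{L}_{g,n}^{U_q}$ transfers the ascending chain condition from $\mathcal{L}_{g,n}$ (granting part 1). The gap is in finite generation: there is no algebra grading on $\mathcal{L}_{g,n}$ ``compatible with $\mathrm{coad}^r$ and $\mathfrak{R}$''. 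The decomposition into matrix-coefficient blocks $C([\mu])$ is only a \emph{filtration}, since by \eqref{filtrationOq} the product of $C([\mu])$ and $C([\nu])$ spreads over all $C([\lambda])$ with $[\lambda] \preceq [\mu]+[\nu]$; so the degree induction you invoke (homogeneous generators of the ideal of positive-degree invariants, splitting coefficients by degree) cannot be run on $\mathcal{L}_{g,n}$ itself. The paper's fix is the ``graded truncation'': pass to $\mathrm{gr}_{\mathcal{W}}(\mathcal{L}_{g,n})$, which is genuinely graded by $P_+^{2g+n}$ with $U_q$-stable, semisimple homogeneous components and degree-zero part $\mathbb{C}(q^{1/D})\,1$; apply the module-algebra Hilbert--Nagata Theorem \ref{HilbertNagata} there; and transfer both properties back to $\mathcal{L}_{g,n}^{U_q}$ using $\mathrm{gr}_{\mathcal{W}}(\mathcal{L}_{g,n})^{U_q} = \mathrm{gr}_{\mathcal{W}_{\mathrm{inv}}}\bigl(\mathcal{L}_{g,n}^{U_q}\bigr)$ together with Lemmas \ref{lemmaFiltrationNoetherian} and \ref{lemmaFinGenGr}. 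Without this step, your argument for part 2 establishes Noetherianity but not finite generation.
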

\noindent We note that it is not difficult to prove that $\mathcal{L}_{g,n}$ is finitely generated (Prop. \ref{propLgnFinGen}). Moreover, it follows from item 1 that $\mathcal{L}_{g,n}^{U_q}$ is a domain. 
\smallskip

Let us discuss a bit how we prove Theorem \ref{thmNoethIntro}. The main ideas of the proof are similar to those for $\mathcal{L}_{0,n}$ in \cite{BR1,BR2}; however when $g>0$ the presence of the algebra $\mathcal{L}_{1,0}$ requires many non-trivial generalizations and new computations.

\smallskip

\indent To prove that $\mathcal{L}_{g,n}$ is Noetherian, we use filtrations. A filtration for $\mathcal{L}_{0,1}$ was introduced in \cite[\S 3.14.4]{VY}. In \S \ref{sectionL10Noetherien} we modify it in order to define a filtration of $\mathcal{L}_{1,0}$ and show that the associated graded algebra is Noetherian, which implies that $\mathcal{L}_{1,0}$ is Noetherian. Then in \S \ref{sectionNoetherianityLgn} we first define a filtration of $\mathcal{L}_{g,n}$ whose associated graded algebra has a kind of ``quasi-commutative" product, in the sense of Lemma \ref{critereNoetherien}. This allows us in a second step to use tensor products of the filtrations of $\mathcal{L}_{0,1}$ and $\mathcal{L}_{1,0}$ to get an associated graded algebra which is Noetherian.

\smallskip

\indent The proof of the item 2 of Theorem \ref{thmNoethIntro} is based on a generalization of the Hilbert--Nagata theorem. Let $G$ be a group acting on a graded algebra $A$ in such a way that the action is compatible with the multiplication and the grading. The Hilbert--Nagata theorem gives sufficient conditions for the subalgebra of $G$-invariant elements of $A$ to be Noetherian and finitely generated. In \S \ref{sectionNoetherianityLgnInv} we generalize this theorem to the case where $A$ is a graded module-algebra over a Hopf algebra $H$. We then apply this general result to the case where $H = U_q^{\mathrm{ad}}$ and $A$ is a ``graded truncation'' of $\mathcal{L}_{g,n}$.

\smallskip

\indent We prove that $\mathcal{L}_{g,n}$ does not have non-trivial zero divisors by using a morphism $\Phi_{g,n}$ called the Alekseev morphism, which has many applications and that we discuss now. In \S \ref{sectionAlekseevMorphism}, relying on the formulas given in \cite{A}, we define for any quasitriangular Hopf algebra $H$ a morphism of algebras
\[ \Phi_{g,n} : \mathcal{L}_{g,n}(H) \to \mathcal{HH}(H^{\circ})^{\otimes g} \otimes H^{\otimes n}. \] 
Here $\mathcal{HH}(H^{\circ})$ is the ``two-sided Heisenberg double'', an algebra which we introduce in \S \ref{sectionTwoSidedHeisenberg} and which extends the usual Heisenberg double $\mathcal{H}(H^{\circ})$ recalled in \S \ref{sectionHeisenberg}. For $g>1$ it is necessary to use $\mathcal{HH}(H^{\circ})$ instead of $\mathcal{H}(H^{\circ})$ to make sense of the formulas in \cite{A} if $H$ is not finite-dimensional (this point is explained at the end of \S \ref{sectionDefAlekseev}). It is important to define and analyze $\Phi_{0,1}$ and $\Phi_{1,0}$ first since their properties are used in the proofs for $\Phi_{g,n}$. The morphism $\Phi_{0,1}$ is well-known in quantum group theory (see e.g. \cite{Bau1}), while $\Phi_{1,0} : \mathcal{L}_{1,0}(H) \to \mathcal{H}(H^{\circ})$ is defined in \S \ref{sectionHeisenberg} following \cite{A}.

\smallskip

\indent Let us return to the case $H = U_q^{\mathrm{ad}}(\mathfrak{g})$. In that situation $\Phi_{g,n}$ takes values in $\mathcal{HH}(\mathcal{O}_q)^{\otimes g} \otimes U_q^{\otimes n}$. We prove that $\mathcal{HH}(\mathcal{O}_q)^{\otimes g} \otimes U_q^{\otimes n}$ does not contain non-trivial zero divisors (Prop. \ref{propHHqUqIntegre}) and that $\Phi_{g,n}$ is injective (Th. \ref{TheoremePhignInjectif}), which implies that $\mathcal{L}_{g,n}$ does not have non-trivial zero divisors. In \cite{Bau1} it was already proved that $\Phi_{0,1} : \mathcal{L}_{0,1} \to U_q$ is injective; moreover, that paper showed that the image of $\Phi_{0,1}$ is $U_q^{\mathrm{lf}}$, namely the subspace of locally finite elements of $U_q$ for the adjoint action. In Theorem \ref{ThmPhi10} we show a similar result for $\Phi_{1,0}$, that is, $\Phi_{1,0} : \mathcal{L}_{1,0} \to \mathcal{H}(\mathcal{O}_q)$ is injective and its image is the subspace of locally finite elements for an action of $U_q$ on $\mathcal{H}(\mathcal{O}_q)$ that we introduce in \S \ref{sectionHeisenberg}.

\medskip

\indent In \S \ref{sectionTopologicalInterpretation} we relate the algebra $\mathcal{L}_{g,n}(H)$ and its subalgebra of $H$-invariant elements $\mathcal{L}_{g,n}^H(H)$ to skein theory. Let $H$ be a ribbon Hopf algebra over a field $k$, and let $F_{\mathrm{RT}} : \mathrm{Rib}_H \to H\text{-mod}$ be the Reshetikhin--Turaev functor, which to an $H$-colored oriented ribbon graph associates some $H$-linear morphism \cite{RT}. Let $\Sigma$ be an oriented surface, possibly with boundary. The skein algebra of $\Sigma$ associated to $H$, denoted by $\mathcal{S}_H(\Sigma)$, is the $k$-vector space generated by the isotopy classes of $H$-colored oriented ribbon links (with coupons) modulo the skein relations:
\begin{center}
\begingroup%
  \makeatletter%
  \providecommand\color[2][]{%
    \errmessage{(Inkscape) Color is used for the text in Inkscape, but the package 'color.sty' is not loaded}%
    \renewcommand\color[2][]{}%
  }%
  \providecommand\transparent[1]{%
    \errmessage{(Inkscape) Transparency is used (non-zero) for the text in Inkscape, but the package 'transparent.sty' is not loaded}%
    \renewcommand\transparent[1]{}%
  }%
  \providecommand\rotatebox[2]{#2}%
  \newcommand*\fsize{\dimexpr\f@size pt\relax}%
  \newcommand*\lineheight[1]{\fontsize{\fsize}{#1\fsize}\selectfont}%
  \ifx\svgwidth\undefined%
    \setlength{\unitlength}{303.53577343bp}%
    \ifx\svgscale\undefined%
      \relax%
    \else%
      \setlength{\unitlength}{\unitlength * \real{\svgscale}}%
    \fi%
  \else%
    \setlength{\unitlength}{\svgwidth}%
  \fi%
  \global\let\svgwidth\undefined%
  \global\let\svgscale\undefined%
  \makeatother%
  \begin{picture}(1,0.12354393)%
    \lineheight{1}%
    \setlength\tabcolsep{0pt}%
    \put(-0.00054172,0.05317266){\color[rgb]{0,0,0}\makebox(0,0)[lt]{\lineheight{1.25}\smash{\begin{tabular}[t]{l}$\displaystyle\sum_i \lambda_iF_{\mathrm{RT}}(T_i) =  0 \quad \implies \quad \sum_i \lambda_i$\end{tabular}}}}%
    \put(0.70188301,0.0530706){\color[rgb]{0,0,0}\makebox(0,0)[lt]{\lineheight{1.25}\smash{\begin{tabular}[t]{l}$= 0$ in $\mathcal{S}_H(\Sigma)$.\end{tabular}}}}%
    \put(0,0){\includegraphics[width=\unitlength,page=1]{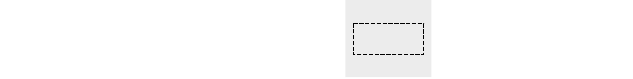}}%
    \put(0.59610599,0.05429781){\color[rgb]{0,0,0}\makebox(0,0)[lt]{\lineheight{1.25}\smash{\begin{tabular}[t]{l}$T_i$\end{tabular}}}}%
    \put(0.59275272,0.01031215){\color[rgb]{0,0,0}\makebox(0,0)[lt]{\lineheight{1.25}\smash{\begin{tabular}[t]{l}$\ldots$\end{tabular}}}}%
    \put(0.59306857,0.11005255){\color[rgb]{0,0,0}\makebox(0,0)[lt]{\lineheight{1.25}\smash{\begin{tabular}[t]{l}$\ldots$\end{tabular}}}}%
    \put(0,0){\includegraphics[width=\unitlength,page=2]{skeinRelationsIntro.pdf}}%
  \end{picture}%
\endgroup%

\end{center}
The $T_i$ are any ribbon graphs and the $\lambda_i \in k$ are any scalars such that the linear equation on the left holds. The right hand-side represents a linear combination of links which are equal outside of the cube in $\Sigma \times [0,1]$ which is depicted in grey. The product of two links $L_1, L_2$ in $\mathcal{S}_H(\Sigma)$ is obtained by putting $L_1$ below $L_2$ in $\Sigma \times [0,1]$. 

\smallskip

\indent The stated skein algebra $\mathcal{S}_H^{\mathrm{st}}(\Sigma)$ is a generalization of $\mathcal{S}_H(\Sigma)$ where one uses ribbon graphs in $\Sigma \times [0,1]$ instead of links. The endpoints of these ribbon graphs are required to be in $\partial\Sigma \times [0,1]$ and are labelled by ``states'' \textit{i.e.} vectors in $H$-modules. Stated skein algebras have been introduced and studied for $H = U_q^{\mathrm{ad}}(\mathfrak{sl}_2)$ in \cite{Le, CL, korinman} and for $H = U_q^{\mathrm{ad}}(\mathfrak{sl}_{m+1})$ in \cite{LS}. In the present paper we will deal with stated skeins only for the surface $\Sigma = \Sigma_{g,n}^{\circ, \bullet}$ obtained by removing one point $\bullet$ on the circle $\partial(\Sigma_{g,n}^{\circ})$, and $\mathcal{S}_H^{\mathrm{st}}(\Sigma_{g,n}^{\circ, \bullet})$ is defined in \S \ref{sectionStatedSkein} for any ribbon Hopf algebra $H$. Our definition agrees with the one which will be given in \cite{CKL} for more general surfaces than $\Sigma_{g,n}^{\circ, \bullet}$.
\smallskip

\indent In \cite[\S 4.1]{FaitgHol} a ``holonomy map'' hol was defined, which to a $H$-colored oriented ribbon graph $\mathbf{T}$ in $(\Sigma_{g,n}^{\circ,\bullet}) \times [0,1]$ associates a tensor $\mathrm{hol}(\mathbf{T})$ with coefficients in $\mathcal{L}_{g,n}(H)$. The type of the tensor $\mathrm{hol}(\mathbf{T})$ depends of the orientation and the number of endpoints of $\mathbf{T}$. The map hol generalizes the Reshetikhin--Turaev functor to the surfaces $\Sigma_{g,n}^{\circ,\bullet}$. In \S \ref{sectionIsoSstLgn} we refine it to a ``stated holonomy map'' $\mathrm{hol}^{\mathrm{st}} : \mathcal{S}_H^{\mathrm{st}}(\Sigma_{g,n} ^{\circ, \bullet}) \to \mathcal{L}_{g,n}(H)$, which is a morphism of algebras due to the properties of hol. Then in \S \ref{sectionIsoSLgninv} we note that there is a natural algebra morphism $I : \mathcal{S}_H(\Sigma_{g,n}^{\circ}) \to \mathcal{S}_H^{\mathrm{st}}(\Sigma_{g,n}^{\circ, \bullet})$ simply obtained by seeing a link as a ribbon graph without boundary points. When restricted to links, $\mathrm{hol}$ and $\mathrm{hol}^{\mathrm{st}}$ are equal and give a morphism $W = \mathrm{hol}^{\mathrm{st}} \circ I : \mathcal{S}_H(\Sigma_{g,n}^{\circ}) \to \mathcal{L}_{g,n}^H(H)$  (\S \ref{sectionIsoSLgninv}). In this way we recover the ``Wilson loop map'' $W$ already defined and studied in \cite{BuR2, BFK} (the latter paper also defined holonomy for certain tangles called $q$-nets).

Consider the following conditions on $H$:
\begin{align*}
\begin{split}
\bullet &\text{ the base field of } H \text{ is algebraically closed and } H\text{-}\mathrm{mod} \text{ is semisimple,}\\
\bullet & \text{ or } H = U_q^{\mathrm{ad}}(\mathfrak{g}) \text{ over } \mathbb{C}(q),\\
\bullet & \text{ or } H \text{ is finite-dimensional.}
\end{split}
\end{align*}
Our second main result is:

\begin{teoIntro}[Theorems \ref{ThStatedHolIso} and \ref{thWilsonIso}]\label{thmSkeinIntro}
When $H$ satisfies any one of the above conditions, we have:
\\1. $\mathrm{hol}^{\mathrm{st}} : \mathcal{S}_H^{\mathrm{st}}(\Sigma_{g,n}^{\circ,\bullet}) \to \mathcal{L}_{g,n}(H)$ is an isomorphism of algebras.
\\2. If the category $H\text{-}{\rm mod}$ of finite dimensional $H$-modules is semisimple, then $W: \mathcal{S}_H(\Sigma_{g,n}^{\circ}) \to \mathcal{L}_{g,n}^H(H)$ is an isomorphism of algebras.
\end{teoIntro}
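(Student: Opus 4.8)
The plan is to prove each statement by combining an explicit computation on generators with a normal-form (basis) argument, establishing Part 1 first and then bootstrapping Part 2 from it. Throughout I take for granted, as recorded before the statement, that $\mathrm{hol}^{\mathrm{st}}$ is a well-defined morphism of algebras; what remains in Part 1 is bijectivity, which I would split into surjectivity and injectivity. For \textbf{surjectivity}, recall that $\mathcal{L}_{g,n}(H)$ is $(H^{\circ})^{\otimes(2g+n)}$ as a vector space and is generated as an algebra by the matrix coefficients attached to the $2g$ handle factors and the $n$ loop factors, i.e. by the entries of the elementary matrices associated to the natural generators of $\Sigma_{g,n}^{\circ,\bullet}$, colored by the finite-dimensional $H$-modules $V$. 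First I would exhibit, for each such factor and each $V$, an explicit stated arc: a single $V$-colored strand running once around the corresponding handle or puncture, with its two endpoints on the boundary edge created by $\bullet$ and labeled by a basis vector of $V$ and the dual basis vector of $V^{*}$. By the definition of $\mathrm{hol}$ recalled in \S\ref{sectionIsoSstLgn}, the value of $\mathrm{hol}^{\mathrm{st}}$ on such a stated arc is exactly the corresponding matrix coefficient in the appropriate tensor factor; since these generate $\mathcal{L}_{g,n}(H)$ and $\mathrm{hol}^{\mathrm{st}}$ is an algebra morphism, surjectivity follows.

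For \textbf{injectivity} the main work is a straightening procedure inside $\mathcal{S}_H^{\mathrm{st}}(\Sigma_{g,n}^{\circ,\bullet})$. I would show that every stated skein is a linear combination of \emph{standard} stated skeins, obtained by isotoping the underlying ribbon graph into a fixed collection of parallel arcs around the handles and punctures, ordering their heights, and using the stated skein relations to fuse and recolor so that each arc carries a single module from a chosen complete family and a pair of basis states. Exchanging the heights of two strands is controlled by the braiding (the $R$-matrix), while splitting and fusing strands is governed by the state-sum relations coming from $F_{\mathrm{RT}}$; together these reduce any diagram to a combination of standard forms. These standard stated skeins are indexed by precisely the combinatorial data indexing the natural basis of $(H^{\circ})^{\otimes(2g+n)}$ given by matrix coefficients of the chosen family, and $\mathrm{hol}^{\mathrm{st}}$ sends them bijectively onto this basis; combined with the fact that they span, this forces $\mathrm{hol}^{\mathrm{st}}$ to be a bijection. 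The hard part is exactly this normal-form theorem: proving that the height-exchange and state-sum relations genuinely suffice to reach a standard form and that the resulting images are linearly independent in $H^{\circ}$, so that no further relations collapse them. For general, possibly non-semisimple, ribbon $H$ this must be phrased through the coend/matrix-coefficient description of $H^{\circ}$ rather than a decomposition into irreducibles, and I expect it to be the principal obstacle in Part 1.

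For Part 2 I would bootstrap from Part 1. Since $W = \mathrm{hol}^{\mathrm{st}} \circ I$ and $\mathrm{hol}^{\mathrm{st}}$ is an isomorphism, the claim reduces to showing that $I : \mathcal{S}_H(\Sigma_{g,n}^{\circ}) \to \mathcal{S}_H^{\mathrm{st}}(\Sigma_{g,n}^{\circ,\bullet})$ is injective and that $\mathrm{hol}^{\mathrm{st}}(\mathrm{Im}\,I) = \mathcal{L}_{g,n}^H(H)$. First, $\mathrm{Im}\,W \subseteq \mathcal{L}_{g,n}^H(H)$ because a link has no boundary endpoints, so the coadjoint action $\mathrm{coad}^r$, which acts only on the boundary edge near $\bullet$, fixes the image of any closed diagram. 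For the reverse inclusion I would use the semisimplicity hypothesis: given an invariant element, transport it through $(\mathrm{hol}^{\mathrm{st}})^{-1}$ to a standard stated skein; invariance under $\mathrm{coad}^r$ translates into the condition that the tuple of boundary states spans an $H$-invariant tensor, and in a semisimple category every such invariant tensor is a combination of configurations that can be capped at the boundary by a morphism out of the trivial module. Such a capping turns the stated arcs into closed strands, producing a genuine link, hence an element of $\mathrm{Im}\,I$; this yields $\mathcal{L}_{g,n}^H(H) \subseteq \mathrm{hol}^{\mathrm{st}}(\mathrm{Im}\,I)$ and, with the previous inclusion, surjectivity of $W$ onto the invariants.

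Finally, for the injectivity of $I$ I would produce a retraction, or equivalently argue that the stated skein relations, when restricted to diagrams without boundary endpoints, reduce exactly to the skein relations defining $\mathcal{S}_H(\Sigma_{g,n}^{\circ})$; semisimplicity again guarantees that the boundary manipulations needed to detect a relation among links can be undone without leaving the span of links. I expect the main obstacle in Part 2 to be the surjectivity-onto-invariants step, namely rigorously matching $\mathrm{coad}^r$-invariance on $\mathcal{L}_{g,n}(H)$ with the geometric operation of closing up a stated skein into a link. This is precisely the place where the semisimplicity assumption is indispensable — it is what allows invariant boundary tensors to be realized by cappings — consistent with its appearance as a hypothesis in the statement.
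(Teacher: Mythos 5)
Your Part 1 is essentially the paper's proof: the paper constructs the explicit map $\xi_{g,n}$ sending a tensor of matrix coefficients to the corresponding ``standard'' stated tangle, proves that $\xi_{g,n}$ is surjective by exactly the straightening/normal-form argument you describe (Lemma \ref{lemmeXiSurjective}, using orientation-reversal and fusion relations), and then observes that $\mathrm{hol}^{\mathrm{st}} \circ \xi_{g,n} = \mathrm{id}_{\mathcal{L}_{g,n}(H)}$, which makes $\xi_{g,n}$ injective, hence bijective, hence $\mathrm{hol}^{\mathrm{st}} = \xi_{g,n}^{-1}$. Your ``standard forms mapping bijectively onto a basis'' is the same argument in slightly heavier clothing: the alleged hard point (linear independence of the images) is automatic once you organize the standard forms as the images under a map $\xi_{g,n}$ defined on basis elements of $(H^{\circ})^{\otimes(2g+n)}$, because the left-inverse identity does all the work. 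So Part 1 is fine.

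Part 2 has a genuine gap: the injectivity of $I$ (equivalently, by Part 1, the injectivity of $W$). Your plan makes injectivity of $I$ an \emph{input}, justified only by the assertion that ``the stated skein relations, when restricted to diagrams without boundary endpoints, reduce exactly to the skein relations'' and that boundary manipulations ``can be undone without leaving the span of links.'' This is not a proof: the stated skein relations could a priori identify two links that are not skein-equivalent in $\mathcal{S}_H(\Sigma_{g,n}^{\circ})$, and ruling this out by analyzing the relations directly is precisely the difficulty. Indeed the paper points this out explicitly: injectivity of $I$ appears there as Corollary \ref{coroISkeinInjective}, a \emph{consequence} of Theorem \ref{thWilsonIso}, with the remark that proving it by skein theory alone seems harder. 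The paper's actual route avoids the issue entirely: it builds the linear map $\mathcal{E}$ identifying $\bigoplus_{\mathbf{X}} \mathrm{Hom}_H\bigl(\bigotimes_i X_i \otimes X_i^*, k\bigr)$ with $\mathcal{L}_{g,n}^H(H)$ (your observation that invariant elements correspond to invariant boundary tensors, made precise using semisimplicity), builds the map $\mathcal{F}$ sending such a morphism $f$ to the link obtained by closing standard arcs with a coupon colored by $f$ (your ``capping''), proves $\mathcal{F}$ is \emph{surjective} by a normal-form argument for links (every link is skein-equivalent to standard arcs colored by irreducibles closed by one coupon), and reads injectivity of $W$ off the identity $W \circ \mathcal{F} = \mathcal{E}$: if $W(L)=0$, write $L = \mathcal{F}(f)$, then $\mathcal{E}(f)=0$, so $f=0$ and $L=0$. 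Your capping idea already contains everything needed for this; the missing move is to run the normal form on the \emph{link} side (surjectivity of $\mathcal{F}$) and use injectivity of the algebraic map $\mathcal{E}$, rather than trying to prove injectivity of $I$ by comparing relation ideals. With that reorganization your argument closes, and injectivity of $I$ comes out for free at the end.
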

\noindent This result applies in particular for $H = U_q^{\mathrm{ad}}(\mathfrak{g})$. It is a generalization of \cite[\S 5]{FaitgHol} for the first item and of \cite{BFK2} (see \cite[\S 8.2]{BR1} in our context) for the second item, which proved it for $H = U_q^{\mathrm{ad}}(\mathfrak{sl}_2)$. If $H\text{-}\mathrm{mod}$ is not semisimple the second item fails because in general $\mathcal{L}_{g,n}^H(H)$ is bigger than $\mathrm{im}(W)$, as explained in \S \ref{sectionRemarksSemisimplicity}. To sum up:

\medskip

\begin{center}
\begingroup%
  \makeatletter%
  \providecommand\color[2][]{%
    \errmessage{(Inkscape) Color is used for the text in Inkscape, but the package 'color.sty' is not loaded}%
    \renewcommand\color[2][]{}%
  }%
  \providecommand\transparent[1]{%
    \errmessage{(Inkscape) Transparency is used (non-zero) for the text in Inkscape, but the package 'transparent.sty' is not loaded}%
    \renewcommand\transparent[1]{}%
  }%
  \providecommand\rotatebox[2]{#2}%
  \newcommand*\fsize{\dimexpr\f@size pt\relax}%
  \newcommand*\lineheight[1]{\fontsize{\fsize}{#1\fsize}\selectfont}%
  \ifx\svgwidth\undefined%
    \setlength{\unitlength}{361.24627995bp}%
    \ifx\svgscale\undefined%
      \relax%
    \else%
      \setlength{\unitlength}{\unitlength * \real{\svgscale}}%
    \fi%
  \else%
    \setlength{\unitlength}{\svgwidth}%
  \fi%
  \global\let\svgwidth\undefined%
  \global\let\svgscale\undefined%
  \makeatother%
  \begin{picture}(1,0.19867145)%
    \lineheight{1}%
    \setlength\tabcolsep{0pt}%
    \put(0.78095327,0.17035836){\color[rgb]{0,0,0}\makebox(0,0)[lt]{\lineheight{1.25}\smash{\begin{tabular}[t]{l}$\mathcal{L}_{g,n}(H)$\end{tabular}}}}%
    \put(0.78134638,0.02594336){\color[rgb]{0,0,0}\makebox(0,0)[lt]{\lineheight{1.25}\smash{\begin{tabular}[t]{l}$\mathcal{L}_{g,n}^H(H)$\end{tabular}}}}%
    \put(0.83819244,0.10038505){\color[rgb]{0,0,0}\makebox(0,0)[lt]{\lineheight{1.25}\smash{\begin{tabular}[t]{l}(subalgebra)\end{tabular}}}}%
    \put(0,0){\includegraphics[width=\unitlength,page=1]{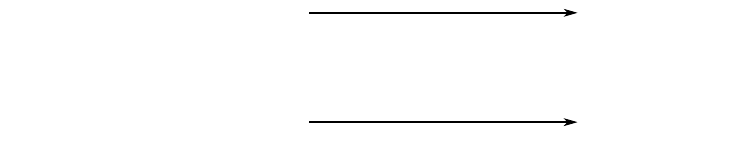}}%
    \put(0.57264788,0.18971962){\color[rgb]{0,0,0}\makebox(0,0)[lt]{\lineheight{1.25}\smash{\begin{tabular}[t]{l}$\sim$\end{tabular}}}}%
    \put(0.44025275,0.15182417){\color[rgb]{0,0,0}\makebox(0,0)[lt]{\lineheight{1.25}\smash{\begin{tabular}[t]{l}Stated holonomy $\mathrm{hol}^{\mathrm{st}}$\end{tabular}}}}%
    \put(0.4601502,0.04749758){\color[rgb]{0,0,0}\makebox(0,0)[lt]{\lineheight{1.25}\smash{\begin{tabular}[t]{l}$\sim$ {\footnotesize (if $H$-mod semisimple)}\end{tabular}}}}%
    \put(0.45451384,0.00245082){\color[rgb]{0,0,0}\makebox(0,0)[lt]{\lineheight{1.25}\smash{\begin{tabular}[t]{l}Wilson loop map $W$\end{tabular}}}}%
    \put(0.57524315,0.09669986){\color[rgb]{0,0,0}\makebox(0,0)[lt]{\lineheight{1.25}\smash{\begin{tabular}[t]{l}$\circlearrowleft$\end{tabular}}}}%
    \put(-0.00031581,0.17038305){\color[rgb]{0,0,0}\makebox(0,0)[lt]{\lineheight{1.25}\smash{\begin{tabular}[t]{l}Stated skein algebra $\mathcal{S}_H^{\mathrm{st}}(\Sigma_{g,n}^{\circ,\bullet})$\end{tabular}}}}%
    \put(0.09324317,0.02790869){\color[rgb]{0,0,0}\makebox(0,0)[lt]{\lineheight{1.25}\smash{\begin{tabular}[t]{l}Skein algebra $\mathcal{S}_H(\Sigma_{g,n}^{\circ})$\end{tabular}}}}%
    \put(0,0){\includegraphics[width=\unitlength,page=2]{diagramme_hol_iso.pdf}}%
    \put(0.29676866,0.088588){\color[rgb]{0,0,0}\makebox(0,0)[lt]{\lineheight{1.25}\smash{\begin{tabular}[t]{l}$I$\end{tabular}}}}%
  \end{picture}%
\endgroup%

\end{center}

\smallskip

\noindent When $H\text{-}\mathrm{mod}$ is semisimple we get in particular that the morphism $I$ is injective (Corollary \ref{coroISkeinInjective}). 
\medskip

\indent Take $H = U_q^{\mathrm{ad}}(\mathfrak{g})$. The category $\mathcal{C}$ of type $1$ finite dimensional $U_q^{\mathrm{ad}}(\mathfrak{g})$-modules is semisimple. Write $\mathcal{S}^{\mathrm{st}}_{\mathfrak{g}}(\Sigma_{g,n}^{\circ,\bullet})$, $\mathcal{S}_{\mathfrak{g}}(\Sigma_{g,n}^{\circ})$ and $\mathcal{S}_{\mathfrak{g}}(\Sigma_{g,n})$ for the stated skein and skein algebras of the respective surfaces $\Sigma_{g,n}^{\circ,\bullet}$, $\Sigma_{g,n}^{\circ}$ and $\Sigma_{g,n}$, where the ribbon graphs are colored by objects and morphisms in $\mathcal{C}$. They are $\mathbb{C}(q^{1/D})$-algebras. In particular, for $\mathfrak{g} = \mathfrak{sl}_2(\mathbb{C})$ these algebras are respectively the Kauffman bracket skein algebra and the stated skein algebra of \cite{Le, CL}. Also, in the particular case of $\mathfrak{g} = \mathfrak{sl}_{m+1}(\mathbb{C})$, they can be described in terms of special ribbon graphs called webs \cite{sikora, LS}.

\smallskip

\indent The conjunction of Theorems \ref{thmNoethIntro} and \ref{thmSkeinIntro} finally gives:
\begin{coroIntro}\label{coroSkeinIntro}
1. The stated skein algebra $\mathcal{S}^{\mathrm{st}}_{\mathfrak{g}}(\Sigma_{g,n}^{\circ,\bullet})$ is a finitely generated Noetherian domain.
\\2. The skein algebra $\mathcal{S}_{\mathfrak{g}}(\Sigma_{g,n}^{\circ})$ is a finitely generated Noetherian domain.
\end{coroIntro}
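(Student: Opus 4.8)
The plan is to obtain both items as a direct transport of the ring-theoretic properties established in Theorem \ref{thmNoethIntro} across the algebra isomorphisms of Theorem \ref{thmSkeinIntro}, specialized to $H = U_q^{\mathrm{ad}}(\mathfrak{g})$ with colors in the category $\mathcal{C}$ of type $1$ finite dimensional modules, which is semisimple. Since being Noetherian, being finitely generated, and having no non-trivial zero divisors are each preserved under an isomorphism of algebras, the entire argument reduces to checking that the two theorems apply in this setting and then reading off the conclusions; no new computation is needed.

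For item 1, first I would invoke Theorem \ref{thmSkeinIntro}.1, which provides an algebra isomorphism $\mathrm{hol}^{\mathrm{st}} : \mathcal{S}^{\mathrm{st}}_{\mathfrak{g}}(\Sigma_{g,n}^{\circ,\bullet}) \overset{\sim}{\longrightarrow} \mathcal{L}_{g,n}$. By Theorem \ref{thmNoethIntro}.1 the target $\mathcal{L}_{g,n}$ is Noetherian and has no non-trivial zero divisors, and by Proposition \ref{propLgnFinGen} it is finitely generated. Transporting these three properties back through $\mathrm{hol}^{\mathrm{st}}$ immediately yields item 1.

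For item 2, the key input is precisely the semisimplicity of $\mathcal{C}$, which is exactly the hypothesis required by Theorem \ref{thmSkeinIntro}.2; hence $W : \mathcal{S}_{\mathfrak{g}}(\Sigma_{g,n}^{\circ}) \overset{\sim}{\longrightarrow} \mathcal{L}_{g,n}^{U_q}$ is an isomorphism of algebras. By Theorem \ref{thmNoethIntro}.2 the target $\mathcal{L}_{g,n}^{U_q}$ is Noetherian and finitely generated, and since it is a subalgebra of $\mathcal{L}_{g,n}$, which has no non-trivial zero divisors by Theorem \ref{thmNoethIntro}.1, it has none either. Transporting through $W$ gives item 2.

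I do not expect a genuine mathematical obstacle here, as all the substantive work lives in the two main theorems; the only point requiring care is bookkeeping at the moment of specialization. One must confirm that the general statements of Theorem \ref{thmSkeinIntro}, phrased for an abstract ribbon Hopf algebra $H$, are correctly applied to $U_q^{\mathrm{ad}}(\mathfrak{g})$ --- which is quasitriangular only in a categorical completion --- so that $\mathcal{S}_H^{\mathrm{st}}(\Sigma_{g,n}^{\circ,\bullet})$ and $\mathcal{S}_H(\Sigma_{g,n}^{\circ})$ really coincide with the algebras $\mathcal{S}^{\mathrm{st}}_{\mathfrak{g}}(\Sigma_{g,n}^{\circ,\bullet})$ and $\mathcal{S}_{\mathfrak{g}}(\Sigma_{g,n}^{\circ})$ colored by $\mathcal{C}$, and that the invariants $\mathcal{L}_{g,n}^{U_q^{\mathrm{ad}}}$ appearing in the target of $W$ are identified with $\mathcal{L}_{g,n}^{U_q}$ as used in Theorem \ref{thmNoethIntro}. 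Once these identifications are in place, both items follow at once.
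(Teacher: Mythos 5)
Your proposal is correct and is exactly the paper's argument: the paper derives Corollary \ref{coroSkeinIntro} as the direct conjunction of Theorems \ref{thmNoethIntro} and \ref{thmSkeinIntro} (with Prop.~\ref{propLgnFinGen} supplying finite generation of $\mathcal{L}_{g,n}$, and the subalgebra observation supplying absence of zero divisors in $\mathcal{L}_{g,n}^{U_q}$), transported across the isomorphisms $\mathrm{hol}^{\mathrm{st}}$ and $W$ specialized to $H = U_q^{\mathrm{ad}}(\mathfrak{g})$ with colors in the semisimple category $\mathcal{C}$. Your bookkeeping remarks about the specialization match the identifications the paper makes when stating these theorems for the quantum group case.
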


In \S \ref{qredsection} we extend our results to the surface $\Sigma_{g,n}$. It is a fact that the skein algebra $\mathcal{S}_H(\Sigma_{g,n})$ is a quotient of $\mathcal{S}_H(\Sigma_{g,n}^\circ)$ (see \S \ref{topqrsection}). Then the item 2 of Corollary \ref{coroSkeinIntro} implies:
\begin{coroIntro} The skein algebra $\mathcal{S}_{\mathfrak{g}}(\Sigma_{g,n})$ is finitely generated and Noetherian.
\end{coroIntro}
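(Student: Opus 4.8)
The plan is to read off the statement directly from the second item of Corollary~\ref{coroSkeinIntro} together with the quotient description of closed-surface skein algebras recorded in \S\ref{topqrsection}. There is essentially no obstacle at the level of this corollary; the entire content lies in the two inputs being combined, so the work consists only in invoking them correctly.

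First I would use \S\ref{topqrsection}, where it is established that gluing a $2$-disk along the boundary circle of $\Sigma_{g,n}^\circ$ induces a surjective morphism of algebras
\[
\mathcal{S}_{\mathfrak{g}}(\Sigma_{g,n}^{\circ}) \twoheadrightarrow \mathcal{S}_{\mathfrak{g}}(\Sigma_{g,n}),
\]
so that $\mathcal{S}_{\mathfrak{g}}(\Sigma_{g,n})$ is a quotient of $\mathcal{S}_{\mathfrak{g}}(\Sigma_{g,n}^{\circ})$. By item~2 of Corollary~\ref{coroSkeinIntro}, the source $\mathcal{S}_{\mathfrak{g}}(\Sigma_{g,n}^{\circ})$ is finitely generated and Noetherian. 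Both properties descend to quotients by standard ring theory: the images of a finite generating set again generate the quotient, so finite generation is preserved; and every ideal of the quotient is the image of an ideal of the source, hence finitely generated, so the Noetherian property is preserved as well. Applying these two facts to the surjection above yields that $\mathcal{S}_{\mathfrak{g}}(\Sigma_{g,n})$ is finitely generated and Noetherian, as claimed.

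The one point worth stressing is what the argument does \emph{not} give. Unlike finite generation and the Noetherian property, the absence of non-trivial zero divisors is not inherited by quotients, so this method concludes nothing about zero divisors in $\mathcal{S}_{\mathfrak{g}}(\Sigma_{g,n})$. This is precisely why the statement omits that clause, and it is consistent with the fact, noted in the introduction, that for closed surfaces whether the skein algebra is a domain remains open.
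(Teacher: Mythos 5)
Your proof is correct and follows exactly the paper's own argument: the paper derives this corollary by noting that $\mathcal{S}_{\mathfrak{g}}(\Sigma_{g,n})$ is a quotient of $\mathcal{S}_{\mathfrak{g}}(\Sigma_{g,n}^{\circ})$ (via the surjection induced by gluing in the disk, established in \S\ref{topqrsection}) and then invoking item 2 of Corollary \ref{coroSkeinIntro}, since finite generation and Noetherianity pass to quotients. Your closing remark about zero divisors not descending to quotients also matches the paper's observation that this question remains open for closed surfaces.
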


We consider the quantum reduction $\mathcal{L}_{g,n}^{\mathrm{qr}}(H)$ of $\mathcal{L}_{g,n}(H)$ associated to the counit $\varepsilon\colon H\ra k$ (see below), and we show that, under suitable hypotheses satisfied e.g. in the case of $H= U_q^{\mathrm{ad}}(\mathfrak{g})$, we have:

\begin{teoIntro}[\S \ref{Lgnqrsection} and \S \ref{topqrsection}] \label{teoclosedcase}
1. There is a surjective algebra morphism $\pi\colon \mathcal{L}_{g,n}^H(H) \ra \mathcal{L}_{g,n}^{\mathrm{qr}}(H)$.\\
2. The isomorphism $W : \mathcal{S}_H(\Sigma_{g,n}^\circ) \ra \mathcal{L}_{g,n}^H(H)$ descends to an algebra isomorphism $W^{\rm qr} : \mathcal{S}_H(\Sigma_{g,n}) \ra \mathcal{L}_{g,n}^{\mathrm{qr}}(H)$.\end{teoIntro}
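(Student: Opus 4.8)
The plan is to present both $\mathcal{S}_H(\Sigma_{g,n})$ and $\mathcal{L}_{g,n}^{\mathrm{qr}}(H)$ as quotients of $\mathcal{S}_H(\Sigma_{g,n}^\circ)$ and $\mathcal{L}_{g,n}^H(H)$ respectively, and to show that the isomorphism $W$ of Theorem \ref{thWilsonIso} carries the defining relations of the first quotient onto those of the second. On the topological side, gluing a $2$-disk along $\partial(\Sigma_{g,n}^\circ)$ induces the surjective algebra morphism $\rho : \mathcal{S}_H(\Sigma_{g,n}^\circ) \twoheadrightarrow \mathcal{S}_H(\Sigma_{g,n})$ recalled in \S\ref{topqrsection} (surjectivity holds because any ribbon link in $\Sigma_{g,n}\times[0,1]$ can be isotoped out of the ball coming from the glued disk). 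On the algebraic side, $\mathcal{L}_{g,n}^{\mathrm{qr}}(H)$ is by definition $(\mathcal{L}_{g,n}(H)/\mathcal{I})^H$, where $\mathcal{I}$ is the ideal constructed in \S\ref{Lgnqrsection} that cuts out the relation ``total boundary monodromy $=\varepsilon$''; concretely $\mathcal{I}$ is generated by the matrix coefficients of $M^V-\mathrm{id}$, as $V$ ranges over colorings, where $M^V$ is the boundary holonomy matrix of $\mathcal{L}_{g,n}(H)$.

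For item 1, I would first verify that $\mathcal{I}$ is stable under the coadjoint action $\mathrm{coad}^r$: since the monodromy transforms by conjugation and $\varepsilon$ is the invariant value, the span of the generators of $\mathcal{I}$ is an $H$-submodule, so $\mathcal{I}$ is an $H$-stable ideal and $\mathcal{L}_{g,n}(H)/\mathcal{I}$ is an $H$-module-algebra. The morphism $\pi$ is then the composite $\mathcal{L}_{g,n}^H(H)\hookrightarrow\mathcal{L}_{g,n}(H)\twoheadrightarrow\mathcal{L}_{g,n}(H)/\mathcal{I}$, which lands in the invariants precisely because $\mathcal{I}$ is $H$-stable. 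Surjectivity of $\pi$ is then equivalent to exactness of the functor $(-)^H$ on the short exact sequence $0\to\mathcal{I}\to\mathcal{L}_{g,n}(H)\to\mathcal{L}_{g,n}(H)/\mathcal{I}\to 0$; this is where the ``suitable hypothesis'' is used, namely that $\mathcal{L}_{g,n}(H)$ is a locally finite, hence semisimple, $H$-module when $H\text{-}\mathrm{mod}$ is semisimple, so that a Reynolds/averaging operator lifts invariants of the quotient to invariants of $\mathcal{L}_{g,n}(H)$. This proves item 1 and simultaneously identifies $\ker\pi=\mathcal{I}\cap\mathcal{L}_{g,n}^H(H)$, whence $\mathcal{L}_{g,n}^{\mathrm{qr}}(H)\cong\mathcal{L}_{g,n}^H(H)/(\mathcal{I}\cap\mathcal{L}_{g,n}^H(H))$.

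For item 2, I would assemble the square
\[
\begin{CD}
\mathcal{S}_H(\Sigma_{g,n}^\circ) @>{W}>{\sim}> \mathcal{L}_{g,n}^H(H)\\
@V{\rho}VV @VV{\pi}V\\
\mathcal{S}_H(\Sigma_{g,n}) @>{W^{\mathrm{qr}}}>> \mathcal{L}_{g,n}^{\mathrm{qr}}(H)
\end{CD}
\]
and show it commutes with an isomorphism along the bottom. Since $W$ is an isomorphism and $\rho,\pi$ are surjective, it suffices to establish the equality of ideals $W(\ker\rho)=\ker\pi=\mathcal{I}\cap\mathcal{L}_{g,n}^H(H)$; then $W^{\mathrm{qr}}$ is the induced map on quotients and is automatically bijective. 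The heart of the matter is therefore to compute $W$ on the generators of $\ker\rho$. By the construction of $\mathrm{hol}^{\mathrm{st}}$, a Wilson loop colored by $V$ running parallel to $\partial(\Sigma_{g,n}^\circ)$ is sent by $W$ to the matrix coefficients of the monodromy $M^V$, while the skein relations added by filling the disk cap this loop off to its value in the disk, computed by the Reshetikhin--Turaev functor as the counit applied to the boundary holonomy, i.e.\ they replace $M^V$ by the identity. Thus $W$ carries the disk-filling generators of $\ker\rho$ exactly onto the generators of $\mathcal{I}$.

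The step I expect to be the main obstacle is this last identification on generators. The inclusion $W(\ker\rho)\subseteq\mathcal{I}$ (equivalently, existence of the surjection $W^{\mathrm{qr}}$) is the direct computation, requiring only that ``capping a boundary-parallel strand into the glued disk'' be evaluated by $F_{\mathrm{RT}}$ as $\varepsilon$ applied to $M^V$, uniformly in $V$ and compatibly with the conventions of the skein relations near $\partial$. The reverse inclusion, $\ker\pi\subseteq W(\ker\rho)$ (injectivity of $W^{\mathrm{qr}}$), is harder: one must show that the boundary relations already generate the \emph{whole} invariant part $\mathcal{I}\cap\mathcal{L}_{g,n}^H(H)$, and here I would again invoke semisimplicity, using the Reynolds operator to write an arbitrary invariant element of $\mathcal{I}$ as an $H$-average of products involving the generators of $\mathcal{I}$, each of which lies in $W(\ker\rho)$. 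Once both inclusions are secured, item 2 follows formally from the commuting square.
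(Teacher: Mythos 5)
Your proposal reproduces the paper's overall architecture (present both sides as quotients, compare kernels through the commuting square), but it contains a genuine gap at its foundation: you treat $\mathcal{I}=I_\varepsilon$ as a two-sided, $H$-stable ideal, so that $\mathcal{L}_{g,n}(H)/\mathcal{I}$ is an $H$-module-algebra and $\pi$ is just the induced map on invariants. In the paper $I_\varepsilon = \mathcal{L}_{g,n}(H)\,\mathfrak{C}_{g,n}(H)$ is only a \emph{left} ideal, and it is not two-sided in general: by Lemma \ref{lemmaCommutationC} the coefficients of $\overset{V}{C}$ commute past the generators only up to $R$-matrix conjugation, and this braided commutation does not survive the subtraction of the identity in $\overset{V}{C}-\mathrm{id}_V$. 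Hence $\mathcal{L}_{g,n}(H)/I_\varepsilon$ is not an algebra at all; the product descends only to the invariant subspace $(\mathcal{L}_{g,n}(H)/I_\varepsilon)^{H'}$, and the reason it does is precisely the quantum moment map equation (Definition \ref{defQMM}, Proposition \ref{QRalg}) for the map $\mu\colon H'\ra \mathcal{L}_{g,n}(H)$, $\Phi_{0,1}({_V\phi^k_l})\mapsto \overset{V}{C}{}^k_l$, whose verification (Theorem \ref{muQMMteo}) is the main computation of \S \ref{QMMLgn}. Your proposal never mentions $\mu$ nor the coideal subalgebra $H'=\mathrm{im}(\Phi_{0,1})$, so item 1 does not even acquire a well-defined target. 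Relatedly, surjectivity of $\pi$ is not mere exactness of $(-)^H$ under semisimplicity: a coset in the quantum reduction is invariant only \emph{modulo} $I_\varepsilon$ and only under $H'$, and the paper needs the extra hypothesis that $Z(H')$ separates the simple types, together with a Vandermonde argument (Lemma \ref{lemsepare}), to replace such a coset by one with an honestly $H$-invariant representative. Your Reynolds-operator argument would at best give surjectivity onto $(\mathcal{L}_{g,n}(H)/I_\varepsilon)^{H}$, which a priori sits inside the quantum reduction $(\mathcal{L}_{g,n}(H)/I_\varepsilon)^{H'}$.

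For item 2 your square is the right one, and the easy inclusion ($\pi\circ W$ kills $\partial$-equivalent differences) is indeed a direct computation — though note that $\ker\rho$ is generated by sliding \emph{any} strand across the filled disk, not by capping off boundary-parallel loops, and evaluating such a $\partial$-move under the holonomy map is a nontrivial diagrammatic lemma (Lemma \ref{lemmeSimplificationWqr}: the move inserts the matrix $\overset{X}{C}$, which then becomes the identity modulo $I_\varepsilon$). The hard inclusion $\ker\pi\subseteq W(\ker\rho)$ is where your sketch fails: elements $a\,(\overset{Y}{C}-\mathrm{id}_Y)^i_j$ of $I_\varepsilon$ are not individually invariant, and applying the Reynolds operator to them does not visibly produce combinations of elements of $W(\ker\rho)$. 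The paper's substitute is Proposition \ref{propDescriptionKerPi}: using that $I_\varepsilon$ is an $H$-submodule of a completely reducible module, it proves $\ker\pi=\mathfrak{R}(I_\varepsilon)=I_\varepsilon\cap\mathcal{L}_{g,n}^H(H)$ and, via equivariant tensors and the identity $(\mathfrak{R}\otimes\mathrm{id})(t)=(\mathrm{id}\otimes P)(t)$, puts every element of $\ker\pi$ into a normal form (an $H$-invariant morphism contracted against the holonomy matrices and $\overset{Y}{C}-\mathrm{id}_Y$). Only this normal form can be matched with explicit skein differences $L^{\partial}_{T}-L_{T}\in\ker\rho$, which combined with injectivity of $W$ (Theorem \ref{thWilsonIso}) yields injectivity of $W^{\mathrm{qr}}$. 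Without an analogue of this structural description, your argument for the reverse inclusion does not close.
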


The notion of quantum reduction of module algebras over quantum groups was introduced in \cite{Lu} as an analog of Hamiltonian reduction in symplectic geometry \cite{MarWe74} (see \cite{AM} for the broader notion of quasi-Hamiltonian reduction, well-suited to character varieties of surface groups). 

In that setup, one considers a symplectic space $X$ endowed with a Hamiltonian action of a Lie group $G$ generated by a moment map $\mu^{cl}\colon X\ra \mathfrak{g}^*$, and the reduction procedure describes symplectic leaves in $X/G$ as quotients $(\mu^{cl})^{-1}(\mathcal{C})/G$, where $\mathcal{C}$ is a coadjoint orbit in $\mathfrak{g}^*$. 

In the quantum setup one works dually, and considers a module algebra $A$ over a Hopf algebra $H$. The quantum moment map is a morphism of algebras $\mu\colon H'\ra A$, where $H'\subset H$ is a coideal subalgebra, and $\mu$ satisfies an equation which guarantees that it generates the action of $H'$ on $A$. The reduction procedure describes algebras of invariant elements in certain quotients of $A$, defined by means of $\mu$ and characters of $H'$. 

An ansatz of quantum reduction was implemented in \cite{A} and \cite{BNR}. Its proper definition and fundamental properties were settled in \cite{VV}, which applied it to double affine Hecke algebras (which are closely related to $\Ll_{1,0}(\mathfrak{g})$). Quantum reduction was also applied to quivers in \cite{JordanQuiver} and \cite{GJS}, where it was axiomatized in a categorical setting. 

In the case $H=U_q^{\rm ad}(\mathfrak{g})$, the quantum reduction in Theorem \ref{teoclosedcase} is obtained from a quantum moment map $\mu\colon U_q^{lf}(\mathfrak{g})\ra \mathcal{L}_{g,n}(\mathfrak{g})$, where $U_q^{lf}(\mathfrak{g})\subset U_q(\mathfrak{g})$ is the subalgebra of locally finite elements (which is known to be isomorphic to $\mathcal{O}_q(G)$), and one uses the counit $\varepsilon$ as a character of $U_q^{\rm lf}$. In the case $g=1$, $\mu$ is essentially the quantum moment map denoted by $\partial_{\lhd}$ in Proposition 1.8.2 of \cite{VV}, and for $\mathfrak{g}=\mathfrak{gl}_{m+1}$ and arbitrary genus $g$, it coincides with the quantum moment map of \cite{JordanQuiver}. 

For the reader's convenience, before we prove Theorem \ref{teoclosedcase} we recall the general setup of quantum reduction (\S \ref{prelqrsection}), and we describe the quantum moment map $\mu$ leading to $\mathcal{L}_{g,n}^{\rm qr}(H)$, providing all arguments we did not find in the literature (\S \ref{QMMLgn}). In particular there is a subtlety in the normalization of $\mu$ (see Remark \ref{normliftholonomy}).  


\medskip

\noindent \textbf{Acknowledgements.} We thank F. Costantino for discussing the definition of stated skein algebras to appear in \cite{CKL}, and T.T.Q. L\^e for a question which led us to Corollary \ref{coroISkeinInjective}. We are grateful to them and to J. Korinman for discussions during the workshop ``Skein algebras in Toulouse'' in March 2023, funded by the LabEx CIMI. We also thank the anonymous referees for their careful reading and their relevant comments which improved the exposition.
\\M.F. is supported by the CIMI Labex ANR 11-LABX-0040. Part of this work was done when M.F. was a postdoc in the University of Hamburg, supported by the DFG under Germany’s Excellence Strategy - EXC 2121 “Quantum Universe” - 390833306.

\section{Preliminaries}\label{sectionPreliminaires}
The notations and conventions used in this paper agree with those from \cite{BR2}.

\subsection{Hopf algebras}\label{sectionHopfAlgebras}
\indent Let $H$ be a Hopf algebra over a field $k$ (see e.g. \cite[Chap. III]{kassel} or \cite[\S 4.1]{CP}). We denote by
\[ \Delta : H \to H \otimes H, \qquad \varepsilon : H \to k, \qquad S : H \to H \]
the coproduct, counit and antipode of $H$. The unit of $H$ is denoted by $1$. We occasionally write $\Delta_H$, $\varepsilon_H$, $S_H$, $1_H$ if there is an ambiguity. We assume that the antipode $S$ is invertible.

\smallskip

\indent For an element $X \in H^{\otimes n}$ we use the notation $\textstyle X = \sum_{(X)} X_{(1)} \otimes \ldots \otimes X_{(n)}$ as a substitute for $\textstyle X = \sum_i X_{(1),i} \otimes \ldots \otimes X_{(n),i}$.

\smallskip
For the coproduct we write $\textstyle \Delta(h) = \sum_{(h)} h_{(1)} \otimes h_{(2)}$ (Sweedler's notation) instead of $\textstyle \sum_{(\Delta(h))} \Delta(h)_{(1)} \otimes \Delta(h)_{(2)}$. The opposite coproduct, denoted by $\Delta^{\mathrm{op}}$, is defined by $\textstyle \Delta^{\mathrm{op}}(h) = \sum_{(h)} h_{(2)} \otimes h_{(1)}$. The co-opposite Hopf algebra $H^{\mathrm{cop}}$ is the algebra $H$ endowed with the coproduct $\Delta^{\mathrm{op}}$, the counit $\varepsilon$ and the antipode $S^{-1}$.

\smallskip

\indent In \S \ref{sectionHandleAlgebra}, \S \ref{sectionGraphAlgebra} and \S \ref{sectionAlekseevMorphism}, $H$ is a quasitriangular Hopf algebra as defined e.g. in \cite[\S4.2]{CP} or \cite[\S VIII.2]{kassel}. We write the $R$-matrix of $H$ as $\textstyle R = \sum_{(R)} R_{(1)} \otimes R_{(2)} \in H^{\otimes 2}$. In \S \ref{sectionTopologicalInterpretation} we moreover assume that $H$ is a ribbon Hopf algebra, which means that it contains an element $v$ with the properties listed e.g. in \cite[\S XIV.6]{kassel} or \cite[\S4.2.C]{CP}; in particular
\begin{equation}\label{vproperties}
v \text{ is central and } \Delta(v) = (v\otimes v)(R'R)^{-1}\ ,
\end{equation}
where $R' = \textstyle \sum_{(R)} R_{(2)} \otimes R_{(1)}$. Let $\textstyle u = \sum_{(R)} S(R_{(2)})R_{(1)}$ be the Drinfeld element, then $g = uv^{-1}$ is the pivotal element of $H$. It satisfies $\Delta(g) = g \otimes g$ and $S^2(h) = ghg^{-1}$ for all $h \in H$.

\smallskip

\indent Let $H^* = \mathrm{Hom}_k(H,k)$ be the dual vector space of $H$. For a finite-dimensional $H$-module $V$ and $v \in V$, $f \in V^*$, we define $_V\phi^f_v \in H^*$ by $_V\phi^f_v(h) = f(h \cdot v)$, where $\cdot$ is the action of $H$ on $V$. The linear form $_V\phi^f_v$ is called a {\em matrix coefficient of $V$}. When a basis $(v_j)$ of $V$ is given we allow ourselves to write $_V\phi^i_j$ instead of $_V\phi^{v^i}_{v_j}$ for a better readability, where $(v^i)$ is the dual basis. The {\em restricted dual of $H$}, denoted by $H^{\circ}$, is the subspace of $H^*$ spanned by the matrix coefficients of finite-dimensional $H$-modules. Let us set
\begin{equation}\label{hopfMatrixCoeffs}
\begin{array}{c}
_V\phi^f_v \, {_W}\phi^l_w = {_{V \otimes W}\phi^{f \otimes l}_{v \otimes w}}, \quad \Delta(_V\phi^i_j) = \sum_{k=1}^{\dim(V)} {_V\phi^i_k} \otimes {_V\phi^k_j},\\[1.3em]
1_ {H^{\circ}} = {_k\phi^1_1}, \quad \varepsilon(_V\phi^i_j) = \delta_{i,j}, \quad S(_V\phi^i_j) = {_{V^*}\phi^j_i}
\end{array}
\end{equation}
or in other words
\[ \varphi\psi = (\varphi \otimes \psi) \circ \Delta, \quad \Delta(\varphi)(x \otimes y) = \varphi(xy), \quad 1_{H^{\circ}} = \varepsilon, \quad \varepsilon(\varphi) = \varphi(1), \quad S(\varphi) = \varphi \circ S \]
for all $\varphi \in H^{\circ}$. Then $H^{\circ}$ is a Hopf algebra. The above formula for $\Delta(\varphi)$ is not well-defined for a general $\varphi \in H^*$, which explains the relevance of $H^{\circ}$. If $H$ is finite-dimensional then $H^{\circ} = H^*$ because $\varphi = {_H\phi^{\varphi}_1}$ for all $\varphi \in H^*$.

\smallskip

\indent Let $\mathcal{C}$ be a full subcategory of $H$-mod (the category of finite-dimensional $H$-modules) such that the trivial module $k$ is in $\mathcal{C}$ and if $V,W \in \mathcal{C}$ then $V \otimes W \in \mathcal{C}$ and $V^* \in \mathcal{C}$. Let $H^{\circ}_{\mathcal{C}}$ be the subspace of $H^{\circ}$ spanned by the matrix coefficients of the objects in $\mathcal{C}$. It is clear from \eqref{hopfMatrixCoeffs} that $H^{\circ}_{\mathcal{C}}$ is a subalgebra of $H^{\circ}$.

\subsection{Filtrations of algebras}\label{sectionFiltrations}
Let $(S, \leq)$ be an ordered abelian monoid and let $A$ be an associative $k$-algebra ($k$ is a field). Recall that an algebra filtration of $A$ indexed by $S$ is a family $F = (F^s)_{s \in S}$ of linear subspaces $F^s\subset A$ such that $1\in F^0$, for all $s,t\in S$ we have $F^s\subset F^t$ if $s\leq t$ and $F^sF^t\subset F^{s+t}$, and
$$A = \cup_{s\in S} F^s.$$
We denote by $\textstyle \mathrm{gr}_F(A) = \bigoplus_{s \in S} \mathrm{gr}_F(A)_s$ the associated graded algebra which is defined by $\mathrm{gr}_F(A)_s = F^s/F^{<s}$, where $\textstyle F^{< s} = \sum_{r<s}F^r$, and endowed with the product
\[ (a + F^{<s})(b + F^{<t}) = ab + F^{<(s+t)} \]
for $a \in F^s$ and $b \in F^t$.
\smallskip

\indent Assume now that $A$ has a decomposition $\textstyle A = \bigoplus_{s \in S} X_s$ as a $k$-vector space, where the family $X = (X_s)_{s \in S}$ is such that $\textstyle X_r X_s \subset \bigoplus_{t \leq r+s} X_t$ for all $r,s$. Define $\textstyle \Sigma^s(X) = \bigoplus_{r \leq s} X_r$. If the partial order $\leq$ on $S$ satisfies
\begin{equation}\label{confluentOrder}
\forall \, r,s \in S, \quad \exists\, m \in S, \quad r \leq m \text{ and } s \leq m
\end{equation}
(or in other words, any finite subset of $S$ has an upper bound) then the condition $\textstyle A = \bigcup_{s \in S} \Sigma^s(X)$ is satisfied. Then $\Sigma(X) = \bigl( \Sigma^s(X) \bigr)_{s \in S}$ is a filtration of the algebra $A$. Moreover, $\textstyle \Sigma^{<s}(X) = \bigoplus_{r<s}X_r$ so that $\mathrm{gr}_{\Sigma(X)}(A)$ can be identified with $\textstyle \bigoplus_{s \in S} X_s$ as graded vector spaces, and under this identification the product $\circ$ in $\mathrm{gr}_{\Sigma(X)}(A)$ is
\[ x \circ y = \pi_{r+s}(xy) \]
where $x \in X_r$, $y \in X_s$, $xy$ is the product in $A$ and $\textstyle \pi_{r+s} : \Sigma^{r+s}(X) \to X_{r+s}$ is the canonical projection. All the filtrations used in the subsequent sections are of this form.

\smallskip

\indent Recall that an order relation $\leq$ is called {\em well-founded} if any decreasing chain $s_1 \geq s_2 \geq \ldots$ is eventually constant. The relevance of filtered algebras in the present paper comes from the following criterion :
\begin{lem}\label{lemmaFiltrationNoetherian}
Assume that the order relation $\leq$ on $S$ is well-founded and let $F = (F^s)_{s \in S}$ be a filtration of $A$. If $\mathrm{gr}_F(A)$ is Noetherian, then $A$ is Noetherian.
\end{lem}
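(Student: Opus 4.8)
The plan is to prove that $A$ satisfies the ascending chain condition on left ideals; since every step below is left--right symmetric, the same argument for right ideals shows that $A$ is Noetherian. The engine of the proof is the \emph{symbol ideal} construction. To a left ideal $I\subseteq A$ I associate the graded subspace $\mathrm{gr}_F(I)=\bigoplus_{s\in S}\mathrm{gr}_F(I)_s$ of $\mathrm{gr}_F(A)$, where $\mathrm{gr}_F(I)_s=\big((I\cap F^s)+F^{<s}\big)/F^{<s}\subseteq F^s/F^{<s}$. First I would check, directly from the definition of the product on $\mathrm{gr}_F(A)$ together with the inclusion $F^r\,(I\cap F^s)\subseteq I\cap F^{r+s}$ (valid because $I$ is a left ideal and $F$ is an algebra filtration), that $\mathrm{gr}_F(I)$ is a graded left ideal of $\mathrm{gr}_F(A)$, and that $I\subseteq I'$ implies $\mathrm{gr}_F(I)\subseteq\mathrm{gr}_F(I')$.

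Given these two facts the reduction is immediate. Let $I_1\subseteq I_2\subseteq\cdots$ be an ascending chain of left ideals of $A$. Applying $\mathrm{gr}_F$ produces an ascending chain $\mathrm{gr}_F(I_1)\subseteq\mathrm{gr}_F(I_2)\subseteq\cdots$ of left ideals of $\mathrm{gr}_F(A)$, which stabilizes at some index $N$ because $\mathrm{gr}_F(A)$ is Noetherian. Thus everything comes down to the following key claim, which I would then apply with $I=I_N$ and $I'=I_k$ for $k\ge N$: \emph{if $I\subseteq I'$ are left ideals with $\mathrm{gr}_F(I)=\mathrm{gr}_F(I')$, then $I=I'$.}

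To prove the key claim I would show, by well-founded induction on $s\in S$ (this is precisely where the hypothesis on $\le$ is used), that $I'\cap F^s\subseteq I$ for every $s$; summing over $s$ and using $A=\bigcup_{s}F^s$ then yields $I'\subseteq I$, hence $I'=I$. For the inductive step, fix $s$ and assume $I'\cap F^r\subseteq I$ for all $r<s$. Given $a\in I'\cap F^s$, its class $\bar a$ lies in $\mathrm{gr}_F(I')_s=\mathrm{gr}_F(I)_s$, so $\bar a$ is the image of some $b\in I\cap F^s$; then $a-b\in F^{<s}$ and $a-b\in I'$, and it remains to place the element $a-b\in I'\cap F^{<s}$ into $I$. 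The minimal elements $s$ have $F^{<s}=0$, so the base case reduces to $a=b\in I$ and requires no separate argument.

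The main obstacle is exactly this last reduction of $I'\cap F^{<s}$ to the inductive hypothesis. Since $F^{<s}=\sum_{r<s}F^r$ is a \emph{sum}, an element of $I'\cap F^{<s}$ need not lie in a single $I'\cap F^r$ with $r<s$, so the hypothesis does not apply verbatim; controlling this term is where well-foundedness, beyond merely indexing the induction, must be used. When the initial segment $\{r:r<s\}$ is directed (in particular whenever $\le$ is total), one has $F^{<s}=\bigcup_{r<s}F^r$, so $a-b$ lies in some $F^r$ with $r<s$, the inductive hypothesis gives $a-b\in I$, and hence $a=b+(a-b)\in I$, closing the induction. In the general case one instead argues by a minimal-counterexample variant: if $I\subsetneq I'$ one selects $a\in I'\setminus I$ whose degree is minimal among all elements of $I'\setminus I$ (possible by well-foundedness), and the construction above produces an element of $I'\setminus I$ of strictly smaller degree, a contradiction. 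I expect the bookkeeping for this degree-minimization, rather than the symbol-ideal formalism, to be the delicate part of the argument.
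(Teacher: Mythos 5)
The paper offers no proof of this lemma (it only cites \cite[Lem. 3.130]{VY}), and your symbol-ideal strategy is indeed the standard route: the construction of $\mathrm{gr}_F(I)$, its monotonicity in $I$, the reduction of the ascending chain condition for $A$ to that of $\mathrm{gr}_F(A)$ plus your key claim, and the well-founded induction proving the key claim are all correct \emph{when $S$ is totally ordered} (or, as you note, whenever each initial segment $\{r : r<s\}$ is directed, so that $F^{<s}=\bigcup_{r<s}F^r$). The gap is in your treatment of the general case, and it is not bookkeeping. In your minimal-counterexample variant, the element $c=a-b$ produced by the construction lies in $F^{<s_0}=\sum_{r<s_0}F^r$; when the initial segment at $s_0$ is not directed, this does \emph{not} imply that $c$ lies in $F^s$ for any single $s<s_0$: $c$ may be a sum of terms of pairwise incomparable degrees $r_i<s_0$ whose only common upper bounds are $\geq s_0$. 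In that case $c$ admits no degree (minimal element of $\{s\in S : c\in F^s\}$) strictly below $s_0$, so no element of $I'\setminus I$ of strictly smaller degree is produced, the descent stalls, and well-foundedness of $\leq$ does not by itself resolve the obstruction you flagged. Your proof as written therefore establishes the lemma only for totally ordered $S$.

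This matters here, because the lemma is applied in the paper to genuinely partial orders with non-directed initial segments: the order $\preceq$ on $\Lambda$ in \S \ref{sectionL10Noetherien}, the order on $\Lambda\times\Lambda$ in Theorem \ref{TheoL10Noetherian}, and the product-type orders in Theorems \ref{ThmLgnNoetherian} and \ref{thmLgnUqFinGen}. (Already in $\mathbb{N}^2$ with the product order, $(1,0)$ and $(0,1)$ are $<(1,1)$ but have no common upper bound $<(1,1)$; the same phenomenon occurs for $\preceq$ on $P$ as soon as $\mathfrak{g}$ has rank $\geq 2$, taking $\mu-D^{-1}\alpha_1$ and $\mu-D^{-1}\alpha_2$ below $\mu$.) The natural repair uses the standing assumption at the end of \S \ref{sectionFiltrations}: every filtration in the paper has the form $F^s=\Sigma^s(X)=\bigoplus_{r\leq s}X_r$ for a decomposition $A=\bigoplus_{s}X_s$ with $X_rX_s\subseteq\bigoplus_{t\leq r+s}X_t$. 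For such filtrations one may replace $\leq$ by a translation-invariant \emph{total} order $\leq'$ refining $\leq$ and well-founded on $S$ (such refinements can be exhibited for each index monoid used in the paper, e.g.\ a positive linear height functional followed by a lexicographic tie-break), and the crucial point is that this replacement does not change the associated graded algebra: with respect to either order, $\mathrm{gr}(A)=\bigoplus_s X_s$ with product $x\circ y=\pi_{r+s}(xy)$, because every component of $xy$ of degree $t\leq r+s$ with $t\neq r+s$ also satisfies $t<' r+s$. Hence the hypothesis that $\mathrm{gr}_F(A)$ is Noetherian is unchanged, and your totally-ordered argument applied to $\leq'$ finishes the proof. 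Without some input of this kind, beyond well-foundedness, the induction you propose cannot be closed.
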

\noindent See e.g. \cite[Lem. 3.130]{VY} for the proof. We note that the converse statement is false in general (see \cite[\S 1.6.9]{MC-R} for a simple counter-example).

\smallskip

\indent A strategy to prove the Noetherianity of some algebra $A$ is then to find a filtration $F$ of $A$ such that $\mathrm{gr}_F(A)$ is simpler to analyze. For the algebras $A$ considered in this paper, we will define $F$ in such a way that the following criterion from \cite[Prop. I.8.17]{BG} applies to $\mathrm{gr}_F(A)$:

\begin{lem}\label{critereNoetherien}
If an associative $k$-algebra is  generated by elements $u_1, \ldots, u_m$ such that
\[ \forall\, 1 \leq j < i \leq m, \quad u_i u_j = q_{ij} u_j u_i + \sum_{s=1}^{j-1} \sum_{t=1}^m \left( \alpha^{ij}_{st} u_s u_t + \beta^{ij}_{st} u_t u_s \right) \]
for certain scalars $q_{ij} \in k^{\times}$, $\alpha^{ij}_{st}, \beta^{ij}_{st} \in k$, then it is Noetherian.
\end{lem}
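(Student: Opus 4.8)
The plan is to reduce the statement to the Noetherianity of a multiparameter quantum affine space via Lemma \ref{lemmaFiltrationNoetherian}. Recall that the quantum affine space $\mathcal{O}_{\mathbf{q}} = k\langle x_1,\dots,x_m\rangle/(x_ix_j - q_{ij}x_jx_i : 1\le j<i\le m)$ is an iterated Ore extension $k[x_1][x_2;\sigma_2]\cdots[x_m;\sigma_m]$ with each $\sigma_i$ a diagonal automorphism, hence is Noetherian (see e.g. \cite{BG, MC-R}); any quotient of it is then Noetherian as well. So it suffices to exhibit a filtration $F$ of $A$, indexed by a well-founded ordered abelian monoid, whose associated graded algebra $\mathrm{gr}_F(A)$ is a quotient of $\mathcal{O}_{\mathbf{q}}$; Lemma \ref{lemmaFiltrationNoetherian} will then give that $A$ is Noetherian.

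For a monomial $w = u_{i_1}\cdots u_{i_r}$ in the generators, let $\mathbf{c}(w) = (c_1,\dots,c_m)\in\mathbb{N}^m$ be its content vector, where $c_l$ counts the factors equal to $u_l$; writing $e_l$ for the standard basis of $\mathbb{N}^m$ one has $\mathbf{c}(u_iu_j)=e_i+e_j$, and $\mathbf{c}$ is additive for the product of monomials. I would order $S = \mathbb{N}^m$ by declaring $\mathbf{a}<\mathbf{b}$ when either $|\mathbf{a}|<|\mathbf{b}|$ (with $|\mathbf{a}|=\sum_l a_l$), or $|\mathbf{a}|=|\mathbf{b}|$ and $a_l>b_l$ at the smallest index $l$ where $\mathbf{a}$ and $\mathbf{b}$ differ. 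This is a translation-invariant total order which is well-founded: along a descending chain the degree $|\cdot|$ is non-increasing in $\mathbb{N}$, hence eventually constant, after which the chain lives in the finite set of content vectors of that degree. Setting $F^{\mathbf{a}} = \mathrm{span}_k\{w : \mathbf{c}(w)\le \mathbf{a}\}$ then defines an exhaustive algebra filtration of $A$ indexed by $(S,\le)$.

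The point of this order is that, in each defining relation, the two leading monomials $u_iu_j$ and $u_ju_i$ share the content $e_i+e_j$, while every correction monomial $u_su_t$ or $u_tu_s$ has content $e_s+e_t$ of the same total degree $2$ but strictly smaller in $\le$: its smallest nonzero coordinate is $\min(s,t)\le s<j$, whereas $e_i+e_j$ vanishes below $j$, so the two contents first differ at $\min(s,t)$, where the correction is larger, i.e. strictly smaller for $\le$. Consequently the correction terms lie in $F^{<(e_i+e_j)}$ and vanish in $\mathrm{gr}_F(A)$, where the relation becomes $\bar u_i\bar u_j = q_{ij}\bar u_j\bar u_i$ for $j<i$. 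Since $\mathrm{gr}_F(A)$ is generated by the symbols $\bar u_1,\dots,\bar u_m$, the assignment $x_i\mapsto \bar u_i$ yields a surjection $\mathcal{O}_{\mathbf{q}}\twoheadrightarrow \mathrm{gr}_F(A)$, so $\mathrm{gr}_F(A)$ is Noetherian and the argument closes.

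The crux is the design of the order, and this is where the index bookkeeping in the hypothesis is essential: a correction term lowers the minimal index present (from $j$ to something $<j$), but because $t$ ranges over all of $\{1,\dots,m\}$ it may raise the other index arbitrarily. Thus no additive weighting $u_i\mapsto c_i\in\mathbb{N}$ can make all corrections lower than $u_ju_i$ simultaneously (a quick inspection of the worst cases $s=j-1$, $i=j+1$, $t=m$ already obstructs this for $m$ large); one is forced into a lexicographic-type order in which the smallest index dominates, refined by total degree to secure well-foundedness. Verifying these three properties of $\le$ — translation invariance, well-foundedness, and the strict decrease of every correction — is the only real work, after which the identification of $\mathrm{gr}_F(A)$ with a quotient of $\mathcal{O}_{\mathbf{q}}$ and the appeal to Lemma \ref{lemmaFiltrationNoetherian} are immediate.
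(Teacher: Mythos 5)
Your main argument is correct, and it supplies a proof where the paper gives none: the paper states Lemma~\ref{critereNoetherien} as a quotation of \cite[Prop.~I.8.17]{BG} and does not reproduce the argument. Your route --- filter $A$ by content vectors in $\mathbb{N}^m$, ordered by total degree refined by the reverse-lexicographic comparison, observe that every correction term $u_su_t$, $u_tu_s$ (with $s\le j-1$) has content whose first nonzero coordinate sits at $\min(s,t)<j$ and is therefore strictly below $e_i+e_j$, deduce that $\mathrm{gr}_F(A)$ is generated by the symbols $\bar u_i$ subject to $\bar u_i\bar u_j=q_{ij}\bar u_j\bar u_i$, hence is a quotient of the Noetherian quantum affine space $\mathcal{O}_{\mathbf{q}}$, and invoke Lemma~\ref{lemmaFiltrationNoetherian} --- is the standard filtered--graded proof of the Brown--Goodearl result, and it is consistent with the paper's own toolkit, since your $(\mathbb{N}^m,\le)$ is a totally ordered, translation-invariant, well-founded abelian monoid, exactly the setting of \S\,\ref{sectionFiltrations}. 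All the verifications you list do go through: translation invariance, well-foundedness (the degree stabilizes along a descending chain, after which the chain lies in the finite stratum of that degree), multiplicativity of the filtration, generation of $\mathrm{gr}_F(A)$ by the $\bar u_i$, and Noetherianity of $\mathcal{O}_{\mathbf{q}}$ as an iterated Ore extension together with its preservation under quotients.

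However, the claim in your last paragraph that no additive weighting $u_i\mapsto c_i\in\mathbb{N}$ can work is false; this is only a motivational remark and does not affect the validity of the proof, but it should be deleted or corrected. For strictly increasing weights, the worst cases you isolate ($s=j-1$, $t=m$, $i=j+1$) reduce the requirement to $c_{j-1}+c_m<c_j+c_{j+1}$ for $2\le j\le m-1$, and this system is solvable for every $m$: choose the gaps $d_j=c_j-c_{j-1}$ from the top down so that $d_j>\sum_{l=j+2}^{m}d_l$ (for $m=5$, e.g.\ $c=(1,4,6,7,8)$, where one checks $c_1+c_5<c_2+c_3$, $c_2+c_5<c_3+c_4$, $c_3+c_5<c_4+c_5$, and these are the binding constraints). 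With such weights, every correction $c_s+c_t$ is strictly smaller than $c_i+c_j$, so the single $\mathbb{N}$-valued filtration by total weight already has associated graded algebra a quotient of $\mathcal{O}_{\mathbf{q}}$, yielding an even simpler variant of your proof. So one is not ``forced'' into a lexicographic-type order; it is merely one convenient choice among several.
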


\indent Finally, we record two classical transfer results:
\begin{lem}\label{lemmaFinGenGr}
Assume that the order relation $\leq$ on $S$ is well-founded and let $F = (F^s)_{s \in S}$ be a filtration of $A$. If $\mathrm{gr}_F(A)$ is finitely generated, then $A$ is finitely generated as well.
\end{lem}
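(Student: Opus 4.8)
The plan is to adapt the standard ``lift a homogeneous generating set'' argument, running a well-founded induction on the filtration degree exactly as in the Noetherianity transfer of Lemma \ref{lemmaFiltrationNoetherian}.

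First I would reduce to homogeneous generators. Since $\mathrm{gr}_F(A) = \bigoplus_{s \in S} \mathrm{gr}_F(A)_s$ is a graded algebra which is finitely generated, replacing any finite generating set by the (finitely many nonzero) homogeneous components of its members yields a finite generating set consisting of homogeneous elements: each original generator is the finite sum of its components, so the subalgebra generated by all components contains the original generators and hence is all of $\mathrm{gr}_F(A)$. Thus I fix homogeneous generators $\bar a_1, \ldots, \bar a_m$ with $\bar a_i \in \mathrm{gr}_F(A)_{s_i} = F^{s_i}/F^{<s_i}$, choose lifts $a_i \in F^{s_i}$ representing them, and let $B \subseteq A$ be the subalgebra they generate; the goal is $B = A$.

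Next I would show $F^s \subseteq B$ for every $s \in S$ by well-founded induction on $s$, which is legitimate precisely because $\leq$ is assumed well-founded. Fix $s$ and assume $F^r \subseteq B$ for all $r < s$; as $F^{<s} = \sum_{r<s} F^r$, this gives $F^{<s} \subseteq B$ (this also covers minimal $s$, for which $F^{<s} = 0$). Take $a \in F^s$ and consider its class $\bar a = a + F^{<s} \in \mathrm{gr}_F(A)_s$. Expanding $\bar a$ as a $k$-linear combination of products $\bar a_{i_1} \cdots \bar a_{i_\ell}$ of degree $s$ (so $s_{i_1} + \cdots + s_{i_\ell} = s$) and forming the corresponding element $b = \sum (\text{coeff})\, a_{i_1} \cdots a_{i_\ell} \in B$, the filtration property $F^r F^t \subseteq F^{r+t}$ ensures $b \in F^s$, and the definition of the graded product shows that $b$ represents $\bar a$ in $\mathrm{gr}_F(A)_s$. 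Hence $a - b \in F^{<s} \subseteq B$, so $a = b + (a-b) \in B$, proving $F^s \subseteq B$. Since the filtration is exhaustive, $A = \bigcup_s F^s \subseteq B$, whence $A = B$ is generated by $a_1, \ldots, a_m$.

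I expect no serious obstacle: the argument is routine, and the only point requiring genuine care is that $S$ is an arbitrary ordered abelian monoid rather than $\mathbb{N}$, so ordinary induction is unavailable and one must invoke well-founded induction. This is exactly where the hypothesis that $\leq$ is well-founded is used, and it also removes the need for a separate base case, since minimal $s$ satisfy $F^{<s} = 0$. The possibly infinite index set in $F^{<s} = \sum_{r<s} F^r$ causes no trouble, because each of its elements is already a finite sum of elements of subspaces $F^r$ with $r < s$, all contained in $B$ by the inductive hypothesis.
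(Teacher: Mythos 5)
Your proof is correct and is precisely the ``easy proof by well-founded induction'' that the paper leaves to the reader: lift homogeneous generators of $\mathrm{gr}_F(A)$ and show $F^s \subseteq B$ by well-founded induction on $s$, using $F^{<s}=\sum_{r<s}F^r$ and exhaustiveness of the filtration. All the delicate points (reduction to homogeneous generators, the degree-$s$ projection of a polynomial expression, the minimal-element base case, and the role of well-foundedness in place of ordinary induction on $\mathbb{N}$) are handled properly, and the argument parallels the one the paper does spell out for Lemma \ref{gradedNoetherianFinGen}.
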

\begin{proof}
The easy proof by well-founded induction is left to the reader.
\end{proof}
We say that $F = (F^s)_{s \in S}$ is {\it locally bounded below} if for each nonzero element $a \in A$ the set of all $s\in S$ such that $a\in F^s$ has a minimal element. We have (see e.g. \cite[\S 1.6.6]{MC-R} or \cite[Lem. 3.130]{VY}):
\begin{lem}\label{lemmadomainGr}
Assume that $(S,\leq)$ is totally ordered and $F = (F^s)_{s \in S}$ is a filtration of $A$ which is locally bounded below. If $\mathrm{gr}_F(A)$ has no non-trivial zero divisors, then $A$ has no non-trivial zero divisors as well.
\end{lem}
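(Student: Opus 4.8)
The plan is to prove the statement directly by constructing a \emph{symbol} (leading-term) map from $A$ to $\mathrm{gr}_F(A)$ and showing it is multiplicative on leading terms. First I would use the hypothesis that $F$ is locally bounded below to attach to every nonzero $a \in A$ a well-defined degree $\deg(a) \in S$, namely the minimal element of $\{s \in S : a \in F^s\}$, which exists by assumption. Because $(S,\leq)$ is totally ordered and $F$ is a filtration (so the subspaces $F^r$ form an increasing family), the space $F^{<s} = \sum_{r<s}F^r$ coincides with $\bigcup_{r<s} F^r$; hence minimality of $\deg(a)$ guarantees $a \notin F^{<\deg(a)}$. I would then define the symbol $\sigma(a) = a + F^{<\deg(a)} \in \mathrm{gr}_F(A)_{\deg(a)}$, and the previous observation shows $\sigma(a) \neq 0$ for every nonzero $a$.

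The crux is the multiplicativity of the symbol on leading terms. Given nonzero $a, b \in A$, set $s = \deg(a)$ and $t = \deg(b)$, so that $a \in F^s$ and $b \in F^t$. By the very definition of the product in $\mathrm{gr}_F(A)$ recalled in \S\ref{sectionFiltrations}, we have $\sigma(a)\,\sigma(b) = (a + F^{<s})(b + F^{<t}) = ab + F^{<(s+t)}$, a homogeneous element of $\mathrm{gr}_F(A)_{s+t}$. Since $\mathrm{gr}_F(A)$ has no non-trivial zero divisors and $\sigma(a), \sigma(b)$ are both nonzero, their product is nonzero; thus $ab + F^{<(s+t)} \neq 0$, which forces $ab \notin F^{<(s+t)}$, and in particular $ab \neq 0$ (as $0$ lies in every filtration level). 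This is exactly the desired conclusion.

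As for obstacles, I do not expect a serious one: this is the standard ``the associated graded reflects integrality'' technique. The only points requiring care are the well-definedness of $\deg$ and the nonvanishing of $\sigma(a)$, both of which rely essentially on the two standing hypotheses. The total ordering of $S$ is what makes the family $(F^r)$ increasing and lets me identify $F^{<s}$ with the union $\bigcup_{r<s}F^r$, so that a genuine leading degree is not absorbed into lower filtration levels; local boundedness below is what guarantees the minimum exists in the first place. If either hypothesis were dropped the symbol map could fail to be defined or could vanish, so I would take care to invoke both explicitly at the step where they are needed.
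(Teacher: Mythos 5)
Your proof is correct and is exactly the standard leading-term (symbol) argument that the paper itself delegates to the references \cite[\S 1.6.6]{MC-R} and \cite[Lem. 3.130]{VY}: local boundedness below makes $\deg$ well defined, the total order identifies $F^{<s}$ with $\bigcup_{r<s}F^r$ so that $\sigma(a)\neq 0$, and multiplicativity of the symbol plus integrality of $\mathrm{gr}_F(A)$ gives $ab\neq 0$. No gaps; this is the same approach.
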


\subsection{Semisimple Lie algebras}\label{prelimLieAlgebras}
Let $\mathfrak{g}$ be a finite-dimensional complex simple Lie algebra; one can more generally adapt the notations to handle the semisimple case, by replacing $D$ below with the lowest common multiple of the corresponding integers for the simple components of $\mathfrak{g}$. Here we fix the notations regarding $\mathfrak{g}$.

\smallskip

\indent We denote by $m$ the rank of $\mathfrak{g}$ and its Cartan matrix by $(a_{ij})$. We fix a Cartan subalgebra $\mathfrak{h} \subset \mathfrak{g}$ and a basis of simple roots $\alpha_i \in \mathfrak{h}^*_{\mathbb{R}}$. Let $\mathfrak{b}_{\pm}$ be the associated Borel subalgebras. We denote by $N$ the number of positive roots of $\mathfrak{g}$.

\smallskip

\indent Let $d_1, \ldots, d_m$ be the unique coprime integers such that the matrix $(d_ia_{ij})$ is symmetric. There is an inner product $(-,-)$ on $\mathfrak{h}^* _{\mathbb{R}}$ defined by $(\alpha_i,\alpha_j) = d_i a_{ij}$ on the simple roots. The simple coroots are $\alpha_i^{\vee} = d_i^{-1} \alpha_i$ for every $1 \leq i \leq m$, so that $(\alpha_i,\alpha_i^{\vee}) = 2$.

\smallskip

\indent The root lattice is $\textstyle Q = \bigoplus_{i=1}^m \mathbb{Z} \alpha_i \subset \mathfrak{h}^*_{\mathbb{R}}$. The weight lattice $P$ is the $\mathbb{Z}$-lattice formed by all the $\lambda \in \mathfrak{h}^*_{\mathbb{R}}$ satisfying $(\lambda, \alpha_i^{\vee}) \in \mathbb{Z}$ for every $i$. The fundamental weights $\varpi_i$, $1 \leq i \leq m$, are defined by $(\varpi_i, \alpha_j^{\vee}) = \delta_{i,j}$ for all $j$. Then $\textstyle P = \bigoplus_{i=1}^m \mathbb{Z} \varpi_i$. We have $Q \subset P$ and we denote by $D$ the smallest positive integer such that $DP \subset Q$. 
\smallskip

\indent The cone of dominant integral weights is $\textstyle P_+ = \bigoplus_{i=1}^m \mathbb{N} \varpi_i$. We also put $\textstyle Q_+ = \bigoplus_{i=1}^m \mathbb{N} \alpha_i$. Note that $Q_+ \not\subset P_+$. But $P_+ \subset D^{-1}Q_+$, which is due to the classical fact that the inverse of the Cartan matrix has coefficients in $D^{-1}\mathbb{N}$.

\smallskip

\indent The standard partial order $\leq$ on $P$ is defined by $\lambda \leq \mu$ if and only if $\mu - \lambda \in Q_+$. We will need another partial order $\preceq$ on $P$, defined by $\lambda \preceq \mu$ if and only if $\mu - \lambda \in D^{-1}Q_+$. Note that $\lambda \leq \mu$ implies $\lambda \preceq \mu$. The advantage of $\preceq$ is that it satisfies the condition \eqref{confluentOrder}, contrarily to $\leq$.

We put $\lambda \prec \mu$ when $\lambda \preceq \mu$ and $\lambda \ne \mu$.
\subsection{Quantum group $U_q(\mathfrak{g})$}\label{sectionPrelimUq}
\indent Let $\mathfrak{g}$ be a semisimple finite-dimensional Lie algebra of rank $m$ and let $\mathbb{C}(q)$ be the field of fractions of $\mathbb{C}[q]$, where $q$ is an indeterminate. The {\em simply connected quantum group} $U_q = U_q(\mathfrak{g})$ is the $\mathbb{C}(q)$-algebra generated by $E_i, F_i, L_i^{\pm 1}$ for $1 \leq i \leq m$ modulo the relations
\begin{align*}
&L_i^{\pm 1} L_i^{\mp 1} = 1, \qquad L_i L_j = L_j L_i, \qquad L_iE_j = q_i^{\delta_{i,j}}E_jL_i, \qquad L_iF_j = q_i^{-\delta_{i,j}}F_jL_i,\\
& E_iF_j - F_jE_i = \delta_{i,j} \frac{K_i - K_i^{-1}}{q_i - q_i^{-1}},\\
&q\text{-Serre relations (see e.g. \cite[Def. 3.13]{VY}})
\end{align*}
where $q_i = q^{d_i}$ and for $\textstyle \mu = \sum_{i=1}^m n_i \varpi_i \in P$ we set $\textstyle K_{\mu} = \prod_{i=1}^m L_i^{n_i}$ and $\textstyle K_i = K_{\alpha_i} = \prod_{j=1}^m L_j^{a_{ji}}$. The Hopf algebra structure on $U_q$ is given by
\[ \begin{array}{lll}
\Delta(E_i) = E_i \otimes K_i + 1 \otimes E_i, & \Delta(F_i) = F_i \otimes 1 + K_i^{-1} \otimes F_i, & \Delta(L_i) = L_i \otimes L_i,\\
S(E_i) = -E_i K_i^{-1}, & S(F_i) = -K_i F_i, & S(L_i) = L_i^{-1},\\
\varepsilon(E_i) = 0, & \varepsilon(F_i) = 0, & \varepsilon(L_i) = 1.
\end{array} \]
We denote by $U_q(\mathfrak{h})$, $U_q(\mathfrak{n}_+)$ and $U_q(\mathfrak{n}_-)$ the subalgebras of $U_q$ generated respectively by $(K_{\mu})_{\mu \in P}$, $(E_i)_{1 \leq i \leq m}$ and $(F_i)_{1 \leq i \leq m}$.

\smallskip

\indent The {\em adjoint quantum group} $U_q^{\mathrm{ad}} = U_q^{\mathrm{ad}}(\mathfrak{g})$ is the Hopf subalgebra of $U_q$ generated by the elements $E_i, F_i, K_i$. In particular $K_{\alpha} \in U_q^{\mathrm{ad}}$ for all $\alpha \in Q$.

\smallskip

\indent Fix a reduced expression $s_{i_1} \ldots s_{i_N}$ of the longest element $w_0$ of the Weyl group of $\mathfrak{g}$, where as usual $s_i : \mathfrak{h}^*_{\mathbb{R}} \to \mathfrak{h}^*_{\mathbb{R}}$, $\alpha_j \mapsto \alpha_j - a_{ij}\alpha_i$. This expression induces a total ordering of the positive roots:
\[ \beta_1 = \alpha_{i_1}, \quad \beta_k = s_{i_1} \ldots s_{i_{k-1}}(\alpha_{i_k}) \]
for all $2 \leq k \leq N$. The {\em root vectors} of $U_q$ associated to this ordering are 
\[ E_{\beta_1} = E_{i_1}, \: E_{\beta_k} = T_{i_1} \ldots T_{i_{k-1}}(E_{i_k}), \qquad F_{\beta_1} = F_{i_1}, \: F_{\beta_k} = T_{i_1} \ldots T_{i_{k-1}}(F_{i_k}) \]
for all $2 \leq k \leq N$, where the $T_i$ are Lusztig's algebra automorphisms whose defining formulas can be found e.g. in \cite[Th. 3.58]{VY}. We record that for any $\mu \in P$,
\begin{equation}\label{weightOfRootVectors}
K_{\mu} E_{\beta_j} = q^{(\mu,\beta_j)} E_{\beta_j} K_{\mu}, \qquad K_{\mu} F_{\beta_j} = q^{-(\mu,\beta_j)} F_{\beta_j} K_{\mu}.
\end{equation}

\smallskip

\indent For $\mathbf{t} = (t_1, \ldots, t_N) \in \mathbb{N}^N$, $\mu \in P$ and $\mathbf{s} = (s_1, \ldots, s_N) \in \mathbb{N}^N$ consider the monomial
\begin{equation}\label{PBWmonomial}
X(\mathbf{t}, \mu, \mathbf{s}) = F_{\beta_N}^{t_N} \ldots F_{\beta_1}^{t_1} K_{\mu} E_{\beta_N}^{s_N} \ldots E_{\beta_1}^{s_1}.
\end{equation}
It is a classical theorem that these monomials form a basis of $U_q$, called the {\em Poincar\'e-Birkhoff-Witt basis} (PBW basis) (see e.g. \cite{CP}). In particular the elements $X(\mathbf{0}, 0, \mathbf{s})$ form a basis of $U_q(\mathfrak{n}_+)$ (where $\mathbf{0} = (0, \ldots, 0)$), the elements $X(\mathbf{t}, 0, \mathbf{0})$ form a basis of $U_q(\mathfrak{n}_-)$ and the elements $K_{\mu} = X(\mathbf{0}, \mu, \mathbf{0})$ form a basis of $U_q(\mathfrak{h})$.

\smallskip

\indent Recall that the {\em height} of an element in $Q$ is $\textstyle \mathrm{ht}\left( \sum_i k_i \alpha_i \right) = \sum_i k_i$, and let
\[ \mathrm{ht}\bigl( X(\mathbf{t}, \mu, \mathbf{s}) \bigr) = \sum_{i=1}^N (t_i + s_i) \mathrm{ht}(\beta_i). \]
Then the {\em degree} of $X(\mathbf{t}, \mu, \mathbf{s})$ is defined as
\[ d\bigl( X(\mathbf{t}, \mu, \mathbf{s}) \bigr) = \bigl( s_N, \ldots, s_1, t_N, \ldots, t_1, \mathrm{ht}\bigl( X(\mathbf{t}, \mu, \mathbf{s}) \bigr) \bigr) \in \mathbb{N}^{2N+1}. \]
The set $\mathbb{N}^{2N+1}$ is a totally ordered additive monoid for the lexicographic order given by
\begin{equation}\label{RevLexN}
(1, 0, \ldots, 0) < (0, 1, 0, \ldots, 0) < \ldots < (0, \ldots, 0, 1, 0)  < (0, \ldots, 0, 1).
\end{equation}
For $\mathbf{m} \in \mathbb{N}^{2N+1}$, denote by $\mathcal{F}_{\mathrm{DCK}}^{\mathbf{m}}$ the subspace of $U_q$ spanned by the monomials $X(\mathbf{t}, \mu, \mathbf{s})$ of degree $\leq \mathbf{m}$. It was shown in \cite[Prop 1.7]{DC-K} that $(\mathcal{F}_{\mathrm{DCK}}^{\mathbf{m}})_{\mathbf{m} \in \mathbb{N}^{2N+1}}$ is an algebra filtration of $U_q$ and that the graded algebra $\mathrm{gr}_{\mathcal{F}_{\mathrm{DCK}}}(U_q)$ is quasi-polynomial in the sense of \cite[Section 1.8]{DC-K}. It is generated by the cosets 
\[ \overline{E_{\beta_i}} = E_{\beta_i} + \mathcal{F}_{\mathrm{DCK}}^{<d(E_{\beta_i})}, \quad \overline{F_{\beta_i}} = F_{\beta_i} + \mathcal{F}_{\mathrm{DCK}}^{<d(F_{\beta_i})}, \quad \overline{K_{\mu}} = K_{\mu} + \mathcal{F}_{\mathrm{DCK}}^{<d(K_\nu)} \]
for $1 \leq i \leq N$ and $\mu \in P$ (note that $\mathcal{F}_{\mathrm{DCK}}^{<d(K_\mu)}=\{0\}$ since $d(K_\mu)=\mathbf{0}$), modulo the relations
\begin{equation}\label{commutationGrDCK}
\begin{array}{lll}
\overline{E_{\beta_i}} \, \overline{E_{\beta_j}} = q^{(\beta_i,\beta_j)} \overline{E_{\beta_j}} \, \overline{E_{\beta_i}}, & \overline{F_{\beta_i}} \, \overline{F_{\beta_j}} = q^{(\beta_i,\beta_j)} \overline{F_{\beta_j}} \, \overline{F_{\beta_i}}, & \overline{E_{\beta_i}} \, \overline{F_{\beta_j}} = \overline{F_{\beta_j}} \, \overline{E_{\beta_i}}, \\[.5em]
\overline{K_{\mu}} \, \overline{E_{\beta_j}} = q^{(\mu,\beta_j)} \overline{E_{\beta_j}} \, \overline{K_{\mu}}, & \overline{K_{\mu}} \, \overline{F_{\beta_j}} = q^{-(\mu,\beta_j)} \overline{F_{\beta_j}} \, \overline{K_{\mu}}, & \overline{K_{\mu}} \, \overline{K_{\nu}} = \overline{K_{\mu + \nu}}.
\end{array}
\end{equation}

\smallskip

The following facts describe the effect of the coproduct $\Delta$ on the filtration $\mathcal{F}_{\mathrm{DCK}}$; this will be useful later.
\begin{prop}\label{propHtCoproduit}
We have
\[ \Delta(E_{\beta_j}) = E_{\beta_j} \otimes K_{\beta_j} + \sum_k c_k X(\mathbf{0}, 0, \mathbf{s}'_k) \otimes X(\mathbf{0}, \mu_k, \mathbf{s}''_k) + 1 \otimes E_{\beta_j} \]
where $\mathbf{s}'_k$, $\mathbf{s}''_k\in \mathbb{N}^N$ and $\mu_k\in P$ satisfy $\mathrm{ht}\bigl( X(\mathbf{0}, 0, \mathbf{s}'_k) \bigr) < \mathrm{ht}(\beta_j)$ and $\mathrm{ht}\bigl( X(\mathbf{0}, \mu_k, \mathbf{s}_k'') \bigr) < \mathrm{ht}(\beta_j)$ for all $k$, and $c_k \in \mathbb{C}(q)$. Similarly
\[ \Delta(F_{\beta_j}) = F_{\beta_j} \otimes 1 + \sum_k d_k X(\mathbf{t}'_k, \nu_k, \mathbf{0}) \otimes X(\mathbf{t}_k'', 0, \mathbf{0}) + K_{-\beta_j} \otimes F_{\beta_j} \]
with $\mathrm{ht}\bigl( X(\mathbf{t}'_k, \nu_k, \mathbf{0}) \bigr) < \mathrm{ht}(\beta_j)$ and $\mathrm{ht}\bigl( X(\mathbf{t}_k'', 0, \mathbf{0}) \bigr) < \mathrm{ht}(\beta_j)$ for all $k$, and $d_k \in \mathbb{C}(q)$.
\end{prop}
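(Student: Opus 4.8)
The plan is to derive both identities from a single grading statement about the coproduct on the nilpotent subalgebras, and then specialize to the root vectors. Recall that $U_q(\mathfrak{n}_+)$ carries the $Q_+$-grading for which $E_i$ has degree $\alpha_i$; since $q$ is generic, this grading coincides with the weight decomposition for the adjoint $K$-action, so \eqref{weightOfRootVectors} shows that $E_{\beta_j}$ is homogeneous of degree $\beta_j$, and likewise $F_{\beta_j}$ is homogeneous of degree $-\beta_j$ in $U_q(\mathfrak{n}_-)$. Thus it suffices to prove the following normal form: for homogeneous $x \in U_q(\mathfrak{n}_+)_\beta$ with $\beta \in Q_+$,
\[ \Delta(x) = x \otimes K_\beta + 1 \otimes x + \sum_k u_k \otimes K_{\gamma_k} w_k, \]
where $u_k \in U_q(\mathfrak{n}_+)$ is homogeneous of degree $\gamma_k$, $w_k \in U_q(\mathfrak{n}_+)$ is homogeneous of degree $\beta - \gamma_k$, and $0 < \mathrm{ht}(\gamma_k) < \mathrm{ht}(\beta)$ (whence also $0 < \mathrm{ht}(\beta - \gamma_k) < \mathrm{ht}(\beta)$); symmetrically, for $y \in U_q(\mathfrak{n}_-)_{-\beta}$ one gets $y \otimes 1 + K_{-\beta} \otimes y$ plus middle terms carrying the $K_\mu$ in the left leg. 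Expanding the legs $u_k, w_k$ (resp. the $F$-legs) in the PBW basis, which preserves $Q_+$-degree and hence height, then yields exactly the two displayed decompositions with $\mu_k = \gamma_k$ (resp. $\nu_k$ equal to minus the degree of the right leg) and the required height bounds.

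I would prove the normal form by induction on $\mathrm{ht}(\beta)$ using multiplicativity of $\Delta$. The base case is the defining formula $\Delta(E_i) = E_i \otimes K_i + 1 \otimes E_i$: it has height $1$, no room for middle terms, and the claim holds trivially. For the inductive step I write $x = x' E_i$ with $x' \in U_q(\mathfrak{n}_+)_{\beta - \alpha_i}$ and expand $\Delta(x') \Delta(E_i)$ using the inductive form of $\Delta(x')$. This produces six families of terms; after commuting each $K_\mu$ past the neighbouring $E$'s via $K_\mu E_\beta = q^{(\mu,\beta)} E_\beta K_\mu$ to restore the normal form $u \otimes K_\gamma w$, one checks that the only term with trivial left leg is $1 \otimes x$, the only term with trivial right $E$-leg is $x \otimes K_\beta$, and every remaining term has both $E$-legs of height strictly between $1$ and $\mathrm{ht}(\beta) - 1$. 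The $F$-case is the mirror image, starting from $\Delta(F_i) = F_i \otimes 1 + K_{-\alpha_i} \otimes F_i$, with the $K_\mu$ now accumulating in the left leg.

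Applying the normal form to $x = E_{\beta_j}$ and $y = F_{\beta_j}$ and rewriting the middle legs in the PBW basis gives precisely the stated identities. The only genuine work lies in the bookkeeping of the inductive step: one must confirm that the bihomogeneous components of $\Delta(x)$ in the extreme bidegrees $(\beta, 0)$ and $(0, \beta)$ (for the pair of $E$-degrees of the two legs) are exactly $x \otimes K_\beta$ and $1 \otimes x$, so that no middle term leaks into these slots, and one must track the scalars $q^{(\mu,\beta)}$ arising from normal-ordering the $K_\mu$. I expect this to be the main, though routine, obstacle; it is controlled once one observes that $\Delta$ preserves the bigrading of $U_q^{\geq 0} \otimes U_q^{\geq 0}$ by the two $E$-degrees, so that no deeper difficulty arises.
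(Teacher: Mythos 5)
Your proposal is correct, but it follows a genuinely different route from the paper. The paper does not prove the coproduct normal form itself: it invokes \cite[Lem.~4.12]{jantzen} (in the form stated in \cite[Prop.~1]{krahmer}), which directly gives
\[ \Delta(E_{\beta_j}) = E_{\beta_j} \otimes K_{\beta_j} + \sum_i x'_i \otimes x''_iK_{\gamma_i} + 1 \otimes E_{\beta_j}, \qquad x'_i \in U_q(\mathfrak{n}_+)_{\gamma_i},\ x''_i \in U_q(\mathfrak{n}_+)_{\beta_j-\gamma_i},\ 0 < \gamma_i < \beta_j, \]
and then converts the partial-order bound $0<\gamma_i<\beta_j$ into the height bound: expanding in the PBW basis and using \eqref{weightOfRootVectors}, a monomial $X(\mathbf{0},0,\mathbf{s}'_k)$ of weight $\mu_k$ satisfies $s'_{k,1}\beta_1+\ldots+s'_{k,N}\beta_N=\mu_k$, whence $\mathrm{ht}\bigl(X(\mathbf{0},0,\mathbf{s}'_k)\bigr)=\mathrm{ht}(\mu_k)<\mathrm{ht}(\mu_k)+\mathrm{ht}(\beta_j-\mu_k)=\mathrm{ht}(\beta_j)$. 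You instead re-prove the cited normal form from scratch, by induction on $\mathrm{ht}(\beta)$ over monomials $x=x'E_i$ (which suffices by linearity, since the normal-form condition is a linear constraint on $x$), tracking the six families of terms and the height bounds directly inside the induction; your bookkeeping of the extreme bidegrees $(\beta,0)$ and $(0,\beta)$ and of the scalars from normal-ordering the $K_\mu$'s is exactly the content of Jantzen's lemma, and it checks out. What each approach buys: the paper's argument is shorter and delegates the combinatorial induction to the literature, at the cost of needing the separate weight-to-height computation on PBW monomials; yours is self-contained and elementary, obtains the height bounds without that extra step, but requires carrying out the (routine) inductive expansion that the paper avoids. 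Both proofs finish the same way, by expanding the middle legs in the PBW basis and using that this expansion preserves the $Q_+$-degree, hence the height.
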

\begin{proof}
Let
\[ U_q(\mathfrak{n}_+)_{\lambda} = \bigl\{ x \in U_q(\mathfrak{n}_+) \, \big| \, \forall \, \nu \in P, \: K_{\nu}xK_{\nu}^{-1} = q^{(\nu,\lambda)}x \bigr\} \]
which is the space of vectors in $U_q(\mathfrak{n}_+)$ with weight $\lambda \in P$ for the adjoint action of $U_q$. Due to \eqref{weightOfRootVectors} we have $E_{\beta_j} \in U_q(\mathfrak{n}_+)_{\beta_j}$. By \cite[Lem 4.12]{jantzen} (see also \cite[Prop. 1]{krahmer} for the exact statement used here) there exist elements $x'_i \in U_q(\mathfrak{n}_+)_{\gamma_i}$ and $x''_i \in U_q(\mathfrak{n}_+)_{\beta_j - \gamma_i}$ with $\gamma_i \in Q$ satisfying $0 < \gamma_i < \beta_j$ such that
\[ \Delta(E_{\beta_j}) = E_{\beta_j} \otimes K_{\beta_j} + \sum_i x'_i \otimes x''_iK_{\gamma_i} + 1 \otimes E_{\beta_j}. \]
Since the monomials $X(\mathbf{0},0,\mathbf{s})$ form a basis of $U_q(\mathfrak{n}_+)$ we can rewrite this as
\[ \Delta(E_{\beta_j}) = E_{\beta_j} \otimes K_{\beta_j} + \sum_k c_k X(\mathbf{0}, 0, \mathbf{s}'_k) \otimes X(\mathbf{0}, \mu_k, \mathbf{s}''_k) + 1 \otimes E_{\beta_j} \]
where for all $k$: $c_k \in \mathbb{C}(q), $ $X(\mathbf{0}, 0, \mathbf{s}'_k) \in U_q(\mathfrak{n}_+)_{\mu_k}$ and $X(\mathbf{0}, \mu_k, \mathbf{s}''_k) \in U_q(\mathfrak{n}_+)_{\beta_j - \mu_k}$ for some $\mu_k \in Q$ such that $0 < \mu_k < \beta_j$. Write $\mathbf{s}'_k = (s'_{k,1}, \ldots, s'_{k,N})$ and note by \eqref{weightOfRootVectors} that
\[ K_{\nu} X(\mathbf{0}, 0, \mathbf{s}'_k) K_{\nu}^{-1} = q^{(\nu, s'_{k,1}\beta_1 + \ldots + s'_{k,N}\beta_N)} X(\mathbf{0}, 0, \mathbf{s}'_k) \]
for all $\nu \in P$. Hence $s'_{k,1}\beta_1 + \ldots + s'_{k,N}\beta_N = \mu_k$ and we get
\[ \mathrm{ht}\bigl( X(\mathbf{0}, 0, \mathbf{s}'_k) \bigr) = \mathrm{ht}(s'_{k,1}\beta_1 + \ldots + s'_{k,N}\beta_N) = \mathrm{ht}(\mu_k) < \mathrm{ht}(\mu_k) + \mathrm{ht}(\beta_j - \mu_k) = \mathrm{ht}(\beta_j). \]
We obtain similarly that $\mathrm{ht}\bigl( X(\mathbf{0}, \mu_k, \mathbf{s}''_k) \bigr) < \mathrm{ht}(\beta_j)$. The proof for $F_{\beta_j}$ is completely analogous.
\end{proof}

\begin{cor}\label{coroCoproduitSurFiltrationDCK}
For all $\mathbf{m} \in \mathbb{N}^{2N+1}$ we have $\Delta(\mathcal{F}_{\mathrm{DCK}}^{\mathbf{m}}) \subset \mathcal{F}_{\mathrm{DCK}}^{\mathbf{m}} \otimes \mathcal{F}_{\mathrm{DCK}}^{\mathbf{m}}$.
\end{cor}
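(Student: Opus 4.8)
The plan is to exploit that $\Delta$ is an algebra morphism together with the fact, recalled above, that $\mathcal{F}_{\mathrm{DCK}}$ is a multiplicative increasing filtration. Setting $G^{\mathbf{m}} := \mathcal{F}_{\mathrm{DCK}}^{\mathbf{m}} \otimes \mathcal{F}_{\mathrm{DCK}}^{\mathbf{m}} \subset U_q \otimes U_q$, the inclusion $\mathcal{F}_{\mathrm{DCK}}^{\mathbf{a}}\mathcal{F}_{\mathrm{DCK}}^{\mathbf{b}}\subset\mathcal{F}_{\mathrm{DCK}}^{\mathbf{a}+\mathbf{b}}$ gives at once $G^{\mathbf{a}}G^{\mathbf{b}}\subset G^{\mathbf{a}+\mathbf{b}}$, and $G$ is increasing because $\mathcal{F}_{\mathrm{DCK}}$ is. Since $\mathcal{F}_{\mathrm{DCK}}^{\mathbf{m}}$ is spanned by the PBW monomials $X(\mathbf{t},\mu,\mathbf{s})$ of degree $\leq \mathbf{m}$ and $\Delta$ is linear, it suffices to prove that $\Delta\big(X(\mathbf{t},\mu,\mathbf{s})\big)\in G^{d(X(\mathbf{t},\mu,\mathbf{s}))}$ for every such monomial: the conclusion $\Delta(\mathcal{F}_{\mathrm{DCK}}^{\mathbf{m}})\subset \mathcal{F}_{\mathrm{DCK}}^{\mathbf{m}}\otimes\mathcal{F}_{\mathrm{DCK}}^{\mathbf{m}}$ then follows because $d(X(\mathbf{t},\mu,\mathbf{s}))\leq\mathbf{m}$ forces $G^{d(X(\mathbf{t},\mu,\mathbf{s}))}\subset G^{\mathbf{m}}$.

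First I would reduce to the generators. Applying $\Delta$ to the factorization \eqref{PBWmonomial} and using that both $\Delta$ and $G$ are multiplicative, one is left to check the three statements $\Delta(E_{\beta_j})\in G^{d(E_{\beta_j})}$, $\Delta(F_{\beta_j})\in G^{d(F_{\beta_j})}$ and $\Delta(K_\mu)\in G^{d(K_\mu)}$, together with the additivity of the degree
\[ d\big(X(\mathbf{t},\mu,\mathbf{s})\big)=\sum_{i=1}^N t_i\,d(F_{\beta_i})+\sum_{i=1}^N s_i\,d(E_{\beta_i}). \]
This additivity is a direct unwinding of the definition of $d$: one has $d(K_\mu)=\mathbf{0}$, while $d(E_{\beta_i})$ (resp. $d(F_{\beta_i})$) has a single $1$ in the $\mathbf{s}$-block (resp. $\mathbf{t}$-block) in the slot corresponding to $\beta_i$ and last coordinate $\mathrm{ht}(\beta_i)$, and the heights add up to $\mathrm{ht}(X(\mathbf{t},\mu,\mathbf{s}))=\sum_i(t_i+s_i)\mathrm{ht}(\beta_i)$.

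The generator cases are where Proposition \ref{propHtCoproduit} enters, and the one point to stress is that in the lexicographic order \eqref{RevLexN} the last coordinate, namely the height, is the most significant one. For $K_\mu$ one simply has $\Delta(K_\mu)=K_\mu\otimes K_\mu\in\mathcal{F}_{\mathrm{DCK}}^{\mathbf{0}}\otimes\mathcal{F}_{\mathrm{DCK}}^{\mathbf{0}}=G^{\mathbf{0}}$. For $E_{\beta_j}$, the extreme terms $E_{\beta_j}\otimes K_{\beta_j}$ and $1\otimes E_{\beta_j}$ of Proposition \ref{propHtCoproduit} lie in $G^{d(E_{\beta_j})}$ since $d(K_{\beta_j})=d(1)=\mathbf{0}\leq d(E_{\beta_j})$; for each middle term $X(\mathbf{0},0,\mathbf{s}'_k)\otimes X(\mathbf{0},\mu_k,\mathbf{s}''_k)$, Proposition \ref{propHtCoproduit} guarantees that both tensor factors have height strictly less than $\mathrm{ht}(\beta_j)$, so each has degree with last coordinate strictly smaller than that of $d(E_{\beta_j})$, whence degree strictly less than $d(E_{\beta_j})$, and the term lies in $G^{d(E_{\beta_j})}$. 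The argument for $F_{\beta_j}$ is identical, using the second formula of Proposition \ref{propHtCoproduit}.

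I do not expect a genuine obstacle here: the substance is entirely carried by Proposition \ref{propHtCoproduit}, and the only step requiring care is the translation of its height inequalities into strict inequalities for the degree $d$, which works precisely because the height is the dominant coordinate of $d$ in the order \eqref{RevLexN}. The remaining steps are the bookkeeping of degree additivity and the multiplicativity of $G$, both routine.
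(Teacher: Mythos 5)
Your proof is correct and follows essentially the same route as the paper's: use Proposition \ref{propHtCoproduit} (with the observation that the height is the most significant coordinate in the order \eqref{RevLexN}) to get $\Delta(E_{\beta_j}) \in \bigl(\mathcal{F}_{\mathrm{DCK}}^{d(E_{\beta_j})}\bigr)^{\otimes 2}$ and $\Delta(F_{\beta_j}) \in \bigl(\mathcal{F}_{\mathrm{DCK}}^{d(F_{\beta_j})}\bigr)^{\otimes 2}$, then multiply out along the PBW factorization using the multiplicativity of the filtration and the additivity of the degree. Your write-up merely makes explicit the steps the paper compresses into ``by definition of the degree $d$ and of the order $<$''.
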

\begin{proof}
By Proposition \ref{propHtCoproduit} and by definition of the degree $d$ and of the order $<$ on $\mathbb{N}^{2N+1}$ we have $\Delta(E_{\beta_j}) \in \left(\mathcal{F}_{\mathrm{DCK}}^{d(E_{\beta_j})}\right)^{\otimes 2}$ and $\Delta(F_{\beta_j}) \in \left(\mathcal{F}_{\mathrm{DCK}}^{d(F_{\beta_j})}\right)^{\otimes 2}$. Hence
\begin{align*}
\Delta\bigl( X(\mathbf{t},\mu,\mathbf{s}) \bigr) &= \Delta(F_{\beta_N})^{t_N} \ldots \Delta(F_{\beta_1})^{t_1} \Delta(K_{\mu}) \Delta(E_{\beta_N})^{s_N} \ldots \Delta(E_{\beta_1})^{s_1}\\
&\in  \left(\mathcal{F}_{\mathrm{DCK}}^{\sum_{j=1}^N t_j d(F_{\beta_j}) + s_j d(E_{\beta_j})}\right)^{\otimes 2} = \left( \mathcal{F}_{\mathrm{DCK}}^{d( X(\mathbf{t},\mu,\mathbf{s}))}\right)^{\otimes 2}
\end{align*}
and the result follows.
\end{proof}

\subsection{Categorical completion $\mathbb{U}_q$ and $R$-matrix}\label{sectionCategoricalCompletion}
\indent The quantum group $U_q^{\mathrm{ad}}(\mathfrak{g})$ is not quasitriangular in the usual sense. We quickly recall from \cite{BR1} how to overcome this issue.

\smallskip

\indent Let $\mathcal{C} = \mathcal{C}_q(\mathfrak{g})$ be the full subcategory of type $1$ finite-dimensional modules in $U_q^{\mathrm{ad}}$-mod, i.e., finite dimensional $\mc(q)$-vector spaces which are weight modules where the generator $K_i$ has eigenvalues in $q_i^\mz$ (see e.g. \cite[\S 10.1.A]{CP} or \cite[\S I.6.12]{BG}). Note that $\mathcal{C}$ is equivalent to the full subcategory of type $1$ finite-dimensional $U_q$-modules.

The {\em categorical completion of $U^{\mathrm{ad}}_q$} is the subalgebra $\mathbb{U}^{\mathrm{ad}}_q$ of $\textstyle \prod_{V \in \mathcal{C}} \mathrm{End}_{\mathbb{C}(q)}(V)$ consisting of the collections $(a_V)_{V \in \mathcal{C}}$ which satisfy $f\circ a_V = a_{V'} \circ f$ for any $f \in \mathrm{Hom}_{\mathcal{C}}(V,V')$, \textit{i.e.} natural transformations $a : \mathcal{U} \Rightarrow \mathcal{U}$ where $\mathcal{U} : \mathcal{C} \to \mathrm{Vect}_{\mathbb{C}(q)}$ is the forgetful functor. It admits a ``generalized'' Hopf algebra structure such that the map
\begin{equation}\label{embeddingUqIntoItsCompletion}
\iota : U_q^{\mathrm{ad}} \to \mathbb{U}^{\mathrm{ad}}_q, \quad h \mapsto (h_V)_{V \in \mathcal{C}}
\end{equation}
is a morphism of Hopf algebras, where $h_V$ is the representation of $h$ on $V$. The morphism $\iota$ is known to be injective and thus $U_q^{\mathrm{ad}}$ can be seen as a Hopf subalgebra of $\mathbb{U}^{\mathrm{ad}}_q$. If moreover we let $\mathbb{U}_q = \mathbb{U}_q^{\mathrm{ad}} \otimes_{\mathbb{C}(q)} \mathbb{C}(q^{1/D})$ be the extension of scalars to $\mathbb{C}(q^{1/D})$, then $\iota$ can be extended to an embedding of the simply connected quantum group:
\[ \iota : U_q \to \mathbb{U}_q. \]

\smallskip

\indent There is an analogous notion of categorical completion of $U_q \otimes U_q$, denoted by $\mathbb{U}_q \otimes \mathbb{U}_q$, see \cite[\S2, \S3.2]{BR1}. There is an $R$-matrix for $\mathbb{U}_q$, which lives in $\mathbb{U}_q \otimes \mathbb{U}_q$; this means that it is a family of $\mathbb{C}(q)$-linear maps
\begin{equation}\label{RmatrixInCompletion}
R = \bigl(R_{V,W} : V \otimes W \to V \otimes W\bigr)_{V, W \in \mathcal{C}}
\end{equation}
which is natural (\textit{i.e.} it commutes with the $U_q$-morphisms of the form $f \otimes g$). The definition of $R$  is derived from the $R$-matrix of $U_h(\mathfrak{g})$ \cite[Th. 8.3.9]{CP}: we have $R = \Theta \hat R$, where
\begin{itemize}
\item for all $V,W \in \mathcal{C}$, if $v \in V$ and $w \in W$ are weight vectors of weights $\mu$ and $\nu$ we put
\begin{equation}\label{ThetaOnWeightVectors}
\Theta(v \otimes w) = q^{(\mu,\nu)} v \otimes w.
\end{equation}
Since any module in $\mathcal{C}$ has a basis of weight vectors, this defines a linear map $\Theta_{V,W} : V \otimes W \to V \otimes W$ and $\Theta = (\Theta_{V,W})_{V,W \in \mathcal{C}}$ is an element of $\mathbb{U}_q^{\otimes 2}$.
\item $\hat R$ is written formally as $\textstyle \prod_{i=1}^N \sum_{n \in \mathbb{N}} z_{i,n}E_{\beta_i}^n \otimes F_{\beta_i}^n$, where the $z_{i,n}$ are coefficients in $\mathbb{C}(q)$ such that $z_{i,0} = 1$ for all $i$. Since the actions of $E_{\beta_i}$ and $F_{\beta_i}$ are nilpotent on any finite-dimensional module, the action of these infinite sums on $V \otimes W$ gives a well-defined linear map $\hat R_{V,W} : V \otimes W \to V \otimes W$ for all $V,W \in \mathcal{C}$ and $\hat R = (\hat R_{V,W})_{V,W \in \mathcal{C}}$ is an element of $\mathbb{U}_q^{\otimes 2}$.
\end{itemize}

\smallskip

\indent Since $\Theta_{V,W} \in \mathrm{End}_{\mathbb{C}(q)}(V \otimes W) = \mathrm{End}_{\mathbb{C}(q)}(V) \otimes \mathrm{End}_{\mathbb{C}(q)}(W)$, we can write $\textstyle \Theta_{V,W} = \sum_i \Theta^V_{(1),i} \otimes \Theta^W_{(2),i}$. From this observation we allow ourselves to use the notation $\Theta = \Theta_{(1)} \otimes \Theta_{(2)}$, which will be very convenient in later computations. 
\smallskip

In e.g. Theorem \ref{TheoremePhignInjectif} we will need the following observation. Let $v$ be a vector of weight $\mu \in P$ in some $V \in \mathcal{C}$; then we have
\begin{equation}\label{ThetaPoids}
\Theta_{(1)}v \otimes \Theta_{(2)} = v \otimes K_{\mu}, \qquad \Theta_{(1)} \otimes \Theta_{(2)}v = K_{\mu} \otimes v.
\end{equation}
Indeed, $\Theta_{V,-} = \bigl(\Theta_{V,W} : V \otimes W \to V \otimes W \bigr)_{W \in \mathcal{C}}$ is a family of linear maps indexed by $W$, so by definition $\Theta_{V,-} \in \mathrm{End}(V) \otimes \mathbb{U}_q$, and thus $\Theta_{(1)}v \otimes \Theta_{(2)} \in V \otimes \mathbb{U}_q$. If $w \in W$ is a vector of weight $\nu$ we have $\Theta_{V,W}(v \otimes w) = q^{(\mu,\nu)} v \otimes w = v \otimes K_{\mu}w$, which means that $\Theta_{(1)}v \otimes \Theta_{(2)} = v \otimes K_{\mu}$ as desired. We also note that
\begin{equation}\label{ThetaPoidsInverse}
(S \otimes \mathrm{id})(\Theta) =  (\mathrm{id} \otimes S)(\Theta) = \Theta^{-1}
\end{equation}
and we allow ourselves to write these elements as $S(\Theta_{(1)}) \otimes \Theta_{(2)}$ and $\Theta_{(1)} \otimes S(\Theta_{(2)})$. Finally
\begin{equation}\label{commutationKTheta}
K_{\nu}\Theta_{(1)} \otimes \Theta_{(2)} = \Theta_{(1)}K_{\nu} \otimes \Theta_{(2)}, \qquad \Theta_{(1)} \otimes K_{\nu}\Theta_{(2)} = \Theta_{(1)} \otimes \Theta_{(2)}K_{\nu}
\end{equation}
for all $\nu \in P$.

\smallskip

\indent Using the PBW basis \eqref{PBWmonomial} to rewrite $\hat R$, we get
\begin{equation}\label{expressionCanoniqueR}
R = \Theta_{(1)} \otimes \Theta_{(2)} + \sum_{k \geq 1} z_k \, \Theta_{(1)}X(\mathbf{0},0,\mathbf{s}_k) \otimes \Theta_{(2)}X(\mathbf{s}_k,0,\mathbf{0})
\end{equation}
where $z_k \in \mathbb{C}(q)$ and for each $k\geq 1$ we have $\mathbf{s}_k \neq \mathbf{0}$. This expression of $R$ will be abridged as a formal sum
\[ R = \sum_{(R)} R_{(1)} \otimes R_{(2)}. \]
Let $R_{(1)} \otimes R_{(2)}$ be one of the summands in \eqref{expressionCanoniqueR}; we see from \eqref{weightOfRootVectors} and \eqref{commutationKTheta} that
\begin{equation}\label{commutationRK}
K_{\nu}R_{(1)} \otimes R_{(2)} = q^{(\nu, \gamma)} R_{(1)}K_{\nu} \otimes R_{(2)} \qquad R_{(1)} \otimes K_{\nu} R_{(2)} = q^{-(\nu,\gamma)} R_{(1)} \otimes R_{(2)}K_{\nu}
\end{equation}
for some $\gamma \in Q_+$. In particular $\gamma = 0$ if and only if $R_{(1)} \otimes R_{(2)}$ is the first summand of $R$, namely $\Theta_{(1)} \otimes \Theta_{(2)}$.

\subsection{Quantized coordinate algebra $\mathcal{O}_q(G)$}\label{sectionOq}
Recall that $\mathcal{C} = \mathcal{C}_q(\mathfrak{g})$ is the full subcategory of $U_q^{\mathrm{ad}}$-mod whose objects are the (finite-dimensional) modules of type $1$. The subcategory $\mathcal{C}$ contains the trivial module and is stable under tensor product and taking duals.

\smallskip

\indent Let $G$ be the connected, complex, semisimple algebraic group $G$ with Lie algebra $\mathfrak{g}$.  The {\em quantized coordinate algebra} $\mathcal{O}_q = \mathcal{O}_q(G)$ is the subspace of $(U_q^{\mathrm{ad}})^{\circ}$ spanned over $\mathbb{C}(q)$ by the family of all matrix coefficients of objects in $\mathcal{C}$ \cite[Chap. I.7]{BG}. Note that $\mathcal{O}_q = (U_q^{\mathrm{ad}})^{\circ}_{\mathcal{C}}$ in the notations of \S\ref{sectionHopfAlgebras} and hence $\mathcal{O}_q$ is a Hopf algebra. We denote by $\star$ the product in $\mathcal{O}_q$. Since $\mathcal{C}$ is semisimple, $\mathcal{O}_q$ is spanned over $\mathbb{C}(q)$ by the matrix coefficients of the irreducible finite-dimensional $U_q^{\mathrm{ad}}$-modules of type $1$.

\smallskip

\indent Let $\mathcal{O}_q(q^{1/D}) = \mathcal{O}_q \otimes_{\mathbb{C}(q)} \mathbb{C}(q^{1/D})$ be the extension of scalars to $\mathbb{C}(q^{1/D})$, where $D \in \mathbb{N}$ is defined in \S \ref{prelimLieAlgebras}. There is a non-degenerate pairing
\begin{equation}\label{pairingOqCategoricalCompletion}
\langle \cdot, \cdot \rangle : \mathcal{O}_q(q^{1/D}) \otimes \mathbb{U}_q \to \mathbb{C}(q^{1/D}), \quad \bigl\langle {_V\phi^w_v}, (a_X)_{X \in \mathcal{C}} \bigr\rangle = w(a_Vv)
\end{equation}
which extends the evaluation pairing $\mathcal{O}_q \otimes U_q^{\mathrm{ad}} \to \mathbb{C}(q)$.

\smallskip

\indent For each $\mu \in P_+$ let $V_{\mu}$ be the irreducible $U_q^{\mathrm{ad}}$-module of type $1$ with highest weight $\mu$.
We denote by $C(\mu)$ the subspace of $\mathcal{O}_q$ spanned by the matrix coefficients of $V_{\mu}$. We have $\textstyle \mathcal{O}_q = \bigoplus_{\mu \in P_+} C(\mu)$. We have the decomposition
\[ V_{\mu} \otimes V_{\nu} = V_{\mu + \nu} \oplus \bigoplus_{\lambda < \mu + \nu} N_{\mu,\nu}^{\lambda} V_{\lambda} \]
where the $N_{\mu,\nu}^{\lambda} \in \mathbb{N}$ are multiplicities and $\lambda \leq \kappa$ means that $\kappa - \lambda \in Q_+$. We deduce that
\begin{equation}\label{filtrationOq}
C(\mu) \star C(\nu) = C(\mu + \nu) \oplus \bigoplus_{\lambda < \mu + \nu} \delta_{\mu,\nu}^{\lambda} \, C(\lambda)
\end{equation}
where $\delta_{\mu,\nu}^{\lambda}$ is $0$ or $1$, depending if $N_{\mu,\nu}^{\lambda} = 0$ or $N_{\mu,\nu}^{\lambda} > 0$.

\section{The loop algebra $\mathcal{L}_{0,1}$ and the handle algebra $\mathcal{L}_{1,0}$}\label{sectionHandleAlgebra}
\indent Let $H$ be a quasitriangular Hopf algebra with an invertible antipode. In the first papers on combinatorial quantization like \cite{A, AGS1, BuR1}, the {\em loop algebra} $\mathcal{L}_{0,1}(H)$ and the {\em handle algebra} $\mathcal{L}_{1,0}(H)$ were defined by matrix relations describing the commutation relations in the algebra. Here we give a  more intrinsic definition of $\mathcal{L}_{1,0}(H)$, as a twist of $\mathcal{L}_{0,1}(H)^{\otimes 2}$. The seminal definition of $\mathcal{L}_{1,0}(H)$ based on matrix relations will be recovered in Proposition \ref{presentationL10}.

\subsection{Definition of $\mathcal{L}_{0,1}(H)$}\label{subsectionL10H}
Recall that an invertible element $J \in H^{\otimes 2}$ satisfying
\begin{equation*}
\begin{array}{l}
(J \otimes 1) \, (\Delta \otimes \mathrm{id})(J) = (1 \otimes J) \, (\mathrm{id} \otimes \Delta)(J),\\[.2em]
(\varepsilon \otimes \mathrm{id})(J) = (\mathrm{id} \otimes \varepsilon)(J) = 1.
\end{array}
\end{equation*}
is called a {\em twist} for $H$. We can define a new Hopf algebra $H_J$, called the {\em twist of $H$ by $J$}, which is $H$ as an algebra but whose coproduct, counit and antipode are (for $h\in H$)
\[ \Delta_{H_J}(h) = J \Delta_H(h) J^{-1}, \quad \varepsilon_{H_J}(h) = \varepsilon_H(h), \quad S_{H_J}(h) = u S_H(h) u^{-1} \]
with $\textstyle u = \sum_{(J)} J_{(1)}S(J_{(2)})$ if we write $\textstyle J = \sum_{(J)} J_{(1)} \otimes J_{(2)}$. If $(A,m_A,\cdot)$ is a right $H$-module-algebra we can define a right $H_J$-module-algebra $A_J$, called the {\em twist of $A$ by $J$}, which is $A$ as a vector space and whose product is
\begin{equation}\label{twistedProduct}
m_{A_J}(x \otimes y) = \sum_{(J)} m_A\bigl( x \cdot J_{(1)} \otimes y \cdot J_{(2)}\bigr) 
\end{equation}
(where $m_A$ is the product in $A$). The right action $\cdot : A_J \otimes H_J \to A_J$ is equal to the original action $\cdot : A \otimes H \to A$.

\smallskip

\indent Following the conventions of \cite{BR1,BR2} recalled in \S \ref{sectionHopfAlgebras}, we write the $R$-matrix of $H$ as
\[ R = \sum_{(R)} R_{(1)} \otimes R_{(2)} \in H^{\otimes 2}. \]
Let us recall from \cite[\S 4]{BR1} the construction of the {\em loop algebra} $\mathcal{L}_{0,1}(H)$ as a twist of the restricted dual $H^{\circ}$, i.e. the subspace of $H^*$ spanned by the matrix coefficients of finite-dimensional $H$-modules (see \S\ref{sectionHopfAlgebras}). Denote by $\rhd$ and $\lhd$ the left and right coregular actions of $H$ on $H^{\circ}$ respectively:
\begin{equation}\label{coregularActions}
x\rhd \varphi = \sum_{(\varphi)}\varphi_{(1)} \langle \varphi_{(2)}, x\rangle, \quad \varphi \lhd x = \sum_{(\varphi)} \langle \varphi_{(1)}, x\rangle \varphi_{(2)}
\end{equation}
where $x \in H$, $\varphi \in H^{\circ}$ and we use Sweedler's notation $\textstyle \Delta_{H^{\circ}}(\varphi) = \sum_{(\varphi)} \varphi_{(1)} \otimes \varphi_{(2)}$. Let $H \otimes H^{\mathrm{cop}}$ be endowed with the usual Hopf algebra structure on tensor products. Introduce
\begin{equation}\label{twistL01}
F = R_{32}R_{42} = \sum_{(R^1),(R^2)} (1 \otimes R_{(2)}^1R_{(2)}^2) \otimes (R_{(1)}^1 \otimes R_{(1)}^2) \in (H \otimes H^{\mathrm{cop}})^{\otimes 2}.
\end{equation}
Here we write $\textstyle R_{32} = \sum_{(R)} 1 \otimes R_{(2)} \otimes R_{(1)} \otimes 1$ and $\textstyle R_{42} = \sum_{(R)} 1 \otimes R_{(2)} \otimes 1 \otimes R_{(1)}$, which are embeddings of $R \in H^{\otimes 2}$ into $(H \otimes H^{\mathrm{cop}})^{\otimes 2}$, while $R^1$ and $R^2$ denote two copies of $R$. It is straightforward to check that the element $F$ is a twist for $H \otimes H^{\mathrm{cop}}$, so we can consider the twisted Hopf algebra $(H \otimes H^{\mathrm{cop}})_F$, which we denote by $A_{0,1}(H)$.

\smallskip

\indent Note that $H^{\circ}$ is a right $(H \otimes H^{\mathrm{cop}})$-module-algebra for the action
\begin{equation}\label{actionD}
 \varphi \cdot (x \otimes y) = S(y) \rhd \varphi \lhd x \qquad (\text{with } \varphi \in H^{\circ},\, x \in H,\, y \in H^{\mathrm{cop}}).
\end{equation}
\begin{defi} $\mathcal{L}_{0,1}(H)$ is the twist $(H^{\circ})_F$, with $F$ in \eqref{twistL01}.
\end{defi}
Explicitly, it is the vector space $H^{\circ}$ with the product
\begin{equation}\label{produitL01}
\varphi\psi = \sum_{(R^1),(R^2)} \bigl( R_{(2)}^2 S(R_{(2)}^1) \rhd \varphi \bigr) \star \bigl( R_{(1)}^2 \rhd \psi \lhd R_{(1)}^1 \bigr)
\end{equation}
where $\star$ is the usual product on $H^{\circ}$, given by $\eta \star \gamma = (\eta \otimes \gamma) \circ \Delta_H$. The formula \eqref{produitL01} is derived from \eqref{twistL01} using the general definition of the twisted product in \eqref{twistedProduct} together with the fact that $S$ is an antimorphism of algebras and that $(S \otimes S)(R) =R$.

\smallskip

\indent By construction, $\mathcal{L}_{0,1}(H)$ is a right $A_{0,1}(H)$-module-algebra for the action \eqref{actionD}. The coproduct $\Delta_H$, viewed as a map $H \to A_{0,1}(H)$, is a morphism of Hopf algebras. It follows that $\mathcal{L}_{0,1}(H)$ is a right $H$-module-algebra for the right coadjoint action, given by
\begin{equation}\label{coadL01}
\mathrm{coad}^r(h)(\varphi) = \sum_{(h)} S(h_{(2)}) \rhd \varphi \lhd h_{(1)}.
\end{equation}
 
\subsection{Definition of $\mathcal{L}_{1,0}(H)$} Let us denote
\begin{align*}\gamma & := R_{32} R_{31} R_{42} R_{14}^{-1} \\
& = \sum_{(R^1), \ldots (R^4)} \!\!\!\bigl(R_{(2)}^2 S(R_{(1)}^4) \otimes R_{(2)}^1R_{(2)}^3 \bigr) \otimes \bigl(R_{(1)}^1R_{(1)}^2 \otimes R_{(1)}^3 R_{(2)}^4\bigr) \in A_{0,1}(H)^{\otimes 2}
\end{align*}
where $R^1, \ldots, R^4$ are four copies of $R$ and we used that $R^{-1} = (S \otimes \mathrm{id})(R)$.
\begin{lem}\label{lemmeTwistL10}
The element $\gamma$ is a bicharacter of $A_{0,1}(H)$, which means that
\[ (\Delta_{A_{0,1}(H)} \otimes \mathrm{id})(\gamma) = \gamma_{23}\gamma_{13}, \quad (\mathrm{id} \otimes \Delta_{A_{0,1}(H)})(\gamma) = \gamma_{12} \gamma_{13} \]
where the subscripts denote embeddings of $\gamma \in A_{0,1}(H)^{\otimes 2}$ into $A_{0,1}(H)^{\otimes 3}$. It follows that
\[ \Gamma = 1 \otimes 1 \otimes \gamma \otimes 1 \otimes 1 \in A_{0,1}(H)^{\otimes 2} \otimes A_{0,1}(H)^{\otimes 2} \]
is a twist for $A_{0,1}(H)^{\otimes 2}$.
\end{lem}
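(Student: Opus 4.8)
The statement bundles two claims: that $\gamma$ satisfies the two bicharacter identities (together with the normalization $(\varepsilon \otimes \mathrm{id})(\gamma) = (\mathrm{id} \otimes \varepsilon)(\gamma) = 1$ with respect to $\varepsilon = \varepsilon_{A_{0,1}(H)} = \varepsilon_H \otimes \varepsilon_H$), and that these identities force $\Gamma$ to be a twist. The plan is to treat the two claims separately and to put essentially all the effort into the first, since the second is formal.

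For the bicharacter identities I would compute directly, starting from the explicit expansion of $\gamma = R_{32}R_{31}R_{42}R_{14}^{-1}$ and the twisted coproduct $\Delta_{A_{0,1}(H)}(h) = F\,\Delta_{H \otimes H^{\mathrm{cop}}}(h)\,F^{-1}$ with $F = R_{32}R_{42}$. Applying $\Delta_{A_{0,1}(H)}$ to one tensor leg of $\gamma$ turns it into a product of $R$-matrices in $(H \otimes H^{\mathrm{cop}})^{\otimes 3} = H^{\otimes 6}$, flanked by conjugating copies of $F$. The tools needed are the hexagon (bicharacter) identities for $R$ itself, namely $(\Delta \otimes \mathrm{id})(R) = R_{13}R_{23}$ and $(\mathrm{id} \otimes \Delta)(R) = R_{13}R_{12}$ together with their $\Delta^{\mathrm{op}}$-versions on the $H^{\mathrm{cop}}$-legs, the quasi-cocommutativity $R\,\Delta = \Delta^{\mathrm{op}}\,R$, and the quantum Yang--Baxter equation $R_{12}R_{13}R_{23} = R_{23}R_{13}R_{12}$. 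The mechanism I expect is that the conjugating factors $F$ telescope against the $R$-factors of $\gamma$ through repeated use of the Yang--Baxter equation, leaving exactly the ordered products $\gamma_{23}\gamma_{13}$ and $\gamma_{12}\gamma_{13}$ (the order is reversed from the usual $R$-matrix relations, reflecting that $\gamma$ behaves like an inverse/co-opposite $R$-matrix). The normalization conditions are immediate from $(\varepsilon \otimes \mathrm{id})(R) = (\mathrm{id} \otimes \varepsilon)(R) = 1$ applied termwise: each of the four $R$-factors of $\gamma$ collapses to $1$ and one is left with $1_{A_{0,1}(H)}$.

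Once the bicharacter identities are established, the twist property of $\Gamma$ is a purely formal consequence valid for any bicharacter $\gamma$ of any Hopf algebra $K$ placed in the two inner legs of $B = K \otimes K$; here $K = A_{0,1}(H)$ and $\Gamma = \gamma_{23} \in B^{\otimes 2}$. Writing $\gamma = \sum \alpha \otimes \beta$ and using $\Delta_B(x \otimes y) = \sum x_{(1)} \otimes y_{(1)} \otimes x_{(2)} \otimes y_{(2)}$, one expands both sides of the twist equation $(\Gamma \otimes 1)(\Delta_B \otimes \mathrm{id})(\Gamma) = (1 \otimes \Gamma)(\mathrm{id} \otimes \Delta_B)(\Gamma)$ in $B^{\otimes 3} = K^{\otimes 6}$. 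Substituting $(\Delta_K \otimes \mathrm{id})(\gamma) = \gamma_{23}\gamma_{13}$ into the left-hand side and $(\mathrm{id} \otimes \Delta_K)(\gamma) = \gamma_{12}\gamma_{13}$ into the right-hand side, both sides reduce to $\sum 1 \otimes \alpha_A\alpha_B \otimes \beta_A \otimes \alpha_C \otimes \beta_C\beta_B \otimes 1$ after relabeling the three independent copies $A,B,C$ of $\gamma$; the counit conditions on $\Gamma$ follow from the normalization of $\gamma$. I would present this as a short self-contained computation (or isolate it as a general lemma on bicharacters), since it uses no special feature of $A_{0,1}(H)$.

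The main obstacle is entirely in the first part: managing the bookkeeping of six tensor legs, the interleaving of the $H$- and $H^{\mathrm{cop}}$-coproducts, and the placement of the conjugating factors $F$, while applying the hexagon relations and the Yang--Baxter equation in the right order. A practical way to control this is to verify each identity one elementary factor $R_{ij}^{\pm 1}$ of $\gamma$ at a time, exploiting that each such factor is itself a bicharacter-type element, and to arrange the cancellations of $F$ using the quasitriangularity relations before fully expanding rather than after. A tempting conceptual shortcut is to recognize $\gamma$ as (the inverse, or co-opposite, of) the canonical $R$-matrix that $A_{0,1}(H) = (H \otimes H^{\mathrm{cop}})_F$ inherits from the quasitriangular structure of $H \otimes H^{\mathrm{cop}}$, whence both identities would follow at once from the hexagon axioms for that $R$-matrix; however, establishing this identification requires essentially the same $R$-matrix manipulation, so I would keep the direct computation as the primary route.
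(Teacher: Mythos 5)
Your proposal is correct and takes essentially the same route as the paper: the paper likewise establishes the two bicharacter identities by direct computation with the twisted coproduct $\Delta_{A_{0,1}(H)} = F\,\Delta_{H \otimes H^{\mathrm{cop}}}(\cdot)\,F^{-1}$ and the quasitriangularity axioms (explicitly leaving that computation to the reader), and treats the twist property of $\Gamma$ as ``an easy general fact''. Your second paragraph in fact supplies the detail the paper omits --- placing a bicharacter $\gamma \in K \otimes K$ in the inner legs of $(K\otimes K)^{\otimes 2}$ yields a twist because both sides of the twist equation reduce to $\gamma_{23}\gamma_{45}\gamma_{25}$ in $K^{\otimes 6}$ (your displayed expansion is a correct instance of this), with the counit conditions coming from the normalization of $\gamma$.
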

\begin{proof}
The two equalities are obtained by straightforward computations using the definition of the coproduct in $A_{0,1}(H)$ and the defining properties of the $R$-matrix given e.g. in \cite[\S 4.2]{CP}; this is left to the reader. The last claim is a general fact.\end{proof}

\noindent It follows that we have the twisted Hopf algebra $\bigl(A_{0,1}(H)^{\otimes 2}\bigr)_{\Gamma}$, which we denote by $A_{1,0}(H)$.
\begin{defi} $\mathcal{L}_{1,0}(H)$ is the twist by $\Gamma$ of the right $A_{0,1}(H)^{\otimes 2}$-module-algebra $\mathcal{L}_{0,1}(H)^{\otimes 2}$:
\[ \mathcal{L}_{1,0}(H) = (\mathcal{L}_{0,1}(H) \otimes \mathcal{L}_{0,1}(H))_{\Gamma}. \]
\end{defi}

\indent Explicitly, $\mathcal{L}_{1,0}(H)$ is the vector space $\mathcal{L}_{0,1}(H) \otimes \mathcal{L}_{0,1}(H)$ with the product
\begin{equation}\label{productL10}
(\beta \otimes \alpha)(\beta' \otimes \alpha') = \sum_{(R^1),\ldots,(R^4)} \beta \bigl( R_{(2)}^4 R_{(1)}^3 \rhd \beta' \lhd R_{(1)}^1 R_{(1)}^2 \bigr) \otimes \bigl( R_{(2)}^3 S(R_{(2)}^1) \rhd \alpha \lhd R_{(2)}^2 R_{(1)}^4  \bigr) \alpha'
\end{equation}
where we used that $(S \otimes S)(R) =R$. Note that the maps
\begin{equation}\label{embeddingL01inL10}
\fonc{\mathfrak{i}_A}{\mathcal{L}_{0,1}(H)}{\mathcal{L}_{1,0}(H)}{\alpha}{1 \otimes \alpha}, \qquad \fonc{\mathfrak{i}_B}{\mathcal{L}_{0,1}(H)}{\mathcal{L}_{1,0}(H)}{\beta}{\beta \otimes 1}
\end{equation}
are embeddings of algebras, and we have
\begin{equation}\label{factorisationElementsL10}
\beta \otimes \alpha = \mathfrak{i}_B(\beta) \, \mathfrak{i}_A(\alpha).
\end{equation}
Therefore the handle algebra $\mathcal{L}_{1,0}(H)$ contains two distinct copies of the loop algebra $\mathcal{L}_{0,1}(H)$. They will be used to give a presentation of $\mathcal{L}_{1,0}(H)$ in Proposition \ref{presentationL10}. The choice of the subscripts $A$ and $B$ to denote the embeddings will be explained in \eqref{matABmars25}. 

\smallskip

\begin{lem}\label{lemmaIteratedDelta10}
The iterated coproduct $\textstyle \Delta^{(3)}_H : h \mapsto \sum_{(h)} h_{(1)} \otimes h_{(2)} \otimes h_{(3)} \otimes h_{(4)}$, viewed as a map $H \to A_{1,0}(H)$, is a morphism of Hopf algebras.
\end{lem}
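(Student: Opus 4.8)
The plan is to avoid a head-on computation with the eightfold-twisted coproduct of $A_{1,0}(H)$ and instead exhibit $\Delta^{(3)}_H$ as a composite of two morphisms of Hopf algebras. First, the easy structural parts: twisting a Hopf algebra changes neither its multiplication, its unit, nor its counit, and $H^{\mathrm{cop}}$ has the same multiplication as $H$; hence the underlying algebra of $A_{1,0}(H) = (A_{0,1}(H)^{\otimes 2})_\Gamma$ is just $(H\otimes H^{\mathrm{cop}})^{\otimes 2} = H^{\otimes 4}$, and its counit is $\varepsilon_H^{\otimes 4}$. Since the iterated coproduct $\Delta^{(3)}_H\colon H\to H^{\otimes 4}$ is an algebra map sending $1$ to $1^{\otimes 4}$ and satisfies $\varepsilon_H^{\otimes 4}\circ\Delta^{(3)}_H = \varepsilon_H$, it is automatically a morphism of algebras respecting units and counits when viewed as a map $H\to A_{1,0}(H)$. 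Compatibility with the antipodes will then be automatic, as any bialgebra morphism between Hopf algebras commutes with antipodes. So the whole content is the compatibility with the coproducts.

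For this I would use that $\Delta_H\colon H\to A_{0,1}(H)$ is a morphism of Hopf algebras (established in the construction of $\mathcal{L}_{0,1}(H)$ above, see the discussion preceding \eqref{coadL01}; cf. \cite[\S 4]{BR1}). Its coalgebra-morphism property is exactly $\Delta_{A_{0,1}(H)}\circ\Delta_H = (\Delta_H\otimes\Delta_H)\circ\Delta_H = \Delta^{(3)}_H$, an identity of maps $H\to A_{0,1}(H)^{\otimes 2}$ whose common target, as a vector space, is $H^{\otimes 4}$. Thus $\Delta^{(3)}_H = \Delta_{A_{0,1}(H)}\circ\Delta_H$ as linear maps. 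Since $\Delta_H$ is already a Hopf morphism, it suffices to show that the coproduct $\Delta_{A_{0,1}(H)}$, viewed as a map $A_{0,1}(H)\to A_{1,0}(H) = (A_{0,1}(H)^{\otimes 2})_\Gamma$, is a morphism of Hopf algebras; composing the two then yields the lemma.

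Write $B = A_{0,1}(H)$ and recall from Lemma \ref{lemmeTwistL10} that $\Gamma = \gamma_{23}$ with $\gamma\in B^{\otimes 2}$. The map $\Delta_B$ is always an algebra morphism $B\to B^{\otimes 2}$ respecting the counit, and these properties are untouched by the $\Gamma$-twist. For the coproduct I would check that $\Delta_B\colon B\to (B^{\otimes 2})_{\gamma_{23}}$ is a coalgebra morphism; conjugating the tensor coproduct of $B^{\otimes 2}$ by $\Gamma=\gamma_{23}$ and cancelling the two outer legs by means of the counit and coassociativity, this identity collapses to the single intertwining relation
\[ \gamma\,\Delta^{\mathrm{op}}_B(x) = \Delta_B(x)\,\gamma \qquad (x\in B). \]
This says precisely that $\gamma$ is the inverse of a universal R-matrix for $B$. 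Such an R-matrix exists because $B = (H\otimes H^{\mathrm{cop}})_F$ is quasitriangular, being a twist by $F$ of the quasitriangular Hopf algebra $H\otimes H^{\mathrm{cop}}$; its R-matrix is $F_{21}(R_H)_{13}(R_{H^{\mathrm{cop}}})_{24}F^{-1}$, which one identifies with $\gamma^{-1}$ (up to the usual flip/inverse conventions). Granting the displayed relation, $\Delta_B\colon B\to A_{1,0}(H)$ is a morphism of Hopf algebras, and composing with $\Delta_H$ finishes the proof.

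The hard part will be the verification of the intertwining relation $\gamma\,\Delta^{\mathrm{op}}_B(x) = \Delta_B(x)\,\gamma$, equivalently the identification of $\gamma$ with the (inverse) R-matrix of $B$. This is an $R$-matrix computation using the explicit forms $\gamma = R_{32}R_{31}R_{42}R_{14}^{-1}$ and $\Delta_B(x) = F\,\Delta_{H\otimes H^{\mathrm{cop}}}(x)\,F^{-1}$ with $F = R_{32}R_{42}$, together with the quasitriangularity axioms $(\Delta\otimes\mathrm{id})(R) = R_{13}R_{23}$, $(\mathrm{id}\otimes\Delta)(R) = R_{13}R_{12}$ and $R\,\Delta = \Delta^{\mathrm{op}}R$. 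The effort lies in arranging the many copies of $R$ so that they telescope through the hexagon identities; this is the one-level-up analogue of the computation showing $\Delta_H\colon H\to A_{0,1}(H)$ is a Hopf morphism, and it refines the bicharacter computation already carried out for Lemma \ref{lemmeTwistL10}. As a cross-check it suffices to verify the relation on algebra generators of $B$, where the presence of $F$ in both $\gamma$ and $\Delta_B$ should produce the cancellations that keep the computation manageable.
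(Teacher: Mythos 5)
Your proposal is correct, but it follows a genuinely different route from the paper's. The paper gives no argument beyond ``straightforward computations left to the reader''; the computation it has in mind is the direct one: after discarding the automatic algebra/unit/counit/antipode parts exactly as you do, one checks by hand that conjugation by $\Gamma=\gamma_{23}$ restores the natural order of the Sweedler legs, i.e.
\[ \gamma\,\Bigl(\sum_{(h)} h_{(3)}\otimes h_{(4)}\otimes h_{(1)}\otimes h_{(2)}\Bigr) \;=\; \Bigl(\sum_{(h)} h_{(1)}\otimes h_{(2)}\otimes h_{(3)}\otimes h_{(4)}\Bigr)\,\gamma , \]
by pushing the four factors $R_{32},R_{31},R_{42},R_{14}^{-1}$ of $\gamma$ through the legs one at a time using only $R\Delta=\Delta^{\mathrm{op}}R$. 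Your route instead factors $\Delta^{(3)}_H=\Delta_{A_{0,1}(H)}\circ\Delta_H$ and reduces everything to the intertwining relation $\gamma\,\Delta^{\mathrm{op}}_B(x)=\Delta_B(x)\,\gamma$ on all of $B=A_{0,1}(H)$; each of your reductions is valid, and your argument in fact proves the stronger statement that $\Delta_B\colon A_{0,1}(H)\to A_{1,0}(H)$ is itself a Hopf morphism. The step you defer as ``the hard part'' does close, and more easily than you fear, but only with the right choice among the conventions you left open: one must equip $H^{\mathrm{cop}}$ with the R-matrix $R_{21}$ (the other natural choice $R^{-1}$ does \emph{not} reproduce $\gamma$ up to flips and inverses unless $H$ is triangular). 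Indeed, with $F=R_{32}R_{42}$, hence $F_{21}=R_{14}R_{24}$, and $(R_{21})_{24}=R_{42}$, the twisted R-matrix is
\[ \mathcal{R}_B \;=\; F_{21}\,R_{13}R_{42}\,F^{-1} \;=\; R_{14}R_{24}R_{13}R_{42}R_{42}^{-1}R_{32}^{-1} \;=\; R_{14}R_{24}R_{13}R_{32}^{-1}, \]
and flipping the two copies of $A_{0,1}(H)$ (the permutation $1\leftrightarrow 3$, $2\leftrightarrow 4$ of the $H$-slots) gives
\[ (\mathcal{R}_B)_{21} \;=\; R_{32}R_{42}R_{31}R_{14}^{-1} \;=\; R_{32}R_{31}R_{42}R_{14}^{-1} \;=\; \gamma , \]
the middle equality because $R_{42}$ and $R_{31}$ occupy disjoint slots. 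Since the flip of any R-matrix satisfies $\mathcal{R}_{21}\Delta^{\mathrm{op}}(x)=\Delta(x)\mathcal{R}_{21}$, your intertwining relation follows at once (and the hexagon half of the R-matrix claim, which the intertwining relation alone does not give, is exactly the bicharacter property already proved in Lemma \ref{lemmeTwistL10}); no telescoping computation is needed beyond the two standard facts you quote. What each approach buys: the paper's computation is self-contained, using nothing but $R\Delta=\Delta^{\mathrm{op}}R$; yours is structural, explains \emph{why} $\gamma$ works --- it is the flip of the canonical R-matrix of the quasitriangular Hopf algebra $(H\otimes H^{\mathrm{cop}})_F$ --- and yields the extra statement about $\Delta_B$, at the cost of invoking the theory of twists of quasitriangular Hopf algebras.
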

\begin{proof}
Straightforward computations left to the reader.
\end{proof}
\noindent Since by definition $\mathcal{L}_{1,0}(H)$ is a right $A_{1,0}(H)$-module-algebra, the lemma implies that $\mathcal{L}_{1,0}(H)$ is a right $H$-module-algebra with action
\begin{equation}\label{coadL10}
\mathrm{coad}^r(h)(\beta \otimes \alpha) = \sum_{(h)} \mathrm{coad}^r(h_{(1)})(\beta) \otimes \mathrm{coad}^r(h_{(2)})(\alpha)
\end{equation}
where we use the action $\mathrm{coad}^r$ from \eqref{coadL01}. In particular the maps $\mathfrak{i}_A, \mathfrak{i}_B$ in \eqref{embeddingL01inL10} are embeddings of $H$-module-algebras.

\smallskip

\indent We now recover the definition of $\mathcal{L}_{1,0}(H)$ by matrix relations, which is very helpful in certain proofs. To introduce the notations, let us first recall the case of $\mathcal{L}_{0,1}(H)$, which was treated in \cite[Prop. 4.6]{BR1}. Let $V$ be a finite-dimensional $H$-module, $(v_i)$ be a basis of $V$ and $(v^i)$ be its dual basis. Denote by $_V\phi^i_j$ the matrix coefficients in this basis: $_V\phi^i_j(h) = v^i(h \cdot v_j)$ for $h \in H$. Let $E_{ij}$ be the basis elements of $\mathrm{End}(V)$ given by $E_{ij}(v_k) = \delta_{jk}v_i$. We define
\begin{equation}\label{defMatrixL01}
\overset{V}{M} = \sum_{i,j} {_V\phi^i_j} \otimes E_{ij} \in \mathcal{L}_{0,1}(H) \otimes \mathrm{End}(V)
\end{equation}
which can be seen as a matrix of size $\dim(V)$ with coefficients in $\mathcal{L}_{0,1}(H)$. Note that $\overset{V}{M}$ does not depend on the choice of a basis in $V$. By its very definition, the collection of matrices $M$ satisfies {\em naturality}:
\begin{equation}\label{naturalite}
(\mathrm{id} \otimes f)\overset{V}{M} = \overset{W}{M}(\mathrm{id} \otimes f)
\end{equation}
for any $H$-linear morphism $f : V \to W$.
Consider the embeddings
\[ \fonc{j_1}{\mathrm{End}(V)}{\mathrm{End}(V) \otimes \mathrm{End}(W)}{X}{X \otimes \mathrm{id}_W}, \quad \fonc{j_2}{\mathrm{End}(W)}{\mathrm{End}(V) \otimes \mathrm{End}(W)}{Y}{\mathrm{id}_V \otimes Y} \]
where $V$ and $W$ are any finite-dimensional $H$-modules. We define
\[ \overset{V}{M}_1 = (\mathrm{id} \otimes j_1)\bigl(\overset{V}{M}\bigr), \:\: \overset{W}{M}_2 = (\mathrm{id} \otimes j_2)\bigl(\overset{W}{M}\bigr) \in \mathcal{L}_{0,1}(H) \otimes \mathrm{End}(V) \otimes \mathrm{End}(W). \]
Note that $\overset{V \otimes W}{M}$ can be viewed as an element of $\mathcal{L}_{0,1}(H) \otimes \mathrm{End}(V) \otimes \mathrm{End}(W)$ since $\mathrm{End}(V \otimes W) = \mathrm{End}(V) \otimes \mathrm{End}(W)$. Finally $x_{V,W}$ denotes the representation of some $x \in H^{\otimes 2}$ on $V \otimes W$ and we implicitly identify $x_{V,W}$ with $1 \otimes x_{V,W} \in \mathcal{L}_{0,1}(H) \otimes \mathrm{End}(V) \otimes \mathrm{End}(W)$. Then $\mathcal{L}_{0,1}(H)$ is spanned by the coefficients of the matrices $\overset{V}{M}$ for all $V$ in $H\text{-}\mathrm{mod}$ and \eqref{produitL01} is equivalent to the set of all {\it fusion relations} :
\begin{equation}\label{fusionrelation}
\overset{V \otimes W}{M} = \overset{V}{M}_1 \, (R')_{V,W} \, \overset{W}{M}_2 \, (R')_{V,W}^{-1}
\end{equation}
where $\textstyle R' = \sum_{(R)} R_{(2)} \otimes R_{(1)}$. Thanks to the braiding in $H\text{-}\mathrm{mod}$ and naturality \eqref{naturalite}, it is equivalent to
\begin{equation}\label{fusionrelationbis}
\overset{V \otimes W}{M} = R_{V,W}^{-1} \, \overset{W}{M}_2 \, R_{V,W}\, \overset{V}{M}_1.
\end{equation}
The equivalence of \eqref{fusionrelation} and \eqref{fusionrelationbis} gives the so-called reflection equation:
\begin{equation}\label{reflectionEquation}
R_{V,W} \overset{V}{M}_1 \, (R')_{V,W} \, \overset{W}{M}_2 = \overset{W}{M}_2 \, R_{V,W}\, \overset{V}{M}_1 (R')_{V,W}.
\end{equation}
From the fusion relation one can check that $\overset{V}{M}$ is invertible, see e.g. \cite[Prop. 3.3]{FaitgHol}. 

\indent For $\mathcal{L}_{1,0}(H)$, consider the matrices
\begin{equation}\label{matABmars25} \overset{V}{A} = (\mathfrak{i}_A \otimes \mathrm{id})\bigl(\overset{V}{M}\bigr),  \:\: \overset{V}{B} = (\mathfrak{i}_B \otimes \mathrm{id})\bigl(\overset{V}{M}\bigr) \in \mathcal{L}_{1,0}(H) \otimes \mathrm{End}(V).
\end{equation}
Since the matrix $\overset{V}{M}$ is invertible, so are $\overset{V}{A}$ and $\overset{V}{B}$. In mathematical physics, $\overset{V}{A}$ and $\overset{V}{B}$ are seen as quantum holonomy matrices associated to the usual generators $a,b$ of the fundamental group of the torus with an open disk removed. They can be given a stated skein theoretic interpretation (\cite[\S 4]{FaitgHol}), which will be developed in \S \ref{sectionTopologicalInterpretation} and \S \ref{QMMLgn}.

\begin{prop}\label{presentationL10}
The following relations hold true for any finite-dimensional $H$-modules $V$, $W$:
\begin{align*}
&\overset{V \otimes W}{A} = \overset{V}{A}_1 \, (R')_{V,W} \, \overset{W}{A}_2 \, (R')_{V,W}^{-1} \quad \text{(fusion relation)}\\
&\overset{V \otimes W}{B} = \overset{V}{B}_1 \, (R')_{V,W} \, \overset{W}{B}_2 \, (R')_{V,W}^{-1} \quad \text{(fusion relation)}\\
&R_{V,W} \, \overset{V}{B}_1 \, (R')_{V,W} \, \overset{W}{A}_2 = \overset{W}{A}_2 \, R_{V,W} \, \overset{V}{B}_1 \, R_{V,W}^{-1} \quad \text{(exchange relation)}
\end{align*}
where $\textstyle R' = \sum_{(R)} R_{(2)} \otimes R_{(1)}$. These matrix equalities entirely describe the product in $\mathcal{L}_{1,0}(H)$.
\end{prop}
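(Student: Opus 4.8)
The plan is to read all three relations off the explicit product formula \eqref{productL10}, using the dictionary that converts the coregular actions \eqref{coregularActions} into multiplication by representation matrices, and then to explain why they force the whole multiplication. For the two fusion relations there is nothing new to compute: the maps $\mathfrak{i}_A$ and $\mathfrak{i}_B$ are algebra morphisms \eqref{embeddingL01inL10}, and the matrices $(R')_{V,W}^{\pm 1}$ lie in $\mathrm{End}(V)\otimes\mathrm{End}(W)$, so they are unaffected when one applies $\mathfrak{i}_A\otimes\mathrm{id}$ or $\mathfrak{i}_B\otimes\mathrm{id}$ to the fusion relation \eqref{fusionrelation} of $\mathcal{L}_{0,1}(H)$. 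Since products of matrix coefficients go to products of matrix coefficients, this yields at once the fusion relations for $\overset{V}{A}$ and for $\overset{V}{B}$.

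The exchange relation is the heart of the matter. The computational input is the dictionary coming from \eqref{coregularActions}: writing $x_V$ for the matrix of $x\in H$ acting on $V$, one checks directly that $x\rhd\overset{V}{M} = \overset{V}{M}\,x_V$ and $\overset{V}{M}\lhd x = x_V\,\overset{V}{M}$, hence $x\rhd\overset{V}{M}\lhd y = y_V\,\overset{V}{M}\,x_V$. I would then evaluate the two matrix products $\overset{V}{B}_1\overset{W}{A}_2$ and $\overset{W}{A}_2\overset{V}{B}_1$. The first is immediate from \eqref{factorisationElementsL10}: its $(ij,kl)$-coefficient is $\mathfrak{i}_B({_V\phi^i_j})\,\mathfrak{i}_A({_W\phi^k_l}) = {_V\phi^i_j}\otimes{_W\phi^k_l}$, so $\overset{V}{B}_1\overset{W}{A}_2$ is the ``normal-ordered'' matrix, call it $C$. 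For the second I would substitute $\beta={_V\phi^i_j}$ and $\alpha={_W\phi^k_l}$ into the formula \eqref{productL10} for $(1\otimes\alpha)(\beta\otimes 1)$ and translate each of its four coregular actions through the dictionary, using $(\mathrm{id}\otimes S)(R)=R^{-1}$ and $(S\otimes S)(R)=R$. After moving the scalar representation matrices acting in one tensor slot past the coefficient matrix acting in the other slot (which commute), the four copies of $R$ occurring in $\gamma=R_{32}R_{31}R_{42}R_{14}^{-1}$ reassemble into the operators $R_{V,W}$, $(R')_{V,W}$ and $R_{V,W}^{-1}$ flanking the core $C$, and a final rearrangement of these factors produces exactly the stated exchange relation.

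The main obstacle is precisely this bookkeeping of the four entangled copies of $R$ in the twist $\gamma$. Two of the copies deliver both legs to the same side of $C$, assembling cleanly into factors $R_{V,W}$; the other two are \emph{split}, with one leg to the left of $C$ and the other to the right, and these are what produce the factor $(R')_{V,W}$ sitting between $\overset{V}{B}_1$ and $\overset{W}{A}_2$ and the factor $R_{V,W}^{-1}$ appearing on the outside. One must track, for each of $R_{32},R_{31},R_{42},R_{14}^{-1}$, which leg acts on $V$ and which on $W$, whether it multiplies on the left or on the right, and whether it carries an antipode; the safe way to organize this is to tabulate the slot and position of the two legs of each copy before assembling. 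As a sanity check, specializing to $R=1$ gives $\gamma=1$, a trivial twist, and the exchange relation degenerates correctly to the commutation $\overset{V}{B}_1\overset{W}{A}_2 = \overset{W}{A}_2\overset{V}{B}_1$; moreover the reflection equations satisfied by $\overset{V}{A}$ and $\overset{V}{B}$ individually, inherited from \eqref{reflectionEquation} through the algebra morphisms $\mathfrak{i}_A,\mathfrak{i}_B$, provide a further consistency check on the placement of the $R$-factors.

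Finally, that the three relations entirely describe the product follows from normal ordering. By \eqref{factorisationElementsL10} every element of $\mathcal{L}_{1,0}(H)$ is a sum of products $\mathfrak{i}_B(\beta)\,\mathfrak{i}_A(\alpha)$, and since $\mathcal{L}_{0,1}(H)$ is spanned by the coefficients of the matrices $\overset{V}{M}$, the algebra $\mathcal{L}_{1,0}(H)$ is generated by the coefficients of all the $\overset{V}{A}$ and $\overset{V}{B}$. The two fusion relations reduce any product of $A$-coefficients, respectively of $B$-coefficients, to coefficients of a single matrix, while the exchange relation rewrites any $\overset{W}{A}_2\,\overset{V}{B}_1$ in terms of $\overset{V}{B}_1\,\overset{W}{A}_2$, that is, moves every $A$ to the right of every $B$. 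Thus an arbitrary product of generators can be brought to normal-ordered form, so these relations determine the multiplication completely.
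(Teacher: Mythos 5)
Your proposal is correct and follows essentially the same route as the paper's proof: the fusion relations by applying the algebra embeddings $\mathfrak{i}_A,\mathfrak{i}_B$ to \eqref{fusionrelation}, the exchange relation by translating the twisted product formula \eqref{productL10} through the dictionary $h\rhd\overset{V}{M}=\overset{V}{M}\,h_V$, $\overset{V}{M}\lhd h=h_V\,\overset{V}{M}$ and reassembling the four $R$-factors, and the final claim from \eqref{factorisationElementsL10} together with the fact that matrix coefficients span $\mathcal{L}_{0,1}(H)$. One small correction: the identity you invoke as $(\mathrm{id}\otimes S)(R)=R^{-1}$ is false as an equality in $H^{\otimes 2}$; what the cancellation step actually requires is $\sum_{(R^1),(R^2)} R^1_{(1)}R^2_{(1)}\otimes S(R^2_{(2)})R^1_{(2)}=1\otimes 1$ (inversion of $R$ in $H\otimes H^{\mathrm{op}}$, which is the identity the paper uses), and this is indeed the right form since in your computation the two legs of each cancelling pair sit on opposite sides of the core matrix.
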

\begin{proof}
We see from \eqref{productL10} and \eqref{factorisationElementsL10} that the product in $\mathcal{L}_{1,0}(H)$ is entirely described by the following formulas:
\begin{equation}\label{productL10WithEmbeddings}
\begin{array}{rl}
\mathfrak{i}_A(\varphi)\mathfrak{i}_A(\psi) &= \mathfrak{i}_A(\varphi \psi),\\
\mathfrak{i}_B(\varphi)\mathfrak{i}_B(\psi) &= \mathfrak{i}_B(\varphi \psi),\\
\mathfrak{i}_A(\varphi)\mathfrak{i}_B(\psi) &= \sum_{(R^1),\ldots,(R^4)} \!\!\!\!\!\!\mathfrak{i}_B\bigl( R_{(2)}^4 R_{(1)}^3 \rhd \psi \lhd R_{(1)}^1 R_{(1)}^2 \bigr) \mathfrak{i}_A\bigl( R_{(2)}^3 S(R_{(2)}^1) \rhd \varphi \lhd R_{(2)}^2 R_{(1)}^4  \bigr).
\end{array}
\end{equation}
Assume that $\varphi$, $\psi$ are matrix coefficients ${_V\phi^i_j}$, ${_W\phi^k_l}$ in some bases of $V$ and $W$. We have already recalled above that the product in $\mathcal{L}_{0,1}(H)$ for matrix coefficients is equivalent to \eqref{fusionrelation}. Hence the two first equalities in \eqref{productL10WithEmbeddings} for matrix coefficients are equivalent to the fusion relations for the matrices $\overset{V}{A}$ and $\overset{V}{B}$. The exchange relation is equivalent to the third equality in \eqref{productL10WithEmbeddings} for matrix coefficients, as we now show. Note that by the very definition of a representation we have
\[ \sum_{i,j} (h \rhd {_V\phi}^i_j) \otimes E_{ij} = \sum_{i,j} {_V\phi}^i_j \otimes (E_{ij} \, h_V), \qquad \sum_{i,j} ({_V\phi}^i_j \lhd h) \otimes E_{ij} = \sum_{i,j} {_V\phi}^i_j \otimes (h_V E_{ij}) \]
where $h_V \in \mathrm{End}(V)$ is the representation of $h \in H$ on $V$. 
Hence
\begin{align*}
&\overset{W}{A}_2 \, \overset{V}{B}_1 = \sum_{i,j,k,l} \mathfrak{i}_A({_W\phi}^k_l)\, \mathfrak{i}_B({_V\phi}^i_j) \otimes E_{ij} \otimes E_{kl}\\
=& \sum_{\substack{i,j,k,l\\(R^1),\ldots,(R^4)}} \mathfrak{i}_B\bigl( R_{(2)}^4 R_{(1)}^3 \rhd {_V\phi}^i_j \lhd R_{(1)}^1 R_{(1)}^2 \bigr)\,\mathfrak{i}_A\bigl( R_{(2)}^3 S(R_{(2)}^1) \rhd {_W\phi}^k_l \lhd R_{(2)}^2 R_{(1)}^4 \bigr) \otimes E_{ij} \otimes E_{kl}\\
=& \sum_{\substack{i,j,k,l\\(R^1), \ldots, (R^4)}} \mathfrak{i}_B({_V\phi}^i_j)\,\mathfrak{i}_A({_W\phi}^k_l) \otimes \bigl(R_{(1)}^1\bigr)_V \, \bigl(R_{(1)}^2\bigr)_V \, E_{ij} \, \bigl(R_{(2)}^4\bigr)_V \, \bigl(R_{(1)}^3\bigr)_V\\[-1.2em]
& \qquad\qquad\qquad\qquad\qquad\qquad\qquad\qquad\qquad\qquad \otimes \bigl(R_{(2)}^2\bigr)_W \, \bigl(R_{(1)}^4\bigr)_W \, E_{kl} \, \bigl(R_{(2)}^3\bigr)_W \, S\bigl(R_{(2)}^1\bigr)_W\\
=& \sum_{(R^1), \ldots, (R^4)} \bigl(R_{(1)}^1\bigr)_{V1} \, \bigl(R_{(1)}^2\bigr)_{V1} \, \overset{V}{B}_1 \, \bigl(R_{(2)}^4\bigr)_{V1} \, \bigl(R_{(1)}^3\bigr)_{V1} \, \bigl(R_{(2)}^2\bigr)_{W2} \, \bigl(R_{(1)}^4\bigr)_{W2} \, \overset{W}{A}_2 \, \bigl(R_{(2)}^3\bigr)_{W2} \, S\bigl(R_{(2)}^1\bigr)_{W2}\\
=& \sum_{(R)} \bigl(R_{(1)}\bigr)_{V1} \, R_{V,W} \, \overset{V}{B}_1 \, (R')_{V,W} \, \overset{W}{A}_2 \, R_{V,W} \, S\bigl(R_{(2)}\bigr)_{W2}.
\end{align*}
Using that $\textstyle \sum_{(R^1),(R^2)} R^1_{(1)}R^2_{(1)} \otimes S(R^2_{(2)})R^1_{(2)} = 1 \otimes 1$, we get
\[ R_{V,W} \, \overset{V}{B}_1 \, (R')_{V,W} \, \overset{W}{A}_2 = \sum_{(R)}  \bigl( R_{(1)} \bigr)_{V1} \, \overset{W}{A}_2 \, \overset{V}{B}_1 \, \bigl(R_{(2)}\bigr)_{W2} \, R_{V,W}^{-1} = \overset{W}{A}_2 \, R_{V,W} \, \overset{V}{B}_1 \, R_{V,W}^{-1} \]
as claimed.
\\As a result the equalities in \eqref{productL10WithEmbeddings}, which determine the product in $\mathcal{L}_{1,0}(H)$, are equivalent to the matrix equalities when $\varphi$ and $\psi$ are matrix coefficients. But the matrix coefficients span $\mathcal{L}_{0,1}(H)$ as a vector space, so if we know the formulas \eqref{productL10WithEmbeddings} for matrix coefficients we can actually deduce the product in $\mathcal{L}_{1,0}(H)$, which proves the last claim.
\end{proof}
\medskip

We will need the following result when discussing the quantum moment maps (see Section \ref{QMMLgn}). Assume now that $H$ has a ribbon element $v$. For every finite-dimensional $H$-module $V$ define matrices in $\mathcal{L}_{1,0}(H) \otimes {\rm End}(V)$ by:
\begin{equation}\label{matpartielles}
\overset{V}{X} =v_V\overset{V}{B}\overset{V}{A}{}^{-1}\ ,\quad \overset{V}{Y} =v_V\overset{V}{B}{}^{-1}\overset{V}{A}
\end{equation}
where $v_V \in \mathrm{End}(V)$ is the representation of $v$ on $V$, and is identified with the element $1_{\mathcal{L}_{1,0}(H)} \otimes v_V\in \mathcal{L}_{1,0}(H) \otimes {\rm End}(V)$.
\begin{lem}\label{fusionforXY} The matrices $\overset{V}{X}$, $\overset{V}{Y}$, and $\overset{V}{X}\overset{V}{Y} =v_V^2\overset{V}{B}\overset{V}{A}{}^{-1}\overset{V}{B}{}^{-1}\overset{V}{A}$ satisfy the fusion relation \eqref{fusionrelation}.
\end{lem}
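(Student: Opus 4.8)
The plan is to verify the fusion relation \eqref{fusionrelation} for each of the three matrices separately, in each case expanding $\overset{V\otimes W}{(\cdot)}$ by means of the relations of Proposition \ref{presentationL10} and then reducing to an identity purely in the $R$-matrices, which is settled with the ribbon identity \eqref{vproperties}. Two consequences of $v$ being central are used throughout. First, $v_V$ is an $H$-linear endomorphism of $V$, so by naturality \eqref{naturalite} it commutes with $\overset{V}{A}$ and $\overset{V}{B}$; in particular $\overset{V}{X}\overset{V}{Y} = v_V^2\overset{V}{B}\overset{V}{A}{}^{-1}\overset{V}{B}{}^{-1}\overset{V}{A}$ as stated. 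Second, $v\otimes 1$ and $1\otimes v$ commute with $R$, hence $v_V\otimes\mathrm{id}_W$ and $\mathrm{id}_V\otimes v_W$ commute with $R_{V,W}$ and $(R')_{V,W}$. Finally, since $(R'R)_{V,W} = (R')_{V,W}\,R_{V,W}$, the identity \eqref{vproperties} gives
\[ v_{V\otimes W} = (v_V\otimes v_W)\,R_{V,W}^{-1}(R')_{V,W}^{-1}. \]

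For $\overset{V}{X} = v_V\overset{V}{B}\overset{V}{A}{}^{-1}$ I would first use the fusion relation for $\overset{V}{B}$ and the inverse of the fusion relation for $\overset{V}{A}$ to compute
\[ \overset{V\otimes W}{B}\,\overset{V\otimes W}{A}{}^{-1} = \overset{V}{B}_1\,(R')_{V,W}\,\overset{W}{Z}_2\,(R')_{V,W}^{-1}\,\overset{V}{A}_1^{-1}, \qquad \overset{V}{Z} := \overset{V}{B}\,\overset{V}{A}{}^{-1}. \]
Substituting the formula for $v_{V\otimes W}$ above and cancelling the factors $v_V\otimes\mathrm{id}_W$ and $\mathrm{id}_V\otimes v_W$ from both sides (they commute with $R_{V,W}$, $(R')_{V,W}$ and with the matrices in the relevant slots), the fusion relation $\overset{V\otimes W}{X} = \overset{V}{X}_1(R')_{V,W}\overset{W}{X}_2(R')_{V,W}^{-1}$ becomes equivalent to
\[ R_{V,W}^{-1}(R')_{V,W}^{-1}\,\overset{V}{B}_1\,(R')_{V,W}\,\overset{W}{Z}_2\,(R')_{V,W}^{-1}\,\overset{V}{A}_1^{-1} = \overset{V}{B}_1\,\overset{V}{A}_1^{-1}\,(R')_{V,W}\,\overset{W}{Z}_2\,(R')_{V,W}^{-1}, \]
which expresses precisely that the two extra $R$-matrices coming from $v$ absorb the defect by which $\overset{V}{Z}=\overset{V}{B}\overset{V}{A}{}^{-1}$ fails fusion on the nose. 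I would prove this by moving $\overset{V}{B}_1$ and $\overset{V}{A}_1^{-1}$ across $\overset{W}{Z}_2 = \overset{W}{B}_2\overset{W}{A}_2^{-1}$ using the exchange relation of Proposition \ref{presentationL10}, the reflection equations \eqref{reflectionEquation} for $\overset{V}{A}$ and $\overset{V}{B}$, and the quasitriangularity axioms $(\Delta\otimes\mathrm{id})(R)=R_{13}R_{23}$, $(\mathrm{id}\otimes\Delta)(R)=R_{13}R_{12}$. The matrix $\overset{V}{Y} = v_V\overset{V}{B}{}^{-1}\overset{V}{A}$ is handled identically, with the roles of $A$ and $B$ (and of the two fusion orders) interchanged.

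For the product, once $\overset{V}{X}$ and $\overset{V}{Y}$ are known to satisfy \eqref{fusionrelation}, I would invoke the following elementary observation: if two natural families $\overset{V}{M}$, $\overset{V}{N}$ satisfy \eqref{fusionrelation}, then $\overset{V}{M}\overset{V}{N}$ satisfies \eqref{fusionrelation} provided $(R')_{V,W}\overset{W}{M}_2(R')_{V,W}^{-1}$ commutes with $\overset{V}{N}_1$ (this follows by expanding both products and cancelling the invertible matrices). Applying it with $M=X$, $N=Y$, it remains to establish the exchange relation $(R')_{V,W}\overset{W}{X}_2(R')_{V,W}^{-1}\,\overset{V}{Y}_1 = \overset{V}{Y}_1\,(R')_{V,W}\overset{W}{X}_2(R')_{V,W}^{-1}$, again from the exchange and reflection relations of Proposition \ref{presentationL10} and the ribbon identity, by the same kind of manipulation. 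Alternatively one may expand $\overset{V\otimes W}{X}\,\overset{V\otimes W}{Y} = v_{V\otimes W}^2\,\overset{V\otimes W}{B}\,\overset{V\otimes W}{A}{}^{-1}\,\overset{V\otimes W}{B}{}^{-1}\,\overset{V\otimes W}{A}$ directly.

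The main obstacle is the $R$-matrix bookkeeping in these reduced identities: one must commute several copies of $R_{V,W}$ and $(R')_{V,W}$ past the matrices $\overset{V}{A}_1,\overset{V}{B}_1,\overset{W}{A}_2,\overset{W}{B}_2$ using only the single exchange relation of Proposition \ref{presentationL10} and its reflection-equation consequences, and then check that the two factors $R_{V,W}^{-1}(R')_{V,W}^{-1}$ supplied by $v$ cancel the resulting discrepancy \emph{exactly}. It is this matching between the framing correction carried by the ribbon element and the non-cocommutativity encoded in $R'R$ that makes the statement hold, and controlling it carefully is the only delicate point.
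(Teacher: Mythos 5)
Your proposal is correct and follows essentially the same route as the paper: the same preliminary facts about the ribbon element (centrality, naturality, and $v_{V\otimes W}=(v_V\otimes v_W)R_{V,W}^{-1}(R')_{V,W}^{-1}$), the same reduction of the fusion relation for $\overset{V}{X}$ and $\overset{V}{Y}$ to an $R$-matrix identity settled by the exchange relation and reflection equation of Proposition \ref{presentationL10}, and the same two-step treatment of $\overset{V}{X}\overset{V}{Y}$ (first the exchange relation between $\overset{V}{X}$ and $\overset{V}{Y}$, then the elementary cancellation argument). The only difference is presentational: the paper carries out explicitly the $R$-matrix manipulation you leave schematic, using exactly the tools you name.
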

\begin{proof}Recall first that $v$ is central. It follows that $v_V$ is a $H$-linear endomorphism so by naturality  \eqref{naturalite} we have $v_V \overset{V}{M} = \overset{V}{M} v_V$. Moreover $(v_V)_1 \, x_{V,W} = x_{V,W} \, (v_V)_1$ and $(v_W)_2 \, x_{V,W} = x_{V,W} \, (v_W)_2$ for all $x \in H^{\otimes 2}$. Let us show the claim for $\overset{V}{X}$; explanations are below the computation: 
\begin{align*}
\overset{V\otimes W}{X} & = (v_V)_1 \, (v_W)_2 \, (R'R)_{V,W}^{-1} \, \overset{V}{B}_1 \, (R')_{V,W} \, \overset{W}{B}_2 \, (R')_{V,W}^{-1}\, (R')_{V,W}\, \overset{W}{A}{}^{-1}_2 (R')_{V,W}^{-1}\, \overset{V}{A}{}^{-1}_1 \\
& = (v_V)_1 \, (v_W)_2 \, R_{V,W}^{-1} \, (R')_{V,W}^{-1} \, \overset{V}{B}_1 \, (R')_{V,W} \, \overset{W}{B}_2 \,  \overset{W}{A}{}^{-1}_2 (R')_{V,W}^{-1}\, \overset{V}{A}{}^{-1}_1\\
& = (v_V)_1 \, (v_W)_2 \, R_{V,W}^{-1}\, \overset{W}{B}_2 \, R_{V,W} \, \overset{V}{B}_1 \,  R_{V,W}^{-1}\, \overset{W}{A}{}^{-1}_2 (R')_{V,W}^{-1}\, \overset{V}{A}{}^{-1}_1\\
&= (v_V)_1 \, (v_W)_2 \, R_{V,W}^{-1}\, \overset{W}{B}_2 \, \overset{W}{A}{}^{-1}_2 \, R_{V,W} \, \overset{V}{B}_1 \,  \overset{V}{A}{}^{-1}_1\\
& = R_{V,W}^{-1} \, \overset{W}{X}_2 \, R_{V,W}\, \overset{V}{X}_1 = \overset{V}{X}_1 \, R'_{V,W} \, \overset{W}{X}_2 \, (R')_{V,W}^{-1}.
\end{align*}
For the first equality we used \eqref{vproperties} and the fusion relations for the matrices $B$ and $A$, for the third equality we used a variant of \eqref{reflectionEquation}, for the fourth equality we used the exchange relation in Proposition \ref{presentationL10}, for the fifth equality we used the remarks made at the begining of the proof and for the sixth equality we used the equivalence of \eqref{fusionrelationbis} and \eqref{fusionrelation}.
\\\noindent The same arguments apply to $\overset{V}{Y}$. In order to prove the claim for $\overset{V}{X}\overset{V}{Y}$ we first show that $\overset{V}{X}$ and $\overset{V}{Y}$ satisfy the exchange relation
$$R_{V,W}\, \overset{V}{X}_1 \, R_{V,W}^{-1} \,  \overset{W}{Y}_2 = \overset{W}{Y}_2 R_{V,W}\,  \overset{V}{X}_1 \, R_{V,W}^{-1}.$$
Since $v$ is central, the observations at the begining of this proof imply that the above exchange relation is equivalent to
$$R_{V,W}\, \overset{V}{B}_1\overset{V}{A}{}^{-1} _1\, R_{V,W}^{-1} \,  \overset{W}{B}{}^{-1}_2\overset{W}{A}_2 = \overset{W}{B}{}^{-1}_2\overset{W}{A}_2 \, R_{V,W}\,  \overset{V}{B}_1\overset{V}{A}{}^{-1}_1 \, R_{V,W}^{-1}.$$
Now, by using again \eqref{reflectionEquation} and the exchange relation in Proposition \ref{presentationL10} we get
\begin{align*}
R_{V,W}\, \overset{V}{B}_1\overset{V}{A}{}^{-1} _1\, R_{V,W}^{-1} \,  \overset{W}{B}{}^{-1}_2\overset{W}{A}_2  & = R_{V,W}\, \overset{V}{B}_1\, R'_{V,W}\, \overset{W}{B}{}^{-1}_2\, (R'_{V,W})^{-1} \,
\overset{V}{A}{}^{-1} _1 \, R'_{V,W}\, \overset{W}{A}_2\\
 & = \overset{W}{B}{}^{-1}_2 \, R_{V,W}\, \overset{V}{B}_1\, R'_{V,W}\, (R'_{V,W})^{-1} \,
\overset{V}{A}{}^{-1} _1 \, R'_{V,W}\, \overset{W}{A}_2\\
& = \overset{W}{B}{}^{-1}_2 \, R_{V,W}\, \overset{V}{B}_1\, R'_{V,W}\, \overset{W}{A}_2 \, R_{V,W}\, \overset{V}{A}{}^{-1} _1 \,  R_{V,W}^{-1}\\
&= \overset{W}{B}{}^{-1}_2 \,  \overset{W}{A}_2 \, R_{V,W}\, \overset{V}{B}_1\, R_{V,W}^{-1}\, R_{V,W}\, \overset{V}{A}{}^{-1} _1 \,  R_{V,W}^{-1}\\
& = \overset{W}{B}{}^{-1}_2\overset{W}{A}_2 \, R_{V,W}\,  \overset{V}{B}_1\overset{V}{A}{}^{-1}_1 \, R_{V,W}^{-1}.
\end{align*}
We can now deduce that $\overset{V}{X}\overset{V}{Y}$ satisfies the fusion relation:
\begin{align*}
\overset{V\otimes W}{X}\ \overset{V\otimes W}{Y} & = \overset{V}{X}_1 \, R'_{V,W} \, \overset{W}{X}_2 \, (R')_{V,W}^{-1}\, \overset{V}{Y}_1 \, R'_{V,W} \, \overset{W}{Y}_2 \, (R')_{V,W}^{-1}\\
& = \overset{V}{X}_1 \, R'_{V,W} \, (R')^{-1}_{V,W}\, \overset{V}{Y}_1 \, R'_{V,W} \, \overset{W}{X}_2 \, (R')^{-1}_{V,W}\, R'_{V,W} \, \overset{W}{Y}_2 \, (R')_{V,W}^{-1}\\
& = \overset{V}{X}_1 \, \overset{V}{Y}_1 \, R'_{V,W} \, \overset{W}{X}_2 \,  \overset{W}{Y}_2 \, (R')_{V,W}^{-1}.\qedhere
\end{align*}
\end{proof}

\subsection{Morphism $\mathcal{L}_{1,0}(H) \to \mathcal{H}(H^{\circ})$}\label{sectionHeisenberg} Since the left coregular action  \eqref{coregularActions} endows $H^{\circ}$ with a structure of $H$-module-algebra, we can consider the smash product $H^{\circ} \# H$, which is denoted by $\mathcal{H}(H^{\circ})$ and is called the Heisenberg double of $H^{\circ}$ \cite[\S 4.1.10]{Mon}. Explicitly, the algebra $\mathcal{H}(H^{\circ})$ is the vector space $H^{\circ} \otimes H$ with the product
\begin{equation}\label{productHeisenberg}
(\varphi \,\#\, x)(\psi \,\#\, y) = \sum_{(x)}\varphi \star (x_{(1)} \rhd \psi) \,\#\, x_{(2)}y
\end{equation}
where we write $\varphi \,\#\, x$ for the element $\varphi \otimes x \in H^{\circ} \otimes H$ and $\star$ is the usual product in $H^{\circ}$.

\smallskip

\indent Consider the following {\em right} action of $H$ on $\mathcal{H}(H^{\circ})$, which will be used in Proposition \ref{propPhi10} below:
\begin{equation}\label{actionOnHeisenberg}
(\varphi \,\#\, x) \cdot h = \sum_{(h)} S(h_{(2)}) \rhd \varphi \lhd h_{(3)} \,\#\, S(h_{(1)})xh_{(4)}.
\end{equation}

\begin{lem}
The action \eqref{actionOnHeisenberg} endows $\mathcal{H}(H^{\circ})$ with a structure of right $H$-module-algebra.
\end{lem}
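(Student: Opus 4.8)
The plan is to verify directly the two defining properties of a right $H$-module-algebra for the action \eqref{actionOnHeisenberg}: first that it is a genuine right action, and then that it is multiplicative, namely $(ab)\cdot h = \sum_{(h)}(a\cdot h_{(1)})(b\cdot h_{(2)})$ together with $1_{\mathcal{H}(H^{\circ})}\cdot h = \varepsilon(h)\,1_{\mathcal{H}(H^{\circ})}$.

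First I would treat the action axioms. That $(\varphi\,\#\,x)\cdot 1 = \varphi\,\#\,x$ is immediate from $\Delta^{(3)}(1)=1^{\otimes 4}$. For associativity, expanding $((\varphi\,\#\,x)\cdot h)\cdot h'$ by \eqref{actionOnHeisenberg} and comparing with $(\varphi\,\#\,x)\cdot(hh')$, the two coincide termwise because $\rhd$ and $\lhd$ from \eqref{coregularActions} are respectively a left and a right action of $H$ that commute, because $S$ is an anti-homomorphism so that $S(h'_{(2)})S(h_{(2)})=S(h_{(2)}h'_{(2)})$, and because $\Delta^{(3)}$ is an algebra morphism, giving $(hh')_{(i)}=h_{(i)}h'_{(i)}$ leg by leg. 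For the unit, $1_{\mathcal{H}(H^{\circ})}=\varepsilon\,\#\,1$, and since $k\rhd\varepsilon=\varepsilon_H(k)\varepsilon=\varepsilon\lhd k$ the $H^{\circ}$-part of $(\varepsilon\,\#\,1)\cdot h$ reduces to the scalar $\varepsilon_H(h_{(2)})\varepsilon_H(h_{(3)})$, which the counit absorbs, leaving $\varepsilon\,\#\,\sum_{(h)}S(h_{(1)})h_{(2)}=\varepsilon(h)(\varepsilon\,\#\,1)$.

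The heart of the proof is multiplicativity. On the left I would first compute $ab$ with the smash-product formula \eqref{productHeisenberg} and then apply \eqref{actionOnHeisenberg}, distributing the operator $S(h_{(2)})\rhd(-)\lhd h_{(3)}$ across the $\star$-product by using that both coregular actions make $H^{\circ}$ a module-algebra, that they commute, and that $\rhd$ is a left action; this rewrites the left-hand side in terms of a five-fold coproduct $\Delta^{(5)}(h)$ as $\sum (S(f_3)\rhd\varphi\lhd f_4)\star\bigl((S(f_2)x_{(1)})\rhd\psi\lhd f_5\bigr)\,\#\,S(f_1)x_{(2)}y\,f_6$. On the right I would apply \eqref{actionOnHeisenberg} to each factor and multiply the results via \eqref{productHeisenberg}; expanding $\Delta(x')$ for $x'=S(h_{(1)})xh_{(4)}$ produces a larger expression involving about ten Sweedler legs of $h$. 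The two sides are then matched by two successive applications of the antipode–counit identity $\sum_{(k)}k_{(1)}S(k_{(2)})=\varepsilon(k)\,1$: first on the pair of legs coming from the ``join'' between the coproducts of $h$ assigned to $a$ and to $b$ (interleaved with the splitting of $x$), and then, once this first collapse has made the remaining redundant legs adjacent, on a second pair sitting inside the element that acts on $\psi$. After these two collapses the surviving six legs match the $f_i$ above term by term.

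The main obstacle is exactly this Sweedler-leg bookkeeping: one must check that each of the two collapsing pairs occurs \emph{only} in the telescoping combination $k_{(1)}S(k_{(2)})$, and that the collapses are performed in the right order, since the inner join must be contracted before the second pair becomes adjacent. Once the correct ordering is identified, everything else is a routine application of coassociativity, the module-algebra axioms for $\rhd$ and $\lhd$, and the fact that $S$ is an anti-homomorphism.
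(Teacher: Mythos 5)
Your proposal is correct and follows essentially the same route as the paper: both sides of the multiplicativity identity are reduced, via the module-algebra property of the coregular actions, coassociativity, and two successive antipode–counit collapses (the inner pair in the $\#$-part first, then the pair inside the element acting on $\psi$), to the same six-legged expression $\sum \bigl(S(h_{(3)})\rhd\varphi\lhd h_{(4)}\bigr)\star\bigl(S(h_{(2)})x_{(1)}\rhd\psi\lhd h_{(5)}\bigr)\,\#\,S(h_{(1)})x_{(2)}yh_{(6)}$, exactly as in the paper's computation. The only difference is that you also verify the right-action and unit axioms explicitly, which the paper leaves implicit as routine.
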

\begin{proof}
We compute:
\begin{align*}
&\sum_{(h)} \bigl( (\varphi \,\#\, x) \cdot h_{(1)} \bigr) \bigl( (\psi \,\#\, y) \cdot h_{(2)} \bigr)\\
=\:& \sum_{(h)} \bigl( S(h_{(2)}) \rhd \varphi \lhd h_{(3)} \,\#\, S(h_{(1)})xh_{(4)} \bigr) \bigl( S(h_{(6)}) \rhd \psi \lhd h_{(7)} \,\#\, S(h_{(5)})yh_{(8)} \bigr)\\
=\:& \sum_{\substack{(x),(h)\\(S(h_{(1)})),(h_{(4)})}} \bigl( S(h_{(2)}) \rhd \varphi \lhd h_{(3)} \bigr) \star \bigl( S(h_{(1)})_{(1)}x_{(1)}h_{(4)(1)}S(h_{(6)}) \rhd \psi \lhd h_{(7)} \bigr)\\[-1.7em]
&\qquad\qquad\qquad\qquad\qquad\qquad\qquad\qquad\qquad\qquad \,\#\, S(h_{(1)})_{(2)}x_{(2)}h_{(4)(2)}S(h_{(5)})yh_{(8)}\\[.8em]
=\:& \sum_{(x),(h)} \bigl( S(h_{(3)}) \rhd \varphi \lhd h_{(4)} \bigr) \star \bigl( S(h_{(2)})x_{(1)}h_{(5)}S(h_{(8)}) \rhd \psi \lhd h_{(9)} \bigr)\\[-1em]
&\qquad\qquad\qquad\qquad\qquad\qquad\qquad\qquad\qquad\qquad \,\#\, S(h_{(1)})x_{(2)}h_{(6)}S(h_{(7)})yh_{(10)}\\[.8em]
=\:& \sum_{(x),(h)} \bigl( S(h_{(3)}) \rhd \varphi \lhd h_{(4)} \bigr) \star \bigl( S(h_{(2)})x_{(1)} \rhd \psi \lhd h_{(5)} \bigr) \,\#\,S(h_{(1)})x_{(2)}yh_{(6)}\\
=\:& \sum_{(x),(h)} S(h_{(2)}) \rhd \bigl(\varphi \star (x_{(1)} \rhd \psi)\bigr) \lhd h_{(3)} \,\#\, S(h_{(1)})x_{(2)}yh_{(4)} = \bigl( (\varphi \,\#\, x)(\psi \,\#\, y) \bigr) \cdot h
\end{align*}
We used the properties of the antipode $S$ and for the last step we used that both $\rhd$ and $\lhd$ are $H$-module-algebra structures on $H^{\circ}$.
\end{proof}

\indent Recall the map (see \cite[Th. 4.3]{BR1} and the references therein)
\begin{equation}\label{RSDmap}
\fonc{\Phi_{0,1}}{\mathcal{L}_{0,1}(H)}{H}{\varphi}{(\varphi \otimes \mathrm{id})(RR') = \sum_{(R^1), (R^2)} \bigl\langle \varphi, R^1_{(1)}R^2_{(2)} \bigr\rangle R^1_{(2)} R^2_{(1)}}
\end{equation}
where $R^1$, $R^2$ denote two copies of $R \in H^{\otimes 2}$. The map $\Phi_{0,1}$ is a morphism of $H$-module-algebras when $\mathcal{L}_{0,1}(H)$ is endowed with the action $\mathrm{coad}^r$ in \eqref{coadL01} and $H$ is endowed with the right adjoint action $\textstyle \mathrm{ad}^r(h)(x) = \sum_{(h)}S(h_{(1)})xh_{(2)}$. Moreover it take values in $H^{\mathrm{lf}}$, the subspace of locally finite elements for $\mathrm{ad}^r$. 

\smallskip

\indent The morphism \eqref{RSDmap} relates $\mathcal{L}_{0,1}(H)$ and $H$. We now discuss a morphism which relates $\mathcal{L}_{1,0}(H)$ and $\mathcal{H}(H^{\circ})$. Let
\begin{equation}\label{defLFHeisenberg}
\mathcal{H}(H^{\circ})^{\mathrm{lf}} = \bigl\{ v \in \mathcal{H}(H^{\circ}) \, \bigl| \, \dim(v \cdot H) < \infty \bigr. \bigr\}
\end{equation}
be the subspace of locally finite elements for the action \eqref{actionOnHeisenberg}. If $H$ is finite-dimensional, we obviously have $\mathcal{H}(H^{\circ})^{\mathrm{lf}} = \mathcal{H}(H^{\circ})$; but we will see later in the case of $H = U_q^{\mathrm{ad}}$ that $\mathcal{H}(H^{\circ})^{\mathrm{lf}}$ is in general a strict subspace. The map $\Phi_{1,0}$ in the next proposition was introduced (in a different form) in \cite[\S 5]{A}.

\begin{prop}\label{propPhi10}
We have a morphism of right $H$-module-algebras
\[ \begin{array}{lrcl}
\Phi_{1,0} : & \mathcal{L}_{1,0}(H) & \longrightarrow & \mathcal{H}(H^{\circ})\\
 & \beta \otimes \alpha & \longmapsto & \sum_{(R^1), (R^2),(R^3)} \left(R^1_{(2)} R^2_{(2)} \rhd \beta \lhd R^3_{(1)} R^1_{(1)} \right) \,\#\, \left(R^3_{(2)} R^2_{(1)} \Phi_{0,1}(\alpha)\right)
\end{array} \]
where $R^1, R^2, R^3$ are three copies of $R \in H^{\otimes 2}$ and $\mathcal{H}(H^{\circ})$ is endowed with the action \eqref{actionOnHeisenberg}. The morphism $\Phi_{1,0}$ actually takes values in $\mathcal{H}(H^{\circ})^{\mathrm{lf}}$.
\end{prop}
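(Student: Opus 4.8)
The plan is to work entirely with the matrix presentation of Proposition \ref{presentationL10}, since the two fusion relations and the exchange relation there completely determine the product of $\mathcal{L}_{1,0}(H)$. Set $\overset{V}{\mathbb{A}} = (\Phi_{1,0}\otimes\mathrm{id})(\overset{V}{A})$ and $\overset{V}{\mathbb{B}} = (\Phi_{1,0}\otimes\mathrm{id})(\overset{V}{B})$ in $\mathcal{H}(H^{\circ})\otimes\mathrm{End}(V)$. The first observation is that the defining formula is compatible with the factorization \eqref{factorisationElementsL10}: using that $\Phi_{1,0}(\mathfrak{i}_A(\alpha)) = 1\,\#\,\Phi_{0,1}(\alpha)$ (see below) and the elementary identity $(\varphi\,\#\,x)(\varepsilon\,\#\,y) = \varphi\,\#\,xy$ in $\mathcal{H}(H^{\circ})$ coming from \eqref{productHeisenberg}, one checks directly that
\[ \Phi_{1,0}(\beta\otimes\alpha) = \Phi_{1,0}(\mathfrak{i}_B(\beta))\,\Phi_{1,0}(\mathfrak{i}_A(\alpha)). \]
Granting this, Proposition \ref{presentationL10} reduces the multiplicativity of $\Phi_{1,0}$ to verifying that $\overset{V}{\mathbb{A}}$ and $\overset{V}{\mathbb{B}}$ satisfy the same two fusion relations and the same exchange relation as $\overset{V}{A}$ and $\overset{V}{B}$.

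Next I would compute the two matrices. Putting $\beta=1=\varepsilon$ in the formula and collapsing the three copies of $R$ by the counit axiom $(\varepsilon\otimes\mathrm{id})(R)=(\mathrm{id}\otimes\varepsilon)(R)=1$ gives $\Phi_{1,0}(\mathfrak{i}_A(\alpha)) = 1\,\#\,\Phi_{0,1}(\alpha)$, so $\overset{V}{\mathbb{A}} = 1\,\#\,(\Phi_{0,1}\otimes\mathrm{id})(\overset{V}{M})$; since $x\mapsto 1\,\#\,x$ is an algebra embedding of $H$ into $\mathcal{H}(H^{\circ})$ and $\Phi_{0,1}$ is an algebra morphism, the fusion relation for $\overset{V}{\mathbb{A}}$ is the image of \eqref{fusionrelation} for $\overset{V}{M}$. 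Putting $\alpha=1$ and using $\Phi_{0,1}(\varepsilon)=1$ together with the matrix identities $\sum_{i,j}(x\rhd {_V\phi}^i_j\lhd y)\otimes E_{ij} = \sum_{i,j}{_V\phi}^i_j\otimes y_V\,E_{ij}\,x_V$ recalled inside the proof of Proposition \ref{presentationL10}, I would express $\overset{V}{\mathbb{B}}$ as the coordinate matrix $\overset{V}{N}=\sum_{i,j}({_V\phi}^i_j\,\#\,1)\otimes E_{ij}$ dressed by $R$-matrix factors carried in the $H$-slot; its fusion relation then follows from that of $\overset{V}{M}$ by a direct computation of $\Phi_{1,0}(\mathfrak{i}_B(\beta))\Phi_{1,0}(\mathfrak{i}_B(\beta'))$ in $\mathcal{H}(H^{\circ})$.

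The crux is the exchange relation
\[ R_{V,W}\,\overset{V}{\mathbb{B}}_1\,(R')_{V,W}\,\overset{W}{\mathbb{A}}_2 = \overset{W}{\mathbb{A}}_2\,R_{V,W}\,\overset{V}{\mathbb{B}}_1\,R_{V,W}^{-1}, \]
which is where I expect the main difficulty. Because $\overset{W}{\mathbb{A}}_2$ has trivial $H^{\circ}$-component while its $H$-component is the $\Phi_{0,1}$-image, forming the product $\overset{W}{\mathbb{A}}_2\,\overset{V}{\mathbb{B}}_1$ via \eqref{productHeisenberg} forces that $H$-component to act, through the special case $(\varepsilon\,\#\,x)(\psi\,\#\,y)=\sum_{(x)}(x_{(1)}\rhd\psi)\,\#\,x_{(2)}y$, by the coregular action $\rhd$ on the coordinate functions sitting in $\overset{V}{\mathbb{B}}_1$. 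After inserting the defining formula \eqref{RSDmap} of $\Phi_{0,1}$, the identity becomes a statement about a fixed number of copies of $R$, which I would settle by the systematic use of the quasitriangularity axioms $(\Delta\otimes\mathrm{id})(R)=R_{13}R_{23}$, $(\mathrm{id}\otimes\Delta)(R)=R_{13}R_{12}$, of the relation $R^{-1}=(S\otimes\mathrm{id})(R)$ and of naturality \eqref{naturalite}, exactly mirroring the manipulation of $R$-matrix copies in the proof of the exchange relation in Proposition \ref{presentationL10}. The delicate part is purely the bookkeeping of these copies; no ingredient beyond quasitriangularity and the properties of $\Phi_{0,1}$ enters.

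Finally, for the remaining two claims. Since $\mathrm{coad}^r$ on $\mathcal{L}_{1,0}(H)$ and the action \eqref{actionOnHeisenberg} on $\mathcal{H}(H^{\circ})$ are both by algebra automorphisms and $\Phi_{1,0}$ is now an algebra morphism, it suffices to check the intertwining on the generators $\mathfrak{i}_A(\alpha)$ and $\mathfrak{i}_B(\beta)$. On $\mathfrak{i}_A(\alpha)$ a short counit collapse in \eqref{actionOnHeisenberg} gives $(1\,\#\,x)\cdot h = 1\,\#\,\mathrm{ad}^r(h)(x)$, so the claim reduces to the known fact that $\Phi_{0,1}$ intertwines $\mathrm{coad}^r$ with $\mathrm{ad}^r$; on $\mathfrak{i}_B(\beta)$ it is a direct verification from the definitions of $\rhd$, $\lhd$ and \eqref{actionOnHeisenberg}. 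For the image being locally finite, I would note that by \eqref{coadL10} and the local finiteness of each $\mathcal{L}_{0,1}(H)$-factor under $\mathrm{coad}^r$ (the matrix coefficients of a fixed $V$ span a finite-dimensional submodule), the whole of $\mathcal{L}_{1,0}(H)$ is locally finite; as $\Phi_{1,0}$ is a module morphism and $\mathcal{H}(H^{\circ})^{\mathrm{lf}}$ is a submodule, the image lies in $\mathcal{H}(H^{\circ})^{\mathrm{lf}}$.
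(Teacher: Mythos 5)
Your proposal is correct and takes essentially the same approach as the paper: multiplicativity via the matrix presentation of Proposition \ref{presentationL10} (the paper delegates exactly this fusion/exchange computation to \cite{A} and \cite[Prop 4.6]{Faitg3}), $H$-linearity reduced to the generators $\mathfrak{i}_A(\alpha)$, $\mathfrak{i}_B(\beta)$ through the factorization \eqref{factorisationElementsL10}, and the identical local-finiteness argument for the image. One small correction of wording: the actions are not ``by algebra automorphisms'' but satisfy the module-algebra rule $h\cdot(xy)=\sum_{(h)}(h_{(1)}\cdot x)(h_{(2)}\cdot y)$, and it is precisely this rule, together with the multiplicativity you have already established, that makes your reduction to generators valid.
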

\begin{proof}
Let us first show that $\Phi_{1,0}$ is a morphism of right $H$-modules. Due to \eqref{factorisationElementsL10} it is enough to show that $\Phi_{1,0} \circ \mathfrak{i}_A$ and $\Phi_{1,0} \circ \mathfrak{i}_B$ are $H$-linear morphisms $\mathcal{L}_{0,1}(H) \to \mathcal{H}(H^{\circ})$. Since $\Phi_{1,0}\bigl( \mathfrak{i}_A(\alpha) \bigr) = 1_{H^{\circ}} \,\#\, \Phi_{0,1}(\alpha)$, we use that
$\Phi_{0,1}$ intertwines $\mathrm{coad}^r$ and $\mathrm{ad}^r$ to get:
\begin{align*}
\Phi_{1,0}\bigl( \mathrm{coad}^r(h)(\mathfrak{i}_A(\alpha)) \bigr) &= \Phi_{1,0}\circ \mathfrak{i}_A\bigl( \mathrm{coad}^r(h)(\alpha) \bigr) = 1_{H^{\circ}} \,\#\, \Phi_{0,1}\bigl( \mathrm{coad}^r(h)(\alpha) \bigr)\\
& = \sum_{(h)} 1_{H^{\circ}} \,\#\, S(h_{(1)}) \Phi_{0,1}(\alpha) h_{(2)} = \bigl(1_{H^{\circ}} \,\#\, \Phi_{0,1}(\alpha)\bigr)\cdot h.
\end{align*}
For $\mathfrak{i}_B(\beta)$ we compute:
\begin{align*}
&\Phi_{1,0}\bigl( \mathrm{coad}^r(h)(\mathfrak{i}_B(\beta)) \bigr) = \Phi_{1,0}\circ \mathfrak{i}_B\bigl( \mathrm{coad}^r(h)(\beta) \bigr)\\
=\:& \sum_{(R^1), (R^2),(R^3),(h)} \bigl(R^1_{(2)} R^2_{(2)} S(h_{(2)})\rhd \beta \lhd h_{(1)}R^3_{(1)} R^1_{(1)} \bigr) \,\#\, R^3_{(2)} R^2_{(1)}\\
=\:& \sum_{(R^1), (R^2),(R^3),(h)} \bigl(R^1_{(2)} R^2_{(2)} S(h_{(4)})\rhd \beta \lhd h_{(3)}R^3_{(1)} R^1_{(1)} \bigr) \,\#\, S(h_{(1)})h_{(2)}R^3_{(2)} R^2_{(1)}S(h_{(5)})h_{(6)}\\
=\:& \sum_{(R^1), (R^2),(R^3),(h)} \bigl(R^1_{(2)} S(h_{(5)}) R^2_{(2)} \rhd \beta \lhd R^3_{(1)} h_{(2)} R^1_{(1)} \bigr) \,\#\, S(h_{(1)})R^3_{(2)} h_{(3)} S(h_{(4)}) R^2_{(1)}h_{(6)}\\
=\:& \sum_{(R^1), (R^2),(R^3),(h)} \bigl(S\bigl(h_{(3)}S^{-1}(R^1_{(2)})\bigr) R^2_{(2)} \rhd \beta \lhd R^3_{(1)} h_{(2)} R^1_{(1)} \bigr) \,\#\, S(h_{(1)})R^3_{(2)} R^2_{(1)}h_{(4)}\\
=\:& \sum_{(R^1), (R^2),(R^3),(h)} \bigl(S(h_{(2)})R^1_{(2)} R^2_{(2)} \rhd \beta \lhd R^3_{(1)} R^1_{(1)} h_{(3)} \bigr) \,\#\, S(h_{(1)})R^3_{(2)} R^2_{(1)}h_{(4)}\\
=\:& \Phi_{1,0}\bigl( \mathfrak{i}_B(\beta) \bigr) \cdot h.
\end{align*}
We simply used that $R \Delta_H = \Delta^{\mathrm{op}}_H R$ and that $(\mathrm{id} \otimes S^{-1})(R) = R^{-1}$.
\\The simplest way to show that $\Phi_{1,0}$ is a morphism of algebras uses the description of the product in $\mathcal{L}_{1,0}(H)$ based on the matrices $\overset{V}{A}$, $\overset{V}{B}$ (Proposition \ref{presentationL10}); see \cite{A} (with different conventions), and \cite[Prop 4.6]{Faitg3} where this is done in detail in the present setup.
\\The last claim is due to the fact that all the elements of $\mathcal{L}_{1,0}(H)$ are locally finite. Indeed, as $H$-modules $\mathcal{L}_{1,0}(H)$ is $\mathcal{L}_{0,1}(H) \otimes \mathcal{L}_{0,1}(H)$ with the diagonal $coad^r$ action (see \eqref{coadL10}), and $\mathcal{L}_{0,1}(H) = H^\circ$ under $coad^r$, which is locally finite.
\end{proof}

\subsection{The case $H = U^{\mathrm{ad}}_q(\mathfrak{g})$}\label{SectionL10ForUq}
Recall from \S \ref{sectionPrelimUq} that $U_q = U_q(\mathfrak{g})$ and $U^{\mathrm{ad}}_q = U^{\mathrm{ad}}_q(\mathfrak{g})$ denote respectively the simply connected and adjoint quantum groups, which are Hopf algebra over $\mathbb{C}(q)$ and where $q$ is an indeterminate. The previous definitions of $\mathcal{L}_{0,1}(H)$ and $\mathcal{L}_{1,0}(H)$ must be adapted when $H = U^{\mathrm{ad}}_q$ because this Hopf algebra is not quasitriangular in the usual sense. Indeed, as explained in \S \ref{sectionCategoricalCompletion}, the $R$-matrix lives in the categorical completion $\mathbb{U}_q^{\otimes 2}$. We can overcome this issue thanks to the pairing \eqref{pairingOqCategoricalCompletion} and the fact that for any $\varphi \in \mathcal{O}_q$
\begin{equation}\label{eqWellDefRMatrixOnOq}
\sum_{(R)} \langle \varphi, R_{(1)} \rangle R_{(2)} \in U_q  \quad \text{and} \quad \sum_{(R)} \langle \varphi, R_{(2)} \rangle R_{(1)} \in U_q
\end{equation}
(note that {\it a priori} these elements are in $\mathbb{U}_q$).

\smallskip

\indent As in \S\ref{sectionOq}, we denote by $\mathcal{O}_q(q^{1/D})$ the extension of scalars to $\mathbb{C}(q^{1/D})$ of $\mathcal{O}_q = \mathcal{O}_q(G)$. Using \eqref{pairingOqCategoricalCompletion} we extend the left and right coregular actions \eqref{coregularActions} of $U_q^{\mathrm{ad}}$ on $\mathcal{O}_q$ to left and right coregular actions of $\mathbb{U}_q$ on $\mathcal{O}_q(q^{1/D})$. Hence $\mathcal{O}_q(q^{1/D})$ is a right $(\mathbb{U}_q \otimes \mathbb{U}^{\mathrm{cop}}_q)$-module-algebra for the action \eqref{actionD}. Recall from \S\ref{sectionCategoricalCompletion} that $R \in \mathbb{U}_q^{\otimes 2}$ has coefficients in $\mathbb{C}(q^{1/D})$, thus the same is true for the twist $F$ defined in \eqref{twistL01}. We have the twisted Hopf algebra $\mathbb{A}_{0,1} = (\mathbb{U}_q \otimes \mathbb{U}^{\mathrm{cop}}_q)_F$ and the twisted module-algebra $\mathcal{O}_q(q^{1/D})_F$. The latter is the $\mathbb{C}(q^{1/D})$-vector space $\mathcal{O}_q(q^{1/D})$ with the product \eqref{produitL01}, which makes sense due to \eqref{eqWellDefRMatrixOnOq}. In \cite[Prop. 4.1]{BR1} it is shown that restricting the product of $\mathcal{O}_q(q^{1/D})_F$ on the $\mathbb{C}(q)$-subspace $\mathcal{O}_q$ gives a $\mathbb{C}(q)$-subalgebra. So we can define

\begin{defi}\label{defiL01}
$\mathcal{L}_{0,1}(\mathfrak{g})$ is the $\mathbb{C}(q)$-vector space $\mathcal{O}_q$ endowed with the product \eqref{produitL01}.
\end{defi}
\noindent We simply write $\mathcal{L}_{0,1}$ when $\mathfrak{g}$ is fixed.
\smallskip

\indent Let $\Gamma \in \mathbb{A}_{0,1}^{\otimes 2}$ be the twist which was introduced in Lemma \ref{lemmeTwistL10}. We have the twisted Hopf algebra $\mathbb{A}_{1,0} = (\mathbb{A}_{0,1}^{\otimes 2})_{\Gamma}$ and since $\mathcal{O}_q(q^{1/D})_F \otimes \mathcal{O}_q(q^{1/D})_F$ is a right $\mathbb{A}_{0,1}^{\otimes 2}$-module-algebra, we can define
\begin{defi}
$\mathcal{L}_{1,0}(\mathfrak{g})$ is the twist $\bigl(\mathcal{O}_q(q^{1/D})_F \otimes \mathcal{O}_q(q^{1/D})_F\bigr)_{\Gamma}$.
\end{defi}
\noindent We simply write $\mathcal{L}_{1,0}$ when $\mathfrak{g}$ has been fixed. Explicitly, $\mathcal{L}_{1,0}(\mathfrak{g})$ is the $\mathbb{C}(q^{1/D})$-vector space $\mathcal{O}_q(q^{1/D}) \otimes \mathcal{O}_q(q^{1/D})$ endowed with the product \eqref{productL10}. 

\begin{remark}{\rm Contrarily to the case of $\mathcal{L}_{0,1}$ discussed before Definition \ref{defiL01}, the space $\bigl((\mathcal{O}_q)_F \otimes (\mathcal{O}_q)_F\bigr)_{\Gamma}$ is not a $\mathbb{C}(q)$-subalgebra of $\mathcal{L}_{1,0}$. This is because the terms coming from the $\Theta$ factors of the $R$-matrices (see \S \ref{sectionCategoricalCompletion}), which have coefficients in $\mathbb{C}(q^{1/D})$, do not compensate for each other in the exchange relation of Proposition \ref{presentationL10}. Actually, this same fact implies that $\bigl(\mathcal{O}_q(q^{2/D}) \otimes \mathcal{O}_q(q^{2/D})\bigr)_{\Gamma}$ is a $\mathbb{C}(q^{2/D})$-subalgebra of $\mathcal{L}_{1,0}$. Therefore, if $D=2$ (e.g. for $\mathfrak{g} = \mathfrak{sl}_2$) we can work over $\mathbb{C}(q)$, and for general $D$, we could have defined $\mathcal{L}_{1,0}$ over the ground field $\mathbb{C}(q^{2/D})$.}
\end{remark}

\indent By construction $\mathcal{L}_{0,1}$ is a right $\mathbb{A}_{0,1}$-module-algebra and $\mathcal{L}_{1,0}$ is a right $\mathbb{A}_{1,0}$-module-algebra. Specializing the results of \S \ref{subsectionL10H}, we have morphisms of Hopf algebras
\[ \mathbb{U}_q \xrightarrow{\Delta_{\mathbb{U}_q}} \mathbb{A}_{0,1} \quad \text{and} \quad \mathbb{U}_q \xrightarrow{\Delta^{(3)}_{\mathbb{U}_q}} \mathbb{A}_{1,0} \]
where $\Delta^{(3)}_{\mathbb{U}_q}$ is the iterated coproduct (see Lemma \ref{lemmaIteratedDelta10}). Combining this with the morphism of Hopf algebras $\iota : U_q \to \mathbb{U}_q$ introduced below \eqref{embeddingUqIntoItsCompletion}, we obtain that $\mathcal{L}_{0,1}$ and $\mathcal{L}_{1,0}$ are right $U_q$-module-algebras for the actions $\mathrm{coad}^r$ in \eqref{coadL01} and \eqref{coadL10} respectively.

\medskip

\indent Using the pairing from \eqref{pairingOqCategoricalCompletion} and the embedding $\iota : U_q \to \mathbb{U}_q$, we get a pairing
\[ \langle \cdot, \cdot \rangle : \mathcal{O}_q(q^{1/D}) \otimes U_q(q^{1/D}) \to \mathbb{C}(q^{1/D}) \]
where $U_q(q^{1/D}) = U_q \otimes _{\mathbb{C}(q)} \mathbb{C}(q^{1/D})$ is the extension of scalars to $\mathbb{C}(q^{1/D})$.
It follows that the left (and right) coregular action of $U_q(q^{1/D})$ on $\mathcal{O}_q(q^{1/D})$ is well-defined and allows us to define the Heisenberg double $\mathcal{H}_q = \mathcal{H}_q(\mathfrak{g})$ as the smash product
\[ \mathcal{H}_q = \mathcal{O}_q(q^{1/D}) \,\#\, U_q(q^{1/D}). \]
Explicitly, it is the $\mathbb{C}(q^{1/D})$-vector space $\mathcal{O}_q(q^{1/D}) \otimes U_q(q^{1/D})$ endowed with the product \eqref{productHeisenberg}. For simplicity of notation we often do not write the extension of scalars to $\mathbb{C}(q^{1/D})$.

\begin{prop}\label{HqSansDivDeZero}
The algebra $\mathcal{H}_q$ is a domain.
\end{prop}
\begin{proof}
Recall from \S\ref{sectionPreliminaires} the filtration $(\mathcal{F}_{\mathrm{DCK}}^{\mathbf{m}})_{\mathbf{m} \in \mathbb{N}^{2N+1}}$ of $U_q$ which is based on the PBW basis. By Corollary \ref{coroCoproduitSurFiltrationDCK} the family of subspaces $\bigl( \mathcal{O}_q \# \mathcal{F}_{\mathrm{DCK}}^{\mathbf{m}} \bigr)_{\mathbf{m} \in \mathbb{N}^{2N+1}}$ is a filtration of the algebra $\mathcal{H}_q$. Indeed, if $\varphi\,\#\,x \in \mathcal{O}_q \, \# \, \mathcal{F}_{\mathrm{DCK}}^{\mathbf{m}}$ and $\psi\,\#\,y \in \mathcal{O}_q \, \# \, \mathcal{F}_{\mathrm{DCK}}^{\mathbf{n}}$ we have
\[ (\varphi\,\#\,x)(\psi\,\#\,y) = \sum_{(x)}\varphi \star (x_{(1)} \rhd \psi) \,\#\, x_{(2)}y \in \mathcal{O}_q \, \# \, \mathcal{F}_{\mathrm{DCK}}^{\mathbf{m} + \mathbf{n}}. \]
The associated graded algebra is
\[ \mathrm{gr}_{\mathcal{O}_q \# \mathcal{F}_{\mathrm{DCK}}}(\mathcal{H}_q) = \mathcal{O}_q \, \# \, \mathrm{gr}_{\mathcal{F}_{\mathrm{DCK}}}(U_q). \]
Let us explain this equality more precisely. Recall from \S\ref{sectionHopfAlgebras} that $_V\phi^i_j$ denote the matrix coefficients of finite-dimensional $U_q^{\mathrm{ad}}$-modules, and assume that for each $V$ we use a basis of weight vectors $(v_j)$ with weights $(\epsilon_j)$. $\mathcal{O}_q$ is generated by such matrix coefficients. For simplicity, write $_V\phi^i_j$, $\overline{E_{\beta_i}}$, $\overline{F_{\beta_i}}$, $\overline{K_{\mu}}$ instead of the cosets $(_V\phi^i_j \,\#\, 1) + \mathcal{O}_q \,\#\, \mathcal{F}_{\mathrm{DCK}}^{<\mathbf{0}}$, $(\varepsilon \,\#\, E_{\beta_i}) + \mathcal{O}_q \,\#\, \mathcal{F}_{\mathrm{DCK}}^{< d(E_{\beta_i})}$, $(\varepsilon \,\#\, F_{\beta_i}) + \mathcal{O}_q \,\#\, \mathcal{F}_{\mathrm{DCK}}^{< d(F_{\beta_i})}$, $(\varepsilon \,\#\, K_{\mu}) + \mathcal{O}_q \,\#\, \mathcal{F}_{\mathrm{DCK}}^{<d(K_\mu)}$ in $\mathrm{gr}_{\mathcal{O}_q \# \mathcal{F}_{\mathrm{DCK}}}(\mathcal{H}_q)$. Thanks to Proposition \ref{propHtCoproduit} we get
\[\overline{E_{\beta_k}} \, {_V\phi^i_j} = {_V\phi^i_j} \, \overline{E_{\beta_k}}, \quad \overline{F_{\beta_k}} \, {_V\phi^i_j} = q^{-(\beta_k, \epsilon_j)} {_V\phi^i_j} \, \overline{F_{\beta_k}}, \quad \overline{K_{\nu}} \, {_V\phi^i_j} = q^{(\nu, \epsilon_j)} {_V\phi^i_j} \, \overline{K_{\nu}}. \]
It follows from these relations and \eqref{commutationGrDCK} that $\mathrm{gr}_{\mathcal{O}_q \# \mathcal{F}_{\mathrm{DCK}}}(\mathcal{H}_q)$ is a quasi-polynomial ring over $\mathcal{O}_q$, generated over $\mathcal{O}_q$ by $\overline{E_{\beta_i}}$, $\overline{F_{\beta_i}}$, $\overline{K_{\nu}}$ (with $1 \leq i \leq N$, $\mu \in P$). Since $\mathcal{O}_q$ is a domain \cite[Th. I.8.9]{BG}, it follows from a general result (see e.g. \cite[\S 1.2.9]{MC-R}) that $\mathrm{gr}_{\mathcal{O}_q \# \mathcal{F}_{\mathrm{DCK}}}(\mathcal{H}_q)$ is a domain. By Lemma \ref{lemmadomainGr}, $\mathcal{H}_q$ is a domain as well.
\end{proof}

\smallskip

\indent It follows from \eqref{eqWellDefRMatrixOnOq} that the formulas of the maps $\Phi_{0,1}$ and $\Phi_{1,0}$ in \S \ref{sectionHeisenberg} are well-defined and give morphisms of right $U_q$-module-algebras
\[ \Phi_{0,1} : \mathcal{L}_{0,1} \to U_q, \qquad \Phi_{1,0} : \mathcal{L}_{1,0} \to \mathcal{H}_q \]
where the action on $\mathcal{L}_{0,1}$ and $\mathcal{L}_{1,0}$ is $\mathrm{coad}^r$ in \eqref{coadL01} and \eqref{coadL10}, the action on $U_q$ is the right adjoint action $\textstyle \mathrm{ad}^r(h)(x) = \sum_{(h)} S(h_{(1)})xh_{(2)}$ and the action on $\mathcal{H}_q$ is \eqref{actionOnHeisenberg}. It is known that $\Phi_{0,1}$ affords an isomorphism of $U_q$-module-algebras $\mathcal{L}_{0,1} \overset{\sim}{\rightarrow} U_q^{\mathrm{lf}}$, where $U_q^{\mathrm{lf}}$ is the subspace of locally finite elements for $\mathrm{ad}^r$ \cite[Th. 3]{Bau1} (also see \cite[Th 4.3]{BR1} for this statement in the present framework). We now prove an analogous statement for $\Phi_{1,0}$. Denote by $\mathcal{H}_q^{\mathrm{lf}}$ the subspace of locally finite elements (see \eqref{defLFHeisenberg}).

\begin{teo}\label{ThmPhi10}
1. The morphism $\Phi_{1,0} : \mathcal{L}_{1,0} \to \mathcal{H}_q$ is injective.
\\2. The algebra $\mathcal{L}_{1,0}$ is a domain.
\\3. The image of $\Phi_{1,0}$ is $\mathcal{H}_q^{\mathrm{lf}}$.
\end{teo}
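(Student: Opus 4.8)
The plan is to reduce all three statements to the known properties of $\Phi_{0,1}$ (injective with image $U_q^{\mathrm{lf}}$, see \eqref{RSDmap}) and to the fact that $\mathcal{H}_q$ has no zero divisors (Proposition \ref{HqSansDivDeZero}), via an explicit matrix description of $\Phi_{1,0}$. Introduce the \emph{coordinate matrix} $\overset{V}{\mathcal{N}} = \sum_{i,j} ({}_V\phi^i_j \,\#\, 1) \otimes E_{ij} \in \mathcal{H}_q \otimes \mathrm{End}(V)$. The first, and main computational, step is to prove the two identities $\Phi_{1,0}(\overset{V}{A}) = \Phi_{0,1}(\overset{V}{M})$ and $\Phi_{1,0}(\overset{V}{B}) = \overset{V}{\mathcal{N}}\,\Phi_{0,1}(\overset{V}{M})$ in $\mathcal{H}_q \otimes \mathrm{End}(V)$, where $\Phi_{0,1}(\overset{V}{M})$ has coefficients $1 \,\#\, \Phi_{0,1}({}_V\phi^i_j)$. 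The first is immediate from $\Phi_{1,0}(\mathfrak{i}_A(\alpha)) = 1 \,\#\, \Phi_{0,1}(\alpha)$ (established in the proof of Proposition \ref{propPhi10}); the second is equivalent, in Sweedler form, to $\Phi_{1,0}(\mathfrak{i}_B(\beta)) = \sum_{(\beta)} \beta_{(1)} \,\#\, \Phi_{0,1}(\beta_{(2)})$ and follows from the explicit formula of Proposition \ref{propPhi10} by a direct computation with the $R$-matrix and the smash-product relation $(\varphi \,\#\, 1)(1 \,\#\, u) = \varphi \,\#\, u$ (this is the content of \cite{A}, carried out in the present conventions in \cite{Faitg3}).

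Granting these identities, I would deduce Parts 1 and 3 simultaneously. Since $\overset{V}{M}$ is invertible and $\Phi_{0,1}$ is an algebra morphism, $\Phi_{0,1}(\overset{V}{M})$ is invertible, so the identities give $\Phi_{1,0}(\overset{V}{B}\,\overset{V}{A}{}^{-1}) = \overset{V}{\mathcal{N}}$; hence $\Phi_{1,0}(\overset{V}{X}) = v_V\,\overset{V}{\mathcal{N}}$ for the matrix $\overset{V}{X}$ of \eqref{matpartielles}. By Lemma \ref{fusionforXY} the matrices $\overset{V}{X}$ satisfy the fusion relation \eqref{fusionrelation}, so their coefficients span a copy $\mathfrak{j}_X(\mathcal{L}_{0,1})$ of $\mathcal{L}_{0,1}$ inside $\mathcal{L}_{1,0}$, mapped by $\Phi_{1,0}$ into $\mathcal{O}_q \,\#\, 1$; likewise the coefficients of $\overset{V}{A}$ span $\mathfrak{i}_A(\mathcal{L}_{0,1})$, mapped into $1 \,\#\, U_q^{\mathrm{lf}}$ via $\Phi_{0,1}$. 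Writing $\overset{V}{B} = v_V^{-1}\,\overset{V}{X}\,\overset{V}{A}$ and using the factorization \eqref{factorisationElementsL10}, the multiplication $\mathfrak{j}_X(\mathcal{L}_{0,1}) \otimes \mathfrak{i}_A(\mathcal{L}_{0,1}) \to \mathcal{L}_{1,0}$ is a linear isomorphism (surjectivity is clear from this change of generators, and bijectivity then follows by a graded-dimension count over the isotypic components). Under $\Phi_{1,0}$ this intertwines with the multiplication $\mathcal{O}_q \otimes U_q^{\mathrm{lf}} \to \mathcal{O}_q \,\#\, U_q^{\mathrm{lf}} \subset \mathcal{H}_q$, which is the canonical linear inclusion into $\mathcal{O}_q \otimes U_q$. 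As $\Phi_{0,1}$ is injective with image $U_q^{\mathrm{lf}}$ and the coordinate coefficients span $\mathcal{O}_q$, this shows that $\Phi_{1,0}$ is injective (Part 1) and that $\mathrm{im}(\Phi_{1,0}) = \mathcal{O}_q \,\#\, U_q^{\mathrm{lf}}$.

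Part 2 is then a formal consequence: $\Phi_{1,0}$ is an injective algebra morphism into $\mathcal{H}_q$, which has no non-trivial zero divisors by Proposition \ref{HqSansDivDeZero}, so neither does $\mathcal{L}_{1,0}$.

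For Part 3 it remains to identify $\mathcal{H}_q^{\mathrm{lf}} = \mathcal{O}_q \,\#\, U_q^{\mathrm{lf}}$ for the action \eqref{actionOnHeisenberg}. The inclusion $\mathrm{im}(\Phi_{1,0}) \subseteq \mathcal{H}_q^{\mathrm{lf}}$ is Proposition \ref{propPhi10}, which already gives $\mathcal{O}_q \,\#\, U_q^{\mathrm{lf}} \subseteq \mathcal{H}_q^{\mathrm{lf}}$, so I must prove the reverse inclusion. The point is that in \eqref{actionOnHeisenberg} the $\mathcal{O}_q$-tensor-factor is acted on through the coregular actions $\rhd, \lhd$, for which $\mathcal{O}_q$ is a union of finite-dimensional submodules and hence automatically locally finite, so local finiteness of $\varphi \,\#\, x$ should be controlled entirely by the $U_q$-factor and force its components into $U_q^{\mathrm{lf}}$. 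I expect this to be the main obstacle, because the action \eqref{actionOnHeisenberg} is \emph{not} the plain tensor product of the coadjoint action on $\mathcal{O}_q$ with the adjoint action on $U_q$: the iterated coproduct assigns the two inner legs of $h$ to $\mathcal{O}_q$ and the two outer legs to $U_q$, so the two factors are entangled. The disentangling would proceed by fixing a basis $(\varphi_i)$ of the relevant finite-dimensional coadjoint submodule of $\mathcal{O}_q$, writing a locally finite element as $\sum_i \varphi_i \,\#\, x_i$, and projecting its finite-dimensional $U_q$-orbit to isolate each $x_i$ and conclude $x_i \in U_q^{\mathrm{lf}}$. Together with $\mathrm{im}(\Phi_{1,0}) = \mathcal{O}_q \,\#\, U_q^{\mathrm{lf}}$ this yields $\mathrm{im}(\Phi_{1,0}) = \mathcal{H}_q^{\mathrm{lf}}$, completing Part 3.
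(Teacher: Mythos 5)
Your reduction of Parts 1 and 3 rests on the matrix identity $\Phi_{1,0}(\overset{V}{B}) = \overset{V}{\mathcal{N}}\,\Phi_{0,1}(\overset{V}{M})$, equivalently $\Phi_{1,0}(\mathfrak{i}_B(\beta)) = \sum_{(\beta)} \beta_{(1)} \,\#\, \Phi_{0,1}(\beta_{(2)})$, and this identity is false. The correct matrix form is \eqref{matrixFormPhi01Phi10}: $\Phi_{1,0}\bigl(\overset{V}{B}\bigr) = \overset{V}{L}{^{(+)}}\,\overset{V}{T}\,\overset{V}{L}{^{(-)-1}}$, where $\overset{V}{T}$ is your coordinate matrix $\overset{V}{\mathcal{N}}$ but it sits \emph{between} the two $L$-matrices, and it cannot be pulled out to the left: by the smash-product relation \eqref{commutationsHH} one computes $\bigl(\overset{V}{L}{^{(+)}}\overset{V}{T}\bigr){}^i_j = \sum_{k,m}\sum_{(R)} [\rho_V(R_{(1)})]^i_k\,[\rho_V(R_{(2)(1)})]^m_j\;{_V\phi^k_m}\,\#\,R_{(2)(2)}$, whereas $\bigl(\overset{V}{T}\,\overset{V}{L}{^{(+)}}\bigr){}^i_j = \sum_{k}\sum_{(R)} [\rho_V(R_{(1)})]^k_j\;{_V\phi^i_k}\,\#\,R_{(2)}$, and these differ as soon as $R$ is nontrivial. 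There is also a structural contradiction: by Lemma \ref{fusionforXY} the coefficients of $\overset{V}{X}$ satisfy the fusion (reflection-equation) relations of $\mathcal{L}_{0,1}$, while the coefficients of $\overset{V}{\mathcal{N}} = \overset{V}{T}$ multiply through $\star$ and satisfy the RTT relations $R_{V,W}\overset{V}{T}_1\overset{W}{T}_2 = \overset{W}{T}_2\overset{V}{T}_1R_{V,W}$; since $\Phi_{1,0}$ is an algebra morphism, $\Phi_{1,0}(\overset{V}{X}) = v_V\overset{V}{\mathcal{N}}$ would force these two incompatible families of relations to coincide. What is true is the paper's identity \eqref{IPhi10}, $I \circ \Phi_{1,0}(\beta \otimes \alpha) = \sum_{(\beta)} \beta_{(1)} \otimes \Phi_{0,1}(\beta_{(2)}\alpha)$, which holds only after composing with the $U_q$-module isomorphism $I : \mathcal{H}_q \to \mathcal{O}_q \otimes U_q$ — itself a nontrivial three-fold $R$-matrix twist. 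Your proposal in effect assumes $I = \mathrm{id}$. Consequently your arguments for Parts 1 and 3 collapse; note that the paper proves Part 1 by an entirely different route, a leading-term argument: for $\beta \in {_{\lambda}(\mathcal{O}_q)_{\sigma}}$ one has $\Phi_{1,0}(\mathfrak{i}_B(\beta)) \in q^{(\lambda,\sigma)}\beta \,\#\, K_{\lambda+\sigma} + \bigoplus_{(\lambda',\sigma')<(\lambda,\sigma)} {_{\lambda'}(\mathcal{O}_q)_{\sigma'}}\,\#\,U_q$, and injectivity follows by picking a maximal weight.

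There is a second, independent gap in Part 3. Even if you repair the above by working with $I \circ \Phi_{1,0}$, the identification $\mathcal{H}_q^{\mathrm{lf}} = \mathcal{O}_q \,\#\, U_q^{\mathrm{lf}}$ is not the soft disentangling you sketch: after $I$ converts \eqref{actionOnHeisenberg} into the tensor-product action $\mathrm{coad}^r \otimes \mathrm{ad}^r$ on $\mathcal{O}_q \otimes U_q$, it is still not formal that $(\mathcal{O}_q \otimes U_q)^{\mathrm{lf}} = \mathcal{O}_q \otimes U_q^{\mathrm{lf}}$, because the coproduct entangles the two legs and local finiteness of $\sum_i \varphi_i \otimes x_i$ does not let one isolate the orbit of each $x_i$ by projection. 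The paper transports the question through $\Phi_{0,1} \otimes \mathrm{id}$ to $U_q^{\mathrm{lf}} \otimes U_q$ and invokes the theorem $(U_q \otimes U_q)^{\mathrm{lf}} = U_q^{\mathrm{lf}} \otimes U_q^{\mathrm{lf}}$ of \cite{KLNY}; some such nontrivial structural input about $U_q$ is needed and is missing from your argument. Surjectivity onto $(\mathcal{O}_q \otimes U_q)^{\mathrm{lf}}$ then also requires inverting $\beta \otimes \alpha \mapsto \sum_{(\beta)}\beta_{(1)} \otimes \beta_{(2)}\alpha$, which the paper does with the antipode-type map $S_{\mathcal{L}_{0,1}}$; your graded-dimension count does not substitute for this. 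Your Part 2 (injectivity combined with Proposition \ref{HqSansDivDeZero}) is correct and coincides with the paper's argument.
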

\begin{proof}
For simplicity we write $U_q$ and $\mathcal{O}_q$ instead of $U_q(q^{1/D})$ and $\mathcal{O}_q(q^{1/D})$.
\\1. Recall first that
$\Phi_{1,0}\bigl( \beta \otimes \alpha \bigr) = \Phi_{1,0}\bigl( \mathfrak{i}_B(\beta) \, \mathfrak{i}_A(\alpha) \bigr) = \Phi_{1,0}\bigl(\mathfrak{i}_B(\beta)\bigr) \, \bigl( 1_{\mathcal{O}_q} \,\#\, \Phi_{0,1}(\alpha) \bigr).$
For $\lambda, \sigma \in P$, consider
\[ _{\lambda}(\mathcal{O}_q)_{\sigma} = \bigl\{ \varphi \in \mathcal{O}_q \, | \, \forall \, \nu \in P, \: K_{\nu} \rhd \varphi = q^{(\nu, \lambda)} \varphi \text{ and } \varphi \lhd K_{\nu} = q^{(\sigma, \nu)} \varphi \bigr\} \]
and note that $\mathcal{O}_q$ is the direct sum of all these subspaces. It follows that
\[ \mathcal{L}_{1,0} = \bigoplus_{\lambda, \sigma \in P} \mathfrak{i}_B\bigl( {_{\lambda}(\mathcal{O}_q)_{\sigma}} \bigr) \, \mathfrak{i}_A\bigl( \mathcal{L}_{0,1} \bigr), \quad \mathcal{H}_q = \bigoplus_{\lambda,\sigma \in P} {_{\lambda}(\mathcal{O}_q)_{\sigma}} \,\#\, U_q. \]
By \eqref{commutationRK}, if $\beta \in {_{\lambda}(\mathcal{O}_q)_{\sigma}}$ we have
\begin{align*}
&K_{\nu} \rhd \! \bigl(R^1_{(2)} R^2_{(2)} \rhd \beta \lhd R^3_{(1)} R^1_{(1)} \bigr) \,\#\, R^3_{(2)} R^2_{(1)} = q^{(\nu,\lambda - \gamma_1 - \gamma_2)} \bigl(R^1_{(2)} R^2_{(2)} \rhd \beta \lhd R^3_{(1)} R^1_{(1)} \bigr) \,\#\, R^3_{(2)} R^2_{(1)}\\
&\bigl(R^1_{(2)} R^2_{(2)} \rhd \beta \lhd R^3_{(1)} R^1_{(1)} \bigr) \! \lhd K_{(\nu)} \,\#\, R^3_{(2)} R^2_{(1)} = q^{(\nu, \sigma - \gamma_1 - \gamma_3)} \bigl(R^1_{(2)} R^2_{(2)} \rhd \beta \lhd R^3_{(1)} R^1_{(1)} \bigr) \,\#\, R^3_{(2)} R^2_{(1)}
\end{align*}
for some $\gamma_1, \gamma_2, \gamma_3 \in Q_+$. As a result $\textstyle \Phi_{1,0}(\mathfrak{i}_B(\beta)) \in \bigoplus_{(\lambda', \sigma') \leq (\lambda, \sigma)} {_{\lambda'}(\mathcal{O}_q)_{\sigma'}} \,\#\, U_q$, where $\leq$ is the partial order on $P^2$ defined by  $(\lambda',\sigma') \leq (\lambda,\sigma)$ if and only if $\lambda - \lambda' \in Q_+$ and $\sigma - \sigma' \in Q_+$. Hence thanks to the expression of $R$ in \eqref{expressionCanoniqueR} we see that
\begin{equation}\label{propPhi10SurSommeDirecte}
\begin{array}{rl}
\textstyle \Phi_{1,0}(\mathfrak{i}_B(\beta)) \!\!\!&\!\!\in \bigl(\Theta^1_{(2)} \Theta^2_{(2)} \rhd \beta \lhd \Theta^3_{(1)} \Theta^1_{(1)} \bigr) \,\#\, \Theta^3_{(2)} \Theta^2_{(1)} + \bigoplus_{(\lambda',\sigma') < (\lambda,\sigma)} {_{\lambda'}(\mathcal{O}_q)_{\sigma'}} \,\#\, U_q\\[1.5em]
&= q^{(\lambda,\sigma)} \beta \,\#\,  K_{\lambda + \sigma} + \bigoplus_{(\lambda',\sigma') < (\lambda,\sigma)} {_{\lambda'}(\mathcal{O}_q)_{\sigma'}} \,\#\, U_q
\end{array}
\end{equation}
where the second equality is due to \eqref{ThetaPoids} (and $\Theta^1$, $\Theta^2$, $\Theta^3$ denote three copies of $\Theta$). We are now in position to show our result. Let $x \in \mathcal{L}_{1,0}$ be a non-zero element and write it as $\textstyle x = \sum_{\lambda, \sigma \in P} \sum_{i \in I_{\lambda,\sigma}} \mathfrak{i}_B(\beta_{\lambda,\sigma,i}) \, \mathfrak{i}_A(\alpha_{\lambda,\sigma,i})$ with $\beta_{\lambda,\sigma,i} \in {_{\lambda}(\mathcal{O}_q)_{\sigma}}$ for all $i \in I_{\lambda, \sigma}$ and such that for each $\lambda, \sigma$ the elements $\bigl(\beta_{\lambda,\sigma,i}\bigr)_{i \in I_{\lambda, \sigma}}$ are linearly independent. Take a $(\lambda,\sigma)$ maximal for the order $\leq$ such that there exists at least one $i \in I_{\lambda,\sigma}$ with $\beta_{\lambda,\sigma,i} \neq 0$ and $\alpha_{\lambda,\sigma,i} \neq 0$. Then using \eqref{propPhi10SurSommeDirecte} one obtains
\[ \Phi_{1,0}(x) \in q^{(\lambda,\sigma)} \sum_{i \in I_{\lambda,\sigma}} \beta_{\lambda, \sigma,i} \,\#\,  K_{\lambda + \sigma}\Phi_{0,1}(\alpha_{\lambda,\sigma,i}) + \bigoplus_{(\lambda',\sigma') \neq (\lambda,\sigma)} {_{\lambda'}(\mathcal{O}_q)_{\sigma'}} \,\#\, U_q. \]
The morphism $\Phi_{0,1}$ is injective (see the comments before the theorem); moreover $K_{\lambda+\sigma}$ is invertible, so there exists at least one $i \in I_{\lambda,\sigma}$ with $K_{\lambda + \sigma}\Phi_{0,1}(\alpha_{\lambda,\sigma,i}) \neq 0$ and since the elements $\beta_{\lambda,\sigma,i}$ are linearly independent we conclude that $\Phi_{1,0}(x) \neq 0$.

\smallskip

\indent 2. Follows from Proposition \ref{HqSansDivDeZero} and item 1.

\smallskip

\indent 3. We already know from Proposition \ref{propPhi10} that $\mathrm{im}(\Phi_{1,0}) \subset \mathcal{H}_q^{\mathrm{lf}}$. Let us prove the converse inclusion. Let $\mathcal{O}_q \otimes U_q$ be the tensor product of the right modules $\bigl( \mathcal{O}_q, \mathrm{coad}^r \bigr)$ and $\bigl( U_q, \mathrm{ad}^r \bigr)$, \textit{i.e.} the right action is
\begin{equation}\label{actionOqUqProofHqlf}
(\varphi \otimes x) \cdot h = \sum_{(h)} S(h_{(2)}) \rhd \varphi \lhd h_{(1)} \otimes S(h_{(3)})xh_{(4)}.
\end{equation}
As recalled above the theorem, $\Phi_{0,1}$ provides an isomorphism of $U_q$-modules $\bigl(\mathcal{O}_q, \mathrm{coad}^r) \overset{\sim}{\to} \bigl(U_q^{\mathrm{lf}}, \mathrm{ad}^r\bigr)$. Hence we have an isomorphism of $U_q$-modules
\[ \Phi_{0,1} \otimes \mathrm{id} : \: \mathcal{O}_q \otimes U_q \, \overset{\sim}{\longrightarrow} \, U_q^{\mathrm{lf}} \otimes U_q \]
and we see that $\bigl( \mathcal{O}_q \otimes U_q \bigr)^{\mathrm{lf}} \cong \bigl(U_q^{\mathrm{lf}} \otimes U_q\bigr)^{\mathrm{lf}} \subset (U_q \otimes U_q)^{\mathrm{lf}} = U_q^{\mathrm{lf}} \otimes U_q^{\mathrm{lf}}$ where the last equality is due to \cite[Th. 2]{KLNY}. The converse inclusion being obvious, we conclude that $\bigl( \mathcal{O}_q \otimes U_q \bigr)^{\mathrm{lf}} = \mathcal{O}_q \otimes U_q^{\mathrm{lf}}$.
Now consider the map
\[ \fonc{I}{\mathcal{H}_q}{\mathcal{O}_q \otimes U_q}{\varphi \,\#\, x}{\sum_{(R^1),(R^2),(R^3)}} S\bigl( R^1_{(2)} R^2_{(1)} \bigr) \rhd \varphi \lhd R^1_{(1)}R^3_{(1)} \otimes S\bigl( R^3_{(2)}R^2_{(2)} \bigr)x. \]
It is an isomorphism of $U_q$-modules, thanks to a straightforward computation which uses $R \Delta = \Delta^{\mathrm{op}} R$ to pass from the action \eqref{actionOnHeisenberg} to the action \eqref{actionOqUqProofHqlf}. Another computation which uses $(\mathrm{id} \otimes S^{-1})(R) = R^{-1}$ reveals that
\begin{equation}\label{IPhi10}
I \circ \Phi_{1,0}(\beta \otimes \alpha) = \sum_{(\beta)} \beta_{(1)} \otimes \Phi_{0,1}\bigl( \beta_{(2)}\alpha \bigr)
\end{equation}
where $\textstyle \sum_{(\beta)} \beta_{(1)} \otimes \beta_{(2)}$ is the coproduct of $\beta$ in $\mathcal{O}_q$ and $\beta_{(2)}\alpha$ is the product of $\beta_{(2)}$ and $\alpha$ in $\mathcal{L}_{0,1}$. We need one more fact: there is a map $S_{\mathcal{L}_{0,1}} : \mathcal{L}_{0,1} \to \mathcal{L}_{0,1}$ such that $\textstyle \sum_{(\varphi)} \varphi_{(1)} S_{\mathcal{L}_{0,1}}(\varphi_{(2)}) = \varepsilon(\varphi)1_{\mathcal{L}_{0,1}}$ for all $\varphi \in \mathcal{L}_{0,1}$;
it is given by
\[ S_{\mathcal{L}_{0,1}}(\varphi) = \sum_{(R)} S_{\mathcal{O}_q}\bigl( S(R_{(1)}) \rhd \varphi \lhd R_{(2)}u^{-1} \bigr) \]
where $S_{\mathcal{O}_q}$ is the antipode of $\mathcal{O}_q$ and $\textstyle u = \sum_{(R)}S(R_{(2)})R_{(1)}$ is the Drinfeld element, which satisfies $S^2 (x) = uxu^{-1}$ for all $x \in U_q$. In fact, $\mathcal{L}_{0,1}$ is isomorphic to a coend \cite[Prop. 6.3]{FaitgMCG} which has a natural structure of a Hopf algebra, and $S_{\mathcal{L}_{0,1}}$ is the antipode for this structure. We are finally ready to conclude. Take $\psi \otimes y \in (\mathcal{O}_q \otimes U_q)^{\mathrm{lf}}$. By the above discussion we know that $y \in U_q^{\mathrm{lf}}$, so there exists $\gamma \in \mathcal{L}_{0,1}$ such that $y = \Phi_{0,1}(\gamma)$. By \eqref{IPhi10} and the property of $S_{\mathcal{L}_{0,1}}$ we find
\[ I \circ \Phi_{1,0}\biggl( \sum_{(\psi)} \psi_{(1)} \otimes S_{\mathcal{L}_{0,1}}(\psi_{(2)})\gamma \biggr) = \sum_{(\psi)} \psi_{(1)} \otimes \Phi_{0,1}\bigl(\psi_{(2)} S_{\mathcal{L}_{0,1}}(\psi_{(3)})\gamma\bigr) = \psi \otimes y \]
which proves that $\mathrm{im}(I \circ \Phi_{1,0}) = (\mathcal{O}_q \otimes U_q)^{\mathrm{lf}}$. Since $I$ is an isomorphism of $U_q$-modules the result follows.
\end{proof}

Note that $\mathcal{H}_q^{\mathrm{lf}}$ is a strict subspace of $\mathcal{H}_q$. Observe for instance that the action \eqref{actionOnHeisenberg} is such that $(1_{\mathcal{O}_q} \,\#\, x) \cdot h = 1_{\mathcal{O}_q} \,\#\, \mathrm{ad}^r(h)(x)$. Thus if $x \in U_q$ is not locally finite then $1_{\mathcal{O}_q} \,\#\, x \not\in \mathcal{H}_q^{\mathrm{lf}}$. For instance one can take $x = K_i$.

\subsection{Noetherianity of $\mathcal{L}_{1,0}$}\label{sectionL10Noetherien} The strategy is to construct a filtration on $\mathcal{L}_{1,0}$ and to show that the associated graded algebra satisfies the relations in Lemma \ref{critereNoetherien}. First we need to recall and adapt a filtration of $\mathcal{L}_{0,1}$ taken from \cite{VY}.

\smallskip

\indent For $\mu \in P_+$ and $\lambda \in P$ we define
\begin{align*}
 {_{\lambda}C(\mu)} &= \bigl\{ \varphi \in C(\mu) \, \big| \, \forall \, \nu \in P, \: K_{\nu} \rhd \varphi = q^{(\lambda,\nu)}\varphi \bigr\},\\
C(\mu)_{\lambda} &= \bigl\{ \varphi \in C(\mu) \, \big| \, \forall \, \nu \in P, \: \varphi \lhd K_{\nu} = q^{(\lambda,\nu)}\varphi \bigr\}
\end{align*}
where $C(\mu)$ is the subspace of $\mathcal{O}_q$ spanned by the matrix coefficients of $V_{\mu}$, the irreducible $U_q^{\mathrm{ad}}$-module with highest weight $\mu$ (it was first introduced in Section \ref{sectionOq}). Consider the ordered abelian monoid
\begin{equation}\label{defIndexSetLambda}
\Lambda = \bigl\{ (\mu, \lambda) \in P_+ \times P \, \big| \, \lambda \text{ is a weight of } V_{\mu} \bigr\}
\end{equation}
for the order $\preceq$ defined by $(\mu',\lambda') \preceq (\mu,\lambda)$ if and only if $\mu' \preceq \mu$ and $\lambda'  \preceq \lambda$, where the order $\preceq$ on $P$ is defined in \S\ref{prelimLieAlgebras}. Note that $\preceq$ is well-founded on $\Lambda$ (although it is not on $P_+ \times P$) and satisfies condition \eqref{confluentOrder}.

\smallskip

\indent It is not difficult to show that
\[ _{\lambda}C(\mu) \, {_{\lambda'}C(\mu')} = \bigoplus_{(\nu, \kappa) \preceq (\mu+\mu', \lambda+\lambda')} {_{\kappa}C(\nu)} \]
in $\mathcal{L}_{0,1}$ (see \cite[\S 3.14.4]{VY}, but with a slightly different product), and similarly for the subspaces $C(\mu)_{\lambda}$. By the general discussion of \S\ref{sectionFiltrations} it follows that we have filtrations $\mathcal{F}_{\ell}$, $\mathcal{F}_r$ of the algebra $\mathcal{L}_{0,1}$ given by
\[
\mathcal{F}_{\ell}^{\mu, \lambda} = \bigoplus_{(\mu', \lambda') \preceq (\mu, \lambda)}  {_{\lambda'}C(\mu')} \quad \text{and} \quad \mathcal{F}_r^{\mu, \lambda} = \bigoplus_{(\mu', \lambda') \preceq (\mu, \lambda)}  C(\mu')_{\lambda'}
\]
for all $(\mu, \lambda) \in \Lambda$. The filtration $\mathcal{F}_{\ell}$ was used in \cite[\S 3]{BR2} where it was denoted by $\mathcal{F}_2$.

\smallskip

\indent For $x \in \{\ell, r\}$, let $\mathrm{gr}_{\mathcal{F}_x}(\mathcal{L}_{0,1})$ be the graded algebra associated to $\mathcal{F}_x$ (see \S \ref{sectionFiltrations}). The vector space $\mathrm{gr}_{\mathcal{F}_x}(\mathcal{L}_{0,1})$ can be identified with $\mathcal{O}_q$: we identify $\varphi \in {_{\lambda}C(\mu)}$ (resp. $\varphi \in C(\mu)_{\lambda}$) with $\varphi + \mathcal{F}_{\ell}^{\prec(\mu,\lambda)}$ (resp. $\varphi + \mathcal{F}_r^{\prec(\mu,\lambda)}$). We denote by $\circ_x$ the product on $\mathcal{O}_q$ obtained through this identification.

\smallskip

\indent We introduce some notations in order to describe the product $\circ_x$. For $\mu \in P_+$, denote by $e_1^{\mu}, \ldots, e_m^{\mu}$ a basis of weight vectors of $V_{\mu}$. Let $\epsilon^{\mu}_i \in P$ be the weight of $e^{\mu}_i$ and assume that the vectors are numbered in such a way that $\epsilon^{\mu}_i \succ \epsilon^{\mu}_j$ implies $i < j$. Let $e^1_{\mu}, \ldots, e^m_{\mu}$ be the dual basis and let ${_{\mu}\phi^i_j} : x \mapsto e^i_{\mu}(x \cdot e_j^{\mu})$ be the associated matrix coefficients. Note that
\begin{equation}\label{weightMatrixCoeffs}
K_{\nu} \rhd {_{\mu}\phi^i_j} = q^{(\nu, \epsilon^{\mu}_j)} {_{\mu}\phi^i_j}, \quad \text{and} \quad {_{\mu}\phi^i_j} \lhd K_{\nu} = q^{(\nu, \epsilon_i^{\mu})} {_{\mu}\phi^i_j}
\end{equation}
or in other words ${_{\mu}\phi^i_j} \in {_{\epsilon^{\mu}_j}C(\mu)}$ and ${_{\mu}\phi^i_j} \in C(\mu)_{\epsilon^{\mu}_i}$. We use similar notations for a basis $e^{\eta}_1, \ldots, e^{\eta}_n$ of $V_{\eta}$. Finally for $\varphi \in C(\mu)$ and $\psi \in C(\nu)$, we put
\[ \varphi \, \overline{\star} \, \psi = \pi_{\mu+\nu}(\varphi \star \psi) \]
where $\star$ is the usual product in $\mathcal{O}_q$ and $\pi_{\mu+\nu} : C(\mu)\star C(\nu) \to C(\mu + \nu)$ is the projection defined by \eqref{filtrationOq}.

\smallskip

\indent The following facts were proved in \cite[pp. 177-178]{VY} for the filtration $\mathcal{F}_{\ell}$ (but note that in \cite{VY} the product in $\mathcal{L}_{0,1}$ is slightly different from ours). We give a proof of these facts for the filtration $\mathcal{F}_r$.

\begin{lem}\label{lemmaProductsInLeftAndRightGr}
For $x \in \{\ell, r\}$ and with the notations just introduced, we have
\\1. ${_{\mu}\phi^i_j} \circ_x  {_{\eta}\phi^k_l} = q^{(\epsilon^{\mu}_j, \epsilon^{\eta}_l - \epsilon^{\eta}_k)} {_{\mu}\phi^i_j} \: \overline{\star} \: {_{\eta}\phi^k_l} + \sum_{s=j+1}^{m} \sum_{t=1}^{l-1} \alpha^{ijkl}_{st} {_{\mu}\phi^i_s} \: \overline{\star} \: {_{\eta}\phi^k_t}$, with $\alpha^{ijkl}_{st} \in \mathbb{C}(q^{1/D})$. 
\\Moreover $\alpha^{ijkl}_{st} = 0$ unless $\epsilon^{\mu}_s \prec \epsilon^{\mu}_j$ and $\epsilon^{\eta}_t \succ \epsilon^{\eta}_l$. It follows in particular that ${_{\mu}\phi^i_j} \circ_{\ell}  {_{\eta}\phi^k_l} = {_{\mu}\phi^i_j} \circ_r  {_{\eta}\phi^k_l}$.
\\2. ${_{\mu}\phi^k_l} \circ_x {_{\eta}\phi^i_j} = q^{(\epsilon^{\mu}_i, \epsilon^{\eta}_k - \epsilon^{\eta}_l)} {_{\eta}\phi^i_j} \: \overline{\star} \: {_{\mu}\phi^k_l} + \sum_{s=i+1}^{n} \sum_{t=1}^{k-1} \beta^{ijkl}_{st} {_{\eta}\phi^s_j} \: \overline{\star} \: {_{\mu}\phi^t_l}$, with $\beta^{ijkl}_{st} \in \mathbb{C}(q^{1/D})$. 
\\Moreover $\beta^{ijkl}_{st} = 0$ unless $\epsilon^{\mu}_s \prec \epsilon^{\mu}_i$ and $\epsilon^{\eta}_t \succ \epsilon^{\eta}_k$.
\\3. It follows that
\begin{equation}\label{formuleEchangeGrL01}
\begin{array}{rl}
{_{\eta}\phi^k_l} \circ_x {_{\mu}\phi^i_j} \: = \: q^{(\epsilon^{\mu}_i + \epsilon^{\mu}_j, \epsilon^{\eta}_k - \epsilon^{\eta}_l)} {_{\mu}\phi^i_j} \circ_x {_{\eta}\phi^k_l} &\!\!\!+\: \sum_{s=j}^{m} \sum_{t=1}^{l} \sum_{u=i+1}^{m} \sum_{v=1}^{k-1} \delta^{ijkl}_{stuv} \, {_{\eta}\phi^u_s} \circ_x {_{\mu}\phi^v_t}\\
&+\: \sum_{s=j+1}^{m} \sum_{t=1}^{l-1} \gamma^{ijkl}_{st} {_{\mu}\phi^i_s} \circ_x {_{\eta}\phi^k_t}
\end{array}
\end{equation}
with $\delta^{ijkl}_{stuv}, \gamma^{ijkl}_{st} \in \mathbb{C}(q^{1/D})$. Moreover $\delta^{ijkl}_{stuv} = 0$ unless $\epsilon^{\mu}_s \preceq \epsilon^{\mu}_j, \epsilon^{\eta}_t \succeq \epsilon^{\eta}_l, \epsilon^{\mu}_u \prec \epsilon^{\mu}_i, \epsilon^{\eta}_v \succ \epsilon^{\eta}_k$ and $\gamma^{ijkl}_{st} = 0$ unless $\epsilon^{\mu}_s \prec \epsilon^{\mu}_j, \epsilon^{\eta}_t \succ \epsilon^{\eta}_l$.
\end{lem}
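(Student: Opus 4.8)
The plan is to compute the graded products $\circ_r$ directly from the product formula \eqref{produitL01}, together with the canonical form \eqref{expressionCanoniqueR} of the $R$-matrix, retaining only the top graded component for $\mathcal{F}_r$, that is the part of maximal total \emph{right} weight. The starting observation is that in \eqref{produitL01} the first factor $\varphi={_{\mu}\phi^i_j}$ is acted on only by $\rhd$, which by \eqref{coregularActions} alters its \emph{lower} index; hence its upper index $i$, and with it the right weight $\epsilon^\mu_i$, is untouched in every summand. For the second factor $\psi={_{\eta}\phi^k_l}$ the upper index $k$ is moved only by $\lhd R^1_{(1)}$. Expanding $R^1$ via \eqref{expressionCanoniqueR}, the summand $\Theta_{(1)}$ preserves $k$ and contributes a $q$-power through \eqref{ThetaPoids}, while every nontrivial summand $\Theta_{(1)}X(\mathbf{0},0,\mathbf{s}_k)$ carries a factor $E_\beta$ that, by \eqref{weightMatrixCoeffs} and positivity of the weight of $E_\beta$, strictly lowers the right weight of $\psi$. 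Thus only $R^1\to\Theta^1$ survives in the top $\mathcal{F}_r$-piece, and the single remaining copy $R^2$ produces the corrections.

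With $R^1$ reduced to $\Theta^1$, the $F$-part of $R^2$ acts by $\rhd$ on the lower index $j$ of $\varphi$ and lowers its weight, replacing $j$ by some $s$ with $\epsilon^\mu_s\prec\epsilon^\mu_j$ (hence $s>j$), while the $E$-part of $R^2$ acts by $\rhd$ on the lower index $l$ of $\psi$ and raises its weight, replacing $l$ by some $t$ with $\epsilon^\eta_t\succ\epsilon^\eta_l$ (hence $t<l$). Since both come from the single copy $R^2$, the two weight-shifts are opposite and correlated, which forces the total right weight to be preserved and yields exactly the double sum and the vanishing conditions of item 1. The fully leading term ($R^1\to\Theta^1$, $R^2\to\Theta^2$) reduces, after \eqref{ThetaPoids} and the projection $\pi_{\mu+\eta}$ of \eqref{filtrationOq}, to $q^{(\epsilon^\mu_j,\epsilon^\eta_l-\epsilon^\eta_k)}\,{_{\mu}\phi^i_j}\,\overline{\star}\,{_{\eta}\phi^k_l}$, the scalar being read off from \eqref{commutationRK}. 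As the resulting explicit expression is identical to the one obtained in \cite{VY} for $\circ_\ell$, the two graded products agree on these generators. Item 2 is the parallel computation using the equivalent fusion relation \eqref{fusionrelationbis}, in which the roles of the upper and lower indices are interchanged; it produces the same shape with the upper indices now carrying the corrections and with the leading $\overline{\star}$ written in the flipped order ${_{\eta}\phi^i_j}\,\overline{\star}\,{_{\mu}\phi^k_l}$, the flip being the quasi-commutativity of the Cartan multiplication on $C(\mu+\eta)$.

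Item 3 is then deduced from items 1 and 2 with no further $R$-matrix computation. Item 1 expresses ${_{\mu}\phi^i_j}\circ_r{_{\eta}\phi^k_l}$ as a combination of the monomials ${_{\mu}\phi^i_s}\,\overline{\star}\,{_{\eta}\phi^k_t}$ that is unitriangular for the order $\preceq$ (leading coefficient $q^{(\epsilon^\mu_j,\epsilon^\eta_l-\epsilon^\eta_k)}$); this triangularity allows one to invert the relation and write the leading monomial ${_{\mu}\phi^i_j}\,\overline{\star}\,{_{\eta}\phi^k_l}$ back in terms of the products ${_{\mu}\phi^i_s}\circ_r{_{\eta}\phi^k_t}$. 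On the other hand, item 2 applied with $V_\eta$ as first factor expresses ${_{\eta}\phi^k_l}\circ_r{_{\mu}\phi^i_j}$ through the \emph{same} leading monomial ${_{\mu}\phi^i_j}\,\overline{\star}\,{_{\eta}\phi^k_l}$ plus upper-index corrections. Substituting the inverted form of item 1, and converting the correction monomials back into $\circ_r$-products in the same way, produces the exchange relation \eqref{formuleEchangeGrL01}: the leading scalar is the quotient $q^{(\epsilon^\mu_i+\epsilon^\mu_j,\,\epsilon^\eta_k-\epsilon^\eta_l)}$ of the two $q$-powers, the $\delta$-sum gathers the converted upper-index corrections coming from item 2 and the $\gamma$-sum the converted lower-index corrections coming from item 1, with the index ranges inherited from items 1 and 2.

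The main obstacle is the control of the corrections. The decisive point is to justify that the $E_\beta$-contributions to the action $\lhd R^1_{(1)}$ fall strictly below the top $\mathcal{F}_r$-degree, so that the upper indices are pinned and the total right weight is preserved; this strict weight-lowering, which replaces the left-weight analysis of \cite{VY}, must be carried out with the $R$-matrix understood inside the categorical completion, using the pairing \eqref{pairingOqCategoricalCompletion} and the well-definedness statement \eqref{eqWellDefRMatrixOnOq}. The second delicate point is the triangular inversion in item 3: one must check that $\preceq$ is well-founded on $\Lambda$ (as noted after \eqref{defIndexSetLambda}) so that the inversion of the unitriangular systems of items 1 and 2 terminates, keeps the sums finite, and leaves them supported in the stated ranges.
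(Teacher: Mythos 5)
Your proposal is correct and follows essentially the same route as the paper's own proof: item 1 by isolating the top right-weight component of \eqref{produitL01} (which forces $R^1\to\Theta$, the remaining copy $R^2$ then producing the strictly triangular lower-index corrections and the leading scalar via \eqref{ThetaPoids} and \eqref{commutationRK}, after which the formula is compared with the one in \cite{VY} for $\circ_\ell$), item 2 by the same computation on the flipped form of the product (the paper derives this form from $R\Delta=\Delta^{\mathrm{op}}R$, which is exactly the content of your appeal to \eqref{fusionrelationbis}), and item 3 by triangular inversion of items 1 and 2 and comparison of the two resulting expressions for $\overline{\star}$, as in \cite{VY}. The only deviations are cosmetic and harmless --- e.g.\ your remark that the $R^2$-corrections preserve the ``total right weight'' really concerns the left weights (the right weight is untouched simply because $R^2$ never acts on the upper indices), and the triangular inversion in item 3 takes place inside the finite-dimensional span of the monomials ${_{\mu}\phi^i_s}\:\overline{\star}\:{_{\eta}\phi^k_t}$, so no well-foundedness argument beyond finiteness is actually needed.
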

\begin{proof}
Recall the formula \eqref{expressionCanoniqueR} for the $R$-matrix; rewrite it more simply as $\textstyle R = \Theta_{(1)} \otimes \Theta_{(2)} + \sum_r X^+_r \otimes X^-_r$ with $X^+_r \otimes X^-_r \in \Theta_{(1)}U_q(\mathfrak{n}_+) \otimes \Theta_{(2)}U_q(\mathfrak{n}_-)$ for each $r$. We have a key-property regarding the weights for the adjoint action: 
\[ \forall \, r, \:\: \exists \, \gamma_r \in Q^+\!\setminus\!\{0\}, \:\: \forall \, \nu \in P, \quad K_{\nu}X^{\pm}_rK_{\nu}^{-1} = q^{\pm (\nu,\gamma_r)} X^{\pm}_r \]
which simply follows from \eqref{weightOfRootVectors}. Hence by \eqref{weightMatrixCoeffs} we have
\begin{align}
\begin{split}\label{coregOnNumberedMatCoeffs}
&\bigl[ X^-_r \rhd {_{\mu}\phi^i_j} \bigr] \in \mathrm{span}\bigl\{ {_{\mu}\phi^i_s} \, \big| \, s \text{ such that } \epsilon^{\mu}_s = \epsilon^{\mu}_j - \gamma_r \bigr\} \subset \mathrm{span}\bigl\{ {_{\mu}\phi^i_s} \, \big| \, j+1 \leq s \leq m \bigr\},\\
&\bigl[ X^+_r \rhd {_{\eta}\phi^k_l} \bigr] \subset \mathrm{span}\bigl\{ {_{\eta}\phi^k_t} \, \big| \, t \text{ such that } \epsilon^{\eta}_t = \epsilon^{\eta}_l + \gamma_r \bigr\} \subset \mathrm{span}\bigl\{ {_{\eta}\phi^k_t} \, \big| \, 1 \leq t \leq l-1 \bigr\},\\
& \bigl[ {_{\eta}\phi^k_l} \lhd X^+_r \bigr] \subset \mathrm{span}\bigl\{ {_{\eta}\phi^u_l} \, \big| \, u \text{ such that } \epsilon^{\eta}_u = \epsilon^{\eta}_k - \gamma_r \bigr\} \subset \mathrm{span}\bigl\{ {_{\eta}\phi^u_l} \, \big| \, k+1 \leq u \leq n \bigr\}
\end{split}
\end{align}
where the inclusions are due to the chosen numbering for the basis of weight vectors in $V_{\mu}$.
The formula \eqref{produitL01} for the product in $\mathcal{L}_{0,1}$ thus gives (with implicit summation on $r,r'$)
\begin{align*}
{_{\mu}\phi^i_j} \, {_{\eta}\phi^k_l} =&\: \bigl( \Theta_{(2)}^2 S(\Theta_{(2)}^1) \rhd {_{\mu}\phi^i_j}  \bigr) \star \bigl( \Theta_{(1)}^2 \rhd {_{\eta}\phi^k_l} \lhd \Theta_{(1)}^1 \bigr) + \bigl( X^-_r S(\Theta_{(2)}) \rhd {_{\mu}\phi^i_j}  \bigr) \star \bigl( X^+_r \rhd {_{\eta}\phi^k_l} \lhd \Theta_{(1)} \bigr)\\
&+ \bigl( \Theta_{(2)} S(X^-_r) \rhd {_{\mu}\phi^i_j}  \bigr) \star \bigl( \Theta_{(1)} \rhd {_{\eta}\phi^k_l} \lhd X^+_r \bigr) + \bigl( X^-_r S(X^-_{r'}) \rhd {_{\mu}\phi^i_j}  \bigr) \star \bigl( X^+_r \rhd {_{\eta}\phi^k_l} \lhd X^+_{r'} \bigr)\\
=&\: q^{(\epsilon^{\mu}_j, \epsilon^{\eta}_l - \epsilon^{\eta}_k)} {_{\mu}\phi^i_j} \star {_{\eta}\phi^k_l} + \sum_{s=j+1}^m \sum_{t=1}^{l-1} \alpha^{ijkl}_{st} {_{\mu}\phi^i_s} \star {_{\eta}\phi^k_t} + \sum_{s=j+1}^m \sum_{u = k+1}^n B^{ijkl}_{su} {_{\mu}\phi^i_s} \star {_{\eta}\phi^u_l}\\
&\qquad \qquad \qquad\qquad\qquad\qquad\:\:+ \sum_{s=j+1}^m \sum_{t=1}^{l-1} \sum_{u=k+1}^n D^{ijkl}_{stu} {_{\mu}\phi^i_s} \star {_{\eta}\phi^u_t}
\end{align*}
for certain scalars $\alpha^{ijkl}_{st}, B^{ijkl}_{su}, D^{ijkl}_{stu} \in \mathbb{C}(q^{1/D})$, where for the second equality we used \eqref{ThetaPoids}, \eqref{ThetaPoidsInverse}, and \eqref{weightMatrixCoeffs} to evaluate the action of the $\Theta$'s (first term in the result), and we used \eqref{coregOnNumberedMatCoeffs} to get the other terms. More precisely, we actually have $\alpha^{ijkl}_{st} = 0$ unless there exists $0 \neq \gamma \in Q_+$ such that $\epsilon^{\mu}_s = \epsilon^{\mu}_j - \gamma$ and $\epsilon^{\eta}_t = \epsilon^{\eta}_l + \gamma$, $B^{ijkl}_{su} = 0$ unless $\epsilon^{\mu}_s \prec \epsilon^{\mu}_j$ and $\epsilon^{\eta}_u \prec \epsilon^{\eta}_k$, $D^{ijkl}_{stu} = 0$ unless $\epsilon^{\mu}_s \prec \epsilon^{\mu}_j$ and $\epsilon^{\eta}_l \prec \epsilon^{\eta}_t$ and $\epsilon^{\eta}_u \prec \epsilon^{\eta}_k$. 
Of course the condition for $\alpha^{ijkl}_{st}$ implies $\epsilon^{\mu}_s \prec \epsilon^{\mu}_j$ and $\epsilon^{\eta}_t \succ \epsilon^{\eta}_l$ as in the statement of the Lemma but for the proof we need this more precise property, showing that $\epsilon^{\mu}_s + \epsilon^{\eta}_t = \epsilon^{\mu}_j + \epsilon^{\eta}_l$ for all $s,t$. Write $_{\lambda} C(\mu)_{\lambda'} = {_\lambda}C(\mu) \cap C(\mu)_{\lambda'}$. Then by  \eqref{weightMatrixCoeffs} we have
\[ \alpha^{ijkl}_{st} {_{\mu}\phi^i_s} \star {_{\eta}\phi^k_t} \in \sum_{0 \neq \gamma \in Q^+}{_{\epsilon^{\mu}_j - \gamma}}C(\mu)_{\epsilon^{\mu}_i} \star {_{\epsilon^{\eta}_l + \gamma}}C(\eta)_{\epsilon^{\eta}_k} \subset \sum_{\nu \preceq \mu + \eta} {_{\epsilon^{\mu}_j + \epsilon^{\eta}_l}}C(\nu)_{\epsilon^{\mu}_i + \epsilon^{\eta}_k}. \]
Thus, among the terms produced by $\alpha^{ijkl}_{st} {_{\mu}\phi^i_s} \star {_{\eta}\phi^k_t}$, only those (and all of them) which are in $C(\mu + \eta)$ are kept in $\mathrm{gr}_{\mathcal{F}_\ell}(\mathcal{L}_{0,1})$ and $\mathrm{gr}_{\mathcal{F}_r}(\mathcal{L}_{0,1})$, which is precisely the effect of the product $\overline{\star}$. Same remark for $q^{(\epsilon^{\mu}_j, \epsilon^{\eta}_l - \epsilon^{\eta}_k)} {_{\mu}\phi^i_j} \star {_{\eta}\phi^k_l}$. Next,
\[ B^{ijkl}_{su} {_{\mu}\phi^i_s} \star {_{\eta}\phi^u_l} \in \sum_{\substack{\lambda \,\prec\, \epsilon^{\mu}_j\\ \kappa \,\prec\, \epsilon^{\eta}_k}} {_\lambda}C(\mu)_{\epsilon^{\mu}_i} \star {_{\epsilon^\eta_l}}C(\eta)_{\kappa} \subset \sum_{\substack{\nu \,\preceq\, \mu + \eta\\ \lambda' \prec\, \epsilon^{\mu}_j + \epsilon^{\eta}_l \\ \kappa' \prec\, \epsilon^{\mu}_i + \epsilon^{\eta}_k}} {_{\lambda'}}C(\nu)_{\kappa'} \subset \mathcal{F}_\ell^{\prec (\mu + \eta, \, \epsilon^{\mu}_j + \epsilon^{\eta}_l)} \cap \mathcal{F}_r^{\prec (\mu + \eta, \, \epsilon^{\mu}_i + \epsilon^{\eta}_k)} \]
and thus all these terms disappear in $\mathrm{gr}_{\mathcal{F}_\ell}(\mathcal{L}_{0,1})$ and $\mathrm{gr}_{\mathcal{F}_r}(\mathcal{L}_{0,1})$. Same remark for $D^{ijkl}_{stu} {_{\mu}\phi^i_s} \star {_{\eta}\phi^u_t}$. We have thus obtained the announced formula.
\\2. Using the relation $R\Delta = \Delta^{\mathrm{op}}R$, we see that the formula \eqref{produitL01} for the product in $\mathcal{L}_{0,1}$ can be rewritten as follows:
\[ \alpha \beta = \sum_{(R^1), (R^2)} \bigl( \beta \lhd R^1_{(1)} R^2_{(1)} \bigr) \star \bigl( S(R^1_{(2)}) \rhd \alpha \lhd R^2_{(2)} \bigr). \]
To obtain the desired equalities in the claim, it suffices to repeat a proof like in item 1 but with this alternative formula for the product in $\mathcal{L}_{0,1}$.
\\3. Each of the formulas in items 1 and 2 allows one to express the product $\overline{\star}$ in terms of the product $\circ_x$ (using induction). We get the result by comparing the two resulting expressions for $\overline{\star}$; for more details see \cite[pp. 177-178]{VY}.
\end{proof}

\indent Recall that $\varpi_1, \ldots, \varpi_m$ denote the fundamental weights of the semisimple Lie algebra $\mathfrak{g}$. Any irreducible $U_q^{\mathrm{ad}}$-module $V_{\mu}$ is a direct summand of some tensor product of the modules $V_{\varpi_1}, \ldots, V_{\varpi_m}$ (see e.g. \cite[\S I.6.12]{BG}). Hence the matrix coefficients ${_{\varpi_r}\phi^i_j}$ generate $\mathcal{O}_q$ as an algebra, and there is of course a finite number of such elements. From item 1 of Lemma \ref{lemmaProductsInLeftAndRightGr}, one can deduce that these matrix coefficients generate $\mathrm{gr}_{\mathcal{F}_x}(\mathcal{L}_{0,1})$ as well (see \cite[pp. 178]{VY}). We form an ordered list $(u_1, \ldots, u_p)$ containing all the elements ${_{\varpi_r}\phi^i_j}$ such that for any $u_b = {_{\varpi_r}\phi^i_j}$ and $u_a = {_{\varpi_s}\phi^k_l}$ the following condition is satisfied:
\begin{equation}\label{conditionOrdreCoeffMatricielsFondamentauxL01}
\biggl[ \epsilon^{\varpi_r}_i \prec \epsilon^{\varpi_s}_k \:\text{ or }\: \bigl( \epsilon^{\varpi_r}_i = \epsilon^{\varpi_s}_k \:\text{ and }\: \epsilon^{\varpi_r}_j \prec \epsilon^{\varpi_s}_l\bigr) \biggr] \implies b < a.
\end{equation}
With this choice of numbering, the formula \eqref{formuleEchangeGrL01} can be rewritten as
\begin{equation}\label{relationsNoetherianiteL01}
\forall \, 1 \leq b < a \leq p, \quad u_a \circ_x u_b = q_{ab} u_b \circ_x u_a + \sum_{s=1}^{b-1} \sum_{t=1}^p \alpha^{ab}_{st} u_s \circ_x u_t
\end{equation}
for certain scalars $q_{ab} \in \mathbb{C}(q)^{\times}$ and $\alpha^{ab}_{st} \in \mathbb{C}(q)$. It follows that $\mathrm{gr}_{\mathcal{F}_x}(\mathcal{L}_{0,1})$ is Noetherian by Lemma \ref{critereNoetherien} and hence $\mathcal{L}_{0,1}$ is Noetherian by Lemma \ref{lemmaFiltrationNoetherian}.

\medskip

\indent We are now ready to study the Noetherianity for $\mathcal{L}_{1,0}$. Consider
\begin{equation}\label{filtrationL10}
\mathcal{F}^{\mu, \lambda, \eta, \kappa} = \mathcal{F}_r^{\mu, \lambda} \otimes \mathcal{F}_{\ell}^{\eta, \kappa} \subset \mathcal{L}_{1,0}  \quad \text{for } (\mu, \lambda), (\eta, \kappa) \in \Lambda.
\end{equation}
Here we use that $\mathcal{L}_{1,0}$ is $\mathcal{O}_q(q^{1/D})^{\otimes 2}$ as a vector space and we implicitly extend the scalars of $\mathcal{F}_x^{\mu,\lambda}$ to $\mathbb{C}(q^{1/D})$. On the indexing set $\Lambda \times \Lambda$ of the family $\mathcal{F}$ of spaces $\mathcal{F}^{\mu, \lambda, \eta, \kappa}$, we put the product order, again denoted by $\preceq$, namely $(\mu', \lambda', \eta', \kappa') \preceq (\mu, \lambda, \eta, \kappa)$ if and only if $(\mu', \lambda') \preceq (\mu, \lambda)$ and $(\eta', \kappa') \preceq (\eta, \kappa)$.
\begin{lem}
$\mathcal{F}$ is a filtration of the algebra $\mathcal{L}_{1,0}$.
\end{lem}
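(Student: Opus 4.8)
The plan is to check the three defining properties of a filtration for the family $\mathcal{F}=(\mathcal{F}^{\mu,\lambda,\eta,\kappa})$ indexed by $(\Lambda\times\Lambda,\preceq)$, namely that it is increasing, exhaustive, and multiplicative. The first two are immediate from the corresponding properties of $\mathcal{F}_r$ and $\mathcal{F}_\ell$: since each $\mathcal{F}_x^{\mu,\lambda}$ is a direct sum of weight blocks of $\mathcal{O}_q$, the tensor products $\mathcal{F}_r^{\mu,\lambda}\otimes\mathcal{F}_\ell^{\eta,\kappa}$ are genuinely nested subspaces of $\mathcal{O}_q(q^{1/D})^{\otimes 2}=\mathcal{L}_{1,0}$ whenever $(\mu,\lambda,\eta,\kappa)\preceq(\mu'',\lambda'',\eta'',\kappa'')$, and their union over all indices is all of $\mathcal{L}_{1,0}$ because $\bigcup_{(\mu,\lambda)}\mathcal{F}_x^{\mu,\lambda}=\mathcal{O}_q$. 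So the real content is multiplicativity.

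For multiplicativity I would use the decomposition $\mathcal{L}_{1,0}=\bigoplus C(\mu)_\lambda\otimes{_\kappa C(\eta)}$ and reduce to checking that for $\beta\in C(\mu)_\lambda$, $\alpha\in{_\kappa C(\eta)}$, $\beta'\in C(\mu')_{\lambda'}$, $\alpha'\in{_{\kappa'} C(\eta')}$ the product $(\beta\otimes\alpha)(\beta'\otimes\alpha')$ lies in $\mathcal{F}^{\mu+\mu',\lambda+\lambda',\eta+\eta',\kappa+\kappa'}$. Expanding with the explicit formula \eqref{productL10}, I would treat the two tensor slots separately: in the first slot one has the $\mathcal{L}_{0,1}$-product of $\beta$ with the twisted element $R^4_{(2)}R^3_{(1)}\rhd\beta'\lhd R^1_{(1)}R^2_{(1)}$, and in the second slot the $\mathcal{L}_{0,1}$-product of the twisted element $R^3_{(2)}S(R^1_{(2)})\rhd\alpha\lhd R^2_{(2)}R^4_{(1)}$ with $\alpha'$.

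The key step is weight tracking. Using the canonical expression \eqref{expressionCanoniqueR} together with the commutation relations \eqref{commutationRK} — and the fact that the antipode preserves adjoint weights, so that $S(R^1_{(2)})$ shifts weights exactly as $R^1_{(2)}$ does — one shows that the coregular actions only lower the relevant weight in the order $\preceq$ and never change the highest-weight label $\mu'$ (resp. $\eta'$), since each block $C(\nu)$ is stable under both coregular actions. Concretely, right multiplication by any component $R_{(1)}$ lowers the right-weight by some $\gamma\in Q_+$, so the twisted $\beta'$ lies in $\bigoplus_{\lambda''\preceq\lambda'}C(\mu')_{\lambda''}\subseteq\mathcal{F}_r^{\mu',\lambda'}$; symmetrically, left multiplication by $R_{(2)}$ or $S(R_{(2)})$ lowers the left-weight, so the twisted $\alpha$ lies in $\mathcal{F}_\ell^{\eta,\kappa}$. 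Since moreover $\beta\in\mathcal{F}_r^{\mu,\lambda}$ and $\alpha'\in\mathcal{F}_\ell^{\eta',\kappa'}$, and $\mathcal{F}_r,\mathcal{F}_\ell$ are already known to be filtrations of $\mathcal{L}_{0,1}$, each slot of the product lies in $\mathcal{F}_r^{\mu+\mu',\lambda+\lambda'}$ resp. $\mathcal{F}_\ell^{\eta+\eta',\kappa+\kappa'}$, giving the claim.

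I expect the \emph{main obstacle} to be the bookkeeping of which of the four copies of $R$ acts on which side of $\beta'$ and $\alpha$, and confirming the correct matching between slots and filtrations: the B-slot must be filtered by the right-weight $\mathcal{F}_r$ precisely because only right coregular actions by $R_{(1)}$-components touch $\beta'$ from the right, while the A-slot must be filtered by the left-weight $\mathcal{F}_\ell$ because $\alpha$ is acted on from the left by $R_{(2)}$- and $S(R_{(2)})$-components. Getting the direction of each weight shift right — and in particular verifying that $S$ does not reverse the weight — is where the argument could go wrong; everything else reduces to the already-established fact that $\mathcal{F}_r$ and $\mathcal{F}_\ell$ are filtrations of $\mathcal{L}_{0,1}$.
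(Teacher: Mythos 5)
Your proposal is correct and follows essentially the same route as the paper: expand the product via \eqref{productL10}, use \eqref{expressionCanoniqueR} and the commutation relations \eqref{commutationRK} to show that the twisted element built from $\beta'$ stays in $\mathcal{F}_r^{\mu',\lambda'}$ (only its right weight can drop, by elements of $Q_+$, while the block $C(\mu')$ is preserved) and the twisted element built from $\alpha$ stays in $\mathcal{F}_{\ell}^{\eta,\kappa}$, and then conclude from the fact that $\mathcal{F}_r$ and $\mathcal{F}_{\ell}$ are algebra filtrations of $\mathcal{L}_{0,1}$. Your weight bookkeeping — including the observation that $S$ preserves adjoint weights, so $S(R^1_{(2)})$ shifts weights like $R^1_{(2)}$ — matches the paper's computation \eqref{weightsInProductL10} exactly.
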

\begin{proof}
Take $\beta \otimes \alpha \in C(\mu)_{\lambda} \otimes {_{\kappa}C(\eta)}$ and $\beta' \otimes \alpha' \in C(\mu')_{\lambda'} \otimes {_{\kappa'}C(\eta')}$ and recall that the product in $\mathcal{L}_{1,0}$ is described in \eqref{productL10}. By \eqref{commutationRK}, we have
\begin{equation}\label{weightsInProductL10}
\begin{array}{rcl}
\bigl( R_{(2)}^4 R_{(1)}^3 \rhd \beta' \lhd R_{(1)}^1 R_{(1)}^2 \bigr) \lhd K_{\nu}\!\!\!&=&\!\!\!q^{(\nu, \lambda' - \gamma_1 - \gamma_2)} \bigl( R_{(2)}^4 R_{(1)}^3 \rhd \beta' \lhd R_{(1)}^1 R_{(1)}^2 \bigr)\\[.5em]
K_{\nu} \rhd \bigl( R_{(2)}^3 S(R_{(2)}^1) \rhd \alpha \lhd R_{(2)}^2 R_{(1)}^4  \bigr)\!\!\!\!&=&\!\!\!q^{(\nu, \kappa - \gamma_1 - \gamma_3)} \bigl( R_{(2)}^3 S(R_{(2)}^1) \rhd \alpha \lhd R_{(2)}^2 R_{(1)}^4  \bigr)
\end{array}
\end{equation}
for some $\gamma_1, \gamma_2, \gamma_3 \in Q_+$. In other words:
\[ R_{(2)}^4 R_{(1)}^3 \rhd \beta' \lhd R_{(1)}^1 R_{(1)}^2 \in \mathcal{F}_r^{\mu',\lambda'}, \quad  R_{(2)}^3 S(R_{(2)}^1) \rhd \alpha \lhd R_{(2)}^2 R_{(1)}^4\in \mathcal{F}_{\ell}^{\eta,\kappa} \]
Since $\mathcal{F}_l$ and $\mathcal{F}_r$ are filtrations of the algebra $\mathcal{L}_{0,1}$ we thus have
\[ \beta \bigl( R_{(2)}^4 R_{(1)}^3 \rhd \beta' \lhd R_{(1)}^1 R_{(1)}^2 \bigr) \in \mathcal{F}_r^{\mu + \mu', \lambda + \lambda'}, \quad  \bigl( R_{(2)}^3 S(R_{(2)}^1) \rhd \alpha \lhd R_{(2)}^2 R_{(1)}^4 \bigr) \alpha'\in \mathcal{F}_{\ell}^{\eta + \eta',\kappa + \kappa'} \]
which gives the claim by the definition of $\mathcal{F}$.
\end{proof}

\indent We identify $\mathrm{gr}_{\mathcal{F}}(\mathcal{L}_{1,0})$ with $\mathcal{O}_q(q^{1/D})^{\otimes 2}$ as a vector space: $\beta \otimes \alpha \in C(\mu)_{\lambda} \otimes {_{\kappa}C(\eta)}$ is identified with $\beta \otimes \alpha + \mathcal{F}^{\prec (\mu, \lambda, \eta, \kappa)}$. We denote by $\diamond$ the resulting product on $\mathcal{O}_q(q^{1/D})^{\otimes 2}$. Let $\beta \in C(\mu)_{\lambda}$ and $\alpha \in {_{\kappa}C(\eta)}$; then \eqref{expressionCanoniqueR} and \eqref{weightsInProductL10} yield
\begin{equation}\label{formulaDiamondProduct}
\mathfrak{i}_A(\alpha) \diamond \mathfrak{i}_B(\beta) = \sum_{(R)} \mathfrak{i}_B\bigl( R_{(2)} \Theta^3_{(1)} \rhd \beta \lhd \Theta^1_{(1)} \Theta^2_{(1)} \bigr) \diamond \mathfrak{i}_A\bigl( \Theta^3_{(2)} S(\Theta^1_{(2)}) \rhd \alpha \lhd \Theta^2_{(2)}R_{(1)} \bigr)
\end{equation}
where $\Theta^1$, $\Theta^2$, $\Theta^3$ are three copies of $\Theta$.

\begin{teo}\label{TheoL10Noetherian}
The algebra $\mathcal{L}_{1,0}$ is Noetherian.
\end{teo}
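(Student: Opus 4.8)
The plan is to produce a filtration whose associated graded algebra $\mathrm{gr}_{\mathcal{F}}(\mathcal{L}_{1,0}) \cong (\mathcal{O}_q(q^{1/D})^{\otimes 2}, \diamond)$ meets the Noetherianity criterion of Lemma \ref{critereNoetherien}, and then to descend to $\mathcal{L}_{1,0}$ via Lemma \ref{lemmaFiltrationNoetherian}. The filtration $\mathcal{F}$ of \eqref{filtrationL10} is already at hand; the descent is legitimate because the product order $\preceq$ on $\Lambda \times \Lambda$ is well-founded, being the product of two copies of the well-founded order on $\Lambda$ (a strictly decreasing chain would force a strict decrease in one coordinate infinitely often, contradicting well-foundedness there). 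So everything reduces to checking the hypotheses of Lemma \ref{critereNoetherien} for $(\mathcal{O}_q(q^{1/D})^{\otimes 2}, \diamond)$.

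First I would record a finite generating set. Since the matrix coefficients ${_{\varpi_r}\phi^i_j}$ of the fundamental modules generate $\mathcal{O}_q$, hence also each of $\mathrm{gr}_{\mathcal{F}_\ell}(\mathcal{L}_{0,1})$ and $\mathrm{gr}_{\mathcal{F}_r}(\mathcal{L}_{0,1})$ for $\circ_\ell$ and $\circ_r$, the factorization \eqref{factorisationElementsL10} shows that $\mathrm{gr}_{\mathcal{F}}(\mathcal{L}_{1,0})$ is generated by the finite family $\{\mathfrak{i}_B({_{\varpi_r}\phi^i_j})\}\cup\{\mathfrak{i}_A({_{\varpi_r}\phi^i_j})\}$. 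I would order this family by listing all the $\mathfrak{i}_B$-generators first and all the $\mathfrak{i}_A$-generators second, each block internally ordered by the rule \eqref{conditionOrdreCoeffMatricielsFondamentauxL01}, and call the resulting list $w_1, \dots, w_{2p}$. With this ordering a ``standard monomial'' is one in which every $\mathfrak{i}_B$-factor precedes every $\mathfrak{i}_A$-factor, which is precisely the direction in which the exchange formula \eqref{formulaDiamondProduct} resolves products.

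The verification of the relations of Lemma \ref{critereNoetherien} splits into three cases. When both generators lie in a single block, the required relations are exactly \eqref{relationsNoetherianiteL01} for $\circ_r$ (inside the $\mathfrak{i}_B$-block) and for $\circ_\ell$ (inside the $\mathfrak{i}_A$-block), whose correction terms stay within that block with first factor of strictly smaller index. The case $i$ in the $\mathfrak{i}_B$-block with $j$ in the $\mathfrak{i}_A$-block never occurs, since $\mathfrak{i}_B$-indices are smaller than $\mathfrak{i}_A$-indices. The essential case is the mixed one: for $w_i = \mathfrak{i}_A(u_a)$ and $w_j = \mathfrak{i}_B(u_b)$ with $i>j$, I would apply \eqref{formulaDiamondProduct} to rewrite $\mathfrak{i}_A(\alpha)\diamond\mathfrak{i}_B(\beta)$ as a sum of standard monomials $\mathfrak{i}_B(\beta')\diamond\mathfrak{i}_A(\alpha')$. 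Retaining only the $\Theta$-part (the first summand of $R$ in \eqref{expressionCanoniqueR}) and using \eqref{ThetaPoids} yields the leading term $q_{ij}\,\mathfrak{i}_B(u_b)\diamond\mathfrak{i}_A(u_a) = q_{ij}\,w_j w_i$, with $q_{ij}$ a power of $q^{1/D}$ and hence a unit of the ground field.

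The crux is controlling the remaining summands, and this is where I expect the main obstacle to lie. In $\beta' = R_{(2)}\Theta^3_{(1)}\rhd\beta\lhd\Theta^1_{(1)}\Theta^2_{(1)}$ with $\beta = {_{\varpi_r}\phi^i_j}$, the right coregular weight $\epsilon^{\varpi_r}_i$ (attached to the upper index, governed by $\lhd$ as in \eqref{weightMatrixCoeffs}) is preserved, because the $\Theta$'s are Cartan and the left action $R_{(2)}\rhd$ commutes with $\lhd$; whereas the left coregular weight $\epsilon^{\varpi_r}_j$ is strictly lowered by some $\gamma\in Q_+\setminus\{0\}$ on every non-$\Theta$ summand, by \eqref{commutationRK}. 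Thus each correction term is of the form $\mathfrak{i}_B({_{\varpi_r}\phi^{i}_{j'}})\diamond\mathfrak{i}_A(\cdots)$ with $\epsilon^{\varpi_r}_{j'}\prec\epsilon^{\varpi_r}_j$ at equal upper-index weight, so by \eqref{conditionOrdreCoeffMatricielsFondamentauxL01} its leading factor is a $\mathfrak{i}_B$-generator of index strictly below $b=j$; expanding $\beta'$ and $\alpha'$ in the generators then exhibits these corrections as $\sum_{s<j,\,t}\alpha^{ij}_{st}\,w_s w_t$, exactly the triangular shape Lemma \ref{critereNoetherien} demands. The delicate point to get right is precisely this bookkeeping: one must confirm that the genuine $R$-matrix contribution lowers the lower-index weight while leaving unchanged the primary sorting key (the upper-index weight) of the $\mathfrak{i}_B$-factor, so that every correction falls strictly below the smaller of the two swapped generators. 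Granting this, Lemma \ref{critereNoetherien} gives that $\mathrm{gr}_{\mathcal{F}}(\mathcal{L}_{1,0})$ is Noetherian, and Lemma \ref{lemmaFiltrationNoetherian} concludes that $\mathcal{L}_{1,0}$ is Noetherian.
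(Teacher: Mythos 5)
Your proposal is correct and takes essentially the same route as the paper's proof: the filtration $\mathcal{F}$ of \eqref{filtrationL10}, the finite generating family $\mathfrak{i}_B(u_k), \mathfrak{i}_A(u_k)$ ordered with the $\mathfrak{i}_B$-block first, the exchange formula \eqref{formulaDiamondProduct} whose $\Theta$-part gives the invertible leading coefficient while the non-$\Theta$ summands of $R$ produce the correction terms, and the conclusion via Lemmas \ref{critereNoetherien} and \ref{lemmaFiltrationNoetherian}. Your key bookkeeping claim --- that the genuine $R$-matrix contributions fix the upper index of the $\mathfrak{i}_B$-factor while strictly lowering its lower-index weight, so that each correction's leading factor sits strictly below $u_b$ in the ordering \eqref{conditionOrdreCoeffMatricielsFondamentauxL01} --- is precisely the content of the paper's relation \eqref{echangeCoeffsMatricielsL10}.
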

\begin{proof}
We use the notations and assumptions introduced just before Lemma \ref{lemmaProductsInLeftAndRightGr} for basis elements and matrix coefficients. We are going to show that $\mathrm{gr}_{\mathcal{F}}(\mathcal{L}_{1,0})$ is Noetherian. The first task is to simplify the formula \eqref{formulaDiamondProduct} when $\alpha = {_{\eta}\phi^k_l}$ and $\beta = {_{\mu}\phi^i_j}$ are matrix coefficients of some irreducible modules $V_{\eta}$ and $V_{\mu}$. By \eqref{commutationRK} and \eqref{weightMatrixCoeffs}, we have
\begin{align*}
\bigl( K_{\nu}\rhd (R_{(2)} \rhd {_{\mu}\phi^i_j}) \bigr) \otimes ({_{\eta}\phi^k_l} \lhd R_{(1)}) &= q^{(\nu, \epsilon^{\mu}_j - \gamma)}(R_{(2)} \rhd {_{\mu}\phi^i_j}) \otimes ({_{\eta}\phi^k_l} \lhd R_{(1)})\\
(R_{(2)} \rhd {_{\mu}\phi^i_j}) \otimes \bigl( ({_{\eta}\phi^k_l} \lhd R_{(1)}) \lhd K_{\nu}) \bigr) &= q^{(\nu, \epsilon^{\eta}_k - \gamma)} (R_{(2)} \rhd {_{\mu}\phi^i_j}) \otimes ({_{\eta}\phi^k_l} \lhd R_{(1)})
\end{align*}
for some $\gamma \in Q_+$. Hence
\[ (R_{(2)} \rhd {_{\mu}\phi^i_j}) \otimes ({_{\eta}\phi^k_l} \lhd R_{(1)}) \in \mathrm{span}\bigl\{ {_{\mu}\phi^i_s} \otimes {_{\eta}\phi^t_l} \bigr\}_{\epsilon^{\mu}_s \preceq \epsilon^{\mu}_j, \: \epsilon^{\eta}_t \preceq \epsilon^{\eta}_k} \subset \mathrm{span}\bigl\{ {_{\mu}\phi^i_s} \otimes {_{\eta}\phi^t_l} \bigr\}_{s \geq j, t \geq k} \]
where the inclusion is due to the assumption on the numbering of the basis vectors. Using the expression \eqref{expressionCanoniqueR} we obtain more precisely
\begin{align*}
\sum_{(R)} (R_{(2)} \rhd {_{\mu}\phi^i_j}) \otimes ({_{\eta}\phi^k_l} \lhd R_{(1)}) &\in (\Theta_{(2)} \rhd {_{\mu}\phi^i_j}) \otimes ({_{\eta}\phi^k_l} \lhd \Theta_{(1)}) + \mathrm{span}\bigl\{ {_{\mu}\phi^i_s} \otimes {_{\eta}\phi^t_l} \bigr\}_{\epsilon^{\mu}_s \prec \epsilon^{\mu}_j, \: \epsilon^{\eta}_t \prec \epsilon^{\eta}_k}\\
&= q^{(\epsilon^{\mu}_j, \epsilon^{\eta}_k)} {_{\mu}\phi^i_j} \otimes {_{\eta}\phi^k_l} + \mathrm{span}\bigl\{ {_{\mu}\phi^i_s} \otimes {_{\eta}\phi^t_l} \bigr\}_{\epsilon^{\mu}_s \prec \epsilon^{\mu}_j, \: \epsilon^{\eta}_t \prec \epsilon^{\eta}_k}.
\end{align*}
We can now simplify \eqref{formulaDiamondProduct}:
\begin{equation}\label{echangeCoeffsMatricielsL10}
\begin{array}{rl}
&\mathfrak{i}_A({_{\eta}\phi^k_l}) \diamond \mathfrak{i}_B({_{\mu}\phi^i_j})\\[.7em]
&= \sum_{(R)} \mathfrak{i}_B\bigl( R_{(2)} \Theta^3_{(1)} \rhd {_{\mu}\phi^i_j} \lhd \Theta^1_{(1)} \Theta^2_{(1)} \bigr) \diamond \mathfrak{i}_A\bigl( \Theta^3_{(2)} S(\Theta^1_{(2)}) \rhd {_{\eta}\phi^k_l} \lhd \Theta^2_{(2)}R_{(1)} \bigr)\\[2.5em]
&= q^{(\epsilon^{\mu}_i, \epsilon^{\eta}_k - \epsilon^{\eta}_l) + (\epsilon^{\mu}_j, \epsilon^{\eta}_l)} \sum_{(R)} \mathfrak{i}_B\bigl( R_{(2)} \rhd {_{\mu}\phi^i_j} \bigr) \diamond \mathfrak{i}_A\bigl( {_{\eta}\phi^k_l} \lhd R_{(1)} \bigr)\\
&= q^{(\epsilon^{\mu}_i, \epsilon^{\eta}_k - \epsilon^{\eta}_l) + (\epsilon^{\mu}_j, \epsilon^{\eta}_l + \epsilon^{\eta}_k)} \, \mathfrak{i}_B({_{\mu}\phi^i_j}) \diamond \mathfrak{i}_A({_{\eta}\phi^k_l}) + \sum_{s = j+1}^m \sum_{t = k+1}^n c_{st}^{ijkl} \, \mathfrak{i}_B({_{\mu}\phi^i_s}) \diamond \mathfrak{i}_A({_{\eta}\phi^t_l})
\end{array}
\end{equation}
for some $c_s^{ijkl} \in \mathbb{C}(q^{1/D})$ such that $c_s^{ijkl} = 0$ unless $\epsilon^{\mu}_s \prec \epsilon^{\mu}_j$ and $\epsilon^{\eta}_t \prec \epsilon^{\eta}_k$.

\smallskip

\indent Recall from above \eqref{conditionOrdreCoeffMatricielsFondamentauxL01} the finite sequence of elements $(u_k)$ which generate $\mathrm{gr}_{\mathcal{F}_x}(\mathcal{L}_{0,1})$. Since $\mathfrak{i}_B\bigl( \mathcal{F}_r^{\mu,\lambda} \bigr) = \mathcal{F}^{\mu,\lambda,0,0}$ and $\mathfrak{i}_A\bigl( \mathcal{F}_l^{\eta,\kappa} \bigr) = \mathcal{F}^{0,0,\eta,\kappa}$, the linear maps $\mathfrak{i}_B$ and $\mathfrak{i}_A$ are morphisms of algebras $\mathrm{gr}_{\mathcal{F}_r}(\mathcal{L}_{0,1}) \to \mathrm{gr}_{\mathcal{F}}(\mathcal{L}_{1,0})$ and $\mathrm{gr}_{\mathcal{F}_l}(\mathcal{L}_{0,1}) \to \mathrm{gr}_{\mathcal{F}}(\mathcal{L}_{1,0})$. Moreover, we have $\mathfrak{i}_B(\beta) \diamond \mathfrak{i}_A(\alpha) = \beta \otimes \alpha$. It follows that the algebra $\mathrm{gr}_{\mathcal{F}}(\mathcal{L}_{1,0})$ is generated by the elements $\mathfrak{i}_B(u_k)$ and $\mathfrak{i}_A(u_k)$. We organize them in a sequence $(x_1, \ldots, x_{2p})$ by
\[ x_1 = \mathfrak{i}_B(u_1), \ldots, x_p = \mathfrak{i}_B(u_p), \: x_{p+1} = \mathfrak{i}_A(u_1), \ldots, x_{2p} = \mathfrak{i}_A(u_p). \]
Thanks to relations \eqref{relationsNoetherianiteL01} and \eqref{echangeCoeffsMatricielsL10} we have:
\begin{align}
&\forall \, 1 \leq b < a \leq p, \quad x_a \diamond x_b = q_{ab} x_b \diamond x_a + \sum_{s=1}^{b-1} \sum_{t=1}^p \alpha^{ab}_{st} x_s \diamond x_t \nonumber\\
&\forall \, p+1 \leq b < a \leq 2p, \quad x_a \diamond x_b = q_{ab} x_b \diamond x_a + \sum_{s=1}^{b-1} \sum_{t=1}^p \alpha^{ab}_{st} x_s \diamond x_t \nonumber\\
&\forall \, 1 \leq b \leq p < a \leq 2p, \quad x_a \diamond x_b = q'_{ab} x_b \diamond x_a + \sum_{s=1}^{b-1} \sum_{t=1}^p \lambda^{ab}_{st} x_s \diamond x_t \label{echangeL10Generateurs}
\end{align}
for certain scalars $q_{ab}, q'_{ab} \in \mathbb{C}(q^{1/D})^{\times}$ and $\alpha^{ab}_{st}, \lambda^{ab}_{st} \in \mathbb{C}(q^{1/D})$. These three relations cover all the possible cases for the indices $a$ and $b$, and they all have the form required by Lemma \ref{critereNoetherien}. It follows that $\mathrm{gr}_{\mathcal{F}}(\mathcal{L}_{1,0})$ is Noetherian, and that $\mathcal{L}_{1,0}$ is Noetherian as well thanks to Lemma \ref{lemmaFiltrationNoetherian}.
\end{proof}

\section{The graph algebra $\mathcal{L}_{g,n}$}\label{sectionGraphAlgebra}
Following the definition of $\mathcal{L}_{0,n}(H)$ in \cite{BR1}, we will define $\mathcal{L}_{g,n}(H)$ as a twist of the tensor product of $g$ copies of $\mathcal{L}_{1,0}(H)$ and $n$ copies of $\mathcal{L}_{0,1}(H)$ and then explain the equivalence with the other constructions based on matrix relations \cite{A, AGS1, BuR1} or braided tensor product \cite{AS}. See also \cite{MW} for a similar construction based on twisting; in their work $\mathcal{L}_{g,n}(H)$ corresponds to $\mathcal{A}^*_{\Gamma_{g,n}}$, where $\Gamma_{g,n}$ is the ``daisy'' graph.

\smallskip

\indent The definition of $\mathcal{L}_{g,n}(H)$ in \S \ref{sectionDefLgnH} works for any quasitriangular Hopf algebra $H$. The general definition is adapted to $H = U^{\mathrm{ad}}_q(\mathfrak{g})$ in \S \ref{sectionLgnUq}. Our main results are in \S \ref{sectionNoetherianityLgn} and \S \ref{sectionNoetherianityLgnInv}.

\subsection{Definition of $\mathcal{L}_{g,n}(H)$}\label{sectionDefLgnH}
Let $H$ be a quasitriangular Hopf algebra with an invertible antipode. As in \cite[\S 6.1]{BR1} we will use twisted tensor products, so let us recall a few facts about this operation. For two Hopf algebras $A$ and $B$, a {\em bicharacter} is an invertible element $\chi \in B \otimes A$ such that
\[ (\Delta_B \otimes \mathrm{id}_A)(\chi) = \chi_{23} \, \chi_{13}, \qquad (\mathrm{id}_B \otimes \Delta_A)(\chi) = \chi_{12} \, \chi_{13}, \]
where the subscripts describe embeddings in $B \otimes B \otimes A$ and $B \otimes A \otimes A$ respectively; for instance $\textstyle \chi_{23} = \sum_{(\chi)} 1_B \otimes \chi_{(1)} \otimes \chi_{(2)}$. For such a $\chi$, the element $X= 1_A \otimes \chi \otimes 1_B$ is a twist for $A \otimes B$ (recall the definition of a twist in \S \ref{subsectionL10H}). The twisted Hopf algebra $(A \otimes B)_X$ is denoted by $A \otimes^{\chi} B$ and is called the {\em twisted tensor product of $A$ and $B$ by $\chi$}. If $M$ is a $A$-module-algebra and $N$ is a $B$-module-algebra, then $M \otimes N$ is a $(A \otimes B)$-module-algebra; the twist $(M \otimes N)_X$ is denoted by $M \otimes^{\chi} N$ and is called the {\em twisted tensor product of $M$ and $N$ by $\chi$}.

\begin{exa}
1. The element $R' = \textstyle \sum_{(R)} R_{(2)} \otimes R_{(1)}$ is a bicharacter (with $A = B =H$).
\\2. An easy computation reveals that the Hopf algebra $A_{0,1}(H)$, which was defined as a twist of  $H\otimes H^{\rm cop}$ by $F=R_{32}R_{42}$ in section \ref{subsectionL10H}, coincides with $H \otimes^{R'} H$.
\\3. The Hopf algebra $A_{1,0}(H)$ from section \ref{subsectionL10H} was defined as $A_{0,1}(H) \otimes^{\gamma} A_{0,1}(H)$ (see Lemma \ref{lemmeTwistL10}) and $\mathcal{L}_{1,0}(H)$ is $\mathcal{L}_{0,1}(H) \otimes^{\gamma} \mathcal{L}_{0,1}(H)$.
\end{exa}

\indent Let $f_A : H \to A$ and $f_B : H \to B$ be morphisms of Hopf algebras and denote $f_A \odot f_B = (f_A \otimes f_B)\Delta$. Then
\begin{enumerate}
\item $(f_B \otimes f_A)(R')$ is a bicharacter,
\item $f_A \odot f_B$ is a morphism of Hopf algebras $H \to A \otimes^{(f_B \otimes f_A)(R')} B$.
\end{enumerate}

\smallskip

\indent The twisted tensor product allows us to iterate the twist operation, as follows. Recall from \S \ref{subsectionL10H} that $\mathcal{L}_{0,1}(H)$ and $\mathcal{L}_{1,0}(H)$ are module-algebras over the Hopf algebras $A_{0,1}(H)$ and $A_{1,0}(H)$ respectively.  Let us put by convention $A_{0,0}(H) = \mathcal{L}_{0,0}(H) = k$ (the base field of $H$). For $n \geq 0$ we define
\begin{align*}
A_{0,n+1}(H) &= A_{0,n}(H) \otimes^{J_n} A_{0,1}(H) \quad \text{with } J_n = \bigl( \Delta \otimes \Delta^{\odot n} \bigr)(R')\\
\mathcal{L}_{0,n+1}(H) &= \mathcal{L}_{0,n}(H) \otimes^{J_n} \mathcal{L}_{0,1}(H)
\end{align*}
where $\Delta^{\odot 0} = \varepsilon$ and $\Delta^{\odot n} = \Delta^{\odot (n-1)} \odot \Delta$. For $g \geq 0$ we define
\begin{align*}
A_{g+1,0}(H) &= A_{g,0}(H) \otimes^{K_g} A_{1,0}(H) \quad \text{with } K_g = \bigl( \Delta^{(3)} \otimes (\Delta^{(3)})^{\odot g} \bigr)(R')\\
\mathcal{L}_{g+1,0}(H) &= \mathcal{L}_{g,0}(H) \otimes^{K_g} \mathcal{L}_{1,0}(H)
\end{align*}
where $\Delta^{(3)} = (\Delta \otimes \Delta)\Delta$ is the third iterated coproduct, $(\Delta^{(3)})^{\odot 0} = \varepsilon$ and $(\Delta^{(3)})^{\odot g} = (\Delta^{(3)})^{\odot (g-1)} \odot \Delta^{(3)}$. To see that $J_n$ (resp. $K_g$) is a bicharacter, recall that $\Delta : H \to A_{0,1}(H)$ (resp. $\Delta^{(3)} : H \to A_{1,0}(H)$) is a morphism of Hopf algebras and make an induction from items 1 and 2 above.

\smallskip

\indent In general we put
\[
A_{g,n}(H) = A_{g,0}(H) \otimes^{F_{g,n}} A_{0,n}(H) \quad \text{with } F_{g,n} = \bigl(\Delta^{\odot n} \otimes (\Delta^{(3)})^{\odot g}\bigr)(R') \]
and
\begin{defi}\label{def:Lgn}
The graph algebra is $\mathcal{L}_{g,n}(H) = \mathcal{L}_{g,0}(H) \otimes^{F_{g,n}} \mathcal{L}_{0,n}(H)$.
\end{defi}
By construction $\mathcal{L}_{g,n}(H)$ is a (right) $A_{g,n}(H)$-module-algebra. As vectors spaces
\[ A_{g,n}(H) = H^{\otimes (4g + 2n)} \quad \text{and} \quad \mathcal{L}_{g,n}(H) = (H^{\circ})^{\otimes (2g+n)} \]
and the right action is given by
\[ (\varphi_1 \otimes \ldots \otimes \varphi_{2g+n})\cdot (x_1 \otimes \ldots \otimes x_{4g+2n}) = \bigl(S(x_2) \rhd \varphi_1 \lhd x_1 \bigr) \otimes \ldots \otimes \bigl( S(x_{4g+2n}) \rhd \varphi_{2g+n} \lhd x_{4g+2n-1} \bigr). \]
Thanks to the morphism of Hopf algebras $(\Delta^{(3)})^{\odot g} \odot \Delta^{\odot n} = \Delta^{(4g + 2n - 1)} : H \to A_{g,n}(H)$, we obtain a right $H$-module-algebra structure on $\mathcal{L}_{g,n}(H)$, denoted by $\mathrm{coad}^r$:
\begin{equation}\label{coadLgn}
\begin{array}{rl}
\mathrm{coad}^r(h)(\varphi_1 \otimes \ldots \otimes \varphi_{2g+n}) \!\!\!\!&= (\varphi_1 \otimes \ldots \otimes \varphi_{2g+n}) \cdot \Delta^{(4g + 2n - 1)}(h)\\[.5em]
&= \sum_{(h)} \mathrm{coad}^r(h_{(1)})(\varphi_1) \otimes \ldots \otimes \mathrm{coad}^r(h_{(2g+n)})(\varphi_{2g+n})
\end{array}
\end{equation}
where the right action $\mathrm{coad}^r$ in the second line is defined in \eqref{coadL01}. In other words, $\mathcal{L}_{g,n}(H)$ is $\mathcal{L}_{0,1}(H)^{\otimes (2g+n)}$ as a $H$-module.

\smallskip

\indent We now reformulate the definition of $\mathcal{L}_{g,n}(H)$ as a braided tensor product, which will be more convenient in order to describe the product. Let $\mathrm{Mod}\text{-}H$ be the category of right $H$-modules; it is braided, with braiding:
\[ \fonc{c_{U,V}}{U \otimes V}{V \otimes U}{u \otimes v}{\sum_{(R)} \, (v \cdot R_{(1)}) \otimes (u \cdot R_{(2)})} .\]
Let $(M, m_M, 1_M)$ and $(N, m_N, 1_N)$ be $H$-module-algebras (\textit{i.e.} algebras in $\mathrm{Mod}\text{-}H$) and define
\begin{align*}
&m_{M \, \widetilde{\otimes} \, N} : (M \otimes N) \otimes (M \otimes N) \xrightarrow{\mathrm{id}_M \,\otimes \,c_{N,M} \,\otimes\, \mathrm{id}_N} M \otimes M \otimes N \otimes N \xrightarrow{m_M \otimes m_N} M \otimes N, \\
&1_{M \, \widetilde{\otimes} \, N} = 1_M \otimes 1_N.
\end{align*}
This endows $M \otimes N$ with a structure of (right) $H$-module-algebra, denoted by $M \, \widetilde{\otimes} \, N$ and called the braided tensor product of $M$ and $N$ \cite[Lem. 9.2.12]{majidFoundations}. It is easy to see that $\widetilde{\otimes}$ is associative. We often write a pure tensor $m \otimes n \in M \, \widetilde{\otimes} \, N$ as $m \, \widetilde{\otimes} \, n$ to stress that we use the braided product in $M \otimes N$. Explicitly:
\begin{equation}\label{multiplicationInBraidedTensorProduct}
(m \,\widetilde{\otimes}\, n)(m' \,\widetilde{\otimes}\, n') = \sum_{(R)}m (m' \cdot R_{(1)}) \,\widetilde{\otimes}\, (n \cdot R_{(2)})n'.
\end{equation}
The embeddings of $k$-vector spaces
\begin{equation}\label{embeddingsBraided}
\fonc{j_M}{M}{M \, \widetilde{\otimes} \, N}{m}{m \,\widetilde{\otimes}\, 1} \quad \fonc{j_N}{N}{M \, \widetilde{\otimes} \, N}{n}{1 \,\widetilde{\otimes}\, n}
\end{equation}
are morphisms of $H$-module-algebras and we have
\begin{align}
j_M(m) j_N(n) &= m \,\widetilde{\otimes}\, n,\label{factorisationBraidedTensors}\\
j_N(n)j_M(m) &= \sum_{(R)} j_M(m \cdot R_{(1)}) j_N(n \cdot R_{(2)}).\label{productBraided}
\end{align}

\begin{prop}\label{propBraidedTensProduct}
$\mathcal{L}_{g,n}(H) = \mathcal{L}_{1,0}(H)^{\widetilde{\otimes} g} \: \widetilde{\otimes} \: \mathcal{L}_{0,1}(H)^{\widetilde{\otimes} n}$ as right $H$-module-algebras.
\end{prop}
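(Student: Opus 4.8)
The plan is to reduce the whole statement to a single compatibility between the two twisting operations used in the construction, and then to peel off the factors of $\mathcal{L}_{g,n}(H)$ by an easy induction. The structural input is the observation (already implicit in the items numbered (1)--(2) above) that every bicharacter occurring here has the form $(f_B \otimes f_A)(R')$ for the very Hopf morphisms $f_A, f_B$ that define the module-algebra structures on the factors.

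First I would prove the key identity: if $f_A : H \to A$ and $f_B : H \to B$ are morphisms of Hopf algebras, and $M$ (resp. $N$) is an $A$-module-algebra (resp. $B$-module-algebra) viewed as an $H$-module-algebra through $f_A$ (resp. $f_B$), then the twisted tensor product $M \otimes^{(f_B \otimes f_A)(R')} N$ coincides, as an $H$-module-algebra, with the braided tensor product $M \,\widetilde{\otimes}\, N$. This is a direct computation. The relevant twist is $X = 1_A \otimes (f_B \otimes f_A)(R') \otimes 1_B$, and unwinding the twisted product \eqref{twistedProduct} together with $(m \otimes n)\cdot(a \otimes b) = (m\cdot a)\otimes(n\cdot b)$ gives
\[ (m \otimes n)(m' \otimes n') = \sum_{(R)} m\,(m' \cdot f_A(R_{(1)})) \otimes (n \cdot f_B(R_{(2)}))\,n'. \]
Since the $H$-actions on $M$ and $N$ are precisely those pulled back along $f_A$ and $f_B$, the right-hand side is exactly the braided product \eqref{multiplicationInBraidedTensorProduct}. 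The two $H$-module-algebra structures also agree: the action on $M \otimes^{\chi} N$ is pulled back along $f_A \odot f_B = (f_A \otimes f_B)\Delta$, which is the diagonal action defining $M \,\widetilde{\otimes}\, N$.

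With this in hand the proposition is formal. By definition $K_g = (\Delta^{(3)} \otimes (\Delta^{(3)})^{\odot g})(R')$, $J_n = (\Delta \otimes \Delta^{\odot n})(R')$ and $F_{g,n} = (\Delta^{\odot n} \otimes (\Delta^{(3)})^{\odot g})(R')$, where $\Delta^{(3)}$, $(\Delta^{(3)})^{\odot g}$, $\Delta$, $\Delta^{\odot n}$ are exactly the Hopf morphisms endowing $\mathcal{L}_{1,0}(H)$, $\mathcal{L}_{g,0}(H)$, $\mathcal{L}_{0,1}(H)$, $\mathcal{L}_{0,n}(H)$ with their $\mathrm{coad}^r$ module-algebra structures. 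Applying the key identity to the recursive definitions gives $\mathcal{L}_{g+1,0}(H) = \mathcal{L}_{g,0}(H) \,\widetilde{\otimes}\, \mathcal{L}_{1,0}(H)$ and $\mathcal{L}_{0,n+1}(H) = \mathcal{L}_{0,n}(H) \,\widetilde{\otimes}\, \mathcal{L}_{0,1}(H)$, whence by induction $\mathcal{L}_{g,0}(H) = \mathcal{L}_{1,0}(H)^{\widetilde{\otimes} g}$ and $\mathcal{L}_{0,n}(H) = \mathcal{L}_{0,1}(H)^{\widetilde{\otimes} n}$. Applying it once more to $\mathcal{L}_{g,n}(H) = \mathcal{L}_{g,0}(H) \otimes^{F_{g,n}} \mathcal{L}_{0,n}(H)$ yields $\mathcal{L}_{g,n}(H) = \mathcal{L}_{g,0}(H) \,\widetilde{\otimes}\, \mathcal{L}_{0,n}(H)$, and the associativity of $\widetilde{\otimes}$ gives the claimed factorization.

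The only genuine content is the key identity of the second paragraph; everything else is bookkeeping with bicharacters and an induction. The main point to be careful about is conventional: one must check that the order of the legs in $(f_B \otimes f_A)(R')$, together with the use of $R'$ rather than $R$, reproduces exactly the braiding $c_{N,M}$ (and not its inverse or a transposed version) in the product $m_{M \widetilde{\otimes} N}$, so that the two formulas agree on the nose rather than up to a flip. A secondary, immediate verification is that the diagonal action in the braided picture matches the action pulled back along $f_A \odot f_B = (f_A \otimes f_B)\Delta$ in the twisted picture.
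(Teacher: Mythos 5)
Your proof is correct and follows essentially the same route as the paper's: the paper's proof is exactly your key identity carried out for the specific morphisms $f_B = \Delta^{(3)}$, $f_A = (\Delta^{(3)})^{\odot g}$ (it unwinds the twisted product by $K_g = \bigl(\Delta^{(3)} \otimes (\Delta^{(3)})^{\odot g}\bigr)(R')$ and identifies it with the braided product via the $\mathrm{coad}^r$ structures), then notes that the cases of $J_n$ and $F_{g,n}$ are handled similarly. Your only departure is organizational --- proving the computation once as a general lemma about $(f_B \otimes f_A)(R')$ and applying it three times --- and your conventions (the placement of $R'$ in $B \otimes A$ and the diagonal action via $f_A \odot f_B$) match the paper's, so the two arguments coincide.
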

\begin{proof}
Note that $K_g = \bigl( \Delta^{(3)} \otimes \Delta^{(4g-1)} \bigr)(R')$; then by definition of the multiplication $m_{\mathcal{L}_{g+1,0}(H)}$ in $\mathcal{L}_{g+1,0}(H)$ (recall \eqref{twistedProduct} and the definition of the twisted tensor product) we have for $u,v \in \mathcal{L}_{g,0}(H)$ and $x,y \in \mathcal{L}_{1,0}(H)$:
\begin{align*}
&m_{\mathcal{L}_{g+1,0}(H)}\bigl( (u \otimes x) \otimes (v \otimes y) \bigr)\\
=\:&\sum_{(K_g)} m_{\mathcal{L}_{g,0}(H)}\bigl( u \otimes \bigl(v\cdot (K_g)_{(2)}\bigr) \bigr) \otimes m_{\mathcal{L}_{1,0}(H)}\bigl( \bigl(x \cdot (K_g)_{(1)}\bigr) \otimes y \bigr)\\
=\:&\sum_{(R)} m_{\mathcal{L}_{g,0}(H)}\bigl( u \otimes \bigl(v\cdot \Delta^{(4g-1)}(R_{(1)})\bigr) \bigr) \otimes m_{\mathcal{L}_{1,0}(H)}\bigl( \bigl(x \cdot \Delta^{(3)}(R_{(2)})\bigr) \otimes y \bigr)\\
=\:&\sum_{(R)} m_{\mathcal{L}_{g,0}(H)}\bigl( u \otimes \mathrm{coad}^r(R_{(1)})(v) \bigr) \otimes m_{\mathcal{L}_{1,0}(H)}\bigl( \mathrm{coad}^r(R_{(2)})(x) \otimes y \bigr) =(u \, \widetilde{\otimes} \, x)(v \, \widetilde{\otimes} \, y).
\end{align*}
For the third equality we used \eqref{coadLgn}. Hence $\mathcal{L}_{g+1,0}(H) = \mathcal{L}_{g,0}(H) \, \widetilde{\otimes} \, \mathcal{L}_{1,0}(H)$ and it follows that $\mathcal{L}_{g,0}(H) = \mathcal{L}_{1,0}(H)^{\widetilde{\otimes} g}$. One shows similarly that $\mathcal{L}_{0,n}(H) = \mathcal{L}_{0,1}(H)^{\widetilde{\otimes} n}$ and that $\mathcal{L}_{g,0}(H) \otimes^{F_{g,n}} \mathcal{L}_{0,n}(H) = \mathcal{L}_{g,0}(H) \, \widetilde{\otimes} \, \mathcal{L}_{0,n}(H)$.
\end{proof}

Thanks to Proposition \ref{propBraidedTensProduct} and \eqref{embeddingsBraided} we see that there are embeddings of $H$-module-algebras $j_i : \mathcal{L}_{1,0}(H) \to \mathcal{L}_{g,n}(H)$ for $1 \leq i \leq g$ and $j_i : \mathcal{L}_{0,1}(H) \to \mathcal{L}_{g,n}(H)$ for $g+1 \leq i \leq g+n$, given by
\begin{equation}\label{embeddingsL01L10InLgn}
j_i(x) = 1^{\widetilde{\otimes} (i-1)} \, \widetilde{\otimes} \, x \, \widetilde{\otimes} \, 1^{\widetilde{\otimes}(g+n-i)}
\end{equation}
where $1$ is the unit of $\mathcal{L}_{1,0}(H)$ (\textit{i.e.} $\varepsilon \otimes \varepsilon$) or of $\mathcal{L}_{0,1}(H)$ (\textit{i.e.} $\varepsilon$) depending on the position.
Recall the embeddings $\mathfrak{i}_A, \mathfrak{i}_B$ from \eqref{embeddingL01inL10}; we denote
\begin{equation}\label{embeddingsFrakI}
\begin{array}{l}
\mathfrak{i}_{A(i)} = j_i \circ \mathfrak{i}_A, \:\: \mathfrak{i}_{B(i)} = j_i \circ \mathfrak{i}_B \quad \text{ for } 1 \leq i \leq g,\\[.2em]
\mathfrak{i}_{M(i)} = j_i \quad \text{ for } g+1 \leq i \leq g+n
\end{array}
\end{equation}
which are all embeddings $\mathcal{L}_{0,1}(H) \to \mathcal{L}_{g,n}(H)$. Then from \eqref{factorisationBraidedTensors} and \eqref{factorisationElementsL10} we find
\begin{equation}\label{monomialsLgn}
\begin{array}{rl}
&\mathfrak{i}_{B(1)}(\varphi_1) \, \mathfrak{i}_{A(1)}(\varphi_2) \ldots \mathfrak{i}_{B(g)}(\varphi_{2g-1}) \, \mathfrak{i}_{A(g)}(\varphi_{2g}) \, \mathfrak{i}_{M(g+1)}(\varphi_{2g+1}) \ldots \mathfrak{i}_{M(g+n)}(\varphi_{2g+n})\\[.5em]
= \!\!\!\!& j_1(\varphi_1 \otimes \varphi_2) \ldots j_g(\varphi_{2g-1} \otimes \varphi_{2g}) \, j_{g+1}(\varphi_{2g+1}) \ldots j_{g+n}(\varphi_{2g+n})\\[.5em]
=\!\!\!\!& (\varphi_1 \otimes \varphi_2) \, \widetilde{\otimes} \, \ldots \, \widetilde{\otimes} \, (\varphi_{2g-1} \otimes \varphi_{2g}) \, \widetilde{\otimes} \, \varphi_{2g+1} \, \widetilde{\otimes} \, \ldots \, \widetilde{\otimes} \, \varphi_{2g+n}
\end{array}
\end{equation}
and any element in $\mathcal{L}_{g,n}(H)$ is a linear combination of such elements.

\begin{prop}\label{productLgn}
The product in $\mathcal{L}_{g,n}(H)$ is entirely determined by the following equalities:
\begin{align*}
\mathfrak{i}_{X(i)}(\varphi) \, \mathfrak{i}_{X(i)}(\psi) &= \mathfrak{i}_{X(i)}(\varphi \psi)\\
\mathfrak{i}_{A(i)}(\varphi) \, \mathfrak{i}_{B(i)}(\psi) &= \sum_{(R^1),\ldots,(R^4)} \!\!\!\mathfrak{i}_{B(i)}\bigl( R_{(2)}^4 R_{(1)}^3 \rhd \psi \lhd R_{(1)}^1 R_{(1)}^2 \bigr) \, \mathfrak{i}_{A(i)}\bigl( R_{(2)}^3 S(R_{(2)}^1) \rhd \varphi \lhd R_{(2)}^2 R_{(1)}^4  \bigr)\\
\mathfrak{i}_{Y(j)}(\varphi) \, \mathfrak{i}_{X(i)}(\psi) &= \sum_{(R^1),\ldots,(R^4)} \!\!\! \mathfrak{i}_{X(i)}\bigl( S(R^3_{(1)}R^4_{(1)}) \rhd \psi \lhd R^1_{(1)}R^2_{(1)} \bigr) \, \mathfrak{i}_{Y(j)}\bigl( S(R^1_{(2)}R^3_{(2)}) \rhd \varphi \lhd R^2_{(2)}R^4_{(2)} \bigr)
\end{align*}
for all $\varphi, \psi \in \mathcal{L}_{0,1}(H)$, where $1 \leq i < j \leq g+n$ and $R^1, \ldots, R^4$ are four copies of $R \in H^{\otimes 2}$. In the last equality $X(i)$ is $B(i)$ or $A(i)$ if $1 \leq i \leq g$ and is $M(i)$ if $g + 1 \leq i \leq g + n$; same convention for $Y(j)$.
\end{prop}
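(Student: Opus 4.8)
The plan is to show that the three displayed families of relations, together with the spanning property recorded in \eqref{monomialsLgn}, suffice to compute the product of any two elements, and then to verify each relation. First I would isolate the reduction to generators. By \eqref{monomialsLgn} every element of $\mathcal{L}_{g,n}(H)$ is a linear combination of \emph{ordered} monomials
\[
\mathfrak{i}_{B(1)}(\varphi_1)\,\mathfrak{i}_{A(1)}(\varphi_2)\cdots\mathfrak{i}_{B(g)}(\varphi_{2g-1})\,\mathfrak{i}_{A(g)}(\varphi_{2g})\,\mathfrak{i}_{M(g+1)}(\varphi_{2g+1})\cdots\mathfrak{i}_{M(g+n)}(\varphi_{2g+n}),
\]
in which the generators appear sorted by increasing index and, within a handle, with $B(i)$ before $A(i)$. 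Hence it is enough to show that the product of two such monomials can be rewritten as a linear combination of ordered monomials using only the stated relations. The first relation merges two consecutive generators of the same type and index; the second moves $A(i)$ to the right of $B(i)$ inside a single handle; the third sorts a generator of index $j$ to the right of one of smaller index $i$. Each application strictly decreases a well-founded complexity (lexicographically: the number of out-of-order index pairs, then the number of $A(i)$-before-$B(i)$ pairs, then the word length), so the rewriting terminates in the ordered form. Since ordered monomials span $\mathcal{L}_{g,n}(H)$, the three relations determine the product.

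It then remains to establish the relations. The first is immediate: since $\mathfrak{i}_A,\mathfrak{i}_B$ are algebra embeddings (see \eqref{embeddingL01inL10}) and each $j_i$ is an algebra morphism (Proposition \ref{propBraidedTensProduct} and \eqref{embeddingsBraided}), the composites $\mathfrak{i}_{A(i)},\mathfrak{i}_{B(i)},\mathfrak{i}_{M(i)}$ of \eqref{embeddingsFrakI} are algebra morphisms, which is exactly $\mathfrak{i}_{X(i)}(\varphi)\,\mathfrak{i}_{X(i)}(\psi)=\mathfrak{i}_{X(i)}(\varphi\psi)$. For the second relation, $A(i)$ and $B(i)$ lie in the same tensor slot $\mathcal{L}_{1,0}(H)$; as $j_i$ is an algebra morphism we have $\mathfrak{i}_{A(i)}(\varphi)\,\mathfrak{i}_{B(i)}(\psi)=j_i\!\bigl(\mathfrak{i}_A(\varphi)\,\mathfrak{i}_B(\psi)\bigr)$, and substituting the third formula of \eqref{productL10WithEmbeddings} for $\mathfrak{i}_A(\varphi)\,\mathfrak{i}_B(\psi)$ in $\mathcal{L}_{1,0}(H)$ yields the claimed identity verbatim.

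The heart of the matter is the third relation. Using associativity of $\widetilde{\otimes}$ I would group $\mathcal{L}_{g,n}(H)=P\,\widetilde{\otimes}\,Q$, with $P$ the braided product of the slots $1,\dots,j-1$ (which contains the slot $i$) and $Q$ that of the slots $j,\dots,g+n$. Since $\mathfrak{i}_{X(i)}(\psi)$ has units in every slot $\geq j$ it lies in $\mathrm{im}(j_P)$, and likewise $\mathfrak{i}_{Y(j)}(\varphi)$ lies in $\mathrm{im}(j_Q)$; this is the precise sense in which the slots strictly between $i$ and $j$, and beyond $j$, are irrelevant. The braided commutation \eqref{productBraided}, combined with the fact that $j_P,j_Q$ and the $\mathfrak{i}_{X(i)}$ are $H$-linear, then gives
\[
\mathfrak{i}_{Y(j)}(\varphi)\,\mathfrak{i}_{X(i)}(\psi)=\sum_{(R)} \mathfrak{i}_{X(i)}\bigl(\mathrm{coad}^r(R_{(1)})(\psi)\bigr)\,\mathfrak{i}_{Y(j)}\bigl(\mathrm{coad}^r(R_{(2)})(\varphi)\bigr).
\]
Expanding the two coadjoint actions by \eqref{coadL01} and evaluating
\[
\bigl(R_{(1)}\bigr)_{(1)}\otimes\bigl(R_{(1)}\bigr)_{(2)}\otimes\bigl(R_{(2)}\bigr)_{(1)}\otimes\bigl(R_{(2)}\bigr)_{(2)}=(\Delta\otimes\Delta)(R)=R_{14}R_{24}R_{13}R_{23}
\]
by means of the quasitriangularity axioms introduces exactly four copies $R^1,\dots,R^4$ of $R$, and after a relabeling of these dummy summation indices one recovers the stated formula. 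The main obstacle is precisely this last bookkeeping: tracking which copy of $R$ lands in which of the four actions on $\varphi$ and $\psi$, and checking that the resulting expression matches the claimed one up to a permutation of the summation variables. Everything else is formal.
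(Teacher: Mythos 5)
Your proposal is correct and follows essentially the same route as the paper: the first relation comes from the $\mathfrak{i}_{X(i)}$ being algebra morphisms, the second from the $\mathcal{L}_{1,0}(H)$-formulas \eqref{productL10WithEmbeddings} pushed through the embedding $j_i$, and the third from a braided tensor decomposition of $\mathcal{L}_{g,n}(H)$ combined with the commutation rule \eqref{productBraided}, the $H$-linearity of the embeddings, and the quasitriangularity identity for $(\Delta\otimes\Delta)(R)$. The only (immaterial) differences are that you split the braided product just before slot $j$ whereas the paper splits just after slot $i$ (writing $\mathcal{L}_{g,n}(H)=\mathcal{L}_{i,0}(H)\,\widetilde{\otimes}\,\mathcal{L}_{g-i,n}(H)$ in the case $i<j\leq g$), that you expand $(\Delta\otimes\Delta)(R)$ as $R_{14}R_{24}R_{13}R_{23}$ where the paper uses the equal expression $R_{14}R_{13}R_{24}R_{23}$ (equal since $R_{24}$ and $R_{13}$ commute), and that your rewriting/termination argument spells out what the paper dismisses as clear.
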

\begin{proof}
It is clear that these formulas determine the product since they allow us to compute the product of any two elements from \eqref{monomialsLgn}. The first formula simply expresses the fact that $\mathfrak{i}_{X(i)}$ is a morphism of algebras. The second formula is obtained from \eqref{productL10} and \eqref{factorisationElementsL10} thanks to the algebra embedding $j_i : \mathcal{L}_{1,0}(H) \to \mathcal{L}_{g,n}(H)$. Let us prove the third formula. Assume for instance that $i < j \leq g$. Using Proposition \ref{propBraidedTensProduct} and the associativity of $\widetilde{\otimes}$ we can write $\mathcal{L}_{g,n}(H) = \mathcal{L}_{i,0}(H) \, \widetilde{\otimes} \, \mathcal{L}_{g-i,n}(H)$; let $j_{\mathcal{L}_{i,0}(H)}$ and $j_{\mathcal{L}_{g-i,n}(H)}$ be the corresponding $H$-module-algebra embeddings (see \eqref{embeddingsBraided}). It is easy to see that
\[ \mathfrak{i}^{(g,n)}_{X(i)} = j_{\mathcal{L}_{i,0}(H)} \circ \mathfrak{i}^{(i,0)}_{X(i)}, \qquad \mathfrak{i}^{(g,n)}_{Y(j)} = j_{\mathcal{L}_{g-i,n}(H)} \circ \mathfrak{i}^{(g-i,n)}_{Y(j-i)} \]
where $\mathfrak{i}^{(g,n)}_{X(l)}$, $\mathfrak{i}^{(i,0)}_{X(l)}$ and $\mathfrak{i}^{(g-i,n)}_{X(l)}$ are the $H$-module-algebra embeddings from \eqref{embeddingsFrakI} for $\mathcal{L}_{g,n}(H)$, $\mathcal{L}_{i,0}(H)$ and $\mathcal{L}_{g-i,n}(H)$ respectively. Hence we have
\begin{align*}
&\mathfrak{i}^{(g,n)}_{Y(j)}(\varphi) \, \mathfrak{i}^{(g,n)}_{X(i)}(\psi) =  j_{\mathcal{L}_{g-i,n}(H)}\bigl(\mathfrak{i}^{(g-i,n)}_{Y(j-i)}(\varphi)\bigr) j_{\mathcal{L}_{i,0}(H)}\bigl(\mathfrak{i}^{(i,0)}_{X(i)}(\psi)\bigr)\\
=\:& \sum_{(R)} j_{\mathcal{L}_{i,0}(H)}\!\left(\mathrm{coad}^r(R_{(1)})\bigl(\mathfrak{i}^{(i,0)}_{X(i)}(\psi)\bigr)\right) j_{\mathcal{L}_{g-i,n}(H)}\!\left(\mathrm{coad}^r(R_{(2)})\bigr(\mathfrak{i}^{(g-i,n)}_{Y(j-i)}(\varphi)\bigl)\right)\\
=\:& \sum_{(R), (R_{(1)}), (R_{(2)})} j_{\mathcal{L}_{i,0}(H)}\!\left(\mathfrak{i}^{(i,0)}_{X(i)}\!\left( S((R_{(1)})_{(2)}) \rhd \psi \lhd (R_{(1)})_{(1)}\right)\right)\\[-1.3em]
&\hspace{13em} \times j_{\mathcal{L}_{g-i,n}(H)}\!\left(\mathfrak{i}^{(g-i,n)}_{Y(j-i)}\!\left( S((R_{(2)})_{(2)}) \rhd \varphi \lhd (R_{(2)})_{(1)}\right)\right)\\
=\:&\sum_{(R^1),\ldots,(R^4)} \!\!\! \mathfrak{i}_{X(i)}^{(g,n)}\bigl( S(R^3_{(1)}R^4_{(1)}) \rhd \psi \lhd R^1_{(1)}R^2_{(1)} \bigr) \, \mathfrak{i}^{(g,n)}_{Y(j)}\bigl( S(R^1_{(2)}R^3_{(2)}) \rhd \varphi \lhd R^2_{(2)}R^4_{(2)} \bigr).
\end{align*}
For the second equality we used \eqref{productBraided}, for the third equality we used the $H$-linearity of $\mathfrak{i}^{(i,0)}_{X(i)}$, $\mathfrak{i}^{(g-i,n)}_{Y(j-i)}$ and the definition \eqref{coadL01} of the $H$-action on $\mathcal{L}_{0,1}(H)$, and for the last equality we used that $(\Delta \otimes \Delta)(R) = R_{14}R_{13}R_{24}R_{23}$, which follows from the axioms of a $R$-matrix. The other cases for $i$ and $j$ are treated similarly.
\end{proof}

Recall the notations introduced before Proposition \ref{presentationL10}, in particular the matrix $\overset{V}{M} \in \mathcal{L}_{0,1}(H) \otimes \mathrm{End}(V)$ for any finite-dimensional $H$-module $V$. Consider the following elements of $\mathcal{L}_{g,n}(H) \otimes \mathrm{End}(V)$, or in other words, matrices of size $\dim(V)$ with coefficients in $\mathcal{L}_{g,n}(H)$:
\begin{equation}\label{matricesABMgn}
\begin{array}{l}
\overset{V}{A}(i) = (\mathfrak{i}_{A(i)} \otimes \mathrm{id})\bigl( \overset{V}{M} \bigr), \:\: \overset{V}{B}(i) = (\mathfrak{i}_{B(i)} \otimes \mathrm{id})\bigl( \overset{V}{M} \bigr) \quad \text{for } 1 \leq i \leq g\\[.4em]
\overset{V}{M}(i) = (\mathfrak{i}_{M(i)} \otimes \mathrm{id})\bigl( \overset{V}{M} \bigr) \quad \text{for } g+1 \leq i \leq g+n
\end{array}
\end{equation}
In mathematical physics $\overset{V}{A}(i), \overset{V}{B}(i), \overset{V}{M}(i)$ are seen as quantum holonomy matrices associated to the usual generators $a_i, b_i, m_i$ of the fundamental group of the surface $\Sigma_{g,n}$ with an open disk removed (see \cite{A}). They have a stated skein theoretic interpretation described in \cite{FaitgHol}, and can be used e.g. in the proof of Theorem \ref{thWilsonIso} and Lemma \ref{lemmaIsoInvariantsHomH}.

\begin{prop}\label{presentationLgn}
The following relations hold true for any finite-dimensional $H$-modules $V,W$:
\begin{align*}
& \overset{V \otimes W}{X}\!\!\!\!(i) = \overset{V}{X}\hspace{-1pt}(i)_1\,(R')_{VW}\,\overset{W}{X}\hspace{-1pt}(i)_2\, (R')_{VW}^{-1} \quad \text{ for } 1 \leq i \leq g+n,\\
& R_{VW}\,\overset{V}{B}(i)_1\, (R')_{VW}\,\overset{W}{A}(i)_2 = \overset{W}{A}(i)_2\, R_{VW}\,\overset{V}{B}(i)_1\, R_{VW}^{-1} \quad \text{ for } 1 \leq i \leq g,\\
& R_{VW}\,\overset{V}{X}\hspace{-1pt}(i)_1\, R_{VW}^{-1} \,\overset{W}{Y}\hspace{-1pt}(j)_2 = \overset{W}{Y}\hspace{-1pt}(j)_2\, R_{VW}\,\overset{V}{X}\hspace{-1pt}(i)_1\, R_{VW}^{-1} \quad  \text{ for } 1 \leq i < j \leq g+n,
\end{align*}
with the same notations as in Prop \ref{productLgn}. These matrix equalities entirely describe the product in $\mathcal{L}_{g,n}(H)$.
\end{prop}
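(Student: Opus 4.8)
The plan is to obtain each of the three matrix identities by specialising, to matrix coefficients $_V\phi^a_b$ and $_W\phi^c_d$, the three formulas of Proposition \ref{productLgn} that determine the product of $\mathcal{L}_{g,n}(H)$. Since the matrix coefficients span $\mathcal{L}_{0,1}(H)$ and, by \eqref{monomialsLgn}, every element of $\mathcal{L}_{g,n}(H)$ is a linear combination of ordered monomials in the $\mathfrak{i}_{X(i)}(\varphi)$, knowing the product on matrix coefficients determines it completely; thus the three matrix relations will be equivalent to the three formulas of Proposition \ref{productLgn}, which also settles the concluding assertion. Throughout I will use naturality \eqref{naturalite} together with the two elementary rules
\[ \textstyle \sum_{a,b}(h\rhd {_V\phi^a_b})\otimes E_{ab} = \sum_{a,b}{_V\phi^a_b}\otimes(E_{ab}\,h_V), \qquad \sum_{a,b}({_V\phi^a_b}\lhd h)\otimes E_{ab} = \sum_{a,b}{_V\phi^a_b}\otimes(h_V\,E_{ab}), \]
recalled in the proof of Proposition \ref{presentationL10}, which convert the coregular actions $\rhd$ and $\lhd$ into right and left multiplication on the endomorphism leg.

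First I would treat the fusion relations. Each embedding $\mathfrak{i}_{X(i)}$ --- that is $\mathfrak{i}_{A(i)}=j_i\circ\mathfrak{i}_A$, $\mathfrak{i}_{B(i)}=j_i\circ\mathfrak{i}_B$ or $\mathfrak{i}_{M(i)}=j_i$ --- is a morphism of algebras, and by \eqref{matricesABMgn} one has $\overset{V}{X}(i)=(\mathfrak{i}_{X(i)}\otimes\mathrm{id})(\overset{V}{M})$. Applying $\mathfrak{i}_{X(i)}\otimes\mathrm{id}\otimes\mathrm{id}$ to the fusion relation \eqref{fusionrelation} for $\overset{V}{M}$ in $\mathcal{L}_{0,1}(H)$, and using that the factors $(R')_{V,W}^{\pm1}$ sit in the $\mathrm{End}(V)\otimes\mathrm{End}(W)$ legs and are therefore untouched by $\mathfrak{i}_{X(i)}$, gives the first family of relations for every $1\le i\le g+n$. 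In the same way, the exchange relation between $\overset{V}{B}(i)$ and $\overset{W}{A}(i)$ for $1\le i\le g$ is nothing but the image under the algebra morphism $j_i\otimes\mathrm{id}$ of the exchange relation of Proposition \ref{presentationL10}, since $\overset{V}{A}(i)=(j_i\otimes\mathrm{id})(\overset{V}{A})$ and $\overset{V}{B}(i)=(j_i\otimes\mathrm{id})(\overset{V}{B})$.

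The substantial step is the commutation relation for distinct indices $i<j$, and this is where the main work lies. I would start from the third formula of Proposition \ref{productLgn} with $\varphi={_W\phi^c_d}$ and $\psi={_V\phi^a_b}$, and compute $\overset{W}{Y}(j)_2\,\overset{V}{X}(i)_1=\sum_{a,b,c,d}\mathfrak{i}_{Y(j)}({_W\phi^c_d})\,\mathfrak{i}_{X(i)}({_V\phi^a_b})\otimes E_{ab}\otimes E_{cd}$. Rewriting the product via that formula and applying the two rules above converts the four coregular actions into left and right multiplications, on the $V$-leg and on the $W$-leg, by the representatives of the four copies $R^1,\dots,R^4$. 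Collecting these eight factors and simplifying them with $(S\otimes S)(R)=R$, $(S\otimes\mathrm{id})(R)=R^{-1}$ and the coproduct identity $(\Delta\otimes\Delta)(R)=R_{14}R_{13}R_{24}R_{23}$ that already underlies Proposition \ref{productLgn}, the copies pair off into a single conjugation by $R_{V,W}$, yielding exactly $R_{V,W}\,\overset{V}{X}(i)_1\,R_{V,W}^{-1}\,\overset{W}{Y}(j)_2 = \overset{W}{Y}(j)_2\,R_{V,W}\,\overset{V}{X}(i)_1\,R_{V,W}^{-1}$. This careful bookkeeping of the four $R$-matrices is the main obstacle; it is entirely parallel to the exchange-relation computation in the proof of Proposition \ref{presentationL10}, the braiding \eqref{productBraided} between distinct tensor factors playing here the role that the handle relation \eqref{productL10WithEmbeddings} played there. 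Having established the three families of relations and noting, conversely, that on matrix coefficients each of them reproduces the corresponding formula of Proposition \ref{productLgn}, I conclude that these matrix equalities determine the product.
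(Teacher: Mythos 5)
Your proposal is correct and follows essentially the same route as the paper: the paper's proof also reduces the first two families of relations to Proposition \ref{presentationL10} via the algebra embeddings, and establishes the third by the same matrix-coefficient computation (using the conversion rules for $\rhd$, $\lhd$ and the $R$-matrix identities, including the cancellation $\sum_{(R^1),(R^2)} R^1_{(1)}R^2_{(1)} \otimes S(R^2_{(2)})R^1_{(2)} = 1 \otimes 1$ to remove the leftover copy of $R$), which the paper explicitly leaves as an exercise parallel to the exchange-relation computation in Proposition \ref{presentationL10}. Your write-up in fact supplies the details of that exercise, and your closing argument for why the relations determine the product matches the paper's.
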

\begin{proof}
This is completely parallel to the proof of Proposition \ref{presentationL10}. By the same arguments we already get that the first and second matrix relations are equivalent to the first and second formulas in Proposition \ref{productLgn}. It remains to show that the third matrix relation is equivalent to the third formula in Proposition \ref{productLgn}; this is similar to the computation in the proof of Proposition \ref{presentationL10} and is left as an exercise for the reader.
\end{proof}
\begin{remark}
In many papers the matrix relations in Proposition \ref{presentationLgn} are taken as a definition for the product in $\mathcal{L}_{g,n}(H)$.
\end{remark}

\subsection{The case $H = U_q^{\mathrm{ad}}(\mathfrak{g})$}\label{sectionLgnUq}
Since $U_q^{\mathrm{ad}}(\mathfrak{g})$ is not quasitriangular in the usual sense (\S \ref{sectionCategoricalCompletion}), the above definition of $\mathcal{L}_{g,n}(H)$ must be slightly adapted. We use the notations from sections \ref{SectionL10ForUq} and \ref{sectionDefLgnH}. Let us put by convention $\mathbb{A}_{0,0} = \mathcal{L}_{0,0}(q^{1/D}) = \mathbb{C}(q^{1/D})$. Recall from \S \ref{SectionL10ForUq} that $\mathcal{L}_{0,1}$ and $\mathcal{L}_{1,0}$ are module-algebras over the Hopf algebras $\mathbb{A}_{0,1}$ and $\mathbb{A}_{1,0}$ respectively. For $n \geq 0$ we define
\begin{align*}
&\mathbb{A}_{0,n+1} = \mathbb{A}_{0,n} \otimes^{J_n} \mathbb{A}_{0,1} \quad \text{with } J_n = \bigl( \Delta_{\mathbb{U}_q} \otimes \Delta_{\mathbb{U}_q}^{\odot n} \bigr)(R')\\
&\mathcal{L}_{0,n+1}(q^{1/D}) = \mathcal{L}_{0,n}(q^{1/D}) \otimes^{J_n} \mathcal{O}_q(q^{1/D})_F
\end{align*}
where $\textstyle R' = \sum_{(R)} R_{(2)} \otimes R_{(1)}$ is the flip of the $R$-matrix $R \in \mathbb{U}_q^{\otimes 2}$. For $g \geq 0$ let
\begin{align*}
&\mathbb{A}_{g+1,0} = \mathbb{A}_{g,0} \otimes^{K_g} \mathbb{A}_{1,0} \quad \text{with } K_g = \bigl( \Delta_{\mathbb{U}_q}^{(3)} \otimes (\Delta_{\mathbb{U}_q}^{(3)})^{\odot g} \bigr)(R')\\
&\mathcal{L}_{g+1,0} = \mathcal{L}_{g,0} \otimes^{K_g} \mathcal{L}_{1,0}.
\end{align*}
In \cite[Prop. 6.2]{BR1} it is shown that restricting the product of $\mathcal{L}_{0,n}(q^{1/D})$ on the $\mathbb{C}(q)$-subspace $\mathcal{L}_{0,1}^{\otimes n}$ gives a $\mathbb{C}(q)$-subalgebra, which is denoted by $\mathcal{L}_{0,n}$. However, as we have seen in section \ref{SectionL10ForUq}, the algebra $\mathcal{L}_{1,0}$ (and hence $\mathcal{L}_{g,0}$) can only be defined over $\mathbb{C}(q^{1/D})$. So we can define $\mathcal{L}_{g,n}$ over $\mathbb{C}(q^{1/D})$ as follows:
\begin{align*}
\mathbb{A}_{g,n} &= \mathbb{A}_{g,0} \otimes^{F_{g,n}} \mathbb{A}_{0,n} \quad \text{with } F_{g,n} = \bigl(\Delta_{\mathbb{U}_q}^{\odot n} \otimes (\Delta_{\mathbb{U}_q}^{(3)})^{\odot g}\bigr)(R')\\
\mathcal{L}_{g,n} &= \mathcal{L}_{g,0} \otimes^{F_{g,n}} \mathcal{L}_{0,n}(q^{1/D}).
\end{align*}
Explicitly, $\mathcal{L}_{g,n}$ is the $\mathbb{C}(q^{1/D})$-vector space $\mathcal{O}_q(q^{1/D})^{\otimes (2g+n)}$ endowed with the product described in Proposition \ref{productLgn}. 

\smallskip

\indent By construction $\mathcal{L}_{g,n}$ is a right $\mathbb{A}_{g,n}$-module-algebra. As in \S \ref{sectionDefLgnH}, we have a morphism of Hopf algebras $\Delta_{\mathbb{U}_q}^{(4g+2n-1)} : \mathbb{U}_q \to \mathbb{A}_{g,n}$. Combining this with the morphism of Hopf algebras $\iota : U_q \to \mathbb{U}_q$ introduced below \eqref{embeddingUqIntoItsCompletion}, we obtain that $\mathcal{L}_{g,n}$ is a right $U_q$-module-algebra for the action $\mathrm{coad}^r$ in \eqref{coadLgn}.

\smallskip

\indent Note that the factorization of $\mathcal{L}_{g,n}$ as the braided tensor product $\mathcal{L}_{1,0}^{\widetilde{\otimes} g} \:\widetilde{\otimes} \: \mathcal{L}_{0,1}^{\widetilde{\otimes} n}$ (Prop. \ref{propBraidedTensProduct}) holds true for $H = U_q^{\mathrm{ad}}$ as well, despite the fact that the whole category of $U_q^{\mathrm{ad}}$-modules (not necessarily finite-dimensional) is not braided. This is because all the elements of $\mathcal{L}_{0,1}$ and $\mathcal{L}_{1,0}$ are locally-finite, and so the pairing \eqref{pairingOqCategoricalCompletion} and \eqref{eqWellDefRMatrixOnOq} ensure that the formula \eqref{multiplicationInBraidedTensorProduct} for the product in $M \,\widetilde{\otimes} \, N$ is well-defined when $M = \mathcal{L}_{g,n}$, $N = \mathcal{L}_{g',n'}$ and the right action $\cdot$ is $\mathrm{coad}^r$.

\begin{prop}\label{propLgnFinGen}
The algebra $\mathcal{L}_{g,n}$ is finitely generated.
\end{prop}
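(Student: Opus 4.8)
The plan is to reduce finite generation of $\mathcal{L}_{g,n}$ to that of $\mathcal{L}_{0,1}$, and then to exploit the factorization \eqref{monomialsLgn} together with the fact that all the maps $\mathfrak{i}_{A(i)}$, $\mathfrak{i}_{B(i)}$, $\mathfrak{i}_{M(i)}$ of \eqref{embeddingsFrakI} are algebra embeddings.

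First I would establish that $\mathcal{L}_{0,1}$ is finitely generated. This is essentially already contained in \S\ref{sectionL10Noetherien}: the matrix coefficients ${_{\varpi_r}\phi^i_j}$ of the fundamental modules form a finite family which generates $\mathrm{gr}_{\mathcal{F}_x}(\mathcal{L}_{0,1})$ (this is the list $(u_1, \ldots, u_p)$ appearing before \eqref{conditionOrdreCoeffMatricielsFondamentauxL01}). Since the order $\preceq$ on the index monoid $\Lambda$ is well-founded, Lemma \ref{lemmaFinGenGr} applies and shows that the same finitely many elements generate $\mathcal{L}_{0,1}$ itself. Extending scalars, $\mathcal{L}_{0,1}(q^{1/D})$ is finitely generated over $\mathbb{C}(q^{1/D})$ by the same family; denote these generators $\varphi^{(1)}, \ldots, \varphi^{(s)}$.

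Next, recall from Proposition \ref{propBraidedTensProduct} and \eqref{monomialsLgn} that every element of $\mathcal{L}_{g,n}$ is a linear combination of monomials
\[ \mathfrak{i}_{B(1)}(\varphi_1)\,\mathfrak{i}_{A(1)}(\varphi_2)\cdots \mathfrak{i}_{B(g)}(\varphi_{2g-1})\,\mathfrak{i}_{A(g)}(\varphi_{2g})\,\mathfrak{i}_{M(g+1)}(\varphi_{2g+1})\cdots\mathfrak{i}_{M(g+n)}(\varphi_{2g+n}) \]
with each $\varphi_j \in \mathcal{L}_{0,1}$. By \eqref{embeddingsFrakI}, each of the $2g+n$ maps $\mathfrak{i}_{A(i)}, \mathfrak{i}_{B(i)}$ (for $1 \le i \le g$) and $\mathfrak{i}_{M(i)}$ (for $g+1 \le i \le g+n$) is a composite of the algebra embeddings $j_i$, $\mathfrak{i}_A$, $\mathfrak{i}_B$, hence is itself an embedding of algebras $\mathcal{L}_{0,1} \to \mathcal{L}_{g,n}$. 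Consequently I would claim that the finite set
\[ \mathcal{G} = \{\mathfrak{i}_{B(i)}(\varphi^{(k)}),\,\mathfrak{i}_{A(i)}(\varphi^{(k)}) : 1 \le i \le g,\ 1 \le k \le s\} \cup \{\mathfrak{i}_{M(i)}(\varphi^{(k)}) : g+1 \le i \le g+n,\ 1 \le k \le s\} \]
generates $\mathcal{L}_{g,n}$. Indeed, fix a monomial as above; each $\varphi_j$ is a noncommutative polynomial in the $\varphi^{(k)}$, and since $\mathfrak{i}_{A(i)}, \mathfrak{i}_{B(i)}, \mathfrak{i}_{M(i)}$ are algebra morphisms, each factor $\mathfrak{i}_{X(i)}(\varphi_j)$ is the corresponding polynomial in the elements of $\mathcal{G}$. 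Hence the whole monomial, and thus every element of $\mathcal{L}_{g,n}$, lies in the subalgebra generated by $\mathcal{G}$.

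This completes the argument. I expect no serious obstacle here: the only input beyond formal manipulation is the finite generation of $\mathcal{L}_{0,1}$, which in turn rests on the representation-theoretic fact that the fundamental matrix coefficients generate $\mathcal{O}_q$ (and hence, via the graded analysis of \S\ref{sectionL10Noetherien} and Lemma \ref{lemmaFinGenGr}, generate $\mathcal{L}_{0,1}$). Everything else follows formally from the braided-tensor-product factorization and the morphism property of the embeddings $\mathfrak{i}_{X(i)}$.
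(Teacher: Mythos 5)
Your proposal is correct and follows essentially the same route as the paper's proof: finite generation of $\mathrm{gr}_{\mathcal{F}_x}(\mathcal{L}_{0,1})$ by the fundamental matrix coefficients, transfer to $\mathcal{L}_{0,1}$ via Lemma \ref{lemmaFinGenGr}, and then the monomial factorization \eqref{monomialsLgn} combined with the fact that the embeddings $\mathfrak{i}_{B(i)}, \mathfrak{i}_{A(i)}, \mathfrak{i}_{M(i)}$ are algebra morphisms. The only difference is cosmetic: you spell out the polynomial-substitution step that the paper leaves implicit.
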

\begin{proof}
Recall the algebra $\mathrm{gr}_{\mathcal{F}_l}(\mathcal{L}_{0,1})$ from \S \ref{sectionL10Noetherien}. It is proved in \cite[pp. 178]{VY} that $\mathrm{gr}_{\mathcal{F}_l}(\mathcal{L}_{0,1})$ is finitely generated, thanks to the first item in Lemma \ref{lemmaProductsInLeftAndRightGr}. As a result $\mathcal{L}_{0,1}$ is finitely generated, see Lemma \ref{lemmaFinGenGr}. Let $x_1, \ldots, x_m$ be the generators of $\mathcal{L}_{0,1}$. We deduce from \eqref{monomialsLgn} that the elements $\mathfrak{i}_{B(k)}(x_s)$, $\mathfrak{i}_{A(k)}(x_s)$, $\mathfrak{i}_{M(l)}(x_s)$ with $1 \leq k \leq g$, $g+1 \leq l \leq g+n$, $1 \leq s \leq m$ generate $\mathcal{L}_{g,n}$.
\end{proof}

\subsection{Noetherianity of $\mathcal{L}_{g,n}$}\label{sectionNoetherianityLgn}
Here are the steps of the proof:
\begin{enumerate}
\item we introduce a filtration $\mathcal{G}_1$ of $\mathcal{L}_{g,n}$ whose graded algebra $\mathrm{gr}_{\mathcal{G}_1}(\mathcal{L}_{g,n})$ simplifies the third relation in Proposition \ref{productLgn},
\item we construct a filtration $\mathcal{G}_2$ of $\mathrm{gr}_{\mathcal{G}_1}(\mathcal{L}_{g,n})$ thanks to the filtrations on $\mathcal{L}_{0,1}$ and on $\mathcal{L}_{1,0}$ from \S \ref{sectionL10Noetherien},
\item using Lemma \ref{critereNoetherien}, we show that $\mathrm{gr}_{\mathcal{G}_2}\bigl( \mathrm{gr}_{\mathcal{G}_1}(\mathcal{L}_{g,n}) \bigr)$ is Noetherian,
\item it follows from Lemma \ref{lemmaFiltrationNoetherian} that $\mathrm{gr}_{\mathcal{G}_1}(\mathcal{L}_{g,n})$ is Noetherian and that $\mathcal{L}_{g,n}$ is Noetherian.
\end{enumerate}

We write $\mathcal{L}_{0,1}$ instead of $\mathcal{L}_{0,1}(q^{1/D})$ for simplicity of notation. For $\mu, \omega \in P_+$ and $\lambda \in P$ consider the subspaces
\begin{align*}
&(\mathcal{L}_{0,1})_{\mu, \lambda} = \bigl\{ \varphi \in C(\mu) \, \big| \, \forall \, \nu \in P, \: \mathrm{coad}^r(K_{\nu}^{-1})(\varphi) = q^{(\lambda,\nu)}\varphi \bigr\},\\
&(\mathcal{L}_{1,0})_{\mu, \omega, \lambda} = \bigl\{ x \in C(\mu) \otimes C(\omega) \, \big| \, \forall \, \nu \in P, \: \mathrm{coad}^r(K_{\nu}^{-1})(x) = q^{(\lambda,\nu)}x \bigr\}
\end{align*}
where $\mathrm{coad}^r$ is defined in \eqref{coadL01} for $\mathcal{L}_{0,1}$ and in \eqref{coadL10} for $\mathcal{L}_{1,0}$, and $C(\mu)$ is the subspace of matrix coefficients of the irreducible $U_q^{\mathrm{ad}}$-module with highest weight $\mu$. For a finite sequence $\bigl( [\mu], [\lambda] \bigr)$, with $[\mu] = (\mu_1, \ldots, \mu_{2g+n}) \in (P_+)^{2g+n}$ and $[\lambda] = (\lambda_1, \ldots, \lambda_{g+n})\in P^{g+n}$, we put
\[ (\mathcal{L}_{g,n})_{[\mu], [\lambda]} = (\mathcal{L}_{1,0})_{\mu_1,\mu_2, \lambda_1} \otimes \ldots \otimes (\mathcal{L}_{1,0})_{\mu_{2g-1}, \mu_{2g}, \lambda_g} \otimes (\mathcal{L}_{0,1})_{\mu_{2g+1}, \lambda_{g+1}} \otimes \ldots \otimes (\mathcal{L}_{0,1})_{\mu_{2g+n}, \lambda_{g+n}}. \]
Recall the partial order $\preceq$ on $P$ defined in \S \ref{prelimLieAlgebras}. For $\mu, \mu', \omega, \omega' \in P_+$ and $\lambda, \lambda' \in P$ we write $(\mu',\lambda') \preceq (\mu,\lambda)$ (resp. $(\mu',\omega',\lambda') \preceq (\mu,\omega,\lambda)$) to mean that $\mu' \preceq \mu$ and $\lambda' \preceq \lambda$ (resp. $\mu' \preceq \mu$ and $\omega' \preceq \omega$ and $\lambda' \preceq \lambda$). We say that $\bigl( [\mu'], [\lambda'] \bigr) \prec_1 \bigl( [\mu], [\lambda] \bigr)$ if and only if
\begin{align*}
&\:(\mu'_{2g+n}, \lambda'_{g+n}) \prec (\mu_{2g+n}, \lambda_{g+n})\\
\text{ or } &\: \Big( (\mu'_{2g+n}, \lambda'_{g+n}) = (\mu_{2g+n}, \lambda_{g+n}) \text{ and } (\mu'_{2g+n-1}, \lambda'_{g+n-1}) \prec (\mu_{2g+n-1}, \lambda_{g+n-1}) \Big) \text{ or } \ldots
\end{align*}
This defines a partial order $\preceq_1$ on $(P_+)^{2g+n} \times P^{g+n}$, which is a block lexicographic order starting from the right. For instance if $g=1$ and $n=1$ we have $\bigl( [\mu'_1,\mu'_2,\mu'_3], [\lambda'_1,\lambda'_2] \bigr) \prec_1 \bigl( [\mu_1,\mu_2,\mu_3], [\lambda_1,\lambda_2] \bigr)$ if and only if
\[ (\mu'_3, \lambda'_2) \prec (\mu_3, \lambda_2) \text{ or } \Big( (\mu'_3, \lambda'_2) = (\mu_3, \lambda_2) \text{ and } (\mu'_1, \mu'_2, \lambda'_1) \prec (\mu_1, \mu_2, \lambda_1) \Big). \]
Now let
\[ \Xi = \bigl\{ \bigl( [\mu], [\lambda] \bigr) \in (P_+)^{2g+n} \times P^{g+n} \, \big| \, (\mathcal{L}_{g,n})_{[\mu], [\lambda]} \neq 0 \bigr\}. \]
Then $(\Xi, \preceq_1)$ is an ordered abelian monoid and the order $\preceq_1$ is well-founded on $\Xi$. Moreover $\preceq_1$ satisfies the condition \eqref{confluentOrder} and it follows that
\[ \mathcal{G}_1^{[\mu], [\lambda]} = \bigoplus_{( [\mu'], [\lambda']) \preceq_1 ([\mu], [\lambda])} (\mathcal{L}_{g,n})_{[\mu'], [\lambda']} \qquad (\text{with } \bigl( [\mu'], [\lambda']\bigr) \in \Xi) \]
defines a filtration $\mathcal{G}_1$ of the vector space $\mathcal{L}_{g,n}$ indexed by $\Xi$. For any $1 \leq k \leq g$ and $g+1 \leq l \leq g+n$ it is convenient to use the embeddings
\begin{equation}\label{specialSequenceForLgn}
S_k : P_+^2 \times P \to P_+^{2g+n} \times P^{g+n}, \qquad S_l : P_+ \times P \to P_+^{2g+n} \times P^{g+n}
\end{equation}
such that $S_k(\mu,\omega,\lambda)$ has the $2k$-th entry equal to $\mu$, the $2k+1$-th entry equal to $\omega$, the $2g+n+k$-th entry equal to $\lambda$, and all remaining entries equal to $0$; similarly $S_l(\mu,\lambda)$ has the $g+l$-th entry equal to $\mu$, the $2g+n+l$-th entry equal to $\lambda$, and all remaining entries equal to $0$. These embeddings are such that  
\begin{equation}\label{propertySpecialSequences}
(\mathcal{L}_{g,n})_{S_k(\mu,\omega,\lambda)} = j_k\bigl( (\mathcal{L}_{1,0})_{\mu,\omega,\lambda} \bigr), \qquad (\mathcal{L}_{g,n})_{S_l(\mu, \lambda)} = j_{l}\bigl( (\mathcal{L}_{0,1})_{\mu, \lambda} \bigr)
\end{equation}
for $\mu,\omega \in P_+$, $\lambda \in P$, and where $j_k$, $j_l$ are the embeddings from \eqref{embeddingsL01L10InLgn}.

\begin{lem}
$\mathcal{G}_1$ is a filtration of the algebra $\mathcal{L}_{g,n}$.
\end{lem}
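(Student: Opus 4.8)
The plan is to verify directly that $\mathcal{G}_1^{([\mu],[\lambda])}\,\mathcal{G}_1^{([\mu'],[\lambda'])}\subseteq\mathcal{G}_1^{([\mu]+[\mu'],[\lambda]+[\lambda'])}$, where the sum of indices is taken componentwise in the abelian monoid $\Xi$. Since the spaces $(\mathcal{L}_{g,n})_{[\mu],[\lambda]}$ are spanned by the standard monomials \eqref{monomialsLgn} whose $k$-th slot lies in a fixed weight space $(\mathcal{L}_{1,0})_{\mu_{2k-1},\mu_{2k},\lambda_k}$ or $(\mathcal{L}_{0,1})_{\mu_{2g+l},\lambda_{g+l}}$, it suffices to control the product of two such monomials $u=\prod_k j_k(u_k)$ and $v=\prod_k j_k(v_k)$. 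I would track two separate gradings in parallel: the tuple of ``highest weights'' $[\mu]$, recording the $C(\mu_s)$-component occupied by each tensor slot, and the tuple of coadjoint weights $[\lambda]$.

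First I would record the behaviour of the coadjoint weight. Because $K_\nu$ is grouplike and $\mathrm{coad}^r$ acts diagonally on the braided tensor factors, each operator $\mathrm{coad}^r(K_\nu^{-1})$ is an algebra automorphism of $\mathcal{L}_{g,n}$; hence the total coadjoint weight $\sum_l\lambda_l$ is exactly additive, and by quasi-triangularity, in the form \eqref{commutationRK}, every $R$-factor occurring in the product formulas preserves the combined weight of the two slots it braids. The leading summand $\Theta_{(1)}\otimes\Theta_{(2)}$ of each $R$ acts by a scalar and leaves every slot weight unchanged, while all remaining summands shift slot weights by elements of $Q_+$, i.e.\ strictly lower them for $\preceq$. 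In parallel, since the coregular actions $\rhd$ and $\lhd$ preserve each $C(\mu_s)$, the only genuine mixing of the highest weights comes from the ordinary product of two factors sitting in the \emph{same} slot, which by \eqref{filtrationOq} lands in $\bigoplus_{\kappa\le\mu_s+\mu'_s}C(\kappa)\subseteq\bigoplus_{\kappa\preceq\mu_s+\mu'_s}C(\kappa)$ (recall $\le$ implies $\preceq$).

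With these two facts I would reorder $uv$ into standard form using Proposition \ref{productLgn}. A same-slot product $\mathfrak{i}_{X(k)}(u_k)\,\mathfrak{i}_{X(k)}(v_k)$ fuses the $C(\mu)$'s as above and adds the coadjoint weights, contributing at index $(\mu_s+\mu'_s,\lambda_k+\lambda'_k)$ plus strictly lower terms. A cross-slot product is governed by the exchange relation (third line of Proposition \ref{productLgn}): its leading $\Theta$-term simply transposes the two slots back into increasing order without altering any weight, producing exactly the slotwise sums, while every other term applies a nontrivial $R$-component through $\rhd,\lhd$ and therefore strictly lowers the weight at one of the two affected slots.

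The hard part will be to check that these cross-slot correction terms respect the order $\preceq_1$, and this is exactly why $\preceq_1$ is chosen block-lexicographic \emph{from the right} rather than the naive product order. When the slot at position $k'$ is braided past the slot at position $k<k'$, the redistribution of a factor $\gamma\in Q_+$ can raise the coadjoint weight at the less significant (leftward) slot $k$ while lowering it at the more significant (rightward) slot $k'$; only the rightward block-lexicographic order guarantees that such a term is still $\prec_1$ the expected index. I would confirm, by the same sign bookkeeping already carried out for the handle case in \eqref{weightsInProductL10}, that the rightmost affected block is strictly lowered by every correction term, so that each standard monomial occurring in $uv$ has index $\preceq_1\big([\mu]+[\mu'],[\lambda]+[\lambda']\big)$. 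This gives the inclusion $\mathcal{G}_1^{([\mu],[\lambda])}\,\mathcal{G}_1^{([\mu'],[\lambda'])}\subseteq\mathcal{G}_1^{([\mu]+[\mu'],[\lambda]+[\lambda'])}$ and hence the claim.
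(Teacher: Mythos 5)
Your proposal is correct and follows essentially the same route as the paper's proof: both track the $C(\mu)$-components via \eqref{filtrationOq} and the coadjoint weights via \eqref{commutationRK}, and both rest on the observation that every nontrivial $R$-correction raises the weight of the leftward slot while strictly lowering that of the rightward (more significant) slot, which is exactly what the right-to-left block-lexicographic order $\preceq_1$ is designed to absorb. The only organizational difference is that the paper reduces to left multiplication of a full standard monomial by a single-slot element $j_m(x)$ — so $x$ absorbs all the $R_{(2)}$-factors and the ``rightmost affected slot is strictly lowered'' claim is immediate — whereas your direct reordering of two monomials requires a small extra cancellation bookkeeping (under iterated exchanges a slot can be both raised and lowered, so one must argue about the rightmost slot with nonzero \emph{net} deviation), a point you flag as the hard part and which does go through.
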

\begin{proof}
Let
\[ y_1 \otimes \ldots \otimes y_{g+n} = j_1(y_1) \ldots j_{g+n}(y_{g+n}) \in (\mathcal{L}_{g,n})_{[\mu'], [\lambda']} \]
where by definition we have $y_k \in (\mathcal{L}_{1,0})_{\mu'_{2k-1}, \mu'_{2k}, \lambda'_k}$ for $1 \leq k \leq g$ and $y_l \in (\mathcal{L}_{0,1})_{\mu'_{g+l}, \lambda'_l}$ for $g+1 \leq l \leq g+n$. Take an element $x \in (\mathcal{L}_{0,1})_{\mu, \lambda}$; so $j_m(x) \in (\mathcal{L}_{g,n})_{S_m(\mu, \lambda)}$ for any $m \in \{g+1, \ldots, g+n\}$ (see \eqref{specialSequenceForLgn} and \eqref{propertySpecialSequences}). By Proposition \ref{propBraidedTensProduct} and \eqref{productBraided} we have
\begin{align*}
&j_m(x) \, j_1(y_1) \ldots j_{g+n}(y_{g+n})\\
=\:\,&\sum_{(R^1), \ldots, (R^{m-1})} j_1\!\left( \mathrm{coad}^r(R^1_{(1)})(y_1) \right) \ldots j_{m-1}\!\left( \mathrm{coad}^r(R^{m-1}_{(1)})(y_{m-1}) \right)\\[-1em]
&\hphantom{\sum_{(R^1), \ldots, (R^{m-1})}} \:\:\times j_m\!\left( \mathrm{coad}^r\bigl( R^1_{(2)} \ldots R^{m-1}_{(2)} \bigr)(x)y_m \right) j_{m+1}(y_{m+1}) \ldots j_{g+n}(y_{g+n}).
\end{align*}
For any term in this sum and any $\nu \in P$, the fact that $\mathcal{L}_{0,1}$ is a $U_q$-module-algebras for $\mathrm{coad}^r$ together with \eqref{commutationRK} gives
\begin{multline*}
\mathrm{coad}^r(K_{\nu}^{-1})\!\left( \mathrm{coad}^r\bigl(R^1_{(2)} \ldots R^{m-1}_{(2)}\bigr)(x) \, y_m \right)\\
= q^{(\nu,\, \lambda + \lambda'_m - \gamma_1 - \ldots - \gamma_{m-1})} \mathrm{coad}^r\bigl(R^1_{(2)} \ldots R^{m-1}_{(2)}\bigr)(x) \, y_m
\end{multline*}
for some $\gamma_1, \ldots, \gamma_{m-1} \in Q_+$. Moreover due to \eqref{filtrationOq} and \eqref{produitL01} we have
\[ \textstyle \mathrm{coad}^r\bigl(R^1_{(2)} \ldots R^{m-1}_{(2)}\bigr)(x) \, y_m \in \bigoplus_{\kappa \preceq \mu + \mu_{g+m}'} C(\kappa). \]
It follows from these two facts that $j_m(x) \, j_1(y_1) \ldots j_{g+n}(y_{g+n}) \in \mathcal{G}_1^{S_m(\mu, \lambda) + ([\mu'],[\lambda'])}$ by definition of $\mathcal{G}_1$. If now $x$ is an element in $(\mathcal{L}_{1,0})_{\mu, \omega, \lambda}$, $j_m(x) \in (\mathcal{L}_{g,n})_{S_m(\mu, \omega, \lambda)}$ for any $m \in \{1, \ldots, g\}$, and we have an analogous proof. These two particular cases imply the result.
\end{proof}

\indent Let us identify $\mathrm{gr}_{\mathcal{G}_1}(\mathcal{L}_{g,n})$ and $\mathcal{L}_{g,n}$ as vector spaces; namely $x \in (\mathcal{L}_{g,n})_{[\mu],[\lambda]}$ is identified with the coset $x + \mathcal{G}_1^{\prec_1 ([\mu],[\lambda])} \in \mathrm{gr}_{\mathcal{G}_1}(\mathcal{L}_{g,n})$. Denote by $\bullet$ the product on $\mathrm{gr}_{\mathcal{G}_1}(\mathcal{L}_{g,n})$. By the identification above, we can regard it as a product on the vector space $\mathcal{L}_{g,n}$. Consider the canonical projections
\[ \pi_{\eta} : \bigoplus_{\kappa \preceq \eta} C(\kappa) \to C(\eta), \quad \pi_{(\eta,\eta')} : \bigoplus_{\substack{\kappa \preceq \eta\\\kappa' \preceq \eta'}} C(\kappa) \otimes C(\kappa') \to C(\eta) \otimes C(\eta') \]
which allow us to define the truncated product $\overline{\,\cdot\,}$ on $\mathcal{L}_{0,1}$ and $\mathcal{L}_{1,0}$:
\[ x \, \overline{\,\cdot\,} \, y = \pi_{\mu+\mu'}(xy) \qquad (\text{resp. } x \, \overline{\,\cdot\,} \, y = \pi_{(\mu+\mu', \omega + \omega')}(xy)) \]
for $x \in C(\mu)$ and $y \in C(\mu')$ (resp. $x \in C(\mu) \otimes C(\omega)$ and $y \in C(\mu') \otimes C(\omega')$). We denote by $(\mathcal{L}_{0,1}, \overline{\,\cdot\,})$ and $(\mathcal{L}_{1,0}, \overline{\,\cdot\,})$ the vector spaces $\mathcal{L}_{0,1}$ and $\mathcal{L}_{1,0}$ endowed with the products $\overline{\,\cdot\,}$ above. The next lemma describes $\bullet$ thanks to the embeddings of vector spaces $j_k$ from \eqref{embeddingsL01L10InLgn}.

\begin{lem}\label{lemmaProductGrG1}
Let $k,l \in \{1, \ldots, g+n\}$, $\mu,\omega,\mu',\omega' \in P_+$ and $\lambda, \lambda' \in P$. Take $x$ in $(\mathcal{L}_{1,0})_{\mu,\omega,\lambda}$ or $(\mathcal{L}_{0,1})_{\mu, \lambda}$, depending if $k$ is $\leq g$ or $>g$. Similarly, take $y$ in $(\mathcal{L}_{1,0})_{\mu', \omega', \lambda'}$ or $(\mathcal{L}_{0,1})_{\mu',\lambda'}$, depending if $l$ is $\leq g$ or $>g$. Then
\begin{align*}
&j_k(x) \bullet j_k(y) = j_k(x \,\overline{\,\cdot\,}\, y) \quad \text{ for all } k,\\
&j_k(x) \bullet j_l(y) = j_k(x) j_l(y) \quad \text{if } k<l,\\
&j_k(x) \bullet j_l(y) = q^{(\lambda,\lambda')} j_l(y)j_k(x) \quad \text{if } k>l.
\end{align*}
\end{lem}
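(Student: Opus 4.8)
The three displayed formulas record how the two ``legs'' in positions $k$ and $l$ interact in $\mathrm{gr}_{\mathcal{G}_1}(\mathcal{L}_{g,n})$, so I would treat the cases $k=l$, $k<l$, $k>l$ separately. Throughout I use that, under the identification of $\mathrm{gr}_{\mathcal{G}_1}(\mathcal{L}_{g,n})$ with $\mathcal{L}_{g,n}$, the product $\bullet$ is the projection of the ordinary product of $\mathcal{L}_{g,n}$ onto the top $\mathcal{G}_1$-degree, i.e.\ $j_k(x)\bullet j_l(y)=\pi\bigl(j_k(x)\,j_l(y)\bigr)$ where $\pi$ is the projection onto the component of degree $([\mu]+[\mu'],[\lambda]+[\lambda'])$ (cf.\ \S\ref{sectionFiltrations}). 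I also use repeatedly that $\mathrm{coad}^r$ stabilises each matrix-coefficient space $C(\mu)$ (resp.\ $C(\mu)\otimes C(\omega)$), as noted in the proof of Proposition \ref{propPhi10}, so that it never changes the $\mu$- (resp.\ $(\mu,\omega)$-) degree.

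For $k=l$, since $j_k$ is an algebra embedding we have $j_k(x)\,j_k(y)=j_k(xy)$, the product being taken in $\mathcal{L}_{0,1}$ or $\mathcal{L}_{1,0}$. By \eqref{filtrationOq} the element $xy$ lies in $\bigoplus_{\kappa\preceq\mu+\mu'}C(\kappa)$ (resp.\ with a second index bounded by $\omega+\omega'$), and since $K_\nu$ is group-like the whole of $xy$ has $\mathrm{coad}^r(K_\nu^{-1})$-weight $\lambda+\lambda'$. Hence the top-degree part is $j_k\bigl(\pi_{\mu+\mu'}(xy)\bigr)=j_k(x\,\overline{\,\cdot\,}\,y)$, because $j_k$ identifies the degree piece with $(\mathcal{L}_{g,n})_{S_k(\cdots)}$ via \eqref{propertySpecialSequences}. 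For $k<l$, group the factors as $\mathcal{L}_{g,n}=U\,\widetilde{\otimes}\,V$ with $U$ the first $l-1$ factors (which contain slot $k$) and $V$ the remaining ones (which contain slot $l$); then by \eqref{factorisationBraidedTensors} the product $j_k(x)\,j_l(y)$ is the pure braided tensor carrying $x$ in slot $k$ and $y$ in slot $l$. This is homogeneous of the top degree, so $\pi$ fixes it and $j_k(x)\bullet j_l(y)=j_k(x)\,j_l(y)$.

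The substantial case is $k>l$. Group $\mathcal{L}_{g,n}=U\,\widetilde{\otimes}\,V$ with $U$ the first $l$ factors and $V$ the rest, so that $j_l(y)=j_U(u)$ and $j_k(x)=j_V(v)$, and apply \eqref{productBraided} together with the $H$-linearity of the embeddings:
\[ j_k(x)\,j_l(y)=\sum_{(R)} j_l\bigl(\mathrm{coad}^r(R_{(1)})(y)\bigr)\,j_k\bigl(\mathrm{coad}^r(R_{(2)})(x)\bigr). \]
I then estimate the $\mathcal{G}_1$-degree of each summand using the expansion \eqref{expressionCanoniqueR} of $R$. For a summand $R_{(1)}\otimes R_{(2)}$ of $R$-weight $\gamma\in Q_+$, relation \eqref{commutationRK} shows that $\mathrm{coad}^r(R_{(2)})(x)$ has $\mathrm{coad}^r(K_\nu^{-1})$-weight $\lambda-\gamma$ while $\mathrm{coad}^r(R_{(1)})(y)$ has weight $\lambda'+\gamma$ (the $\mu$-degrees being unchanged). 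Since $k>l$ and $\prec_1$ is block-lexicographic starting from the right, slot $k$ is the more significant slot; as $\lambda-\gamma\prec\lambda$ whenever $\gamma\neq 0$, every summand with $\gamma\neq0$ is strictly below the top degree and is annihilated by $\pi$. Only the $\Theta$-term survives, and here \eqref{ThetaPoids} gives $\sum_{(\Theta)}\Theta_{(1)}\otimes\mathrm{coad}^r(\Theta_{(2)})(x)=K_{-\lambda}\otimes x$ (using that $x$ has $\mathrm{coad}^r(K_\nu)$-weight $-\lambda$), whence
\[ \sum_{(\Theta)}\mathrm{coad}^r(\Theta_{(1)})(y)\otimes\mathrm{coad}^r(\Theta_{(2)})(x)=\mathrm{coad}^r(K_{-\lambda})(y)\otimes x=q^{(\lambda,\lambda')}\,y\otimes x, \]
so that $j_k(x)\bullet j_l(y)=q^{(\lambda,\lambda')}\,j_l(y)\,j_k(x)$, the right-hand side being homogeneous of top degree by the case $k<l$. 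The main obstacle is exactly this last case: tracking the weight shift $\gamma$ through $\mathrm{coad}^r$ via \eqref{commutationRK} to confirm that the non-$\Theta$ part of $R$ only lowers the degree in the dominant slot $k$, and then pinning down the surviving $\Theta$-contribution to the precise scalar $q^{(\lambda,\lambda')}$ through \eqref{ThetaPoids}. The four sub-cases according to whether $k,l$ are $\leq g$ or $>g$ are handled uniformly, as the argument only uses the $\mathrm{coad}^r(K_\nu^{-1})$-weights.
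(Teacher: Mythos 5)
Your proof is correct and follows essentially the same route as the paper's: the case $k=l$ via the algebra embedding and the filtration property of the product, the case $k<l$ by homogeneity of the pure braided tensor, and the case $k>l$ by expanding the commutation relation \eqref{productBraided} using \eqref{expressionCanoniqueR}, discarding the non-$\Theta$ terms through the weight shift \eqref{commutationRK} in the dominant slot $k$, and evaluating the surviving $\Theta$-term with \eqref{ThetaPoids} to obtain the scalar $q^{(\lambda,\lambda')}$. There are no gaps.
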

\begin{proof}
Let $\simeq$ be the identification $\mathrm{gr}_{\mathcal{G}_1}(\mathcal{L}_{g,n}) \simeq \mathcal{L}_{g,n}$. Let us prove the first formula. Take for instance $k > g$. By assumption $j_k(x) \in (\mathcal{L}_{g,n})_{S_k(\mu, \lambda)}$ and $j_k(y) \in (\mathcal{L}_{g,n})_{S_k(\mu',\lambda')}$. Then $\textstyle xy \in \bigoplus_{\kappa \preceq \mu + \mu'} (\mathcal{L}_{0,1})_{\kappa,\lambda+\lambda'}$ and we have
\[ j_k(x) \bullet j_k(y) \simeq \left( j_k(x) + \mathcal{G}_1^{\prec_1 S_k(\mu,\lambda)} \right) \left( j_k(y) + \mathcal{G}_1^{\prec_1 S_k(\mu',\lambda')} \right) = j_k(xy) + \mathcal{G}_1^{\prec_1 S_k(\mu+\mu',\lambda+\lambda')} \simeq j_k(x\, \overline{\,\cdot\,} \,y). \]
The second formula is obtained similarly. For the third formula, recall the expression of $R$ in \eqref{expressionCanoniqueR} and its commutations rules with $K_{\nu}$ in \eqref{commutationRK}. Then from \eqref{productBraided} and the definition of $\mathcal{G}_1$ we get
\begin{align*}
&j_k(x) \bullet j_l(y) \simeq \left( j_k(x) + \mathcal{G}_1^{\prec_1 S_k(\mu,\lambda)} \right) \left( j_l(y) + \mathcal{G}_1^{\prec_1 S_l(\mu',\lambda')} \right) = j_k(x)j_l(y) + \mathcal{G}_1^{\prec_1 S_k(\mu,\lambda) + S_l(\mu',\lambda')}\\
=\:\,& \sum_{(R)} j_l\bigl( \mathrm{coad}^r(R_{(1)})(y) \bigr) j_k\bigl( \mathrm{coad}^r(R_{(2)})(x) \bigr) + \mathcal{G}_1^{\prec_1 S_k(\mu,\lambda) + S_l(\mu',\lambda')}\\
=\:\,& j_l\bigl( \mathrm{coad}^r(\Theta_{(1)})(y) \bigr) j_k\bigl( \mathrm{coad}^r(\Theta_{(2)})(x) \bigr) + \mathcal{G}_1^{\prec_1 S_k(\mu,\lambda) + S_l(\mu',\lambda')}\\
=\:\,& q^{(\lambda,\lambda')}j_l(y)j_k(x) + \mathcal{G}_1^{\prec_1 S_k(\mu,\lambda) + S_l(\mu',\lambda')} \simeq q^{(\lambda,\lambda')}j_l(y)j_k(x). \qedhere
\end{align*}
\end{proof}

\indent Recall from \S \ref{sectionL10Noetherien} the filtration $\mathcal{F}_\ell$ on $\mathcal{L}_{0,1}$ indexed by $\Lambda$ (where $\Lambda$ is defined in \eqref{defIndexSetLambda}) and the filtration $\mathcal{F}$ on $\mathcal{L}_{1,0}$ indexed by $\Lambda \times \Lambda$. Since we identified $\mathrm{gr}_{\mathcal{G}_1}(\mathcal{L}_{g,n})$ with $\mathcal{L}_{1,0}^{\otimes g} \otimes \mathcal{L}_{0,1}^{\otimes n}$ as a vector space, we can consider the family of subspaces $\mathcal{G}_2$ indexed by $\Lambda^{2g+n}$ and defined by
\begin{multline*}
\mathcal{G}_2^{\mu_1,\lambda_1,\eta_1,\kappa_1, \ldots, \mu_g,\lambda_g,\eta_g,\kappa_g, \mu_{g+1},\lambda_{g+1}, \ldots, \mu_{g+n},\lambda_{g+n}}\\
= \mathcal{F}^{\mu_1,\lambda_1,\eta_1,\kappa_1} \otimes \ldots \otimes \mathcal{F}^{\mu_g,\lambda_g,\eta_g,\kappa_g} \otimes \mathcal{F}_\ell^{\mu_{g+1},\lambda_{g+1}} \otimes \ldots \otimes \mathcal{F}_\ell^{\mu_{g+n},\lambda_{g+n}}.
\end{multline*}
We endow $P^{4g+2n}$ with the partial order $\preceq_2$ given by
\[ (p_1, \ldots, p_{4g+2n}) \preceq_2 (p'_1, \ldots, p'_{4g+2n}) \quad \iff \quad \forall \, i, \: p_i \preceq p'_i  \]
which gives by restriction a partial order on $\Lambda^{2g+n}$ which satisfies \eqref{confluentOrder} and is well-founded. Then $\mathcal{G}_2$ becomes a filtration of the vector space $\mathrm{gr}_{\mathcal{G}_1}(\mathcal{L}_{g,n})$.
\begin{lem}
$\mathcal{G}_2$ is a filtration of the algebra $\mathrm{gr}_{\mathcal{G}_1}(\mathcal{L}_{g,n})$.
\end{lem}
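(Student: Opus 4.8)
The plan is to reduce the claim to two facts: (i) within each tensor factor the \emph{truncated} product $\overline{\,\cdot\,}$ respects the filtration $\mathcal{F}_\ell$ (on $\mathcal{L}_{0,1}$) and $\mathcal{F}$ (on $\mathcal{L}_{1,0}$), and (ii) distinct factors quasi-commute under $\bullet$, i.e.\ commute up to a nonzero scalar. Since $\mathcal{G}_2$ is the tensor product of the factor filtrations and $\bullet$ is bilinear, it suffices to check $\mathcal{G}_2^{\mathbf{p}} \bullet \mathcal{G}_2^{\mathbf{p}'} \subseteq \mathcal{G}_2^{\mathbf{p}+\mathbf{p}'}$ on spanning monomials $j_1(x_1) \bullet \cdots \bullet j_{g+n}(x_{g+n})$, each $x_k$ lying in the corresponding piece of $\mathcal{F}$ or $\mathcal{F}_\ell$. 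Moreover, since each such piece is a direct sum of $\mathrm{coad}^r(K_\nu)$-weight spaces (for $\mathcal{F}_\ell^{\mu,\lambda} = \bigoplus {_{\lambda'}C(\mu')}$ each summand refines further into right-coregular, hence coadjoint, weight spaces), I may assume every $x_k$ is $\mathrm{coad}^r$-weight-homogeneous, so that the third formula of Lemma \ref{lemmaProductGrG1} applies verbatim.

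First I would multiply two such monomials $U = j_1(x_1)\bullet\cdots\bullet j_{g+n}(x_{g+n})$ and $V = j_1(y_1)\bullet\cdots\bullet j_{g+n}(y_{g+n})$. Using the third formula of Lemma \ref{lemmaProductGrG1} repeatedly, I move each $j_k(y_k)$ leftwards past the factors $j_l(x_l)$ with $l>k$; every such transposition $j_l(x_l)\bullet j_k(y_k) = q^{(\,\cdot\,,\,\cdot\,)}\, j_k(y_k)\bullet j_l(x_l)$ produces only a scalar and therefore preserves membership in every $\mathcal{G}_2$-piece. After reordering I obtain, up to a nonzero scalar, the expression $j_1(x_1)\bullet j_1(y_1)\bullet\cdots\bullet j_{g+n}(x_{g+n})\bullet j_{g+n}(y_{g+n})$, and the first formula of Lemma \ref{lemmaProductGrG1} collapses each same-index pair to $j_k(x_k\,\overline{\,\cdot\,}\,y_k)$.

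It then remains to establish fact (i). For $\mathcal{L}_{0,1}$, take $\varphi\in{_aC(\mu_1)}$ and $\psi\in{_bC(\mu_2)}$; the product rule ${_aC(\mu_1)}\,{_bC(\mu_2)} = \bigoplus_{(\nu,\kappa)\preceq(\mu_1+\mu_2,\,a+b)} {_\kappa C(\nu)}$ recalled in \S\ref{sectionL10Noetherien} shows that the top highest-weight component $\varphi\,\overline{\,\cdot\,}\,\psi = \pi_{\mu_1+\mu_2}(\varphi\psi)$ lies in $\bigoplus_{\kappa\preceq a+b}{_\kappa C(\mu_1+\mu_2)}\subseteq \mathcal{F}_\ell^{\mu_1+\mu_2,\,a+b}$; by bilinearity this gives $\mathcal{F}_\ell^{\mu_1,a}\,\overline{\,\cdot\,}\,\mathcal{F}_\ell^{\mu_2,b}\subseteq \mathcal{F}_\ell^{\mu_1+\mu_2,\,a+b}$. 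The same projection-to-top-component argument, applied separately in the two tensor slots of $\mathcal{L}_{1,0}$ and using that $\mathcal{F} = \mathcal{F}_r\otimes\mathcal{F}_\ell$ is a filtration of $(\mathcal{L}_{1,0},\cdot)$ (\S\ref{sectionL10Noetherien}) together with \eqref{filtrationOq}, yields $\mathcal{F}^{\mathbf{q}}\,\overline{\,\cdot\,}\,\mathcal{F}^{\mathbf{q}'}\subseteq \mathcal{F}^{\mathbf{q}+\mathbf{q}'}$. Combining (i) factor-by-factor with the scalar reordering of the previous step gives $U\bullet V\in\mathcal{G}_2^{\mathbf{p}+\mathbf{p}'}$, while exhaustivity $\bigcup_{\mathbf{p}}\mathcal{G}_2^{\mathbf{p}} = \mathrm{gr}_{\mathcal{G}_1}(\mathcal{L}_{g,n})$ follows from condition \eqref{confluentOrder} on $\Lambda$ as in \S\ref{sectionFiltrations}.

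The main obstacle is fact (i): one must check that passing to the truncated product $\overline{\,\cdot\,}$, which discards all but the leading highest-weight component, does not destroy the finer left/right-weight filtration. The projection-to-top-component computation above, resting on the weight bookkeeping already carried out for $\mathcal{F}_\ell$ and $\mathcal{F}$ in \S\ref{sectionL10Noetherien}, is exactly what makes this work. The reordering step is then purely bookkeeping, since the quasi-commutation of distinct factors costs nothing but scalars and hence is invisible to the filtration degree.
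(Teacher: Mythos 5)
Your proof is correct and follows essentially the same route as the paper's: both reduce the multiplicativity of $\mathcal{G}_2$ to Lemma \ref{lemmaProductGrG1} (quasi-commutation of factors with distinct indices, collapse of equal-index factors to the truncated product $\overline{\,\cdot\,}$, giving the product formula with an overall scalar $q^N$), and then to the fact that $\mathcal{F}_\ell$ and $\mathcal{F}$ are also filtrations of $(\mathcal{L}_{0,1}, \overline{\,\cdot\,})$ and $(\mathcal{L}_{1,0}, \overline{\,\cdot\,})$. The only difference is that you spell out the weight-homogeneous refinement, the reordering computation, and the projection-onto-top-component argument which the paper compresses into its displayed identity and the phrase ``it is easy to see''.
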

\begin{proof}
Take $j_1(x_1) \ldots j_{g+n}(x_{g+n}) \in \mathcal{G}_2^S$ and $j_1(y_1) \ldots j_{g+n}(y_{g+n}) \in \mathcal{G}_2^{S'}$ for some $S,S' \in \Lambda^{2g+n}$, where $j_k$ is the embedding of vector spaces \eqref{embeddingsL01L10InLgn}. We can assume that these elements are homogeneous in $\mathrm{gr}_{\mathcal{G}_1}(\mathcal{L}_{g,n})$. Then according to Lemma \ref{lemmaProductGrG1}:
\[ \bigl( j_1(x_1) \ldots j_{g+n}(x_{g+n}) \bigr) \bullet \bigl( j_1(y_1) \ldots j_{g+n}(y_{g+n}) \bigr) = q^Nj_1(x_1\,\overline{\,\cdot\,}\,y_1) \ldots j_{g+n}(x_{g+n}\,\overline{\,\cdot\,}\,y_{g+n}) \]
for some $N \in (1/D)\mathbb{Z}$. It is easy to see that $\mathcal{F}$ and $\mathcal{F}_\ell$ are also filtrations of $(\mathcal{L}_{1,0}, \overline{\,\cdot\,})$ and $(\mathcal{L}_{0,1}, \overline{\,\cdot\,})$ respectively, which gives the result.
\end{proof}

For $1 \leq k \leq g$ and $g+1 \leq l \leq g+n$ we will use the embeddings
\[ \mathbf{S}_k : \Lambda^2 \to (\Lambda^2)^g \times \Lambda^n, \qquad \mathbf{S}_l : \Lambda \to (\Lambda^2)^g \times \Lambda^n \]
such that, for $\xi$, $\zeta\in \Lambda$ we have $\mathbf{S}_k(\xi,\zeta)$ has the $2k$-th entry equal to $\xi$, the $2k+1$-th entry equal to $\zeta$, and all remaining entries equal to $0$; similarly $\mathbf{S}_l(\xi)$ has the $g+l$-th entry equal to $\xi$, and all remaining entries equal to $0$.
\begin{teo}\label{ThmLgnNoetherian}
The algebra $\mathcal{L}_{g,n}$ is Noetherian.
\end{teo}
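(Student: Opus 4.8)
The plan is to carry out the four-step scheme announced just above, so that almost all of the genuine work is already packaged in Lemma~\ref{lemmaProductGrG1} and Theorem~\ref{TheoL10Noetherian}; what remains is to assemble these ingredients. The conceptual point is that passing to $\mathrm{gr}_{\mathcal{G}_1}(\mathcal{L}_{g,n})$, with its product $\bullet$, turns the braided tensor product into a \emph{quasi-commutative} one: by Lemma~\ref{lemmaProductGrG1} each factor $j_k(\mathcal{L}_{1,0})$ or $j_k(\mathcal{L}_{0,1})$ multiplies internally through the truncated products $\overline{\,\cdot\,}$, while two elements lying in distinct tensor factors now commute up to the single scalar $q^{(\lambda,\lambda')}$ read off from their weights, with no correction term.

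I would then descend once more, to $\mathrm{gr}_{\mathcal{G}_2}\bigl(\mathrm{gr}_{\mathcal{G}_1}(\mathcal{L}_{g,n})\bigr)$. Since $\mathcal{G}_2$ is the tensor product of the filtrations $\mathcal{F}$ on $(\mathcal{L}_{1,0},\overline{\,\cdot\,})$ and $\mathcal{F}_\ell$ on $(\mathcal{L}_{0,1},\overline{\,\cdot\,})$, and since these filtrations refine each factor without disturbing the clean inter-factor $q$-commutation (the commutation scalar depends only on the weights, which are recorded in the filtration index), the doubly-graded algebra is identified with the quasi-commutative tensor product of $g$ copies of $\mathrm{gr}_{\mathcal{F}}(\mathcal{L}_{1,0})$, with product $\diamond$, and $n$ copies of $\mathrm{gr}_{\mathcal{F}_\ell}(\mathcal{L}_{0,1})$, with product $\circ_\ell$.

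To verify the hypothesis of Lemma~\ref{critereNoetherien} I would form a single ordered list of generators: within each factor take the generators supplied by Theorem~\ref{TheoL10Noetherian} (for the $\mathcal{L}_{1,0}$-factors) and by \eqref{relationsNoetherianiteL01} (for the $\mathcal{L}_{0,1}$-factors), and order the whole list first by tensor factor and then by the internal order inside each factor. The verification then splits into two cases. For two generators in the same factor, the required relation is exactly \eqref{echangeL10Generateurs} (respectively \eqref{relationsNoetherianiteL01}), whose correction terms involve only strictly earlier generators of that same factor; for two generators in distinct factors, the third formula of Lemma~\ref{lemmaProductGrG1} gives a bare $q$-commutation with vanishing $\alpha$ and $\beta$. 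In both cases the relation has the triangular shape demanded by Lemma~\ref{critereNoetherien}, so $\mathrm{gr}_{\mathcal{G}_2}\bigl(\mathrm{gr}_{\mathcal{G}_1}(\mathcal{L}_{g,n})\bigr)$ is Noetherian. Finally, applying Lemma~\ref{lemmaFiltrationNoetherian} twice---using that $\preceq_2$ and then $\preceq_1$ are well-founded---propagates Noetherianity up to $\mathrm{gr}_{\mathcal{G}_1}(\mathcal{L}_{g,n})$ and then to $\mathcal{L}_{g,n}$ itself.

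The main obstacle is not conceptual but a matter of careful bookkeeping in this last step: one must choose the global order on generators so that it is simultaneously compatible with the factor ordering and with the internal orders, and check that no correction term coming from a within-factor relation ever drags in a generator from a later factor. Ordering factor-by-factor (earliest factor first) guarantees exactly this, because the within-factor corrections in \eqref{echangeL10Generateurs} and \eqref{relationsNoetherianiteL01} only decrease the internal index while staying in the same factor, and inter-factor relations contribute no correction at all.
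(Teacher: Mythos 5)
Your proposal is correct and follows essentially the same route as the paper's own proof: the same two-step passage through $\mathrm{gr}_{\mathcal{G}_1}$ and $\mathrm{gr}_{\mathcal{G}_2}$, the same factor-by-factor ordering of the generators coming from $u_1,\ldots,u_p$, and the same case-by-case verification of Lemma~\ref{critereNoetherien} via \eqref{relationsNoetherianiteL01}, \eqref{echangeL10Generateurs} and the pure $q$-commutation in the third formula of Lemma~\ref{lemmaProductGrG1}, finished by two applications of Lemma~\ref{lemmaFiltrationNoetherian}. Your closing remark about the factor-first ordering preventing correction terms from involving later factors is precisely the point implicit in the paper's list $(\widehat{x}_1,\ldots,\widehat{x}_{(2g+n)p})$.
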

\begin{proof}
We are going to show that $\mathrm{gr}_{\mathcal{G}_2}\bigl( \mathrm{gr}_{\mathcal{G}_1}(\mathcal{L}_{g,n}) \bigr)$ is Noetherian. Recall that $\mathrm{gr}_{\mathcal{G}_1}(\mathcal{L}_{g,n})$ is identified with $\mathcal{L}_{g,n}$ endowed with the product $\bullet$. To avoid confusion we do not identify again $\mathrm{gr}_{\mathcal{G}_2}\bigl( \mathrm{gr}_{\mathcal{G}_1}(\mathcal{L}_{g,n}) \bigr)$ with $\mathcal{L}_{g,n}$ as a vector space and instead we use cosets $x + \mathcal{G}_2^{\prec_2 S}$.

\noindent Note first that thanks to the first formula in Lemma \ref{lemmaProductGrG1} the linear maps $\mathfrak{i}_{B(k)}, \mathfrak{i}_{A(k)}, \mathfrak{i}_{M(s)} : \mathcal{L}_{0,1} \to \mathcal{L}_{g,n}$ from \eqref{embeddingsFrakI} are morphisms of algebras
\[ \overline{\mathfrak{i}}_{B(k)}, \overline{\mathfrak{i}}_{A(k)}, \overline{\mathfrak{i}}_{M(s)} : \bigl( \mathcal{L}_{0,1}, \overline{\,\cdot\,} \, \bigr) \to \mathrm{gr}_{\mathcal{G}_1}(\mathcal{L}_{g,n}). \]
It is easy to see that the filtration $\mathcal{F}_x$ ($x \in \{\ell,r\}$) of $\mathcal{L}_{0,1}$ is also a filtration of $\bigl( \mathcal{L}_{0,1}, \overline{\,\cdot\,} \, \bigr)$ and that $\mathrm{gr}_{\mathcal{F}_x}\bigl( \mathcal{L}_{0,1}, \overline{\,\cdot\,} \, \bigr) = \mathrm{gr}_{\mathcal{F}_x}(\mathcal{L}_{0,1})$. If $\varphi, \psi \in \mathcal{L}_{0,1}$ are such that $\mathfrak{i}_B(\varphi) \in \mathcal{F}^{\mu,\lambda,0,0}$ and $\mathfrak{i}_A(\psi) \in \mathcal{F}^{0,0,\eta,\kappa}$ we let
\[ \widehat{\mathfrak{i}}_{B(k)}(\varphi) = \overline{\mathfrak{i}}_{B(k)}(\varphi) + \mathcal{G}_2^{\prec_2 \mathbf{S}_k(\mu,\lambda,0,0)}, \qquad
\widehat{\mathfrak{i}}_{A(k)}(\psi) = \overline{\mathfrak{i}}_{A(k)}(\psi) + \mathcal{G}_2^{\prec_2 \mathbf{S}_k(0,0,\eta,\kappa)} \]
for $1 \leq k \leq g$. Similarly, for $\varphi \in \mathcal{F}^{\mu,\lambda}_\ell \subset \mathcal{L}_{0,1}$ and $g+1 \leq s \leq g+n$ we let
\[ \widehat{\mathfrak{i}}_{M(s)}(\varphi) = \overline{\mathfrak{i}}_{M(s)}(\varphi) + \mathcal{G}_2^{\prec_2 \mathbf{S}_s(\mu,\lambda)}. \]
Since $\overline{\mathfrak{i}}_{B(k)}, \overline{\mathfrak{i}}_{A(k)}, \overline{\mathfrak{i}}_{M(s)}$ are morphisms of algebras and
\[ \overline{\mathfrak{i}}_{B(k)}\bigl( \mathcal{F}_r^{\prec \mu,\lambda} \bigr) = \mathcal{G}_2^{\prec_2 \mathbf{S}_k(\mu,\lambda,0,0)}, \quad \overline{\mathfrak{i}}_{A(k)}\bigl( \mathcal{F}_\ell^{\prec \eta,\kappa} \bigr) = \mathcal{G}_2^{\prec_2 \mathbf{S}_k(0,0,\eta,\kappa)}, \quad \overline{\mathfrak{i}}_{M(s)}\bigl( \mathcal{F}_\ell^{\prec \mu,\lambda} \bigr) = \mathcal{G}_2^{\prec_2 \mathbf{S}_s(\mu,\lambda)} \]
we obtain morphisms of algebras
\begin{align*}
\widehat{\mathfrak{i}}_{B(k)} :&\:\, \mathrm{gr}_{\mathcal{F}_r}\bigl( \mathcal{L}_{0,1}, \overline{\,\cdot\,} \, \bigr) = \mathrm{gr}_{\mathcal{F}_r}(\mathcal{L}_{0,1}) \to \mathrm{gr}_{\mathcal{G}_2}\bigl( \mathrm{gr}_{\mathcal{G}_1}(\mathcal{L}_{g,n}) \bigr),\\
\widehat{\mathfrak{i}}_{A(k)}, \: \widehat{\mathfrak{i}}_{M(s)} :&\:\, \mathrm{gr}_{\mathcal{F}_\ell}\bigl( \mathcal{L}_{0,1}, \overline{\,\cdot\,} \, \bigr) = \mathrm{gr}_{\mathcal{F}_\ell}(\mathcal{L}_{0,1}) \to \mathrm{gr}_{\mathcal{G}_2}\bigl( \mathrm{gr}_{\mathcal{G}_1}(\mathcal{L}_{g,n}) \bigr).
\end{align*}
Recall from \S \ref{sectionL10Noetherien} that $\mathrm{gr}_{\mathcal{F}_x}(\mathcal{L}_{0,1})$ with $x \in \{\ell,r\}$ is generated by a finite number of matrix coefficients $u_1, \ldots, u_p$. Thanks to the second formula in Lemma \ref{lemmaProductGrG1}, we deduce that $\mathrm{gr}_{\mathcal{G}_2}\bigl( \mathrm{gr}_{\mathcal{G}_1}(\mathcal{L}_{g,n}) \bigr)$ is generated by the finite set of elements $\widehat{\mathfrak{i}}_{B(k)}(u_r)$, $\widehat{\mathfrak{i}}_{A(k)}(u_r)$, $\widehat{\mathfrak{i}}_{M(s)}(u_r)$ for $1 \leq k \leq g$, $g+1 \leq s \leq g+n$ and $1 \leq r \leq p$. By applying the morphism $\widehat{\mathfrak{i}}_{X(r)}$ to \eqref{relationsNoetherianiteL01}, where $X(r)$ is $A(r)$, $B(r)$ or $M(r)$, we obtain
\[ \forall \, 1 \leq b < a \leq p, \quad \widehat{\mathfrak{i}}_{X(r)}(u_a)\,\widehat{\mathfrak{i}}_{X(r)}(u_b) = q_{ab} \, \widehat{\mathfrak{i}}_{X(r)}(u_b)\, \widehat{\mathfrak{i}}_{X(r)}(u_a) + \sum_{s=1}^{b-1} \sum_{t=1}^p \alpha^{ab}_{st} \, \widehat{\mathfrak{i}}_{X(r)}(u_s) \, \widehat{\mathfrak{i}}_{X(r)}(u_t) \]
with certain scalars $q_{ab} \in \mathbb{C}(q^{1/D})^{\times}$ and $\alpha^{ab}_{st} \in \mathbb{C}(q^{1/D})$. Similarly, it follows from \eqref{echangeL10Generateurs} that
\[ \forall \, 1 \leq b < a \leq p, \quad \widehat{\mathfrak{i}}_{A(r)}(u_a) \, \widehat{\mathfrak{i}}_{B(r)}(u_b) = q'_{ab} \, \widehat{\mathfrak{i}}_{B(r)}(u_b) \, \widehat{\mathfrak{i}}_{A(r)}(u_a) + \sum_{s=1}^{b-1} \sum_{t=1}^p \lambda^{ab}_{st} \, \widehat{\mathfrak{i}}_{B(r)}(u_s) \, \widehat{\mathfrak{i}}_{A(r)}(u_t) \]
for any $1 \leq r \leq g$ and with certain scalars $q'_{ab} \in \mathbb{C}(q^{1/D})^{\times}$ and $\lambda^{ab}_{st} \in \mathbb{C}(q^{1/D})$.
Finally, we see from the third formula in Lemma \ref{lemmaProductGrG1} that
\[ \forall \, 1 \leq a, b \leq p, \quad \widehat{\mathfrak{i}}_{Y(s)}(u_a) \, \widehat{\mathfrak{i}}_{X(r)}(u_b) = q''_{ab} \, \widehat{\mathfrak{i}}_{X(r)}(u_b) \, \widehat{\mathfrak{i}}_{Y(s)}(u_a) \]
where $r<s$ and $X(r)$ (resp. $Y(s)$) is $A(r)$, $B(r)$ or $M(r)$ (resp. $A(s)$, $B(s)$ or $M(s)$) and $q''_{ab} \in \mathbb{C}(q^{1/D})^{\times}$.

\noindent We define a sequence $(\widehat{x}_1, \ldots, \widehat{x}_{(2g+n)p})$ by
\begin{align*}
&\widehat{x}_1 = \widehat{\mathfrak{i}}_{B(1)}(u_1), \ldots, \widehat{x}_p = \widehat{\mathfrak{i}}_{B(1)}(u_p), \: \widehat{x}_{p+1} = \widehat{\mathfrak{i}}_{A(1)}(u_1), \ldots, \widehat{x}_{2p} = \widehat{\mathfrak{i}}_{A(1)}(u_p),\\
&\hspace{1.5em}\vdots\hspace{11.5em}\vdots\hspace{11.5em}\vdots\\
&\widehat{x}_{2(g-1)p+1} = \widehat{\mathfrak{i}}_{B(g)}(u_1), \ldots, \widehat{x}_{2(g-1)p+p} = \widehat{\mathfrak{i}}_{B(g)}(u_p), \: \widehat{x}_{2(g-1)p+p+1} = \widehat{\mathfrak{i}}_{A(g)}(u_1), \ldots, \widehat{x}_{2gp} = \widehat{\mathfrak{i}}_{A(g)}(u_p),\\
&\widehat{x}_{2gp+1} = \widehat{\mathfrak{i}}_{M(g+1)}(u_1), \ldots, \widehat{x}_{(2g+1)p} = \widehat{\mathfrak{i}}_{M(g+1)}(u_p),\\
&\hspace{1.5em}\vdots\hspace{13.5em}\vdots\\
&\widehat{x}_{(2g+n-1)p+1} = \widehat{\mathfrak{i}}_{M(g+n)}(u_1), \ldots, \widehat{x}_{(2g+n)p} = \widehat{\mathfrak{i}}_{M(g+n)}(u_p).
\end{align*}
The relations above show that the generators $\widehat{x}_s$ satisfy the relations required by Lemma \ref{critereNoetherien}. It follows that $\mathrm{gr}_{\mathcal{G}_2}\bigl( \mathrm{gr}_{\mathcal{G}_1}(\mathcal{L}_{g,n}) \bigr)$ is Noetherian. Using Lemma \ref{lemmaFiltrationNoetherian} two times, we obtain that $\mathrm{gr}_{\mathcal{G}_1}(\mathcal{L}_{g,n})$ is Noetherian and that $\mathcal{L}_{g,n}$ is Noetherian.
\end{proof}

\begin{remark}
One can wonder why we did not use the simpler filtration by the subspaces $\textstyle \widetilde{\mathcal{G}}_1^{[\lambda]} = \cup_{[\mu] \in P^{2g+n}} \mathcal{G}_1^{[\mu],[\lambda]}$ indexed by $P^{g+n}$ with lexicographic order from the right, for which all the above proofs seem simpler since the product $\overline{\,\cdot\,}$ does not appear in $\mathrm{gr}_{\widetilde{\mathcal{G}}_1}(\mathcal{L}_{g,n})$. The problem is that the lexicographic order on $P^{g+n}$ is not well-founded, hence Lemma \ref{lemmaFiltrationNoetherian} does not apply to $\mathrm{gr}_{\widetilde{\mathcal{G}}_1}(\mathcal{L}_{g,n})$ and the last argument in the proof of Theorem \ref{ThmLgnNoetherian} becomes wrong.
\end{remark}

\subsection{Noetherianity and finiteness of $\mathcal{L}_{g,n}^{U_q}$}\label{sectionNoetherianityLgnInv}
The subalgebra of $U_q$-invariant elements is
\[ \mathcal{L}_{g,n}^{U_q} = \bigl\{ x \in \mathcal{L}_{g,n} \, \big| \, \forall \, h \in U_q, \: \mathrm{coad}^r(h)(x) = \varepsilon(h)x \bigr\} \]
where the right action $\mathrm{coad}^r$ was defined in \eqref{coadLgn}. More generally if $V$ is a module over a Hopf algebra $H$, the subspace of invariant elements is
\[ V^H =  \bigl\{ v \in V \: | \: \forall \, h \in H, \: h\cdot v = \varepsilon(h)v \bigr\} \]
where $\varepsilon$ is the counit of $H$.

\smallskip

\indent We start with a general result about the structure of invariant elements in module-algebras; it is just an abstract formulation of the discussion below Theorem 3.2 in \cite{BR2}, which is itself inspired by \cite[Chap. 3, \S 1]{DC}. Denote by $\mathcal{I}$ an additive abelian monoid endowed with a partial order $\leq$ such that
\begin{equation}\label{conditionsOrdreHilbertNagata}
\begin{array}{l}
\forall \, i \in \mathcal{I}, \:\: 0 \leq i \\
\forall \, i,j,k \in \mathcal{I}, \:\: i \leq j \implies i+k \leq j+k.
\end{array}
\end{equation}
We assume moreover that $\leq$ is well-founded. For instance, $\mathcal{I} = P_+$ endowed with the order $\preceq$ (see \S \ref{prelimLieAlgebras}) satisfies these assumptions.
\begin{teo}[Hilbert--Nagata theorem for module-algebras]\label{HilbertNagata}
Let $H$ be a Hopf algebra and let $\textstyle A = \bigoplus_{i \in \mathcal{I}} A_i$ be a graded $k$-algebra which is a $H$-module-algebra. Assume that
\begin{itemize}
\item $A_0 = k\, 1$,
\item $A$ is Noetherian and finitely generated,
\item for each $i \in \mathcal{I}$, $A_i$ is stable under the action of $H$ and is a semisimple $H$-module.
\end{itemize}
Then the subalgebra $A^H$ is Noetherian and finitely generated.
\end{teo}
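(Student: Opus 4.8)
The plan is to reduce everything to the existence of a \emph{Reynolds operator}: a graded $k$-linear projection $\mathcal{R}\colon A\to A^H$ that is a homomorphism of $A^H$-bimodules, i.e.\ $\mathcal{R}(a\,x\,b)=a\,\mathcal{R}(x)\,b$ for $a,b\in A^H$, $x\in A$. First I would record two elementary facts. From the counit axiom one has $(xy)\cdot h=\sum_{(h)}(x\cdot h_{(1)})(y\cdot h_{(2)})$, and a direct check shows that $A^H$ is a subalgebra and that, for fixed $a,b\in A^H$, the maps $x\mapsto ax$ and $x\mapsto xb$ are morphisms of right $H$-modules. Moreover, since each $A_i$ is $H$-stable, an element is invariant iff each of its homogeneous components is, so $A^H=\bigoplus_{i\in\mathcal{I}}A_i^H$ is a graded subalgebra with $A_0^H=A_0=k$.

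To construct $\mathcal{R}$ I would use that each $A_i$ is a semisimple $H$-module. Decomposing $A_i$ into isotypic components, the sum of its trivial subrepresentations is exactly $A_i^H$, and I define $\mathcal{R}_i\colon A_i\to A_i^H$ to be the projection along the sum of the non-trivial isotypic components, setting $\mathcal{R}=\bigoplus_i\mathcal{R}_i$. The key point is that this projection is \emph{natural}: any morphism $f\colon V\to W$ of semisimple $H$-modules carries $V^H$ into $W^H$ and the non-trivial part of $V$ into the non-trivial part of $W$, whence $\mathcal{R}_W\circ f=f\circ\mathcal{R}_V$. Applying naturality to the multiplication maps of the previous paragraph yields the bimodule identity $\mathcal{R}(axb)=a\,\mathcal{R}(x)\,b$, and $\mathcal{R}$ clearly restricts to the identity on $A^H$.

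For Noetherianity I would show that $I\mapsto AI$ is a strictly increasing map from left ideals of $A^H$ to left ideals of $A$. Writing a general element of $AI$ as $\sum_s x_s a_s$ with $x_s\in A$ and $a_s\in I$, the right $A^H$-linearity of $\mathcal{R}$ gives $\mathcal{R}\bigl(\sum_s x_s a_s\bigr)=\sum_s\mathcal{R}(x_s)\,a_s\in I$, while $\mathcal{R}$ fixes $I$; hence $\mathcal{R}(AI)=I$, so the assignment is injective and order-preserving. Any ascending chain of left ideals of $A^H$ thus maps to an ascending chain of left ideals of $A$, which stabilizes because $A$ is Noetherian, forcing the original chain to stabilize. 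The symmetric argument, using right ideals and the left $A^H$-linearity of $\mathcal{R}$, gives right Noetherianity.

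Finite generation is the Hilbert--Nagata step. Let $A^H_+=\bigoplus_{i\neq 0}A_i^H$ and consider the graded left ideal $A\,A^H_+$ of $A$; since $A$ is Noetherian and this ideal is generated by the graded subspace $A^H_+$, finitely many homogeneous elements $f_1,\dots,f_r\in A^H_+$ generate it. I claim they generate $A^H$ as an algebra, by well-founded induction on $i\in\mathcal{I}$: for homogeneous $a\in A_i^H$ with $i\neq 0$ we have $a\in A\,A^H_+$, so $a=\sum_s x_s f_s$ where each $x_s$ may be taken homogeneous of degree strictly below $i$ (using $0\le\deg x_s$, that $\deg f_s\neq 0$, and the compatibility of $\le$ with addition). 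Applying $\mathcal{R}$ and its right $A^H$-linearity gives $a=\mathcal{R}(a)=\sum_s\mathcal{R}(x_s)\,f_s$ with each $\mathcal{R}(x_s)\in A^H$ homogeneous of degree $\prec i$, hence in $k\langle f_1,\dots,f_r\rangle$ by induction; the base case $i=0$ is $A_0^H=k$. Thus $A^H=k\langle f_1,\dots,f_r\rangle$. The conceptual heart, and the step I expect to require the most care, is the construction and $A^H$-bilinearity of $\mathcal{R}$, as this is exactly what replaces the averaging operator of the classical group setting and relies on semisimplicity being available degree-by-degree; the noncommutativity only forces one to track left versus right ideals and to invoke both sides of the bilinearity, but adds no essential difficulty.
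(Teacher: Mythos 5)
Your proof is correct, and its core is the same as the paper's: both construct the Reynolds operator $\mathfrak{R}$ degree by degree from the semisimplicity of each $A_i$, establish its $A^H$-bilinearity via Schur's lemma (your ``naturality of the projection onto the trivial isotypic component'' is exactly the content of Lemma \ref{lemmaReynolds}), and exploit the identity $\mathfrak{R}(AI)=I$. The differences are in how the two conclusions are packaged. For Noetherianity, the paper shows directly that every one-sided ideal $I$ of $A^H$ is finitely generated: it chooses finitely many $x_1,\dots,x_n\in I$ generating $AI$ (Lemma \ref{lemmaFiniteGeneratorsOfIdeals}) and uses $\mathfrak{R}$ to conclude $I=A^Hx_1+\dots+A^Hx_n$; you instead observe that $I\mapsto AI$ is an order-embedding of ideal lattices and transfer the ascending chain condition --- equivalent, and resting on the same identity, though the paper's version also exhibits explicit generating sets. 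For finite generation the routes genuinely differ: the paper first proves $A^H$ Noetherian and then applies, entirely inside $A^H$, the general fact that a graded Noetherian algebra with degree-zero part $k\,1$ is finitely generated (Lemma \ref{gradedNoetherianFinGen}; no Reynolds operator appears in that induction), whereas you run the classical Hilbert argument in the ambient algebra: finitely many homogeneous $f_s\in A^H_+$ generate the left ideal $A\,A^H_+$ of $A$, and $\mathfrak{R}$ is applied in the inductive step to push the coefficients $x_s\in A$ into $A^H$. Your route has the advantage of not needing the Noetherianity of $A^H$ at all for the finiteness statement, at the price of invoking bilinearity of $\mathfrak{R}$ once more; the paper's route isolates a reusable graded-algebra lemma. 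One shared caveat: both you and the paper infer, from $\deg x_s+\deg f_s=i$ and $\deg f_s\neq 0$, that $\deg x_s$ is \emph{strictly} below $i$; this needs a cancellation-type property of $(\mathcal{I},\le)$ beyond \eqref{conditionsOrdreHilbertNagata}, which does hold in the intended application since there $\mathcal{I}$ embeds in an ordered group, so this is not a gap relative to the paper's own level of rigor.
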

\noindent The proof relies on three basic lemmas. Note that since $A_i$ is semisimple we can write 
\[ \textstyle A_i = (A_i)^H \oplus \bigoplus_{n} V_{i,n}\]
where the $V_{i,n}$ are some simple $H$-modules not isomorphic to the trivial module $k$. Projecting along this decomposition gives a {\it canonical} $H$-morphism $\mathfrak{R}_i : A_i \to (A_i)^H$. Since each $A_i$ is $H$-stable we have $\textstyle A^H = \bigoplus_{i \in \mathcal{I}} (A_i)^H$ and we can define
\begin{equation}\label{defReynolds}
\mathfrak{R} = \bigoplus_{i \in \mathcal{I}} \mathfrak{R}_i : A \to A^H. 
\end{equation}
The morphism $\mathfrak{R}$ is called the {\em Reynolds operator}.
\begin{lem}\label{lemmaReynolds}
For $x \in A^H$ and $y \in A$, it holds $\mathfrak{R}(xy) = x\mathfrak{R}(y)$ and $\mathfrak{R}(yx) = \mathfrak{R}(y)x$.
\end{lem}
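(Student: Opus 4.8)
The plan is to reduce to homogeneous elements and then exploit the fact that multiplication by an invariant element is an $H$-module morphism that is automatically compatible with the isotypic projection defining $\mathfrak{R}$.

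First, since the product is bilinear and $\mathfrak{R}$ is linear, it suffices to establish both identities when $x \in (A_j)^H$ and $y \in A_i$ are homogeneous, of degrees $j$ and $i$. Next I would observe that for such an invariant $x$ the left-multiplication map $L_x : A_i \to A_{i+j}$, $z \mapsto xz$, is a morphism of $H$-modules: using that $A$ is an $H$-module-algebra together with the invariance of $x$,
\[ h \cdot (xz) = \sum_{(h)} (h_{(1)} \cdot x)(h_{(2)} \cdot z) = \sum_{(h)} \varepsilon(h_{(1)})\, x\, (h_{(2)} \cdot z) = x (h \cdot z), \]
where the last equality is the counit axiom. The same computation shows that right multiplication $R_x$ by $x$ is $H$-linear.

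The maps $\mathfrak{R}_i : A_i \to (A_i)^H$ are the canonical projections onto the trivial isotypic component of the semisimple $H$-module $A_i$, and such projections are natural: any $H$-module morphism $f : A_i \to A_{i+j}$ satisfies $\mathfrak{R}_{i+j} \circ f = f \circ \mathfrak{R}_i$. Applying this to $f = L_x$ gives $\mathfrak{R}_{i+j}(xz) = x\,\mathfrak{R}_i(z)$ for all $z \in A_i$; since $\mathfrak{R}(xy) = \mathfrak{R}_{i+j}(xy)$ and $x\mathfrak{R}(y) = x\mathfrak{R}_i(y)$ by the grading, this is exactly the first identity, and the second follows identically with $f = R_x$. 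If one prefers to avoid quoting naturality, it can be proved in place: writing $A_i = (A_i)^H \oplus M_i$ with $M_i = \bigoplus_n V_{i,n}$ the sum of the non-trivial simple summands, one has $L_x((A_i)^H) \subseteq (A_{i+j})^H$ because a product of invariants is invariant, while $L_x(M_i) \subseteq M_{i+j}$ because an $H$-morphism sends each simple $V_{i,n} \not\cong k$ either to $0$ or to an isomorphic, hence non-trivial, simple submodule of $A_{i+j}$. Since $\mathfrak{R}_{i+j}$ is the identity on $(A_{i+j})^H$ and zero on $M_{i+j}$, applying it to the decomposition $xy = x\mathfrak{R}_i(y) + x\bigl(y - \mathfrak{R}_i(y)\bigr)$ yields the claim.

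There is no genuine obstacle here; the whole content is the recognition that the invariance of $x$ turns $L_x$ and $R_x$ into $H$-linear maps, after which the module-theoretic naturality of the trivial-isotypic projection does the work. The only bookkeeping point is the degree shift caused by multiplying by a homogeneous invariant of degree $j$, which is precisely why the argument is organized degree by degree and why the well-foundedness and grading hypotheses of the theorem are used to define $\mathfrak{R}$ in the first place.
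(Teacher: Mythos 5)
Your proof is correct and follows essentially the same route as the paper: reduce to homogeneous $x$ and $y$, observe that multiplication by the invariant element $x$ is $H$-linear via the module-algebra axiom and the counit, and then use that an $H$-linear map between semisimple modules preserves the splitting into trivial and non-trivial isotypic parts (which the paper phrases as a Schur's-lemma decomposition of $\mathrm{Hom}_H(A_j, A_{i+j})$, and you phrase as naturality of the projection $\mathfrak{R}_i$). The two formulations are the same argument, so nothing further is needed.
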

\begin{proof}
We can assume without loss of generality that $x \in (A_i)^H$ and $y \in A_j$. Consider the map
\[ \fonc{m_x}{A_j}{A_{i+j}}{y}{xy} \]
It is $H$-linear:
\[ h \cdot m_x(y) = h\cdot(xy) = \sum_{(h)} (h_{(1)} \cdot x)(h_{(2)} \cdot y) = \sum_{(h)} (\varepsilon(h_{(1)})x)(h_{(2)} \cdot y) = x(h \cdot y) = m_x(h \cdot y). \]
Let $A_j = (A_j)^H \oplus V_j$ and $A_{i+j} = (A_{i+j})^H \oplus V_{i+j}$, where the semisimple modules $V_j$ and $V_{i+j}$ do not have any direct summand isomorphic to the trivial module $k$. Then by Schur's lemma
\[ \mathrm{Hom}_H(A_j, A_{i+j}) = \mathrm{Hom}_k\bigl( (A_j)^H, (A_{i+j})^H \bigr) \oplus \mathrm{Hom}_H(V_j, V_{i+j}). \]
It follows that if we write $y = \mathfrak{R}(y) \oplus v \in (A_j)^H \oplus V_j$, we have
\[ xy = m_x(y) = m_x(\mathfrak{R}(y) \oplus v) = m_x\bigl(\mathfrak{R}(y)\bigr) \oplus m_x(v) = x\mathfrak{R}(y) \oplus xv \in (A_{i+j})^H \oplus V_{i+j}. \]
But by definition of $\mathfrak{R}$ we also have $xy = \mathfrak{R}(xy) \oplus w$, which gives the desired equality. The second one is shown similarly.
\end{proof}

\begin{lem}\label{lemmaFiniteGeneratorsOfIdeals}
Let $A$ be a Noetherian algebra and $X \subset A$. There exists a finite subset $X_{\mathrm{fin}} \subset X$ such that $AX = AX_{\mathrm{fin}}$, where $AY$ denotes the left ideal generated by $Y \subset A$. The same is true for right ideals.
\end{lem}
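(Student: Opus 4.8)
The plan is to exploit the Noetherian hypothesis directly: in a left Noetherian ring every left ideal is finitely generated, so the left ideal $AX$ admits finitely many generators, and each such generator, lying in $AX$, can be expressed as a finite $A$-linear combination of elements of $X$. Collecting the (finitely many) elements of $X$ that actually occur in these expressions will produce the desired finite subset $X_{\mathrm{fin}}$.

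Concretely, I would first invoke left Noetherianity to write $AX = Aa_1 + \ldots + Aa_r$ for some $a_1, \ldots, a_r \in AX$. Next, since each $a_i$ belongs to the left ideal generated by $X$, I can expand $a_i = \sum_{j=1}^{n_i} b_{ij} x_{ij}$ with $b_{ij} \in A$ and $x_{ij} \in X$. Setting $X_{\mathrm{fin}} = \{ x_{ij} \, | \, 1 \leq i \leq r, \ 1 \leq j \leq n_i \}$, which is finite, each $a_i$ lies in $AX_{\mathrm{fin}}$, whence $AX = \sum_i Aa_i \subseteq AX_{\mathrm{fin}}$. The reverse inclusion $AX_{\mathrm{fin}} \subseteq AX$ is immediate from $X_{\mathrm{fin}} \subseteq X$, so $AX = AX_{\mathrm{fin}}$.

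For right ideals the argument is identical after replacing left Noetherianity by right Noetherianity and writing the linear combinations on the appropriate side. There is essentially no obstacle here; the only point to keep in mind is that ``Noetherian'' must be read on the relevant side, which causes no difficulty since $A$ is assumed two-sided Noetherian.
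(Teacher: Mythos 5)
Your proof is correct and is essentially the same as the paper's: both invoke Noetherianity to write $AX$ as a sum of finitely many principal left ideals, expand each generator as a finite $A$-linear combination of elements of $X$, and take $X_{\mathrm{fin}}$ to be the finitely many elements of $X$ that appear, giving the two inclusions. No meaningful difference in approach.
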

\begin{proof}
Since $A$ is Noetherian, $AX$ is finitely generated. So there exists elements
\[ g_1 = \sum_{i \in I_1} a_i^{(1)} x_i^{(1)}, \: \ldots, \: g_n = \sum_{i \in I_n} a_i^{(n)} x_i^{(n)} \]
(where $a_i^{(j)} \in A$, $x_i^{(j)} \in X$ and $I_j$ is finite for each $j$) such that $AX = Ag_1 + \ldots + Ag_n$. Take $X_{\mathrm{fin}} = \{ x_i^{(1)} \}_{i \in I_1} \cup \ldots \cup \{ x_i^{(n)} \}_{i \in I_n}$. Since $X_{\mathrm{fin}}$ is a (finite) subset of $X$, we have $AX_{\mathrm{fin}} \subset AX$. On the other hand, since $g_1, \ldots, g_n \in AX_{\mathrm{fin}}$, we have $AX \subset A(AX_{\mathrm{fin}}) = AX_{\mathrm{fin}}$.
\end{proof}

The last lemma is a well-known fact:
\begin{lem}\label{gradedNoetherianFinGen}
Let $\textstyle A = \bigoplus_{i \in \mathcal{I}} A_i$ be a graded $k$-algebra such that $A_0 = k \, 1$. If $A$ is Noetherian, then it is finitely generated.
\end{lem}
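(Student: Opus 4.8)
The plan is to identify an \emph{augmentation ideal} and bootstrap a finite set of algebra generators out of a finite set of ideal generators, using well-founded induction on the grading monoid $\mathcal{I}$. Set $A_+ = \bigoplus_{i \neq 0} A_i$. Since $A_0 = k\,1$, this is a graded two-sided ideal of $A$, and $A = k\,1 \oplus A_+$ as graded vector spaces. The whole point is that recovering $A$ from finitely many elements reduces to understanding $A_+$, which Noetherianity controls.

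First I would produce a finite set of homogeneous generators of $A_+$ as a left ideal. Because $A$ is Noetherian, $A_+$ is finitely generated as a left ideal; and because $A_+$ is a \emph{graded} ideal, the homogeneous components of any finite generating family again generate it, so we may choose homogeneous generators $g_1, \dots, g_n$ of respective degrees $d_1, \dots, d_n$ with each $d_j \neq 0$. (Equivalently, one applies Lemma \ref{lemmaFiniteGeneratorsOfIdeals} to the set $X$ of all homogeneous elements of $A_+$.) Let $B \subseteq A$ be the subalgebra generated by $g_1, \dots, g_n$; the goal is to prove $B = A$.

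The core step is to show $A_i \subseteq B$ for every $i \in \mathcal{I}$ by well-founded induction with respect to $\preceq$, which is legitimate since $\preceq$ is well-founded. The base case $A_0 = k\,1 \subseteq B$ is immediate. For $i \neq 0$ and $a \in A_i \subseteq A_+$, write $a = \sum_j b_j g_j$ with $b_j \in A$; projecting onto the degree-$i$ homogeneous component and using that each $g_j$ is homogeneous of degree $d_j$, we may replace each $b_j$ by its homogeneous component $(b_j)_{e_j}$ of the degree $e_j$ with $e_j + d_j = i$. The conditions \eqref{conditionsOrdreHilbertNagata} give $e_j \preceq i$, and since $d_j \neq 0$ one has in fact $e_j \prec i$; the inductive hypothesis then yields $(b_j)_{e_j} \in B$, hence $a = \sum_j (b_j)_{e_j}\, g_j \in B$.

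The main obstacle, and the only place where the hypotheses on $(\mathcal{I},\preceq)$ are genuinely used, is guaranteeing the \emph{strict} drop $e_j \prec i$: multiplication by a generator of nonzero degree must strictly raise the degree, for otherwise the induction is circular. This is exactly what the monoid-and-order axioms are designed to supply; for the grading monoid $\mathcal{I} = P_+$ relevant to the application it is transparent, since $P_+$ is cancellative and $e_j + d_j = i$ with $d_j \neq 0$ forces $e_j \neq i$. Everything else (the ideal being graded, extracting homogeneous generators, and the homogeneous projection) is routine once this strictness is secured.
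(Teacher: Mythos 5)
Your proof is correct and follows essentially the same route as the paper's: decompose $A = k\,1 \oplus A_+$, extract finitely many homogeneous generators of the ideal $A_+$ via Noetherianity, and run a well-founded induction on the grading, using the degree drop coming from \eqref{conditionsOrdreHilbertNagata} after projecting onto homogeneous components. If anything, you are more careful than the paper at the one delicate point, the strict inequality $e_j \prec i$: the paper asserts it directly from \eqref{conditionsOrdreHilbertNagata} (which only yield $e_j \preceq i$ in a general monoid), whereas you correctly note that the strictness requires, and has in the relevant case $\mathcal{I} = P_+$, cancellativity of the grading monoid.
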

\begin{proof}
Let $\textstyle A_+ = \bigoplus_{i \in \mathcal{I}\setminus \{0\}} A_i$. Since $A_+$ is an ideal, it is finitely generated: there exists $g_1, \ldots, g_n$ such that $A_+ = Ag_1 + \ldots + Ag_n$. We can assume that the elements $g_s$ are homogeneous (\text{i.e.} $g_s \in A_{j_s}$ with $j_s > 0$). We will show that $A_i \subset k\langle g_1, \ldots, g_n \rangle$ by well-founded induction on $i$. Firstly, $A_0 = k\,1 \subset k\langle g_1, \ldots, g_n \rangle$. Now assume that $\textstyle \bigoplus_{j < i} A_j \subset k\langle g_1, \ldots, g_n \rangle$ for some $i \in \mathcal{I}\setminus \{0\}$ and take $x \in A_i$. Since $A_i \subset A_+$ we have $\textstyle x = \sum_{s=1}^n a_s g_s$. The elements $x$ and $g_s$ being homogeneous, the $a_s$ can be assumed to be homogeneous as well: $a_s \in A_{l_s}$. Then $j_s + l_s = i$ for all $s$, which implies $l_s < i$ due to the assumptions \eqref{conditionsOrdreHilbertNagata} on $\leq$. Hence $a_s \in \textstyle \bigoplus_{j < i} A_j$ so that $a_s \in k\langle g_1, \ldots, g_n \rangle$ by the induction hypothesis. It follows that $x \in k\langle g_1, \ldots, g_n \rangle$, as desired.
\end{proof}

\begin{proof}[Proof of Theorem \ref{HilbertNagata}]
Let $I$ be a left ideal in $A^H$; we want to show that it is finitely generated. Lemma \ref{lemmaFiniteGeneratorsOfIdeals} gives us elements $x_1, \ldots, x_n \in I$ such that $AI = Ax_1 + \ldots + Ax_n$. Let us show that $\mathfrak{R}(AI) = I$. The inclusion $I \subset \mathfrak{R}(AI)$ is obvious since $\mathfrak{R}(I) = I$. Conversely, take $\textstyle x = \sum_{s=1}^n a_s x_s \in AI$. Applying Lemma \ref{lemmaReynolds}, we get $\textstyle \mathfrak{R}(x) = \sum_{s=1}^n \mathfrak{R}(a_s)x_s \in I$, so that $\mathfrak{R}(AI) \subset I$. Now, using Lemma \ref{lemmaReynolds} again, we find
\[ I = \mathfrak{R}(AI) = \mathfrak{R}(Ax_1 + \ldots + Ax_n) = \mathfrak{R}(A)x_1 + \ldots + \mathfrak{R}(A)x_n = A^H x_1 + \ldots + A^H x_n \]
which means that $I$ is generated by $x_1, \ldots, x_n$. The same arguments show that right ideals of $A^H$ are finitely generated as well. Thus $A^H$ is Noetherian. To show that the algebra $A^H$ is finitely generated, note that it is graded by $\mathcal{I}$ if we define $(A^H)_i = (A_i)^H$; hence Lemma \ref{gradedNoetherianFinGen} applies.
\end{proof}

\indent We will now use Theorem \ref{HilbertNagata} to prove that the subalgebra $\mathcal{L}^{U_q}_{g,n}$ is Noetherian and finitely generated, which is our main result in this section. Recall that $\mathcal{L}_{g,n}$ is $\mathcal{O}_q(q^{1/D})^{\otimes (2g+n)}$ as a vector space. For $[\mu] = (\mu_1, \ldots, \mu_{2g+n}) \in P_+^{2g+n}$ let
\[ C([\mu]) = C(\mu_1) \otimes \ldots \otimes C(\mu_{2g+n}) \subset \mathcal{L}_{g,n} \]
where $C(\mu_i)$ is the subspace of matrix coefficients of the irreducible $U_q^{\mathrm{ad}}$-module with highest weight $\mu_i$. We say that $[\mu'] \preceq [\mu]$ if and only if $\mu'_i \preceq \mu_i$ for all $1 \leq i \leq 2g+n$, where we recall that $\mu'_i \preceq \mu_i$ means that $\mu_i - \mu'_i \in D^{-1}Q_+$. This gives an order on $P_+^{2g+n}$ which satisfies the conditions \eqref{confluentOrder} and \eqref{conditionsOrdreHilbertNagata} and is well-founded. We see from the formulas in Proposition \ref{productLgn} and \eqref{produitL01} that the subspaces
\[ \mathcal{W}^{[\mu]} = \bigoplus_{[\mu'] \preceq [\mu]} C([\mu']) \]
form a filtration $\mathcal{W}$ of the algebra $\mathcal{L}_{g,n}$ indexed by $P_+^{2g+n}$. Let
\begin{equation}\label{canoProjCmu}
\textstyle \pi_{[\mu]} : \bigoplus_{[\mu'] \preceq [\mu]} C([\mu']) \to C([\mu])
\end{equation}
be the canonical projection and put $x \,\overline{\,\cdot\,}\,y = \pi_{[\mu]+[\eta]}(xy)$ for $x \in C([\mu])$, $y \in C([\eta])$. Then $\mathrm{gr}_{\mathcal{W}}(\mathcal{L}_{g,n})$ is identified with the vector space $\mathcal{L}_{g,n}$ endowed with the product $\overline{\,\cdot\,}$.

\begin{teo}\label{thmLgnUqFinGen}
The algebra $\mathcal{L}_{g,n}^{U_q}$ is Noetherian and finitely generated.
\end{teo}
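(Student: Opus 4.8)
The plan is to apply the Hilbert--Nagata Theorem \ref{HilbertNagata} to the associated graded algebra $A = \mathrm{gr}_{\mathcal{W}}(\mathcal{L}_{g,n})$, viewed as a $U_q^{\mathrm{ad}}$-module-algebra for $\mathrm{coad}^r$, and then to transfer the conclusion back to $\mathcal{L}_{g,n}^{U_q}$ through the filtration $\mathcal{W}$. Recall that $A$ is $\mathcal{L}_{g,n}$ as a vector space, equipped with the truncated product $\overline{\,\cdot\,}$, and is graded by the monoid $\mathcal{I} = P_+^{2g+n}$ with homogeneous components $A_{[\mu]} = C([\mu])$; the order $\preceq$ on $P_+^{2g+n}$ was already observed to be well-founded and to satisfy \eqref{confluentOrder} and \eqref{conditionsOrdreHilbertNagata}, so it is a legitimate grading monoid for Theorem \ref{HilbertNagata}.

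First I would verify the hypotheses of Theorem \ref{HilbertNagata}. The bottom component is $A_{[0]} = C(0)^{\otimes (2g+n)} = \mathbb{C}(q^{1/D})\,1$. Each component $C([\mu]) = C(\mu_1) \otimes \cdots \otimes C(\mu_{2g+n})$ is stable under $\mathrm{coad}^r$, since the right coadjoint action sends the matrix coefficients of a module $V_{\mu_i}$ into themselves; moreover each $C(\mu_i)$ is semisimple as a $U_q$-module because the category $\mathcal{C}$ of type $1$ modules is semisimple and stays semisimple under tensor product, so $C([\mu])$ is a semisimple $U_q$-module. It remains to see that $A$ is Noetherian and finitely generated. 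For Noetherianity I would re-run the two-step filtration argument from the proof of Theorem \ref{ThmLgnNoetherian}: the filtrations $\mathcal{G}_1$ and $\mathcal{G}_2$ are also filtrations of the truncated-product algebra $(\mathcal{L}_{g,n}, \overline{\,\cdot\,}) = A$, and because passing to $\overline{\,\cdot\,}$ only discards lower-order terms, the resulting iterated graded algebra still satisfies the relations of Lemma \ref{critereNoetherien}, so $A$ is Noetherian. Finite generation of $A$ then follows from Lemma \ref{gradedNoetherianFinGen}, since $A$ is graded by $\mathcal{I}$ with $A_0 = \mathbb{C}(q^{1/D})\,1$.

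Theorem \ref{HilbertNagata} now yields that $A^{U_q} = \bigl(\mathrm{gr}_{\mathcal{W}}(\mathcal{L}_{g,n})\bigr)^{U_q}$ is Noetherian and finitely generated. To conclude I would transfer this to $\mathcal{L}_{g,n}^{U_q}$ itself. Since every $\mathcal{W}^{[\mu]}$ is $\mathrm{coad}^r$-stable, the intersections $\mathcal{W}^{[\mu]} \cap \mathcal{L}_{g,n}^{U_q}$ define a filtration of $\mathcal{L}_{g,n}^{U_q}$ indexed by the well-founded monoid $P_+^{2g+n}$, and the semisimplicity of each $C([\mu])$ (equivalently, the Reynolds operator of Theorem \ref{HilbertNagata}) shows that taking $U_q$-invariants commutes with passing to the associated graded, so that its associated graded algebra is exactly $A^{U_q}$. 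Hence $\mathrm{gr}(\mathcal{L}_{g,n}^{U_q})$ is Noetherian and finitely generated, and Lemmas \ref{lemmaFiltrationNoetherian} and \ref{lemmaFinGenGr} give that $\mathcal{L}_{g,n}^{U_q}$ is Noetherian and finitely generated.

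The main obstacle is the Noetherianity of the graded algebra $A = \mathrm{gr}_{\mathcal{W}}(\mathcal{L}_{g,n})$: this does \emph{not} follow formally from Theorem \ref{ThmLgnNoetherian}, since Noetherianity is not inherited by associated graded algebras (only the converse direction holds, Lemma \ref{lemmaFiltrationNoetherian}), and it must be re-established by reproducing the filtration computation on the truncated product. The second delicate point, that invariants commute with the associated graded, rests entirely on the semisimplicity of the homogeneous pieces $C([\mu])$; it is precisely to manufacture semisimple graded components, on which the Hilbert--Nagata machinery can run, that the filtration $\mathcal{W}$ is introduced in the first place.
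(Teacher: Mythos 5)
Your proposal is correct and follows essentially the same route as the paper: the paper likewise observes that $\mathcal{G}_1$ (and then $\mathcal{G}_2$) filters the truncated-product algebra $\mathrm{gr}_{\mathcal{W}}(\mathcal{L}_{g,n})$ with $\mathrm{gr}_{\mathcal{G}_1}\bigl(\mathrm{gr}_{\mathcal{W}}(\mathcal{L}_{g,n})\bigr) = \mathrm{gr}_{\mathcal{G}_1}(\mathcal{L}_{g,n})$, deduces that $\mathrm{gr}_{\mathcal{W}}(\mathcal{L}_{g,n})$ is Noetherian and finitely generated, applies Theorem \ref{HilbertNagata}, and transfers back through the filtration $\mathcal{W}_{\mathrm{inv}}^{[\mu]} = \mathcal{W}^{[\mu]} \cap \mathcal{L}_{g,n}^{U_q}$ using Lemmas \ref{lemmaFiltrationNoetherian} and \ref{lemmaFinGenGr}. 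Your use of Lemma \ref{gradedNoetherianFinGen} for the finite generation of the graded algebra is a harmless variant of the paper's appeal to Lemma \ref{lemmaFinGenGr}.
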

\begin{proof}
Recall the filtration $\mathcal{G}_1$ of $\mathcal{L}_{g,n}$ that we used in \S \ref{sectionNoetherianityLgn}. We see that $\mathcal{G}_1$ is a filtration of $\mathrm{gr}_{\mathcal{W}}(\mathcal{L}_{g,n})$ as well and that
\[ \mathrm{gr}_{\mathcal{G}_1}\bigl(\mathrm{gr}_{\mathcal{W}}(\mathcal{L}_{g,n})\bigr) = \mathrm{gr}_{\mathcal{G}_1}(\mathcal{L}_{g,n}). \]
In the proof of Theorem \ref{ThmLgnNoetherian} we showed that $\mathrm{gr}_{\mathcal{G}_2}\bigl( \mathrm{gr}_{\mathcal{G}_1}(\mathcal{L}_{g,n}) \bigr)$ is Noetherian and finitely generated. It follows from Lemmas \ref{lemmaFiltrationNoetherian} and \ref{lemmaFinGenGr} that $\mathrm{gr}_{\mathcal{G}_1}(\mathcal{L}_{g,n})$ also has these properties. Using again these lemmas, we obtain that $\mathrm{gr}_{\mathcal{W}}(\mathcal{L}_{g,n})$ is Noetherian and finitely generated.

\noindent Since $\mathcal{L}_{g,n}$ is a $U_q$-module-algebra and the canonical projections $\pi_{[\mu]}$ in \eqref{canoProjCmu} are $U_q$-linear for $\mathrm{coad}^r$, we see that $\mathrm{gr}_{\mathcal{W}}(\mathcal{L}_{g,n})$ is a $U_q$-module-algebra for $\mathrm{coad}^r$. It satisfies all the assumptions of Theorem \ref{HilbertNagata} (recall that $U_q\text{-}\mathrm{mod}$ is semisimple). Hence $\mathrm{gr}_{\mathcal{W}}(\mathcal{L}_{g,n})^{U_q}$ is Noetherian and finitely generated. To conclude, let
\[ \mathcal{W}^{[\mu]}_{\mathrm{inv}} = \bigl( \mathcal{W}^{[\mu]} \bigr)^{U_q} = \mathcal{W}^{[\mu]} \cap \mathcal{L}_{g,n}^{U_q} \subset \mathcal{L}_{g,n}^{U_q}. \]
Then $\mathcal{W}_{\mathrm{inv}}$ is a filtration of $\mathcal{L}_{g,n}^{U_q}$ and we have
\[ \mathrm{gr}_{\mathcal{W}}(\mathcal{L}_{g,n})^{U_q} = \mathrm{gr}_{\mathcal{W}_{\mathrm{inv}}}\bigl(\mathcal{L}_{g,n}^{U_q}\bigr). \]
The result then follows from Lemma \ref{lemmaFiltrationNoetherian} for Noetherianity and from Lemma \ref{lemmaFinGenGr} for finiteness.
\end{proof}

\section{The Alekseev morphism}\label{sectionAlekseevMorphism}
\indent Let $H$ be a quasitriangular Hopf algebra with an invertible antipode $S$. Usually the Alekseev morphism \cite{A} is expected to be a morphism of algebras
\[ \mathcal{L}_{g,n}(H) \to \mathcal{H}(H^{\circ})^{\otimes g} \otimes H^{\otimes n} \]
where $\mathcal{H}(H^{\circ})$ is the Heisenberg double (\S \ref{sectionHeisenberg}). However, without further assumptions on $H$ (e.g. finite-dimensionality, as in \cite{FaitgMCG}) we have to introduce a bigger algebra than $\mathcal{H}(H^{\circ})$ in order to make sense of the formulas given in \cite{A}; we call this algebra the two-sided Heisenberg double. We construct it in \S \ref{sectionTwoSidedHeisenberg}, and define the Alekseev morphism in \S \ref{sectionDefAlekseev}. The case of finite-dimensional $H$ is discussed in \S \ref{AlekseevFinDim}. Then in \S \ref{sectionAlekseevUq} we focus on the quantum group $H = U_q^{\mathrm{ad}}(\mathfrak{g})$ and we use the Alekseev morphism to prove that $\mathcal{L}_{g,n}\bigl( U_q^{\mathrm{ad}}(\mathfrak{g}) \bigr)$ is a domain.

\subsection{The two-sided Heisenberg double}\label{sectionTwoSidedHeisenberg} This section does not use the $R$-matrix of $H$. Recall that $H^{\mathrm{cop}}$ is $H$ with the opposite coproduct. The restricted dual $H^{\circ}$ endowed with its usual product $\star$ is a left $(H \otimes H^{\mathrm{cop}})$-module-algebra for the action
\[ (x \otimes y) \vdash \varphi = x\rhd \varphi \lhd S^{-1}(y). \]
As a result we can make the following definition:
\begin{defi}\label{defTwoSidedHeisenberg}
The two-sided Heisenberg double is the smash product $H^{\circ} \# (H \otimes H^{\mathrm{cop}})$. It is an associative algebra which we denote by $\mathcal{HH}(H^{\circ})$.
\end{defi}

By the general definition of a smash product (see e.g. \cite[Def. 4.1.3]{Mon}), $\mathcal{HH}(H^{\circ})$ is the vector space $H^{\circ} \otimes (H \otimes H^{\mathrm{cop}})$ endowed with the multiplication
\begin{equation}\label{produitSurHH}
\begin{array}{rl}
\bigl( \varphi \,\#\, (x \otimes y) \bigr) \bigl( \psi \,\#\, (z \otimes t) \bigr) \!\!&= \sum_{(x \,\otimes\, y)} \varphi \star \bigl( (x \otimes y)_{(1)} \vdash \psi \bigr) \,\#\, (x \otimes y)_{(2)}(z \otimes t)\\[1.5em]
&= \sum_{(x),(y)} \varphi \star \bigl(x_{(1)} \rhd \psi \lhd S^{-1}(y_{(2)})\bigr) \,\#\, (x_{(2)}z \otimes y_{(1)}t).
\end{array}
\end{equation}
where we write $\varphi \,\#\, (x \otimes y)$ for the element $\varphi \otimes (x \otimes y) \in H^{\circ} \otimes (H \otimes H)$. From this formula we see that $H^{\circ} \#\, (1 \otimes 1)$, $\varepsilon \,\#\, (H \otimes 1)$ and $\varepsilon \,\#\, (1 \otimes H)$ are subalgebras of $\mathcal{HH}(H^{\circ})$ which are canonically isomorphic to $H^{\circ}$, $H$ and $H$ respectively. Moreover $H^{\circ} \# (H \otimes 1)$ is a subalgebra of $\mathcal{HH}(H^{\circ})$ which is canonically isomorphic to the Heisenberg double $\mathcal{H}(H^{\circ})$ from \S\ref{sectionHeisenberg}.

\smallskip

\indent We note that $\mathcal{HH}(H^{\circ})$ has a natural representation:

\begin{prop}\label{propRepresentationTwoSidedHeisenberg}
There is a representation $\blacktriangleright$ of $\mathcal{HH}(H^{\circ})$ on $H^{\circ}$ given by
\[ \varphi \,\#\, (x \otimes y) \blacktriangleright \psi = \varphi \star \bigl( x \rhd \psi \lhd S^{-1}(y) \bigr). \]
\end{prop}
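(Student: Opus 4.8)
The plan is to recognize the map $\blacktriangleright$ as the tautological action of a smash product on its underlying module-algebra, and to deduce the claim from the two defining properties of a module-algebra rather than by grinding through a direct Sweedler computation. Recall from just before Definition \ref{defTwoSidedHeisenberg} that $H^{\circ}$ is a left $(H \otimes H^{\mathrm{cop}})$-module-algebra for the action $(x \otimes y) \to \psi = x \rhd \psi \lhd S^{-1}(y)$, and that $\mathcal{HH}(H^{\circ}) = H^{\circ} \# (H \otimes H^{\mathrm{cop}})$. Writing $A = H^{\circ}$ and $U = H \otimes H^{\mathrm{cop}}$, the proposed formula reads $(\varphi \,\#\, u) \blacktriangleright \psi = \varphi \star (u \to \psi)$, which is precisely the canonical representation of the smash product $A \# U$ on $A$. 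So the statement is an instance of the general fact that, for any left $U$-module-algebra $A$, the assignment $(a \,\#\, u) \blacktriangleright b = a\,(u \to b)$ turns $A$ into a left $(A \# U)$-module; I would either invoke this directly or, for self-containedness, verify it in the present notation.

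The verification has two steps. First, the unit $\varepsilon \,\#\, (1 \otimes 1)$ acts as the identity, since $1 \rhd \psi \lhd S^{-1}(1) = \psi$ and $\varepsilon = 1_{H^{\circ}}$ is the unit of $\star$. Second, and this is the only nontrivial point, I must check compatibility with the product \eqref{produitSurHH}, namely
\[ \bigl( (\varphi \,\#\, u)(\psi \,\#\, v) \bigr) \blacktriangleright \chi = (\varphi \,\#\, u) \blacktriangleright \bigl( (\psi \,\#\, v) \blacktriangleright \chi \bigr) \]
for $u = x \otimes y$ and $v = z \otimes t$. Expanding the left-hand side with \eqref{produitSurHH} gives $\sum_{(u)} \varphi \star (u_{(1)} \to \psi) \star (u_{(2)}v \to \chi)$, while the right-hand side equals $\varphi \star \bigl( u \to (\psi \star (v \to \chi)) \bigr)$. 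The two agree because $\to$ distributes over $\star$ (the module-algebra axiom, giving $u \to (\psi \star (v\to\chi)) = \sum_{(u)} (u_{(1)} \to \psi) \star (u_{(2)} \to (v \to \chi))$), together with the associativity of the action ($u_{(2)} \to (v \to \chi) = (u_{(2)} v) \to \chi$) and the associativity of $\star$.

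The only bookkeeping to watch is the comultiplication of $U = H \otimes H^{\mathrm{cop}}$, where $\Delta_U(x \otimes y) = \sum_{(x),(y)} (x_{(1)} \otimes y_{(2)}) \otimes (x_{(2)} \otimes y_{(1)})$ on account of the opposite coproduct on the second factor, together with $\Delta(S^{-1}(y)) = \sum_{(y)} S^{-1}(y_{(2)}) \otimes S^{-1}(y_{(1)})$; these are exactly what make \eqref{produitSurHH} consistent and what one uses to see that the combined action $x \rhd (-) \lhd S^{-1}(y)$ respects $\star$. I do not expect any genuine obstacle here: the whole content is the elementary module-theoretic lemma about smash products, and the proposition follows at once given that $H^{\circ}$ is a $(H \otimes H^{\mathrm{cop}})$-module-algebra, which is already recorded above.
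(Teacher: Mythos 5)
Your proposal is correct and follows exactly the paper's route: the paper's proof also consists of recognizing $\blacktriangleright$ as the canonical representation $(a \,\#\, h) \blacktriangleright x = a(h \cdot x)$ of a smash product $A \,\#\, H$ on its underlying module-algebra $A$, and it declares the verification straightforward. The only difference is that you spell out the straightforward check (unit axiom, and compatibility with \eqref{produitSurHH} via the module-algebra axiom and associativity of the action), which the paper omits.
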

\begin{proof}
This is actually a general fact: for any left $H$-module-algebra $A$ there is a representation $\blacktriangleright$ of $A \,\#\, H$ on $A$ defined by $(a \,\#\, h) \blacktriangleright x = a(h \cdot x)$. The proof of this claim is straightforward.
\end{proof}

\indent The representation of $\mathcal{H}(H^{\circ})$ on $H^{\circ}$ (obtained by restricting $\blacktriangleright$ to this subalgebra) is known to be faithful \cite[Lem. 9.4.2]{Mon}. In general this is not true for the representation of $\mathcal{HH}(H^{\circ})$ on $H^{\circ}$. For instance if $H$ is finite-dimensional we have $H^{\circ} = H^*$ and $\dim\bigl( \mathcal{HH}(H^*) \bigr) = \dim(H)^3$ while $\dim\bigl( \mathrm{End}_k(H^*) \bigr) = \dim(H)^2$, which forces the representation morphism $\rho : \mathcal{HH}(H^*) \to \mathrm{End}_k(H^*)$ to have a non-zero kernel.


\smallskip

\indent Finally, let us mention another construction of $\mathcal{HH}(H^{\circ})$. We know from \S \ref{sectionHeisenberg} that $\mathcal{H}(H^{\circ})$ is a right $H$-module-algebra; it follows that $\mathcal{H}(H^{\circ})$ is a left $H^{\mathrm{cop}}$-module-algebra for the action
\[ h \cdot (\varphi \,\#\, x) = (\varphi\,\#\, x) \cdot S^{-1}(h) \]
where the $\cdot$ on the right-hand side is defined in \eqref{actionOnHeisenberg}. As a result we can consider the smash product $\mathcal{H}(H^{\circ}) \# H^{\mathrm{cop}}$ and this algebra is isomorphic to $\mathcal{HH}(H^{\circ})$:
\[ \flecheIso{\mathcal{H}(H^{\circ}) \# H^{\mathrm{cop}}}{\mathcal{HH}(H^{\circ})}{(\varphi \,\#\, x) \,\#\, y}{\sum_{(y)} \varphi \,\#\, \bigl(xy_{(2)} \otimes S(y_{(1)})\bigr)} \]

\subsection{Definition of the Alekseev morphism}\label{sectionDefAlekseev}
Recall that $H$ is a quasitriangular Hopf algebra with an invertible antipode $S$. We use the following notations in $\mathcal{HH}(H^{\circ})$:
\begin{itemize}
\item we write $\varphi$ instead of $\varphi \,\#\, (1 \otimes 1)$,
\item we write $h$ instead of $\varepsilon \,\#\, (h \otimes 1)$,
\item we write $\widetilde{h}$ instead of $\varepsilon \,\#\, (1 \otimes h)$,
\end{itemize}
where $\varphi \in H^{\circ}$ and $h \in H$. The multiplication in $\mathcal{HH}(H^{\circ})$ is then described by
\begin{equation}\label{commutationsHH}
h \varphi = \sum_{(h)} (h_{(1)} \rhd \varphi) \, h_{(2)}, \qquad \widetilde{h} \varphi = \sum_{(h)} \bigl(\varphi \lhd S^{-1}(h_{(2)})\bigr) \, \widetilde{h_{(1)}}, \qquad \widetilde{h}g = g \widetilde{h}
\end{equation}
for all $\varphi \in H^{\circ}$ and $h,g \in H$. In particular the Heisenberg double $\mathcal{H}(H^{\circ})$ is identified with the subalgebra of elements of the form $\textstyle \sum_i \varphi_i \, h_i$. We note also that by the very definition $\widetilde{gh} = \widetilde{g}\,\widetilde{h}$. Finally, we endow $\mathcal{HH}(H^{\circ})^{\otimes g} \otimes H^{\otimes n}$ with the usual multiplication on a tensor product of algebras.

\medskip

We will use the embeddings $j_k : \mathcal{L}_{1,0}(H) \to \mathcal{L}_{g,n}(H)$ for $1 \leq k \leq g$ and $j_{g+l} : \mathcal{L}_{0,1}(H) \to \mathcal{L}_{g,n}(H)$ for $1 \leq l \leq n$ from \eqref{embeddingsL01L10InLgn} and the morphisms $\Phi_{0,1} : \mathcal{L}_{0,1}(H) \to H$ and $\Phi_{1,0} : \mathcal{L}_{1,0}(H) \to \mathcal{H}(H^{\circ}) \subset \mathcal{HH}(H^{\circ})$ from \eqref{RSDmap} and Proposition \ref{propPhi10}. Let
\[ \fonc{\mathsf{D}_{g,n}}{H}{\mathcal{HH}(H^{\circ})^{\otimes g} \otimes H^{\otimes n}}{h}{\sum_ {(h)} \widetilde{h_{(1)}}\,h_{(2)} \otimes \ldots \otimes \widetilde{h_{(2g-1)}}\,h_{(2g)} \otimes h_ {(2g+1)} \otimes \ldots \otimes h_{(2g+n)}} \]
which is a morphism of algebras. We use the convention $\mathsf{D}_{0,0}(h) = \varepsilon(h)$. Note for further use that
\begin{equation}\label{inductionGamma}
\mathsf{D}_{0,n+1}(h) = \sum_{(h)} h_{(1)} \otimes \mathsf{D}_{0,n}(h_{(2)}), \qquad \mathsf{D}_{g+1,n}(h) = \sum_{(h)} \widetilde{h_{(1)}} \, h_{(2)} \otimes \mathsf{D}_{g,n}(h_{(3)}).
\end{equation}
\begin{teo}\label{thmMorphismeAlekseev}
There is a morphism of algebras, called \emph{Alekseev morphism},
\[ \Phi_{g,n} : \mathcal{L}_{g,n}(H) \to \mathcal{HH}(H^{\circ})^{\otimes g} \otimes H^{\otimes n} \]
defined by
\begin{align*}
&\Phi_{g,n}\bigl(j_{g+l}(\varphi)\bigr) = \sum_{(R)} 1^{\otimes g} \, \otimes 1^{\otimes (l-1)} \otimes \Phi_{0,1}\!\left( \mathrm{coad}^r\bigl(R_{(1)}\bigr)(\varphi)\right) \otimes \mathsf{D}_{0,n-l}\bigl(R_{(2)}\bigr)\\
&\Phi_{g,n}\bigl( j_k(x) \bigr) = \sum_{(R)} 1^{\otimes (k-1)} \otimes \Phi_{1,0} \!\left( \mathrm{coad}^r\bigl( R_{(1)} \bigr)(x) \right) \otimes \mathsf{D}_{g-k,n}\bigl(R_{(2)}\bigr)
\end{align*}
for any $\varphi \in \mathcal{L}_{0,1}(H)$, $1 \leq l \leq n$, $x \in \mathcal{L}_{1,0}(H)$, $1 \leq k \leq g$, and where $\mathrm{coad}^r$ is defined in \eqref{coadL01} and \eqref{coadL10}.
\end{teo}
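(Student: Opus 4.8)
The plan is to use that, by \eqref{monomialsLgn}, every element of $\mathcal{L}_{g,n}(H)$ is a linear combination of products $j_1(x_1)\cdots j_{g+n}(x_{g+n})$ of the generating embeddings, so that the formulas of the statement define $\Phi_{g,n}$ on a spanning set; the content of the theorem is then that this assignment is compatible with the three families of defining relations listed in Proposition \ref{productLgn}. First I would record the two base cases, which are already available: $\Phi_{0,1}$ from \eqref{RSDmap} and $\Phi_{1,0}$ from Proposition \ref{propPhi10} are morphisms of $H$-module-algebras, the target actions being $\mathrm{ad}^r$ on $H$ and the action \eqref{actionOnHeisenberg} on $\mathcal{H}(H^{\circ})$. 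It is also convenient to note from \eqref{inductionGamma} that $\mathsf{D}_{g,n}$ is a morphism of algebras and behaves inductively under adding one handle or one loop.

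The first two relations of Proposition \ref{productLgn} involve a single slot, so they reduce to showing that each composite $\Phi_{g,n}\circ j_k$ is an algebra morphism. For this I would compute $\Phi_{g,n}(j_k(x))\,\Phi_{g,n}(j_k(y))$ directly: the product in the target is taken slot by slot, the tails combine as $\mathsf{D}_{g-k,n}(R_{(2)})\,\mathsf{D}_{g-k,n}(S_{(2)})=\mathsf{D}_{g-k,n}(R_{(2)}S_{(2)})$ (with $S$ a second copy of $R$), and equivariance of $\Phi_{1,0}$ (resp. $\Phi_{0,1}$) turns $\mathrm{coad}^r(R_{(1)})$ into the corresponding inner action on the middle slot. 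Since that action is a module-algebra action, the middle factor becomes $\sum \Phi_{1,0}(\mathrm{coad}^r(R^1_{(1)})(x))\,\Phi_{1,0}(\mathrm{coad}^r(R^2_{(1)})(y))$, and the hexagon identity $(\Delta\otimes\mathrm{id})(R)=R_{13}R_{23}$ collapses the two copies $R^1,R^2$ to match $\Phi_{g,n}(j_k(xy))$ exactly. The $A$--$B$ exchange (second relation) is then automatic, being a product internal to $\mathcal{L}_{1,0}(H)$ which $\Phi_{g,n}\circ j_k$ already respects.

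The third (cross-slot) relation is the crux. Using the braided commutation \eqref{productBraided}, for $i<j$ it reads $j_j(y)\,j_i(x)=\sum_{(R)} j_i(\mathrm{coad}^r(R_{(1)})(x))\,j_j(\mathrm{coad}^r(R_{(2)})(y))$, so I must recover the corresponding identity after applying $\Phi_{g,n}$. The difficulty is that the $\mathsf{D}$-tail of $\Phi_{g,n}(j_i(x))$ occupies slots $i+1,\dots,g+n$ and therefore overlaps the $\Phi$-part of $\Phi_{g,n}(j_j(y))$ in slot $j$. The key lemma I would isolate is a moment-map property: $\mathsf{D}_{g,n}$ implements the coadjoint action on the image, $\sum_{(h)}\mathsf{D}_{g,n}(h_{(1)})\,\Phi_{g,n}(z)=\sum_{(h)}\Phi_{g,n}(\mathrm{coad}^r(h_{(1)})(z))\,\mathsf{D}_{g,n}(h_{(2)})$, which I would prove by induction on the number of slots from the relations \eqref{commutationsHH} (the base computation being that $\widetilde{h_{(1)}}h_{(2)}$ conjugates a matrix coefficient $\varphi$ exactly to $\mathrm{coad}^r$). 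Feeding this into the overlap and reorganizing the four copies of $R$ via $(\Delta\otimes\mathrm{id})(R)=R_{13}R_{23}$, $(\mathrm{id}\otimes\Delta)(R)=R_{13}R_{12}$ and $R\Delta=\Delta^{\mathrm{op}}R$ should produce precisely the right-hand side.

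The main obstacle is exactly this cross-slot verification: establishing the moment-map identity for $\mathsf{D}_{g,n}$ and carrying out the bookkeeping of the several $R$-copies and of the tail overlap in $\mathcal{HH}(H^{\circ})$. A cleaner alternative I would keep in reserve is the matrix formulation of Proposition \ref{presentationLgn}: apply $\Phi_{g,n}$ to the fusion, $B$--$A$ exchange and cross-exchange relations among the holonomy matrices $\overset{V}{A}(i),\overset{V}{B}(i),\overset{V}{M}(i)$ and verify that they hold in $\mathcal{HH}(H^{\circ})^{\otimes g}\otimes H^{\otimes n}$, as is done for $\Phi_{1,0}$ in \cite{A} and \cite[Prop 4.6]{Faitg3}; in that language the moment-map property becomes the statement that $\mathsf{D}_{g,n}(h)$ conjugates the holonomy matrices by the $R$-matrix, which is often the most transparent way to organize the computation.
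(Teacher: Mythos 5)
Your overall route is, in substance, the paper's own proof: the paper also rests everything on a commutation rule between $\mathsf{D}_{g,n}$ and the image of $\Phi_{g,n}$ (Lemma \ref{lemmaCommutationGammaPhi}), proved by a double induction (first on $n$ with $g=0$, then on $g$ with $n$ fixed), and then deduces multiplicativity by induction on the number of braided tensor factors using \eqref{multiplicationInBraidedTensorProduct}; checking the three relation families of Proposition \ref{productLgn} instead, as you propose, is an equivalent organization. Your single-slot argument (multiplicativity of $\Phi_{1,0}$, $\Phi_{0,1}$, $\mathsf{D}$ plus the hexagon $(\Delta\otimes\mathrm{id})(R)=R_{13}R_{23}$) is correct.

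The genuine problem is that your key lemma is false as stated. You claim
\[ \mathsf{D}_{g,n}(h)\,\Phi_{g,n}(z) \;=\; \sum_{(h)} \Phi_{g,n}\bigl(\mathrm{coad}^r(h_{(1)})(z)\bigr)\,\mathsf{D}_{g,n}(h_{(2)}), \]
and this fails already in the base case $(g,n)=(0,1)$, where $\mathsf{D}_{0,1}=\mathrm{id}_H$: since $\Phi_{0,1}$ intertwines $\mathrm{coad}^r$ with $\mathrm{ad}^r(h)(x)=\sum_{(h)}S(h_{(1)})xh_{(2)}$, your right-hand side equals $\sum_{(h)}S(h_{(1)})\,x\,h_{(2)}h_{(3)}$ with $x=\Phi_{0,1}(z)$. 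Taking $H=U_q$ and $h=K_\mu$ grouplike, this reads $K_{-\mu}\,x\,K_{2\mu}$, which differs from the left-hand side $K_\mu x$ whenever $x$ has nonzero weight --- and $\mathrm{im}\,\Phi_{0,1}=U_q^{\mathrm{lf}}$ contains plenty of such $x$. The correct identity, which is exactly the paper's Lemma \ref{lemmaCommutationGammaPhi}, has the Sweedler legs swapped \emph{and} an inverse antipode:
\[ \mathsf{D}_{g,n}(h)\,\Phi_{g,n}(z) \;=\; \sum_{(h)} \Phi_{g,n}\bigl(\mathrm{coad}^r\bigl(S^{-1}(h_{(2)})\bigr)(z)\bigr)\,\mathsf{D}_{g,n}(h_{(1)}), \]
equivalently, in antipode-free form, $\Phi_{g,n}(z)\,\mathsf{D}_{g,n}(h)=\sum_{(h)}\mathsf{D}_{g,n}(h_{(1)})\,\Phi_{g,n}\bigl(\mathrm{coad}^r(h_{(2)})(z)\bigr)$; both pass the grouplike test. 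A related imprecision occurs in your parenthetical base computation: the bare smash-product relations \eqref{commutationsHH} give $\sum_{(h)}\widetilde{h_{(1)}}h_{(2)}\,\varphi=\sum_{(h)}\bigl(h_{(3)}\rhd\varphi\lhd S^{-1}(h_{(2)})\bigr)\,\widetilde{h_{(1)}}h_{(4)}$, whose legs are in the wrong order to be a $\mathrm{coad}^r$ of anything; the coadjoint action only emerges after combining with the $R$-matrix factors inside the definition of $\Phi_{1,0}$ via $R\Delta=\Delta^{\mathrm{op}}R$. So the gap is a wrong formula rather than a wrong idea: with the corrected lemma your induction and $R$-matrix bookkeeping go through exactly as in the paper, but with the formula as written, the crux step fails at its very first instance.
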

\noindent By \eqref{monomialsLgn} the value of $\Phi_{g,n}$ on general elements is
\begin{align*}
&\Phi_{g,n}\bigl( x_1 \, \widetilde{\otimes} \, \ldots \, \widetilde{\otimes} \, x_g \, \widetilde{\otimes} \, \varphi_1 \, \widetilde{\otimes} \, \ldots \, \widetilde{\otimes} \, \varphi_n \bigr)\\
=\:& \Phi_{g,n}\bigl( j_1(x_1) \bigr) \ldots \Phi_{g,n}\bigl( j_g(x_g) \bigr) \Phi_{g,n}\bigl( j_{g+1}(\varphi_1) \bigr) \ldots \Phi_{g,n}\bigl( j_{g+n}(\varphi_n) \bigr).
\end{align*}
Note that for $g>0$ the morphism $\Phi_{g,n}$ actually takes values in $\mathcal{H}(H^{\circ}) \otimes \mathcal{HH}(H^{\circ})^{\otimes (g-1)} \otimes H^{\otimes n}$ where $\mathcal{H}(H^{\circ})$ is identified with the subalgebra $H^{\circ} \,\#\, (H \otimes 1)$ in $\mathcal{HH}(H^{\circ})$. This remark will be important in the proof of Theorem \ref{TheoremePhignInjectif} in \S\ref{sectionAlekseevUq} below.

\smallskip

\indent The following lemma is the key point for the proof of Theorem \ref{thmMorphismeAlekseev}:
\begin{lem}\label{lemmaCommutationGammaPhi}
For all $h \in H$ and $x \in \mathcal{L}_{g,n}(H)$ we have
\[ \mathsf{D}_{g,n}(h) \, \Phi_{g,n}(x) = \sum_ {(h)}\Phi_{g,n}\!\left( \mathrm{coad}^r\bigl( S^{-1}(h_ {(2)}) \bigr)(x) \right) \mathsf{D}_{g,n}(h_ {(1)}) .\]
\end{lem}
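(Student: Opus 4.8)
The plan is to prove the commutation relation in Lemma~\ref{lemmaCommutationGammaPhi} by reducing it to the generators of $\mathcal{L}_{g,n}(H)$ and inducting on $g$ and $n$ using the recursive structure of $\mathsf{D}_{g,n}$ recorded in \eqref{inductionGamma}. Since both sides of the asserted identity are linear in $x$ and the embeddings \eqref{embeddingsL01L10InLgn} together with \eqref{monomialsLgn} tell us that $\mathcal{L}_{g,n}(H)$ is generated as an algebra by the images of the $j_k$ and $j_{g+l}$, it suffices to check the relation when $x = j_k(y)$ with $y\in\mathcal{L}_{1,0}(H)$ or $x = j_{g+l}(\varphi)$ with $\varphi\in\mathcal{L}_{0,1}(H)$. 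Indeed, if the relation holds for $x$ and $x'$, then writing $\mathsf{D}_{g,n}(h)\Phi_{g,n}(xx')$ and pushing $\mathsf{D}_{g,n}(h)$ through $\Phi_{g,n}(x)$ and then $\Phi_{g,n}(x')$ produces the factor $\mathrm{coad}^r(S^{-1}(h_{(2)}))$ applied to $xx'$, using the Sweedler coproduct of $h$ and the fact that $\mathrm{coad}^r$ is an algebra action; this multiplicativity step is routine and I would record it first.

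Next I would set up the induction. The base cases are $\Phi_{0,1}$ and $\Phi_{1,0}$: one must verify that $\mathsf{D}_{0,1}(h) = h$ satisfies $h\,\Phi_{0,1}(\varphi) = \sum_{(h)}\Phi_{0,1}(\mathrm{coad}^r(S^{-1}(h_{(2)}))(\varphi))\,h_{(1)}$ in $H$, and the analogous statement for $\mathsf{D}_{1,0}(h)=\widetilde{h_{(1)}}h_{(2)}$ and $\Phi_{1,0}$ in $\mathcal{HH}(H^{\circ})$. The first follows because $\Phi_{0,1}$ is a morphism of right $H$-module-algebras intertwining $\mathrm{coad}^r$ with $\mathrm{ad}^r$ (stated below \eqref{RSDmap}), so the commutation of $h$ with $\Phi_{0,1}(\varphi)$ is exactly the $\mathrm{ad}^r$ relation $\sum_{(h)}h_{(1)}x = \sum_{(h)}\mathrm{ad}^r(S^{-1}(h_{(2)}))(x)\,h_{(1)}$ inside $H$, which is a direct Hopf-algebra computation. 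The second base case is the crucial computation: I would use the commutation rules \eqref{commutationsHH} defining $\mathcal{HH}(H^{\circ})$ together with the $H$-module-algebra property of $\Phi_{1,0}$ (Proposition~\ref{propPhi10}) and the explicit form of the action \eqref{actionOnHeisenberg}, which is designed precisely so that the generators $\widetilde{h_{(1)}}h_{(2)}$ implement $\mathrm{coad}^r$ on the image of $\Phi_{1,0}$.

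For the inductive step I would peel off one handle or one loop using \eqref{inductionGamma}. Suppose the lemma holds for $\mathcal{L}_{g,n}(H)$; to prove it for $\mathcal{L}_{g+1,n}(H)$, I write a generic generator via the braided-tensor factorization $\mathcal{L}_{g+1,n}(H) = \mathcal{L}_{1,0}(H)\,\widetilde{\otimes}\,\mathcal{L}_{g,n}(H)$ (Proposition~\ref{propBraidedTensProduct}), so that $\Phi_{g+1,n}$ factors as $\Phi_{1,0}$ on the first tensor slot times $\mathsf{D}$-corrections feeding into the remaining factors, exactly matching $\mathsf{D}_{g+1,n}(h) = \sum_{(h)}\widetilde{h_{(1)}}h_{(2)}\otimes\mathsf{D}_{g,n}(h_{(3)})$. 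The point is that the first two Sweedler legs $\widetilde{h_{(1)}}h_{(2)}$ commute past the first $\mathcal{HH}(H^{\circ})$-factor by the base case $\Phi_{1,0}$, producing a $\mathrm{coad}^r(S^{-1}(h_{(?)}))$ on that slot, while the remaining leg $\mathsf{D}_{g,n}(h_{(3)})$ commutes past the rest by the induction hypothesis; one then reassembles the Sweedler indices so that the two separate coadjoint corrections combine into a single $\mathrm{coad}^r(S^{-1}(h_{(2)}))$ acting on the whole element, which is how $\mathrm{coad}^r$ distributes over $\widetilde{\otimes}$ according to \eqref{coadLgn}. The step adding a loop, $\mathcal{L}_{g,n+1}(H)$, is analogous but easier, using $\mathsf{D}_{0,n+1}(h) = \sum_{(h)}h_{(1)}\otimes\mathsf{D}_{0,n}(h_{(2)})$ and the $\Phi_{0,1}$ base case.

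The main obstacle I anticipate is the bookkeeping of Sweedler indices when recombining the two coadjoint corrections in the handle step into a single $\mathrm{coad}^r(S^{-1}(h_{(2)}))$; the presence of both a left ($h_{(1)}$) and a right ($h_{(2)}$) factor in $\widetilde{h_{(1)}}h_{(2)}$, each of which must be matched against the correct leg of the iterated coproduct and against the antipode $S^{-1}$, makes this delicate. The coassociativity identities and the property $S^{-1}(gh)=S^{-1}(h)S^{-1}(g)$ will have to be applied carefully, and I expect that verifying the indices line up so that the $\mathrm{coad}^r$ factors merge rather than interfere is where almost all the genuine work lies; the rest is formal propagation of the relation through the generators.
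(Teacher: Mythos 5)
Your overall strategy --- double induction on $g$ and $n$ via \eqref{inductionGamma}, with base cases $\Phi_{0,1}$ and $\Phi_{1,0}$ and a peeling-off step through the factorization $\mathcal{L}_{g+1,n}(H) = \mathcal{L}_{1,0}(H)\,\widetilde{\otimes}\,\mathcal{L}_{g,n}(H)$ --- is exactly the paper's, and your treatment of the base cases (deducing the $(0,1)$ case from the fact that $\Phi_{0,1}$ intertwines $\mathrm{coad}^r$ with $\mathrm{ad}^r$, and the $(1,0)$ case from Proposition \ref{propPhi10} together with the commutation rules \eqref{commutationsHH}) is a nice, more conceptual variant of the paper's direct $R$-matrix computations, and it does go through. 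However, there is a genuine logical gap in your opening reduction-to-generators step: you justify it by ``pushing $\mathsf{D}_{g,n}(h)$ through $\Phi_{g,n}(x)$ and then $\Phi_{g,n}(x')$'', which presupposes $\Phi_{g,n}(xx')=\Phi_{g,n}(x)\,\Phi_{g,n}(x')$ for arbitrary $x,x'$. That is precisely Theorem \ref{thmMorphismeAlekseev}, and in the paper the proof of that theorem \emph{uses} the present lemma (it is invoked in the fourth equality of that proof). As stated, your argument is therefore circular. The gap is repairable: at this stage $\Phi_{g,n}$ is only a linear map, defined on the standard ordered monomials \eqref{monomialsLgn} as the ordered product of the generator images, so multiplicativity is available \emph{by definition} only for products taken in that order; since every element of $\mathcal{L}_{g,n}(H)$ is a linear combination of such ordered monomials, your reduction can be rephrased to use only this definitional multiplicativity --- or, more simply, dropped altogether, since your induction (like the paper's) already works with arbitrary elements $y\,\widetilde{\otimes}\,u$ and the definitional expansion of $\Phi$ on them.

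A second, smaller concern: in the inductive step, the ``reassembly of Sweedler indices'' is not just a matter of coassociativity and $S^{-1}(gh)=S^{-1}(h)S^{-1}(g)$. The definitional formula for $\Phi_{g+1,n}$ on $y\,\widetilde{\otimes}\,u$ carries $R$-matrix legs, namely $\sum_{(R)}\Phi_{1,0}\bigl(\mathrm{coad}^r(R_{(1)})(y)\bigr)\otimes \mathsf{D}_{g,n}(R_{(2)})\,\Phi_{g,n}(u)$, and merging the coadjoint correction produced on the first slot with the one produced by the induction hypothesis on the remaining slots forces you to move Sweedler legs of $h$ past $R_{(1)}\otimes R_{(2)}$; this requires the quasitriangularity identities $R\Delta = \Delta^{\mathrm{op}}R$, $(S\otimes\mathrm{id})(R)=R^{-1}$ and $\Delta R^{-1} = R^{-1}\Delta^{\mathrm{op}}$, which is exactly what the paper's computation does at the corresponding point. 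Your plan correctly flags this reassembly as where the work lies, but the tools you list would not suffice to carry it out.
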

\begin{proof}
The proof is by induction. First we prove the result for $\Phi_{0,n}$ by induction on $n$. Then we fix $n$ and we prove the result for $\Phi_{g,n}$ by induction on $g$. So let us consider the case $g=0$. Thanks to the relation $R\Delta = \Delta^{\mathrm{op}}R$ we get for all $h \in H$ and $\varphi \in \mathcal{L}_{0,1}(H)$
\begin{equation}\label{commutationHPhi01}
\begin{array}{rl}
h\,\Phi_{0,1}(\varphi) \!\!\!&= \sum_{(R^1),(R^2)} \varphi\bigl( R^1_{(1)}R^2_{(2)} \bigr) \, hR^1_{(2)} R^2_{(1)}\\[1.4em]
& = \sum_{(R^1),(R^2),(h)} \varphi\!\left( S^{-1}(h_{(3)})h_{(2)}R^1_{(1)}R^2_{(2)} \right) \, h_{(1)}R^1_{(2)} R^2_{(1)}\\[1.4em]
& = \sum_{(R^1),(R^2),(h)} \varphi\!\left( S^{-1}(h_{(3)})R^1_{(1)}R^2_{(2)} h_{(2)}\right) \, R^1_{(2)} R^2_{(1)} h_{(1)}\\[1.4em]
& = \sum_{(h)}\Phi_{0,1}\!\left( \mathrm{coad}^r\bigl( S^{-1}(h_{(2)})\bigr)(\varphi) \right) h_{(1)}
\end{array}
\end{equation}
which is the desired formula for $(g,n) = (0,1)$. Now assume that the formula is true for some $n \geq 1$ (still with $g=0$) and let $\varphi \,\widetilde{\otimes}\, x \in \mathcal{L}_{0,n+1}(H) = \mathcal{L}_{0,1}(H) \,\widetilde{\otimes}\, \mathcal{L}_{0,n}(H)$. We have in $H^{\otimes (n+1)}$
\begin{align*}
&\mathsf{D}_{0,n+1}(h) \Phi_{0,n+1}(\varphi \,\widetilde{\otimes}\, x) = \sum_{(R),(h)} h_{(1)}\Phi_{0,1}\!\left( \mathrm{coad}^r\bigl(R_{(1)}\bigr)(\varphi)\right) \otimes \mathsf{D}_{0,n}\bigl(h_{(2)}R_{(2)}\bigr)\Phi_{0,n}(x)\\
=\:& \sum_{(R),(h)} \Phi_{0,1}\!\left( \mathrm{coad}^r\bigl(R_{(1)}S^{-1}(h_ {(2)})\bigr)(\varphi)\right)h_ {(1)} \otimes \mathsf{D}_{0,n}\bigl(h_{(3)}R_{(2)}\bigr)\Phi_{0,n}(x)\\
=\:& \sum_{(R),(h)} \Phi_{0,1}\!\left( \mathrm{coad}^r\bigl(S^{-1}(h_ {(3)})R_{(1)} \bigr)(\varphi)\right)h_ {(1)} \otimes \mathsf{D}_{0,n}\bigl(R_{(2)}h_{(2)} \bigr)\Phi_{0,n}(x)\\
=\:& \sum_{(R),(h)} \Phi_{0,1}\!\left( \mathrm{coad}^r\bigl(S^{-1}(h_ {(4)})R_{(1)} \bigr)(\varphi)\right)h_ {(1)} \otimes \mathsf{D}_{0,n}\bigl(R_{(2)}\bigr)\Phi_{0,n}\bigl(\mathrm{coad}^r\bigl( S^{-1}(h_ {(3)}) \bigr)(x) \bigr)\mathsf{D}_{0,n}\bigl(h_{(2)} \bigr)\\
=\:& \sum_{(h)} \Phi_{0,n+1}\bigl( \mathrm{coad}^r\bigl( S^{-1}(h_{(2)}) \bigr)(\varphi \,\widetilde{\otimes}\, x) \bigr) \mathsf{D}_{0,n+1}(h_{(1)}).
\end{align*}
For the first equality we used \eqref{inductionGamma}, for the second we used \eqref{commutationHPhi01}, for the third we used that $(S \otimes \mathrm{id})(R) = R^{-1}$ together with $R^{-1}\Delta^{\mathrm{op}} = \Delta R^{-1}$, for the fourth we used the induction hypothesis and for the fifth we used \eqref{inductionGamma}, the definition of $\Phi_{0,n+1}$ and the definition of $\mathrm{coad}^r$ in \eqref{coadLgn}. Now we treat the case $g > 0$. Observe first that for all $h \in H$ and $\beta \otimes \alpha \in \mathcal{L}_{1,0}(H)$, we have in $\mathcal{HH}(H^{\circ})$:
\begin{align*}
&\mathsf{D}_{1,0}(h) \Phi_{1,0}(\beta \otimes \alpha) = \sum_{(R^1),(R^2),(R^3),(h)} \widetilde{h_{(1)}}h_{(2)} \left( R^1_{(2)}R^2_{(2)} \rhd \beta \lhd R^3_{(1)}R^1_{(1)} \right)R^3_{(2)}R^2_{(1)}\Phi_{0,1}(\alpha)\\
=\:&\sum_{(R^1),(R^2),(R^3),(h)} \left( h_ {(3)}R^1_{(2)}R^2_{(2)} \rhd \beta \lhd R^3_{(1)}R^1_{(1)}S^{-1}(h_{(2)}) \right) h_{(4)}R^3_{(2)}R^2_{(1)} \Phi_{0,1}(\alpha) \widetilde{h_{(1)}}\\
=\:&\sum_{(R^1),(R^2),(R^3),(h)} \left( R^1_{(2)}R^2_{(2)}h_{(3)} \rhd \beta \lhd S^{-1}(h_{(4)})R^3_{(1)}R^1_{(1)} \right) R^3_{(2)}R^2_{(1)} h_{(2)} \Phi_{0,1}(\alpha)\widetilde{h_{(1)}}\\
=\:&\sum_{(h)} \Phi_{1,0}\bigl( \mathrm{coad}^r\bigl( S^{-1}(h_ {(3)}) \bigr)(\beta) \otimes \varepsilon \bigr) \Phi_{0,1}\bigl( \mathrm{coad}^r\bigl( S^{-1}(h_ {(2)})\bigr)(\alpha) \bigr) \mathsf{D}_{1,0}(h_{(1)})\\
=\:&\sum_{(h)} \Phi_{1,0}\left( \mathrm{coad}^r\bigl(S^{-1}(h_{(2)})\bigr)(\beta \otimes \alpha)\right) \mathsf{D}_{1,0}(h_{(1)})
\end{align*}
which is the formula for the case $(g,n)=(1,0)$. The first equality is the definition of $\Phi_{1,0}$ (Proposition \ref{propPhi10}), the second equality uses \eqref{commutationsHH}, the third equality uses $R\Delta = \Delta^{\mathrm{op}}R$ three times, the fourth equality uses \eqref{commutationHPhi01}, the definition of $\Phi_{1,0}$ and the definition of $\mathsf{D}_{1,0}$ and the fifth equality uses the definition of $\Phi_{1,0}$ and \eqref{coadL10}. For the general case, fix $n$ and make an induction on $g$; this is completely similar to the computation for $\Phi_{0,n+1}$ above, using \eqref{inductionGamma} and the result for $\Phi_{1,0}$ just proved.
\end{proof}

\begin{proof}[Proof of Theorem \ref{thmMorphismeAlekseev}]
We will obtain the result by the same type of induction as in the proof of Lemma \ref{lemmaCommutationGammaPhi}, \textit{i.e.} first one proves the result for $\Phi_{0,n}$ by induction on $n$ and then one fixes $n$ and proves the result for $\Phi_{g,n}$ by induction on $g$. Let us do for instance the second part, the first part being completely similar and left to the reader. So assume that $\Phi_{g,n}$ is a morphism of algebras and let $x \,\widetilde{\otimes}\, u, y \,\widetilde{\otimes}\, v \in \mathcal{L}_{g+1,n}(H) = \mathcal{L}_{1,0}(H) \,\widetilde{\otimes}\, \mathcal{L}_{g,n}(H)$. We have
\begin{align*}
&\Phi_{g+1,n}\bigl( (x \,\widetilde{\otimes}\, u)(y \,\widetilde{\otimes}\, v) \bigr) = \sum_{(R)} \Phi_{g+1,n}\bigl( x\,\mathrm{coad}^r(R_ {(1)})(y) \,\widetilde{\otimes}\, \mathrm{coad}^r(R_ {(2)})(u)\,v \bigr)\\
=\:& \sum_{(R^1), (R^2)} \Phi_{1,0}\bigl( \mathrm{coad}^r(R^2_{(1)})\bigl( x\,\mathrm{coad}^r(R^1_{(1)})(y)\bigr) \bigr) \otimes \mathsf{D}_{g,n}(R^2_{(2)})\,\Phi_{g,n}\bigl( \mathrm{coad}^r(R^1_{(2)})(u)\,v \bigr)\\
=\:& \sum_{(R^1), (R^2), (R^3)} \Phi_{1,0}\bigl( \mathrm{coad}^r(R^2_{(1)})(x) \bigr) \,\Phi_{1,0}\bigl( \mathrm{coad}^r(R^1_{(1)}R^3_{(1)})(y) \bigr)\\[-1.2em]
&\hspace{17.7em} \otimes \mathsf{D}_{g,n}(R^2_{(2)}R^3_{(2)})\,\Phi_{g,n}\bigl( \mathrm{coad}^r(R^1_{(2)})(u)\bigr)\,\Phi_{g,n}(v)\\[.5em]
=\:& \sum_{\substack{(R^1), (R^2),\\(R^3), (R^4)}} \Phi_{1,0}\bigl( \mathrm{coad}^r(R^2_{(1)})(x) \bigr)\, \Phi_{1,0}\bigl( \mathrm{coad}^r(R^1_{(1)}R^3_{(1)}R^4_{(1)})(y) \bigr)\\[-1.9em]
&\hspace{11em} \otimes \mathsf{D}_{g,n}(R^2_{(2)}) \,\Phi_{g,n}\bigl( \mathrm{coad}^r\bigl(R^1_{(2)}S^{-1}(R^3_{(2)})\bigr)(u) \bigr) \, \mathsf{D}_{g,n}(R^4_{(2)})\,\Phi_{g,n}(v)\\[.5em]
=\:& \sum_{(R^2), (R^4)} \Phi_{1,0}\bigl( \mathrm{coad}^r(R^2_{(1)})(x) \bigr)\, \Phi_{1,0}\bigl( \mathrm{coad}^r(R^4_{(1)})(y) \bigr) \otimes \mathsf{D}_{g,n}(R^2_{(2)}) \,\Phi_{g,n}(u) \, \mathsf{D}_{g,n}(R^4_{(2)})\,\Phi_{g,n}(v)\\
=\:& \Phi_{g+1,n}(x \,\widetilde{\otimes}\, u)\,\Phi_{g+1,n}(y \,\widetilde{\otimes}\, v).
\end{align*}
The first equality uses \eqref{multiplicationInBraidedTensorProduct}, the second uses the definition of $\Phi_{g+1,n}$, the third uses that $\mathcal{L}_{1,0}(H)$ is a module-algebra for $\mathrm{coad}^r$ together with the formula $(\Delta \otimes \mathrm{id})(R) = R_{13}R_ {23}$ and then uses that $\Phi_{1,0}$ and $\Phi_{g,n}$ are morphisms of algebras (by Proposition \ref{propPhi10} and the induction hypothesis), the fourth uses Lemma \ref{lemmaCommutationGammaPhi} and the formula $(\mathrm{id} \otimes \Delta)(R)= R_{13}R_{12}$, the fifth uses $(\mathrm{id} \otimes S^{-1})(R) = R^{-1}$ and the sixth uses the definition of $\Phi_{g+1,n}$.
\end{proof}

\indent For completeness let us provide an alternative definition of $\Phi_{g,n}$ based on the matrices $\overset{V}{B}(i)$, $\overset{V}{A}(i)$, $\overset{V}{M}(j)$ from \eqref{matricesABMgn}, which was first introduced in \cite{A}. Let $R^{(+)} = R$ and $\textstyle R^{(-)} = (R')^{-1} = \sum_{(R)} R_{(2)} \otimes S(R_{(1)})$ and write as usual $\textstyle R^{(\pm)} = \sum_{(R^{(\pm)})} R^{(\pm)}_{(1)} \otimes R^{(\pm)}_{(2)}$. Note that $R^{(-)}$ is also an $R$-matrix. For a finite-dimensional $H$-module $V$ we define
\begin{align*}
&\overset{V}{L}{^{(\pm)}} = \sum_{(R^{(\pm)})} R^{(\pm)}_{(2)} \otimes \rho_V\!\left(R^{(\pm)}_{(1)}\right) \in H \otimes \mathrm{End}_k(V)\\
&\overset{V}{\widetilde{L}}{^{(\pm)}} = \sum_{(R^{(\pm)})} \widetilde{R^{(\pm)}_{(2)}} \otimes \rho_V\!\left(R^{(\pm)}_{(1)}\right) \in \mathcal{HH}(H^{\circ}) \otimes \mathrm{End}_k(V)\\
&\overset{V}{T} = \sum_{i,j} {_V\phi^{e^i}_{e_j}} \otimes E_{ij} \in H^{\circ} \otimes \mathrm{End}(V)
\end{align*}
where $\rho_V : H \to \mathrm{End}_k(V)$ is the representation morphism, ${_V\phi^{e^i}_{e_j}} : h \mapsto e^i(h \cdot e_j)$ are the matrix coefficients of $V$ in some basis $(e_i)$ and $E_{ij}(e_k) = \delta_{jk}e_i$. These elements can be seen as matrices of size $\dim(V)$ with coefficients in $\mathcal{HH}(H^{\circ})$. They satisfy various exchange relations; for instance it is an exercise to show that \eqref{commutationsHH} implies
\[ \overset{V}{L}{^{(\pm)}_1} \overset{W}{T}_2 = \overset{W}{T}_2 \, \overset{V}{L}{^{(\pm)}_1} \, R^{(\pm)}_{VW}, \quad R^{(\pm)}_{VW} \, \overset{V}{\widetilde{L}}{^{(\pm)}_1} \, \overset{W}{T}_2 = \overset{W}{T}_2 \, \overset{V}{\widetilde{L}}{^{(\pm)}_1}, \quad \overset{V}{L}{^{(\pm)}_1} \, \overset{W}{\widetilde{L}}{^{(\pm)}_2} = \overset{W}{\widetilde{L}}{^{(\pm)}_2}\, \overset{V}{L}{^{(\pm)}_1} \]
where we use the notations introduced before Proposition \ref{presentationL10}. Let $\overset{V}{M}$ (resp. $\overset{V}{A}$, $\overset{V}{B}$) be the matrix with coefficients in $\mathcal{L}_{0,1}(H)$ (resp. in $\mathcal{L}_{1,0}(H)$) introduced before Proposition \ref{presentationL10}. We have
\begin{equation}\label{matrixFormPhi01Phi10}
\Phi_{0,1}\bigl( \overset{V}{M} \bigr) = \overset{V}{L}{^{(+)}} \overset{V}{L}{^{(-)-1}}, \quad \Phi_{1,0}\bigl( \overset{V}{A} \bigr) = \overset{V}{L}{^{(+)}} \overset{V}{L}{^{(-)-1}}, \quad \Phi_{1,0}\bigl( \overset{V}{B} \bigr) = \overset{V}{L}{^{(+)}} \, \overset{V}{T} \, \overset{V}{L}{^{(-)-1}}
\end{equation}
where $\Phi_{0,1}\bigl( \overset{V}{M} \bigr)$ is viewed as a matrix with coefficients in $H$ while $\Phi_{1,0}\bigl( \overset{V}{A} \bigr)$ and $\Phi_{1,0}\bigl( \overset{V}{B} \bigr)$ are viewed as matrices with coefficients in $\mathcal{H}(H^{\circ})$. Finally for $1 \leq k \leq g$ let $J_k :\mathcal{HH}(H^{\circ}) \to \mathcal{HH}(H^{\circ})^{\otimes g} \otimes H^{\otimes n}$ and for $g+1 \leq l \leq g+n$ let $J_{g+l} : H \to \mathcal{HH}(H^{\circ})^{\otimes g} \otimes H^{\otimes n}$ be the obvious embeddings. We define
\begin{align*}
&\overset{V}{L}{^{(\pm)}}(i) = (J_i \otimes \mathrm{id})\bigl( \overset{V}{L}{^{(\pm)}}  \bigr) \quad \text{for } 1 \leq i \leq g+n,\\
&\overset{V}{\widetilde{L}}{^{(\pm)}}(k) = (J_k \otimes \mathrm{id})\bigl( \overset{V}{\widetilde{L}}{^{(\pm)}}  \bigr), \quad \overset{V}{T}(k) = (J_k \otimes \mathrm{id})\bigl( \overset{V}{T}  \bigr) \quad \text{for } 1 \leq k \leq g.
\end{align*}
A straightforward computation based on the axiom $(\mathrm{id} \otimes \Delta)(R) = R_{13}R_{12}$ reveals that
\begin{equation}\label{matrixFormD}
\sum_{(R)} \left(1^{\otimes g} \, \otimes 1^{\otimes l} \otimes \mathsf{D}_{0,n-l}\bigl(R_{(2)}\bigr) \right) \otimes \rho_V\bigl(R_ {(1)}\bigr) = \overset{V}{L}{^{(+)}}(g+n) \ldots \overset{V}{L}{^{(+)}}(g+l+1)
\end{equation}
and similarly
\begin{equation}\label{matrixFormDBis}
\begin{array}{l}
\sum_{(R)} \left(1^{\otimes k} \otimes \mathsf{D}_{g-k,n}\bigl(R_{(2)}\bigr)\right) \otimes \rho_V\bigl(R_ {(1)}\bigr)\\
= \overset{V}{L}{^{(+)}}(g+n) \ldots \overset{V}{L}{^{(+)}}(g+1) \, \overset{V}{L}{^{(+)}}(g) \overset{V}{\widetilde{L}}{^{(+)}}(g) \ldots \overset{V}{L}{^{(+)}}(k+1) \overset{V}{\widetilde{L}}{^{(+)}}(k+1).
\end{array}
\end{equation}
Let us denote the matrices in \eqref{matrixFormD} and \eqref{matrixFormDBis} by $\Lambda_{0,n-l}$ and $\Lambda_{g-k,n}$ respectively, with the convention $\Lambda_{0,0} = 1$. A computation which combines \eqref{matrixFormPhi01Phi10} with  \eqref{matrixFormD} and \eqref{matrixFormDBis} yields
\begin{equation}\label{matrixFormAlekseev}
\begin{array}{l}
\Phi_{g,n}\bigl( \overset{V}{M}(g+l) \bigr) = \Lambda_{0, n-l} \, \overset{V}{L}{^{(+)}}(g+l) \, \overset{V}{L}{^{(-)}}(g+l)^{-1} \Lambda_{0, n-l}^{-1},\\
\Phi_{g,n}\bigl( \overset{V}{A}(k) \bigr) = \Lambda_{g-k,n} \, \overset{V}{L}{^{(+)}}(k) \overset{V}{L}{^{(-)}}(k)^{-1} \, \Lambda_{g-k, n}^{-1},\\
\Phi_{g,n}\bigl( \overset{V}{B}(k) \bigr) = \Lambda_{g-k,n} \, \overset{V}{L}{^{(+)}}(k) \, \overset{V}{T}(k)  \,\overset{V}{L}{^{(-)}}(k)^{-1} \, \Lambda_{g-k, n}^{-1}
\end{array}
\end{equation}
for $1 \leq l \leq n$ and $1 \leq k \leq g$. This is the matrix definition of $\Phi_{g,n}$.

\begin{remark}
Another possible definition of $\Phi_{g,n}$ uses $\overset{V}{L}{^{(-)}}$, $\overset{V}{\widetilde{L}}{^{(-)}}$ instead of $\overset{V}{L}{^{(+)}}$, $\overset{V}{\widetilde{L}}{^{(+)}}$ to define different conjugation matrices $\Lambda$, where the conjugation starts from the first tensorand (here it starts from the last tensorand). See e.g. \cite[\S3.3]{FaitgMCG}. For $g=0$ our definition here agrees with  the one in \cite[\S6.2]{BR1}, as can be seen from the first equality in \eqref{matrixFormAlekseev}.
\end{remark}

\subsection{Finite-dimensional case}\label{AlekseevFinDim}
\indent Here we quickly discuss the case where $H$ is finite-dimensional, which is treated in detail in \cite{FaitgMCG}. In this case one can use $\mathcal{H}(H^{\circ})$ instead of $\mathcal{HH}(H^{\circ})$.

\smallskip

Recall from \S\ref{sectionTwoSidedHeisenberg} the representation of $\mathcal{H}(H^*)$ on $H^*$, which is faithful. Since $\dim\bigl( \mathcal{H}(H^*) \bigr) = \dim\bigl( \mathrm{End}_k(H^*) \bigr)$ the representation morphism $\rho : \mathcal{H}(H^*) \to \mathrm{End}_k(H^*)$ is an isomorphism. For $h \in H$ let $Q_h \in \mathcal{H}(H^*)$ be defined by $\rho(Q_h)(\psi) = \psi \lhd S^{-1}(h)$ for all $\psi \in H^*$ (be aware that $Q_h$ was denoted by $\widetilde{h}$ in \cite{FaitgMCG}). We have $Q_{hg} = Q_h\,Q_g$ so that $\{ Q_h \,|\, h \in H\}$ is a subalgebra of $\mathcal{H}(H^*)$ isomorphic to $H$. These elements obey the following commutation rules in $\mathcal{H}(H^*)$:
\begin{equation}\label{commutationQHeisenberg}
Q_h \, \varphi = \sum_{(h)} \bigl(\varphi \lhd S^{-1}(h_{(2)})\bigr) \, Q_{h_{(1)}}, \qquad Q_h \, g = g \, Q_h
\end{equation}
for all $\varphi \in H^*$ and $h,g \in H$.
Let
\[ \fonc{\mathsf{D}^{\mathrm{fin}}_{g,n}}{H}{\mathcal{H}(H^*)^{\otimes g} \otimes H^{\otimes n}}{h}{\sum_ {(h)} Q_ {h_{(1)}}\,h_{(2)} \otimes \ldots \otimes Q_{h_{(2g-1)}}\,h_{(2g)} \otimes h_ {(2g+1)} \otimes \ldots \otimes h_{(2g+n)}}. \]
which is a morphism of algebras and define
\[ \Phi_{g,n}^{\mathrm{fin}} : \mathcal{L}_{g,n}(H) \to \mathcal{H}(H^*)^{\otimes g} \otimes H^{\otimes n} \]
by the formulas of Theorem \ref{thmMorphismeAlekseev} but with $\mathsf{D}^{\mathrm{fin}}$ instead of $\mathsf{D}$. Thanks to \eqref{commutationQHeisenberg}, the proof of Theorem \ref{thmMorphismeAlekseev} remains valid with the elements $Q_h$ instead of $\widetilde{h}$ and $\Phi_{g,n}^{\mathrm{fin}}$ is a morphism of algebras. 

\smallskip

\indent Hence we have two possible different morphisms in the finite-dimensional case: $\Phi_{g,n}$ and $\Phi_{g,n}^{\mathrm{fin}}$. They are related as follows. Let $r_{V_i} : H \to \mathrm{End}_k(V_i)$ be any representations of $H$ for $1 \leq i \leq n$ and let $\rho : \mathcal{HH}(H^*) \to \mathrm{End}_k(H^*)$ be the representation from Proposition \ref{propRepresentationTwoSidedHeisenberg}, which by restriction gives the isomorphism $\mathcal{H}(H^*) \cong \mathrm{End}_k(H^*)$. Then the two representations of $\mathcal{L}_{g,n}(H)$
\begin{align*}
&\bigl( \rho^{\otimes g} \otimes r_{V_1} \otimes \ldots \otimes r_{V_n} \bigr) \circ \Phi_{g,n}^{\mathrm{fin}} : \mathcal{L}_{g,n}(H) \to \mathrm{End}_k\bigl( (H^*)^{\otimes g} \otimes V_1 \otimes \ldots \otimes V_n \bigr),\\
&\bigl( \rho^{\otimes g} \otimes r_{V_1} \otimes \ldots \otimes r_{V_n} \bigr) \circ \Phi_{g,n} : \mathcal{L}_{g,n}(H) \to \mathrm{End}_k\bigl( (H^*)^{\otimes g} \otimes V_1 \otimes \ldots \otimes V_n \bigr)
\end{align*}
are {\em equal}, simply because $\rho(Q_h) = \rho(\widetilde{h})$. This property is satisfying, as one of the main applications of $\Phi_{g,n}$ is to construct representations of $\mathcal{L}_{g,n}(H)$.

\smallskip

\indent When $H$ is infinite dimensional we cannot define the elements $Q_h \in \mathcal{H}(H^{\circ})$, because we do not know if the representation morphism $\rho : \mathcal{H}(H^*) \to \mathrm{End}_k(H^{\circ})$ is surjective. This is why we introduced the bigger algebra $\mathcal{HH}(H^{\circ})$ in which the elements $\widetilde{h}$ exist by construction for all $h$ and are a substitute for $Q_h$.

\subsection{The case $H = U_q^{\mathrm{ad}}(\mathfrak{g})$}\label{sectionAlekseevUq}
\indent The left and right coregular actions of $U_q$ on $\mathcal{O}_q$ allow us to define the two-sided Heisenberg double $\mathcal{H}_q = \mathcal{H}\mathcal{H}_q(\mathfrak{g})$ as the smash product
\[ \mathcal{H}\mathcal{H}_q = \mathcal{O}_q \,\#\, \bigl( U_q \otimes U_q^{\mathrm{cop}}\bigr). \]
where the scalars are implicitly extended to $\mathbb{C}(q^{1/D})$. It is the $\mathbb{C}(q^{1/D})$-vector space $\mathcal{O}_q \otimes \bigl( U_q \otimes U_q\bigr)$ endowed with the product \eqref{produitSurHH}.

\smallskip

\indent Let $G$ be the connected, complex, semisimple algebraic group with Lie algebra $\mathfrak{g}$. Recall from \S\ref{sectionPreliminaires} that $\mathcal{O}_q = \mathcal{O}_q(G)$ is the subalgebra of $U_q^{\circ}$ generated over $\mathbb{C}(q)$ by the family of all matrix coefficients of the irreducible $U_q^{\mathrm{ad}}(\mathfrak{g})$-modules of type $1$.

\begin{lem}\label{lemOqgIntegre}
For all $k \geq 1$ the algebra $\mathcal{O}_q(G)^{\otimes k}$ is a domain.
\end{lem}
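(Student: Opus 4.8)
The plan is to reduce the statement, by a filtration argument, to the two facts that a quantum affine space has no non-trivial zero divisors and that the class of quantum affine spaces is stable under tensor products over the base field.

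First I would invoke the structure theory of the quantized coordinate ring. It is classical that $\mathcal{O}_q(G)$ carries a filtration $F = (F^s)_{s\in S}$ of the type considered in \S\ref{sectionFiltrations} (arising from a PBW-type basis), indexed by a totally ordered, well-founded, locally bounded below monoid $S$ (one may take $S=\mathbb{N}^r$ with the lexicographic order), whose associated graded algebra $\mathrm{gr}_F(\mathcal{O}_q)$ is a \emph{quantum affine space}: a $k$-algebra generated by elements $z_1,\dots,z_r$ subject only to relations $z_iz_j = \lambda_{ij}\,z_jz_i$ with $\lambda_{ij}\in k^\times$; see \cite{BG}. A quantum affine space is an iterated skew polynomial ring over $k$, hence has no non-trivial zero divisors.

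Next I would transport $F$ to the tensor power: equip $\mathcal{O}_q^{\otimes k}$ with the tensor product filtration indexed by $S^k$ endowed with the lexicographic order (so as to get a total, well-founded, locally bounded below order), built from the decomposition $\mathcal{O}_q^{\otimes k} = \bigoplus_{(s_1,\dots,s_k)} X_{s_1}\otimes\cdots\otimes X_{s_k}$ induced by $\mathrm{gr}_F(\mathcal{O}_q)=\bigoplus_s X_s$. Because elements of distinct tensor factors commute, the top-degree component of a product of homogeneous elements is the tensor product of the top-degree components, so the natural map $\bigl(\mathrm{gr}_F(\mathcal{O}_q)\bigr)^{\otimes k} \to \mathrm{gr}(\mathcal{O}_q^{\otimes k})$ is an isomorphism of algebras. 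Its source is the quantum affine space on the $rk$ generators $z_i \otimes 1 \otimes \cdots, \dots$ (generators from the same factor keeping the relations $\lambda_{ij}$, generators from different factors commuting), which is again a quantum affine space and in particular has no non-trivial zero divisors.

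Finally, since the tensor filtration is over a totally ordered monoid and is locally bounded below, Lemma \ref{lemmadomainGr} applies and yields that $\mathcal{O}_q(G)^{\otimes k}$ has no non-trivial zero divisors. The main obstacle is the first input, namely the existence of a quantum affine space associated graded for $\mathcal{O}_q(G)$; the identification $\mathrm{gr}(A^{\otimes k}) \cong \mathrm{gr}(A)^{\otimes k}$ is then straightforward (it uses only a total refinement of the product order together with the commutation of the tensor factors), and the transfer lemma plus the domain property of quantum affine spaces are routine. I emphasize that it is essential to obtain a \emph{quantum affine space}, not merely a domain, as associated graded, since tensor powers of an arbitrary domain over a non-algebraically-closed field such as $\mathbb{C}(q^{1/D})$ need not be domains.
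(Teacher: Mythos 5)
Your overall scheme (a filtration on $\mathcal{O}_q$, the identification $\mathrm{gr}(A^{\otimes k})\cong \mathrm{gr}(A)^{\otimes k}$ for a lexicographically refined tensor filtration, stability of quantum affine spaces under tensor product, then Lemma \ref{lemmadomainGr}) is internally coherent, and you are right that one cannot simply say ``tensor products of domains are domains'' over $\mathbb{C}(q^{1/D})$. The gap is the input on which everything rests: the claim that $\mathcal{O}_q(G)$ carries a filtration of the required type whose associated graded is a genuine quantum affine space is not in \cite{BG}, is not classical, and is very likely false as stated. What \cite{BG} contains is the theorem (Th.\ I.8.9, due to Joseph) that $\mathcal{O}_q(G)$ is a Noetherian domain, proved by entirely different means, and Proposition I.8.17 (Lemma \ref{critereNoetherien} here), whose triangular relations allow lower-order correction terms and yield Noetherianity, not the domain property. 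Concretely, for $G=SL_2$ the natural filtrations of $\mathcal{O}_q(SL_2)$ degenerate the relation $ad-qbc=1$ to $ad=qbc$, so the associated graded is the quantum quadric cone, i.e.\ the quotient of the quantum affine space on $a,b,c,d$ by $ad=qbc$: this is a twisted semigroup algebra on the non-free semigroup $\mathbb{N}^4/(e_1+e_2=e_3+e_4)$ --- a domain, but not a quantum affine space (its Hilbert series is $(1+t)/(1-t)^3$ rather than $1/(1-t)^3$). The structural results that do exist in this direction (toric degenerations via canonical bases, Joseph's theory) likewise produce twisted affine semigroup algebras on non-free semigroups, never free quantum affine spaces. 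So you have reduced the lemma to an unproved structural claim; repairing your argument would require replacing ``quantum affine space'' by a tensor-stable class of domains that actually contains $\mathrm{gr}(\mathcal{O}_q(G))$ (for instance $q$-twisted affine semigroup algebras, or subalgebras of quantum tori), and proving that membership needs canonical-basis technology at least as heavy as the theorem you were trying to make routine.

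For comparison, the paper's proof takes a different and much shorter route that bypasses tensor products and filtrations altogether: since $U_q^{\mathrm{ad}}(\mathfrak{g})^{\otimes k}\cong U_q^{\mathrm{ad}}(\mathfrak{g}^{\oplus k})$ and every irreducible module over a tensor product of algebras is a tensor product of irreducibles, the matrix coefficients match up to give an algebra isomorphism $\mathcal{O}_q(G)^{\otimes k}\cong \mathcal{O}_q(G^{\times k})$; then \cite[Th. I.8.9]{BG}, applied to the connected semisimple group $G^{\times k}$, gives the conclusion directly. If you want to keep a reduction-style argument, this is the cleanest fix: absorb the tensor power into a single quantized coordinate ring of a bigger semisimple group, rather than trying to control the associated graded of the tensor power.
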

\begin{proof}
The direct product $G^{\times k}$ is a connected, complex, semisimple algebraic group with Lie algebra $\mathfrak{g}^{\oplus k}$. By definition, $\mathcal{O}_q(G^{\times k})$ is generated over $\mathbb{C}(q)$ by the family of all matrix coefficients of the irreducible $U_q^{\mathrm{ad}}(\mathfrak{g}^{\oplus k})$-modules of type $1$. Let us note that $U_q^{\mathrm{ad}}(\mathfrak{g}^{\oplus k}) \cong U_q^{\mathrm{ad}}(\mathfrak{g})^{\otimes k}$ as Hopf algebras. It is a general fact that given two algebras $A$ and $B$ the irreducible $(A \otimes B)$-modules are all of the form $V \otimes W$ where $V$ is an irreducible $A$-module and $W$ is an irreducible $B$-module, see e.g. \cite[Th. 3.10.2]{introRepTheory}. Hence we have an isomorphism of algebras
\[ \flecheIso{\mathcal{O}_q(G^{\times k})}{\mathcal{O}_q(G)^{\otimes k}}{_{V_1 \otimes \ldots \otimes V_k}\phi^{i_1, \ldots, i_k}_{j_1, \ldots, j_k}}{_{V_1}\phi^{i_1}_{j_1} \otimes \ldots \otimes {_{V_k}\phi^{i_k}_{j_k}}} \]
where each $V_l$ is an irreducible $U_q^{\mathrm{ad}}(\mathfrak{g})$-module of type $1$, $(v_{l,i})$ is a basis of $V_l$ with dual basis $(v^{l,j})$ and the $\phi$'s are the matrix coefficients in these bases. By \cite[Th. I.8.9]{BG} the algebra $\mathcal{O}_q(G^{\times k})$ is a domain, which concludes the proof.\end{proof}

\begin{prop}\label{propHHqUqIntegre}
The algebra $\mathcal{HH}_q^{\otimes g} \otimes U_q^{\otimes n}$ is a domain.
\end{prop}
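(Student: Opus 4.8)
The plan is to generalize the argument of Proposition~\ref{HqSansDivDeZero} by filtering \emph{every} copy of $U_q$ occurring in $\mathcal{HH}_q^{\otimes g} \otimes U_q^{\otimes n}$ by the De Concini--Kac filtration $\mathcal{F}_{\mathrm{DCK}}$, and showing that the resulting associated graded algebra is a quasi-polynomial ring over $\mathcal{O}_q^{\otimes g}$. Since $\mathcal{O}_q^{\otimes g}$ has no non-trivial zero divisors by Lemma~\ref{lemOqgIntegre}, such a quasi-polynomial extension of it has none either, by the same general fact (see \cite[\S 1.2.9]{MC-R}) already used in Proposition~\ref{HqSansDivDeZero}; the statement then follows from Lemma~\ref{lemmadomainGr}.

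First I would treat a single factor $\mathcal{HH}_q = \mathcal{O}_q \,\#\, (U_q \otimes U_q^{\mathrm{cop}})$, which as a vector space is $\mathcal{O}_q \otimes U_q \otimes U_q$, and filter it by the subspaces $\mathcal{O}_q \,\#\, (\mathcal{F}_{\mathrm{DCK}}^{\mathbf{m}} \otimes \mathcal{F}_{\mathrm{DCK}}^{\mathbf{n}})$. That this is an algebra filtration follows, exactly as in Proposition~\ref{HqSansDivDeZero}, from Corollary~\ref{coroCoproduitSurFiltrationDCK}: in the product formula \eqref{produitSurHH} the first tensorand $x$ acts on $\psi$ through its coproduct via $\rhd$, the second tensorand $y$ acts through $S^{-1}(y_{(2)})$ via $\lhd$, and the $U_q$-components are multiplied as $x_{(2)}z$ and $y_{(1)}t$; in each case the relevant Sweedler components stay within the same filtration degree. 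For the full algebra I would take the tensor product of the $2g+n$ such filtrations, one for each copy of $U_q$ (the $2g$ copies inside the $\mathcal{HH}_q$ factors and the $n$ remaining ones), indexed by $(\mathbb{N}^{2N+1})^{2g+n}$ with the lexicographic order, which is total, well-founded and locally bounded below, as required by Lemma~\ref{lemmadomainGr}.

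To identify the associated graded I would compute the leading commutation relations. The relations among the generators $E_{\beta_i}, F_{\beta_i}, K_\nu$ of a ``left'' copy (the elements written $h$ in \eqref{commutationsHH}) and the matrix coefficients generating $\mathcal{O}_q$ are exactly those already obtained in Proposition~\ref{HqSansDivDeZero}. For a ``right'' copy (the elements $\widetilde{h}$), the relation $\widetilde{h}\varphi = \sum_{(h)} (\varphi \lhd S^{-1}(h_{(2)}))\,\widetilde{h_{(1)}}$ together with Proposition~\ref{propHtCoproduit} shows that, modulo strictly lower filtration degree, $\widetilde{E_{\beta_k}}\,\varphi$ equals $(\varphi \lhd S^{-1}(K_{\beta_k}))\,\widetilde{E_{\beta_k}} = (\varphi \lhd K_{\beta_k}^{-1})\,\widetilde{E_{\beta_k}}$, hence a $q$-scalar multiple of $\varphi\,\widetilde{E_{\beta_k}}$ on weight matrix coefficients; the analogous statements hold for $\widetilde{F_{\beta_k}}$ and $\widetilde{K_\nu}$. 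The two $U_q$-copies inside one $\mathcal{HH}_q$ commute exactly ($\widetilde{h}g = g\widetilde{h}$), and copies from distinct tensor factors commute as well. Combined with the quasi-polynomiality of $\mathrm{gr}_{\mathcal{F}_{\mathrm{DCK}}}(U_q)$ recalled in \S\ref{sectionPrelimUq}, this identifies the total associated graded with a quasi-polynomial ring over $\mathcal{O}_q^{\otimes g}$ (with the convention that $\mathcal{O}_q^{\otimes 0}$ is the base field).

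To conclude, $\mathcal{O}_q^{\otimes g}$ is a domain by Lemma~\ref{lemOqgIntegre}, so the quasi-polynomial extension above is a domain, and Lemma~\ref{lemmadomainGr} transfers this to $\mathcal{HH}_q^{\otimes g} \otimes U_q^{\otimes n}$. The main point requiring care is the computation of the leading terms for the ``right'' generators $\widetilde{h}$: one must verify that the $S^{-1}$ and the right coregular action $\lhd$ appearing in \eqref{commutationsHH} still produce $q$-commutation in the graded, and that passing to the lexicographic total order on $(\mathbb{N}^{2N+1})^{2g+n}$ yields the same quasi-polynomial graded as the product order while meeting the hypotheses (total, locally bounded below) of Lemma~\ref{lemmadomainGr}, so that all the reductions are legitimate.
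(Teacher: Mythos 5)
Your proposal is correct and follows essentially the same route as the paper's own proof: filter each copy of $U_q$ (including the two copies inside each $\mathcal{HH}_q$ factor) by $\mathcal{F}_{\mathrm{DCK}}$, verify via Corollary \ref{coroCoproduitSurFiltrationDCK} and Proposition \ref{propHtCoproduit} that the generators $\overline{E_{\beta_k}}, \overline{F_{\beta_k}}, \overline{K_\nu}, \overline{\widetilde{E_{\beta_k}}}, \overline{\widetilde{F_{\beta_k}}}, \overline{\widetilde{K_\nu}}$ $q$-commute with matrix coefficients in the associated graded, identify that graded as a quasi-polynomial ring over $\mathcal{O}_q^{\otimes g}$, and conclude with Lemma \ref{lemOqgIntegre}, the general fact from \cite[\S 1.2.9]{MC-R}, and Lemma \ref{lemmadomainGr}. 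The only (cosmetic) divergence is your use of the lexicographic total order on $(\mathbb{N}^{2N+1})^{2g+n}$ where the paper uses the direct product order; since the lexicographic order refines the product order and yields the same graded algebra, your choice in fact matches the "totally ordered" hypothesis of Lemma \ref{lemmadomainGr} more literally.
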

\begin{proof}
We first introduce a filtration $\mathcal{G}$ on $\mathcal{HH}_q$ which straightforwardly generalizes the filtration of $\mathcal{H}_q$ used in the proof of Proposition \ref{HqSansDivDeZero}. It is defined by 
\[ \mathcal{G}^{\mathbf{m}, \mathbf{n}} = \mathcal{O}_q \,\#\, (\mathcal{F}_{\mathrm{DCK}}^{\mathbf{m}} \otimes \mathcal{F}_{\mathrm{DCK}}^{\mathbf{n}}) \]
for $(\mathbf{m}, \mathbf{n}) \in (\mathbb{N}^{2N+1})^2$. We endow the monoid $(\mathbb{N}^{2N+1})^2$ with the partial order defined by $(\mathbf{m}, \mathbf{n}) \leq (\mathbf{m}', \mathbf{n}')$ if and only if $\mathbf{m} \leq \mathbf{m}'$ and $\mathbf{n} \leq \mathbf{n}'$, where $\leq$ on $\mathbb{N}^{2N+1}$ is the lexicographic order from the right \eqref{RevLexN}. Thanks to formula \eqref{produitSurHH} for the product in $\mathcal{HH}_q$ and to Corollary \ref{coroCoproduitSurFiltrationDCK} we see that $\mathcal{G}$ is a filtration of $\mathcal{HH}_q$. We have
\[ \mathrm{gr}_{\mathcal{G}}(\mathcal{HH}_q) = \mathcal{O}_q \,\#\, \bigl( \mathrm{gr}_{\mathcal{F}_{\mathrm{DCK}}}(U_q) \otimes \mathrm{gr}_{\mathcal{F}_{\mathrm{DCK}}}(U_q) \bigr). \]
Let us explain this equality more precisely. Let $_V\phi^i_j : x \mapsto v^i(x \cdot v_j)$ be the matrix coefficients (see \S\ref{sectionHopfAlgebras}) of a finite-dimensional $U_q^{\mathrm{ad}}$-module $V$ in a basis of weight vectors $(v_i)$ with weights $(\epsilon_i)$. Write
\[ _V\phi^i_j, \quad \overline{E_{\beta_k}}, \quad \overline{F_{\beta_k}}, \quad \overline{K_{\mu}}, \quad \overline{\widetilde{E_{\beta_k}}}, \quad \overline{\widetilde{F_{\beta_k}}}, \quad \overline{\widetilde{K_{\mu}}} \]
instead of
\begin{align*}
_V\phi^i_j + \mathcal{G}^{<(\mathbf{0},\mathbf{0})}, \quad &E_{\beta_k} + \mathcal{G}^{<(d(E_{\beta_k}),\mathbf{0})}, \quad F_{\beta_k} + \mathcal{G}^{<(d(F_{\beta_k}),\mathbf{0})}, \quad K_{\mu} + \mathcal{G}^{<(\mathbf{0}, \mathbf{0})}\\
&\widetilde{E_{\beta_k}} + \mathcal{G}^{<(\mathbf{0},d(E_{\beta_k}))}, \quad \widetilde{F_{\beta_k}} + \mathcal{G}^{<(\mathbf{0},d(F_{\beta_k}))}, \quad \widetilde{K_{\mu}} + \mathcal{G}^{<(\mathbf{0}, \mathbf{0})}
\end{align*}
in $\mathrm{gr}_{\mathcal{G}}(\mathcal{HH}_q)$. Note that $\mathcal{G}^{<(\mathbf{0},\mathbf{0})}=0$. The subalgebra generated by $\overline{E_{\beta_k}}$, $\overline{F_{\beta_k}}$, $\overline{K_{\mu}}$, $\overline{\widetilde{E_{\beta_k}}}$, $\overline{\widetilde{F_{\beta_k}}}$, $\overline{\widetilde{K_{\mu}}}$ for $1 \leq k \leq N$ and $\mu \in P$ is isomorphic to $\mathrm{gr}_{\mathcal{F}_{\mathrm{DCK}}}(U_q)^{\otimes 2}$ and is thus a quasi-polynomial ring (over $\mathbb{C}(q)$). Moreover by Proposition \ref{propHtCoproduit} and \eqref{commutationsHH} we get
\begin{equation}\label{commutationGHHq}
\begin{array}{lll}
\overline{E_{\beta_k}} \, {_V\phi^i_j} = {_V\phi^i_j} \, \overline{E_{\beta_k}}, & \overline{F_{\beta_k}} \, {_V\phi^i_j} = q^{-(\beta_k, \epsilon_j)} {_V\phi^i_j} \, \overline{F_{\beta_k}}, & \overline{K_{\mu}} \, {_V\phi^i_j} = q^{(\mu, \epsilon_j)} {_V\phi^i_j} \, \overline{K_{\nu}},\\[.5em]
\overline{\widetilde{E_{\beta_k}}} \, {_V\phi^i_j} = q^{-(\beta_k, \epsilon_i)} {_V\phi^i_j} \, \overline{\widetilde{E_{\beta_k}}}, & \overline{\widetilde{F_{\beta_k}}} \, {_V\phi^i_j} = {_V\phi^i_j} \, \overline{\widetilde{F_{\beta_k}}}, & \overline{K_{\mu}} \, {_V\phi^i_j} = q^{-(\mu, \epsilon_i)} {_V\phi^i_j} \, \overline{K_{\nu}}
\end{array}
\end{equation}
for all $\mu \in P$. Hence $\mathrm{gr}_{\mathcal{G}}(\mathcal{HH}_q)$ is a quasi-polynomial ring over $\mathcal{O}_q(q^{1/D})$, generated over $\mathcal{O}_q(q^{1/D})$ by $\overline{E_{\beta_k}}$, $\overline{F_{\beta_k}}$, $\overline{K_{\mu}}$, $\overline{\widetilde{E_{\beta_k}}}$, $\overline{\widetilde{F_{\beta_k}}}$, $\overline{\widetilde{K_{\mu}}}$ for $1 \leq k \leq N$ and $\mu \in P$.

\noindent Now for $(\mathbf{m}_1, \mathbf{n}_1, \ldots, \mathbf{m}_g, \mathbf{n}_g, \mathbf{p}_{1}, \ldots, \mathbf{p}_{n}) \in (\mathbb{N}^{2N+1})^{2g+n}$ let
\[ \mathcal{T}^{\mathbf{m}_1, \mathbf{n}_1, \ldots, \mathbf{m}_g, \mathbf{n}_g, \mathbf{p}_{1}, \ldots, \mathbf{p}_{n}} = \mathcal{G}^{\mathbf{m}_1, \mathbf{n}_1} \otimes \ldots \otimes \mathcal{G}^{\mathbf{m}_g, \mathbf{n}_g} \otimes \mathcal{F}_{\mathrm{DCK}}^{\mathbf{p}_1} \otimes \ldots \otimes \mathcal{F}_{\mathrm{DCK}}^{\mathbf{p}_n} \subset \mathcal{HH}_q^{\otimes g} \otimes U_q^{\otimes n}. \]
We endow the monoid $(\mathbb{N}^{2N+1})^{2g+n}$ with the direct product order:
\begin{align*}
&(\mathbf{m}_1, \mathbf{n}_1, \ldots, \mathbf{m}_g, \mathbf{n}_g, \mathbf{p}_{1}, \ldots, \mathbf{p}_{n}) \leq (\mathbf{m}'_1, \mathbf{n}'_1, \ldots, \mathbf{m}'_g, \mathbf{n}'_g, \mathbf{p}'_{1}, \ldots, \mathbf{p}'_{n})\\
&\iff \forall\, i,j,\quad \mathbf{m}_i \leq \mathbf{m}'_i, \quad \mathbf{n}_i \leq \mathbf{n}'_i, \quad \mathbf{p}_j \leq \mathbf{p}'_j.
\end{align*}
It is clear that $\mathcal{T}$ is a filtration of $\mathcal{HH}_q^{\otimes g} \otimes U_q^{\otimes n}$ and that
\[ \mathrm{gr}_{\mathcal{T}}\bigl( \mathcal{HH}_q^{\otimes g} \otimes U_q^{\otimes n} \bigr) = \mathrm{gr}_{\mathcal{G}}(\mathcal{HH}_q)^{\otimes g} \otimes \mathrm{gr}_{\mathcal{F}_{\mathrm{DCK}}}(U_q)^{\otimes n}. \]
We have seen that $\mathrm{gr}_{\mathcal{G}}(\mathcal{HH}_q)$ is a quasipolynomial ring over $\mathcal{O}_q(q^{1/D})$ and we know that $\mathrm{gr}_{\mathcal{F}_{\mathrm{DCK}}}(U_q)$ is a quasipolynomial ring over $\mathbb{C}(q)$ (\S \ref{sectionPrelimUq}). Hence $\mathrm{gr}_{\mathcal{T}}\bigl( \mathcal{HH}_q^{\otimes g} \otimes U_q^{\otimes n} \bigr)$ is a quasipolynomial ring over $\mathcal{O}_q(q^{1/D})^{\otimes g}$. Since $\mathcal{O}_q(q^{1/D})^{\otimes g}$ is a domain by Lemma \ref{lemOqgIntegre}, it follows from a general result (see e.g. \cite[\S 1.2.9]{MC-R}) that $\mathrm{gr}_{\mathcal{T}}(\mathcal{HH}_q^{\otimes g} \otimes U_q^{\otimes n})$ is a domain. The conclusion then follows from Lemma \ref{lemmadomainGr}.
\end{proof}

\begin{teo}\label{TheoremePhignInjectif}
1. The morphism $\Phi_{g,n} : \mathcal{L}_{g,n} \to \mathcal{HH}_q^{\otimes g} \otimes U_q^{\otimes n}$ is injective.
\\2. The algebra $\mathcal{L}_{g,n}$ is a domain.
\end{teo}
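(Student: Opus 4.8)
Item~2 will be immediate once item~1 is established: since $\Phi_{g,n}$ is a morphism of algebras and, by Proposition~\ref{propHHqUqIntegre}, its target $\mathcal{HH}_q^{\otimes g} \otimes U_q^{\otimes n}$ has no non-trivial zero divisors, an injective $\Phi_{g,n}$ forces $\mathcal{L}_{g,n}$ to have none either. Indeed, if $ab=0$ with $a,b\neq 0$ in $\mathcal{L}_{g,n}$, then $\Phi_{g,n}(a)\,\Phi_{g,n}(b)=\Phi_{g,n}(ab)=0$ with both factors non-zero by injectivity, a contradiction. So the whole point is to prove item~1.

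The plan for item~1 is to run the weight-filtration argument used for $\Phi_{1,0}$ in Theorem~\ref{ThmPhi10} through a double induction based on the recursive structure of $\Phi_{g,n}$. Using the factorisations $\mathcal{L}_{0,n}=\mathcal{L}_{0,1}\,\widetilde{\otimes}\,\mathcal{L}_{0,n-1}$ and $\mathcal{L}_{g,n}=\mathcal{L}_{1,0}\,\widetilde{\otimes}\,\mathcal{L}_{g-1,n}$ from Proposition~\ref{propBraidedTensProduct}, the defining formulas of Theorem~\ref{thmMorphismeAlekseev} give
\begin{align*}
\Phi_{g,n}(x\,\widetilde{\otimes}\,u)&=\Phi_{g,n}(j_1(x))\cdot\bigl(1\otimes\Phi_{g-1,n}(u)\bigr),\\
\Phi_{g,n}(j_1(x))&=\sum_{(R)}\Phi_{1,0}\bigl(\mathrm{coad}^r(R_{(1)})(x)\bigr)\otimes\mathsf{D}_{g-1,n}(R_{(2)}),
\end{align*}
together with the analogous identity involving $\Phi_{0,1}$, $\Phi_{0,n-1}$ and $\mathsf{D}_{0,n-1}$ for the first recursion. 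The base cases are the known injectivity of $\Phi_{0,1}$ (recalled before Theorem~\ref{ThmPhi10}) and of $\Phi_{1,0}$ (Theorem~\ref{ThmPhi10}). First I would establish injectivity of $\Phi_{0,n}$ by induction on $n$, and then, with $n$ fixed, prove injectivity of $\Phi_{g,n}$ by induction on $g$.

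For the inductive step I would argue by leading terms, generalising \eqref{propPhi10SurSommeDirecte}. Grading $\mathcal{L}_{1,0}$ by the total coadjoint weight (so $x$ has weight $\lambda$ when $\mathrm{coad}^r(K_\nu^{-1})(x)=q^{(\lambda,\nu)}x$ for all $\nu$), write a non-zero $z\in\mathcal{L}_{g,n}$ as $z=\sum_{\lambda,i}x_{\lambda,i}\,\widetilde{\otimes}\,u_{\lambda,i}$ with the $x_{\lambda,i}$ of weight $\lambda$ and linearly independent for each $\lambda$, and $u_{\lambda,i}\in\mathcal{L}_{g-1,n}$. Keeping in \eqref{expressionCanoniqueR} only the leading summand $R=\Theta$ and using \eqref{commutationRK} and \eqref{ThetaPoids}, one finds that, modulo terms of strictly smaller weight in the first (that is, $\mathcal{H}_q$) tensorand,
\[
\Phi_{g,n}(j_1(x_{\lambda,i}))\equiv c_\lambda\,\Phi_{1,0}(x_{\lambda,i})\otimes\mathsf{D}_{g-1,n}(K_\lambda)
\]
for some non-zero scalar $c_\lambda$, where $\mathsf{D}_{g-1,n}(K_\lambda)$ is group-like, hence invertible, and lies in the tensorands $2,\dots,g+n$ only. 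Choosing $\lambda$ maximal (for $\preceq$) such that some $x_{\lambda,i}\neq0$ with $u_{\lambda,i}\neq0$, the top-weight component of $\Phi_{g,n}(z)$ equals $c_\lambda\sum_i\Phi_{1,0}(x_{\lambda,i})\otimes\mathsf{D}_{g-1,n}(K_\lambda)\Phi_{g-1,n}(u_{\lambda,i})$. By injectivity of $\Phi_{1,0}$ the elements $\Phi_{1,0}(x_{\lambda,i})$ are linearly independent in $\mathcal{H}_q$, and by invertibility of $\mathsf{D}_{g-1,n}(K_\lambda)$ together with the inductive injectivity of $\Phi_{g-1,n}$ at least one right-hand tensor factor is non-zero; since a sum $\sum_i a_i\otimes b_i$ with the $a_i$ linearly independent vanishes only when all $b_i=0$, this component is non-zero. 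Being of maximal weight it cannot be cancelled by the lower-order terms, so $\Phi_{g,n}(z)\neq0$. The $g=0$ step is identical, peeling off the first $\mathcal{L}_{0,1}$ factor and invoking injectivity of $\Phi_{0,1}$.

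The main obstacle is the precise leading-term computation: one must set up the weight filtration on $\mathcal{HH}_q^{\otimes g}\otimes U_q^{\otimes n}$ (via the weight in the first tensorand) so that $\Phi_{g,n}$ is filtered, and verify that the corrections coming from the non-$\Theta$ summands in \eqref{expressionCanoniqueR}, which by \eqref{commutationRK} carry a non-zero weight shift $\gamma\in Q_+$, genuinely lower the first-tensorand weight strictly, so that no cancellation with the displayed top term can occur. This is the technical heart of the argument and the faithful analogue of \eqref{propPhi10SurSommeDirecte}; everything else is bookkeeping driven by the two recursions above.
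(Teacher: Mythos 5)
Your proposal is correct and takes essentially the same route as the paper's proof: the same double induction (first $\Phi_{0,n}$ on $n$, then $\Phi_{g,n}$ on $g$ with $n$ fixed) from the base cases $\Phi_{0,1}$ and $\Phi_{1,0}$, the same weight decomposition of the first braided tensor factor mapped into the weight decomposition of $\mathcal{H}_q$, and the same leading-term/maximality argument combining linear independence of the $\Phi_{1,0}(x_{\lambda,i})$ with invertibility of the group-like element $\mathsf{D}_{g-1,n}(K_\lambda)$. The ``technical heart'' you flag — that the non-$\Theta$ summands of $R$ shift the first-tensorand weight strictly — is exactly the fact the paper records after \eqref{commutationRK} (the shift $\gamma \in Q_+$ vanishes only for the $\Theta$-summand of \eqref{expressionCanoniqueR}), so your outline closes in precisely the paper's way.
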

\begin{proof}
1. The proof is by induction. First we prove the result for $\Phi_{0,n}$ by induction on $n$. Then we fix $n$ and we prove the result for $\Phi_{g,n}$ by induction on $g$.
\\It was shown in \cite[Th. 3]{Bau1} that the morphism $\Phi_{0,1}$ is injective (see \cite[Th 4.3]{BR1} for this statement in the present framework). Now assume that $\Phi_{0,n}$ is injective for some $n$. Let $\mathrm{ad}^r$ be the right adjoint action of $U_q$ on itself: $\textstyle \mathrm{ad}^r(h)(x) = \sum_{(h)} S(h_{(1)})xh_{(2)}$. For $\lambda \in P$ consider the subspaces
\begin{align*}
(\mathcal{L}_{0,1})_{\lambda} &= \bigl\{ \varphi \in \mathcal{L}_{0,1} \, \big| \, \forall \, \nu \in P, \:\: \mathrm{coad}^r(K_{\nu})(\varphi) = q^{(\lambda,\nu)}\varphi \bigr\}, \\
(U_q)_{\lambda} &= \bigl\{ x \in U_q \, \big| \, \forall \, \nu \in P, \:\: \mathrm{ad}^r(K_{\nu})(x) = q^{(\lambda,\nu)}x \bigr\}.
\end{align*}
We have $\textstyle \mathcal{L}_{0,1} = \bigoplus_{\lambda \in P} (\mathcal{L}_{0,1})_{\lambda}$ and $\textstyle U_q = \bigoplus_{\lambda \in P} (U_q)_{\lambda}$, thus
\[ \mathcal{L}_{0,n+1} = \bigoplus_{\lambda \in P} \, (\mathcal{L}_{0,1})_{\lambda} \,\widetilde{\otimes}\, \mathcal{L}_{0,n}, \qquad U_q^{\otimes (n+1)} =  \bigoplus_{\lambda \in P} \, (U_q)_{\lambda} \otimes (U_q)^{\otimes n} \]
where for the first equality we used that $\mathcal{L}_{0,n+1} = \mathcal{L}_{0,1} \,\widetilde{\otimes}\, \mathcal{L}_{0,n}$ (Prop. \ref{propBraidedTensProduct}). Since
\[ \forall\, h \in U_q, \: \forall \, \psi \in \mathcal{L}_{0,1}, \quad\Phi_{0,1}\bigl( \mathrm{coad}^r(h)(\psi) \bigr) = \mathrm{ad}^r(h)\bigl( \Phi_{0,1}(\psi) \bigr) \]
we have $\Phi_{0,1}\bigl( (\mathcal{L}_{0,1})_{\lambda} \bigr) \subset (U_q)_{\lambda}$. Take $\varphi \in (\mathcal{L}_{0,1})_{\lambda}$ and $x \in \mathcal{L}_{0,n}$. By \eqref{ThetaPoids} we have
\[ \mathrm{coad}^r(\Theta_{(1)})(\varphi) \otimes \Theta_{(2)} = \varphi \otimes K_{\lambda}.  \]
It follows from \eqref{commutationRK} and the expression of $R$ in \eqref{expressionCanoniqueR} that
\begin{align*}
\Phi_{0,n+1}(\varphi \,\widetilde{\otimes}\, x)&= \sum_{(R)}\Phi_{0,1}\!\left( \mathrm{coad}^r\bigl(R_{(1)}\bigr)(\varphi)\right) \otimes \mathsf{D}_{0,n}\bigl(R_{(2)}\bigr)\Phi_{0,n}(x)\\
&\in \Phi_{0,1}\!\left( \mathrm{coad}^r\bigl(\Theta_{(1)}\bigr)(\varphi)\right) \otimes \mathsf{D}_{0,n}\bigl(\Theta_{(2)}\bigr)\Phi_{0,n}(x) + \bigoplus_{\lambda' < \lambda} \, (U_q)_{\lambda'} \otimes (U_q)^{\otimes n}\\
&=\Phi_{0,1}(\varphi) \otimes (K_{\lambda})^{\otimes n}\Phi_{0,n}(x) + \bigoplus_{\lambda' < \lambda} \, (U_q)_{\lambda'} \otimes (U_q)^{\otimes n}
\end{align*}
where $\lambda' < \lambda$ means that $\lambda - \lambda' \in Q_+\!\setminus\!\{0\}$ and we used that $K_ {\lambda}$ is grouplike for the last equality. We are now in position to show our result. Let $y \in \mathcal{L}_{0,n+1}$ be a non-zero element and write it as $\textstyle y = \sum_{\lambda \in P} \sum_{i \in I_{\lambda}} \varphi_{\lambda,i} \,\widetilde{\otimes}\, x_{\lambda,i}$ with $\varphi_{\lambda,i} \in (\mathcal{L}_{0,1})_{\lambda}$ for all $i \in I_{\lambda}$ and such that for each $\lambda$ the elements $\bigl(\varphi_{\lambda,i}\bigr)_{i \in I_{\lambda}}$ are linearly independent over $\mathbb{C}(q^{1/D})$. Take a $\lambda$ maximal for the order $\leq$ on $P$ such that there exists at least one $i \in I_{\lambda}$ with $\varphi_{\lambda,i} \neq 0$ and $x_{\lambda,i} \neq 0$. Then
\[ \Phi_{0,n+1}(y) \in \sum_{i \in I_{\lambda}} \Phi_{0,1}(\varphi_{\lambda,i}) \otimes (K_{\lambda})^{\otimes n}\Phi_{0,n}(x_{\lambda,i}) + \bigoplus_{\lambda' \neq \lambda} \, (U_q)_{\lambda} \otimes (U_q)^{\otimes n} \]
By the induction hypothesis the morphism $\Phi_{0,n}$ is injective so $(K_{\lambda})^{\otimes n}\Phi_{0,n}(x_{\lambda,i}) \neq 0$ ($K_{\lambda}$ being invertible) for at least one $i \in I_{\lambda}$. Moreover the elements $\varphi_{\lambda,i}$ are linearly independent so the elements $\Phi_{0,1}(\varphi_{\lambda,i})$ are linearly independent as well and it follows that $\Phi_{0,n+1}(y) \neq 0$.
\\Let us now examine the case of $\Phi_{g,n}$. Recall first from Theorem \ref{ThmPhi10} that $\Phi_{1,0}$ is injective. We see easily from the formulas in Theorem \ref{thmMorphismeAlekseev} that $\Phi_{g,n}$ actually takes values in $\mathcal{H}_q \otimes \mathcal{HH}_q^{\otimes (g-1)} \otimes U_q^{\otimes n}$. Recall the right action \eqref{actionOnHeisenberg} of $U_q$ on $\mathcal{H}_q$ and consider the subspaces

\begin{align*}
(\mathcal{L}_{1,0})_{\lambda} &= \bigl\{ x \in \mathcal{L}_{1,0} \, \big| \, \forall \, \nu \in P, \:\: \mathrm{coad}^r(K_{\nu})(x) = q^{(\lambda,\nu)}x \bigr\}, \\
(\mathcal{H}_q)_{\lambda} &= \bigl\{ y \in \mathcal{H}_q \,\big|\, \forall \, \nu \in P, \: y \cdot K_{\nu} = q^{(\lambda,\nu)}y \bigr\}.
\end{align*}
One sees easily that $\textstyle \mathcal{H}_q = \bigoplus_{\lambda \in P} (\mathcal{H}_q)_{\lambda}$ and we obtain the decompositions
\[ \mathcal{L}_{g+1,n} = \bigoplus_{\lambda \in P} \, (\mathcal{L}_{1,0})_{\lambda} \,\widetilde{\otimes}\, \mathcal{L}_{g,n}, \quad \mathcal{H}_q \otimes \mathcal{HH}_q^{\otimes g} \otimes U_q^{\otimes n} =  \bigoplus_{\lambda \in P} \, (\mathcal{H}_q)_{\lambda} \otimes \mathcal{HH}_q^{\otimes g} \otimes U_q^{\otimes n}. \]
Moreover by Proposition \ref{propPhi10} we have $\Phi_{1,0}\bigl( (\mathcal{L}_{1,0})_{\lambda} \bigr) \subset (\mathcal{H}_q)_{\lambda}$. After these preliminary remarks the proof (by induction) of the injectivity of $\Phi_{g+1,n}$ is completely similar to the proof for $\Phi_{0,n+1}$ above and is thus left to the reader.
\\2. Follows from Proposition \ref{propHHqUqIntegre} and item 1.
\end{proof}

\section{Topological interpretation of $\mathcal{L}_{g,n}(H)$}\label{sectionTopologicalInterpretation}
Let $H$ be a ribbon Hopf algebra with an invertible antipode. In this last section we show that the algebra $\mathcal{L}_{g,n}(H)$ is isomorphic to the stated skein algebra of a surface as defined below. If moreover $H\text{-}\mathrm{mod}$ is semisimple we show that the subalgebra of invariant elements $\mathcal{L}_{g,n}^H(H)$ is isomorphic to the skein algebra of the surface. Finally we explain why this last property is false when $H$-mod is not semisimple.

\subsection{Stated skein algebras of surfaces}\label{sectionStatedSkein}
\indent In this subsection we recall the definitions of the topological objects that we will use. For details, see \cite{Le, CL}.

\smallskip

\indent Denote by $\Sigma_g$ the compact closed oriented surface of genus $g \geq 0$, by $\Sigma_{g,n}$ the surface obtained from $\Sigma_g$ by removing $n$ points, by $\Sigma_{g,n}^{\circ}$ the punctured bordered surface obtained from $\Sigma_{g,n}$ by removing an open disk $D$, and then by $\Sigma_{g,n}^{\circ,\bullet}$ the surface obtained from $\Sigma_{g,n}^{\circ}$ by removing a point from the boundary component $\partial(\Sigma_{g,n}^{\circ,\bullet})$. Here is a picture of $\Sigma_{g,n}^{\circ,\bullet}$:

\begin{center}
\begingroup%
  \makeatletter%
  \providecommand\color[2][]{%
    \errmessage{(Inkscape) Color is used for the text in Inkscape, but the package 'color.sty' is not loaded}%
    \renewcommand\color[2][]{}%
  }%
  \providecommand\transparent[1]{%
    \errmessage{(Inkscape) Transparency is used (non-zero) for the text in Inkscape, but the package 'transparent.sty' is not loaded}%
    \renewcommand\transparent[1]{}%
  }%
  \providecommand\rotatebox[2]{#2}%
  \newcommand*\fsize{\dimexpr\f@size pt\relax}%
  \newcommand*\lineheight[1]{\fontsize{\fsize}{#1\fsize}\selectfont}%
  \ifx\svgwidth\undefined%
    \setlength{\unitlength}{256.21571674bp}%
    \ifx\svgscale\undefined%
      \relax%
    \else%
      \setlength{\unitlength}{\unitlength * \real{\svgscale}}%
    \fi%
  \else%
    \setlength{\unitlength}{\svgwidth}%
  \fi%
  \global\let\svgwidth\undefined%
  \global\let\svgscale\undefined%
  \makeatother%
  \begin{picture}(1,0.21872731)%
    \lineheight{1}%
    \setlength\tabcolsep{0pt}%
    \put(0,0){\includegraphics[width=\unitlength,page=1]{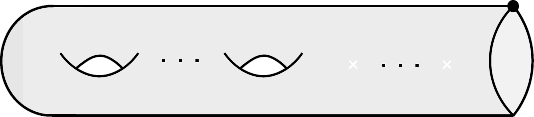}}%
    \put(0.17862914,0.12557988){\color[rgb]{0,0,0}\makebox(0,0)[lt]{\lineheight{1.25}\smash{\begin{tabular}[t]{l}$1$\end{tabular}}}}%
    \put(0.48686139,0.13252033){\color[rgb]{0,0,0}\makebox(0,0)[lt]{\lineheight{1.25}\smash{\begin{tabular}[t]{l}$g$\end{tabular}}}}%
    \put(0.65312417,0.11683069){\color[rgb]{0,0,0}\makebox(0,0)[lt]{\lineheight{1.25}\smash{\begin{tabular}[t]{l}$1$\end{tabular}}}}%
    \put(0.82509597,0.11830895){\color[rgb]{0,0,0}\makebox(0,0)[lt]{\lineheight{1.25}\smash{\begin{tabular}[t]{l}$n$\end{tabular}}}}%
    \put(0,0){\includegraphics[width=\unitlength,page=2]{surface.pdf}}%
  \end{picture}%
\endgroup%

\end{center}

\noindent The puncture on the boundary is represented by a big black dot (\textbullet) while the $n$ inner punctures are represented by the cross marks ($\mathsf{x}$). For our purposes it is much more convenient to use a less familiar view of $\Sigma_{g,n}^{\circ,\bullet}$:
\begin{equation}\label{surfaceRuban}
\begingroup%
  \makeatletter%
  \providecommand\color[2][]{%
    \errmessage{(Inkscape) Color is used for the text in Inkscape, but the package 'color.sty' is not loaded}%
    \renewcommand\color[2][]{}%
  }%
  \providecommand\transparent[1]{%
    \errmessage{(Inkscape) Transparency is used (non-zero) for the text in Inkscape, but the package 'transparent.sty' is not loaded}%
    \renewcommand\transparent[1]{}%
  }%
  \providecommand\rotatebox[2]{#2}%
  \newcommand*\fsize{\dimexpr\f@size pt\relax}%
  \newcommand*\lineheight[1]{\fontsize{\fsize}{#1\fsize}\selectfont}%
  \ifx\svgwidth\undefined%
    \setlength{\unitlength}{380.84571101bp}%
    \ifx\svgscale\undefined%
      \relax%
    \else%
      \setlength{\unitlength}{\unitlength * \real{\svgscale}}%
    \fi%
  \else%
    \setlength{\unitlength}{\svgwidth}%
  \fi%
  \global\let\svgwidth\undefined%
  \global\let\svgscale\undefined%
  \makeatother%
  \begin{picture}(1,0.16735188)%
    \lineheight{1}%
    \setlength\tabcolsep{0pt}%
    \put(0,0){\includegraphics[width=\unitlength,page=1]{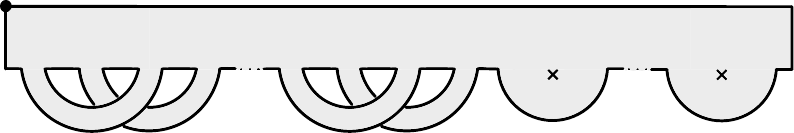}}%
  \end{picture}%
\endgroup%

\end{equation}

\indent Let $\boldsymbol{\Sigma} = \Sigma_{g,n}^{\circ,\bullet} \times [0,1]$ be the thickening of $\Sigma_{g,n}^{\circ,\bullet}$. A point $P \in \boldsymbol{\Sigma}$ is a pair $(p,h)$ and the number $h$ is called the {\em height} of $P$. Recall from \cite[\S 2.2]{Le} that an $H$-colored $\partial\boldsymbol{\Sigma}$-tangle is an oriented, framed, compact, properly embedded
1-dimensional submanifold $\mathbf{T} \subset \boldsymbol{\Sigma}$ such that
\begin{enumerate}
\item the framing is vertical at any point of $\partial\mathbf{T} = \mathbf{T} \cap \partial\boldsymbol{\Sigma}$,
\item the points of $\partial\mathbf{T}$ have distincts heights,
\item each connected component of $\mathbf{T}$ is colored (i.e. labelled) by a finite-dimensional $H$-module.
\end{enumerate}
Here we moreover allow the $\partial\boldsymbol{\Sigma}$-tangles to contain {\em coupons}, which are squares colored by $H$-morphisms in the usual coherent way. The coupons are not allowed to intersect $\partial\boldsymbol{\Sigma}$. This gives {\em $\partial\boldsymbol{\Sigma}$-ribbon graphs}, which are the generalization to surfaces of the homogeneous colored directed ribbon graphs from \cite{RT}.

\smallskip

\indent An isotopy of $\partial\boldsymbol{\Sigma}$-ribbon graphs is an isotopy which preserves all the properties listed above. We represent a $\partial\boldsymbol{\Sigma}$-ribbon graph $\mathbf{T}$ by its diagram, which is the projection of $\mathbf{T}$ on $\Sigma_{g,n}^{\circ,\bullet}$. We use isotopy to ensure that the projection has at most double points, and at double points we record the over/under-passing information as usual. Moreover, the height information of the boundary points $\partial\mathbf{T}$ must be recorded with the diagram; thanks to isotopy it is enough to record the order of the boundary points with respect to the relation $<$ defined by $(p,h)<(p',h')$ if and only $h < h'$.

We will systematically consider $\partial\boldsymbol{\Sigma}$-ribbon graphs up to isotopy.

\smallskip

\indent Let $\mathfrak{o}$ be an orientation of $\partial\Sigma_{g,n}^{\circ,\bullet}$. We say that a diagram of the $\partial\boldsymbol{\Sigma}$-ribbon graph $\mathbf{T}$ is $\mathfrak{o}$-ordered if the boundary points of $\mathbf{T}$ have increasing heights when one goes along $\partial\Sigma_{g,n}^{\circ,\bullet}$ according to $\mathfrak{o}$ and starting from the point $\bullet$. In a $\mathfrak{o}$-ordered diagram the height relation $<$ of the boundary points of $\mathbf{T}$ is given by $\mathfrak{o}$. Using isotopy it is clear that any $\partial\boldsymbol{\Sigma}$-ribbon graph can be represented by a $\mathfrak{o}$-ordered diagram. Here is an example of such a diagram for $(g,n)=(1,1)$:
\begin{equation}\label{exempleTangle}
\begingroup%
  \makeatletter%
  \providecommand\color[2][]{%
    \errmessage{(Inkscape) Color is used for the text in Inkscape, but the package 'color.sty' is not loaded}%
    \renewcommand\color[2][]{}%
  }%
  \providecommand\transparent[1]{%
    \errmessage{(Inkscape) Transparency is used (non-zero) for the text in Inkscape, but the package 'transparent.sty' is not loaded}%
    \renewcommand\transparent[1]{}%
  }%
  \providecommand\rotatebox[2]{#2}%
  \newcommand*\fsize{\dimexpr\f@size pt\relax}%
  \newcommand*\lineheight[1]{\fontsize{\fsize}{#1\fsize}\selectfont}%
  \ifx\svgwidth\undefined%
    \setlength{\unitlength}{232.25295243bp}%
    \ifx\svgscale\undefined%
      \relax%
    \else%
      \setlength{\unitlength}{\unitlength * \real{\svgscale}}%
    \fi%
  \else%
    \setlength{\unitlength}{\svgwidth}%
  \fi%
  \global\let\svgwidth\undefined%
  \global\let\svgscale\undefined%
  \makeatother%
  \begin{picture}(1,0.37687844)%
    \lineheight{1}%
    \setlength\tabcolsep{0pt}%
    \put(-0.39641803,0.03709577){\color[rgb]{0,0,0}\makebox(0,0)[lt]{\begin{minipage}{0.06366424\unitlength}\raggedright \end{minipage}}}%
    \put(0,0){\includegraphics[width=\unitlength,page=1]{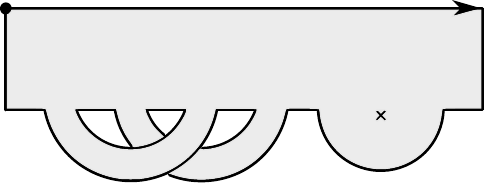}}%
    \put(0.18172988,0.21672238){\color[rgb]{0,0,0}\makebox(0,0)[lt]{\lineheight{1.25}\smash{\begin{tabular}[t]{l}$f$\end{tabular}}}}%
    \put(0,0){\includegraphics[width=\unitlength,page=2]{exempleTangle.pdf}}%
    \put(0.85594006,0.23774041){\color[rgb]{0,0,0}\makebox(0,0)[lt]{\lineheight{1.25}\smash{\begin{tabular}[t]{l}$g$\end{tabular}}}}%
    \put(0,0){\includegraphics[width=\unitlength,page=3]{exempleTangle.pdf}}%
    \put(0.09264031,0.31049015){\color[rgb]{0,0,0}\makebox(0,0)[lt]{\lineheight{1.25}\smash{\begin{tabular}[t]{l}$U$\end{tabular}}}}%
    \put(0.21726033,0.27248345){\color[rgb]{0,0,0}\makebox(0,0)[lt]{\lineheight{1.25}\smash{\begin{tabular}[t]{l}$V$\end{tabular}}}}%
    \put(0.4386446,0.31015976){\color[rgb]{0,0,0}\makebox(0,0)[lt]{\lineheight{1.25}\smash{\begin{tabular}[t]{l}$W$\end{tabular}}}}%
    \put(0.71068414,0.29629705){\color[rgb]{0,0,0}\makebox(0,0)[lt]{\lineheight{1.25}\smash{\begin{tabular}[t]{l}$X$\end{tabular}}}}%
    \put(0.87721113,0.31505116){\color[rgb]{0,0,0}\makebox(0,0)[lt]{\lineheight{1.25}\smash{\begin{tabular}[t]{l}$Y$\end{tabular}}}}%
  \end{picture}%
\endgroup%

\end{equation}
The colors (i.e. labels) $U,V,W,X,Y$ are finite-dimensional $H$-modules and $f : W \otimes V^* \to V^*$, $g : U \to Y$ are $H$-morphisms. With this choice of orientation of the boundary, the diagram implicitly means that the heights of the boundary points labelled by $U$, $W$ and $Y$ respectively form a strictly increasing sequence. In the sequel we always use ordered diagrams with respect to this orientation.

\smallskip

\indent Let $\mathbf{T}$ be a $\partial\boldsymbol{\Sigma}$-ribbon graph, let $p_1, \ldots, p_r$ be the boundary points of $\mathbf{T}$ and let $V_i$ be the color of the strand with endpoint $p_i$, for each $i$. A {\em state} of $\mathbf{T}$ is a map which to each $p_i$ associates an element of $V_i$ or $V_i^*$, depending on if the orientation of the strand points outward or inward the surface at $p_i$, respectively. In practice we label $p_i$ by an element of $V_i$ or $V_i^*$ on the ribbon graph diagram. Here is an example of a state for a ribbon graph like in \eqref{exempleTangle}:

\begin{center}
\begingroup%
  \makeatletter%
  \providecommand\color[2][]{%
    \errmessage{(Inkscape) Color is used for the text in Inkscape, but the package 'color.sty' is not loaded}%
    \renewcommand\color[2][]{}%
  }%
  \providecommand\transparent[1]{%
    \errmessage{(Inkscape) Transparency is used (non-zero) for the text in Inkscape, but the package 'transparent.sty' is not loaded}%
    \renewcommand\transparent[1]{}%
  }%
  \providecommand\rotatebox[2]{#2}%
  \newcommand*\fsize{\dimexpr\f@size pt\relax}%
  \newcommand*\lineheight[1]{\fontsize{\fsize}{#1\fsize}\selectfont}%
  \ifx\svgwidth\undefined%
    \setlength{\unitlength}{126.21546977bp}%
    \ifx\svgscale\undefined%
      \relax%
    \else%
      \setlength{\unitlength}{\unitlength * \real{\svgscale}}%
    \fi%
  \else%
    \setlength{\unitlength}{\svgwidth}%
  \fi%
  \global\let\svgwidth\undefined%
  \global\let\svgscale\undefined%
  \makeatother%
  \begin{picture}(1,0.26469555)%
    \lineheight{1}%
    \setlength\tabcolsep{0pt}%
    \put(0,0){\includegraphics[width=\unitlength,page=1]{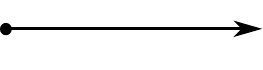}}%
    \put(0.21346234,0.19285028){\color[rgb]{0,0,0}\makebox(0,0)[lt]{\lineheight{1.25}\smash{\begin{tabular}[t]{l}$u$\end{tabular}}}}%
    \put(0.43494575,0.19299738){\color[rgb]{0,0,0}\makebox(0,0)[lt]{\lineheight{1.25}\smash{\begin{tabular}[t]{l}$w$\end{tabular}}}}%
    \put(0.68554456,0.2017813){\color[rgb]{0,0,0}\makebox(0,0)[lt]{\lineheight{1.25}\smash{\begin{tabular}[t]{l}$\gamma$\end{tabular}}}}%
    \put(0,0){\includegraphics[width=\unitlength,page=2]{exempleState.pdf}}%
    \put(0.25773229,0.04118123){\color[rgb]{0,0,0}\makebox(0,0)[lt]{\lineheight{1.25}\smash{\begin{tabular}[t]{l}$U$\end{tabular}}}}%
    \put(0.48701464,0.03467227){\color[rgb]{0,0,0}\makebox(0,0)[lt]{\lineheight{1.25}\smash{\begin{tabular}[t]{l}$W$\end{tabular}}}}%
    \put(0.73185394,0.03821341){\color[rgb]{0,0,0}\makebox(0,0)[lt]{\lineheight{1.25}\smash{\begin{tabular}[t]{l}$Y$\end{tabular}}}}%
    \put(0,0){\includegraphics[width=\unitlength,page=3]{exempleState.pdf}}%
  \end{picture}%
\endgroup%

\end{center}
with $u \in U$, $w \in W$, $\gamma \in Y^*$; in this picture we just show $\mathbf{T}$ in a disk which intersects the boundary. A $\partial\boldsymbol{\Sigma}$-ribbon graph together with a state is called a {\em stated ribbon graph}. If $s = (s_1, \ldots, s_l)$ is a state of the ribbon graph $\mathbf{T}$ (\textit{i.e.} a labelling of the boundary points ordered by increasing height), we denote by $\mathbf{T}^s$ the ribbon graph $\mathbf{T}$ with the state $s$.

\smallskip

\indent Denote by $M^{\mathrm{st}}_H(\Sigma_{g,n}^{\circ,\bullet})$ the vector space freely generated by the (isotopy classes of) stated ribbon graphs, over the base field $k$ of $H$. In $M^{\mathrm{st}}_H(\Sigma_{g,n}^{\circ,\bullet})$ we ``multilinearize'' the states, \textit{i.e} we put the relations $\mathbf{T}^{(s_1, \ldots, \lambda s_i + \mu s'_i, \ldots, s_n)} = \lambda \mathbf{T}^{(s_1, \ldots, s_i, \ldots, s_n)} + \mu \mathbf{T}^{(s_1, \ldots, s'_i, \ldots, s_n)}$ for all $i$, all $\lambda,\mu \in k$ and any $\partial\boldsymbol{\Sigma}$-ribbon graph $\mathbf{T}$ which can be labelled with these states.

\smallskip

\indent Let $\mathbf{T}^s$ be a stated ribbon graph which looks as follows in a disk $D$ which intersects the boundary:
\begin{equation}\label{tangleInDisk}
\begingroup%
  \makeatletter%
  \providecommand\color[2][]{%
    \errmessage{(Inkscape) Color is used for the text in Inkscape, but the package 'color.sty' is not loaded}%
    \renewcommand\color[2][]{}%
  }%
  \providecommand\transparent[1]{%
    \errmessage{(Inkscape) Transparency is used (non-zero) for the text in Inkscape, but the package 'transparent.sty' is not loaded}%
    \renewcommand\transparent[1]{}%
  }%
  \providecommand\rotatebox[2]{#2}%
  \newcommand*\fsize{\dimexpr\f@size pt\relax}%
  \newcommand*\lineheight[1]{\fontsize{\fsize}{#1\fsize}\selectfont}%
  \ifx\svgwidth\undefined%
    \setlength{\unitlength}{126.19012465bp}%
    \ifx\svgscale\undefined%
      \relax%
    \else%
      \setlength{\unitlength}{\unitlength * \real{\svgscale}}%
    \fi%
  \else%
    \setlength{\unitlength}{\svgwidth}%
  \fi%
  \global\let\svgwidth\undefined%
  \global\let\svgscale\undefined%
  \makeatother%
  \begin{picture}(1,0.49958293)%
    \lineheight{1}%
    \setlength\tabcolsep{0pt}%
    \put(0,0){\includegraphics[width=\unitlength,page=1]{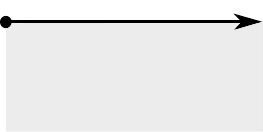}}%
    \put(0.45105949,0.16828492){\color[rgb]{0,0,0}\makebox(0,0)[lt]{\lineheight{1.25}\smash{\begin{tabular}[t]{l}$T$\end{tabular}}}}%
    \put(0.18852961,0.30935651){\color[rgb]{0,0,0}\makebox(0,0)[lt]{\lineheight{1.25}\smash{\begin{tabular}[t]{l}$V_1$\end{tabular}}}}%
    \put(0,0){\includegraphics[width=\unitlength,page=2]{planarTangle.pdf}}%
    \put(0.68954449,0.31080404){\color[rgb]{0,0,0}\makebox(0,0)[lt]{\lineheight{1.25}\smash{\begin{tabular}[t]{l}$V_l$\end{tabular}}}}%
    \put(0.1632712,0.0179534){\color[rgb]{0,0,0}\makebox(0,0)[lt]{\lineheight{1.25}\smash{\begin{tabular}[t]{l}$W_1$\end{tabular}}}}%
    \put(0.68848308,0.01790013){\color[rgb]{0,0,0}\makebox(0,0)[lt]{\lineheight{1.25}\smash{\begin{tabular}[t]{l}$W_m$\end{tabular}}}}%
    \put(0,0){\includegraphics[width=\unitlength,page=3]{planarTangle.pdf}}%
    \put(0.2794529,0.4552337){\color[rgb]{0,0,0}\makebox(0,0)[lt]{\lineheight{1.25}\smash{\begin{tabular}[t]{l}$s_1$\end{tabular}}}}%
    \put(0.46331786,0.46198791){\color[rgb]{0,0,0}\makebox(0,0)[lt]{\lineheight{1.25}\smash{\begin{tabular}[t]{l}$\ldots$\end{tabular}}}}%
    \put(0.66219219,0.4552337){\color[rgb]{0,0,0}\makebox(0,0)[lt]{\lineheight{1.25}\smash{\begin{tabular}[t]{l}$s_l$\end{tabular}}}}%
  \end{picture}%
\endgroup%

\end{equation}
where $m,l \geq 0$ and $T$ is any oriented and colored $(m,l)$-ribbon graph in $\mathbb{R}^3$. We emphasize that $m$ or $l$ may be equal to $0$. Each strand carries an orientation. For $1 \leq i \leq l$ let $\epsilon(i) \in \{\pm\}$ be $+$ (resp. $-$) if the orientation of the strand colored by $V_i$ points inward (resp. outward) the surface at the boundary. Set $V_i^+ = V_i$ and $V_i^- = V_i^*$ so that $s_i \in V_i^{-\epsilon(i)}$. Similarly for $1 \leq j \leq m$ let $\eta(j) \in \{\pm\}$ be $+$ (resp. $-$) if the strand colored by $W_j$ is outgoing (resp. incoming) at the bottom of $T$ and set $W_j^+ = W_j$ and $W_j^- = W_j^*$. The Reshetikhin--Turaev functor $F_{\mathrm{RT}}$ \cite{RT} applied to $T$ gives a linear map
\[ F_{\mathrm{RT}}(T) : W_1^{\eta(1)} \otimes \ldots \otimes W_m^{\eta(m)} \to V_1^{\epsilon(1)} \otimes \ldots \otimes V_l^{\epsilon(l)}. \]
Denote as usual $\langle -, - \rangle$ for the evaluation pairing: $\langle \varphi, v \rangle = \langle v, \varphi \rangle = \varphi(v)$ for any $v \in V_i$, $\varphi \in V_i^*$. We extend this pairing straightforwardly:
\begin{equation}\label{dualityBracket}
\begin{array}{rrcl}
\langle -, - \rangle : &\left( V_1^{-\epsilon(1)} \otimes \ldots \otimes V_l^{-\epsilon(l)} \right) \otimes \left( V_1^{\epsilon(1)} \otimes \ldots \otimes V_l^{\epsilon(l)} \right) & \to & k\\[.5em]
& (s_1 \otimes \ldots \otimes s_l) \otimes (x_1 \otimes \ldots \otimes x_l) & \mapsto & \langle s_1, x_1 \rangle \ldots \langle s_l, x_l \rangle 
\end{array}
\end{equation}
Let $\bigl(w^{j,+}_r\bigr)_r$ be a basis of $W_j$ and $\bigl(w^{j,-}_r\bigr)_r$ be its dual basis. Then we identify $\mathbf{T}^s$ with the following element of $M^{\mathrm{st}}_H(\Sigma_{g,n}^{\circ,\bullet})$:
\begin{equation}\label{skeinRelation}
\begingroup%
  \makeatletter%
  \providecommand\color[2][]{%
    \errmessage{(Inkscape) Color is used for the text in Inkscape, but the package 'color.sty' is not loaded}%
    \renewcommand\color[2][]{}%
  }%
  \providecommand\transparent[1]{%
    \errmessage{(Inkscape) Transparency is used (non-zero) for the text in Inkscape, but the package 'transparent.sty' is not loaded}%
    \renewcommand\transparent[1]{}%
  }%
  \providecommand\rotatebox[2]{#2}%
  \newcommand*\fsize{\dimexpr\f@size pt\relax}%
  \newcommand*\lineheight[1]{\fontsize{\fsize}{#1\fsize}\selectfont}%
  \ifx\svgwidth\undefined%
    \setlength{\unitlength}{381.87954953bp}%
    \ifx\svgscale\undefined%
      \relax%
    \else%
      \setlength{\unitlength}{\unitlength * \real{\svgscale}}%
    \fi%
  \else%
    \setlength{\unitlength}{\svgwidth}%
  \fi%
  \global\let\svgwidth\undefined%
  \global\let\svgscale\undefined%
  \makeatother%
  \begin{picture}(1,0.09897297)%
    \lineheight{1}%
    \setlength\tabcolsep{0pt}%
    \put(0,0){\includegraphics[width=\unitlength,page=1]{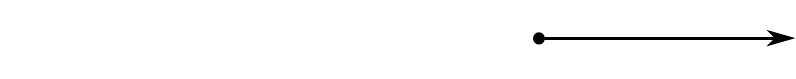}}%
    \put(0.74017001,0.06805863){\color[rgb]{0,0,0}\makebox(0,0)[lt]{\lineheight{1.25}\smash{\begin{tabular}[t]{l}$w^{1,-\eta(1)}_{r_1}$\end{tabular}}}}%
    \put(0.8151311,0.06455063){\color[rgb]{0,0,0}\makebox(0,0)[lt]{\lineheight{1.25}\smash{\begin{tabular}[t]{l}$\ldots$\end{tabular}}}}%
    \put(0.88646875,0.06779516){\color[rgb]{0,0,0}\makebox(0,0)[lt]{\lineheight{1.25}\smash{\begin{tabular}[t]{l}$w^{m,-\eta(m)}_{r_m}$\end{tabular}}}}%
    \put(-0.00058827,0.03803108){\color[rgb]{0,0,0}\makebox(0,0)[lt]{\lineheight{1.25}\smash{\begin{tabular}[t]{l}$\sum_{r_1, \ldots, r_m} \left\langle s_1 \otimes \ldots \otimes s_l, F_{\mathrm{RT}}(T)\!\left( w^{1,\eta(1)}_{r_1} \otimes \ldots w^{m,\eta(m)}_{r_m} \right) \right\rangle$\end{tabular}}}}%
    \put(0,0){\includegraphics[width=\unitlength,page=2]{skeinRelation.pdf}}%
    \put(0.71344173,0.01549328){\color[rgb]{0,0,0}\makebox(0,0)[lt]{\lineheight{1.25}\smash{\begin{tabular}[t]{l}$W_1$\end{tabular}}}}%
    \put(0.90663496,0.0154757){\color[rgb]{0,0,0}\makebox(0,0)[lt]{\lineheight{1.25}\smash{\begin{tabular}[t]{l}$W_m$\end{tabular}}}}%
    \put(0,0){\includegraphics[width=\unitlength,page=3]{skeinRelation.pdf}}%
  \end{picture}%
\endgroup%

\end{equation}
We stress that the stated ribbon graphs appearing in the linear combination \eqref{skeinRelation} are equal to $\mathbf{T}$ outside the disk $D$ represented in \eqref{tangleInDisk}. Such an identification is called a {\em stated skein relation}.

\begin{defi}\label{defStatedSkein}
The stated skein algebra of $\Sigma_{g,n}^{\circ,\bullet}$ associated to $H$, which we denote by $\mathcal{S}^{\mathrm{st}}_H(\Sigma_{g,n}^{\circ,\bullet})$, is the quotient of $M^{\mathrm{st}}_H(\Sigma_{g,n}^{\circ,\bullet})$ by all the stated skein relations \eqref{skeinRelation}.
\end{defi}
\noindent This definition agrees with the one to appear in \cite{CKL} for more general surfaces than $\Sigma_{g,n}^{\circ,\bullet}$.

\smallskip

\indent Given two stated ribbon graphs $\mathbf{T}_1^s$, $\mathbf{T}_2^t$ we can use isotopy to transform $\mathbf{T}_1^s$ into $\textstyle (\mathbf{T}_1^s)^- \subset \Sigma_{g,n}^{\circ,\bullet} \times [0,\frac{1}{2} [$ and $\mathbf{T}_2$ into $\textstyle (\mathbf{T}_2^t)^+ \subset \Sigma_{g,n}^{\circ,\bullet} \times ]\frac{1}{2}, 1]$. The disjoint union $(\mathbf{T}_1^s)^- \cup (\mathbf{T}_2^t)^+$ is denoted by $\mathbf{T}_1^s \ast \mathbf{T}_2^t$. Each boundary point keeps its state in $(\mathbf{T}_1^s)^- \cup (\mathbf{T}_2^t)^+$ so that the result is naturally a stated ribbon graph. This operation $\ast$ defines an associative algebra structure on $M^{\mathrm{st}}_H(\Sigma_{g,n}^{\circ,\bullet})$ which descends to $\mathcal{S}^{\mathrm{st}}_H(\Sigma_{g,n}^{\circ,\bullet})$. The unit is the empty $\partial\boldsymbol{\Sigma}$-ribbon graph.

\smallskip

\indent Here is an example of stated skein relation which we will use later:

\begin{equation}\label{evalBraiding}
\begingroup%
  \makeatletter%
  \providecommand\color[2][]{%
    \errmessage{(Inkscape) Color is used for the text in Inkscape, but the package 'color.sty' is not loaded}%
    \renewcommand\color[2][]{}%
  }%
  \providecommand\transparent[1]{%
    \errmessage{(Inkscape) Transparency is used (non-zero) for the text in Inkscape, but the package 'transparent.sty' is not loaded}%
    \renewcommand\transparent[1]{}%
  }%
  \providecommand\rotatebox[2]{#2}%
  \newcommand*\fsize{\dimexpr\f@size pt\relax}%
  \newcommand*\lineheight[1]{\fontsize{\fsize}{#1\fsize}\selectfont}%
  \ifx\svgwidth\undefined%
    \setlength{\unitlength}{287.44900273bp}%
    \ifx\svgscale\undefined%
      \relax%
    \else%
      \setlength{\unitlength}{\unitlength * \real{\svgscale}}%
    \fi%
  \else%
    \setlength{\unitlength}{\svgwidth}%
  \fi%
  \global\let\svgwidth\undefined%
  \global\let\svgscale\undefined%
  \makeatother%
  \begin{picture}(1,0.29529047)%
    \lineheight{1}%
    \setlength\tabcolsep{0pt}%
    \put(0,0){\includegraphics[width=\unitlength,page=1]{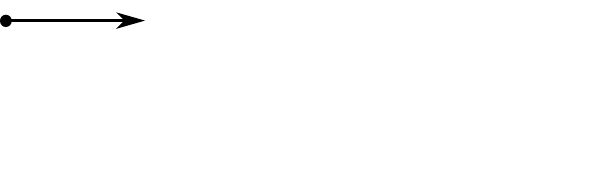}}%
    \put(0.06685406,0.27483276){\color[rgb]{0,0,0}\makebox(0,0)[lt]{\lineheight{1.25}\smash{\begin{tabular}[t]{l}$v$\end{tabular}}}}%
    \put(0.15450349,0.27878623){\color[rgb]{0,0,0}\makebox(0,0)[lt]{\lineheight{1.25}\smash{\begin{tabular}[t]{l}$\varphi$\end{tabular}}}}%
    \put(0,0){\includegraphics[width=\unitlength,page=2]{evalBraiding.pdf}}%
    \put(0.04038675,0.17417135){\color[rgb]{0,0,0}\makebox(0,0)[lt]{\lineheight{1.25}\smash{\begin{tabular}[t]{l}$Y$\end{tabular}}}}%
    \put(0.17251204,0.17513637){\color[rgb]{0,0,0}\makebox(0,0)[lt]{\lineheight{1.25}\smash{\begin{tabular}[t]{l}$X$\end{tabular}}}}%
    \put(0,0){\includegraphics[width=\unitlength,page=3]{evalBraiding.pdf}}%
    \put(0.26393868,0.05025881){\color[rgb]{0,0,0}\makebox(0,0)[lt]{\lineheight{1.25}\smash{\begin{tabular}[t]{l}$= \: \sum_{(R)}$\end{tabular}}}}%
    \put(0,0){\includegraphics[width=\unitlength,page=4]{evalBraiding.pdf}}%
    \put(0.59281727,0.11380332){\color[rgb]{0,0,0}\makebox(0,0)[lt]{\lineheight{1.25}\smash{\begin{tabular}[t]{l}$S(R_{(2)})v$\end{tabular}}}}%
    \put(0.42975207,0.11413277){\color[rgb]{0,0,0}\makebox(0,0)[lt]{\lineheight{1.25}\smash{\begin{tabular}[t]{l}$\varphi(R_{(1)}?)$\end{tabular}}}}%
    \put(0,0){\includegraphics[width=\unitlength,page=5]{evalBraiding.pdf}}%
    \put(0.45785212,0.00457604){\color[rgb]{0,0,0}\makebox(0,0)[lt]{\lineheight{1.25}\smash{\begin{tabular}[t]{l}$Y$\end{tabular}}}}%
    \put(0.66825206,0.00554104){\color[rgb]{0,0,0}\makebox(0,0)[lt]{\lineheight{1.25}\smash{\begin{tabular}[t]{l}$X$\end{tabular}}}}%
    \put(0,0){\includegraphics[width=\unitlength,page=6]{evalBraiding.pdf}}%
    \put(0.26612122,0.21465081){\color[rgb]{0,0,0}\makebox(0,0)[lt]{\lineheight{1.25}\smash{\begin{tabular}[t]{l}$= \: \sum_{i,j} \left\langle v \otimes \varphi, c_{Y,X^*}(y_i \otimes x^j) \right\rangle$\end{tabular}}}}%
    \put(0,0){\includegraphics[width=\unitlength,page=7]{evalBraiding.pdf}}%
    \put(0.82350998,0.27483276){\color[rgb]{0,0,0}\makebox(0,0)[lt]{\lineheight{1.25}\smash{\begin{tabular}[t]{l}$y^i$\end{tabular}}}}%
    \put(0.91115944,0.27878623){\color[rgb]{0,0,0}\makebox(0,0)[lt]{\lineheight{1.25}\smash{\begin{tabular}[t]{l}$x_j$\end{tabular}}}}%
    \put(0,0){\includegraphics[width=\unitlength,page=8]{evalBraiding.pdf}}%
    \put(0.79704269,0.17417134){\color[rgb]{0,0,0}\makebox(0,0)[lt]{\lineheight{1.25}\smash{\begin{tabular}[t]{l}$Y$\end{tabular}}}}%
    \put(0.92916799,0.17513636){\color[rgb]{0,0,0}\makebox(0,0)[lt]{\lineheight{1.25}\smash{\begin{tabular}[t]{l}$X$\end{tabular}}}}%
    \put(0,0){\includegraphics[width=\unitlength,page=9]{evalBraiding.pdf}}%
  \end{picture}%
\endgroup%

\end{equation}

\noindent where $v \in X$, $\varphi \in Y^*$, $c$ is the braiding in $H\text{-}\mathrm{mod}$, $(x_i)$ is a basis of $X$, $(y_j)$ is a basis of $Y$, $(x^i)$, $(y^j)$ are their dual bases and $\varphi(R_{(1)}?)$ denotes the linear form on $Y$ defined by $y \mapsto \varphi(R_{(1)}y)$. The detail of the second equality is
\begin{align*}
&\sum_{i,j} \left\langle v \otimes \varphi, c_{Y,X^*}(y_i \otimes x^j) \right\rangle y^i \otimes x_j = \sum_{(R),i,j} \left\langle v \otimes \varphi, R_{(2)}x^j \otimes R_{(1)}y_i \right\rangle y^i \otimes x_j\\
=\:&\sum_{(R),i,j} \left\langle x^j, S(R_{(2)})v\right\rangle \left\langle \varphi, R_{(1)}y_i \right\rangle y^i \otimes x_j =\sum_{(R)} \varphi(R_{(1)}?) \otimes S(R_{(2)})v.
\end{align*}
One proves similarly that
\begin{equation}\label{evalBraiding2}
\begingroup%
  \makeatletter%
  \providecommand\color[2][]{%
    \errmessage{(Inkscape) Color is used for the text in Inkscape, but the package 'color.sty' is not loaded}%
    \renewcommand\color[2][]{}%
  }%
  \providecommand\transparent[1]{%
    \errmessage{(Inkscape) Transparency is used (non-zero) for the text in Inkscape, but the package 'transparent.sty' is not loaded}%
    \renewcommand\transparent[1]{}%
  }%
  \providecommand\rotatebox[2]{#2}%
  \newcommand*\fsize{\dimexpr\f@size pt\relax}%
  \newcommand*\lineheight[1]{\fontsize{\fsize}{#1\fsize}\selectfont}%
  \ifx\svgwidth\undefined%
    \setlength{\unitlength}{212.41435739bp}%
    \ifx\svgscale\undefined%
      \relax%
    \else%
      \setlength{\unitlength}{\unitlength * \real{\svgscale}}%
    \fi%
  \else%
    \setlength{\unitlength}{\svgwidth}%
  \fi%
  \global\let\svgwidth\undefined%
  \global\let\svgscale\undefined%
  \makeatother%
  \begin{picture}(1,0.18069311)%
    \lineheight{1}%
    \setlength\tabcolsep{0pt}%
    \put(0,0){\includegraphics[width=\unitlength,page=1]{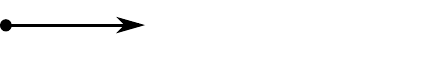}}%
    \put(0.09047003,0.14285349){\color[rgb]{0,0,0}\makebox(0,0)[lt]{\lineheight{1.25}\smash{\begin{tabular}[t]{l}$v$\end{tabular}}}}%
    \put(0.20863547,0.14329933){\color[rgb]{0,0,0}\makebox(0,0)[lt]{\lineheight{1.25}\smash{\begin{tabular}[t]{l}$w$\end{tabular}}}}%
    \put(0,0){\includegraphics[width=\unitlength,page=2]{evalBraiding2.pdf}}%
    \put(0.05465323,0.00663377){\color[rgb]{0,0,0}\makebox(0,0)[lt]{\lineheight{1.25}\smash{\begin{tabular}[t]{l}$Y$\end{tabular}}}}%
    \put(0.23345131,0.00793968){\color[rgb]{0,0,0}\makebox(0,0)[lt]{\lineheight{1.25}\smash{\begin{tabular}[t]{l}$X$\end{tabular}}}}%
    \put(0,0){\includegraphics[width=\unitlength,page=3]{evalBraiding2.pdf}}%
    \put(0.35717416,0.06845382){\color[rgb]{0,0,0}\makebox(0,0)[lt]{\lineheight{1.25}\smash{\begin{tabular}[t]{l}$= \: \sum_{(R)}$\end{tabular}}}}%
    \put(0,0){\includegraphics[width=\unitlength,page=4]{evalBraiding2.pdf}}%
    \put(0.77410156,0.15835881){\color[rgb]{0,0,0}\makebox(0,0)[lt]{\lineheight{1.25}\smash{\begin{tabular}[t]{l}$R_{(2)}v$\end{tabular}}}}%
    \put(0.58263152,0.15533685){\color[rgb]{0,0,0}\makebox(0,0)[lt]{\lineheight{1.25}\smash{\begin{tabular}[t]{l}$R_{(1)}w$\end{tabular}}}}%
    \put(0,0){\includegraphics[width=\unitlength,page=5]{evalBraiding2.pdf}}%
    \put(0.60193264,0.00663379){\color[rgb]{0,0,0}\makebox(0,0)[lt]{\lineheight{1.25}\smash{\begin{tabular}[t]{l}$Y$\end{tabular}}}}%
    \put(0.85134731,0.00793965){\color[rgb]{0,0,0}\makebox(0,0)[lt]{\lineheight{1.25}\smash{\begin{tabular}[t]{l}$X$\end{tabular}}}}%
    \put(0,0){\includegraphics[width=\unitlength,page=6]{evalBraiding2.pdf}}%
  \end{picture}%
\endgroup%

\end{equation}

\subsection{Isomorphism $\mathcal{S}^{\mathrm{st}}_H(\Sigma_{g,n}^{\circ,\bullet}) \cong \mathcal{L}_{g,n}(H)$}\label{sectionIsoSstLgn}
This subsection is a generalization of \cite[\S 5]{FaitgHol}, which dealt with the case $H = U_q^{\mathrm{ad}}(\mathfrak{sl}_2)$. The main ingredient in the definition of the isomorphism will be the {\em holonomy map}
\[ \mathrm{hol} : \bigl\{\partial\boldsymbol{\Sigma}\text{-ribbon graphs} \bigr\} \to \bigl\{\text{tensors with coefficients in } \mathcal{L}_{g,n}(H) \bigr\} \]
defined in \cite[\S 4.1]{FaitgHol}. This map depends on a presentation of $\Sigma_{g,n}^{\circ,\bullet}$ as in \eqref{surfaceRuban}.

\smallskip

\indent Let $\mathbf{T}^s \subset \Sigma_{g,n}^{\circ, \bullet} \times [0,1]$ be a stated ribbon graph with state $s = (s_1, \ldots, s_l)$ which looks as follows near the boundary:
\begin{center}
\begingroup%
  \makeatletter%
  \providecommand\color[2][]{%
    \errmessage{(Inkscape) Color is used for the text in Inkscape, but the package 'color.sty' is not loaded}%
    \renewcommand\color[2][]{}%
  }%
  \providecommand\transparent[1]{%
    \errmessage{(Inkscape) Transparency is used (non-zero) for the text in Inkscape, but the package 'transparent.sty' is not loaded}%
    \renewcommand\transparent[1]{}%
  }%
  \providecommand\rotatebox[2]{#2}%
  \newcommand*\fsize{\dimexpr\f@size pt\relax}%
  \newcommand*\lineheight[1]{\fontsize{\fsize}{#1\fsize}\selectfont}%
  \ifx\svgwidth\undefined%
    \setlength{\unitlength}{113.66472394bp}%
    \ifx\svgscale\undefined%
      \relax%
    \else%
      \setlength{\unitlength}{\unitlength * \real{\svgscale}}%
    \fi%
  \else%
    \setlength{\unitlength}{\svgwidth}%
  \fi%
  \global\let\svgwidth\undefined%
  \global\let\svgscale\undefined%
  \makeatother%
  \begin{picture}(1,0.24204434)%
    \lineheight{1}%
    \setlength\tabcolsep{0pt}%
    \put(0,0){\includegraphics[width=\unitlength,page=1]{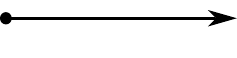}}%
    \put(0.19410107,0.19808386){\color[rgb]{0,0,0}\makebox(0,0)[lt]{\lineheight{1.25}\smash{\begin{tabular}[t]{l}$s_1$\end{tabular}}}}%
    \put(0.65684454,0.1948548){\color[rgb]{0,0,0}\makebox(0,0)[lt]{\lineheight{1.25}\smash{\begin{tabular}[t]{l}$s_l$\end{tabular}}}}%
    \put(0,0){\includegraphics[width=\unitlength,page=2]{boundaryPoints.pdf}}%
    \put(0.09595768,0.0358081){\color[rgb]{0,0,0}\makebox(0,0)[lt]{\lineheight{1.25}\smash{\begin{tabular}[t]{l}$V_1$\end{tabular}}}}%
    \put(0,0){\includegraphics[width=\unitlength,page=3]{boundaryPoints.pdf}}%
    \put(0.69721404,0.03497471){\color[rgb]{0,0,0}\makebox(0,0)[lt]{\lineheight{1.25}\smash{\begin{tabular}[t]{l}$V_l$\end{tabular}}}}%
    \put(0.37897518,0.06976563){\color[rgb]{0,0,0}\makebox(0,0)[lt]{\lineheight{1.25}\smash{\begin{tabular}[t]{l}$\ldots$\end{tabular}}}}%
  \end{picture}%
\endgroup%

\end{center}
\noindent As before, let $\epsilon(i) \in \{\pm\}$ be $+$ (resp. $-$) if the orientation of the strand colored by $V_i$ points inward (resp. outward) the surface at the boundary, and set $V_i^+ = V_i$, $V_i^- = V_i^*$ so that $s_i \in V_i^{-\epsilon(i)}$. By definition $\mathbf{T}$ is the $\partial\boldsymbol{\Sigma}$-ribbon graph obtained from $\mathbf{T}^s$ by forgetting the states. Then the holonomy map gives an element
\[ \mathrm{hol}(\mathbf{T}) \in \mathcal{L}_{g,n}(H) \otimes V_1^{\epsilon(1)} \otimes \ldots \otimes V_l^{\epsilon(l)}. \]
Let $\langle -, - \rangle$ be the evaluation pairing as in \eqref{dualityBracket}. Then $\langle s_1 \otimes \ldots \otimes s_l, - \rangle$ is a linear map $V_1^{\epsilon(1)} \otimes \ldots \otimes V_l^{\epsilon(l)} \to k$ and we define
\[ \mathrm{hol}^{\mathrm{st}}(\mathbf{T}^s) = \left(\mathrm{id}_ {\mathcal{L}_{g,n}(H)} \otimes \langle s_1 \otimes \ldots \otimes s_l, - \rangle\right)\!(\mathrm{hol}(\mathbf{T})) \in \mathcal{L}_{g,n}(H). \]
Extending linearly, this gives a map $\mathrm{hol}^{\mathrm{st}} : M^{\mathrm{st}}_H(\Sigma_{g,n}^{\circ,\bullet}) \to \mathcal{L}_{g,n}(H)$. It is immediate from the definition of $\mathrm{hol}$ in \cite{FaitgHol} that $\mathrm{hol}^{\mathrm{st}}$ is compatible with the stated skein relations, so it descends to $\mathcal{S}^{\mathrm{st}}_H(\Sigma_{g,n}^{\circ,\bullet})$.

\begin{defi}
We call $\mathrm{hol}^{\mathrm{st}} : \mathcal{S}^{\mathrm{st}}_H(\Sigma_{g,n}^{\circ,\bullet}) \to \mathcal{L}_{g,n}(H)$ the stated holonomy map.
\end{defi}

\begin{prop}\label{statedHolMorphism}
The stated holonomy map is a morphism of algebras.
\end{prop}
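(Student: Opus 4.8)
The plan is to prove the identity
\[ \mathrm{hol}^{\mathrm{st}}(\mathbf{T}_1^s \ast \mathbf{T}_2^t) = \mathrm{hol}^{\mathrm{st}}(\mathbf{T}_1^s)\, \mathrm{hol}^{\mathrm{st}}(\mathbf{T}_2^t), \]
where the left-hand product is the stacking product $\ast$ of $\mathcal{S}^{\mathrm{st}}_H(\Sigma_{g,n}^{\circ,\bullet})$ and the right-hand product is that of $\mathcal{L}_{g,n}(H)$. First I would recall the defining features of the holonomy map $\mathrm{hol}$ from \cite[\S4.1]{FaitgHol}: it is computed like the Reshetikhin--Turaev functor, except that each time a strand colored by $V$ runs around the generators $a_i,b_i$ or $m_j$ of the surface one inserts the quantum holonomy matrix $\overset{V}{A}(i)$, $\overset{V}{B}(i)$ or $\overset{V}{M}(j)$ from \eqref{matricesABMgn}, crossings contribute the braiding $R$-matrices, and coupons contribute $H$-morphisms. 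Since $\mathrm{hol}^{\mathrm{st}}$ is obtained from $\mathrm{hol}$ by pairing the free boundary legs against the state via \eqref{dualityBracket}, and since that pairing factorizes as $\langle s \otimes t, - \rangle = \langle s, -\rangle \langle t, -\rangle$ over the two disjoint sets of boundary points, it is enough to establish multiplicativity of $\mathrm{hol}$ as a map valued in $\mathcal{L}_{g,n}(H)$-tensors, keeping the boundary legs of $\mathbf{T}_1$ below those of $\mathbf{T}_2$ in accordance with the $\mathfrak{o}$-ordered diagram convention.

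Next I would reduce to a generating computation. Isotope so that $\mathbf{T}_1 \subset \Sigma_{g,n}^{\circ,\bullet}\times[0,\frac{1}{2}[$ and $\mathbf{T}_2 \subset \Sigma_{g,n}^{\circ,\bullet}\times]\frac{1}{2},1]$, and use the handle/puncture decomposition underlying the braided tensor product $\mathcal{L}_{g,n}(H) = \mathcal{L}_{1,0}(H)^{\widetilde{\otimes} g}\,\widetilde{\otimes}\,\mathcal{L}_{0,1}(H)^{\widetilde{\otimes} n}$ (Proposition \ref{propBraidedTensProduct}). Because Proposition \ref{productLgn} expresses the product of any two elements of $\mathcal{L}_{g,n}(H)$ through the three elementary formulas involving the embeddings $\mathfrak{i}_{A(i)},\mathfrak{i}_{B(i)},\mathfrak{i}_{M(j)}$ of \eqref{embeddingsFrakI}, and because \eqref{monomialsLgn} writes any holonomy as a monomial in these embeddings, it suffices to check multiplicativity when each of $\mathbf{T}_1,\mathbf{T}_2$ is a single strand passing once through a single ``window'' of the surface \eqref{surfaceRuban}, treating separately the case of the same window and the case of distinct windows.

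For two strands running through the \emph{same} window (say around $b_i$), stacking places one holonomy matrix above the other, and by the very definition of $\mathrm{hol}$ the resulting coefficient is the matrix product $\overset{V}{B}(i)_1\,(R')_{V,W}\,\overset{W}{B}(i)_2\,(R')_{V,W}^{-1}$, which is precisely the fusion relation \eqref{fusionrelation} of Proposition \ref{presentationLgn}; thus it equals $\mathrm{hol}$ of the fused strand, matching $\mathfrak{i}_{B(i)}(\varphi)\,\mathfrak{i}_{B(i)}(\psi)=\mathfrak{i}_{B(i)}(\varphi\psi)$. For strands running through \emph{distinct} windows, stacking forces the two strands to cross; applying $F_{\mathrm{RT}}$ together with the braiding skein relations \eqref{evalBraiding}--\eqref{evalBraiding2} replaces each crossing by $R$-matrix factors, and collecting them reproduces exactly the exchange relation (third relation of Proposition \ref{presentationLgn}), equivalently the third formula of Proposition \ref{productLgn}. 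The $A/B$ interaction within a single handle uses in addition the exchange relation of $\mathcal{L}_{1,0}(H)$ from Proposition \ref{presentationL10}.

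The main obstacle is the bookkeeping in the distinct-window case: one must track the precise $R$-matrices produced by the RT-evaluation of the stacking crossings and verify that they assemble into the twist $F_{g,n}$ defining the braided product, with the correct tensor-leg positions and heights. This is where the $\mathfrak{o}$-ordering of boundary points and the conventions in \eqref{evalBraiding}--\eqref{evalBraiding2} must be matched term by term; once this is done for the elementary generators, the general case follows by the reduction above. The argument runs parallel to \cite[\S5]{FaitgHol} for $\mathfrak{sl}_2$, the only new input being that all computations are now phrased through the matrix relations of Propositions \ref{presentationL10} and \ref{presentationLgn}, which hold for an arbitrary quasitriangular (ribbon) $H$.
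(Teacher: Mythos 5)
Your first reduction --- factorizing the evaluation pairing $\langle s_1\otimes\ldots\otimes s_l\otimes t_1\otimes\ldots\otimes t_m, -\rangle$ over the two disjoint sets of boundary points, so that everything rests on multiplicativity of $\mathrm{hol}$ itself --- is exactly the paper's proof. The difference lies in how that multiplicativity is handled. The paper simply quotes it: by \cite[Th. 4.4]{FaitgHol}, if $\mathrm{hol}(\mathbf{T}_1)=\sum_i x_i\otimes \mathbf{v}_i$ and $\mathrm{hol}(\mathbf{T}_2)=\sum_j y_j\otimes \mathbf{w}_j$, then $\mathrm{hol}(\mathbf{T}_1\ast\mathbf{T}_2)=\sum_{i,j}x_iy_j\otimes \mathbf{v}_i\otimes\mathbf{w}_j$, and pairing with the states finishes the argument in two lines. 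You instead propose to re-prove this multiplicativity from scratch, reducing to single strands through single windows and matching the stacking against the fusion and exchange relations of Propositions \ref{presentationL10} and \ref{presentationLgn}. That is indeed the strategy by which the cited theorem is established in \cite{FaitgHol}, so your route is viable in principle, but it buys nothing here and is where all the real work (and all the risk) sits: the step ``any holonomy is a monomial in the embeddings, hence it suffices to treat elementary strands'' glosses over isotopy invariance of $\mathrm{hol}$, graphs with coupons and internal crossings, single components winding through several windows, and the height bookkeeping of the $\mathfrak{o}$-ordering --- precisely the content of \cite[Th. 4.4]{FaitgHol}. If you allow yourself that citation, as the paper does, your proof collapses to its first paragraph; if you do not, your sketch of the generating computation would need to be expanded into a full induction on diagrams before it counts as a proof.
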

\begin{proof}
This follows from one of the main properties of $\mathrm{hol}$ proven in \cite{FaitgHol}. Let $\mathbf{T}^s_1$, $\mathbf{T}^t_2$ be stated ribbon graphs and let
\begin{center}
\begingroup%
  \makeatletter%
  \providecommand\color[2][]{%
    \errmessage{(Inkscape) Color is used for the text in Inkscape, but the package 'color.sty' is not loaded}%
    \renewcommand\color[2][]{}%
  }%
  \providecommand\transparent[1]{%
    \errmessage{(Inkscape) Transparency is used (non-zero) for the text in Inkscape, but the package 'transparent.sty' is not loaded}%
    \renewcommand\transparent[1]{}%
  }%
  \providecommand\rotatebox[2]{#2}%
  \newcommand*\fsize{\dimexpr\f@size pt\relax}%
  \newcommand*\lineheight[1]{\fontsize{\fsize}{#1\fsize}\selectfont}%
  \ifx\svgwidth\undefined%
    \setlength{\unitlength}{315.48854944bp}%
    \ifx\svgscale\undefined%
      \relax%
    \else%
      \setlength{\unitlength}{\unitlength * \real{\svgscale}}%
    \fi%
  \else%
    \setlength{\unitlength}{\svgwidth}%
  \fi%
  \global\let\svgwidth\undefined%
  \global\let\svgscale\undefined%
  \makeatother%
  \begin{picture}(1,0.0878418)%
    \lineheight{1}%
    \setlength\tabcolsep{0pt}%
    \put(0,0){\includegraphics[width=\unitlength,page=1]{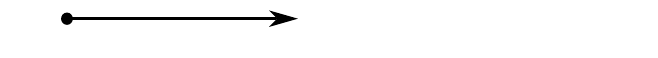}}%
    \put(0.16287845,0.07136599){\color[rgb]{0,0,0}\makebox(0,0)[lt]{\lineheight{1.25}\smash{\begin{tabular}[t]{l}$s_1$\end{tabular}}}}%
    \put(0.32959642,0.07020261){\color[rgb]{0,0,0}\makebox(0,0)[lt]{\lineheight{1.25}\smash{\begin{tabular}[t]{l}$s_l$\end{tabular}}}}%
    \put(0,0){\includegraphics[width=\unitlength,page=2]{boundaryPoints2.pdf}}%
    \put(0.12751919,0.01290101){\color[rgb]{0,0,0}\makebox(0,0)[lt]{\lineheight{1.25}\smash{\begin{tabular}[t]{l}$V_1$\end{tabular}}}}%
    \put(0,0){\includegraphics[width=\unitlength,page=3]{boundaryPoints2.pdf}}%
    \put(0.34414083,0.01260075){\color[rgb]{0,0,0}\makebox(0,0)[lt]{\lineheight{1.25}\smash{\begin{tabular}[t]{l}$V_l$\end{tabular}}}}%
    \put(0.2294852,0.02513528){\color[rgb]{0,0,0}\makebox(0,0)[lt]{\lineheight{1.25}\smash{\begin{tabular}[t]{l}$\ldots$\end{tabular}}}}%
    \put(-0.0011425,0.02281649){\color[rgb]{0,0,0}\makebox(0,0)[lt]{\lineheight{1.25}\smash{\begin{tabular}[t]{l}$\mathbf{T}^s_1 =$\end{tabular}}}}%
    \put(0,0){\includegraphics[width=\unitlength,page=4]{boundaryPoints2.pdf}}%
    \put(0.70964941,0.07136598){\color[rgb]{0,0,0}\makebox(0,0)[lt]{\lineheight{1.25}\smash{\begin{tabular}[t]{l}$t_1$\end{tabular}}}}%
    \put(0.87636736,0.07200365){\color[rgb]{0,0,0}\makebox(0,0)[lt]{\lineheight{1.25}\smash{\begin{tabular}[t]{l}$t_m$\end{tabular}}}}%
    \put(0,0){\includegraphics[width=\unitlength,page=5]{boundaryPoints2.pdf}}%
    \put(0.66828665,0.012901){\color[rgb]{0,0,0}\makebox(0,0)[lt]{\lineheight{1.25}\smash{\begin{tabular}[t]{l}$W_1$\end{tabular}}}}%
    \put(0,0){\includegraphics[width=\unitlength,page=6]{boundaryPoints2.pdf}}%
    \put(0.89091181,0.01260074){\color[rgb]{0,0,0}\makebox(0,0)[lt]{\lineheight{1.25}\smash{\begin{tabular}[t]{l}$W_m$\end{tabular}}}}%
    \put(0.77625618,0.02513527){\color[rgb]{0,0,0}\makebox(0,0)[lt]{\lineheight{1.25}\smash{\begin{tabular}[t]{l}$\ldots$\end{tabular}}}}%
    \put(0.54578838,0.02281649){\color[rgb]{0,0,0}\makebox(0,0)[lt]{\lineheight{1.25}\smash{\begin{tabular}[t]{l}$\mathbf{T}^t_2 =$\end{tabular}}}}%
  \end{picture}%
\endgroup%

\end{center}
be their boundary points and states. By definition of the product of stated ribbon graphs and of an ordered $\partial\boldsymbol{\Sigma}$-ribbon graph diagram, the boundary points of $\mathbf{T}^s_1 \ast \mathbf{T}_2^t$ are arranged as follows:
\begin{center}
\begingroup%
  \makeatletter%
  \providecommand\color[2][]{%
    \errmessage{(Inkscape) Color is used for the text in Inkscape, but the package 'color.sty' is not loaded}%
    \renewcommand\color[2][]{}%
  }%
  \providecommand\transparent[1]{%
    \errmessage{(Inkscape) Transparency is used (non-zero) for the text in Inkscape, but the package 'transparent.sty' is not loaded}%
    \renewcommand\transparent[1]{}%
  }%
  \providecommand\rotatebox[2]{#2}%
  \newcommand*\fsize{\dimexpr\f@size pt\relax}%
  \newcommand*\lineheight[1]{\fontsize{\fsize}{#1\fsize}\selectfont}%
  \ifx\svgwidth\undefined%
    \setlength{\unitlength}{269.48703354bp}%
    \ifx\svgscale\undefined%
      \relax%
    \else%
      \setlength{\unitlength}{\unitlength * \real{\svgscale}}%
    \fi%
  \else%
    \setlength{\unitlength}{\svgwidth}%
  \fi%
  \global\let\svgwidth\undefined%
  \global\let\svgscale\undefined%
  \makeatother%
  \begin{picture}(1,0.10283644)%
    \lineheight{1}%
    \setlength\tabcolsep{0pt}%
    \put(0,0){\includegraphics[width=\unitlength,page=1]{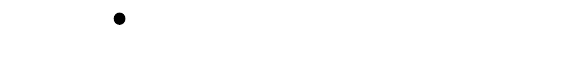}}%
    \put(0.28437267,0.08354818){\color[rgb]{0,0,0}\makebox(0,0)[lt]{\lineheight{1.25}\smash{\begin{tabular}[t]{l}$s_1$\end{tabular}}}}%
    \put(0.47954943,0.08218621){\color[rgb]{0,0,0}\makebox(0,0)[lt]{\lineheight{1.25}\smash{\begin{tabular}[t]{l}$s_l$\end{tabular}}}}%
    \put(-0.00078325,0.02720825){\color[rgb]{0,0,0}\makebox(0,0)[lt]{\lineheight{1.25}\smash{\begin{tabular}[t]{l}$\mathbf{T}^s_1 \ast \mathbf{T}^t_2 =$\end{tabular}}}}%
    \put(0,0){\includegraphics[width=\unitlength,page=2]{boundaryPoints3.pdf}}%
    \put(0.6600864,0.08354819){\color[rgb]{0,0,0}\makebox(0,0)[lt]{\lineheight{1.25}\smash{\begin{tabular}[t]{l}$t_1$\end{tabular}}}}%
    \put(0.8552632,0.08429471){\color[rgb]{0,0,0}\makebox(0,0)[lt]{\lineheight{1.25}\smash{\begin{tabular}[t]{l}$t_m$\end{tabular}}}}%
    \put(0,0){\includegraphics[width=\unitlength,page=3]{boundaryPoints3.pdf}}%
    \put(0.24297756,0.01510322){\color[rgb]{0,0,0}\makebox(0,0)[lt]{\lineheight{1.25}\smash{\begin{tabular}[t]{l}$V_1$\end{tabular}}}}%
    \put(0,0){\includegraphics[width=\unitlength,page=4]{boundaryPoints3.pdf}}%
    \put(0.49657657,0.01475171){\color[rgb]{0,0,0}\makebox(0,0)[lt]{\lineheight{1.25}\smash{\begin{tabular}[t]{l}$V_l$\end{tabular}}}}%
    \put(0.36234922,0.02942589){\color[rgb]{0,0,0}\makebox(0,0)[lt]{\lineheight{1.25}\smash{\begin{tabular}[t]{l}$\ldots$\end{tabular}}}}%
    \put(0,0){\includegraphics[width=\unitlength,page=5]{boundaryPoints3.pdf}}%
    \put(0.6042833,0.01510324){\color[rgb]{0,0,0}\makebox(0,0)[lt]{\lineheight{1.25}\smash{\begin{tabular}[t]{l}$W_1$\end{tabular}}}}%
    \put(0,0){\includegraphics[width=\unitlength,page=6]{boundaryPoints3.pdf}}%
    \put(0.8722903,0.01475171){\color[rgb]{0,0,0}\makebox(0,0)[lt]{\lineheight{1.25}\smash{\begin{tabular}[t]{l}$W_m$\end{tabular}}}}%
    \put(0.73806304,0.02942589){\color[rgb]{0,0,0}\makebox(0,0)[lt]{\lineheight{1.25}\smash{\begin{tabular}[t]{l}$\ldots$\end{tabular}}}}%
  \end{picture}%
\endgroup%

\end{center}
Let $\mathbf{T}_1$, $\mathbf{T}_2$ be the $\partial\boldsymbol{\Sigma}$-ribbon graphs without states and write $\textstyle \mathrm{hol}(\mathbf{T}_1) = \sum_i x_i \otimes \mathbf{v}_i$, $\textstyle \mathrm{hol}(\mathbf{T}_2) = \sum_j y_j \otimes \mathbf{w}_j$ with $x_i, y_j \in \mathcal{L}_{g,n}(H)$, $\mathbf{v}_i \in V_1^{\epsilon(1)} \otimes \ldots \otimes V_l^{\epsilon(l)}$ and $\mathbf{w}_j \in W_1^{\eta(1)} \otimes \ldots \otimes W_m^{\eta(m)}$, where the $\epsilon$'s and $\eta$'s are signs as above.
We know from \cite[Th. 4.4]{FaitgHol} that $\textstyle \mathrm{hol}(\mathbf{T}_1 \ast \mathbf{T}_2) = \sum_{i,j} x_iy_j \otimes \mathbf{v}_i \otimes \mathbf{w}_j$. Hence:
\begin{align*}
\mathrm{hol}^{\mathrm{st}}(\mathbf{T}^s_1 \ast \mathbf{T}^t_2) &= \left(\mathrm{id}_ {\mathcal{L}_{g,n}(H)} \otimes \langle s_1 \otimes \ldots \otimes s_l \otimes t_1 \otimes \ldots \otimes t_m, - \rangle\right)\!(\mathrm{hol}(\mathbf{T}_1 \ast \mathbf{T}_2))\\
&= \sum_{i,j} x_iy_j \, \langle s_1 \otimes \ldots \otimes s_l, \mathbf{v}_i \rangle \, \langle t_1 \otimes \ldots \otimes t_m, \mathbf{w}_j\rangle = \mathrm{hol}^{\mathrm{st}}(\mathbf{T}^s_1) \, \mathrm{hol}^{\mathrm{st}}(\mathbf{T}^t _2). \qedhere
\end{align*}
\end{proof}

\indent We now look for a right inverse to $\mathrm{hol}^{\mathrm{st}}$. Recall that $H^{\circ}$ is spanned by the matrix coefficients $_V\phi^f_v$ of finite-dimensional $H$-modules and that $\mathcal{L}_{g,n}(H)$ is $(H^{\circ})^{\otimes (2g+n)}$ as a vector space. We define a linear map $\xi_{g,n} : \mathcal{L}_{g,n}(H) \to \mathcal{S}^{\mathrm{st}}_H(\Sigma_{g,n}^{\circ,\bullet})$ by

\smallskip

\begin{equation}\label{defXign}
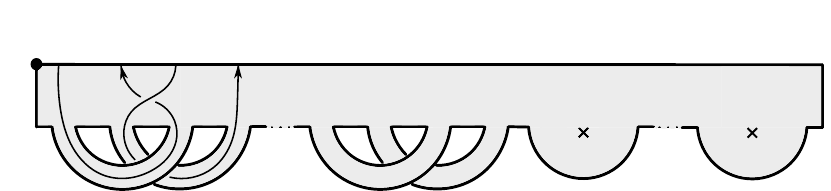
\end{equation}

\noindent Note that this definition implicitly extracts the tensor $f \otimes x \in X^* \otimes X$ from the matrix coefficient $_X\phi^f_x$, which might be ill-defined. Such a situation never occurs in the following cases, where therefore $\xi_{g,n}$ is well-defined:
\begin{align}
\begin{split}\label{assumptionsForSkein}
\bullet &\text{ the base field of } H \text{ is algebraically closed and } H\text{-}\mathrm{mod} \text{ is semisimple,}\\
\bullet & \text{ or } H = U_q^{\mathrm{ad}}(\mathfrak{g}) \text{ over } \mathbb{C}(q),\\
\bullet & \text{ or } H \text{ is finite-dimensional.}
\end{split}
\end{align}
Here $H\text{-}\mathrm{mod}$ is the abelian category of finite-dimensional $H$-modules. Indeed, in the first case we have $\textstyle H^{\circ} = \bigoplus_{X \in \mathrm{Irr}(H)} C(X)$ and $C(X) \cong X^* \otimes X$ \cite[Th.\,27.8]{CR}, where $\mathrm{Irr}(H)$ is the set of isomorphism classes of irreducible $H$-modules. The second case similarly follows from $\textstyle \mathcal{O}_q(G) = \bigoplus_{\mu \in P_+} V_{\mu}^* \otimes V_{\mu}$, called Peter--Weyl decomposition \cite[\S 3.10]{VY}. The last case is proven in \cite[\S 5.1.2]{FaitgDer} and is based on the use of the regular representation $_HH$. In the sequel we assume that $H$ belongs to one of these three cases.

\begin{lem}\label{lemmeXiSurjective}
The linear map $\xi_{g,n} : \mathcal{L}_{g,n}(H) \to \mathcal{S}^{\mathrm{st}}_H(\Sigma_{g,n}^{\circ,\bullet})$ is surjective.
\end{lem}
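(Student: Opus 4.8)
The plan is to show that every generator of $\mathcal{S}^{\mathrm{st}}_H(\Sigma_{g,n}^{\circ,\bullet})$, namely every stated ribbon graph $\mathbf{T}^s$, can be rewritten modulo the stated skein relations as a linear combination of the particular stated ribbon graphs appearing on the right-hand side of \eqref{defXign}; since these are by definition the elements $\xi_{g,n}\bigl({}_{U_1}\phi^{e_1}_{u_1} \otimes \cdots \otimes {}_{W_n}\phi^{l_n}_{w_n}\bigr)$, and since $H^{\circ}$ is spanned by matrix coefficients ${}_V\phi^f_v$, this gives surjectivity. First I would fix the band presentation of the surface displayed in \eqref{surfaceRuban}, in which $\Sigma_{g,n}^{\circ,\bullet}$ is realized as a disk with $2g+n$ bands attached along arcs accumulating at the boundary puncture $\bullet$; the cores of these bands are the standard generator arcs carrying the colors $U_i, V_i$ (for the handles) and $W_j$ (for the punctures) in \eqref{defXign}.

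The main topological step is to bring $\mathbf{T}$ into a standard position relative to this presentation. Using isotopy I would arrange that $\mathbf{T}$ meets the interior of each band in a family of parallel strands running along the core, and that all crossings, coupons and local extrema of $\mathbf{T}$ are confined to a single disk $D_0 \subset \Sigma_{g,n}^{\circ,\bullet}$ adjacent to $\bullet$, through which the bands are attached. I would then collapse the parallel strands: a family of parallel core strands of one band, colored $V_{i_1}, \ldots, V_{i_k}$, can be fused into a single strand colored $V_{i_1} \otimes \cdots \otimes V_{i_k}$ by inserting the identity coupon, a move compatible with the stated skein relations because $F_{\mathrm{RT}}$ sends the identity coupon to the identity of the tensor product. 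This is the skein-theoretic counterpart of the fusion relation \eqref{fusionrelation}. After this step each band is traversed by exactly one strand, colored by a (possibly large) finite-dimensional $H$-module.

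It then remains to evaluate what is left inside $D_0$. The part of $\mathbf{T}$ lying in $D_0$ is an ordinary oriented colored ribbon graph $T$ in a disk meeting $\partial\boldsymbol{\Sigma}$, so the stated skein relation \eqref{skeinRelation} applies directly: it rewrites $\mathbf{T}^s$ as a linear combination, with coefficients of the form $\langle s, F_{\mathrm{RT}}(T)(\cdots)\rangle$, of stated ribbon graphs in which $D_0$ contains only trivial strands running straight from the incoming band-strands to the boundary, carrying dual-basis states. Each term of this combination is then a disjoint union of standard generator arcs, one per band, each a single strand colored by a module and stated at both endpoints. After multilinearizing the states, a strand colored $U$ with incoming state $u \in U$ and outgoing state $e \in U^{*}$ is precisely the matrix coefficient ${}_U\phi^e_u$; reading off the colors and states band by band identifies each term with $\xi_{g,n}$ applied to a tensor product of matrix coefficients, which proves the claim.

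The delicate point, which I expect to be the main obstacle, is the rigorous justification of the standard-position isotopy together with the fusion move for general ribbon graphs with coupons: one must verify that every strand can indeed be combed onto the cores of the bands, and that parallel-strand fusion is well defined independently of the choices involved, so that no relation is violated when passing to the quotient $\mathcal{S}^{\mathrm{st}}_H(\Sigma_{g,n}^{\circ,\bullet})$ of $M^{\mathrm{st}}_H(\Sigma_{g,n}^{\circ,\bullet})$. In the case $H = U_q^{\mathrm{ad}}(\mathfrak{sl}_2)$ this was carried out by explicit diagrammatic manipulation in \cite[\S5]{FaitgHol}; here the same manipulations must be organized using only the general stated skein relation \eqref{skeinRelation} and the product $\ast$, with particular care given to the height ordering of the boundary points near $\bullet$, so that the resulting arcs occur in the order prescribed by \eqref{defXign}.
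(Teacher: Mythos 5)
Your proposal follows essentially the same route as the paper's proof: isotope a general stated ribbon graph into a standard position in which the bands of the presentation \eqref{surfaceRuban} are traversed by parallel strands and all crossings, coupons and extrema sit in a disk near $\bullet$; fuse the parallel strands of each band into a single strand colored by a tensor product via an identity coupon (the paper calls this the ``all-in-one'' relation); evaluate what remains in the disk with the stated skein relation \eqref{skeinRelation}; and read off matrix coefficients to identify each resulting term with an element of the image of $\xi_{g,n}$.

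There is, however, one ingredient you omit which the paper makes explicit and which your argument genuinely needs: an \emph{orientation reversal relation}. The generators \eqref{defXign} have strands with a prescribed orientation through each band, and your fusion of parallel strands into a single strand colored $V_{i_1} \otimes \cdots \otimes V_{i_k}$ only makes sense when the strands in a band are coherently oriented; a general ribbon graph will traverse a band in both directions. Before fusing one must therefore reverse the wrongly-oriented strands, which in the Reshetikhin--Turaev calculus replaces the color $V$ by $V^*$ and introduces duality morphisms; straightening the resulting cups and caps produces the pivotal element $g$ of $H$, so the states transform by $v \mapsto \langle -, v\rangle$ and $f \mapsto f(g^{-1}\cdot {?})$. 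The paper derives this relation as a consequence of \eqref{skeinRelation} and applies it alongside the all-in-one relation; without it, your final identification ``incoming state $u$, outgoing state $e$ gives ${_U\phi^e_u}$'' applies only to tangles whose strands happen to run in the prescribed direction, so the surjectivity argument is incomplete as stated. Once this relation is added (and one notes, as the paper does, that the crossings appearing in \eqref{defXign} can always be created or removed since they are braiding isomorphisms under $F_{\mathrm{RT}}$), your proof coincides with the paper's.
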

\begin{proof}
Due to the stated skein relations \eqref{skeinRelation} in $\mathcal{S}^{\mathrm{st}}_H(\Sigma_{g,n}^{\circ,\bullet})$ we see that every element is equal to a linear combination of ``standard'' stated tangles of the form
\begin{center}
\begingroup%
  \makeatletter%
  \providecommand\color[2][]{%
    \errmessage{(Inkscape) Color is used for the text in Inkscape, but the package 'color.sty' is not loaded}%
    \renewcommand\color[2][]{}%
  }%
  \providecommand\transparent[1]{%
    \errmessage{(Inkscape) Transparency is used (non-zero) for the text in Inkscape, but the package 'transparent.sty' is not loaded}%
    \renewcommand\transparent[1]{}%
  }%
  \providecommand\rotatebox[2]{#2}%
  \newcommand*\fsize{\dimexpr\f@size pt\relax}%
  \newcommand*\lineheight[1]{\fontsize{\fsize}{#1\fsize}\selectfont}%
  \ifx\svgwidth\undefined%
    \setlength{\unitlength}{446.63583731bp}%
    \ifx\svgscale\undefined%
      \relax%
    \else%
      \setlength{\unitlength}{\unitlength * \real{\svgscale}}%
    \fi%
  \else%
    \setlength{\unitlength}{\svgwidth}%
  \fi%
  \global\let\svgwidth\undefined%
  \global\let\svgscale\undefined%
  \makeatother%
  \begin{picture}(1,0.17637223)%
    \lineheight{1}%
    \setlength\tabcolsep{0pt}%
    \put(0,0){\includegraphics[width=\unitlength,page=1]{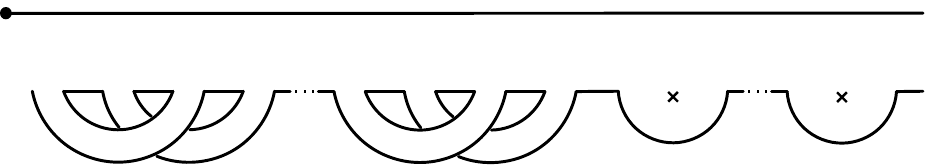}}%
    \put(0.50127145,0.17063306){\color[rgb]{0,0,0}\makebox(0,0)[lt]{\lineheight{1.25}\smash{\begin{tabular}[t]{l}$s$\end{tabular}}}}%
    \put(0,0){\includegraphics[width=\unitlength,page=2]{standardForm.pdf}}%
    \put(0.18702328,0.15014741){\color[rgb]{0,0,0}\makebox(0,0)[lt]{\lineheight{1.25}\smash{\begin{tabular}[t]{l}$\ldots$\end{tabular}}}}%
    \put(0,0){\includegraphics[width=\unitlength,page=3]{standardForm.pdf}}%
    \put(0.27429474,0.15037311){\color[rgb]{0,0,0}\makebox(0,0)[lt]{\lineheight{1.25}\smash{\begin{tabular}[t]{l}$\ldots$\end{tabular}}}}%
    \put(0,0){\includegraphics[width=\unitlength,page=4]{standardForm.pdf}}%
    \put(0.04031936,0.15013887){\color[rgb]{0,0,0}\makebox(0,0)[lt]{\lineheight{1.25}\smash{\begin{tabular}[t]{l}$\ldots$\end{tabular}}}}%
    \put(0.10740354,0.15141107){\color[rgb]{0,0,0}\makebox(0,0)[lt]{\lineheight{1.25}\smash{\begin{tabular}[t]{l}$\ldots$\end{tabular}}}}%
    \put(0,0){\includegraphics[width=\unitlength,page=5]{standardForm.pdf}}%
    \put(0.51111277,0.15014741){\color[rgb]{0,0,0}\makebox(0,0)[lt]{\lineheight{1.25}\smash{\begin{tabular}[t]{l}$\ldots$\end{tabular}}}}%
    \put(0,0){\includegraphics[width=\unitlength,page=6]{standardForm.pdf}}%
    \put(0.59838427,0.15037311){\color[rgb]{0,0,0}\makebox(0,0)[lt]{\lineheight{1.25}\smash{\begin{tabular}[t]{l}$\ldots$\end{tabular}}}}%
    \put(0,0){\includegraphics[width=\unitlength,page=7]{standardForm.pdf}}%
    \put(0.36440888,0.15013887){\color[rgb]{0,0,0}\makebox(0,0)[lt]{\lineheight{1.25}\smash{\begin{tabular}[t]{l}$\ldots$\end{tabular}}}}%
    \put(0.43149307,0.15141107){\color[rgb]{0,0,0}\makebox(0,0)[lt]{\lineheight{1.25}\smash{\begin{tabular}[t]{l}$\ldots$\end{tabular}}}}%
    \put(0,0){\includegraphics[width=\unitlength,page=8]{standardForm.pdf}}%
    \put(0.66412211,0.15056291){\color[rgb]{0,0,0}\makebox(0,0)[lt]{\lineheight{1.25}\smash{\begin{tabular}[t]{l}$\ldots$\end{tabular}}}}%
    \put(0.75392156,0.15031452){\color[rgb]{0,0,0}\makebox(0,0)[lt]{\lineheight{1.25}\smash{\begin{tabular}[t]{l}$\ldots$\end{tabular}}}}%
    \put(0,0){\includegraphics[width=\unitlength,page=9]{standardForm.pdf}}%
    \put(0.84547794,0.15056291){\color[rgb]{0,0,0}\makebox(0,0)[lt]{\lineheight{1.25}\smash{\begin{tabular}[t]{l}$\ldots$\end{tabular}}}}%
    \put(0.9352773,0.15031452){\color[rgb]{0,0,0}\makebox(0,0)[lt]{\lineheight{1.25}\smash{\begin{tabular}[t]{l}$\ldots$\end{tabular}}}}%
  \end{picture}%
\endgroup%

\end{center}
 In this picture each strand carries some orientation and color and $s = (s_1, \ldots, s_N)$ is some state. We put the crossings in view of the definition \eqref{defXign}; it is clear that we can always add such crossings since they correspond to the braiding isomorphism through the Reshetikhin--Turaev functor. Now we observe that the stated skein relations imply an orientation reversal relation:
\begin{center}
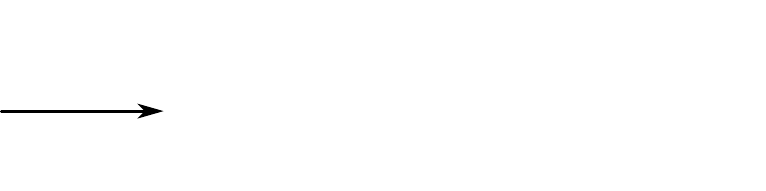
\end{center}
where $\langle -, v \rangle : \varphi \mapsto \varphi(v)$ and $f(g^{-1} ?) : x \mapsto f(g^{-1}x)$ with $g \in H$ the pivotal element, and $v,x\in V$. We similarly have an all-in-one relation:
\begin{center}
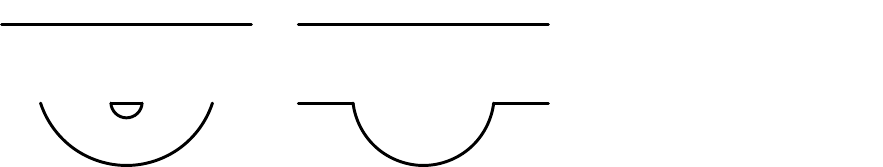
\end{center}
In these pictures the handle represents either one of the handles or one of the punctured half-disks in \eqref{surfaceRuban}. As a result the ``standard'' stated tangles above can be rewritten as the ones in \eqref{defXign}, which by definition span the image of $\xi_{g,n}$.
\end{proof}

\begin{teo}\label{ThStatedHolIso}
Let $H$ be a ribbon Hopf algebra satisfying any one of the conditions in \eqref{assumptionsForSkein}. Then the stated holonomy map $\mathrm{hol}^{\mathrm{st}} : \mathcal{S}^{\mathrm{st}}_H(\Sigma_{g,n}^{\circ,\bullet}) \to \mathcal{L}_{g,n}(H)$ is an isomorphism of algebras.
\end{teo}
\begin{proof}
We already know from Proposition \ref{statedHolMorphism} that $\mathrm{hol}^{\mathrm{st}}$ is a morphism of algebras. It is immediate from the definition of $\mathrm{hol}$ in \cite{FaitgHol} that $\mathrm{hol}^{\mathrm{st}} \circ \xi_{g,n} = \mathrm{id}_{\mathcal{L}_{g,n}(H)}$ (the role of the crossings in the definition \eqref{defXign} of $\xi_{g,n}$ is precisely to obtain this equality). This implies that the linear map $\xi_{g,n}$ is injective. It follows from Lemma \ref{lemmeXiSurjective} that $\xi_{g,n}$ is an isomorphism of vector spaces. Hence $\mathrm{hol}^{\mathrm{st}} = \xi^{-1}_{g,n}$ is an isomorphism as well.
\end{proof}

\indent A corollary of the previous proof is that $\xi_{g,n}$ is a morphism of algebras. It is nevertheless worthwile to mention that the direct proof (\textit{i.e.} without resorting on $\mathrm{hol}^{\mathrm{st}}$) of this fact allows one to recover the product in $\mathcal{L}_{g,n}(H)$ by means of stated skein relations, thus giving a topological flavour to the formulas in Proposition \ref{productLgn}. We explain this for $\mathcal{S}^{\mathrm{st}}_H(\Sigma_{0,1}^{\circ,\bullet})$ and $\mathcal{S}^{\mathrm{st}}_H(\Sigma_{1,0}^{\circ,\bullet})$.

\smallskip

\indent So let us first discuss the case of $\xi_{0,1} : \mathcal{L}_{0,1}(H) \to \mathcal{S}^{\mathrm{st}}_H(\Sigma_{0,1}^{\circ,\bullet})$. It is useful to note that
\begin{equation}\label{coregularOnMatrixCoeffs}
h \rhd {_V\phi^f_v} = {_V\phi^f_{hv}}, \qquad {_V\phi^f_v} \lhd h = {_V\phi^{f(h?)}_v}
\end{equation}
where we recall that $\rhd$ and $\lhd$ are the left and right coregular actions of $H$ on $H^{\circ}$, see  \eqref{coregularActions}. We then have the following stated skein computation, which recovers the formula \eqref{produitL01} for the product in $\mathcal{L}_{0,1}(H)$:
\begin{center}
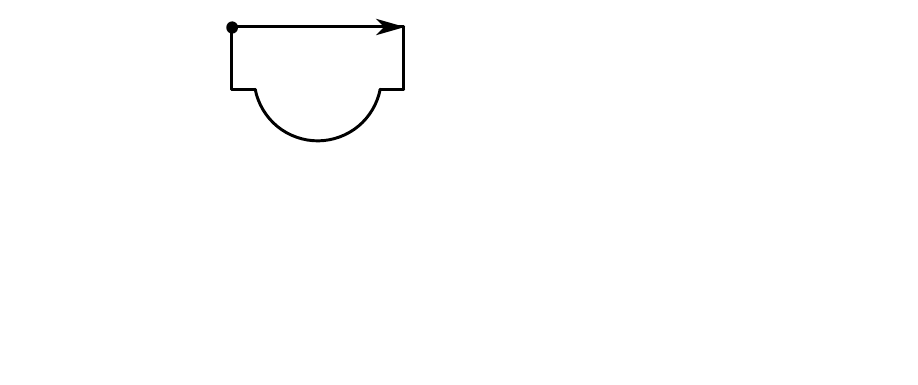
\end{center}
The first equality is by definition of the product in $\mathcal{S}^{\mathrm{st}}_H(\Sigma_{0,1}^{\circ,\bullet})$, the second equality uses the stated skein relations \eqref{evalBraiding} and \eqref{evalBraiding2}, the third equality uses the all-in-one relation (see the proof of Lemma \ref{lemmeXiSurjective}), the fourth equality is the definition of $\xi_{0,1}$ and the last equality uses \eqref{coregularOnMatrixCoeffs} and \eqref{hopfMatrixCoeffs}.

\smallskip

\indent Let us now consider $\xi_{1,0} : \mathcal{L}_{1,0}(H) \to \mathcal{S}^{\mathrm{st}}_H(\Sigma_{1,0}^{\circ,\bullet})$. Recall that the product in $\mathcal{L}_{1,0}(H)$ is fully described by the formulas \eqref{productL10WithEmbeddings}. Let us write
\[ \xi_b = \xi_{1,0} \circ \mathfrak{i}_B, \qquad \xi_a = \xi_{1,0} \circ \mathfrak{i}_A \]
which are morphisms of algebras $\mathcal{L}_{0,1}(H) \to \mathcal{S}^{\mathrm{st}}_H(\Sigma_{1,0}^{\circ,\bullet})$, and we recall that $\mathfrak{i}_B$ and $\mathfrak{i}_A$ are defined in \eqref{embeddingL01inL10}. Then the definition of $\xi_{1,0}$ in \eqref{defXign} is such that 
\begin{equation}\label{Xi10XiaXib}
\xi_{1,0}(\beta \otimes \alpha) = \xi_b(\beta) \, \xi_a(\alpha).
\end{equation}
We have two natural morphisms of algebras $\mathfrak{e}_b, \mathfrak{e}_a : \mathcal{S}^{\mathrm{st}}_H(\Sigma_{0,1}^{\circ,\bullet}) \to \mathcal{S}^{\mathrm{st}}_H(\Sigma_{1,0}^{\circ,\bullet})$ defined by
\begin{center}
\begingroup%
  \makeatletter%
  \providecommand\color[2][]{%
    \errmessage{(Inkscape) Color is used for the text in Inkscape, but the package 'color.sty' is not loaded}%
    \renewcommand\color[2][]{}%
  }%
  \providecommand\transparent[1]{%
    \errmessage{(Inkscape) Transparency is used (non-zero) for the text in Inkscape, but the package 'transparent.sty' is not loaded}%
    \renewcommand\transparent[1]{}%
  }%
  \providecommand\rotatebox[2]{#2}%
  \newcommand*\fsize{\dimexpr\f@size pt\relax}%
  \newcommand*\lineheight[1]{\fontsize{\fsize}{#1\fsize}\selectfont}%
  \ifx\svgwidth\undefined%
    \setlength{\unitlength}{396.51535151bp}%
    \ifx\svgscale\undefined%
      \relax%
    \else%
      \setlength{\unitlength}{\unitlength * \real{\svgscale}}%
    \fi%
  \else%
    \setlength{\unitlength}{\svgwidth}%
  \fi%
  \global\let\svgwidth\undefined%
  \global\let\svgscale\undefined%
  \makeatother%
  \begin{picture}(1,0.13720742)%
    \lineheight{1}%
    \setlength\tabcolsep{0pt}%
    \put(0,0){\includegraphics[width=\unitlength,page=1]{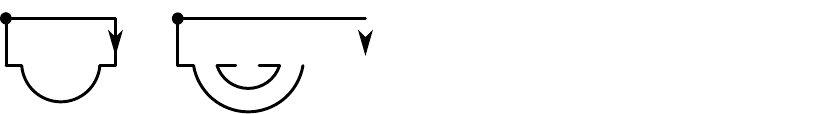}}%
    \put(0.23093838,0.12424651){\color[rgb]{0,0,0}\makebox(0,0)[lt]{\lineheight{1.25}\smash{\begin{tabular}[t]{l}$f$\end{tabular}}}}%
    \put(0.36199029,0.12609925){\color[rgb]{0,0,0}\makebox(0,0)[lt]{\lineheight{1.25}\smash{\begin{tabular}[t]{l}$v$\end{tabular}}}}%
    \put(0.15835319,0.05828366){\color[rgb]{0,0,0}\makebox(0,0)[lt]{\lineheight{1.25}\smash{\begin{tabular}[t]{l}$\overset{\text{\normalsize $\mathfrak{e}_b$}}{\longmapsto}$\end{tabular}}}}%
    \put(0.03166589,0.1244853){\color[rgb]{0,0,0}\makebox(0,0)[lt]{\lineheight{1.25}\smash{\begin{tabular}[t]{l}$f$\end{tabular}}}}%
    \put(0.1056362,0.12347203){\color[rgb]{0,0,0}\makebox(0,0)[lt]{\lineheight{1.25}\smash{\begin{tabular}[t]{l}$v$\end{tabular}}}}%
    \put(0,0){\includegraphics[width=\unitlength,page=2]{exempleInjection.pdf}}%
    \put(0.83129417,0.12400766){\color[rgb]{0,0,0}\makebox(0,0)[lt]{\lineheight{1.25}\smash{\begin{tabular}[t]{l}$f$\end{tabular}}}}%
    \put(0.96139068,0.12418857){\color[rgb]{0,0,0}\makebox(0,0)[lt]{\lineheight{1.25}\smash{\begin{tabular}[t]{l}$v$\end{tabular}}}}%
    \put(0.70692047,0.05828361){\color[rgb]{0,0,0}\makebox(0,0)[lt]{\lineheight{1.25}\smash{\begin{tabular}[t]{l}$\overset{\text{\normalsize $\mathfrak{e}_a$}}{\longmapsto}$\end{tabular}}}}%
    \put(0,0){\includegraphics[width=\unitlength,page=3]{exempleInjection.pdf}}%
    \put(0.04546268,0.08554862){\color[rgb]{0,0,0}\makebox(0,0)[lt]{\lineheight{1.25}\smash{\begin{tabular}[t]{l}$_V$\end{tabular}}}}%
    \put(0,0){\includegraphics[width=\unitlength,page=4]{exempleInjection.pdf}}%
    \put(0.58019448,0.12448527){\color[rgb]{0,0,0}\makebox(0,0)[lt]{\lineheight{1.25}\smash{\begin{tabular}[t]{l}$f$\end{tabular}}}}%
    \put(0.6541648,0.12347201){\color[rgb]{0,0,0}\makebox(0,0)[lt]{\lineheight{1.25}\smash{\begin{tabular}[t]{l}$v$\end{tabular}}}}%
    \put(0,0){\includegraphics[width=\unitlength,page=5]{exempleInjection.pdf}}%
    \put(0.59399128,0.08554858){\color[rgb]{0,0,0}\makebox(0,0)[lt]{\lineheight{1.25}\smash{\begin{tabular}[t]{l}$_V$\end{tabular}}}}%
    \put(0,0){\includegraphics[width=\unitlength,page=6]{exempleInjection.pdf}}%
    \put(0.242859,0.08619951){\color[rgb]{0,0,0}\makebox(0,0)[lt]{\lineheight{1.25}\smash{\begin{tabular}[t]{l}$_V$\end{tabular}}}}%
    \put(0,0){\includegraphics[width=\unitlength,page=7]{exempleInjection.pdf}}%
    \put(0.84699226,0.08561413){\color[rgb]{0,0,0}\makebox(0,0)[lt]{\lineheight{1.25}\smash{\begin{tabular}[t]{l}$_V$\end{tabular}}}}%
    \put(0,0){\includegraphics[width=\unitlength,page=8]{exempleInjection.pdf}}%
  \end{picture}%
\endgroup%

\end{center}
(we use that every element of $\mathcal{S}^{\mathrm{st}}_H(\Sigma_{0,1}^{\circ,\bullet})$ can be written as a linear combination of such stated tangles, see the proof of Lemma \ref{lemmeXiSurjective}). They satisfy $\xi_b = \mathfrak{e}_b \circ \xi_{0,1}$, $\xi_a = \mathfrak{e}_a \circ \xi_{0,1}$. Hence by the previous computation for $\xi_{0,1}$ we find
\[ \xi_b(\varphi)\,\xi_b(\psi) = \mathfrak{e}_b\bigl( \xi_{0,1}(\varphi)\,\xi_{0,1}(\psi) \bigr) = \mathfrak{e}_b\bigl( \xi_{0,1}(\varphi\psi) \bigr) = \xi_b(\varphi\psi) \]
and similarly for $\xi_a$; these two equalities correspond to the first and second formulas of \eqref{productL10WithEmbeddings}. Finally:
\begin{center}
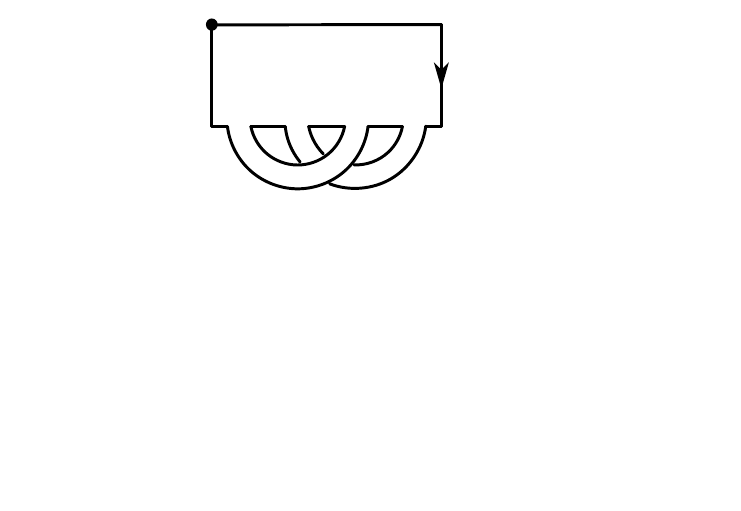
\end{center}

\bigskip

\noindent The first equality is by definition of the product in $\mathcal{S}^{\mathrm{st}}_H(\Sigma_{1,0}^{\circ,\bullet})$, the second equality is a trick, the third equality uses stated skein relations like \eqref{evalBraiding} and \eqref{evalBraiding2}, the fourth equality uses the definition of $\xi_{1,0}$, the last equality uses \eqref{coregularOnMatrixCoeffs} and \eqref{Xi10XiaXib}. This equality corresponds to the third formula of \eqref{productL10WithEmbeddings}.

\smallskip

\indent For general $g$ and $n$ write
\[ \xi_{b_i} = \xi_{g,n} \circ \mathfrak{i}_{B(i)}, \qquad \xi_{a_i} = \xi_{g,n} \circ \mathfrak{i}_{A(i)}, \qquad \xi_{m_j} = \xi_{g,n} \circ \mathfrak{i}_{M(j)}. \]
for all $1 \leq i \leq g$, $g+1 \leq j \leq g+n$ and observe that by definition of $\xi_{g,n}$ (see \eqref{defXign}) we have
\begin{align*}
\xi_{g,n}\bigl( \varphi_1 \otimes \ldots \otimes \varphi_{2g+n} \bigr) = \xi_{b_1}(\varphi_1) \, \xi_{a_1}(\varphi_2) \ldots \xi_{b_g}(\varphi_{2g-1}) \, \xi_{a_g}(\varphi_g) \, \xi_{m_{g+1}}(\varphi_{2g+1}) \ldots \xi_{m_{g+n}}(\varphi_{2g+n}).
\end{align*}
As above, one can show using the stated skein relations that the elements $\xi_{b_i}(\varphi)$, $\xi_{a_i}(\varphi)$, $\xi_{m_j}(\varphi) \in \mathcal{S}^{\mathrm{st}}_H(\Sigma_{g,n}^{\circ,\bullet})$ with $\varphi \in H^{\circ}$ obey the same product than the elements $\mathfrak{i}_{B(i)}(\varphi)$, $\mathfrak{i}_{A(i)}(\varphi)$, $\mathfrak{i}_{M(j)}(\varphi) \in \mathcal{L}_{g,n}(H)$. The reader can treat the case $(g,n)=(0,2)$ as an exercise.

\smallskip

\indent The computations above reveal the topological relevance of the elements $\mathfrak{i}_{B(i)}(\varphi)$, $\mathfrak{i}_{A(i)}(\varphi)$, $\mathfrak{i}_{M(j)}(\varphi)$ and of the formulas in Proposition \ref{productLgn} for their products.

\subsection{Isomorphism $\mathcal{S}_H(\Sigma_{g,n}^{\circ}) \cong \mathcal{L}_{g,n}^H(H)$ for semisimple $H\text{-}\mathrm{mod}$}\label{sectionIsoSLgninv}
This part is a generalization of \cite[\S 8.2]{BR1}, which dealt with the case $H = U_q^{\mathrm{ad}}(\mathfrak{sl}_2)$.

\smallskip

\indent Recall that $\Sigma_{g,n}^{\circ}$ is the compact oriented surface of genus $g$ with one boundary component and $n$ punctures not belonging to the boundary. Recall that $F_{\mathrm{RT}}$ denotes the Reshetikhin--Turaev functor \cite{RT}, and $k$ is the base field of $H$.
\begin{defi}
The skein algebra $\mathcal{S}_H(\Sigma_{g,n}^{\circ})$ is the $k$-vector space generated by the isotopy classes of ribbon links with coupons (\textit{i.e.} ribbon graphs without boundary points) in $\Sigma_{g,n}^{\circ} \times [0,1]$, modulo the skein relations:
\begin{equation}\label{usualSkeinRelation}
\begingroup%
  \makeatletter%
  \providecommand\color[2][]{%
    \errmessage{(Inkscape) Color is used for the text in Inkscape, but the package 'color.sty' is not loaded}%
    \renewcommand\color[2][]{}%
  }%
  \providecommand\transparent[1]{%
    \errmessage{(Inkscape) Transparency is used (non-zero) for the text in Inkscape, but the package 'transparent.sty' is not loaded}%
    \renewcommand\transparent[1]{}%
  }%
  \providecommand\rotatebox[2]{#2}%
  \newcommand*\fsize{\dimexpr\f@size pt\relax}%
  \newcommand*\lineheight[1]{\fontsize{\fsize}{#1\fsize}\selectfont}%
  \ifx\svgwidth\undefined%
    \setlength{\unitlength}{209.63965807bp}%
    \ifx\svgscale\undefined%
      \relax%
    \else%
      \setlength{\unitlength}{\unitlength * \real{\svgscale}}%
    \fi%
  \else%
    \setlength{\unitlength}{\svgwidth}%
  \fi%
  \global\let\svgwidth\undefined%
  \global\let\svgscale\undefined%
  \makeatother%
  \begin{picture}(1,0.23412325)%
    \lineheight{1}%
    \setlength\tabcolsep{0pt}%
    \put(-0.00078435,0.10909138){\color[rgb]{0,0,0}\makebox(0,0)[lt]{\lineheight{1.25}\smash{\begin{tabular}[t]{l}$\sum_i \lambda_i$\end{tabular}}}}%
    \put(0.40349837,0.10824069){\color[rgb]{0,0,0}\makebox(0,0)[lt]{\lineheight{1.25}\smash{\begin{tabular}[t]{l}$= 0$ \hspace{1em}if\; \; $\displaystyle\sum_i \lambda_iF_{\mathrm{RT}}(T_i) =  0$.\end{tabular}}}}%
    \put(0,0){\includegraphics[width=\unitlength,page=1]{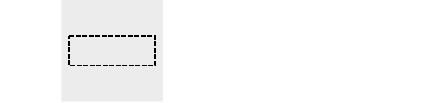}}%
    \put(0.23081831,0.1020627){\color[rgb]{0,0,0}\makebox(0,0)[lt]{\lineheight{1.25}\smash{\begin{tabular}[t]{l}$T_i$\end{tabular}}}}%
    \put(0.2223914,0.19244067){\color[rgb]{0,0,0}\makebox(0,0)[lt]{\lineheight{1.25}\smash{\begin{tabular}[t]{l}$\ldots$\end{tabular}}}}%
    \put(0,0){\includegraphics[width=\unitlength,page=2]{usualSkeinRelations.pdf}}%
    \put(0.14312695,0.20663588){\color[rgb]{0,0,0}\makebox(0,0)[lt]{\lineheight{1.25}\smash{\begin{tabular}[t]{l}$_{Y_1}$\end{tabular}}}}%
    \put(0,0){\includegraphics[width=\unitlength,page=3]{usualSkeinRelations.pdf}}%
    \put(0.32765603,0.2069265){\color[rgb]{0,0,0}\makebox(0,0)[lt]{\lineheight{1.25}\smash{\begin{tabular}[t]{l}$_{Y_l}$\end{tabular}}}}%
    \put(0.2223914,0.04218284){\color[rgb]{0,0,0}\makebox(0,0)[lt]{\lineheight{1.25}\smash{\begin{tabular}[t]{l}$\ldots$\end{tabular}}}}%
    \put(0,0){\includegraphics[width=\unitlength,page=4]{usualSkeinRelations.pdf}}%
    \put(0.13736881,0.05026268){\color[rgb]{0,0,0}\makebox(0,0)[lt]{\lineheight{1.25}\smash{\begin{tabular}[t]{l}$_{X_1}$\end{tabular}}}}%
    \put(0,0){\includegraphics[width=\unitlength,page=5]{usualSkeinRelations.pdf}}%
    \put(0.32795138,0.0502102){\color[rgb]{0,0,0}\makebox(0,0)[lt]{\lineheight{1.25}\smash{\begin{tabular}[t]{l}$_{X_k}$\end{tabular}}}}%
  \end{picture}%
\endgroup%

\end{equation}
where $\lambda_i \in k$ and the $T_i$'s are any ribbon graphs in $[0,1]^3$ having common colors on the incoming and outgoing strands. The sum on the left represents a linear combination of ribbon links which are equal outside of some cube in $\Sigma_{g,n}^{\circ} \times [0,1]$ which is depicted in grey.
\end{defi}
\noindent Note that up to introducing identity coupons, we can always assume that the incoming and outgoing strands of $T_i$ have this orientation. Then $F_{\mathrm{RT}}(T_i) \in \mathrm{Hom}_H\bigl( X_1 \otimes \ldots \otimes X_k, Y_1 \otimes \ldots \otimes Y_l \bigr)$. The product in $\mathcal{S}_H(\Sigma_{g,n}^{\circ})$ is given by stacking (see after Definition \ref{defStatedSkein}).

\smallskip

\indent Recall that $\Sigma_{g,n}^{\circ, \bullet}$ is $\Sigma_{g,n}^{\circ}$ with a puncture on the boundary. This puncture is irrelevant when we restrict to ribbon links, so if $L$ is the isotopy class of a ribbon link in $\Sigma_{g,n}^{\circ} \times [0,1]$ we have the corresponding isotopy class $I(L)$ in $\Sigma_{g,n}^{\circ, \bullet} \times [0,1]$. Note that $I(L)$ can be seen as a stated ribbon graph without boundary points (and thus without states): $I(L) \in \mathcal{S}_H^{\mathrm{st}}(\Sigma_{g,n}^{\circ, \bullet})$.
\begin{lem}\label{LemmaMapIWellDef}
The map $L \mapsto I(L)$ induces a well-defined morphism of algebras $I : \mathcal{S}_H(\Sigma_{g,n}^{\circ}) \to \mathcal{S}_H^{\mathrm{st}}(\Sigma_{g,n}^{\circ, \bullet})$.
\end{lem}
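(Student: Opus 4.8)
The plan is to verify, in order, that $I$ is well defined on isotopy classes, that it respects the algebra structure, and then — the actual content — that it descends through the defining skein relations.

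First I would note that $I$ is well defined at the level of isotopy classes. A ribbon link $L\subset \Sigma_{g,n}^{\circ}\times[0,1]$ is disjoint from $\partial\Sigma_{g,n}^{\circ}\times[0,1]$, hence \emph{a fortiori} from $\{\bullet\}\times[0,1]$, so it is a $\partial\boldsymbol{\Sigma}$-ribbon graph with no boundary points in $\Sigma_{g,n}^{\circ,\bullet}\times[0,1]$. Any ambient isotopy of $L$ inside $\Sigma_{g,n}^{\circ}\times[0,1]$ keeps it in the interior, hence away from $\{\bullet\}\times[0,1]$, so it is an isotopy of $\partial\boldsymbol{\Sigma}$-ribbon graphs; thus $\widetilde I : L\mapsto I(L)$ is a well-defined linear map from the free vector space on isotopy classes of links into $\mathcal{S}^{\mathrm{st}}_H(\Sigma_{g,n}^{\circ,\bullet})$, landing in the span of ribbon graphs without boundary points and without states. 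Compatibility with products and units is then immediate: both products are given by stacking in the thickened surface, the empty link maps to the empty ribbon graph, and since links carry no boundary points the juxtaposition $I(L_1)\ast I(L_2)$ equals $I(L_1\ast L_2)$.

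The main step is to show that $\widetilde I$ annihilates the usual skein relations \eqref{usualSkeinRelation}, so that it factors through $\mathcal{S}_H(\Sigma_{g,n}^{\circ})$; equivalently, that an interior skein relation is a consequence of the boundary stated skein relations \eqref{skeinRelation}. Given tangles $T_i$ in a cube with $\sum_i\lambda_i F_{\mathrm{RT}}(T_i)=0$ and a common environment outside the cube, I would first isotope the cube adjacent to the boundary $\partial\boldsymbol{\Sigma}$. Then, inserting a complete set of states on each strand joining the cube to the environment — a move which, exactly as in the orientation-reversal and all-in-one relations established inside the proof of Lemma \ref{lemmeXiSurjective}, is a consequence of \eqref{skeinRelation} — I would cut these strands and route their free ends to the boundary. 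This writes each $I(L_i)$ as a sum over basis states of a stated graph built from the state-decorated environment stacked with the tangle $T_i$ pushed to the boundary. Applying \eqref{skeinRelation} to the latter replaces $T_i$ by its Reshetikhin--Turaev evaluation $F_{\mathrm{RT}}(T_i)$ paired against the chosen states, times a standard boundary tangle independent of $i$. Summing against the $\lambda_i$, the environment and the standard factor pull out and the remaining scalar becomes $\langle -, (\sum_i\lambda_i F_{\mathrm{RT}}(T_i))(-)\rangle = 0$, whence $\sum_i\lambda_i I(L_i)=0$ in $\mathcal{S}^{\mathrm{st}}_H(\Sigma_{g,n}^{\circ,\bullet})$.

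The hard part will be the bookkeeping in this last step: making the ``cut and route to the boundary'' move precise and checking that it is genuinely a combination of stated skein relations, and confirming that the environment contributions are identical for all $i$ so that the common factor $\sum_i\lambda_i F_{\mathrm{RT}}(T_i)$ can be isolated. A cleaner route that sidesteps this entirely — available because Theorem \ref{ThStatedHolIso} precedes this lemma — is to compose with the stated holonomy map: on links $\mathrm{hol}^{\mathrm{st}}\circ\widetilde I$ coincides with the holonomy map $\mathrm{hol}$ of \cite{FaitgHol}, which is built from the Reshetikhin--Turaev functor and is therefore compatible with the relations \eqref{usualSkeinRelation}. Hence $\mathrm{hol}^{\mathrm{st}}\circ\widetilde I$ kills these relations, and since $\mathrm{hol}^{\mathrm{st}}$ is an isomorphism, in particular injective, so does $\widetilde I$; this produces the well-defined algebra morphism $I$ at once.
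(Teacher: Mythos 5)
Your primary argument is essentially the paper's own proof: the paper isotopes the cube of the skein relation next to $\partial(\Sigma_{g,n}^{\circ})$, applies the stated skein relation \eqref{skeinRelation} to cut the strands entering the cube against dual bases, and obtains a linear combination whose coefficients are $\sum_i \lambda_i\,\mathrm{ev}_Y \circ \bigl(\mathrm{id}_{Y^*} \otimes F_{\mathrm{RT}}(T_i)\bigr)$ evaluated on basis vectors, which vanish by hypothesis; your ``cut and route to the boundary'' step is exactly that one diagrammatic computation, and multiplicativity is dispatched just as you do it (both products are stacking). Your alternative route via $\mathrm{hol}^{\mathrm{st}}$ is a genuinely different shortcut and is logically admissible here (Theorem \ref{ThStatedHolIso} precedes the lemma and its proof does not use it), but be aware of what it leans on: you need that $\mathrm{hol}$ itself, as a map on ribbon graphs \emph{before} any skein quotient, annihilates interior skein relations \eqref{usualSkeinRelation}. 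The present paper only asserts compatibility of $\mathrm{hol}^{\mathrm{st}}$ with the boundary relations \eqref{skeinRelation}; compatibility with interior relations is indeed immediate from the local, Reshetikhin--Turaev-style definition of $\mathrm{hol}$ in \cite{FaitgHol}, but it is a property of the external reference that the paper never isolates. So your shortcut trades the diagrammatic bookkeeping for a citation, whereas the paper's (and your primary) argument proves the stronger, purely skein-theoretic fact that interior relations are consequences of the boundary stated relations, independently of any holonomy map.
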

\begin{proof}
Let $L_1, \ldots, L_k$ be links in $\Sigma_{g,n}^{\circ} \times [0,1]$ and $\textstyle L = \sum_i \lambda_i L_i \in \mathcal{S}_H(\Sigma_{g,n}^{\circ})$. We have to show that if $L = 0$ is a skein relation \eqref{usualSkeinRelation} in $\mathcal{S}_H(\Sigma_{g,n}^{\circ})$ then $\textstyle I(L) = \sum_i \lambda_i I(L_i) = 0$ follows from the stated skein relations in $\mathcal{S}_H^{\mathrm{st}}(\Sigma_{g,n}^{\circ, \bullet})$. For notational simplicity we take ribbon graphs $T_i$ which have $2$ incoming strands and $1$ outgoing strand in \eqref{usualSkeinRelation}. Using isotopy we can assume that the cube in \eqref{usualSkeinRelation} is very close to $\partial(\Sigma_{g,n}^{\circ})$. Therefore in $\mathcal{S}_H^{\mathrm{st}}(\Sigma_{g,n}^{\circ, \bullet})$ we have
\begin{center}
\begingroup%
  \makeatletter%
  \providecommand\color[2][]{%
    \errmessage{(Inkscape) Color is used for the text in Inkscape, but the package 'color.sty' is not loaded}%
    \renewcommand\color[2][]{}%
  }%
  \providecommand\transparent[1]{%
    \errmessage{(Inkscape) Transparency is used (non-zero) for the text in Inkscape, but the package 'transparent.sty' is not loaded}%
    \renewcommand\transparent[1]{}%
  }%
  \providecommand\rotatebox[2]{#2}%
  \newcommand*\fsize{\dimexpr\f@size pt\relax}%
  \newcommand*\lineheight[1]{\fontsize{\fsize}{#1\fsize}\selectfont}%
  \ifx\svgwidth\undefined%
    \setlength{\unitlength}{367.93349484bp}%
    \ifx\svgscale\undefined%
      \relax%
    \else%
      \setlength{\unitlength}{\unitlength * \real{\svgscale}}%
    \fi%
  \else%
    \setlength{\unitlength}{\svgwidth}%
  \fi%
  \global\let\svgwidth\undefined%
  \global\let\svgscale\undefined%
  \makeatother%
  \begin{picture}(1,0.22374486)%
    \lineheight{1}%
    \setlength\tabcolsep{0pt}%
    \put(0.06826433,0.02073411){\color[rgb]{0,0,0}\makebox(0,0)[lt]{\lineheight{1.25}\smash{\begin{tabular}[t]{l}$\displaystyle = \sum_{i,j,k,l} \lambda_i\,\mathrm{ev}_Y \circ \bigl(\mathrm{id}_{Y^*} \otimes F_{\mathrm{RT}}(T_i) \bigr)(y^j \otimes x^{(1)}_k \otimes x^{(2)}_l)$\end{tabular}}}}%
    \put(0,0){\includegraphics[width=\unitlength,page=1]{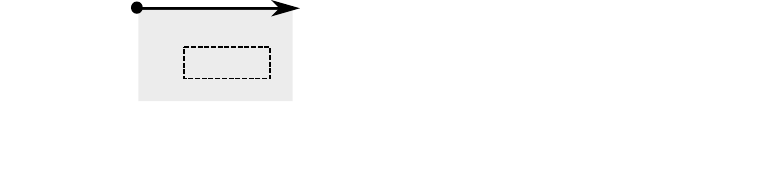}}%
    \put(0.28155071,0.13366402){\color[rgb]{0,0,0}\makebox(0,0)[lt]{\lineheight{1.25}\smash{\begin{tabular}[t]{l}$T_i$\end{tabular}}}}%
    \put(0,0){\includegraphics[width=\unitlength,page=2]{fromSkeinToStatedSkein.pdf}}%
    \put(0.29814315,0.1745336){\color[rgb]{0,0,0}\makebox(0,0)[lt]{\lineheight{1.25}\smash{\begin{tabular}[t]{l}$_Y$\end{tabular}}}}%
    \put(0.2689412,0.09792553){\color[rgb]{0,0,0}\makebox(0,0)[lt]{\lineheight{1.25}\smash{\begin{tabular}[t]{l}$_{X_1}$\end{tabular}}}}%
    \put(0.3438521,0.09727831){\color[rgb]{0,0,0}\makebox(0,0)[lt]{\lineheight{1.25}\smash{\begin{tabular}[t]{l}$_{X_2}$\end{tabular}}}}%
    \put(-0.0009011,0.14350498){\color[rgb]{0,0,0}\makebox(0,0)[lt]{\lineheight{1.25}\smash{\begin{tabular}[t]{l}$I(L) = \sum_i \lambda_i$\end{tabular}}}}%
    \put(0,0){\includegraphics[width=\unitlength,page=3]{fromSkeinToStatedSkein.pdf}}%
    \put(0.82056071,0.02762859){\color[rgb]{0,0,0}\makebox(0,0)[lt]{\lineheight{1.25}\smash{\begin{tabular}[t]{l}$_{X_1}$\end{tabular}}}}%
    \put(0.90566368,0.02698137){\color[rgb]{0,0,0}\makebox(0,0)[lt]{\lineheight{1.25}\smash{\begin{tabular}[t]{l}$_{X_2}$\end{tabular}}}}%
    \put(0,0){\includegraphics[width=\unitlength,page=4]{fromSkeinToStatedSkein.pdf}}%
    \put(0.76001653,0.02702497){\color[rgb]{0,0,0}\makebox(0,0)[lt]{\lineheight{1.25}\smash{\begin{tabular}[t]{l}$_Y$\end{tabular}}}}%
    \put(0.74829309,0.07625522){\color[rgb]{0,0,0}\makebox(0,0)[lt]{\lineheight{1.25}\smash{\begin{tabular}[t]{l}$y_j$\end{tabular}}}}%
    \put(0.80712497,0.07302488){\color[rgb]{0,0,0}\makebox(0,0)[lt]{\lineheight{1.25}\smash{\begin{tabular}[t]{l}$x^{(1),k}$\end{tabular}}}}%
    \put(0.88571896,0.07279151){\color[rgb]{0,0,0}\makebox(0,0)[lt]{\lineheight{1.25}\smash{\begin{tabular}[t]{l}$x^{(2),l}$\end{tabular}}}}%
    \put(0.98455734,0.02790645){\color[rgb]{0,0,0}\makebox(0,0)[lt]{\lineheight{1.25}\smash{\begin{tabular}[t]{l}$=0$\end{tabular}}}}%
  \end{picture}%
\endgroup%

\end{center}
where all ribbon graphs are equal outside of the grey cube and $(y_j)$, $(x^{(1)}_k)$, $(x^{(2)}_l)$ are bases of $Y$, $X_1$, $X_2$ with dual bases $(y^j)$, $(x^{(1), k})$, $(x^{(2), l})$. For the second equality we used \eqref{skeinRelation} and for the third equality we used that $\textstyle \sum_i \lambda_i \, F_{\mathrm{RT}}(T_i) = 0$ by assumption. It is clear that $I$ is a morphism of algebras since both products are by stacking.
\end{proof}

\indent Note that since $I(L)$ is a stated ribbon graph without boundary points (and thus without states), $\mathrm{hol}^{\mathrm{st}}\bigl( I(L) \bigr)$ is just an element of $\mathcal{L}_{g,n}(H)$. Thus we can make the following definition:
\begin{defi}\label{defWilsonLoopMap}
The morphism $W = \mathrm{hol}^{\mathrm{st}} \circ I :\mathcal{S}_H(\Sigma_{g,n}^{\circ}) \to \mathcal{L}_{g,n}(H)$ is called the Wilson loop map.
\end{defi}
Recall that the subalgebra of $H$-invariant elements is
\[ \mathcal{L}_{g,n}^H(H) = \bigl\{ x \in \mathcal{L}_{g,n}(H) \, \big| \, \forall \, h \in H, \: \mathrm{coad}^r(h)(x) = \varepsilon(h)x \bigr\}. \]

\begin{teo}\label{thWilsonIso}
Assume that the ribbon Hopf algebra $H$ satisfies any one of the conditions in \eqref{assumptionsForSkein} and has semisimple category $H\text{-}\mathrm{mod}$. Then the Wilson loop map takes values in $\mathcal{L}^H_{g,n}(H)$ and provides an isomorphism of algebras $\mathcal{S}_H(\Sigma_{g,n}^{\circ}) \overset{\sim}{\to} \mathcal{L}^H_{g,n}(H)$.
\end{teo}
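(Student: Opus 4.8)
The map $W=\mathrm{hol}^{\mathrm{st}}\circ I$ is a composite of the algebra morphisms $I$ (Lemma \ref{LemmaMapIWellDef}) and $\mathrm{hol}^{\mathrm{st}}$ (Theorem \ref{ThStatedHolIso}), hence is itself a morphism of algebras, and the first point to settle is that it lands in the invariant subalgebra. The plan is to invoke the $H$-equivariance of the holonomy map established in \cite{FaitgHol}: for a ribbon graph $\mathbf{T}$ whose boundary points assemble into legs in a module $W$, applying $\mathrm{coad}^r(h)$ to the $\mathcal{L}_{g,n}(H)$-coefficient of $\mathrm{hol}(\mathbf{T})$ coincides with letting $h$ act on the boundary legs. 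A link $L$ has no boundary points, so there are no legs to act on and this equivariance collapses to $\mathrm{coad}^r(h)\bigl(\mathrm{hol}(L)\bigr)=\varepsilon(h)\,\mathrm{hol}(L)$; since $W(L)=\mathrm{hol}^{\mathrm{st}}(I(L))=\mathrm{hol}(L)$ for a link, this gives $W(L)\in\mathcal{L}_{g,n}^H(H)$. Equivalently, transporting $\mathrm{coad}^r$ to $\mathcal{S}^{\mathrm{st}}_H(\Sigma_{g,n}^{\circ,\bullet})$ through the isomorphism $\mathrm{hol}^{\mathrm{st}}$ yields the geometric action of $H$ at the boundary puncture $\bullet$, which is trivial on skeins with no boundary points.

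To prove that $W$ is an isomorphism onto $\mathcal{L}_{g,n}^H(H)$, I would exploit that $\mathrm{hol}^{\mathrm{st}}$ is already an isomorphism with inverse $\xi_{g,n}$, so that everything reduces to the map $I$. Writing $\mathcal{S}^{\mathrm{st}}_H(\Sigma_{g,n}^{\circ,\bullet})^{H}$ for the invariant subalgebra under the $H$-action transported from $\mathrm{coad}^r$ via $\mathrm{hol}^{\mathrm{st}}$ (and identified above with the geometric action at $\bullet$), the subspace $\xi_{g,n}\bigl(\mathcal{L}_{g,n}^H(H)\bigr)$ is exactly $\mathcal{S}^{\mathrm{st}}_H(\Sigma_{g,n}^{\circ,\bullet})^{H}$, and by the previous paragraph $\mathrm{im}(I)$ sits inside it. It therefore suffices to produce a two-sided inverse $\Psi=J\circ\xi_{g,n}:\mathcal{L}_{g,n}^H(H)\to\mathcal{S}_H(\Sigma_{g,n}^{\circ})$, where $J$ is a \emph{closing-up} operation sending an invariant stated skein to a genuine link.

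The construction of $J$ is where semisimplicity of $H\text{-}\mathrm{mod}$ enters. Using the stated skein relations I would first bring an element of $\mathcal{S}^{\mathrm{st}}_H(\Sigma_{g,n}^{\circ,\bullet})$ into the standard form of Lemma \ref{lemmeXiSurjective}, a ribbon graph threading the handles and punctures and ending in boundary points carrying a state $s$ in a module $W$. Invariance forces $s\in W^H$, and since $H\text{-}\mathrm{mod}$ is semisimple one has $W^H=\mathrm{Hom}_H(k,W)$ together with a splitting $W=W^H\oplus W'$ where $W'$ contains no trivial summand. Viewing $s$ as an $H$-morphism $k\to W$, I would absorb it into a coupon placed just inside the surface near $\bullet$; the strand joining this coupon to the boundary is then colored by the trivial module $k$ and can be deleted, so that all boundary points disappear and the result is a ribbon graph without boundary points, i.e. an element of $\mathcal{S}_H(\Sigma_{g,n}^{\circ})$. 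One then checks $J\circ I=\mathrm{id}$ (a link has no boundary points, so closing up does nothing, whence $\Psi\circ W=\mathrm{id}$) and $I\circ J=\mathrm{id}$ on invariant stated skeins (reopening the coupon recovers the invariant state, whence $W\circ\Psi=\mathrm{id}$).

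The hard part will be the well-definedness of $J$: the standard form is not unique, so I must verify that closing up respects all stated skein relations and that reopening a capped diagram returns the \emph{same} class in $\mathcal{S}^{\mathrm{st}}_H(\Sigma_{g,n}^{\circ,\bullet})$. This is precisely the step requiring semisimplicity of $H\text{-}\mathrm{mod}$: the projection onto the trivial isotypic component and the splitting $W=W^H\oplus W'$ are what allow an invariant boundary state to be converted into a coupon and back. When $H\text{-}\mathrm{mod}$ is not semisimple this fails, and indeed $\mathcal{L}_{g,n}^H(H)$ is strictly larger than $\mathrm{im}(W)$, as explained in \S\ref{sectionRemarksSemisimplicity}. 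Finally, injectivity of $W$, hence of $I$ (recovering Corollary \ref{coroISkeinInjective}), follows formally from $\Psi\circ W=\mathrm{id}$, and the case $H=U_q^{\mathrm{ad}}(\mathfrak{sl}_2)$ recovers the isomorphism of \cite[\S 8.2]{BR1}.
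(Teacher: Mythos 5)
There is a genuine gap, and you have named it yourself: the well-definedness of the closing-up map $J$ is not an auxiliary verification to be done at the end, it \emph{is} the theorem. Your map $J$ is defined on representatives (standard forms of Lemma \ref{lemmeXiSurjective}), and nothing in your sketch shows that it respects the stated skein relations or the non-uniqueness of the standard form. Two further steps are quicker than they look. First, ``invariance forces $s\in W^H$'' is not meaningful term by term: a class in $\xi_{g,n}\bigl(\mathcal{L}_{g,n}^H(H)\bigr)$ is a \emph{linear combination} of standard-form stated skeins, and the individual states are not invariant; only after grouping the terms with the same colors $\mathbf{X}\in\mathrm{Irr}(H)^{2g+n}$ does the total state tensor define an element of $\mathrm{Hom}_H\bigl(\bigotimes_i X_i\otimes X_i^*,k\bigr)$ --- and proving that invariance in $\mathcal{L}_{g,n}(H)$ translates into $H$-linearity of this tensor is exactly the equivariance computation of Lemma \ref{lemmaIsoInvariantsHomH} (the map $\mathcal{E}$), which you use implicitly but never establish. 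Second, $J\circ I=\mathrm{id}$ is not free: to evaluate $J$ on $I(L)$ you must first bring $I(L)$ to standard form, which \emph{creates} boundary points via stated skein relations near $\partial\Sigma$, so ``closing up does nothing'' does not apply; showing that closing up the standard form of $I(L)$ returns $L$ is essentially the content of the paper's Lemma \ref{lemmaSurjectionOnSkeinAlg}.

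The paper sidesteps all of this by never defining an inverse on skeins. It introduces the intermediate space $\bigoplus_{\mathbf{X}\in\mathrm{Irr}(H)^{2g+n}}\mathrm{Hom}_H\bigl(\bigotimes_{i}X_i\otimes X_i^*,k\bigr)$, on which two maps are defined without any ambiguity: $\mathcal{E}$ (state the invariant tensor on the standard ribbon graph, an isomorphism onto $\mathcal{L}_{g,n}^H(H)$ by Lemma \ref{lemmaIsoInvariantsHomH}) and $\mathcal{F}$ (put the $H$-morphism in a coupon and close up into a link, surjective by Lemma \ref{lemmaSurjectionOnSkeinAlg}). The commutative square $W\circ\mathcal{F}=\mathcal{E}$ then yields injectivity of $\mathcal{F}$ formally from injectivity of $\mathcal{E}$, so $\mathcal{F}$ is bijective and $W=\mathcal{E}\circ\mathcal{F}^{-1}$ is the desired isomorphism; your $J$ would be $\mathcal{F}\circ\mathcal{E}^{-1}\circ\mathrm{hol}^{\mathrm{st}}$, i.e.\ it is only well-defined \emph{because} of this algebraic factorization, not prior to it. One point in your favour: your first paragraph (that $W$ lands in $\mathcal{L}_{g,n}^H(H)$ by equivariance of $\mathrm{hol}$ applied to graphs with no boundary points) is a correct alternative to the paper's derivation, and consistently with it this is the one part of the statement that survives without semisimplicity, as the paper notes in \S \ref{sectionRemarksSemisimplicity}.
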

It is well-known that the ribbon category $\mathcal{C}$ of finite dimensional $U_q^{\mathrm{ad}}(\mathfrak{g})$-modules of type $1$ is semisimple. Therefore we can apply this theorem to $H=U_q^{\mathrm{ad}}(\mathfrak{g})$, meaning that the ribbon links with coupons defining $\mathcal{S}_H(\Sigma_{g,n}^{\circ})$ are colored by objects and morphisms in $\mathcal{C}$. Since $\mathcal{C}$ is equivalent to the category of finite dimensional modules of type $1$ over the simply connected quantum group $U_q(\mathfrak{g})$, we can indifferently take $H=U_q(\mathfrak{g})$.

\indent It is true that $W$ takes values in $\mathcal{L}^H_{g,n}(H)$ even if $H$-mod is not semisimple, but in general it is not an isomorphism. See \S \ref{sectionRemarksSemisimplicity} for more details on the non-semisimple case. 

\medskip

{\em For the rest of this section we assume that $H$-mod is semisimple.} In order to prove the theorem we first define two auxiliary maps $\mathcal{E}$ and $\mathcal{F}$. 

\smallskip

\indent Let $\mathrm{Irr}(H)$ be the set of irreducible finite-dimensional $H$-modules up to isomorphisms. Since $H\text{-}\mathrm{mod}$ is semisimple the matrix coefficients of the modules in $\mathrm{Irr}(H)$ form a basis of $H^{\circ}$, \textit{i.e.} $\textstyle H^{\circ} = \bigoplus_{X \in \mathrm{Irr}(H)} C(X)$. Hence
\begin{equation}\label{decompositionLgnMatCoeffs}
\mathcal{L}_{g,n}(H) = \bigoplus_{\mathbf{X} \in \mathrm{Irr}(H)^{2g+n}} C(X_1) \otimes C(X_2) \otimes \ldots \otimes C(X_{2g+n})
\end{equation}
where $\mathbf{X} = (X_1, \ldots, X_{2g+n})$. For any such $\mathbf{X} \in \mathrm{Irr}(H)^{2g+n}$, let
\[ \textstyle \mathcal{E}_{\mathbf{X}} : \mathrm{Hom}_k\!\left( \bigotimes_{i=1}^{2g+n} X_i \otimes X_i^*, k \right) \overset{\sim}{\longrightarrow} C(X_1) \otimes \ldots \otimes C(X_{2g+n}) \]
($\mathrm{Hom}_k$ denotes the space of all linear maps) be the isomorphism of vector spaces defined by
\[ \mathcal{E}_{\mathbf{X}}(f) = \sum_{\substack{i_1, \ldots, i_N\\j_1, \ldots, j_N}} f\!\left( x_{i_1}^{(1)} \otimes x^{(1),j_1} \otimes \ldots \otimes x_{i_N}^{(N)} \otimes x^{(N),j_N} \right) \, {_{X_1}\phi^{i_1}_{j_1}} \otimes \ldots \otimes {_{X_N}\phi^{i_N}_{j_N}} \]
where $N = 2g+n$, $\bigl( x_{i_l}^{(l)} \bigr)$ is a basis of $X_l$ with dual basis $\bigl( x^{(l), j_l} \bigr)$ and $_{X_l}\phi^{i_l}_{j_l}$ denotes the matrix coefficients of $X_l$ in this basis. When $f$ is $H$-linear we can represent the element $\mathcal{E}_{\mathbf{X}}(f)$ by using the diagrammatic calculus of \cite[\S 3]{FaitgHol}, as follows:
\smallskip
\begin{equation}\label{diagramInvariant}
\begingroup%
  \makeatletter%
  \providecommand\color[2][]{%
    \errmessage{(Inkscape) Color is used for the text in Inkscape, but the package 'color.sty' is not loaded}%
    \renewcommand\color[2][]{}%
  }%
  \providecommand\transparent[1]{%
    \errmessage{(Inkscape) Transparency is used (non-zero) for the text in Inkscape, but the package 'transparent.sty' is not loaded}%
    \renewcommand\transparent[1]{}%
  }%
  \providecommand\rotatebox[2]{#2}%
  \newcommand*\fsize{\dimexpr\f@size pt\relax}%
  \newcommand*\lineheight[1]{\fontsize{\fsize}{#1\fsize}\selectfont}%
  \ifx\svgwidth\undefined%
    \setlength{\unitlength}{416.38977499bp}%
    \ifx\svgscale\undefined%
      \relax%
    \else%
      \setlength{\unitlength}{\unitlength * \real{\svgscale}}%
    \fi%
  \else%
    \setlength{\unitlength}{\svgwidth}%
  \fi%
  \global\let\svgwidth\undefined%
  \global\let\svgscale\undefined%
  \makeatother%
  \begin{picture}(1,0.14846606)%
    \lineheight{1}%
    \setlength\tabcolsep{0pt}%
    \put(0,0){\includegraphics[width=\unitlength,page=1]{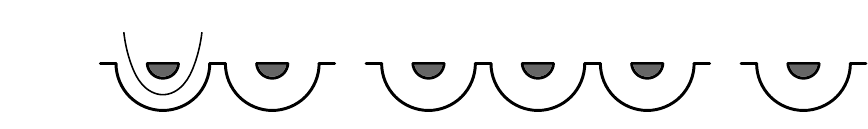}}%
    \put(0.15043853,0.09278534){\color[rgb]{0,0,0}\makebox(0,0)[lt]{\lineheight{1.25}\smash{\begin{tabular}[t]{l}$_{X_1}$\end{tabular}}}}%
    \put(0,0){\includegraphics[width=\unitlength,page=2]{diagramInvariant.pdf}}%
    \put(0.2765223,0.09278534){\color[rgb]{0,0,0}\makebox(0,0)[lt]{\lineheight{1.25}\smash{\begin{tabular}[t]{l}$_{X_2}$\end{tabular}}}}%
    \put(0,0){\includegraphics[width=\unitlength,page=3]{diagramInvariant.pdf}}%
    \put(0.45664203,0.09278534){\color[rgb]{0,0,0}\makebox(0,0)[lt]{\lineheight{1.25}\smash{\begin{tabular}[t]{l}$_{X_{2g-1}}$\end{tabular}}}}%
    \put(0,0){\includegraphics[width=\unitlength,page=4]{diagramInvariant.pdf}}%
    \put(0.58272578,0.09278534){\color[rgb]{0,0,0}\makebox(0,0)[lt]{\lineheight{1.25}\smash{\begin{tabular}[t]{l}$_{X_{2g}}$\end{tabular}}}}%
    \put(0,0){\includegraphics[width=\unitlength,page=5]{diagramInvariant.pdf}}%
    \put(0.70880963,0.09278534){\color[rgb]{0,0,0}\makebox(0,0)[lt]{\lineheight{1.25}\smash{\begin{tabular}[t]{l}$_{X_{2g+1}}$\end{tabular}}}}%
    \put(0,0){\includegraphics[width=\unitlength,page=6]{diagramInvariant.pdf}}%
    \put(0.88892928,0.09278534){\color[rgb]{0,0,0}\makebox(0,0)[lt]{\lineheight{1.25}\smash{\begin{tabular}[t]{l}$_{X_{2g+n}}$\end{tabular}}}}%
    \put(0,0){\includegraphics[width=\unitlength,page=7]{diagramInvariant.pdf}}%
    \put(0.5571836,0.12304783){\color[rgb]{0,0,0}\makebox(0,0)[lt]{\lineheight{1.25}\smash{\begin{tabular}[t]{l}$f$\end{tabular}}}}%
    \put(0.20643905,0.00245282){\color[rgb]{0,0,0}\makebox(0,0)[lt]{\lineheight{1.25}\smash{\begin{tabular}[t]{l}$B(1)$\end{tabular}}}}%
    \put(0.32710797,0.00240923){\color[rgb]{0,0,0}\makebox(0,0)[lt]{\lineheight{1.25}\smash{\begin{tabular}[t]{l}$A(1)$\end{tabular}}}}%
    \put(0.51540638,0.00223782){\color[rgb]{0,0,0}\makebox(0,0)[lt]{\lineheight{1.25}\smash{\begin{tabular}[t]{l}$B(g)$\end{tabular}}}}%
    \put(0.63634363,0.00232925){\color[rgb]{0,0,0}\makebox(0,0)[lt]{\lineheight{1.25}\smash{\begin{tabular}[t]{l}$A(g)$\end{tabular}}}}%
    \put(0.76481387,0.00222913){\color[rgb]{0,0,0}\makebox(0,0)[lt]{\lineheight{1.25}\smash{\begin{tabular}[t]{l}$M(g+1)$\end{tabular}}}}%
    \put(0.94557596,0.0021845){\color[rgb]{0,0,0}\makebox(0,0)[lt]{\lineheight{1.25}\smash{\begin{tabular}[t]{l}$M(g+n)$\end{tabular}}}}%
    \put(-0.0003669,0.0718649){\color[rgb]{0,0,0}\makebox(0,0)[lt]{\lineheight{1.25}\smash{\begin{tabular}[t]{l}$\mathcal{E}_{\mathbf{X}}(f) = $\end{tabular}}}}%
  \end{picture}%
\endgroup%

\end{equation}

\begin{lem}\label{lemmaIsoInvariantsHomH}
By restriction to $H$-linear maps, the $k$-linear isomorphism $\mathcal{E}_{\mathbf{X}}$ gives an isomorphism of vector spaces $\textstyle \mathrm{Hom}_H\!\left( \bigotimes_{i=1}^{2g+n} X_i \otimes X_i^*, k \right) \overset{\sim}{\longrightarrow} \bigl( C(X_1) \otimes \ldots \otimes C(X_{2g+n}) \bigr)^H$. It follows that we have an isomorphism of vector spaces
\[ \mathcal{E} : \bigoplus_{\mathbf{X} \in \mathrm{Irr}(H)^{2g+n}} \mathrm{Hom}_H\!\left( {\textstyle \bigotimes_{i=1}^{2g+n}} X_i \otimes X_i^*, k \right) \overset{\sim}{\longrightarrow} \mathcal{L}_{g,n}^H(H). \]
\end{lem}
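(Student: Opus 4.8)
The plan is to upgrade the vector-space isomorphism $\mathcal{E}_{\mathbf{X}}$ (which is given just before the lemma) to an $H$-equivariant one, and then simply read off the statement on invariant subspaces. Write $M_{\mathbf{X}} = \bigotimes_{i=1}^{N} (X_i \otimes X_i^*)$ with $N = 2g+n$, regarded as an $H$-module through the iterated coproduct, and equip $\mathrm{Hom}_k(M_{\mathbf{X}}, k)$ with the right action $(f \cdot h)(m) = f(h \cdot m)$. Under this action the invariants are precisely the $H$-linear maps $\mathrm{Hom}_H(M_{\mathbf{X}}, k)$: a linear form $f$ is a morphism to the trivial module if and only if $f(h\cdot m) = \varepsilon(h) f(m)$ for all $h,m$. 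On the target side $C(X_1) \otimes \cdots \otimes C(X_N)$ carries the restriction of $\mathrm{coad}^r$, whose invariants are by definition $(C(X_1) \otimes \cdots \otimes C(X_N))^H$. Hence, once I show that $\mathcal{E}_{\mathbf{X}}$ intertwines these two right actions, the fact that it is already a bijection forces it to restrict to a bijection of the two invariant subspaces, which is the first assertion.

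The core of the argument is therefore the equivariance identity $\mathcal{E}_{\mathbf{X}}(f \cdot h) = \mathrm{coad}^r(h)(\mathcal{E}_{\mathbf{X}}(f))$. First I would verify the single-factor case. Using $h\cdot(x_i \otimes x^j) = \sum_{(h),k,l} {_X\phi^k_i}(h_{(1)})\, {_X\phi^j_l}(S(h_{(2)}))\, x_k \otimes x^l$, where the action on $X^*$ is the dual one, the left-hand side becomes $\sum {_X\phi^k_i}(h_{(1)})\,{_X\phi^j_l}(S(h_{(2)}))\, f(x_k\otimes x^l)\,{_X\phi^i_j}$. On the other hand, \eqref{coadL01} together with the coregular formulas \eqref{coregularOnMatrixCoeffs} and the coproduct of matrix coefficients in \eqref{hopfMatrixCoeffs} give $\mathrm{coad}^r(h)({_X\phi^k_l}) = \sum {_X\phi^k_p}(h_{(1)})\,{_X\phi^q_l}(S(h_{(2)}))\,{_X\phi^p_q}$, and feeding this into $\mathcal{E}_X(f) = \sum_{kl} f(x_k \otimes x^l)\, {_X\phi^k_l}$ yields the same expression after relabelling $i,j \leftrightarrow p,q$. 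For general $N$ both actions distribute $h$ across the $N$ slots through $\Delta^{(N-1)}$ — the tensor-module structure on $M_{\mathbf{X}}$ on the source, and formula \eqref{coadLgn} on the target — so the identity follows from the single-factor computation by coassociativity, each slot consuming one leg of the coproduct exactly as above. (Alternatively the identity can be read off the diagrammatic picture \eqref{diagramInvariant}, but the algebraic check is cleaner to make rigorous.)

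Finally I would globalize. Semisimplicity of $H\text{-}\mathrm{mod}$ gives $H^\circ = \bigoplus_{X} C(X)$ and hence the decomposition \eqref{decompositionLgnMatCoeffs}. Each summand $C(X_1) \otimes \cdots \otimes C(X_N)$ is stable under $\mathrm{coad}^r$, since every $C(X_i)$ is a $\mathrm{coad}^r$-submodule (as already used in the proof of Proposition \ref{propPhi10}) and $\mathrm{coad}^r$ acts diagonally through $\Delta^{(N-1)}$. Therefore taking $H$-invariants commutes with the direct sum, $\mathcal{L}_{g,n}^H(H) = \bigoplus_{\mathbf{X}} (C(X_1) \otimes \cdots \otimes C(X_N))^H$, and the maps $\mathcal{E}_{\mathbf{X}}$ assemble into the claimed isomorphism $\mathcal{E}$. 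The only genuinely computational point, and thus the main obstacle, is the equivariance identity of the second paragraph: keeping track of the antipodes, of the dual action on the $X_i^*$ factors, and of the precise placement of $h_{(1)}$ versus $S(h_{(2)})$ so that they match the $\lhd$ and $\rhd$ appearing in $\mathrm{coad}^r$, is where care is needed, though it is otherwise routine bookkeeping.
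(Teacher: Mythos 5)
Your proposal is correct and takes essentially the same route as the paper: the right action you put on $\mathrm{Hom}_k(M_{\mathbf{X}},k)$ is exactly the paper's $\mathrm{act}^r$, whose invariants are the $H$-linear maps, and the whole lemma is reduced, as in the paper, to the equivariance of $\mathcal{E}_{\mathbf{X}}$ followed by summing over $\mathbf{X}$ using semisimplicity. The only cosmetic difference is that the paper proves the intertwining via the per-factor balancing identities \eqref{balancingActions} rather than your explicit single-factor matrix-coefficient computation, but these encode the same bookkeeping.
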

\begin{proof}
Note that the action of $H$ on $\textstyle \bigotimes_{i=1}^{2g+n} X_i \otimes X_i^*$ is by coproduct
\[ h \cdot \bigl( x_1 \otimes \varphi_1 \otimes \ldots \otimes x_N \otimes \varphi_N \bigr) = \sum_{(h)} h_{(1)} \cdot x_1 \otimes h_{(2)} \cdot \varphi_1 \otimes \ldots \otimes h_{(2N-1)} \cdot x_N \otimes h_{(2N)} \cdot \varphi_N \]
while the action of $H$ on $k$ is by counit. We also recall that if $X$ is a $H$-module, then the action of $h \in H$ on $\varphi \in X^*$ is defined by $\langle h \cdot \varphi, x \rangle = \langle \varphi, S(h) \cdot x \rangle$ for all $x \in X$. Consider the right action $\mathrm{act}^r$ of $H$ on $\textstyle \mathrm{Hom}_k\!\left( \bigotimes_{i=1}^{2g+n} X_i \otimes X_i^*, k \right)$ defined by $\mathrm{act}^r(h)(f)\bigl( \boldsymbol{v} \bigr) = f(h \cdot \boldsymbol{v} )$ for all $h \in H$ and $\boldsymbol{v} \in \textstyle \bigotimes_{i=1}^{2g+n} X_i \otimes X_i^*$. Then $\mathcal{E}_{\mathbf{X}}$ intertwines the right actions $\mathrm{act}^r$ and $\mathrm{coad}^r$. This is due to the definition of $\mathrm{coad}^r$ (see \eqref{coadLgn} and \eqref{coadL01}) and to the following formulas, which come from \eqref{coregularOnMatrixCoeffs}:
\begin{equation}\label{balancingActions}
\begin{array}{l}
\sum_{i_l,j_l} \bigl( h \rhd {_{X_l}\phi^{i_l}_{j_l}} \bigr) \otimes x_{i_l} \otimes x^{j_l} = \sum_{i_l,j_l} {_{X_l}\phi^{i_l}_{j_l}} \otimes x_{i_l} \otimes S^{-1}(h) \cdot x^{j_l},\\[1.5em]
\sum_{i_l,j_l} \bigl( {_{X_l}\phi^{i_l}_{j_l}} \lhd h \bigr) \otimes x_{i_l} \otimes x^{j_l} = \sum_{i_l,j_l} {_{X_l}\phi^{i_l}_{j_l}} \otimes h \cdot x_{i_l} \otimes x^{j_l}.
\end{array}
\end{equation}
Now note that a linear form $\textstyle f : \bigotimes_{i=1}^{2g+n} X_i \otimes X_i^* \to k$ is $H$-linear if and only if $f$ is an invariant element under the action $\mathrm{act}^r$, \textit{i.e.} $\mathrm{act}^r(h)(f) = \varepsilon(h)f$ for all $h \in H$. Being an intertwiner between $\mathrm{act}^r$ and $\mathrm{coad}^r$, the linear isomorphism $\mathcal{E}_{\mathbf{X}}$ restricts to an isomorphism between the invariant elements for $\mathrm{act}^r$ in $\textstyle  \mathrm{Hom}_k\!\left( \bigotimes_{i=1}^{2g+n} X_i \otimes X_i^*, k \right)$ and the invariant elements for $\mathrm{coad}^r$ in $C(X_1) \otimes \ldots \otimes C(X_{2g+n})$.
\end{proof}

\indent Now, for $\mathbf{X} = (X_1, \ldots, X_{2g+n}) \in \mathrm{Irr}(H)^{2g+n}$, let
\[ \textstyle \mathcal{F}_{\mathbf{X}} : \mathrm{Hom}_H\!\left( \bigotimes_{i=1}^{2g+n} X_i \otimes X_i^*, k \right) \to \mathcal{S}_H(\Sigma_{g,n}^{\circ}) \]
defined by
\begin{center}
\begingroup%
  \makeatletter%
  \providecommand\color[2][]{%
    \errmessage{(Inkscape) Color is used for the text in Inkscape, but the package 'color.sty' is not loaded}%
    \renewcommand\color[2][]{}%
  }%
  \providecommand\transparent[1]{%
    \errmessage{(Inkscape) Transparency is used (non-zero) for the text in Inkscape, but the package 'transparent.sty' is not loaded}%
    \renewcommand\transparent[1]{}%
  }%
  \providecommand\rotatebox[2]{#2}%
  \newcommand*\fsize{\dimexpr\f@size pt\relax}%
  \newcommand*\lineheight[1]{\fontsize{\fsize}{#1\fsize}\selectfont}%
  \ifx\svgwidth\undefined%
    \setlength{\unitlength}{431.6425824bp}%
    \ifx\svgscale\undefined%
      \relax%
    \else%
      \setlength{\unitlength}{\unitlength * \real{\svgscale}}%
    \fi%
  \else%
    \setlength{\unitlength}{\svgwidth}%
  \fi%
  \global\let\svgwidth\undefined%
  \global\let\svgscale\undefined%
  \makeatother%
  \begin{picture}(1,0.17531391)%
    \lineheight{1}%
    \setlength\tabcolsep{0pt}%
    \put(0,0){\includegraphics[width=\unitlength,page=1]{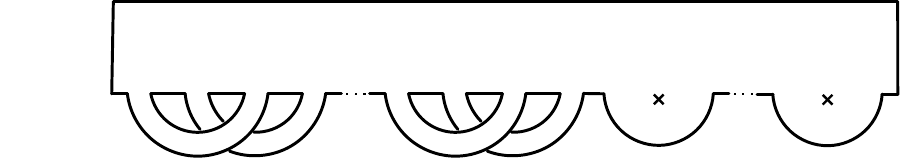}}%
    \put(-0.00049774,0.08874784){\color[rgb]{0,0,0}\makebox(0,0)[lt]{\lineheight{1.25}\smash{\begin{tabular}[t]{l}$\mathcal{F}_{\mathbf{X}}(f)=$\end{tabular}}}}%
    \put(0,0){\includegraphics[width=\unitlength,page=2]{defSectionDeW.pdf}}%
    \put(0.15587771,0.09917036){\color[rgb]{0,0,0}\makebox(0,0)[lt]{\lineheight{1.25}\smash{\begin{tabular}[t]{l}$_{X_1}$\end{tabular}}}}%
    \put(0.28014577,0.10115663){\color[rgb]{0,0,0}\makebox(0,0)[lt]{\lineheight{1.25}\smash{\begin{tabular}[t]{l}$_{X_2}$\end{tabular}}}}%
    \put(0,0){\includegraphics[width=\unitlength,page=3]{defSectionDeW.pdf}}%
    \put(0.44312977,0.09836345){\color[rgb]{0,0,0}\makebox(0,0)[lt]{\lineheight{1.25}\smash{\begin{tabular}[t]{l}$_{X_{2g-1}}$\end{tabular}}}}%
    \put(0.56739773,0.10034972){\color[rgb]{0,0,0}\makebox(0,0)[lt]{\lineheight{1.25}\smash{\begin{tabular}[t]{l}$_{X_{2g}}$\end{tabular}}}}%
    \put(0,0){\includegraphics[width=\unitlength,page=4]{defSectionDeW.pdf}}%
    \put(0.69645035,0.09993191){\color[rgb]{0,0,0}\makebox(0,0)[lt]{\lineheight{1.25}\smash{\begin{tabular}[t]{l}$_{X_{2g+1}}$\end{tabular}}}}%
    \put(0,0){\includegraphics[width=\unitlength,page=5]{defSectionDeW.pdf}}%
    \put(0.88428969,0.09971253){\color[rgb]{0,0,0}\makebox(0,0)[lt]{\lineheight{1.25}\smash{\begin{tabular}[t]{l}$_{X_{2g+n}}$\end{tabular}}}}%
    \put(0,0){\includegraphics[width=\unitlength,page=6]{defSectionDeW.pdf}}%
    \put(0.55873271,0.12590622){\color[rgb]{0,0,0}\makebox(0,0)[lt]{\lineheight{1.25}\smash{\begin{tabular}[t]{l}$f$\end{tabular}}}}%
  \end{picture}%
\endgroup%

\end{center}
The map $\mathcal{F}_{\mathbf{X}}$ is $k$-linear thanks to the skein relations \eqref{usualSkeinRelation}. Let
\[ \mathcal{F} : \bigoplus_{\mathbf{X} \in \mathrm{Irr}(H)^{2g+n}} \mathrm{Hom}_H\!\left( {\textstyle \bigotimes_{i=1}^{2g+n}} X_i \otimes X_i^*, k \right) \to \mathcal{S}_H(\Sigma_{g,n}^{\circ}) \]
be the sum of the linear maps $\mathcal{F}_{\mathbf{X}}$.

\begin{lem}\label{lemmaSurjectionOnSkeinAlg}
The linear map $\mathcal{F}$ is surjective.
\end{lem}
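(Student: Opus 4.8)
The plan is to show that every generator of $\mathcal{S}_H(\Sigma_{g,n}^{\circ})$ can be rewritten, using only the skein relations \eqref{usualSkeinRelation}, as a linear combination of the standard closed diagrams $\mathcal{F}_{\mathbf{X}}(f)$. This is the exact analogue for links of the reduction carried out for stated tangles in the proof of Lemma \ref{lemmeXiSurjective}, with the role of the boundary coupon now played by a single closed coupon.

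First I would fix a ribbon link with coupons $L \subset \Sigma_{g,n}^{\circ} \times [0,1]$ and put it in standard position with respect to the band presentation \eqref{surfaceRuban} of $\Sigma_{g,n}^{\circ}$: after an isotopy, the diagram of $L$ meets each of the $2g+n$ bands transversally in a finite family of parallel, coherently cooriented arcs, and all crossings and coupons of $L$ are confined to the central disk of \eqref{surfaceRuban}. Cutting out a cube containing this central tangle $T$ and applying the skein relation \eqref{usualSkeinRelation} expands $L$ as a finite linear combination $\sum_i \lambda_i L_i$ in which the central cube is replaced by the expansion of $F_{\mathrm{RT}}(T)$ over bases of the colors of the arcs leaving the cube; after this step the only remaining nontrivial feature of each $L_i$ is how the parallel arcs thread through the bands.

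Next, for each band I would fuse the parallel arcs running through it into a single strand: inserting an identity coupon realizes their common color as a tensor product $Z$, and since $H\text{-}\mathrm{mod}$ is semisimple I can expand $\mathrm{id}_Z$ as a sum of compositions of $H$-morphisms $Z \to X \to Z$ with $X$ irreducible (the isotypic projections), each of which is a legitimate coupon. Distributing over these projections expresses every $L_i$ as a linear combination of diagrams in which exactly one loop, colored by an irreducible module $X_k \in \mathrm{Irr}(H)$, passes through the $k$-th band, and all $2g+n$ of these loops meet a single central coupon. Using the orientation-reversal and all-in-one relations established in the proof of Lemma \ref{lemmeXiSurjective}, I can arrange every such loop to enter the coupon with the standard orientation, so that its two legs contribute the pair $X_k \otimes X_k^*$; the central coupon is then an $H$-morphism $\bigotimes_{i=1}^{2g+n} X_i \otimes X_i^* \to k$, i.e. an element $f \in \mathrm{Hom}_H(\bigotimes_{i=1}^{2g+n} X_i \otimes X_i^*, k)$, and the resulting diagram is by definition $\mathcal{F}_{\mathbf{X}}(f)$ with $\mathbf{X} = (X_1, \ldots, X_{2g+n})$.

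The main obstacle will be the geometric bookkeeping of the first two steps: verifying that the standard position can be achieved by an isotopy and that the successive applications of the skein relations and of the semisimple decomposition produce precisely the combinatorial shape of \eqref{diagramInvariant}, with the correct over/under crossings between the bands and the correct orientations at the coupon. Once the colors through the bands are irreducible and the guts are collected into a single $H$-linear coupon, matching the outcome with the definition of $\mathcal{F}_{\mathbf{X}}$ is a direct, if notationally heavy, comparison, entirely parallel to the $\xi_{g,n}$ computations already carried out; I would treat the low cases $(g,n)=(1,0)$ and $(0,1)$ explicitly and indicate the evident induction on $2g+n$ for the general case.
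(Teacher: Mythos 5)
Your proposal is correct and follows essentially the same route as the paper's proof: put the link in standard position with one strand per band by fusing parallel arcs and normalizing orientations via the all-in-one and orientation-reversal relations from the proof of Lemma \ref{lemmeXiSurjective}, use semisimplicity (isotypic projections realized as coupons, i.e. $\mathrm{id}_V = \sum_l I_l \circ \pi_l$) to reduce to irreducible colors, and replace the central tangle $T$ by a single coupon colored by $F_{\mathrm{RT}}(T) \in \mathrm{Hom}_H\bigl( \bigotimes_{i=1}^{2g+n} X_i \otimes X_i^*, k \bigr)$, so that the resulting link is exactly $\mathcal{F}_{\mathbf{X}}\bigl(F_{\mathrm{RT}}(T)\bigr)$. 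One phrase to correct: in the ordinary (non-stated) skein algebra there are no states, so the skein relation \eqref{usualSkeinRelation} does not ``expand $F_{\mathrm{RT}}(T)$ over bases'' of the colors leaving the cube --- the legitimate move, which you in fact invoke at the end, is simply to replace the tangle inside the cube by the coupon colored by $F_{\mathrm{RT}}(T)$.
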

\begin{proof}
As in the proof of Lemma \ref{lemmeXiSurjective} we observe that every element of $\mathcal{S}_H(\Sigma_{g,n}^{\circ})$ is a linear combination of links of the form
\begin{center}
\begingroup%
  \makeatletter%
  \providecommand\color[2][]{%
    \errmessage{(Inkscape) Color is used for the text in Inkscape, but the package 'color.sty' is not loaded}%
    \renewcommand\color[2][]{}%
  }%
  \providecommand\transparent[1]{%
    \errmessage{(Inkscape) Transparency is used (non-zero) for the text in Inkscape, but the package 'transparent.sty' is not loaded}%
    \renewcommand\transparent[1]{}%
  }%
  \providecommand\rotatebox[2]{#2}%
  \newcommand*\fsize{\dimexpr\f@size pt\relax}%
  \newcommand*\lineheight[1]{\fontsize{\fsize}{#1\fsize}\selectfont}%
  \ifx\svgwidth\undefined%
    \setlength{\unitlength}{411.66567055bp}%
    \ifx\svgscale\undefined%
      \relax%
    \else%
      \setlength{\unitlength}{\unitlength * \real{\svgscale}}%
    \fi%
  \else%
    \setlength{\unitlength}{\svgwidth}%
  \fi%
  \global\let\svgwidth\undefined%
  \global\let\svgscale\undefined%
  \makeatother%
  \begin{picture}(1,0.18382137)%
    \lineheight{1}%
    \setlength\tabcolsep{0pt}%
    \put(-0.0005219,0.09547778){\color[rgb]{0,0,0}\makebox(0,0)[lt]{\lineheight{1.25}\smash{\begin{tabular}[t]{l}$L_{\mathbf{X}} =$\end{tabular}}}}%
    \put(0,0){\includegraphics[width=\unitlength,page=1]{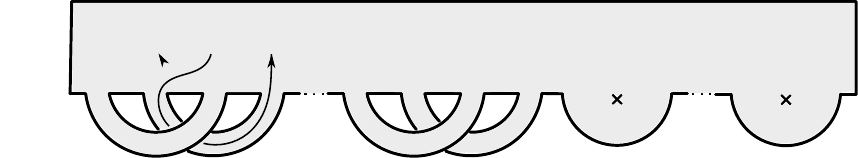}}%
    \put(0.11491492,0.1039828){\color[rgb]{0,0,0}\makebox(0,0)[lt]{\lineheight{1.25}\smash{\begin{tabular}[t]{l}$_{X_1}$\end{tabular}}}}%
    \put(0.24521335,0.10606546){\color[rgb]{0,0,0}\makebox(0,0)[lt]{\lineheight{1.25}\smash{\begin{tabular}[t]{l}$_{X_2}$\end{tabular}}}}%
    \put(0,0){\includegraphics[width=\unitlength,page=2]{standardLink.pdf}}%
    \put(0.41610648,0.10313673){\color[rgb]{0,0,0}\makebox(0,0)[lt]{\lineheight{1.25}\smash{\begin{tabular}[t]{l}$_{X_{2g-1}}$\end{tabular}}}}%
    \put(0.5464048,0.10521939){\color[rgb]{0,0,0}\makebox(0,0)[lt]{\lineheight{1.25}\smash{\begin{tabular}[t]{l}$_{X_{2g}}$\end{tabular}}}}%
    \put(0,0){\includegraphics[width=\unitlength,page=3]{standardLink.pdf}}%
    \put(0.68171996,0.1047813){\color[rgb]{0,0,0}\makebox(0,0)[lt]{\lineheight{1.25}\smash{\begin{tabular}[t]{l}$_{X_{2g+1}}$\end{tabular}}}}%
    \put(0,0){\includegraphics[width=\unitlength,page=4]{standardLink.pdf}}%
    \put(0.87867458,0.10455128){\color[rgb]{0,0,0}\makebox(0,0)[lt]{\lineheight{1.25}\smash{\begin{tabular}[t]{l}$_{X_{2g+n}}$\end{tabular}}}}%
    \put(0,0){\includegraphics[width=\unitlength,page=5]{standardLink.pdf}}%
    \put(0.53718447,0.13433898){\color[rgb]{0,0,0}\makebox(0,0)[lt]{\lineheight{1.25}\smash{\begin{tabular}[t]{l}$T_{\mathbf{X}}$\end{tabular}}}}%
  \end{picture}%
\endgroup%

\end{center}
where $T_{\mathbf{X}}$ is some oriented and colored $(4g+2n, 0)$-ribbon graph in $[0,1]^3$. We can assume that the colors $\mathbf{X} = (X_1, \ldots, X_{2g+n})$ are irreducible. Indeed, if a strand is colored by a non-irreducible module $V$, then write $\textstyle V = \bigoplus_l S_l$ where the $S_l$ are irreducible. We can introduce coupons containing the injections $I_l : S_l \to V$ and projections $\pi_l : V \to S_l$ thanks to the relation $\textstyle \mathrm{id}_V = \sum_l I_l \circ \pi_l$. By sliding these coupons along the strand we get a sum of links where the strands under consideration are colored by the modules $S_l$.
\\Thanks to the skein relations \eqref{usualSkeinRelation} we can replace the ribbon graph $T_{\mathbf{X}}$ by a coupon colored by $\textstyle F_{\mathrm{RT}}(T_{\mathbf{X}}) \in \mathrm{Hom}_H\!\left( \bigotimes_{i=1}^{2g+n} X_i \otimes X_i^*, k \right)$ and it follows that $L_{\mathbf{X}} = \mathcal{F}_{\mathbf{X}}\bigl( F_{\mathrm{RT}}(T_{\mathbf{X}}) \bigr)$.
\end{proof}

\begin{proof}[Proof of Theorem \ref{thWilsonIso}]
By definition of $\mathrm{hol}$ \cite[Def. 4.2]{FaitgHol} and \eqref{diagramInvariant}, we have a commutative diagram
\[ \xymatrix{
\bigoplus_{\mathbf{X} \in \mathrm{Irr}(H)^{2g+n}} \mathrm{Hom}_H\!\left( \bigotimes_{i=1}^{2g+n} X_i \otimes X_i^*, k \right) \ar[r]^-{\mathcal{E}} \ar[d]_{\mathcal{F}} & \mathcal{L}_{g,n}^H(H) \ar@{^{(}->}[d]\\
\mathcal{S}_H(\Sigma_{g,n}^{\circ}) \ar[r]_W & \mathcal{L}_{g,n}(H)
} \]
The surjectivity of $\mathcal{F}$ (Lemma \ref{lemmaSurjectionOnSkeinAlg}) implies that $W$ takes values in $\mathcal{L}_{g,n}^H(H)$. Since $\mathcal{E}$ is an isomorphism (Lemma \ref{lemmaIsoInvariantsHomH}), $\mathcal{F}$ is necessarily injective, so it is an isomorphism of vector spaces. It follows that $W$ is an isomorphism between $\mathcal{S}_H(\Sigma_{g,n}^{\circ})$ and $\mathcal{L}^H_{g,n}(H)$.
\end{proof}

Recall the map $I$ introduced before Lemma \ref{LemmaMapIWellDef}. Our results imply the following important fact, which is not obvious despite seeming completely natural:
\begin{cor}\label{coroISkeinInjective}
Under the assumptions of Theorem \ref{thWilsonIso}, the morphism of algebras $I : \mathcal{S}_H(\Sigma_{g,n}^{\circ}) \to \mathcal{S}_H^{\mathrm{st}}(\Sigma_{g,n}^{\circ, \bullet})$ is injective.
\end{cor}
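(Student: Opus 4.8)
The plan is to read off the injectivity of $I$ directly from the factorization $W = \mathrm{hol}^{\mathrm{st}} \circ I$ recorded in Definition \ref{defWilsonLoopMap}, rather than attempting any genuinely skein-theoretic argument. First I would invoke Theorem \ref{thWilsonIso}: since $H\text{-}\mathrm{mod}$ is semisimple, the Wilson loop map $W : \mathcal{S}_H(\Sigma_{g,n}^{\circ}) \to \mathcal{L}_{g,n}^H(H)$ is an isomorphism of algebras, hence in particular injective. Next I would recall that, by its very definition, $W$ is the composite of the map $I : \mathcal{S}_H(\Sigma_{g,n}^{\circ}) \to \mathcal{S}_H^{\mathrm{st}}(\Sigma_{g,n}^{\circ, \bullet})$ (Lemma \ref{LemmaMapIWellDef}) followed by the stated holonomy map $\mathrm{hol}^{\mathrm{st}}$.

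From here the conclusion is the elementary observation that a left factor of an injective composite is injective: given $x, y \in \mathcal{S}_H(\Sigma_{g,n}^{\circ})$ with $I(x) = I(y)$, applying $\mathrm{hol}^{\mathrm{st}}$ yields $W(x) = W(y)$, whence $x = y$ by injectivity of $W$. This already proves that $I$ is injective. Alternatively, one may note that $\mathrm{hol}^{\mathrm{st}}$ is itself an isomorphism by Theorem \ref{ThStatedHolIso}, so that $I = (\mathrm{hol}^{\mathrm{st}})^{-1} \circ W$ is manifestly injective as a composite of injective maps; I would likely phrase the final argument this way, since it also makes transparent how $I$ sits inside the commutative triangle of Theorems \ref{ThStatedHolIso} and \ref{thWilsonIso}.

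I do not expect a real obstacle, as the mathematical content is entirely absorbed into the two cited theorems and the corollary is their purely formal consequence. The one point worth verifying explicitly is that the relevant triangle commutes \emph{on the nose}, i.e. that $W$ equals $\mathrm{hol}^{\mathrm{st}} \circ I$ as maps of algebras and not merely up to some implicit identification; but this is precisely the definition of $W$ in Definition \ref{defWilsonLoopMap}, so no extra work is required. I would therefore keep the write-up to a single short argument, emphasizing that the interest of the statement lies not in its proof but in the fact that establishing injectivity of $I$ by direct skein-theoretic means appears considerably harder, as remarked in the text.
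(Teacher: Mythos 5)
Your proposal is correct and is essentially the paper's own proof: the paper likewise assembles the commutative triangle $W = \mathrm{hol}^{\mathrm{st}} \circ I$ from Definition \ref{defWilsonLoopMap}, Theorem \ref{ThStatedHolIso} and Theorem \ref{thWilsonIso}, and concludes injectivity of $I$ immediately from the fact that both $W$ and $\mathrm{hol}^{\mathrm{st}}$ are isomorphisms. No gap to report.
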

\begin{proof}
By the definition of $W$ (Definition \ref{defWilsonLoopMap}), Theorem \ref{ThStatedHolIso} and Theorem \ref{thWilsonIso} we have a commutative diagram where the rows are isomorphisms:
\[ \xymatrix@R=.6em{
\mathcal{S}_H^{\mathrm{st}}(\Sigma_{g,n}^{\circ,\bullet}) \ar[rr]_{\mathrm{hol}^{\mathrm{st}}}^{\sim} & & \mathcal{L}_{g,n}(H)\\
 & \circlearrowleft & \\
\mathcal{S}_H(\Sigma_{g,n}^{\circ}) \ar[rr]_W^{\sim} \ar[uu]^I & & \mathcal{L}_{g,n}^H(H) \ar@{^{(}->}[uu]_{\text{(subalgebra)}}
} \]
It immediately follows that $I$ is injective.
\end{proof}

\noindent This corollary means that the skein algebra $\mathcal{S}_H(\Sigma_{g,n}^{\circ})$ is isomorphic to the subalgebra of ribbon links (with coupons) in the stated skein algebra $\mathcal{S}_H^{\mathrm{st}}(\Sigma_{g,n}^{\circ, \bullet})$.

\subsection{Remarks on semisimplicity}\label{sectionRemarksSemisimplicity}

\indent If the category $H\text{-}\mathrm{mod}$ is not semisimple:
\begin{itemize}
\item the matrix coefficients of irreducible modules do not form a generating family of $H^{\circ}$,
\item the matrix coefficients of indecomposable modules form a generating family of $H^{\circ}$ but not a free family (even if one restricts to projective modules). \\Hence $\textstyle H^{\circ} = \sum_{X \in \mathrm{Ind}(H)} C(X)$, where $\mathrm{Ind}(H)$ denotes the set of indecomposable $H$-modules up to isomorphisms.
\end{itemize}
Consequently the decomposition \eqref{decompositionLgnMatCoeffs} is no longer true, and instead one has
\[ \mathcal{L}_{g,n}(H) = \sum_{\mathbf{X} \in \mathrm{Ind}(H)^{2g+n}} C(X_1) \otimes C(X_2) \otimes \ldots \otimes C(X_{2g+n}). \]
So there is a surjection $\textstyle \bigoplus_{\mathbf{X} \in \mathrm{Ind}(H)^{2g+n}} \mathrm{Hom}_k\!\left( \bigotimes_{i=1}^{2g+n} X_i \otimes X_i^*, k \right) \twoheadrightarrow \mathcal{L}_{g,n}(H)$ and a commutative diagram
\[ \xymatrix{
\bigoplus_{\mathbf{X} \in \mathrm{Ind}(H)^{2g+n}} \mathrm{Hom}_H\!\left( \bigotimes_{i=1}^{2g+n} X_i \otimes X_i^*, k \right) \ar[r]^-{\mathcal{E}} \ar[d]_{\mathcal{F}} & \mathcal{L}_{g,n}^H(H) \ar@{^{(}->}[d]\\
\mathcal{S}_H(\Sigma_{g,n}^{\circ}) \ar[r]_W & \mathcal{L}_{g,n}(H)
} \]
Moreover $\mathcal{F}$ is still surjective, thus $W$ takes values in $\mathcal{L}_{g,n}^H(H)$. But in general nothing can be said about the surjectivity of $\mathcal{E}$. So when $H$ is not semisimple, Theorem \ref{thWilsonIso} will in general not be true:
\[ \mathrm{im}(W) \subsetneq \mathcal{L}_{g,n}^H(H). \]
Non-semisimplicity arises notably at roots of unity. For instance take $(g,n)=(0,1)$ and $H = \overline{U}_{\epsilon} = \overline{U}_{\epsilon}(\mathfrak{sl}_2)$, the restricted quantum group of $\mathfrak{sl}(2)$ at $\epsilon = e^{i\pi/p}$ with $p \geq 2$ (see e.g. \cite[\S 3]{FGST}). We have $\mathcal{L}_{0,1}^{\overline{U}_{\epsilon}} \cong \mathcal{Z}(\overline{U}_{\epsilon})$ (see e.g. \cite[Th. 3.7]{Faitg3}) and it is known that $\dim\bigl( \mathcal{Z}(\overline{U}_{\epsilon}) \bigr) = 3p-1$ \cite[Prop. 4.4.4]{FGST}. On the other hand one can check that $\dim\bigl( \mathrm{im}(W) \bigr) = 2p$, showing that the inclusion is indeed strict. 

In \cite[\S 4.4]{FaitgHol} a generalization of Wilson loops was defined in order to produce more $H$-invariant elements. The idea is roughly to use other symmetric linear forms than the traces; for $\Sigma^{\circ}_{0,1}$ these generalized Wilson loops recover the whole of $\mathcal{L}^H_{0,1}(H)$.
\smallskip

Another interesting situation where non-semisimplicity arises is when working over the ring $A=\mc[q^{1/D},q^{-1/D}]$ and specializing $q$ to a root of unity. To define an integral version $\mathcal{L}_{g,n}^A$ of $\mathcal{L}_{g,n}$ defined over $A$, such that $\mathcal{L}_{g,n}^A\otimes_A \mc(q^{1/D}) = \mathcal{L}_{g,n}$, one considers Lusztig's restricted quantum group $U_{\mathcal{A}}^{\mathrm{res}}(\mathfrak{g})\subset U_q^{\mathrm{ad}}(\mathfrak{g})$, which is defined over $\mathcal{A}:=\mc[q,q^{-1}]$, and its restricted dual $\mathcal{O}_{\mathcal{A}}(G)$, associated to the category $\mathcal{C}_{\mathcal{A}}$ of finite rank $U_{\mathcal{A}}^{\mathrm{res}}(\mathfrak{g})$-modules of type $1$. The algebra $\mathcal{L}_{g,n}^A$ in the case $g=0$ is described in \cite{BR1}.

\section{The case of surfaces without boundary}\label{qredsection} By Theorem \ref{thWilsonIso}, we know that if the ribbon Hopf algebra $H$ has semisimple category $H\text{-}\mathrm{mod}$, the Wilson loop map provides an isomorphism of algebras $\mathcal{S}_H(\Sigma_{g,n}^{\circ}) \overset{\sim}{\to} \mathcal{L}^H_{g,n}(H)$. Here we address the question of defining algebras which are related to the surface $\Sigma_{g,n}$ obtained by closing $\Sigma_{g,n}^{\circ}$, and derived from $\mathcal{S}_H(\Sigma_{g,n}^{\circ}) $ and $\mathcal{L}^H_{g,n}(H)$ respectively. We stress that the construction and results  presented below work for any $n\geq0$, including in particular the closed surface of genus $g$.

\subsection{Quantum moment maps and quantum reduction}\label{prelqrsection}
Let $H$ be a Hopf algebra over a field $k$, and $H'\subset H$ a subalgebra and right $H$-coideal (so $\Delta(H')\subset H'\otimes H$). On $H$ we have the right adjoint action, 
$$\mathrm{ad}^r(h)(g) = \sum_{(h)} S(h_{(1)})gh_{(2)}, \quad (h,g\in H),$$
and we assume that $H'$ is a stable subspace under this action. Let $A$ be a left $H$-module algebra. We denote by $\smallblacktriangle$ the action of $H$ on $A$. 
\begin{defi}\label{defQMM} A morphism of algebras $\mu\colon H'\ra A$ is a quantum moment map \emph{(QMM)} if for every $a\in A$ and $h'\in H'$ it satisfies
$$\sum_{(h')}(h_{(2)}'\smallblacktriangle a) \mu(h_{(1)}')= \mu(h') a.$$
\end{defi}
This definition is implicit in \cite{Lu}, and explicit in \cite[Section 1.5]{VV} (we use an opposite coproduct, which is suited to our conventions). The following facts follow readily from the definitions and are properties we can expect of a quantum analog of moment map (see \cite{Lu}, Theorem 3.10).
\begin{lem}\label{lemQMM} Assume we are given a QMM $\mu\colon H'\ra A$. Then:
\smallskip

\begin{itemize}
\item[$(i)$] \emph{(Equivariance)} For every $h\in H$ and $h'\in H'$ we have $$\mu\bigl(\mathrm{ad}^r(h)(h')\bigr) = S(h)\smallblacktriangle \mu(h').$$

\item[$(ii)$] \emph{(Recovering the action)}  For every $a\in A$ and $h\in H$ such that $(S\otimes id)\Delta(h)\in H'\otimes H'$ (e.g. when $H'=H$), we have $$\textstyle \sum_{(h)} \mu(S(h_{(1)})) a \mu(h_{(2)}) = S(h)\smallblacktriangle a.$$
\end{itemize}
\end{lem}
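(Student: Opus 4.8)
The plan is to read both assertions off the defining relation of a QMM, $\sum_{(h')}(h'_{(2)}\smallblacktriangle a)\mu(h'_{(1)})=\mu(h')a$, by Sweedler calculus, using in addition that $\mu$ is a unital algebra morphism, that $A$ is a left $H$-module algebra, and the standard Hopf identities $\Delta\circ S=(S\otimes S)\circ\Delta^{\mathrm{op}}$ together with the antipode and counit axioms. The running bookkeeping point is that $H'$ is a right coideal, so the first tensor leg of $\Delta(x)$ lies in $H'$ whenever $x\in H'$; this is what guarantees that $\mu$ is only ever applied to elements of $H'$. I would establish $(ii)$ first, since part of $(i)$ then falls out of it.

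For $(ii)$, assume $(S\otimes\mathrm{id})\Delta(h)\in H'\otimes H'$, so that $S(h_{(1)}),h_{(2)}\in H'$. Starting from $\sum \mu(S(h_{(1)}))\,a\,\mu(h_{(2)})$, I would apply the defining relation to the element $S(h_{(1)})\in H'$; expanding $\Delta(S(h_{(1)}))=\sum S(h_{(2)})\otimes S(h_{(1)})$ and reindexing by coassociativity turns the expression into $\sum (S(h_{(1)})\smallblacktriangle a)\,\mu(S(h_{(2)}))\,\mu(h_{(3)})$. Since $\mu$ is multiplicative and $S(h_{(2)})h_{(3)}\in H'$, the last two factors merge into $\mu(S(h_{(2)})h_{(3)})$, and the antipode–counit collapse $\sum h_{(1)}\otimes S(h_{(2)})h_{(3)}=h\otimes 1$ together with $\mu(1)=1$ yields exactly $S(h)\smallblacktriangle a$.

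For $(i)$, the element $\mathrm{ad}^r(h)(h')=\sum S(h_{(1)})h'h_{(2)}$ lies in $H'$ by the assumed $\mathrm{ad}^r$-stability, so the left-hand side is well defined. Specializing $(ii)$ to $a=\mu(h')$ gives $\sum \mu(S(h_{(1)}))\mu(h')\mu(h_{(2)})=S(h)\smallblacktriangle\mu(h')$; whenever the three factors lie in $H'$ the left-hand side equals $\mu\bigl(\sum S(h_{(1)})h'h_{(2)}\bigr)=\mu(\mathrm{ad}^r(h)(h'))$ by multiplicativity of $\mu$. This proves $(i)$ at once when $H'=H$, and more generally for every $h$ satisfying the hypothesis of $(ii)$.

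The main obstacle is $(i)$ for an arbitrary $h\in H$ when $H'\subsetneq H$ is a proper coideal (e.g. $H=U_q^{\mathrm{ad}}(\mathfrak{g})$, $H'=U_q^{\mathrm{lf}}(\mathfrak{g})$, and $h=K_i\notin H'$): there $S(h_{(1)}),h_{(2)}$ escape $H'$, so the factorization used above is illegitimate even though $\mathrm{ad}^r(h)(h')\in H'$. To handle this I would first observe that, since $\mathrm{ad}^r$ is a right action and $S$ an anti-morphism, the set of $h$ for which $(i)$ holds (for all $h'$) is a subalgebra of $H$ containing $1$; hence it suffices to verify $(i)$ on a generating set. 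For grouplike generators $g$, where both $\mathrm{ad}^r(g)$ and $g\smallblacktriangle(-)$ are algebra automorphisms, I would apply $g\smallblacktriangle(-)$ to the defining relation and substitute to exhibit $g\smallblacktriangle\mu(\mathrm{ad}^r(g)(h'))$ as again satisfying the QMM relation; identifying it with $\mu(h')$ is then the crux. I expect this last identification — ruling out the central ambiguity that the bare defining relation leaves — together with keeping every argument of $\mu$ inside $H'$ via the coideal property, to be where the actual work lies, the rest being routine Sweedler bookkeeping.
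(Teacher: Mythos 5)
Your proof of (ii) is correct, and so is your derivation of (i) from (ii) under the hypothesis $(S\otimes\mathrm{id})\Delta(h)\in H'\otimes H'$ (in particular when $H'=H$): the bookkeeping works, since $S(h_{(1)})\in H'$ together with the right coideal property puts $S(h_{(2)})\in H'$, the hypothesis puts $h_{(3)}\in H'$, and then multiplicativity of $\mu$ and the antipode axiom finish the computation. Note that the paper offers no proof of this lemma at all; it asserts that the facts ``follow readily from the definitions'', citing \cite{Lu} and \cite{VV}, so there is no argument in the paper to compare yours against beyond the statement itself.

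The genuine issue is the one you flagged and could not close, namely item (i) for arbitrary $h\in H$, and your instinct about the ``central ambiguity'' is not a technical nuisance but a fatal obstruction: (i) is \emph{false} in the stated generality, so no completion of your strategy can exist. Take $H=k[\mathbb{Z}/2\times\mathbb{Z}/2]$ with grouplike generators $s,t$, let $H'=k[\langle s\rangle]$ (a subalgebra, right coideal, and $\mathrm{ad}^r$-stable since the group is abelian), and let $A=k[u]/(u^2-1)$ with $s$ acting trivially and $t$ acting by the algebra automorphism $u\mapsto -u$. Then $\mu\colon H'\to A$, $s\mapsto u$, is a unital algebra morphism satisfying the QMM relation, because $s$ acts trivially and $A$ is commutative: $\sum_{(h')}(h'_{(2)}\smallblacktriangle a)\,\mu(h'_{(1)})=a\,\mu(h')=\mu(h')\,a$. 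Yet $\mu\bigl(\mathrm{ad}^r(t)(s)\bigr)=\mu(t^{-1}st)=\mu(s)=u$, while $S(t)\smallblacktriangle\mu(s)=t\smallblacktriangle u=-u$, so (i) fails for $h=t$ when $\mathrm{char}(k)\neq 2$. Since $t$ is grouplike, this also kills your plan of verifying (i) on grouplike generators: your observation that the set of $h$ for which (i) holds is a unital subalgebra of $H$ is correct, but that subalgebra can be proper. The honest conclusion is that Lemma \ref{lemQMM}(i) requires an extra hypothesis (for instance the hypothesis of (ii), under which you did prove it), and that for the moment map the paper actually uses, $\mu\colon U_q^{\mathrm{lf}}\to\mathcal{L}_{g,n}(\mathfrak{g})$ of \S \ref{QMMLgn}, equivariance should be checked directly -- it follows there from the $\mathrm{coad}^r$-equivariance of the matrices $\overset{V}{C}$ and of $\Phi_{0,1}$ -- rather than deduced from the abstract QMM axiom.
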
 
Given a morphism of algebras $\chi\colon H'\ra k$, put
\begin{equation}\label{defiIchi}
I_\chi := A\mu({\rm Ker}(\chi)).
\end{equation}
This is a left ideal of $A$ (but in general not a two-sided ideal). Since ${\rm Ker}(\chi) = \{h'-\chi(h')1, h'\in H'\}$, and $A$ is a $H$-module algebra and $H'$ is stable under $\mathrm{ad}^r$, $(i)$ in the lemma implies that $I_\chi$ is $H'$-stable. Therefore we can consider the left $(A/I_\chi)$-submodule $(A/I_\chi)^{H'}\subset A/I_\chi$ formed by the $H'$-invariant elements. Define the map $\varphi_\chi\colon H'\ra H$, $\textstyle h'\mapsto \sum_{(h')} \chi(h_{(1)}')h_{(2)}'$, and denote by $\varepsilon_{\vert H'}\colon H'\ra k$ the restriction of the counit of $H$.
\begin{prop}\label{QRalg} The product of $A$ descends to the space $(A/I_\chi)^{H'}$ and gives it a structure of algebra. Moreover, if ${\rm Im}(\varphi_\chi)\subset H'$ and $\varphi_\chi({\rm Ker}(\chi))= {\rm Ker}(\varepsilon_{\vert H'})$, then the algebra $(A/I_\chi)^{H'}$ is isomorphic to $End_A(A/I_\chi)^{op}$.
\end{prop}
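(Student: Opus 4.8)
The plan is to reduce the whole statement to a single computational identity that tames the fact that $I_\chi$ is only a \emph{left} ideal. First I would record, for every $x\in H'$ and $a\in A$, the congruence
$$\mu(x)\,a \;\equiv\; \varphi_\chi(x)\smallblacktriangle a \pmod{I_\chi}.$$
To prove it I apply the QMM property (Definition \ref{defQMM}) to get $\mu(x)a=\sum_{(x)}(x_{(2)}\smallblacktriangle a)\mu(x_{(1)})$; since $H'$ is a right coideal we have $x_{(1)}\in H'$, so I may split $x_{(1)}=\big(x_{(1)}-\chi(x_{(1)})1\big)+\chi(x_{(1)})1$ with $x_{(1)}-\chi(x_{(1)})1\in{\rm Ker}(\chi)$. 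The first piece contributes $\sum_{(x)}(x_{(2)}\smallblacktriangle a)\,\mu\big(x_{(1)}-\chi(x_{(1)})1\big)\in A\mu({\rm Ker}(\chi))=I_\chi$, while the second collapses, using that $\mu$ is an algebra map, to $\big(\sum_{(x)}\chi(x_{(1)})x_{(2)}\big)\smallblacktriangle a=\varphi_\chi(x)\smallblacktriangle a$. This identity is the engine of everything that follows.

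Next I would treat the endomorphism algebra abstractly, independently of $H'$. Writing $M:=A/I_\chi$, which is a cyclic left $A$-module generated by $\bar 1$, I introduce the \emph{idealizer classes}
$$N:=\{\bar a\in M \mid I_\chi a\subseteq I_\chi\}.$$
Right multiplication $\rho_a(\bar c)=\overline{ca}$ is well defined precisely when $\bar a\in N$, and one checks directly that $\rho_a$ is left $A$-linear, that $\bar a\mapsto\rho_a$ and $f\mapsto f(\bar1)$ are mutually inverse bijections between $N$ and ${\rm End}_A(M)$, and that $\rho_{ab}=\rho_b\circ\rho_a$. The product of $A$ is internal to $N$, since $I_\chi(ab)=(I_\chi a)b\subseteq I_\chi b\subseteq I_\chi$, so $N$ is an algebra and $\bar a\mapsto\rho_a$ is an isomorphism of algebras $N\xrightarrow{\sim}{\rm End}_A(M)^{\rm op}$. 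Thus the second assertion will follow once $N$ is identified with $(A/I_\chi)^{H'}$, and the first will follow because the product visibly descends to the subalgebra $N$.

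This identification is where the congruence meets the hypotheses. Because $I_\chi=A\mu({\rm Ker}(\chi))$ and $A\cdot I_\chi=I_\chi$, membership $\bar a\in N$ is equivalent to $\mu(x)a\in I_\chi$ for all $x\in{\rm Ker}(\chi)$, hence by the congruence to $\varphi_\chi({\rm Ker}(\chi))\smallblacktriangle a\subseteq I_\chi$. On the other hand $\bar a\in(A/I_\chi)^{H'}$ means $(h'-\varepsilon(h'))\smallblacktriangle a\in I_\chi$ for all $h'\in H'$, i.e. ${\rm Ker}(\varepsilon_{\vert H'})\smallblacktriangle a\subseteq I_\chi$. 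Now ${\rm Im}(\varphi_\chi)\subseteq H'$ together with the elementary computation $\varepsilon(\varphi_\chi(x))=\chi(x)$ gives $\varphi_\chi({\rm Ker}(\chi))\subseteq{\rm Ker}(\varepsilon_{\vert H'})$, whence the inclusion $(A/I_\chi)^{H'}\subseteq N$; this already makes the product of $A$ descend to $(A/I_\chi)^{H'}$, the sole obstruction — well-definedness against a change of the left factor — being exactly $I_\chi b\subseteq I_\chi$ for invariant $\bar b$. The surjectivity hypothesis $\varphi_\chi({\rm Ker}(\chi))={\rm Ker}(\varepsilon_{\vert H'})$ then yields the reverse inclusion $N\subseteq(A/I_\chi)^{H'}$, so that $(A/I_\chi)^{H'}=N\cong{\rm End}_A(A/I_\chi)^{\rm op}$ as algebras.

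The main obstacle I anticipate is precisely the one-sidedness of $I_\chi$: neither the well-definedness of the descended product nor the passage to ${\rm End}_A(A/I_\chi)$ can be obtained from the module-algebra formalism alone, because $A/I_\chi$ carries no algebra structure and the second Sweedler leg $h'_{(2)}$ escapes $H'$. The congruence is what converts the troublesome right action of $I_\chi$ into the action of $\varphi_\chi({\rm Ker}(\chi))\subseteq H$, after which the coideal hypothesis ${\rm Im}(\varphi_\chi)\subseteq H'$ and the surjectivity onto ${\rm Ker}(\varepsilon_{\vert H'})$ perform the bookkeeping (the equivariance and recovering-the-action formulas of Lemma \ref{lemQMM} are available as alternative tools but are not needed for this route). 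The only genuinely delicate point is to verify that the two Sweedler splittings are compatible and that all displayed congruences are taken modulo the same left ideal; everything else is formal.
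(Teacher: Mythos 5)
Your proof is correct, and its engine is literally the paper's: the congruence $\mu(x)\,a\equiv\varphi_\chi(x)\smallblacktriangle a\ (\mathrm{mod}\ I_\chi)$, obtained from the QMM equation of Definition \ref{defQMM} by splitting $x_{(1)}=\bigl(x_{(1)}-\chi(x_{(1)})1\bigr)+\chi(x_{(1)})1$ inside $H'$, is exactly the displayed computation in the paper's proof of Proposition \ref{QRalg}, read in the opposite direction. Where your route differs is in what you do with it. The paper proves only the implication ``invariant $\Rightarrow$ right multiplication preserves $I_\chi$'' and then delegates the identification with $\mathrm{End}_A(A/I_\chi)^{\mathrm{op}}$ to \cite[Prop.\ 1.5.2]{VV}; you instead introduce the idealizer classes $N=\{\bar a\in A/I_\chi \mid I_\chi a\subseteq I_\chi\}$, prove the standard eigenring isomorphism $N\cong \mathrm{End}_A(A/I_\chi)^{\mathrm{op}}$ in a self-contained way, and then show that the two hypotheses deliver the two inclusions: $\mathrm{Im}(\varphi_\chi)\subseteq H'$ gives $(A/I_\chi)^{H'}\subseteq N$ (via $\varphi_\chi(\mathrm{Ker}(\chi))\subseteq\mathrm{Ker}(\varepsilon_{\vert H'})$, which is the paper's step), while surjectivity of $\varphi_\chi$ onto $\mathrm{Ker}(\varepsilon_{\vert H'})$ gives $N\subseteq(A/I_\chi)^{H'}$. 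This buys a complete, reference-free proof of the second claim and makes transparent which hypothesis does what.

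Two caveats, both of which apply equally to the paper's own sketch, so they are not gaps specific to your argument. First, the descent step already uses $\varphi_\chi(\mathrm{Ker}(\chi))\subseteq H'$, i.e.\ part of the hypotheses that the proposition attaches only to the second claim: without it, invariance of $\bar b$ says nothing about $\varphi_\chi(x)\smallblacktriangle b$, since $\varphi_\chi(x)$ need not lie in $H'$. (This is harmless in the paper's application, where $\chi=\varepsilon$ and $\varphi_\varepsilon=\mathrm{id}_{H'}$.) Second, ``gives it a structure of algebra'' requires not only well-definedness of $\overline{ab}$ but also closure, i.e.\ that $\overline{ab}$ is again $H'$-invariant; this does not follow from invariance of $\bar a,\bar b$ alone, because the second Sweedler leg $h'_{(2)}$ escapes $H'$. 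So your aside that well-definedness is ``the sole obstruction'' is inaccurate as stated. However, your proof as a whole does settle closure: under both hypotheses $(A/I_\chi)^{H'}=N$, and $N$ is manifestly stable under the product since $I_\chi(ab)=(I_\chi a)b\subseteq I_\chi$. On this point your packaging is actually tighter than the paper's sketch, which never addresses closure at all.
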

Both claims follow from arguments in Proposition 1.5.2 of \cite{VV}, to which we refer. The first claim is also detailed in Proposition 3.12 of \cite{GJS}. For completeness let us explain it. The point is to observe that a coset $a+I_\chi \in A/I_\chi$ is $H'$-invariant if and only if ${\rm Ker}(\varepsilon_{\vert H'}) \smallblacktriangle a \subset I_\chi$. But ${\rm Ker}(\varepsilon_{\vert H'})$ obviously contains the elements $\textstyle \sum_{(h')} \chi(h_{(1)}')h_{(2)}'\in H$, where $h'\in {\rm Ker}(\chi)$, and
\begin{align*}\left(\sum_{(h')} \chi(h_{(1)}')h_{(2)}'\right)\smallblacktriangle a 
& = \sum_{(h')} (h_{(2)}'\smallblacktriangle a)(\mu(h'_{(1)})) + \sum_{(h')} (h_{(2)}'\smallblacktriangle a)(\chi(h'_{(1)}) - \mu(h_{(1)}'))
\\ & = \sum_{(h')} (h_{(2)}'\smallblacktriangle a)\mu(h_{(1)}') \quad {\rm modulo}\ I_\chi \\
& = \mu(h') a \quad {\rm modulo}\ I_\chi.
\end{align*}
The third equality follows from the QMM equation in Definition \ref{defQMM}. Therefore, for every $h'\in {\rm Ker}(\chi)$, $\mu(h') a \in {\rm Ker}(\varepsilon_{\vert H'}) \smallblacktriangle a + I_\chi \subset I_\chi$, whence $I_\chi a \subset I_\chi$, which shows $I_\chi$ is stable under right multiplication by elements which are $H'$-invariant modulo $I_\chi$. It follows that the product of $A$ descends to $(A/I_\chi)^{H'}$.  

\medskip

From \cite{VV} we take:


\begin{defi}\label{defQuantumReduction} The algebra $A/\!\!/_{\!\chi} H' := (A/I_\chi)^{H'}$ is the quantum reduction of $A$ along the character $\chi$.
\end{defi}
By the defining property of $\mu$, it is immediate that given an $A$-module $V$ the subspace
$$V^{H',\chi} :=\{v\in V, \mu(h') \cdot v = \chi(h')v, \forall h'\in H'\}$$
is a $(A/\!\!/_{\!\chi} H')$-module. Therefore we have a functor $A$-Mod$\ra (A/\!\!/_{\!\chi} H')$-Mod.
\medskip

Let us observe the following fact in the case of the quantum reduction by the counit $\varepsilon$, which will be important in Section \ref{Lgnqrsection}. Consider the canonical quotient map of $H$-modules, $p \colon A \ra  A/I_\varepsilon$. The restriction $\pi$ of $p$ to $A^{H'}$ has image contained in $A/\!\!/_{\!\varepsilon} H'$, and as explained after Proposition \ref{QRalg} it is a morphism of algebras
\begin{equation}\label{defPi}
\pi\colon A^{H'} \ra  A/\!\!/_{\!\varepsilon} H'.
\end{equation}
Denote by $Z(H')$ the center of $H'$.
\begin{defi}\label{defiseparate} We will say that $Z(H')$ separates the simple types if the following property holds: for any finite collection of non isomorphic and non-trivial simple $H$-modules $X_1,\ldots,X_k$, there exists an element $z\in Z(H')$ such that $\varepsilon(z)=0$ and $z\smallblacktriangle v  = z_i v$ for every $v\in X_i$, where $z_i \in k$ satisfy $z_i\ne z_j$ for $i\ne j$. 
\end{defi}
\begin{lem} \label{lemsepare} Assume that the $H$-module $A$ is completely reducible and $Z(H')$ separates the simple types. Then the map  $\pi$ is surjective.
\end{lem}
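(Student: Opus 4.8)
The plan is to show that every element of $A/\!\!/_{\!\varepsilon} H' = (A/I_\varepsilon)^{H'}$ lies in the image of $\pi$, by lifting it to $A$, decomposing the lift into $H$-isotypic components, and using the central elements provided by Definition \ref{defiseparate} to push the entire non-trivial isotypic part into the ideal $I_\varepsilon$.

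First I would fix $\bar a \in (A/I_\varepsilon)^{H'}$ and choose a lift $a \in A$ with $p(a) = \bar a$. Since the $H$-module $A$ is completely reducible, it has an isotypic decomposition $A = A^H \oplus \bigoplus_{X} A_X$, the sum running over the non-trivial simple types $X$, together with the associated $H$-linear Reynolds projection $\mathfrak{R} \colon A \to A^H$ (constructed as in \S\ref{sectionNoetherianityLgnInv}). Only finitely many non-trivial types $X_1, \ldots, X_k$ occur in $a$, so $a = \mathfrak{R}(a) + \sum_{i=1}^k a_{X_i}$ with $a_{X_i} \in A_{X_i}$. As $A^H \subset A^{H'}$, we have $\mathfrak{R}(a) \in A^{H'}$ and $\pi(\mathfrak{R}(a)) = p(\mathfrak{R}(a))$; hence it suffices to prove that the non-trivial part $\sum_{i=1}^k a_{X_i}$ lies in $I_\varepsilon$, for then $a \equiv \mathfrak{R}(a) \pmod{I_\varepsilon}$ and $\pi(\mathfrak{R}(a)) = p(a) = \bar a$.

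The key input is the characterization of $H'$-invariance modulo $I_\varepsilon$ established in the proof of Proposition \ref{QRalg}: since $p(a)$ is $H'$-invariant, $\mathrm{Ker}(\varepsilon_{\vert H'}) \smallblacktriangle a \subset I_\varepsilon$. Now I would apply Definition \ref{defiseparate} to the collection $\{X_1, \ldots, X_k\}$ to obtain $z \in Z(H')$ with $\varepsilon(z) = 0$ acting on each $X_i$ by a scalar $z_i$, the scalars $z_1, \ldots, z_k$ being pairwise distinct and distinct from the trivial scalar $0 = \varepsilon(z)$. Because $A_{X_i} \cong X_i^{\oplus m_i}$ as an $H$-module, $z$ acts on the whole component $A_{X_i}$ by $z_i$, and it acts by $0$ on $A^H$. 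For any polynomial $P$ with $P(0) = 0$, the element $P(z)$ lies in $Z(H') \cap \mathrm{Ker}(\varepsilon_{\vert H'})$ (indeed $\varepsilon(P(z)) = P(\varepsilon(z)) = P(0) = 0$ as $\varepsilon$ is an algebra map), so $P(z) \smallblacktriangle a \in I_\varepsilon$; moreover this element equals $\sum_{i=1}^k P(z_i)\, a_{X_i}$, the trivial part being annihilated by $P(0) = 0$. Choosing, by Lagrange interpolation at the $k+1$ distinct points $0, z_1, \ldots, z_k$, a polynomial $P$ with $P(0) = 0$ and $P(z_i) = 1$ for all $i$, I obtain $\sum_{i=1}^k a_{X_i} = P(z) \smallblacktriangle a \in I_\varepsilon$, which finishes the reduction and the proof.

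The step I expect to be the main obstacle is guaranteeing that the separating central element $z$ acts by scalars genuinely different from the trivial scalar $0$, i.e. that each $z_i \neq 0$: this is precisely what makes interpolation under the constraint $P(0) = 0$ possible, and it is where the hypothesis that $Z(H')$ separates the simple types must be used at full strength, the normalization $\varepsilon(z) = 0$ serving exactly to pin the trivial type to the scalar $0$ so that it cannot collide with the $z_i$. The remaining points, namely that $z$ acts as a single scalar on each entire isotypic component and that polynomials in $z$ stay inside $Z(H') \cap \mathrm{Ker}(\varepsilon_{\vert H'})$, are routine consequences of complete reducibility and of $\varepsilon$ being an algebra homomorphism.
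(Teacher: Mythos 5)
Your proof is correct and follows essentially the same route as the paper: decompose a lift into $H$-isotypic components, use the $H'$-invariance modulo $I_\varepsilon$ together with a separating element $z \in Z(H')$ satisfying $\varepsilon(z)=0$, and kill the non-trivial isotypic part by a polynomial trick. The only difference is cosmetic: where you build a single polynomial $P$ with $P(0)=0$ and $P(z_i)=1$ by Lagrange interpolation at the distinct points $0, z_1, \ldots, z_k$, the paper applies the powers $z^s$ for $1 \leq s \leq k$ and inverts the Vandermonde matrix $(z_i^s)$ — both steps rest on exactly the same reading of Definition \ref{defiseparate} (the $z_i$ pairwise distinct and nonzero), a point you correctly single out.
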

\begin{proof}
Let $a+I_\varepsilon \in A/\!\!/_{\!\varepsilon} H'$. By definition we have $h'\smallblacktriangle a - \varepsilon(h')a \in I_\varepsilon$, for every $h'\in H'$. Since $A$ is completely reducible, we can decompose $a$ in the form $a=\textstyle \sum_{i=0}^k a_i$ with $a_i$ the isotypical component of $a$ of type $X_i$, $i\in \{1,\ldots,k\}$, and $a_0$ the isotypical component of $a$ of trivial type, ie. the $H$-invariant component of $a$. Then
$$h'\smallblacktriangle a - \varepsilon(h')a = \sum_{i=1}^k  (h'\smallblacktriangle a_i - \varepsilon(h')a_i).$$ 
Let us apply this to an element $z\in Z(H')$, separating the components of type $X_i$ as in Definition \ref{defiseparate}. For every integer $s\geq 1$ we have
$$z^s\smallblacktriangle a - \varepsilon(z^s)a = \sum_{i=1}^k  z_i^s a_i.$$
Therefore $\textstyle \sum_{i=1}^k  z_i^s a_i \in I_\varepsilon$ for every $s\geq 1$, with $z_i\ne z_j$ for $i\ne j$. Because the Vandermonde matrix $(z_i^s)$ is invertible for $1\leq s\leq k$, we find $a_i \in I_\varepsilon$ for every $i\in \{1,\ldots,k\}$. Therefore $a+I_\varepsilon = a_0 +I_\varepsilon$, showing that $\pi$ is surjective. \end{proof}

\subsection{A quantum moment map for $\mathcal{L}_{g,n}(H)$} \label{QMMLgn} Let $H$ be a ribbon Hopf algebra with an invertible antipode; we denote the ribbon element of $H$ by $v$.

\medskip

\noindent{\em Assumption: the morphism $\Phi_{0,1} : \mathcal{L}_{0,1}(H) \to H$ from \eqref{RSDmap} is injective.}

\medskip

\noindent When $\Phi_{0,1}$ is an isomorphism, $H$ is called {\em factorizable}. In general, let $H'$ be the image of the morphism $\Phi_{0,1}$. Then $H'$ is a subalgebra of $H$ such that $H' \subset H^{\mathrm{lf}}$. The quantum group $U_q^{\mathrm{ad}}(\mathfrak{g})$ satisfies the above assumption, up to a slight adaptation: in that case $H' = U_q(\mathfrak{g})^{\mathrm{lf}}$ by \cite[Th. 3]{Bau1}.
\smallskip

Let us quote the following well-known fact:

\begin{lem}
$H'$ is a right coideal (\textit{i.e.} $\Delta(H') \subset H' \otimes H$) and is stable by the right adjoint action $\mathrm{ad}^r$ of $H$.
\end{lem}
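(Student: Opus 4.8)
The plan is to prove the two assertions separately: the stability under $\mathrm{ad}^r$ is immediate from the functoriality of $\Phi_{0,1}$, while the coideal property is obtained from the $L$-operator description of the monodromy matrices in \eqref{matrixFormPhi01Phi10}.

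First I would dispose of the stability under $\mathrm{ad}^r$. Recall from the discussion following \eqref{RSDmap} that $\Phi_{0,1}$ is a morphism of right $H$-module-algebras intertwining the action $\mathrm{coad}^r$ on $\mathcal{L}_{0,1}(H)$ with the right adjoint action $\mathrm{ad}^r$ on $H$, that is $\Phi_{0,1}\bigl(\mathrm{coad}^r(h)(\varphi)\bigr) = \mathrm{ad}^r(h)\bigl(\Phi_{0,1}(\varphi)\bigr)$ for all $h\in H$ and $\varphi\in\mathcal{L}_{0,1}(H)$. Since $\mathcal{L}_{0,1}(H)$ is $\mathrm{coad}^r$-stable (it is an $H$-module-algebra for this action), applying $\Phi_{0,1}$ gives $\mathrm{ad}^r(h)(H') = \Phi_{0,1}\bigl(\mathrm{coad}^r(h)(\mathcal{L}_{0,1}(H))\bigr)\subseteq H'$, which is the claim.

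For the coideal property I would use the matrix presentation. By \eqref{matrixFormPhi01Phi10} one has $\Phi_{0,1}\bigl(\overset{V}{M}\bigr) = \overset{V}{L}{^{(+)}}\,\overset{V}{L}{^{(-)-1}}$ for every finite-dimensional $V$, and since $\mathcal{L}_{0,1}(H)$ is spanned by the entries of the matrices $\overset{V}{M}$, the subalgebra $H'$ is spanned by the entries of the monodromy matrices $\mathcal{M}^V := \Phi_{0,1}(\overset{V}{M})$. Writing $L^+_{ij}$ and $(L^-)^{-1}_{ij}$ for the entries of $\overset{V}{L}{^{(+)}}$ and of $\overset{V}{L}{^{(-)-1}}$, a direct computation from the axiom $(\mathrm{id}\otimes\Delta)(R) = R_{13}R_{12}$ (applied to $R^{(\pm)}$, both of which are $R$-matrices) together with $\Delta({_V\phi^i_j}) = \sum_k {_V\phi^i_k}\otimes{_V\phi^k_j}$ yields the comatrix-type rules
\[ \Delta\bigl(L^+_{ij}\bigr) = \sum_k L^+_{kj}\otimes L^+_{ik}, \qquad \Delta\bigl((L^-)^{-1}_{ij}\bigr) = \sum_k (L^-)^{-1}_{ik}\otimes (L^-)^{-1}_{kj}, \]
the second being deduced from the rule for $L^-$ by applying $\Delta$ to $\overset{V}{L}{^{(-)}}\,\overset{V}{L}{^{(-)-1}} = \mathrm{id}$. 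Multiplying these two coproducts and telescoping the inner sum through $\overset{V}{L}{^{(+)}}\overset{V}{L}{^{(-)-1}} = \mathcal{M}^V$ gives
\[ \Delta\bigl(\mathcal{M}^V_{ij}\bigr) = \sum_{k,m} \mathcal{M}^V_{km}\otimes L^+_{ik}\,(L^-)^{-1}_{mj}, \]
whose left tensor factor lies in $H'$ and whose right factor lies in $H$. As the $\mathcal{M}^V_{ij}$ span $H'$, this proves $\Delta(H')\subset H'\otimes H$.

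The only genuine work is the coideal part, and within it the main obstacle is the index bookkeeping: obtaining the two coproduct rules with the correct ``transposed'' versus ``standard'' comatrix form, and verifying that the inner summation truly contracts against the defining relation $\overset{V}{L}{^{(-)}}\overset{V}{L}{^{(-)-1}}=\mathrm{id}$, so that only entries of $\mathcal{M}^V$ — and not arbitrary products of $L$-operator entries — survive in the first tensor leg. Everything else, namely the span statement and the $\mathrm{ad}^r$ computation, is immediate.
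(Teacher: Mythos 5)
Your proof is correct, and the $\mathrm{ad}^r$-stability part is word-for-word the paper's argument (one line from the $H$-linearity of $\Phi_{0,1}$). For the coideal property the paper takes a more direct route: it computes $\Delta\bigl(\Phi_{0,1}(\varphi)\bigr)$ for a generic $\varphi\in H^{\circ}$, expanding $\Delta(R^1_{(2)})$ and $\Delta(R^2_{(1)})$ by quasitriangularity and then recombining via the coproduct of $H^{\circ}$, ending with
\begin{equation*}
\Delta\bigl(\Phi_{0,1}(\varphi)\bigr) \:=\: \sum_{(R^1),(R^4),(\varphi)} \varphi_{(1)}\bigl( R^1_{(1)} \bigr) \, \varphi_{(3)}\bigl( R^4_{(2)} \bigr) \, \Phi_{0,1}(\varphi_{(2)}) \otimes R^1_{(2)}R^4_{(1)} \:\in\: H' \otimes H.
\end{equation*}
Your final formula $\Delta(\mathcal{M}^V_{ij}) = \sum_{k,l} \mathcal{M}^V_{kl}\otimes L^+_{ik}\,(L^-)^{-1}_{lj}$ is exactly the componentwise form of this expression: take $\varphi = {_V\phi^i_j}$, so that $\varphi_{(1)} = {_V\phi^i_k}$, $\varphi_{(2)} = {_V\phi^k_l}$, $\varphi_{(3)} = {_V\phi^l_j}$, and note $\sum_{(R^1)} \varphi_{(1)}(R^1_{(1)})R^1_{(2)} = L^+_{ik}$ while $\sum_{(R^4)} \varphi_{(3)}(R^4_{(2)})R^4_{(1)} = (L^-)^{-1}_{lj}$. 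So the two proofs are the same computation in different clothing. What your packaging buys is a set of reusable identities (the comatrix coproduct rules for $\overset{V}{L}{^{(\pm)}}$ and their inverses, which are standard FRT facts and which you derive correctly, including the correct ``transposed'' form $\Delta(L^+_{ij}) = \sum_k L^+_{kj}\otimes L^+_{ik}$ forced by $(\mathrm{id}\otimes\Delta)(R)=R_{13}R_{12}$); what it costs is a dependence on the matrix identity \eqref{matrixFormPhi01Phi10} from \S\ref{sectionDefAlekseev}, which the paper's self-contained Sweedler computation does not need. One slip in your closing paragraph: the contraction producing $\mathcal{M}^V_{kl}$ in the first tensor leg is against the \emph{definition} $\mathcal{M}^V = \overset{V}{L}{^{(+)}}\,\overset{V}{L}{^{(-)-1}}$, not against the relation $\overset{V}{L}{^{(-)}}\,\overset{V}{L}{^{(-)-1}}=\mathrm{id}$; the latter is used only to obtain the coproduct rule for the inverse. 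This is a misstatement in the prose, not in the displayed derivation, which is correct as written.
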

\begin{proof}
By definition an element of $H'$ has the form $\textstyle \sum_{(R^1),(R^2)} \varphi\bigl(R^1_{(1)}R^2_{(2)}\bigr)R^1_{(2)}R^2_{(1)}$ for some $\varphi \in H^{\circ}$. Let us compute the coproduct of such an element:
\begin{align*}
&\sum_{(R^1),(R^2), (R^1_{(2)}), (R^2_{(1)})} \varphi\bigl(R^1_{(1)}R^2_{(2)}\bigr) \, (R^1_{(2)})_{(1)} \, (R^2_{(1)})_{(1)} \otimes (R^1_{(2)})_{(2)} \, (R^2_{(1)})_{(2)}\\
=\:& \sum_{(R^1),\ldots, (R^4)} \varphi\bigl(R^1_{(1)}R^2_{(1)} R^3_{(2)} R^4_{(2)} \bigr) \, R^2_{(2)}R^3_{(1)} \otimes R^1_{(2)}R^4_{(1)}\\
=\:& \sum_{(R^1),(R^4),(\varphi)} \varphi_{(1)}\bigl( R^1_{(1)} \bigr) \, \varphi_{(3)}\bigl( R^4_{(2)} \bigr) \, \Phi_{0,1}(\varphi_{(2)}) \otimes R^1_{(2)}R^4_{(1)} \in H' \otimes H.
\end{align*}
The first equality is by quasi-triangularity of $H$ while the second is by definition of $\Phi_{0,1}$ and of the coproduct in $H^{\circ}$.  The last claim of the lemma is obvious, since $\Phi_{0,1} : (\mathcal{L}_{0,1}(H), \mathrm{coad}^r) \to (H, \mathrm{ad}^r)$ is $H$-linear.
\end{proof}
\begin{remark} For any Hopf algebra $H$, the subalgebra $H^{\mathrm{lf}}$ is always a right coideal subalgebra of $H$ (see \cite{KLNY}, Theorem 1).
\end{remark}
Recall the matrices $\overset{V}{B}(i), \overset{V}{A}(i), \overset{V}{M}(j) \in \mathcal{L}_{g,n}(H) \otimes \mathrm{End}(V)$ from \eqref{matricesABMgn}, where $1 \leq i \leq g$ and $g+1 \leq j \leq g+n$ and $V$ is any finite-dimensional $H$-module. Let
\begin{equation}\label{defCi}\overset{V}{C}(i) = v^2_V \, \overset{V}{B}(i) \overset{V}{A}(i){^{-1}} \overset{V}{B}(i){^{-1}} \overset{V}{A}(i)
\end{equation}
where $v^2_V \in \mathrm{End}(V)$ is the representation of $v^2$ on $V$, where $v$ is the ribbon element of $H$. It is well-known that the matrices $\overset{V}{B}(i), \overset{V}{A}(i), \overset{V}{M}(j)$ are invertible (see e.g. \cite[Lem. 6.1.2]{FaitgThesis}), so this formula makes sense.

Now we can construct the building blocks which will be used to define the quantum moment map in \eqref{defmutotal}. Denote by $_V\phi^k_l \in H^{\circ}$ the matrix coefficients of $V$ and by $\overset{V}{C}(i){^k_l}$ the coefficients of $\overset{V}{C}(i)$, both taken in some basis of $V$. Consider the linear map given by
\[ \fonc{\mu^{(i)}}{H'}{\mathcal{L}_{g,n}(H)}{\Phi_{0,1}\bigl(_V\phi^k_l\bigr)}{\overset{V}{C}(i){^k_l}} \]
for all $V$, $k$ and $l$. It is well-defined since by assumption $\Phi_{0,1}$ is injective.
\begin{lem}\label{muilem} $\mu^{(i)}$ is a morphism of algebras. Equivalently, we have the set of fusion relations
\begin{equation}\label{fusionmui}
\overset{V \otimes W}{C(i)} = \overset{V}{C(i)}_1 \, (R')_{V,W} \, \overset{W}{C(i)}_2 \, (R')_{V,W}^{-1}.
\end{equation} 
\end{lem}
\begin{proof} This follows from the definition of $\Ll_{0,1}(H)$ (see \eqref{fusionrelation}), the fact that $\Phi_{0,1}$ is a morphism of algebras, and finally Lemma \ref{fusionforXY}. \end{proof}

\begin{remark}\label{normliftholonomy} Relations similar to \eqref{fusionmui} are satisfied for any collection of matrices which like $C(i)$ ``lifts'' in a suitable sense a simple closed curve in $\Sigma_{g,n} \!\setminus\!D$. For complete details on this we refer to \cite[Prop. 5.3.14]{FaitgThesis} (see also the seminal work \cite[\S 9.1]{ASduke}, but with different conventions). As shows the proof of Lemma \ref{fusionforXY}, the definition of such a lift needs a normalization factor which is a suitable power of $v_V$.  
\end{remark}

\begin{defi}\label{defMatriceC}
For any finite-dimensional $H$-module $V$, we define
\begin{align*}\overset{V}{C} & = \overset{V}{C}(1) \ldots \overset{V}{C}(g) \overset{V}{M}(g+1) \ldots \overset{V}{M}(g+n) \\ &  = v^{2g}_V \ \prod_{i=1}^g\ [\overset{V}{B}(i),\overset{V}{A}(i){^{-1}}]\ \prod_{j=1}^n\ \overset{V}{M}(g+j)\in \mathcal{L}_{g,n}(H) \otimes \mathrm{End}(V).
\end{align*}
\end{defi}
\noindent Note that if $b_i,a_i,m_j$ ($1 \leq i \leq g$, $g+1 \leq j \leq g+n$) are the standard generators of $\pi_1(\Sigma_{g,n} \!\setminus\! D)$, then $\partial(\Sigma_{g,n} \!\setminus\! D) = b_1 a_1^{-1} b_1^{-1} a_1 \ldots b_g a_g^{-1} b_g^{-1} a_g \, m_{g+1} \ldots m_{g+n}$. So the matrices $\overset{V}{C}$ are in some sense a ``lift'' of $\partial(\Sigma_{g,n} \!\setminus\! D)$.
\medskip

Consider the linear map given by
\begin{equation}\label{defmutotal}
\fonc{\mu}{H'}{\mathcal{L}_{g,n}(H)}{\Phi_{0,1}\bigl(_V\phi^k_l\bigr)}{\overset{V}{C}{^k_l}}
\end{equation}
for all $V$, $k$ and $l$. In a component-free form, $\mu$ is given by
\begin{equation}\label{componentFreeDefOfMu}
\sum_{(R^1), (R^2)} \mu\bigl( R^1_{(2)} R^2_{(1)} \bigr) \otimes \bigl( R^1_{(1)} R^2_{(2)} \bigr)_V = \overset{V}{C}.
\end{equation}

\begin{lem}\label{mulem} $\mu$ is a morphism of algebras.
\end{lem}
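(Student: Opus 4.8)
The plan is to follow exactly the strategy used for Lemma \ref{muilem}, reducing the statement to a single fusion relation for the matrices $\overset{V}{C}$ of Definition \ref{defMatriceC}. Recall that $\Phi_{0,1}$ is by assumption an injective morphism of algebras with image $H'$, so that $H'\cong \mathcal{L}_{0,1}(H)$ as algebras; under this identification the product of $H'$ is entirely encoded in the fusion relation \eqref{fusionrelation} satisfied by the matrices $\Phi_{0,1}\bigl(\overset{V}{M}\bigr)$. Since by \eqref{defmutotal} the map $\mu$ sends the coefficients of $\Phi_{0,1}\bigl(\overset{V}{M}\bigr)$ to the coefficients of $\overset{V}{C}$, and since the matrix coefficients $\Phi_{0,1}\bigl(\overset{V}{M}\bigr){}^k_l$ generate $H'$ as $V$ runs over finite-dimensional $H$-modules, the map $\mu$ is a morphism of algebras if and only if the matrices $\overset{V}{C}$ obey the fusion relation
\begin{equation*}
\overset{V\otimes W}{C} = \overset{V}{C}_1\,(R')_{V,W}\,\overset{W}{C}_2\,(R')_{V,W}^{-1}.
\end{equation*}
Thus the entire content of the lemma is this relation, and everything reduces to an $R$-matrix computation, just as in the Remark following Lemma \ref{muilem}.

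To establish this fusion relation I would prove the following general inductive statement: if $\overset{V}{N}(1),\ldots,\overset{V}{N}(r)$ are families of matrices, each satisfying \eqref{fusionrelation} and pairwise satisfying, for all $a<b$, the exchange relation
\begin{equation*}
R_{V,W}\,\overset{V}{N}(a)_1\,R_{V,W}^{-1}\,\overset{W}{N}(b)_2 = \overset{W}{N}(b)_2\,R_{V,W}\,\overset{V}{N}(a)_1\,R_{V,W}^{-1},
\end{equation*}
then the product $\overset{V}{N}(1)\cdots\overset{V}{N}(r)$ satisfies \eqref{fusionrelation}. One applies this with $\overset{V}{N}(i)=\overset{V}{C}(i)$ for $1\le i\le g$ and $\overset{V}{N}(g+j)=\overset{V}{M}(g+j)$ for $1\le j\le n$, which is exactly the factorisation in Definition \ref{defMatriceC}. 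The hypotheses hold: each $\overset{V}{C}(i)$ satisfies the fusion relation by \eqref{fusionmui} (Lemma \ref{muilem}), and each $\overset{V}{M}(g+j)$ satisfies it by the first relation of Proposition \ref{presentationLgn}. The induction is on $r$, the base case being trivial; the inductive step expands $\overset{V\otimes W}{P}\,\overset{V\otimes W}{N}(r)$ (with $\overset{V}{P}=\overset{V}{N}(1)\cdots\overset{V}{N}(r-1)$) via the fusion relation for the two factors and then commutes the middle $\overset{V}{N}(r)_1$ leftwards past $(R')_{V,W}\,\overset{W}{P}_2\,(R')_{V,W}^{-1}$ using the exchange relation, precisely as the matrices $\overset{V}{X}$, $\overset{V}{Y}$, $\overset{V}{X}\overset{V}{Y}$ were handled in the proof of Lemma \ref{fusionforXY}.

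It remains to verify the pairwise exchange relations for the chosen families. These are \emph{not} among the defining relations of Proposition \ref{presentationLgn} (which concern only the generators $A,B,M$), so they must be derived. The point is that conjugation $X\mapsto R_{V,W}X_1R_{V,W}^{-1}$ is multiplicative and that $v_V$ is central, hence $H$-linear and natural (as recalled at the start of the proof of Lemma \ref{fusionforXY}); therefore, since each $\overset{V}{C}(i)$ is a word in $\overset{V}{B}(i)^{\pm1}$, $\overset{V}{A}(i)^{\pm1}$ and $v_V$, and each of $\overset{V}{B}(i),\overset{V}{A}(i)$ exchanges with every $\overset{W}{B}(j),\overset{W}{A}(j),\overset{W}{M}(j)$ for $i<j$ by the third relation of Proposition \ref{presentationLgn}, the conjugated matrix $R_{V,W}\overset{V}{C}(i)_1R_{V,W}^{-1}$ commutes with $\overset{W}{C}(j)_2$ (and with $\overset{W}{M}(g+j)_2$). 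The main obstacle is the inductive step above: one must carefully turn the exchange relation of Proposition \ref{presentationLgn}, which places the smaller handle index in slot $1$, into the opposite form needed here (larger index in slot $1$), using the equivalence of \eqref{fusionrelation} and \eqref{fusionrelationbis} together with the reflection equation \eqref{reflectionEquation}; this is a fiddly but routine braiding manipulation identical in spirit to the one already carried out for $\overset{V}{X}\overset{V}{Y}$ in Lemma \ref{fusionforXY}.
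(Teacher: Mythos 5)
Your proposal is correct and follows essentially the same route as the paper's proof: reduce the statement (via $\Phi_{0,1}$ being an injective algebra morphism) to the fusion relation for the matrices $\overset{V}{C}$, note that each factor $\overset{V}{C}(i)$ and $\overset{V}{M}(g+j)$ satisfies it, derive the pairwise exchange relations between distinct factors from Proposition \ref{presentationLgn}, and conclude by the same inductive manipulation as in Lemma \ref{fusionforXY}. The only minor imprecision is your attribution of the form-conversion of the exchange relation (larger index in slot $1$, conjugated by $(R')^{-1}$) to the reflection equation: it actually follows by swapping the roles of $V$ and $W$ and conjugating by the flip, which is exactly the implicit step already used in the paper's proof of Lemma \ref{fusionforXY}, so this does not affect the validity of your argument.
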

\begin{proof} Since $\Phi_{0,1}$ is a morphism of algebras it suffices to show that the collection of matrices $C$ satisfies the fusion relation \eqref{fusionrelation}. For ease of notation let us write $\overset{V}{C}(j) = \overset{V}{M}(j)$ for $g+1 \leq j \leq g+n$, so that $\overset{V}{C} = \overset{V}{C}(1) \ldots \overset{V}{C}(g+n)$. We saw in \eqref{fusionmui} that the collection of matrices $\overset{V}{C}(i)$ satisfies the fusion relation for all $1 \leq i \leq g+n$. Straightforward computations using Proposition \ref{presentationLgn} reveal that we also have
\[ R_{V,W} \, \overset{V}{C}(j)_1 \, R{_{V,W}^{-1}} \, \overset{W}{C}(i)_2 = \overset{W}{C}(i)_2 \, R_{V,W} \, \overset{V}{C}(j)_1 \, R{_{V,W}^{-1}} \quad \quad \text{for } 1 \leq j < i \leq g+n. \] The fusion relation for $\overset{V}{C}$ follows from all these relations, arguing like in Lemma \ref{fusionforXY}. \end{proof}

\indent The following commutation relations (written with the notations introduced before Proposition \ref{presentationL10}) for the collection of matrices $\overset{V}{C}$ will play a key role in the proof of the next theorem:
\begin{lem}\label{lemmaCommutationC}
Let $V,W$ be finite-dimensional $H$-modules and let $\overset{W}{X}(i)$ be $\overset{W}{B}(i)$, $\overset{W}{A}(i)$ or $\overset{W}{M}(i)$. We have
\[ R_{V,W} \, \overset{V}{C}_1 \, (R')_{V,W} \, \overset{W}{X}(i)_2 = \overset{W}{X}(i)_2 \, R_{V,W} \, \overset{V}{C}_1 \, (R')_{V,W} \]
in $\mathcal{L}_{g,n}(H) \otimes \mathrm{End}(V) \otimes \mathrm{End}(W)$.
\end{lem}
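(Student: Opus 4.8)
The plan is to prove that the matrix $\overset{V}{C}$, which already satisfies the fusion relation \eqref{fusionrelation} by (the proof of) Lemma \ref{mulem}, also satisfies the \emph{reflection equation} \eqref{reflectionEquation} when paired with each generating matrix $\overset{W}{A}(i)$, $\overset{W}{B}(i)$, $\overset{W}{M}(i)$. The assertion of the lemma is precisely this reflection equation between $\overset{V}{C}$ and $\overset{W}{X}(i)$, so once it holds for $\overset{W}{X}(i)\in\{\overset{W}{A}(i),\overset{W}{B}(i),\overset{W}{M}(i)\}$ we are finished. Writing $\overset{V}{\Gamma}(l)=\overset{V}{C}(l)$ for $1\leq l\leq g$ and $\overset{V}{\Gamma}(l)=\overset{V}{M}(l)$ for $g+1\leq l\leq g+n$, so that $\overset{V}{C}=\overset{V}{\Gamma}(1)\cdots\overset{V}{\Gamma}(g+n)$, I would drag $\overset{W}{X}(i)_2$ across $\overset{V}{C}_1$ one factor at a time.

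First I would record two stability facts. If slot-$1$ matrices $\overset{V}{P}_1,\overset{V}{Q}_1$ are such that $R_{V,W}\overset{V}{P}_1 R_{V,W}^{-1}$ and $R_{V,W}\overset{V}{Q}_1 R_{V,W}^{-1}$ both commute with $\overset{W}{X}(i)_2$, then so do $R_{V,W}(\overset{V}{P}_1\overset{V}{Q}_1)R_{V,W}^{-1}$ and $R_{V,W}\overset{V}{P}_1^{-1}R_{V,W}^{-1}$ (immediate, since conjugation by $R_{V,W}$ is an algebra map and a commuting element has commuting inverse); and the central element $v_V$, being $H$-linear, commutes with $R_{V,W}$, $(R')_{V,W}$ and with every $\overset{W}{X}(i)_2$ by naturality \eqref{naturalite}. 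Hence any word in $\overset{V}{A}(l)^{\pm1},\overset{V}{B}(l)^{\pm1},v_V$, in particular $\overset{V}{\Gamma}(l)_1$, inherits whatever conjugation-type commutation its letters satisfy. Then for the cross-site factors $l\neq i$: the third (exchange) relation of Proposition \ref{presentationLgn} gives, for the building blocks $\overset{V}{A}(l),\overset{V}{B}(l),\overset{V}{M}(l)$ with $l<i$, that $R_{V,W}\overset{V}{\Gamma}(l)_1 R_{V,W}^{-1}$ commutes with $\overset{W}{X}(i)_2$; the case $l>i$ follows by applying the same relation with $V,W$ interchanged and transposing the two tensor slots. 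By the stability facts these pass to $\overset{V}{\Gamma}(l)_1$ itself.

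The main work is the self-site factor $l=i$. If $i$ is a puncture the factor is $\overset{V}{M}(i)$ and the reflection equation against $\overset{W}{M}(i)$ is exactly \eqref{reflectionEquation}. If $i\leq g$ is a handle I must show that $\overset{V}{C}(i)=v_V^2\,\overset{V}{B}(i)\overset{V}{A}(i)^{-1}\overset{V}{B}(i)^{-1}\overset{V}{A}(i)$ reflection-commutes with both $\overset{W}{A}(i)$ and $\overset{W}{B}(i)$. I would do this through the factorisation $\overset{V}{C}(i)=\overset{V}{X}(i)\,\overset{V}{Y}(i)$ of Lemma \ref{fusionforXY}, establishing the required exchange relations of $\overset{V}{X}(i)$ and $\overset{V}{Y}(i)$ with $\overset{W}{A}(i),\overset{W}{B}(i)$ by the same chain of manipulations used there (the fusion relations, the $B$--$A$ exchange relation of Proposition \ref{presentationLgn}, the reflection equation \eqref{reflectionEquation} for $A$ and $B$ separately, and the equivalence of \eqref{fusionrelation} and \eqref{fusionrelationbis}). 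This is purely computational but is the delicate step, as it is here that the commutator word $[b,a^{-1}]$ acquires its ``boundary'' behaviour.

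Finally I would assemble: moving $\overset{W}{X}(i)_2$ leftward through $\overset{V}{C}_1=\overset{V}{\Gamma}(1)_1\cdots\overset{V}{\Gamma}(g+n)_1$, the cross-site factors contribute conjugations by $R_{V,W}^{\pm1}$ that telescope, while the unique self-site factor contributes the reflection equation; collecting the braidings yields exactly $R_{V,W}\overset{V}{C}_1(R')_{V,W}\overset{W}{X}(i)_2=\overset{W}{X}(i)_2 R_{V,W}\overset{V}{C}_1(R')_{V,W}$. The obstacle I anticipate is twofold: the self-handle computation of the previous paragraph, and the careful bookkeeping reconciling the $R/R^{-1}$ conjugations produced by the cross-site exchange relations with the $R/R'$ braiding appearing in the target reflection equation --- this is precisely the kind of identity underlying \cite[Prop. 5.3.14]{FaitgThesis}, to which I would appeal for the more tedious parts of the verification.
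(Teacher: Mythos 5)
Your proposal is correct and follows essentially the same route as the paper: the paper's proof likewise reduces the claim to three per-factor exchange relations — $R_{V,W}\,\overset{V}{C}(j)_1\,R_{V,W}^{-1}$ commuting with $\overset{W}{X}(i)_2$ for $j<i$, $(R')_{V,W}^{-1}\,\overset{V}{C}(j)_1\,(R')_{V,W}$ commuting with it for $j>i$, and the reflection-type relation \eqref{echangeCiXi} at the self-site $j=i$ — all obtained by "straightforward computations using Proposition \ref{presentationLgn}", and then assembles them by exactly the telescoping you describe. Your extra details (the stability of the conjugation-commutation under products, inverses and the central $v_V$, the swap-and-transpose derivation of the $j>i$ case, and the $\overset{V}{X}\overset{V}{Y}$ factorization from Lemma \ref{fusionforXY} for the self-site handle computation) are precisely what the paper's terse phrasing leaves implicit.
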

\begin{proof}
Recall the notation $\overset{V}{C}(j) = \overset{V}{M}(j)$ for $g+1 \leq j \leq g+n$ from the proof of Lemma \ref{mulem}. Straightforward computations using Proposition \ref{presentationLgn} reveal that
\begin{align}
(R'){_{V,W}^{-1}} \, \overset{V}{C}(j)_1 \, (R')_{V,W} \, \overset{W}{X}(i)_2 &= \overset{W}{X}(i)_2 \, (R'){_{V,W}^{-1}} \, \overset{V}{C}(j)_1 \, (R')_{V,W} \quad \text{for } 1 \leq i < j \leq g+n,\nonumber\\
R_{V,W} \, \overset{V}{C}(i)_1 \, (R')_{V,W} \, \overset{W}{X}(i)_2 &= \overset{W}{X}(i)_2 \, R_{V,W} \, \overset{V}{C}(i)_1 \, (R')_{V,W} \quad \text{for } 1 \leq i \leq g+n\label{echangeCiXi}\\
R_{V,W} \, \overset{V}{C}(j)_1 \, R{_{V,W}^{-1}} \, \overset{W}{X}(i)_2 &= \overset{W}{X}(i)_2 \, R_{V,W} \, \overset{V}{C}(j)_1 \, R{_{V,W}^{-1}} \quad \quad \text{for } 1 \leq j < i \leq g+n.\nonumber
\end{align}
The result easily follows.
\end{proof}

\indent Let us define a left action $\smallblacktriangle$ of $H$ on $\mathcal{L}_{g,n}(H)$ by
\[ h \smallblacktriangle x = \mathrm{coad}^r\bigl( S^{-1}(h) \bigr)(x) \]
for $h\in H$, $x \in \mathcal{L}_{g,n}(H)$ and $\mathrm{coad}^r$ is defined in \eqref{coadLgn}. Since $S$ is an anti-morphism of bialgebras, $\smallblacktriangle$ endows $\mathcal{L}_{g,n}(H)$ with the structure of $H^{\mathrm{cop}}$-module-algebra: $\textstyle h \smallblacktriangle (xy) = \sum_{(h)} (h_{(2)} \smallblacktriangle x) (h_{(1)} \smallblacktriangle y)$. If $\overset{V}{X}(i)$ is one of the matrices $\overset{V}{B}(i), \overset{V}{A}(i), \overset{V}{M}(i)$ we define the matrix $h \smallblacktriangle \overset{V}{X}(i)$ in the obvious way on coefficients: $\bigl( h \smallblacktriangle \overset{V}{X}(i) \bigr)^k_l = h \smallblacktriangle \bigl(\overset{V}{X}(i){^k_l}\bigr)$ for all $1 \leq k,l \leq \dim(V)$. One checks easily that
\begin{equation}\label{actionTriangleMatrixForm}
h \smallblacktriangle \overset{V}{X}(i) = \sum_{(h)} S^{-1}(h_{(2)})_V \, \overset{V}{X}(i) \, (h_{(1)})_V.
\end{equation}
Clearly, this applies also to the matrix $\overset{V}{C}$, because $h \smallblacktriangle \overset{V}{X}(i)^{-1}$ is given by the same formula.
\begin{teo}\label{muQMMteo}
The morphism $\mu : H' \to \mathcal{L}_{g,n}(H)$ is a quantum moment map:
\[ \mu(h')\,x = \sum_{(h')} \, (h'_{(2)} \smallblacktriangle x) \, \mu(h'_{(1)}) \]
for all $h' \in H'$ and $x \in \mathcal{L}_{g,n}(H)$.
\end{teo}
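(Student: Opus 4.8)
The plan is to convert the quantum moment map equation into a matrix identity and recognize it as the commutation relation of Lemma \ref{lemmaCommutationC}, after two preliminary reductions.

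First I would reduce both arguments. Since the asserted equation is linear in $h'$ and in $x$, and since $\Phi_{0,1}$ is injective while the matrix coefficients $_V\phi^k_l$ span $\mathcal{L}_{0,1}(H)=H^\circ$, the elements $\Phi_{0,1}(_V\phi^k_l)$ span $H'$; as $\mu(\Phi_{0,1}(_V\phi^k_l))=\overset{V}{C}{}^k_l$, it suffices to take $h'=\Phi_{0,1}(_V\phi^k_l)$ and, collecting over all $k,l$, to prove a single matrix identity in which $\mu(h')$ is replaced by the matrix $\overset{V}{C}$. For the variable $x$, I would first check the equation holds for $x=1$ (both sides give $\mu(h')$, using $h\smallblacktriangle 1=\varepsilon(h)1$ and the counit axiom), and then show it is stable under products: if it holds for $x$ and for $y$, then using that $\mu$ is an algebra morphism (Lemma \ref{mulem}), that $\smallblacktriangle$ is an $H^{\mathrm{cop}}$-module-algebra action, and coassociativity, one moves $\mu(h'_{(1)})$ past $y$ and regroups the Sweedler legs to obtain $\sum (h'_{(2)}\smallblacktriangle (xy))\mu(h'_{(1)})$. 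Since $\mathcal{L}_{g,n}(H)$ is generated by the entries of $\overset{W}{B}(i),\overset{W}{A}(i),\overset{W}{M}(i)$ (Proposition \ref{presentationLgn}), it then suffices to verify the equation when $x$ is an entry of a single generator matrix $\overset{W}{X}(i)$.

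Next I would compute the coproduct of the holonomy matrix $\overset{V}{\mathbf{L}}:=\Phi_{0,1}(\overset{V}{M})\in H\otimes\mathrm{End}(V)$, whose entries are exactly $\overset{V}{\mathbf{L}}{}^k_l=\Phi_{0,1}(_V\phi^k_l)$. By \eqref{matrixFormPhi01Phi10} we have $\overset{V}{\mathbf{L}}=\overset{V}{L}{}^{(+)}\,\overset{V}{L}{}^{(-)-1}$, and from the axiom $(\mathrm{id}\otimes\Delta)(R)=R_{13}R_{12}$ one checks that the entries of $\overset{V}{L}{}^{(\pm)}$ have co-matrix-multiplication coproduct, whence
\begin{equation*}
(\mathrm{id}\otimes\Delta)(\overset{V}{\mathbf{L}}) = \overset{V}{L}{}^{(+)}_{[2]}\,\overset{V}{\mathbf{L}}_{[1]}\,\overset{V}{L}{}^{(-)-1}_{[2]},
\end{equation*}
where $[1],[2]$ label the two $H$-legs, so that leg $1$ carries $h'_{(1)}$ (the $\mu$-leg) and leg $2$ carries $h'_{(2)}$ (the acting leg). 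Splitting leg $2$ once more by the same rule yields the corresponding three-leg expression needed to feed $\smallblacktriangle$.

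Finally I would substitute into the right-hand side $\sum(h'_{(2)}\smallblacktriangle x)\mu(h'_{(1)})$. Applying $\mu$ to leg $1$ produces $\overset{V}{C}_1$, while by \eqref{actionTriangleMatrixForm} the two halves of the coproduct of leg $2$ act on $\overset{W}{X}(i)_2$ by multiplication on the right (by the inner half) and by $S^{-1}(\cdot)_W$ on the left (by the outer half). Using $\overset{V}{L}{}^{(\pm)}=\sum R^{(\pm)}_{(2)}\otimes\rho_V(R^{(\pm)}_{(1)})$, representing these leg-$2$ slots on $W$ converts the $L$-matrices precisely into the braiding factors $R_{V,W}$ and $(R')_{V,W}$ (and their inverses) surrounding $\overset{W}{X}(i)_2$, with $\overset{V}{C}_1$ in the middle. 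The identity to be proved then becomes exactly $R_{V,W}\,\overset{V}{C}_1\,(R')_{V,W}\,\overset{W}{X}(i)_2=\overset{W}{X}(i)_2\,R_{V,W}\,\overset{V}{C}_1\,(R')_{V,W}$, i.e.\ Lemma \ref{lemmaCommutationC}. The main obstacle is precisely this last matching step: keeping careful track of the antipodes $S^{-1}$ coming from the action and of the order in which the four $R$-factors assemble, so that they reproduce the two-sided conjugation in Lemma \ref{lemmaCommutationC}; once the braiding bookkeeping is done the conclusion is immediate.
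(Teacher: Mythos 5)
Your proposal is correct and takes essentially the same route as the paper: reduce $h'$ to the entries of $\Phi_{0,1}\bigl(\overset{V}{M}\bigr)$ and $x$ to entries of the generator matrices (a reduction the paper leaves implicit but which your unit/multiplicativity argument, valid because $H'$ is a right coideal so that $h'_{(1)}\in H'$, justifies), exploit the coproduct structure of those entries — which the paper writes out with four copies of $R$ and the quasi-triangularity axioms instead of your $L^{(\pm)}$-matrix packaging — apply \eqref{actionTriangleMatrixForm}, and conclude via Lemma \ref{lemmaCommutationC}. The only point to note is that the substitution lands not on the lemma verbatim but on its conjugated rewriting \eqref{echangeCX}, in which one leg $(R_{(1)})_{W,2}$ of a fourth $R$-matrix sits to the far left of $\overset{W}{X}(i)_2$ while the antipoded leg $S(R_{(2)})_{V,1}$ sits to the far right of $\overset{V}{C}_1$; passing from this to the lemma uses $\sum_{(R^1),(R^2)} R^1_{(1)}R^2_{(1)} \otimes R^2_{(2)}S(R^1_{(2)}) = 1 \otimes 1$, which is exactly the bookkeeping you flag as the main obstacle.
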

\begin{proof}
Note first that, since $\textstyle (R')_{V,W} = \sum_{(R)} (R_{(2)})_{V,1}(R_{(1)})_{W,2}$ and $\textstyle \sum_{(R^1), (R^2)} R^1_{(1)} R^2_{(1)} \otimes R^2_{(2)} S\bigl(R^1_{(2)}\bigr) = 1 \otimes 1$, the commutation relations of Lemma \ref{lemmaCommutationC} can be rewritten as:
\begin{multline}\label{echangeCX}
\overset{V}{C}_1 \, \overset{W}{X}(i)_2 \,=\, \sum_{(R)} (R_{(1)})_{W,2} \, R{^{-1}_{V,W}} \, \overset{W}{X}(i)_2 \, R_{V,W} \, \overset{V}{C}_1 \, (R')_{V,W} \, S\bigl( R_{(2)} \bigr)_{V,1}\\
=\sum_{(R^1), \ldots, (R^4)} \Bigl( \bigl(R^1_{(1)}R^2_{(2)}\bigr)_W \, \overset{W}{X}(i) \, \bigl(R^3_{(2)} R^4_{(1)}\bigr)_W \Bigr)_2 \Bigl( \bigl(S\bigl( R^2_{(1)} \bigr) R^3_{(1)}\bigr)_V \, \overset{V}{C}_1 \, \bigl(R^4_{(2)} S\bigl( R^1_{(2)} \bigr)\bigr)_V \Bigr)_1.
\end{multline}
For the second equality we split all the $R$-matrices and we used obvious commutation relations to rearrange the terms; also recall that $R^{-1} = (S \otimes \mathrm{id})(R)$. To prove the theorem we can assume that $h'$ is a coefficient of some matrix $\textstyle \sum_{(R^1), (R^2)} R^1_{(2)} R^2_{(1)} \otimes \bigl( R^1_{(1)} R^2_{(2)} \bigr)_V \in H' \otimes \mathrm{End}(V)$ and that $x$ is a coefficient of some matrix $\overset{W}{X}(i) \in \mathcal{L}_{g,n}(H) \otimes \mathrm{End}(W)$ which is $\overset{W}{B}(i), \overset{W}{A}(i)$ or $\overset{W}{M}(i)$. This reduces the proof to a matrix computation in $\mathcal{L}_{g,n}(H) \otimes \mathrm{End}(V) \otimes \mathrm{End}(W)$ based on \eqref{componentFreeDefOfMu}. Starting from the right-hand side of the desired formula we get:
\begin{align*}
&\sum_{(R^1), (R^2), (R^1_{(2)}R^2_{(1)})} \Bigl( \bigl( R^1_{(2)}R^2_{(1)} \bigr)_{(2)} \smallblacktriangle \overset{W}{X}(i) \Bigr)_2 \, \Bigl( \mu\bigl( \bigl( R^1_{(2)}R^2_{(1)} \bigr)_{(1)} \bigr) \otimes \bigl( R^1_{(1)}R^2_{(2)} \bigr)_V \Bigr)_1\\
=\:&\sum_{(R^1), \ldots, (R^4)} \Bigl( R^1_{(2)} R^4_{(1)} \smallblacktriangle \overset{W}{X}(i) \Bigr)_2 \, \Bigl( \mu\bigl( R^2_{(2)} R^3_{(1)} \bigr) \otimes \bigl( R^1_{(1)} \, R^2_{(1)} \, R^3_{(2)} \, R^4_{(2)} \bigr)_V \Bigr)_1\\
=\:&\sum_{(R^1), (R^2)} \Bigl( R^1_{(2)} R^2_{(1)} \smallblacktriangle \overset{W}{X}(i) \Bigr)_2 \, \Bigl( \bigl( R^1_{(1)} \bigr)_V \, \overset{V}{C} \, \bigl(R^2_{(2)} \bigr)_V \Bigr)_1\\
=\:&\sum_{(R^1), \ldots, (R^4)} \Bigl( S^{-1}\bigl( R^1_{(2)} R^4_{(1)} \bigr)_W \, \overset{W}{X}(i) \, \bigl( R^2_{(2)} R^3_{(1)} \bigr)_W \Bigr)_2 \, \Bigl( \bigl( R^1_{(1)} R^2_{(1)} \bigr)_V \, \overset{V}{C} \, \bigl( R^3_{(2)} R^4_{(2)} \bigr)_V \Bigr)_1\\
=\:&\sum_{(R^1), \ldots, (R^4)} \Bigl( \bigl( R^4_{(1)} R^1_{(2)} \bigr)_W \, \overset{W}{X}(i) \, \bigl( R^2_{(2)} R^3_{(1)} \bigr)_W \Bigr)_2 \, \Bigl( \bigl( S\bigl(R^1_{(1)}\bigr) R^2_{(1)} \bigr)_V \, \overset{V}{C} \, \bigl( R^3_{(2)} S\bigl(R^4_{(2)}\bigr) \bigr)_V \Bigr)_1\\
=\:& \overset{V}{C}_1 \, \overset{W}{X}(i)_2 = \sum_{(R^1),(R^2)} \Bigl( \mu\bigl( R^1_{(2)}R^2_{(1)} \bigr) \otimes  \bigl( R^1_{(1)}R^2_{(2)} \bigr)_V \Bigr)_1 \, \overset{W}{X}(i)_2.
\end{align*}
For the first equality we used the quasi-triangularity of $R$, for the second equality we used the definition of $\mu$, for the third equality we used \eqref{actionTriangleMatrixForm} and the quasi-triangularity of $R$, for the fourth equality we used that $S$ is an anti-morphism and that $(S \otimes S)(R) = R$, for the fifth equality we used \eqref{echangeCX} and for the last equality we used the definition of $\mu$ in \eqref{componentFreeDefOfMu}.
\end{proof}
\begin{remark}\label{rubanmu} In the case $\mathfrak{g}=\mathfrak{gl}_n(\mathbb{C})$ and $(g,n)=(1,0)$, quantum moment maps have been obtained in \cite{VV}, Section 1.8, and \cite{JordanQuiver}. In that situation, Theorem \ref{muQMMteo} is Proposition 7.21 of \cite{JordanQuiver}, and Definition 7.25 of \cite{JordanQuiver} provides the extension to any $(g,n)$ by external tensor product. The proof in that paper uses a matrix $\tilde{R}$, which in our notations is $(\mathrm{id} \otimes S)(R)$ evaluated in the fundamental representation of $\mathfrak{gl}_n(\mathbb{C})$. 
\end{remark}

\subsection{The quantum reduction of $\Ll_{g,n}(H)$ along the character $\varepsilon$}\label{Lgnqrsection} In this section we assume the following hypotheses hold for $\Ll_{g,n}(H)$:
\begin{itemize}
\item[(i)] $H$ is a ribbon Hopf algebra over a field $k$, with an invertible antipode.
\item[(ii)] The algebra morphism $\Phi_{0,1} : \mathcal{L}_{0,1}(H) \to H$ is injective.\\ 
Denoting by $H'$ the image of $\Phi_{0,1}$, we thus have the quantum moment map defined in \eqref{defmutotal},
$$\mu\colon H'\ra \Ll_{g,n}(H).$$ 
\item[(iii)] The $H$-module $\Ll_{g,n}(H)$ is completely reducible.
\item[(iv)] $Z(H')$ separates the simple types.
\item[(v)] $\Ll_{g,n}^{H'}(H) = \Ll_{g,n}^{H}(H)$.
\end{itemize}
The quantum groups as defined in Section \ref{sectionPrelimUq} satisfy these hypotheses (see the comments below before Corollary \ref{LgnqrNothfinavril25}).
\smallskip

In particular, under the above hypotheses Lemma \ref{lemsepare} and the results of Section \ref{QMMLgn} apply. 
\begin{defi} $\mathcal{L}_{g,n}^{\mathrm{qr}}(H)$ is the quantum reduction of $\mathcal{L}_{g,n}(H)$ along the counit $\varepsilon$, following Definition \ref{defQuantumReduction}, that is
$$\mathcal{L}_{g,n}^{\mathrm{qr}}(H) := \mathcal{L}_{g,n}(H) /\!\!/_{\varepsilon} \,H' = (\mathcal{L}_{g,n}(H)/I_{\varepsilon})^{H'}.$$
\end{defi}

\begin{prop}\label{qrNoethfg} Under the above hypotheses, if $\Ll_{g,n}^{H}(H)$ is a Noetherian algebra, then $\mathcal{L}_{g,n}^{\mathrm{qr}}(H)$ is a Noetherian algebra, and if $\Ll_{g,n}^{H}(H)$ is a finitely generated algebra over $k$, then $\mathcal{L}_{g,n}^{\mathrm{qr}}(H)$ is a finitely generated algebra over $k$.
\end{prop}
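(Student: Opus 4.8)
The plan is to deduce both assertions directly from the surjective algebra morphism $\pi\colon \mathcal{L}_{g,n}^H(H) \twoheadrightarrow \mathcal{L}_{g,n}^{\mathrm{qr}}(H)$ established just before the statement, so that the entire content reduces to the elementary principle that Noetherianity and finite generation both descend along surjective morphisms of algebras. No new computation is needed; one only unwinds the definitions and invokes standard quotient-ring facts.

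For the Noetherian claim, first I would note that surjectivity of $\pi$ yields an isomorphism of algebras $\mathcal{L}_{g,n}^{\mathrm{qr}}(H) \cong \mathcal{L}_{g,n}^H(H)/\ker(\pi)$, where $\ker(\pi)$ is a two-sided ideal (here it is essential that $\pi$ is a morphism of algebras, not merely of modules). Then I would invoke the standard fact that a quotient of a left and right Noetherian ring by a two-sided ideal is again left and right Noetherian: the left ideals of the quotient correspond bijectively and order-preservingly to the left ideals of $\mathcal{L}_{g,n}^H(H)$ containing $\ker(\pi)$, so the ascending chain condition is inherited, and symmetrically on the right. Hence if $\mathcal{L}_{g,n}^H(H)$ is Noetherian, so is $\mathcal{L}_{g,n}^{\mathrm{qr}}(H)$.

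For finite generation, I would take a finite generating set $g_1, \ldots, g_m$ of $\mathcal{L}_{g,n}^H(H)$ as a $k$-algebra and observe that $\pi(g_1), \ldots, \pi(g_m)$ generate $\mathcal{L}_{g,n}^{\mathrm{qr}}(H)$: every element of the target is the image under $\pi$ of a noncommutative polynomial in the $g_i$, which by multiplicativity of $\pi$ equals the same polynomial evaluated at the $\pi(g_i)$. Thus the image is generated by the $\pi(g_i)$, and $\mathcal{L}_{g,n}^{\mathrm{qr}}(H)$ is finitely generated over $k$.

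I do not expect any genuine obstacle in this argument; the mathematical substance of the proposition is entirely contained in the construction and surjectivity of $\pi$, which has already been secured from the running hypotheses (complete reducibility of the $H$-module $\mathcal{L}_{g,n}(H)$, the separation property of $Z(H')$ through Lemma \ref{lemsepare}, and the identity $\mathcal{L}_{g,n}^{H'}(H) = \mathcal{L}_{g,n}^{H}(H)$). The only point requiring a moment of care is confirming that $\ker(\pi)$ is two-sided, so that the quotient-ring arguments apply; this is immediate from $\pi$ being a homomorphism of algebras.
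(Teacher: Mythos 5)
Your proof is correct and takes essentially the same route as the paper: the paper deduces the proposition immediately ("It follows:") from the surjectivity of the algebra morphism $\pi\colon \mathcal{L}_{g,n}^H(H) \twoheadrightarrow \mathcal{L}_{g,n}^{\mathrm{qr}}(H)$, exactly as you do, with the quotient-ring and image-of-generators arguments you spell out being the standard facts left implicit there.
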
 
\proof By Lemma \ref{lemsepare} and the hypothesis $\Ll_{g,n}^{H'}(H) = \Ll_{g,n}^{H}(H)$, we have a surjective morphism of algebras $\pi\colon \mathcal{L}_{g,n}^H(H) \twoheadrightarrow  \mathcal{L}_{g,n}^{\mathrm{qr}}(H)$. The conclusion follows.\Endproof
\medskip

By abusing notations, we will denote ${\rm id}_W\otimes 1 \in {\rm End}(W) \otimes \Ll_{g,n}(H)$ by ${\rm id}_W$. Define
$$\mathfrak{C}_{g,n}(H) := {\rm Vect}_{k} \left\lbrace \begin{array}{ll} \text{the matrix coefficients of the matrices } \overset{W}{C}-{\rm id}_W \\ \quad\:\: \text{for all finite dimensional } H\text{-modules\ } W \end{array} \right\rbrace.$$
From \eqref{defiIchi} and \eqref{defmutotal} we see that $I_\varepsilon$ is the left ideal of $\Ll_{g,n}(H)$ of the form
\begin{equation}\label{expressIe}
I_\varepsilon = \Ll_{g,n}(H)\mathfrak{C}_{g,n}(H).
\end{equation}
Recall the Reynolds operator $\mathfrak{R}\colon \Ll_{g,n}(H)\ra \Ll_{g,n}^H(H)$, defined by \eqref{defReynolds}. Note that $\mathfrak{R}(I_\varepsilon)$ is a two-sided ideal of $\Ll_{g,n}^H(H)$, thanks to Lemma \ref{lemmaReynolds}.
\begin{prop} We have $ {\rm Ker}(\pi)= I_\varepsilon \cap \mathcal{L}_{g,n}^H(H) = \mathfrak{R}(I_\varepsilon)$. Therefore $\pi$ factors to an isomorphism $\Ll_{g,n}^H(H)/\mathfrak{R}(I_\varepsilon) \cong (\mathcal{L}_{g,n}(H)/I_{\varepsilon})^{H'}$.
\end{prop}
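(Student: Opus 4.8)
The plan is to prove the chain of equalities $\mathrm{Ker}(\pi) = I_\varepsilon \cap \mathcal{L}_{g,n}^H(H) = \mathfrak{R}(I_\varepsilon)$ and then conclude by the first isomorphism theorem. The first equality is immediate from the definition of $\pi$: since $\pi$ is the restriction to $\mathcal{L}_{g,n}^H(H) = \mathcal{L}_{g,n}^{H'}(H)$ of the quotient map $p\colon \mathcal{L}_{g,n}(H) \to \mathcal{L}_{g,n}(H)/I_\varepsilon$, an invariant element $a$ satisfies $\pi(a) = a + I_\varepsilon = 0$ exactly when $a \in I_\varepsilon$, so $\mathrm{Ker}(\pi) = I_\varepsilon \cap \mathcal{L}_{g,n}^H(H)$.

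The substance lies in the second equality, and the key preliminary step I would establish is that $I_\varepsilon$ is an $H$-submodule of $\mathcal{L}_{g,n}(H)$ for the action $\smallblacktriangle$ (equivalently for $\mathrm{coad}^r$, since the two actions differ only by the bijection $S^{-1}$ and hence have the same stable subspaces). First I would note that $\mathrm{Ker}(\varepsilon_{\vert H'})$ is stable under $\mathrm{ad}^r$: indeed $\varepsilon(\mathrm{ad}^r(h)(h')) = \varepsilon(h)\varepsilon(h')$ and $H'$ is $\mathrm{ad}^r$-stable. Then, applying the equivariance property $\mu(\mathrm{ad}^r(h)(h')) = S(h)\smallblacktriangle \mu(h')$ of Lemma \ref{lemQMM}(i) with $h$ replaced by $S^{-1}(h)$, I obtain $h\smallblacktriangle \mu(h') = \mu(\mathrm{ad}^r(S^{-1}(h))(h')) \in \mu(\mathrm{Ker}(\varepsilon_{\vert H'})) \subseteq I_\varepsilon$ for every $h' \in \mathrm{Ker}(\varepsilon_{\vert H'})$. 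Since $\smallblacktriangle$ makes $\mathcal{L}_{g,n}(H)$ a module-algebra, the expansion $h\smallblacktriangle(a\,\mu(h')) = \sum_{(h)} (h_{(2)}\smallblacktriangle a)(h_{(1)}\smallblacktriangle \mu(h'))$ then lands in the left ideal $I_\varepsilon = \mathcal{L}_{g,n}(H)\,\mu(\mathrm{Ker}(\varepsilon_{\vert H'}))$, which gives the $H$-stability.

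Granting this, the complete reducibility of $\mathcal{L}_{g,n}(H)$ gives $\mathfrak{R}(I_\varepsilon) \subseteq I_\varepsilon$: the Reynolds operator $\mathfrak{R}$ is the projection onto the trivial isotypic component $\mathcal{L}_{g,n}^H(H)$, and any submodule of a semisimple module decomposes compatibly with the isotypic decomposition and is therefore stable under each isotypic projection. Together with $\mathrm{im}(\mathfrak{R}) = \mathcal{L}_{g,n}^H(H)$ this yields $\mathfrak{R}(I_\varepsilon) \subseteq I_\varepsilon \cap \mathcal{L}_{g,n}^H(H)$. The reverse inclusion is formal: if $x \in I_\varepsilon \cap \mathcal{L}_{g,n}^H(H)$ then $\mathfrak{R}$ fixes $x$, so $x = \mathfrak{R}(x) \in \mathfrak{R}(I_\varepsilon)$. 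This proves both equalities, hence $\mathrm{Ker}(\pi) = \mathfrak{R}(I_\varepsilon)$.

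To finish, $\pi$ is a surjective morphism of algebras by Lemma \ref{lemsepare}, and $\mathfrak{R}(I_\varepsilon)$ is a two-sided ideal of $\mathcal{L}_{g,n}^H(H)$ by Lemma \ref{lemmaReynolds} (using also that $I_\varepsilon a \subseteq I_\varepsilon$ for genuinely invariant $a$, as observed after Proposition \ref{QRalg}); the first isomorphism theorem for algebras then produces the asserted isomorphism $\mathcal{L}_{g,n}^H(H)/\mathfrak{R}(I_\varepsilon) \cong (\mathcal{L}_{g,n}(H)/I_\varepsilon)^{H'}$. I expect the only genuinely delicate point to be the $H$-stability of $I_\varepsilon$ in the second paragraph, i.e. upgrading the $H'$-stability recorded after Proposition \ref{QRalg} to full $H$-stability via the equivariance of $\mu$; once that is in place, the rest is a routine application of semisimplicity and the Reynolds operator formalism.
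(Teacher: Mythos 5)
Your proof is correct, and its overall skeleton coincides with the paper's: the first equality is read off from the definition of $\pi$, the inclusion $I_\varepsilon \cap \mathcal{L}_{g,n}^H(H) \subseteq \mathfrak{R}(I_\varepsilon)$ is formal, and the converse is reduced to showing $\mathfrak{R}(I_\varepsilon) \subseteq I_\varepsilon$, which follows from complete reducibility once one knows that $I_\varepsilon$ is an $H$-submodule. Where you genuinely differ is in the justification of that last point. The paper uses the concrete description \eqref{expressIe}: the space $\mathfrak{C}_{g,n}(H)$ spanned by the coefficients of the matrices $\overset{W}{C}-\mathrm{id}_W$ is $H$-stable because the action $\smallblacktriangle$ acts on $\overset{W}{C}$ by the conjugation formula recorded after \eqref{actionTriangleMatrixForm}, and then $I_\varepsilon = \mathcal{L}_{g,n}(H)\,\mathfrak{C}_{g,n}(H)$ is stable since $\mathcal{L}_{g,n}(H)$ is a module-algebra. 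You instead argue abstractly: $\mathrm{Ker}(\varepsilon_{\vert H'})$ is $\mathrm{ad}^r$-stable (using that $H'$ is an $\mathrm{ad}^r$-stable subalgebra and $\varepsilon \circ \mathrm{ad}^r(h) = \varepsilon(h)\varepsilon$), and the equivariance of the quantum moment map (Lemma \ref{lemQMM}$(i)$, applied with $h$ replaced by $S^{-1}(h)$) then gives $h \smallblacktriangle \mu(h') = \mu\bigl(\mathrm{ad}^r(S^{-1}(h))(h')\bigr) \in \mu(\mathrm{Ker}(\varepsilon_{\vert H'}))$, after which the module-algebra expansion closes up the left ideal. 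Both arguments are valid; yours has the advantage of working for an arbitrary quantum moment map in the abstract setting of \S\ref{prelqrsection}, without ever invoking the specific matrix form of $\mu$, while the paper's is immediate given the conjugation remark it had already established. The remaining steps (the identity $\mathfrak{R}(I_\varepsilon) = I_\varepsilon \cap \mathcal{L}_{g,n}^H(H)$ via $\mathrm{im}(\mathfrak{R}) = \mathcal{L}_{g,n}^H(H)$ and the fixed-point property of $\mathfrak{R}$, then the first isomorphism theorem using the surjectivity of $\pi$ from Lemma \ref{lemsepare}) match the paper's conclusion exactly; note that once $\mathfrak{R}(I_\varepsilon)$ is identified as $\mathrm{Ker}(\pi)$ it is automatically a two-sided ideal, so your appeal to Lemma \ref{lemmaReynolds} there is a safe but dispensable precaution.
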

\begin{proof} The first equality and the inclusion $I_\varepsilon \cap \mathcal{L}_{g,n}^H(H) \subset \mathfrak{R}(I_\varepsilon)$ are immediate, by the definitions of $\pi$ and $\mathfrak{R}$ respectively. For the converse inclusion, it is enough to show that $\mathfrak{R}(I_\varepsilon) \subset I_\varepsilon$. It follows from the remark below \eqref{actionTriangleMatrixForm} that $\mathfrak{C}_{g,n}(H)$ is an $H$-submodule. Since $\mathcal{L}_{g,n}(H)$ is an $H$-module algebra, $I_\varepsilon = \Ll_{g,n}(H)\mathfrak{C}_{g,n}(H)$ is an $H$-submodule too. Because $\mathcal{L}_{g,n}(H)$ is completely reducible, $I_\varepsilon$ is also completely reducible. Because $\mathfrak{R}$ is the projection onto the isotypical components of trivial type, we obtain $\mathfrak{R}(I_\varepsilon) \subset I_\varepsilon$.
\end{proof}

\smallskip

We are going to describe generators of ${\rm Ker}(\pi)$, and for this, a few observations are necessary. Let $V$ be a finite dimensional $H$-module. Recall that the action $\mathrm{coad}^r$ is defined in \eqref{coadLgn}, and $h_V$ denotes the representation of $h\in H$ on $V$.

\begin{defi} Given a finite family $(v_i)_{i\in I}$ of vectors of $V$, we say that a tensor 
$$t = \sum_{i\in I} a_i \otimes v_i \in \mathcal{L}_{g,n}(H) \otimes V$$
is {\it equivariant} if for every $h\in H$ we have
$$\sum_{i\in I} \mathrm{coad}^r(h)(a_i) \otimes v_i = \sum_{i\in I} a_i \otimes h_V(v_i).$$
\end{defi}
A main example of equivariant tensor is provided by the matrix $\overset{W}{X}\in \mathcal{L}_{g,n}(H) \otimes \mathrm{End}(W)$ which is $\overset{W}{B}(i), \overset{W}{A}(i)$, $\overset{W}{M}(i)$ or any product of them and their inverses, like $\overset{W}{C}$, taking $V=\mathrm{End}(W) \cong W\otimes W^*$. Indeed, expanding $\overset{W}{X} = \textstyle \sum_{i,j} \overset{W}{X}{^i_{j}} \otimes w_i \otimes w^j$ where $(w_i)$ is a basis of $V$ with dual basis $(w^j)$, we have by \eqref{balancingActions},
$$\sum_{i,j} \mathrm{coad}^r\bigl( \overset{W}{X}{^i_j} \bigr) \otimes \bigl( w_i \otimes w^j \bigr) = \sum_{(h),i,j} \overset{W}{X}{^i_{j}} \otimes \bigl( h_{(1)} \cdot w_i \otimes h_{(2)} \cdot w^j \bigr).$$
Another source of examples is provided by products of equivariant tensors. Indeed, given another finite dimensional $H$-module $V'$ with a basis $(v'_j)$, and an equivariant tensor $\textstyle t' = \sum_{i=1}^{\dim V'} a_i' \otimes v_i' \in A\otimes V'$, the tensor
$$t_1t'_2 = \sum_{i,j} a_ia_j' \otimes v_i \otimes v'_j\in A\otimes V \otimes V'$$ is equivariant.
Note also the following fact. Assume the $H$-module $A$ is completely reducible, and denote by $\mathfrak{R}\colon A\ra A^H$ the Reynolds operator (\textit{i.e.} the projection on the submodule of $H$-invariant elements). 
\begin{lem} With $V$ and an equivariant tensor $t$ as above, we have
\begin{equation}\label{obsRP}\big(\mathfrak{R} \otimes {\rm id}_V \big)(t) = \big({\rm id} \otimes P \big)(t)\end{equation}
where $P\colon V \ra V$ is the projector onto the isotypical component of trivial type, so ${\rm Im}(P) = V^H$.
\end{lem} The proof is immediate by taking a basis $(v_i)$ adapted to the decomposition $V = {\rm Im}(P) \oplus {\rm Ker}(P)$.  
\smallskip

In the next statement we use the graphical calculus of \cite[\S 3]{FaitgHol} to represent invariant elements of $\Ll_{g,n}(H)$.

\begin{prop}\label{propDescriptionKerPi} The vector space ${\rm Ker}(\pi)$ is generated by the elements of the form
\smallskip

\begin{center}
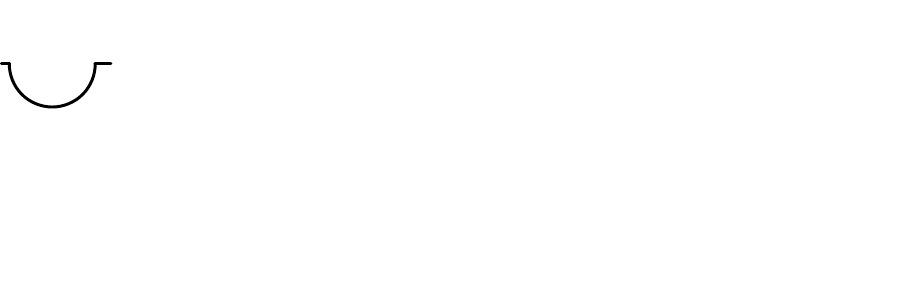
\end{center}

for all $X_i, Y \in \mathrm{Irr}(H)$ and any $f\in  \mathrm{Hom}_H\textstyle \!\left( (\bigotimes_{i=1}^{2g+n} X_i \otimes X_i^*) \otimes Y \otimes Y^*, k \right)$.
\end{prop}
\noindent For instance, if $(g,n)=(1,1)$, the element represented by this diagram is
\begin{align*}
&\sum_{a,b,k,l,m,n,o,p} \overset{X_1}{B(1)}{^a_b} \, \overset{X_2}{A(1)}{^k_l} \, \overset{X_3}{M(2)}{^m_n} \, \overset{Y}{C}{^o_p} \:\, f\bigl( {_1x_a} \otimes {_1x^b} \otimes {_2x_k} \otimes {_2x^l} \otimes {_3x_m} \otimes {_3x^n} \otimes y_o \otimes y^p \bigr)\\
&- \sum_{a,b,k,l,m,n,o} \overset{X_1}{B(1)}{^a_b} \, \overset{X_2}{A(1)}{^k_l} \, \overset{X_3}{M(2)}{^m_n} \:\, f\bigl( {_1x_a} \otimes {_1x^b} \otimes {_2x_k} \otimes {_2x^l} \otimes {_3x_m} \otimes {_3x^n} \otimes y_o \otimes y^o \bigr)
\end{align*}
where the matrices $\overset{X_1}{B(1)}$, $\overset{X_2}{A(1)}$, $\overset{X_3}{M(2)}$, $\overset{Y}{C}$ with coefficients in $\mathcal{L}_{g,n}(H)$ have been introduced in \eqref{matricesABMgn} and Def. \ref{defMatriceC}, $(_kx_r)$ a basis of $X_k$ with dual basis $(_kx^r)$ for $i=1,2,3$, and $(y_s)$ a basis of $Y$ with dual basis $(y^s)$.

\begin{proof} To simplify notations and diagrams we give the details in the case $(g,n)=(1,1)$. Because of \eqref{expressIe} and ${\rm Ker}(\pi) = \mathfrak{R}(I_\varepsilon)$, it is enough to show that if $x(\overset{Y}{C}-{\rm id}_Y)^i_j$ with $x\in \Ll_{1,1}(H)$ satisfies $\mathfrak{R}(x(\overset{Y}{C}-{\rm id}_Y)^i_j) = x(\overset{Y}{C}-{\rm id}_Y)^i_j$, then it is of the form required. Without loss of generality, by linearity we can assume $x\in C(X_1) \otimes C(X_2)\otimes C(X_3)$ for some simple modules $X_1$, $X_2$, $X_3$. Let $S\subset C(X_1) \otimes C(X_2)\otimes C(X_3)$ be the submodule generated by $x$ under $\mathrm{coad}^r$. We thus have an isomorphism $f\colon \mathrm{coad}^r(H)(x) \ra S$. We can view $S$ as a submodule (a direct summand) of $\textstyle \bigotimes_{i=1}^{3} X_i \otimes X_i^*$. Denote by $p_S\colon  \textstyle \bigotimes_{i=1}^{3} X_i \otimes X_i^* \ra S$ the associated projection. Define the tensor $t_x\in \Ll_{1,1}(H) \otimes S$ by
\begin{center}
\begingroup%
  \makeatletter%
  \providecommand\color[2][]{%
    \errmessage{(Inkscape) Color is used for the text in Inkscape, but the package 'color.sty' is not loaded}%
    \renewcommand\color[2][]{}%
  }%
  \providecommand\transparent[1]{%
    \errmessage{(Inkscape) Transparency is used (non-zero) for the text in Inkscape, but the package 'transparent.sty' is not loaded}%
    \renewcommand\transparent[1]{}%
  }%
  \providecommand\rotatebox[2]{#2}%
  \newcommand*\fsize{\dimexpr\f@size pt\relax}%
  \newcommand*\lineheight[1]{\fontsize{\fsize}{#1\fsize}\selectfont}%
  \ifx\svgwidth\undefined%
    \setlength{\unitlength}{172.34063278bp}%
    \ifx\svgscale\undefined%
      \relax%
    \else%
      \setlength{\unitlength}{\unitlength * \real{\svgscale}}%
    \fi%
  \else%
    \setlength{\unitlength}{\svgwidth}%
  \fi%
  \global\let\svgwidth\undefined%
  \global\let\svgscale\undefined%
  \makeatother%
  \begin{picture}(1,0.44697027)%
    \lineheight{1}%
    \setlength\tabcolsep{0pt}%
    \put(0,0){\includegraphics[width=\unitlength,page=1]{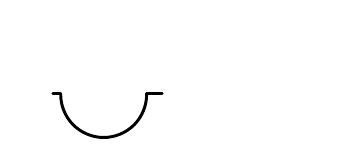}}%
    \put(0.23774912,0.00405489){\color[rgb]{0,0,0}\makebox(0,0)[lt]{\lineheight{1.25}\smash{\begin{tabular}[t]{l}$B(1)$\end{tabular}}}}%
    \put(0,0){\includegraphics[width=\unitlength,page=2]{tenseur_tx.pdf}}%
    \put(0.52048747,0.00372925){\color[rgb]{0,0,0}\makebox(0,0)[lt]{\lineheight{1.25}\smash{\begin{tabular}[t]{l}$A(1)$\end{tabular}}}}%
    \put(0,0){\includegraphics[width=\unitlength,page=3]{tenseur_tx.pdf}}%
    \put(0.80522533,0.00436766){\color[rgb]{0,0,0}\makebox(0,0)[lt]{\lineheight{1.25}\smash{\begin{tabular}[t]{l}$M(2)$\end{tabular}}}}%
    \put(0,0){\includegraphics[width=\unitlength,page=4]{tenseur_tx.pdf}}%
    \put(0.53773538,0.308145){\color[rgb]{0,0,0}\makebox(0,0)[lt]{\lineheight{1.25}\smash{\begin{tabular}[t]{l}$p_S$\end{tabular}}}}%
    \put(0,0){\includegraphics[width=\unitlength,page=5]{tenseur_tx.pdf}}%
    \put(0.20303195,0.23071388){\color[rgb]{0,0,0}\makebox(0,0)[lt]{\lineheight{1.25}\smash{\begin{tabular}[t]{l}$_{X_1}$\end{tabular}}}}%
    \put(0,0){\includegraphics[width=\unitlength,page=6]{tenseur_tx.pdf}}%
    \put(0.48590202,0.23071388){\color[rgb]{0,0,0}\makebox(0,0)[lt]{\lineheight{1.25}\smash{\begin{tabular}[t]{l}$_{X_2}$\end{tabular}}}}%
    \put(0,0){\includegraphics[width=\unitlength,page=7]{tenseur_tx.pdf}}%
    \put(0.76877192,0.23071388){\color[rgb]{0,0,0}\makebox(0,0)[lt]{\lineheight{1.25}\smash{\begin{tabular}[t]{l}$_{X_3}$\end{tabular}}}}%
    \put(0,0){\includegraphics[width=\unitlength,page=8]{tenseur_tx.pdf}}%
    \put(-0.00088646,0.17471334){\color[rgb]{0,0,0}\makebox(0,0)[lt]{\lineheight{1.25}\smash{\begin{tabular}[t]{l}$t_x =$\end{tabular}}}}%
    \put(0,0){\includegraphics[width=\unitlength,page=9]{tenseur_tx.pdf}}%
    \put(0.57133886,0.42424704){\color[rgb]{0,0,0}\makebox(0,0)[lt]{\lineheight{1.25}\smash{\begin{tabular}[t]{l}$_S$\end{tabular}}}}%
  \end{picture}%
\endgroup%

\end{center}
Letting $({}_kx_r)$ and $({}_kx^r)$ be dual basis of $X_k$ and $X_k^*$, this reads
$$t_x =\sum_{a,b,k,l,m,n} \overset{X_1}{B(1)}{}^a_b \,  \overset{X_2}{A(1)}{}^k_l \, \overset{X_3}{M(2)}{}^m_n \otimes p_S\big( {}_1x_a\otimes {}_1x^b\otimes {}_2x_k\otimes {}_2x^l\otimes {}_3x_m \otimes {}_3x^n\big).$$
By construction $t_x$ is an equivariant tensor. Let $(s_r)$ be a basis of $S$, chosen so that $f(x)=s_1$. Then it is immediate that $({\rm id} \otimes s^1)(t_x) = x$. It follows that $(t_x)_1(\overset{Y}{C}-{\rm id}_Y)_2 \in \Ll_{1,1}(H) \otimes S\otimes Y\otimes Y^*$ is equivariant, and therefore from \eqref{obsRP} we get
$$(\mathfrak{R}\otimes {\rm id})\big((t_x)_1(\overset{Y}{C}-{\rm id}_Y)_2\big)  =  ({\rm id} \otimes P) \big((t_x)_1(\overset{Y}{C}-{\rm id}_Y)_2\big)$$
where $P\colon S\otimes Y\otimes Y^* \ra S\otimes Y\otimes Y^*$ is the projection onto the subspace of invariant elements. Letting $(y_s)$ and $(y^t)$ be dual basis of $Y$ and $Y^*$, denote by $\eta\colon S\otimes Y\otimes Y^* \ra k$ the linear form such that $\eta(s_r\otimes y_s\otimes y^t) = \delta_{r,1}\delta_{s,i}\delta_{t,j}$. Then we have
$$x(\overset{Y}{C}-{\rm id}_Y)^i_j = ({\rm id}\otimes \eta)\big((t_x)_1(\overset{Y}{C}-{\rm id}_Y)_2\big),$$
and
\begin{align*}
({\rm id}\otimes (\eta\circ P))\big((t_x)_1(\overset{Y}{C}-{\rm id}_Y)_2\big) & = (\mathfrak{R}\otimes \eta)\big((t_x)_1(\overset{Y}{C}-{\rm id}_Y)_2\big) \\
& = \mathfrak{R}\big(x(\overset{Y}{C}-{\rm id}_Y)^i_j\big) = x(\overset{Y}{C}-{\rm id}_Y)^i_j.
\end{align*}
The map $\eta\circ P\colon S\otimes Y\otimes Y^* \ra k$ is $H$-linear,  and the left-hand side of the identity is the element of the form
\smallskip

\begin{center}
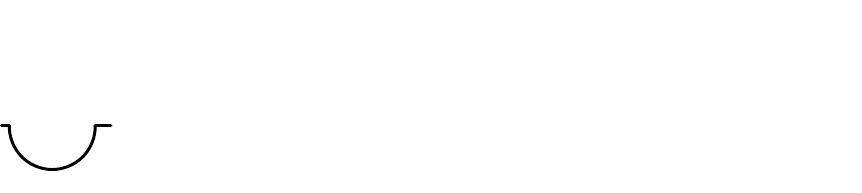
\end{center}
 This concludes the proof.
\end{proof}
\begin{remark}  The proposition above proves the definition of the ideal $\mathcal{I}_C$ in \cite[p.\,10]{BNR}.
\end{remark}
Finally, note that the hypotheses (i)--(v) stated at the beginning of the section are satisfied by $H=U^{\mathrm{ad}}_q=U^{\mathrm{ad}}_q(\mathfrak{g})$. Indeed, (ii) is satisfied as a well-known particular case of Theorem \ref{TheoremePhignInjectif} (\cite{Bau1}), and (iii) follows from the fact that $\Ll_{g,n}(\mathfrak{g}) = \Oo_q(q^{1/D})^{\otimes (2g+n)}$ as a right $U_q$-module, and the complete reducibility of the $U_q$-module $\Oo_q$. As for (i), $U^{\mathrm{ad}}_q$ is ribbon in its categorical completion, explained in Section \ref{sectionCategoricalCompletion}.

 It remains to check (iv) and (v). As already mentioned, we have $H'=U_q^{{\rm lf}}$ by \cite[Th. 3]{Bau1}. Also we have the following decomposition as a free $T_{2-}^{-1}U_q^{\rm lf}$-module \cite[Theorem 6.4]{JL} :
$$U_q = T_{2-}^{-1}U_q^{\rm lf}[T/T_{2}]$$
where $T\subset U_q$ is the multiplicative Abelian group formed by the elements $K_{\lambda}$, $\lambda \in P$, and we denote by $T_2\subset T$ the subgroup formed by the elements $K_{\lambda}$, $\lambda \in 2P$, by $T_{2-} \subset T_2$ the set formed by the elements $K_{-\lambda}$, $\lambda \in 2P_+$, and by $T/T_2$ the quotient group. As a result
$$Z(U_q^{{\rm lf}})=Z(U_q)\quad {\rm and}\quad \Ll_{g,n}^{U_q^{{\rm lf}}} = \Ll_{g,n}^{U_q}.$$
The last equality proves (v). To prove (iv), let us show that $Z(U_q)$ separates the simple types. There is an algebra isomorphism $\Xi\colon Z(U_q)\ra \mc(q)[K_{2\varpi_1}^{\pm 1},\ldots,K_{2\varpi_N}^{\pm 1}]^W$, where $W$ is acting by the shifted Weyl group action (see e.g. \cite{VY}, Section 3.13). Then let $X_1,\ldots,X_k$ be non-trivial and non isomorphic simple type $1$ finite dimensional $U_q$-modules. They are highest weight modules, of non-zero dominant weights $\lambda_1,\ldots,\lambda_k$ lying in disjoint $W$-orbits. Therefore, there exists an element $P\in \mc(q)[K_{2\varpi_1}^{\pm 1},\ldots,K_{2\varpi_N}^{\pm 1}]^W$ such that $P(0)=0$ and $P(\lambda_i)\ne P(\lambda_j)$ for $i\ne j$. Taking $z=\Xi^{-1}(P)$ provides an element satisfying the separation property of Definition \ref{defiseparate}.
\medskip

By Proposition \ref{qrNoethfg} and Theorem \ref{thmLgnUqFinGen}, we get:
\begin{cor}\label{LgnqrNothfinavril25} The algebra $\Ll_{g,n}^{{\rm qr}}(\mathfrak{g})$ is Noetherian and finitely generated.
\end{cor}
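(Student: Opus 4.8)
The plan is to deduce the statement directly from the general reduction result Proposition \ref{qrNoethfg} together with the structural result Theorem \ref{thmLgnUqFinGen}, once I have checked that the standing hypotheses of \S\ref{Lgnqrsection} are satisfied for $H = U_q^{\mathrm{ad}}(\mathfrak{g})$. So the proof splits into a verification part and a one-line transfer part.

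First I would verify the five hypotheses. That $U_q^{\mathrm{ad}}$ (understood inside its categorical completion $\mathbb{U}_q$) is a ribbon Hopf algebra with invertible antipode, and that its category of type-$1$ finite dimensional modules is semisimple, are standard facts recalled in \S\ref{sectionCategoricalCompletion} and \S\ref{sectionOq}; semisimplicity gives complete reducibility of the $U_q^{\mathrm{ad}}$-module $\Ll_{g,n}$. The injectivity of $\Phi_{0,1}$ is \cite[Th.~3]{Bau1} (see also Theorem \ref{ThmPhi10}), and identifies the coideal $H' = \mathrm{im}(\Phi_{0,1})$ with $U_q^{\mathrm{lf}}$.

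The two hypotheses requiring genuine work are the last ones: that $Z(H')$ separates the simple types in the sense of Definition \ref{defiseparate}, and that $\Ll_{g,n}^{H'} = \Ll_{g,n}^{H}$. For both I would invoke the Joseph--Letzter decomposition $U_q = T_{2-}^{-1} U_q^{\mathrm{lf}}[T/T_2]$ of \cite{JL}, which yields the equality of centers $Z(U_q^{\mathrm{lf}}) = Z(U_q)$ and the equality of invariant subalgebras $\Ll_{g,n}^{U_q^{\mathrm{lf}}} = \Ll_{g,n}^{U_q}$. For the separation property I would use the Harish-Chandra-type isomorphism $\Xi : Z(U_q) \xrightarrow{\sim} \mathbb{C}(q)[K_{2\varpi_1}^{\pm 1},\dots,K_{2\varpi_N}^{\pm 1}]^W$, choose a $W$-invariant Laurent polynomial $P$ with $P(0)=0$ taking pairwise distinct values on the (disjoint) shifted $W$-orbits of the highest weights of a given finite family of pairwise non-isomorphic nontrivial simple modules, and set $z = \Xi^{-1}(P)$; this is the required central element acting by distinct scalars on the chosen simple types.

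Once the hypotheses hold, Lemma \ref{lemsepare} together with $\Ll_{g,n}^{H'} = \Ll_{g,n}^{H}$ provides the surjective algebra morphism $\pi : \Ll_{g,n}^{U_q} \twoheadrightarrow \Ll_{g,n}^{\mathrm{qr}}(\mathfrak{g})$, and Proposition \ref{qrNoethfg} then transfers Noetherianity and finite generation from $\Ll_{g,n}^{U_q}$ to its quotient $\Ll_{g,n}^{\mathrm{qr}}(\mathfrak{g})$; since $\Ll_{g,n}^{U_q}$ enjoys both properties by Theorem \ref{thmLgnUqFinGen}, the corollary follows. The main subtlety, rather than an obstacle, is the construction of the separating central element, which hinges on the precise description of $Z(U_q)$ via $\Xi$ and on the fact that distinct dominant highest weights lie in disjoint shifted Weyl orbits; everything else is a direct appeal to the quoted results.
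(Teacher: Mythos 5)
Your proposal is correct and follows essentially the same route as the paper: verification of the hypotheses of \S\ref{Lgnqrsection} via Baumann's theorem ($H'=U_q^{\mathrm{lf}}$), the Joseph--Letzter decomposition (giving $Z(U_q^{\mathrm{lf}})=Z(U_q)$ and $\Ll_{g,n}^{U_q^{\mathrm{lf}}}=\Ll_{g,n}^{U_q}$), and the Harish-Chandra-type isomorphism $\Xi$ to produce the separating central element, followed by the transfer of Noetherianity and finite generation through Proposition \ref{qrNoethfg} and Theorem \ref{thmLgnUqFinGen}. This matches the paper's argument point for point, including the key observation that distinct dominant highest weights lie in disjoint shifted Weyl orbits.
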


\subsection{Topological interpretation of the quantum reduction $\mathcal{L}_{g,n}^{\mathrm{qr}}(H)$}\label{topqrsection} Let us call {\em $\partial$-move} the transformation shown in the picture below; this picture represents a neighborhood of the boundary of $\Sigma_{g,n}^{\circ}$, and the strand represents the projection on $\Sigma_{g,n}^{\circ}$ of a portion of some $H$-colored ribbon link with coupons in $\Sigma_{g,n}^{\circ} \times [0,1]$. 
\medskip

\begin{center}
\begingroup%
  \makeatletter%
  \providecommand\color[2][]{%
    \errmessage{(Inkscape) Color is used for the text in Inkscape, but the package 'color.sty' is not loaded}%
    \renewcommand\color[2][]{}%
  }%
  \providecommand\transparent[1]{%
    \errmessage{(Inkscape) Transparency is used (non-zero) for the text in Inkscape, but the package 'transparent.sty' is not loaded}%
    \renewcommand\transparent[1]{}%
  }%
  \providecommand\rotatebox[2]{#2}%
  \newcommand*\fsize{\dimexpr\f@size pt\relax}%
  \newcommand*\lineheight[1]{\fontsize{\fsize}{#1\fsize}\selectfont}%
  \ifx\svgwidth\undefined%
    \setlength{\unitlength}{240.25800989bp}%
    \ifx\svgscale\undefined%
      \relax%
    \else%
      \setlength{\unitlength}{\unitlength * \real{\svgscale}}%
    \fi%
  \else%
    \setlength{\unitlength}{\svgwidth}%
  \fi%
  \global\let\svgwidth\undefined%
  \global\let\svgscale\undefined%
  \makeatother%
  \begin{picture}(1,0.24830176)%
    \lineheight{1}%
    \setlength\tabcolsep{0pt}%
    \put(0,0){\includegraphics[width=\unitlength,page=1]{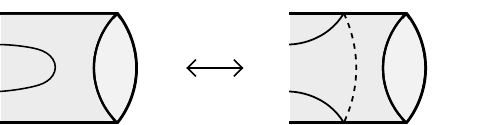}}%
    \put(0.26120084,0.20944257){\color[rgb]{0,0,0}\makebox(0,0)[lt]{\lineheight{1.25}\smash{\begin{tabular}[t]{l}$\partial(\Sigma_{g,n}^{\circ})$\end{tabular}}}}%
    \put(0.84058447,0.2141168){\color[rgb]{0,0,0}\makebox(0,0)[lt]{\lineheight{1.25}\smash{\begin{tabular}[t]{l}$\partial(\Sigma_{g,n}^{\circ})$\end{tabular}}}}%
  \end{picture}%
\endgroup%

\end{center}
\medskip

As usual we consider such links up to isotopy. We say that two linear combinations of links $L_1$ and $L_2$ are $\partial$-equivalent, denoted by $L_1 \sim_{\partial} L_2$, if their diagrams on $\Sigma_{g,n}^{\circ}$ can be related by a finite sequence of $\partial$-moves. Because the skein relations are local, the relation $\sim_{\partial}$ is compatible with skein equivalence, in the sense that if $L_1 \sim_{\partial} L_2$ and if $L_1$ (resp. $L_2$) is skein equivalent to $L'_1$ (resp. to $L'_2$), then $L'_1 \sim_{\partial} L'_2$. Also, $\sim_{\partial}$ is compatible with the product in $\mathcal{S}_H(\Sigma_{g,n}^{\circ})$, which we recall is given by stacking. Hence $\mathcal{S}_H(\Sigma_{g,n}^{\circ})/\sim_{\partial}$ is an algebra. 

The inclusion $\Sigma_{g,n}^{\circ} \subset \Sigma_{g,n}$ yields a well-defined, surjective morphism of algebras $\mathcal{S}_H(\Sigma_{g,n}^{\circ})\ra \mathcal{S}_H(\Sigma_{g,n})$, and skein classes of $\partial$-equivalent $H$-colored ribbon links have the same image. Since (diagrams of) isotopic links in $\Sigma_{g,n}$ can be related by finite sequences of $\partial$-moves and isotopies in the subsurface $\Sigma_{g,n}^{\circ}$, it factors into an isomorphism
$$\mathcal{S}_H(\Sigma_{g,n}^{\circ})/\sim_{\partial}\ \stackrel{\cong }{\lra} \mathcal{S}_H(\Sigma_{g,n}).$$
Below we will identify $\mathcal{S}_H(\Sigma_{g,n}^{\circ})/\sim_{\partial}$ and $\mathcal{S}_H(\Sigma_{g,n})$ by using this isomorphism.
\smallskip

Recall the Wilson loop morphism $W : \mathcal{S}_H(\Sigma_{g,n}^{\circ}) \to \mathcal{L}_{g,n}^H(H)$ (Definition \ref{defWilsonLoopMap}; as explained thereafter, $W$ takes values in $\mathcal{L}_{g,n}^H(H)$), and recall that $\pi\colon \mathcal{L}_{g,n}^H(H) \to \mathcal{L}_{g,n}^{\rm qr}(H)$ denotes the restriction to $\mathcal{L}_{g,n}^H(H)$ of the canonical projection $p : \mathcal{L}_{g,n}(H) \to \mathcal{L}_{g,n}(H)/I_{\epsilon}$ (see \eqref{defPi}).
\begin{prop}\label{prop_Wqr_passe_au_quotient}
If $L_1 \sim_{\partial} L_2$ then $\pi \circ W(L_1) = \pi \circ W(L_2)$. It follows that there is a morphism of algebras $W^{\mathrm{qr}} : \mathcal{S}_H(\Sigma_{g,n}) \to \mathcal{L}_{g,n}^{\mathrm{qr}}(H)$ such that the diagram
\begin{equation}\label{defWqr}
\xymatrix@C=4em{
\mathcal{S}_H(\Sigma_{g,n}^{\circ}) \ar@{->>}[d]_{\sim_{\partial}} \ar[r]^{W}& \mathcal{L}_{g,n}^H(H) \ar[d]^{\pi}\\
\mathcal{S}_H(\Sigma_{g,n}) \ar[r]_{W^{\mathrm{qr}}}& \mathcal{L}_{g,n}^{\mathrm{qr}}(H) 
}
\end{equation}
commutes.
\end{prop}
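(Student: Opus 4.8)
The plan is to reduce the statement to a single $\partial$-move and then to recognize the resulting difference in $\mathcal{L}_{g,n}(H)$ as one of the explicit generators of $\mathrm{Ker}(\pi)$ produced in Proposition \ref{propDescriptionKerPi}. Since $\pi \circ W$ is linear and the relation $\sim_{\partial}$ is generated by single $\partial$-moves together with isotopies of links inside $\Sigma_{g,n}^{\circ}$ (the latter leave $W$ unchanged, $W$ being already well defined on $\mathcal{S}_H(\Sigma_{g,n}^{\circ})$), it suffices to prove $\pi \circ W(L_1) = \pi \circ W(L_2)$ in the case where $L_2$ is obtained from $L_1$ by exactly one $\partial$-move applied to a single strand, say colored by a finite-dimensional $H$-module $V$. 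All the other cases then follow by linearity and composition of moves.

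Next I would compute the effect of this elementary move through the holonomy map of \cite{FaitgHol}. The key geometric point is that the boundary loop $\partial(\Sigma_{g,n}^{\circ})$ reads off the word $b_1 a_1^{-1} b_1^{-1} a_1 \cdots b_g a_g^{-1} b_g^{-1} a_g\, m_{g+1} \cdots m_{g+n}$ in the generators of $\pi_1(\Sigma_{g,n}\setminus D)$, whose holonomy lift is precisely the matrix $\overset{V}{C}$ of Definition \ref{defMatriceC}. Consequently, pushing the $V$-colored strand across $\partial(\Sigma_{g,n}^{\circ})$ amounts, at the level of $\mathrm{hol}$, to inserting a $\overset{V}{C}$ box (or its inverse, according to the direction of the move) along that strand. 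Writing the unaffected part of $L_1$ as a coupon $f = F_{\mathrm{RT}}(T)$ and decomposing the colors into irreducibles (using complete reducibility of the relevant $H$-modules, which holds under the standing hypotheses), I would identify $W(L_1)$ and $W(L_2)$ as the evaluations of one and the same holonomy tensor against $\mathrm{id}_V$, respectively against $\overset{V}{C}$, on the legs colored by $V$ and $V^*$. Hence the difference $W(L_1) - W(L_2)$ is a linear combination of the graphical elements displayed in Proposition \ref{propDescriptionKerPi}, with the coupon $f$ there determined by $F_{\mathrm{RT}}$ of the part of the link left fixed by the move (if the move inserts $\overset{V}{C}^{-1}$ rather than $\overset{V}{C}$, one uses $\overset{V}{C}^{-1} - \mathrm{id}_V = -\,\overset{V}{C}^{-1}(\overset{V}{C} - \mathrm{id}_V)$ to reduce to the same generators).

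It then follows that $W(L_1) - W(L_2) \in \mathrm{Ker}(\pi)$, so $\pi(W(L_1)) = \pi(W(L_2))$. This establishes that $\pi \circ W \colon \mathcal{S}_H(\Sigma_{g,n}^{\circ}) \to \mathcal{L}_{g,n}^{\mathrm{qr}}(H)$, which is a morphism of algebras as a composition of the algebra morphisms $W$ and $\pi$, is constant on $\sim_{\partial}$-equivalence classes. Since $\mathcal{S}_H(\Sigma_{g,n}) \cong \mathcal{S}_H(\Sigma_{g,n}^{\circ})/\!\sim_{\partial}$ and the canonical quotient map is a surjective algebra morphism whose fibers are exactly the $\sim_{\partial}$-classes, the morphism $\pi \circ W$ factors uniquely through it into an algebra morphism $W^{\mathrm{qr}} \colon \mathcal{S}_H(\Sigma_{g,n}) \to \mathcal{L}_{g,n}^{\mathrm{qr}}(H)$ making the diagram \eqref{defWqr} commute.

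The hard part will be the middle step: turning the purely topological $\partial$-move into the algebraic statement that the holonomy tensor gets multiplied by $\overset{V}{C}$ along the displaced strand. This requires carefully unwinding the definition of $\mathrm{hol}$ from \cite{FaitgHol}, getting the normalizing powers of the ribbon element $v$ in $\overset{V}{C}$ right, tracking the placement of the inserted box in the stacking order, and matching the outcome precisely with the graphical normal form in Proposition \ref{propDescriptionKerPi} (rather than merely with the left ideal $I_{\varepsilon} = \mathcal{L}_{g,n}(H)\,\mathfrak{C}_{g,n}(H)$ of \eqref{expressIe}, which would leave the side-placement issue unresolved). Routing the argument through the explicit description of $\mathrm{Ker}(\pi)$ is what makes the side of the $\overset{V}{C}$-insertion irrelevant and keeps the proof manageable.
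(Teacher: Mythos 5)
Your outline coincides with the paper's strategy: reduce to a single $\partial$-move, show that at the level of holonomy the move amounts to multiplying by the matrix $\overset{V}{C}$ along the displaced strand, conclude that the difference dies under $\pi$, and factor through $\mathcal{S}_H(\Sigma_{g,n}) \cong \mathcal{S}_H(\Sigma_{g,n}^{\circ})/\!\sim_{\partial}$. However, the step you explicitly defer as ``the hard part'' is the entire substance of the paper's proof, namely Lemma \ref{lemmeSimplificationWqr}: a diagrammatic computation which writes $\overset{V}{C}(1)$ graphically and then uses the diagrammatic forms of the exchange relations \eqref{echangeCiXi} and of Lemma \ref{lemmaCommutationC} to slide the pushed strand past every $B(i)$, $A(i)$, $M(j)$ box until it collects into a single $\overset{V}{C}$ box sitting in the \emph{rightmost} position. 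The geometric heuristic that the boundary word is $b_1a_1^{-1}b_1^{-1}a_1\cdots m_{g+n}$, whose lift is $\overset{V}{C}$, does not by itself produce this identity, and you give no argument for it; so as it stands the proposal has a genuine gap at its central step.

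Moreover, the one place where you deviate from the paper --- routing the conclusion through Proposition \ref{propDescriptionKerPi} in order to ``make the side of the $\overset{V}{C}$-insertion irrelevant'' --- rests on a misconception. The generators of $\mathrm{Ker}(\pi)$ in Proposition \ref{propDescriptionKerPi} carry the $\overset{Y}{C}$ box in one fixed position, as rightmost factor, precisely because $I_\varepsilon = \mathcal{L}_{g,n}(H)\,\mathfrak{C}_{g,n}(H)$ is only a \emph{left} ideal: an expression $\sum_{k,l}\bigl(\overset{V}{C}-\mathrm{id}_V\bigr){}^k_l\, b_{kl}$ with the $C$-coefficients on the left need not lie in $I_\varepsilon$ (by the moment map relation of Theorem \ref{muQMMteo}, left multiplication by such coefficients differs from an element of $I_\varepsilon$ by an action term). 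So whichever description of $\mathrm{Ker}(\pi)$ you target, you must still prove that the move produces \emph{right} multiplication by $\overset{V}{C}$; the side-placement issue is resolved only by the graphical lemma, not by the choice of target. Note also that (i) you only need the easy inclusion in Proposition \ref{propDescriptionKerPi} (that those elements lie in $\mathrm{Ker}(\pi)$), which is nothing more than \eqref{expressIe} plus $H$-invariance, and (ii) your reduction $\overset{V}{C}{}^{-1}-\mathrm{id}_V = -\overset{V}{C}{}^{-1}\bigl(\overset{V}{C}-\mathrm{id}_V\bigr)$ yields elements of $I_\varepsilon$ that are not literally of the form listed there, so in the end you are invoking membership in $I_\varepsilon\cap\mathcal{L}_{g,n}^H(H)$ anyway. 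This is exactly the paper's (lighter) route: once Lemma \ref{lemmeSimplificationWqr} is available, one concludes with the single identity $p\bigl(a\,\overset{V}{C}\bigr) = p(a)\otimes\mathrm{id}_V$ from \eqref{expressIe}, with no decomposition into irreducible colors and no appeal to Proposition \ref{propDescriptionKerPi}.
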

We do the proof for $(g,n)=(1,1)$. The general case is similar except that it requires to draw more cumbersome diagrams. The key-point is the following lemma, which uses the graphical calculus from \cite[\S 3]{FaitgHol}.
\begin{lem}\label{lemmeSimplificationWqr}We have:
\begin{center}
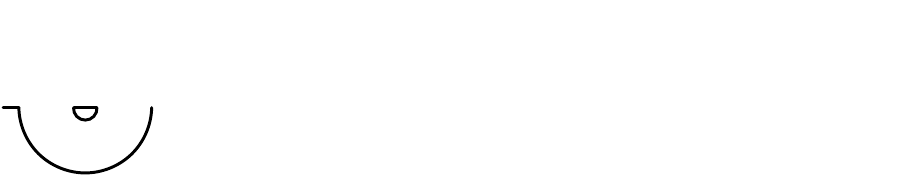
\end{center}

\noindent with the matrix $\overset{X}{C}$ from Definition \ref{defMatriceC}.
\end{lem}
\begin{proof}
For ease of notation we did not put the colors on the matrices in the diagrams; namely the first label in the left-hand side should actually be $\overset{X \otimes U \otimes X}{B(1)}$ {\it etc}.
\\Note first that due to the diagram for the inverse of $B(1)$ and $A(1)$ \cite[Prop. 3.3]{FaitgHol}, the matrix $\overset{X}{C}(1)$ defined in \eqref{defCi} can be written as
\begin{center}
\begingroup%
  \makeatletter%
  \providecommand\color[2][]{%
    \errmessage{(Inkscape) Color is used for the text in Inkscape, but the package 'color.sty' is not loaded}%
    \renewcommand\color[2][]{}%
  }%
  \providecommand\transparent[1]{%
    \errmessage{(Inkscape) Transparency is used (non-zero) for the text in Inkscape, but the package 'transparent.sty' is not loaded}%
    \renewcommand\transparent[1]{}%
  }%
  \providecommand\rotatebox[2]{#2}%
  \newcommand*\fsize{\dimexpr\f@size pt\relax}%
  \newcommand*\lineheight[1]{\fontsize{\fsize}{#1\fsize}\selectfont}%
  \ifx\svgwidth\undefined%
    \setlength{\unitlength}{275.16732491bp}%
    \ifx\svgscale\undefined%
      \relax%
    \else%
      \setlength{\unitlength}{\unitlength * \real{\svgscale}}%
    \fi%
  \else%
    \setlength{\unitlength}{\svgwidth}%
  \fi%
  \global\let\svgwidth\undefined%
  \global\let\svgscale\undefined%
  \makeatother%
  \begin{picture}(1,0.17729334)%
    \lineheight{1}%
    \setlength\tabcolsep{0pt}%
    \put(0,0){\includegraphics[width=\unitlength,page=1]{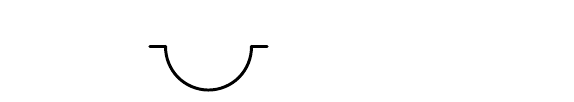}}%
    \put(0.40230358,0.00233567){\color[rgb]{0,0,0}\makebox(0,0)[lt]{\lineheight{1.25}\smash{\begin{tabular}[t]{l}$B(1)$\end{tabular}}}}%
    \put(0,0){\includegraphics[width=\unitlength,page=2]{diagrammeC.pdf}}%
    \put(0.57843095,0.00289628){\color[rgb]{0,0,0}\makebox(0,0)[lt]{\lineheight{1.25}\smash{\begin{tabular}[t]{l}$A(1)$\end{tabular}}}}%
    \put(0,0){\includegraphics[width=\unitlength,page=3]{diagrammeC.pdf}}%
    \put(0.75778977,0.00309618){\color[rgb]{0,0,0}\makebox(0,0)[lt]{\lineheight{1.25}\smash{\begin{tabular}[t]{l}$B(1)$\end{tabular}}}}%
    \put(0,0){\includegraphics[width=\unitlength,page=4]{diagrammeC.pdf}}%
    \put(0.93714304,0.00292197){\color[rgb]{0,0,0}\makebox(0,0)[lt]{\lineheight{1.25}\smash{\begin{tabular}[t]{l}$A(1)$\end{tabular}}}}%
    \put(0,0){\includegraphics[width=\unitlength,page=5]{diagrammeC.pdf}}%
    \put(0.2200115,0.09087547){\color[rgb]{0,0,0}\makebox(0,0)[lt]{\lineheight{1.25}\smash{\begin{tabular}[t]{l}$=$\end{tabular}}}}%
    \put(0,0){\includegraphics[width=\unitlength,page=6]{diagrammeC.pdf}}%
    \put(0.05016561,0.15231032){\color[rgb]{0,0,0}\makebox(0,0)[lt]{\lineheight{1.25}\smash{\begin{tabular}[t]{l}$_X$\end{tabular}}}}%
    \put(0,0){\includegraphics[width=\unitlength,page=7]{diagrammeC.pdf}}%
    \put(0.14671488,0.00292197){\color[rgb]{0,0,0}\makebox(0,0)[lt]{\lineheight{1.25}\smash{\begin{tabular}[t]{l}$C(1)$\end{tabular}}}}%
    \put(0,0){\includegraphics[width=\unitlength,page=8]{diagrammeC.pdf}}%
    \put(0.30199982,0.15317414){\color[rgb]{0,0,0}\makebox(0,0)[lt]{\lineheight{1.25}\smash{\begin{tabular}[t]{l}$_X$\end{tabular}}}}%
  \end{picture}%
\endgroup%

\end{center}
Then, using several times the diagrammatic commutation relations from \cite[Prop. 3.2 and 3.3]{FaitgHol}, one shows that
\begin{center}
\begingroup%
  \makeatletter%
  \providecommand\color[2][]{%
    \errmessage{(Inkscape) Color is used for the text in Inkscape, but the package 'color.sty' is not loaded}%
    \renewcommand\color[2][]{}%
  }%
  \providecommand\transparent[1]{%
    \errmessage{(Inkscape) Transparency is used (non-zero) for the text in Inkscape, but the package 'transparent.sty' is not loaded}%
    \renewcommand\transparent[1]{}%
  }%
  \providecommand\rotatebox[2]{#2}%
  \newcommand*\fsize{\dimexpr\f@size pt\relax}%
  \newcommand*\lineheight[1]{\fontsize{\fsize}{#1\fsize}\selectfont}%
  \ifx\svgwidth\undefined%
    \setlength{\unitlength}{324.75703527bp}%
    \ifx\svgscale\undefined%
      \relax%
    \else%
      \setlength{\unitlength}{\unitlength * \real{\svgscale}}%
    \fi%
  \else%
    \setlength{\unitlength}{\svgwidth}%
  \fi%
  \global\let\svgwidth\undefined%
  \global\let\svgscale\undefined%
  \makeatother%
  \begin{picture}(1,0.27204962)%
    \lineheight{1}%
    \setlength\tabcolsep{0pt}%
    \put(0,0){\includegraphics[width=\unitlength,page=1]{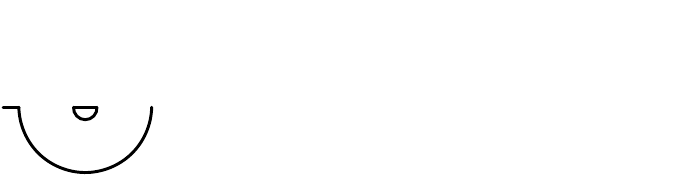}}%
    \put(-0.00047042,0.25316757){\color[rgb]{0,0,0}\makebox(0,0)[lt]{\lineheight{1.25}\smash{\begin{tabular}[t]{l}$_X$\end{tabular}}}}%
    \put(0,0){\includegraphics[width=\unitlength,page=2]{diagrammeBCA.pdf}}%
    \put(0.17748882,0.00206027){\color[rgb]{0,0,0}\makebox(0,0)[lt]{\lineheight{1.25}\smash{\begin{tabular}[t]{l}$B(1)$\end{tabular}}}}%
    \put(0,0){\includegraphics[width=\unitlength,page=3]{diagrammeBCA.pdf}}%
    \put(0.39659519,0.00197902){\color[rgb]{0,0,0}\makebox(0,0)[lt]{\lineheight{1.25}\smash{\begin{tabular}[t]{l}$A(1)$\end{tabular}}}}%
    \put(0,0){\includegraphics[width=\unitlength,page=4]{diagrammeBCA.pdf}}%
    \put(0.48095913,0.10853182){\color[rgb]{0,0,0}\makebox(0,0)[lt]{\lineheight{1.25}\smash{\begin{tabular}[t]{l}$=$\end{tabular}}}}%
    \put(0,0){\includegraphics[width=\unitlength,page=5]{diagrammeBCA.pdf}}%
    \put(0.06864653,0.25272952){\color[rgb]{0,0,0}\makebox(0,0)[lt]{\lineheight{1.25}\smash{\begin{tabular}[t]{l}$_U$\end{tabular}}}}%
    \put(0.16433776,0.25318402){\color[rgb]{0,0,0}\makebox(0,0)[lt]{\lineheight{1.25}\smash{\begin{tabular}[t]{l}$_V$\end{tabular}}}}%
    \put(0,0){\includegraphics[width=\unitlength,page=6]{diagrammeBCA.pdf}}%
    \put(0.64368264,0.03375765){\color[rgb]{0,0,0}\makebox(0,0)[lt]{\lineheight{1.25}\smash{\begin{tabular}[t]{l}$B(1)$\end{tabular}}}}%
    \put(0,0){\includegraphics[width=\unitlength,page=7]{diagrammeBCA.pdf}}%
    \put(0.79291576,0.03423263){\color[rgb]{0,0,0}\makebox(0,0)[lt]{\lineheight{1.25}\smash{\begin{tabular}[t]{l}$C(1)$\end{tabular}}}}%
    \put(0,0){\includegraphics[width=\unitlength,page=8]{diagrammeBCA.pdf}}%
    \put(0.94488687,0.03440203){\color[rgb]{0,0,0}\makebox(0,0)[lt]{\lineheight{1.25}\smash{\begin{tabular}[t]{l}$A(1)$\end{tabular}}}}%
    \put(0,0){\includegraphics[width=\unitlength,page=9]{diagrammeBCA.pdf}}%
    \put(0.62668576,0.25314739){\color[rgb]{0,0,0}\makebox(0,0)[lt]{\lineheight{1.25}\smash{\begin{tabular}[t]{l}$_U$\end{tabular}}}}%
    \put(0.7206066,0.25398293){\color[rgb]{0,0,0}\makebox(0,0)[lt]{\lineheight{1.25}\smash{\begin{tabular}[t]{l}$_V$\end{tabular}}}}%
    \put(0.53377897,0.25329164){\color[rgb]{0,0,0}\makebox(0,0)[lt]{\lineheight{1.25}\smash{\begin{tabular}[t]{l}$_X$\end{tabular}}}}%
    \put(0,0){\includegraphics[width=\unitlength,page=10]{diagrammeBCA.pdf}}%
  \end{picture}%
\endgroup%

\end{center}
Finally, it is an exercise to check that the matrix relation \eqref{echangeCiXi} is equivalent to
\begin{center}
\begingroup%
  \makeatletter%
  \providecommand\color[2][]{%
    \errmessage{(Inkscape) Color is used for the text in Inkscape, but the package 'color.sty' is not loaded}%
    \renewcommand\color[2][]{}%
  }%
  \providecommand\transparent[1]{%
    \errmessage{(Inkscape) Transparency is used (non-zero) for the text in Inkscape, but the package 'transparent.sty' is not loaded}%
    \renewcommand\transparent[1]{}%
  }%
  \providecommand\rotatebox[2]{#2}%
  \newcommand*\fsize{\dimexpr\f@size pt\relax}%
  \newcommand*\lineheight[1]{\fontsize{\fsize}{#1\fsize}\selectfont}%
  \ifx\svgwidth\undefined%
    \setlength{\unitlength}{226.41732085bp}%
    \ifx\svgscale\undefined%
      \relax%
    \else%
      \setlength{\unitlength}{\unitlength * \real{\svgscale}}%
    \fi%
  \else%
    \setlength{\unitlength}{\svgwidth}%
  \fi%
  \global\let\svgwidth\undefined%
  \global\let\svgscale\undefined%
  \makeatother%
  \begin{picture}(1,0.31182482)%
    \lineheight{1}%
    \setlength\tabcolsep{0pt}%
    \put(0,0){\includegraphics[width=\unitlength,page=1]{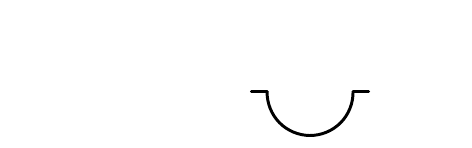}}%
    \put(0.7079673,0.00283857){\color[rgb]{0,0,0}\makebox(0,0)[lt]{\lineheight{1.25}\smash{\begin{tabular}[t]{l}$X(i)$\end{tabular}}}}%
    \put(0,0){\includegraphics[width=\unitlength,page=2]{diagrammeEchangeCX.pdf}}%
    \put(0.92201675,0.00351988){\color[rgb]{0,0,0}\makebox(0,0)[lt]{\lineheight{1.25}\smash{\begin{tabular}[t]{l}$C(i)$\end{tabular}}}}%
    \put(0,0){\includegraphics[width=\unitlength,page=3]{diagrammeEchangeCX.pdf}}%
    \put(0.61756225,0.28568644){\color[rgb]{0,0,0}\makebox(0,0)[lt]{\lineheight{1.25}\smash{\begin{tabular}[t]{l}$_V$\end{tabular}}}}%
    \put(0.82161451,0.28470308){\color[rgb]{0,0,0}\makebox(0,0)[lt]{\lineheight{1.25}\smash{\begin{tabular}[t]{l}$_W$\end{tabular}}}}%
    \put(0.48222482,0.10696849){\color[rgb]{0,0,0}\makebox(0,0)[lt]{\lineheight{1.25}\smash{\begin{tabular}[t]{l}$=$\end{tabular}}}}%
    \put(0,0){\includegraphics[width=\unitlength,page=4]{diagrammeEchangeCX.pdf}}%
    \put(0.27627718,0.18510428){\color[rgb]{0,0,0}\makebox(0,0)[lt]{\lineheight{1.25}\smash{\begin{tabular}[t]{l}$_W$\end{tabular}}}}%
    \put(0,0){\includegraphics[width=\unitlength,page=5]{diagrammeEchangeCX.pdf}}%
    \put(0.39734777,0.00355106){\color[rgb]{0,0,0}\makebox(0,0)[lt]{\lineheight{1.25}\smash{\begin{tabular}[t]{l}$X(i)$\end{tabular}}}}%
    \put(0,0){\includegraphics[width=\unitlength,page=6]{diagrammeEchangeCX.pdf}}%
    \put(0.06096677,0.18510428){\color[rgb]{0,0,0}\makebox(0,0)[lt]{\lineheight{1.25}\smash{\begin{tabular}[t]{l}$_V$\end{tabular}}}}%
    \put(0,0){\includegraphics[width=\unitlength,page=7]{diagrammeEchangeCX.pdf}}%
    \put(0.17830411,0.00355109){\color[rgb]{0,0,0}\makebox(0,0)[lt]{\lineheight{1.25}\smash{\begin{tabular}[t]{l}$C(i)$\end{tabular}}}}%
    \put(0,0){\includegraphics[width=\unitlength,page=8]{diagrammeEchangeCX.pdf}}%
  \end{picture}%
\endgroup%

\end{center}
and the matrix relation in Lemma \ref{lemmaCommutationC} is equivalent to the same diagrammatic identity but with $C$ instead of $C(i)$. From these facts one easily finishes the computation.
\end{proof}

\begin{proof}[Proof of Proposition \ref{prop_Wqr_passe_au_quotient}]
Consider the following $H$-colored ribbon links in $\Sigma_{1,1}^{\circ} \times [0,1]$ which are related by a $\partial$-move:
\begin{center}
\begingroup%
  \makeatletter%
  \providecommand\color[2][]{%
    \errmessage{(Inkscape) Color is used for the text in Inkscape, but the package 'color.sty' is not loaded}%
    \renewcommand\color[2][]{}%
  }%
  \providecommand\transparent[1]{%
    \errmessage{(Inkscape) Transparency is used (non-zero) for the text in Inkscape, but the package 'transparent.sty' is not loaded}%
    \renewcommand\transparent[1]{}%
  }%
  \providecommand\rotatebox[2]{#2}%
  \newcommand*\fsize{\dimexpr\f@size pt\relax}%
  \newcommand*\lineheight[1]{\fontsize{\fsize}{#1\fsize}\selectfont}%
  \ifx\svgwidth\undefined%
    \setlength{\unitlength}{440.16734268bp}%
    \ifx\svgscale\undefined%
      \relax%
    \else%
      \setlength{\unitlength}{\unitlength * \real{\svgscale}}%
    \fi%
  \else%
    \setlength{\unitlength}{\svgwidth}%
  \fi%
  \global\let\svgwidth\undefined%
  \global\let\svgscale\undefined%
  \makeatother%
  \begin{picture}(1,0.20006297)%
    \lineheight{1}%
    \setlength\tabcolsep{0pt}%
    \put(0,0){\includegraphics[width=\unitlength,page=1]{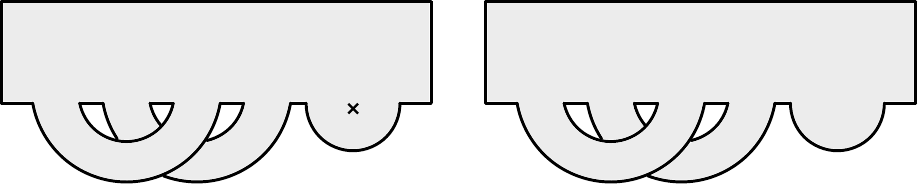}}%
    \put(0.21775541,0.13905404){\color[rgb]{0,0,0}\makebox(0,0)[lt]{\lineheight{1.25}\smash{\begin{tabular}[t]{l}$T$\end{tabular}}}}%
    \put(0,0){\includegraphics[width=\unitlength,page=2]{L_et_Lbord.pdf}}%
    \put(0.05816269,0.11512652){\color[rgb]{0,0,0}\makebox(0,0)[lt]{\lineheight{1.25}\smash{\begin{tabular}[t]{l}$_U$\end{tabular}}}}%
    \put(0,0){\includegraphics[width=\unitlength,page=3]{L_et_Lbord.pdf}}%
    \put(0.13845288,0.11445711){\color[rgb]{0,0,0}\makebox(0,0)[lt]{\lineheight{1.25}\smash{\begin{tabular}[t]{l}$_V$\end{tabular}}}}%
    \put(0.36722648,0.11282728){\color[rgb]{0,0,0}\makebox(0,0)[lt]{\lineheight{1.25}\smash{\begin{tabular}[t]{l}$_W$\end{tabular}}}}%
    \put(0.05873924,0.1801495){\color[rgb]{0,0,0}\makebox(0,0)[lt]{\lineheight{1.25}\smash{\begin{tabular}[t]{l}$_X$\end{tabular}}}}%
    \put(0,0){\includegraphics[width=\unitlength,page=4]{L_et_Lbord.pdf}}%
    \put(0.7436189,0.14076229){\color[rgb]{0,0,0}\makebox(0,0)[lt]{\lineheight{1.25}\smash{\begin{tabular}[t]{l}$T$\end{tabular}}}}%
    \put(0,0){\includegraphics[width=\unitlength,page=5]{L_et_Lbord.pdf}}%
    \put(0.58402626,0.11683479){\color[rgb]{0,0,0}\makebox(0,0)[lt]{\lineheight{1.25}\smash{\begin{tabular}[t]{l}$_U$\end{tabular}}}}%
    \put(0,0){\includegraphics[width=\unitlength,page=6]{L_et_Lbord.pdf}}%
    \put(0.66431644,0.11616539){\color[rgb]{0,0,0}\makebox(0,0)[lt]{\lineheight{1.25}\smash{\begin{tabular}[t]{l}$_V$\end{tabular}}}}%
    \put(0.89309001,0.11453556){\color[rgb]{0,0,0}\makebox(0,0)[lt]{\lineheight{1.25}\smash{\begin{tabular}[t]{l}$_W$\end{tabular}}}}%
    \put(0.58055577,0.17714226){\color[rgb]{0,0,0}\makebox(0,0)[lt]{\lineheight{1.25}\smash{\begin{tabular}[t]{l}$_X$\end{tabular}}}}%
    \put(0,0){\includegraphics[width=\unitlength,page=7]{L_et_Lbord.pdf}}%
  \end{picture}%
\endgroup%

\end{center}
where $T$ is some ribbon graph in $[0,1]^{3}$. We denote them by $L_T$ and $L^{\partial}_T$ respectively and we want to show that $\pi \circ W(L^{\partial}_T) = \pi \circ W(L_T)$. Recall that $p : \mathcal{L}_{g,n}(H) \to \mathcal{L}_{g,n}(H)/I_{\varepsilon}$ denotes the canonical projection. If $\textstyle \mathbf{x} = \sum_i x_i \otimes v_{1,i} \otimes \ldots \otimes v_{k,i} \in \mathcal{L}_{g,n}(H) \otimes V_1 \otimes \ldots \otimes V_k$ is some tensor, define $\textstyle p(\mathbf{x}) = \sum_i p(x_i) \otimes v_{1,i} \otimes \ldots \otimes v_{k,i}$. Note that $W(L^{\partial}_T)$ is equal to the diagram obtained by plugging the tangle $T$ atop the diagram at the left-hand side of Lemma \ref{lemmeSimplificationWqr}. By \eqref{expressIe} we have $p\bigl(a\overset{X}{C}\bigr) = p(a) \otimes \mathrm{id}_X$ for all $a \in \mathcal{L}_{g,n}(H)$, and hence
\begin{center}
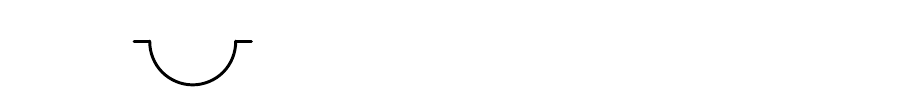
\end{center}
Since the map $\pi$ is just the restriction of $p$ to $\mathcal{L}_{g,n}^H(H)$, applying this equality to the right-hand side of Lemma \ref{lemmeSimplificationWqr} we recover $\pi \circ W(L_T)$, as desired.
\end{proof}

\begin{teo} Assume that $H$ satisfies \eqref{assumptionsForSkein}, $H$-mod is semisimple and the hypotheses of \S \ref{Lgnqrsection} hold true. Then $W^{\rm qr}\colon \mathcal{S}_H(\Sigma_{g,n}) \to \mathcal{L}_{g,n}^{\mathrm{qr}}(H)$ is an isomorphism of algebras.
\end{teo}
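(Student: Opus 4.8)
The plan is to deduce the statement from the commutative square \eqref{defWqr}, in which the top arrow $W$ is an isomorphism by Theorem \ref{thWilsonIso} (the relevant category $H\text{-}\mathrm{mod}$ being semisimple, as it is e.g. for $H = U_q^{\mathrm{ad}}(\mathfrak{g})$), the left arrow $q : \mathcal{S}_H(\Sigma_{g,n}^{\circ}) \twoheadrightarrow \mathcal{S}_H(\Sigma_{g,n})$ is the surjection induced by $\sim_{\partial}$, and the right arrow $\pi : \mathcal{L}_{g,n}^H(H) \twoheadrightarrow \mathcal{L}_{g,n}^{\mathrm{qr}}(H)$ is surjective by Lemma \ref{lemsepare}. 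Since the square commutes and $W$ is an isomorphism, $W^{\mathrm{qr}}$ will be an isomorphism precisely when $W$ carries $\mathrm{Ker}(q)$ onto $\mathrm{Ker}(\pi)$, i.e. when $W(\mathrm{Ker}(q)) = \mathrm{Ker}(\pi)$.

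Surjectivity of $W^{\mathrm{qr}}$ is immediate: from $W^{\mathrm{qr}} \circ q = \pi \circ W$, the right-hand side is a composite of the surjection $\pi$ with the isomorphism $W$, hence $W^{\mathrm{qr}} \circ q$ is surjective, and therefore so is $W^{\mathrm{qr}}$.

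For injectivity I would establish $W(\mathrm{Ker}(q)) = \mathrm{Ker}(\pi)$. The inclusion $W(\mathrm{Ker}(q)) \subseteq \mathrm{Ker}(\pi)$ is exactly Proposition \ref{prop_Wqr_passe_au_quotient}: differences $L_T - L_T^{\partial}$ of $\partial$-equivalent links are sent by $\pi \circ W$ to $0$, and such differences span $\mathrm{Ker}(q)$ because $\sim_{\partial}$ is generated by local $\partial$-moves compatible with stacking. For the reverse inclusion $\mathrm{Ker}(\pi) \subseteq W(\mathrm{Ker}(q))$, I would use the description of $\mathrm{Ker}(\pi) = \mathfrak{R}(I_{\varepsilon})$ given by Proposition \ref{propDescriptionKerPi}: every element of $\mathrm{Ker}(\pi)$ is a linear combination of the explicit generators displayed there, each of which is a difference of two $\mathcal{L}_{g,n}(H)$-valued diagrams that agree everywhere except that one carries the extra boundary matrix $\overset{Y}{C}$ and the other does not. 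Under the identification of $W$ on invariants with the map $\mathcal{E}$ (the commutative triangle relating $\mathcal{E}$, $\mathcal{F}$ and $W$ in the proof of Theorem \ref{thWilsonIso}), these two diagrams are precisely $W(L_T^{\partial})$ and $W(L_T)$ for a suitable ribbon graph $T$ with $F_{\mathrm{RT}}(T) = f$; hence each such generator equals $W(L_T^{\partial} - L_T) \in W(\mathrm{Ker}(q))$, giving the reverse inclusion and completing the proof.

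The main obstacle is this last matching step: one must verify that the two diagrams appearing in a generator of Proposition \ref{propDescriptionKerPi} are genuinely the $W$-images of a $\partial$-equivalent pair $L_T, L_T^{\partial}$, i.e. that the presence or absence of the loop $\overset{Y}{C}$ encircling all the holonomy matrices corresponds exactly to performing one $\partial$-move on the $Y$-colored strand. This is the computation of the proof of Proposition \ref{prop_Wqr_passe_au_quotient}, read in reverse, relying on Lemma \ref{lemmeSimplificationWqr} and the identity $p\bigl(a\overset{X}{C}\bigr) = p(a) \otimes \mathrm{id}_X$; the difficulty is purely diagrammatic (tracking colors, coupons, and the normalization by $v_V$ built into $\overset{Y}{C}$) rather than conceptual, but it must be carried out for general $(g,n)$ rather than only in the $(1,1)$ case pictured.
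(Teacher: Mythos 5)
Your proposal is correct and follows essentially the same route as the paper: surjectivity from the commutative square \eqref{defWqr} together with Theorem \ref{thWilsonIso} and Lemma \ref{lemsepare}, and injectivity by using Proposition \ref{propDescriptionKerPi} to write an element of $\mathrm{Ker}(\pi)$ in terms of the explicit $\overset{Y}{C}$-generators and then, via Lemma \ref{lemmeSimplificationWqr}, realizing each generator as $W(L_T^{\partial}-L_T)$ for a suitable ribbon graph $T$ with coupon $f$, so that injectivity of $W$ forces the original skein class to be $\partial$-trivial. Your reformulation of the injectivity step as the kernel equality $W(\mathrm{Ker}(q))=\mathrm{Ker}(\pi)$ is just a clean repackaging of the paper's direct argument (which, like yours, carries out the diagrammatic matching only in the representative case $(g,n)=(1,1)$), not a genuinely different method.
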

\noindent Note that in particular this result applies to $H = U_q^{\mathrm{ad}}(\mathfrak{g})$, where as usual the ribbon links are colored by the objects and morphisms from the category of finite-dimensional $U_q^{\mathrm{ad}}(\mathfrak{g})$-modules of type $1$.
\begin{proof} Under the hypothesis, $W\colon \mathcal{S}_H(\Sigma_{g,n}^{\circ}) \overset{\sim}{\to} \mathcal{L}^H_{g,n}(H)$ is an isomorphism of algebras (Theorem \ref{thWilsonIso}) and $\pi : \mathcal{L}_{g,n}^H(H) \to \mathcal{L}_{g,n}^{\rm qr}(H)$ is surjective (Lemma \ref{lemsepare}). It follows from its definition in \eqref{defWqr} that $W^{\mathrm{qr}}$ is surjective. To have diagrams with a reasonable size, we prove injectivity for $(g,n) = (1,1)$; this is completely representative of the general situation. So let $L \in \mathcal{S}_H(\Sigma_{1,1}^{\circ})$ and assume that $\pi \circ W(L) = 0$. Then by Proposition \ref{propDescriptionKerPi} we can write
\begin{center}
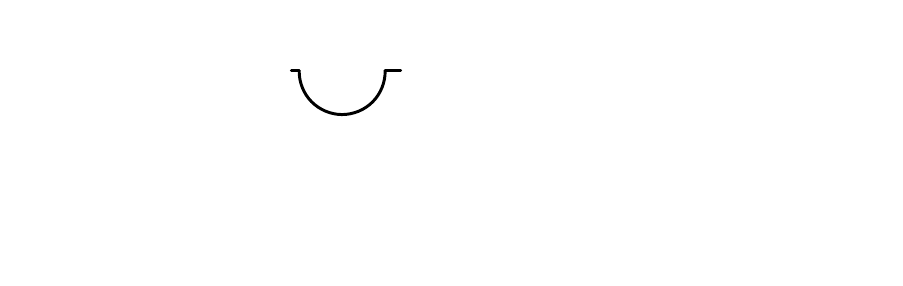
\end{center}
with $X_1,X_2,X_3, Y \in \mathrm{Irr}(H)$, only a finite number of coefficients $\lambda_{X_1,X_2,X_3,Y} \in \mathbb{C}$ are non-zero and $f_{X_1,X_2,X_3,Y} \in \mathrm{Hom}_H\bigl( X_1 \otimes X_1^* \otimes \ldots \otimes X_3 \otimes X_3^* \otimes Y \otimes Y^*, k \bigr)$ where $k$ is the trivial $H$-module. Consider the following ribbon graphs in $[0,1]^3$:
\begin{center}
\begingroup%
  \makeatletter%
  \providecommand\color[2][]{%
    \errmessage{(Inkscape) Color is used for the text in Inkscape, but the package 'color.sty' is not loaded}%
    \renewcommand\color[2][]{}%
  }%
  \providecommand\transparent[1]{%
    \errmessage{(Inkscape) Transparency is used (non-zero) for the text in Inkscape, but the package 'transparent.sty' is not loaded}%
    \renewcommand\transparent[1]{}%
  }%
  \providecommand\rotatebox[2]{#2}%
  \newcommand*\fsize{\dimexpr\f@size pt\relax}%
  \newcommand*\lineheight[1]{\fontsize{\fsize}{#1\fsize}\selectfont}%
  \ifx\svgwidth\undefined%
    \setlength{\unitlength}{322.7697498bp}%
    \ifx\svgscale\undefined%
      \relax%
    \else%
      \setlength{\unitlength}{\unitlength * \real{\svgscale}}%
    \fi%
  \else%
    \setlength{\unitlength}{\svgwidth}%
  \fi%
  \global\let\svgwidth\undefined%
  \global\let\svgscale\undefined%
  \makeatother%
  \begin{picture}(1,0.19504931)%
    \lineheight{1}%
    \setlength\tabcolsep{0pt}%
    \put(0,0){\includegraphics[width=\unitlength,page=1]{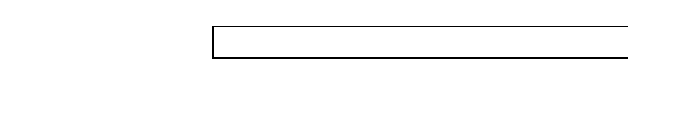}}%
    \put(0.54638063,0.12588952){\color[rgb]{0,0,0}\makebox(0,0)[lt]{\lineheight{1.25}\smash{\begin{tabular}[t]{l}$f_{X_1,X_2,X_3,Y}$\end{tabular}}}}%
    \put(0,0){\includegraphics[width=\unitlength,page=2]{def_f_prime.pdf}}%
    \put(0.27954854,0.16179435){\color[rgb]{0,0,0}\makebox(0,0)[lt]{\lineheight{1.25}\smash{\begin{tabular}[t]{l}$_Y$\end{tabular}}}}%
    \put(0.349665,0.0074577){\color[rgb]{0,0,0}\makebox(0,0)[lt]{\lineheight{1.25}\smash{\begin{tabular}[t]{l}$_{X_1}$\end{tabular}}}}%
    \put(0.43155228,0.00702286){\color[rgb]{0,0,0}\makebox(0,0)[lt]{\lineheight{1.25}\smash{\begin{tabular}[t]{l}$_{X_2}$\end{tabular}}}}%
    \put(0.50907239,0.00635185){\color[rgb]{0,0,0}\makebox(0,0)[lt]{\lineheight{1.25}\smash{\begin{tabular}[t]{l}$_{X_1}$\end{tabular}}}}%
    \put(0.59080743,0.00674274){\color[rgb]{0,0,0}\makebox(0,0)[lt]{\lineheight{1.25}\smash{\begin{tabular}[t]{l}$_{X_2}$\end{tabular}}}}%
    \put(0.67568092,0.00652613){\color[rgb]{0,0,0}\makebox(0,0)[lt]{\lineheight{1.25}\smash{\begin{tabular}[t]{l}$_{X_3}$\end{tabular}}}}%
    \put(0.75297783,0.00671831){\color[rgb]{0,0,0}\makebox(0,0)[lt]{\lineheight{1.25}\smash{\begin{tabular}[t]{l}$_{X_3}$\end{tabular}}}}%
    \put(0.98688347,0.16874702){\color[rgb]{0,0,0}\makebox(0,0)[lt]{\lineheight{1.25}\smash{\begin{tabular}[t]{l}$_Y$\end{tabular}}}}%
    \put(-0.00047332,0.09307643){\color[rgb]{0,0,0}\makebox(0,0)[lt]{\lineheight{1.25}\smash{\begin{tabular}[t]{l}$T(f_{X_1,X_2,X_3,Y}) =$\end{tabular}}}}%
  \end{picture}%
\endgroup%

\end{center}
and using the notations $L_T$, $L^{\partial}_T$ introduced in the proof of Proposition \ref{prop_Wqr_passe_au_quotient} define
\[ L' = \sum_{X_1,X_2,X_3,Y} \lambda_{X_1,X_2,X_3,Y} \bigl( L_{T(f_{X_1,X_2,X_3,Y})}^{\partial} - L_{T(f_{X_1,X_2,X_3,Y})} \bigr) \in \mathcal{S}_H(\Sigma_{1,1}^{\circ}). \]
Note that $L' \sim_{\partial} 0$, simply because $L^{\partial}_T \sim_{\partial} L_T$ for any ribbon graph $T$. Moreover, by definition of $W$ and by Lemma \ref{lemmeSimplificationWqr}, we have $W(L') = W(L)$. It follows that $L' = L$ since $W$ is an isomorphism (Theorem \ref{thWilsonIso}). Thus $L \sim_{\partial} 0$, which means that $L = 0$ in $\mathcal{S}_H(\Sigma_{1,1})$.
\end{proof}

\end{document}